\newtheorem{thm}{Theorem}[section]
\newtheorem{prop}[thm]{Proposition}
\newtheorem{lem}[thm]{Lemma}
\newtheorem{cor}[thm]{Corollary}
\theoremstyle{definition}
\newtheorem{defn}[thm]{Definition}
\newtheorem{exmp}[thm]{Example}
\theoremstyle{remark}
\newtheorem{rem}[thm]{Remark}
\newtheorem*{case}{Case}
\newtheorem*{question}{Question}
\numberwithin{equation}{section}
\author{Antonius Hase}
\address{Mathematics Department, Technion, Haifa 32000, Israel}
\email{hase@campus.technion.ac.il}
\title[The $\mathrm{Out}(F_n)$-action on $H^2_b(F_n, \mathbb{R})$]{Dynamics of $\mathrm{Out}(F_n)$ on the second bounded cohomology of $F_n$}
\begin{document}

\begin{abstract}
We study the $\mathrm{Out}(F_n)$-action on the second bounded cohomology $H^2_b(F_n, \mathbb{R})$, focusing on the countable-dimensional dense invariant subspace given by Brooks quasimorphisms. We show that this subspace has no finite-dimensional invariant subspaces, in particular no fixpoints, partially answering a question of Mikl\'{o}s Ab\'{e}rt. To this end we introduce a notion of speed of an element $g\in \mathrm{Out}(F_n)$, which measures the asymptotic growth rate of bounded cohomology classes under repeated application of $g$.
\end{abstract}

\maketitle

\section{Introduction}

In geometric group theory we study a group by studying its action on an appropriate space. In dynamics we study a space by studying what its elements do under the action of an appropriate group. Both points of view prompt us to study the action of the group of outer automorphisms of the free group $\mathrm{Out}(F_n)$ on the second bounded cohomology of the free group $H^2_b(F_n, \mathbb{R})$. While the $\mathrm{Out}(G)$-action on the group cohomology $H^k(G, \mathbb{R})$ of a group $G$ is a classical object of study in group theory, almost nothing is known about the corresponding $\mathrm{Out}(G)$-action on the bounded group cohomology $H^k_b(G, \mathbb{R})$. 

The study of $\mathrm{Out}(F_n)$ is a classical subject in geometric group theory going back to work of Nielsen \cite{Nielsen} and Whitehead \cite{Whitehead}. For further background on $\mathrm{Out}(F_n)$ see e.g. \cite{Laminations,Titsalternative,Traintracks,OuterSpace,Survey}.

The subject of bounded cohomology was popularized by the articles of Brooks \cite{Brooks}, Gromov \cite{Gromov} and Ivanov \cite{Ivanov}. For further background on bounded cohomology see e.g. \cite{BestvinaBrombergFujiwara2015,BestvinaBrombergFujiwara2016,Buehler-Foundations,BurgerMonod,scl,HullOsin,MonodThesis,Invitation}.

The starting point for our particular line of inquiry is a question asked by Mikl\'{o}s Ab\'{e}rt in \cite{Abert}.\begin{question}[47]\label{47} Can a non-Abelian free group $F$ have a nontrivial pseudocharacter that is invariant under $\mathrm{Aut}(F)$?\end{question} This question was recently answered in the positive for $F_2$ by Brandenbursky and Marcinkowski in \cite{Brandenbursky}, but remains open for $F_n$ with $n>2$. What Ab\'{e}rt calls a \emph{pseudocharacter}, we will call a \emph{homogeneous quasimorphism}. This definition is in line with a lot of the literature, in particular it is in line with the articles \cite{HS} and \cite{HT}, which form the basis of this article. In Section \ref{background} we will explain how we can identify the space of homogeneous quasimorphisms on the free group $\tilde{\mathcal{Q}}(F_n)$ up to homomorphisms with the second bounded cohomology $H^2_b(F_n, \mathbb{R})$. Tweaking Ab\'{e}rt's question only a tiny bit, we get the following question: Does the $\mathrm{Out}(F_n)$ action on $H^2_b(F_n, \mathbb{R})$ have a non-trivial fixpoint?

In this article we consider a countable-dimensional dense subspace $\tilde{\mathcal{B}}(F_n)$ of $\tilde{\mathcal{Q}}(F_n)$, which we call the Brooks space, and the corresponding subspace $H^2_b(F_n, \mathbb{R})_{\mathrm{fin}}$ of $H^2_b(F_n, \mathbb{R})$. By \cite{HS}, these spaces are invariant under the action of $\mathrm{Out}(F_n)$, and we will show the following theorem in Corollary \ref{halfAbert} and Corollary \ref{nofindiminv}.
\\

\begin{thm}
Let $n\geq 2$.
\begin{enumerate}
\item The action of $\mathrm{Out}(F_n)$ on the Brooks space $\tilde{\mathcal{B}}(F_n)$ has no non-trivial fixpoints.
\item The action of $\mathrm{Out}(F_n)$ on $H^2_b(F_n, \mathbb{R})_{\mathrm{fin}}$ admits no non-trivial finite-dimensional $\mathrm{Out}(F_n)$-invariant subspaces. In particular it has no non-trivial fixed point. 
\end{enumerate}
\end{thm}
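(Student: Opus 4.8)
The plan is to prove (2) and deduce (1) from it. First observe that a nonzero $\mathrm{Out}(F_n)$-fixpoint of $\tilde{\mathcal B}(F_n)$ cannot be a homomorphism, since $\mathrm{Out}(F_n)$ acts on $\mathrm{Hom}(F_n,\mathbb R)\cong\mathbb R^n$ through $\mathrm{GL}_n(\mathbb Z)$, which has no nonzero fixed vector (for instance $a_i\mapsto a_i^{-1}$ induces $-\mathrm{id}$); hence such a fixpoint descends to a nonzero $\mathrm{Out}(F_n)$-invariant line in $H^2_b(F_n,\mathbb R)_{\mathrm{fin}}$, so (1) follows from (2). For (2), suppose $V\subseteq H^2_b(F_n,\mathbb R)_{\mathrm{fin}}$ is a nonzero finite-dimensional invariant subspace. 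Identifying $H^2_b(F_n,\mathbb R)_{\mathrm{fin}}$ with $\tilde{\mathcal B}(F_n)/\mathrm{Hom}(F_n,\mathbb R)$ and pulling $V$ back, one gets a finite-dimensional invariant subspace $\widetilde V\subseteq\tilde{\mathcal B}(F_n)$ containing some class $q$ which is not a homomorphism.

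The argument plays two ``sizes'' of a class against each other. On the bounded-cohomology side, $\mathrm{Out}(F_n)$ acts isometrically on $H^2_b(F_n,\mathbb R)$; equivalently it preserves the defect $D$ on $\tilde{\mathcal Q}(F_n)$ (indeed $D(r\circ\psi)=D(r)$ for every automorphism $\psi$, since $\psi$ is a bijective homomorphism), and since $D$ vanishes exactly on homomorphisms it induces a genuine norm on $H^2_b(F_n,\mathbb R)_{\mathrm{fin}}$. Hence on the finite-dimensional invariant subspace $V$ every $g\in\mathrm{Out}(F_n)$ acts as a linear isometry, so $\{g^k\cdot q\}_{k\ge 0}$ is a bounded subset of $V$, and since all norms on the finite-dimensional space $V$ are equivalent, $\{g^k\cdot q\}_{k\ge 0}$ is bounded in \emph{every} norm. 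On the Brooks side, $\tilde{\mathcal B}(F_n)$ is exhausted by the finite-dimensional subspaces $\tilde{\mathcal B}_L$ spanned by the $\tilde h_w$ with $|w|\le L$, and, using that suitably normalized Brooks quasimorphisms form a basis of $\tilde{\mathcal B}(F_n)$ compatible with this length filtration, one has a norm $N$ on $\tilde{\mathcal B}(F_n)$, e.g.\ the $\ell^1$-norm of the Brooks coefficients.

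The core of the proof is to produce a single $g=[\phi]\in\mathrm{Out}(F_n)$ of \emph{positive speed}, meaning that $N(g^k\cdot q)\to\infty$ for every $q\in\tilde{\mathcal B}(F_n)$ that is not a homomorphism, with a positive exponential growth rate---this rate being the ``speed'' of $g$. Applied to this particular $g$, the previous paragraph contradicts $q\in\widetilde V$ and proves (2); and the special case $g^k\cdot q=q$ of a putative fixpoint $q$ is contradicted on the nose, proving (1). To build such a $\phi$ I would take an exponentially growing, ideally fully irreducible, automorphism with a well-behaved train-track representative, expansion factor $\lambda$, and primitive transition matrix $M$, and analyse how $\tilde h_w\circ\phi^{-k}$ decomposes in the Brooks basis. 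The point is that reducing $\phi^{-k}(x)$ for a cyclically reduced word $x$ involves only a bounded amount of cancellation per edge, so once $k$ is large every cyclic occurrence in $\phi^{-k}(x)$ of a fixed pattern of length $|w|$ is determined by a bounded number of consecutive letters of $x$ together with a bounded ``decoration''; re-expanding $(w;\phi^{-k}(x))_{\mathrm{cyc}}-(w^{-1};\phi^{-k}(x))_{\mathrm{cyc}}$ accordingly rewrites $\tilde h_w\circ\phi^{-k}$ as a combination of bounded-complexity Brooks quasimorphisms whose coefficients count occurrences of $w$ inside $\phi^{-k}$ of short words, hence grow like $\lambda^k$ with rates governed by the Perron--Frobenius data of $M$.

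The step I expect to be the main obstacle is controlling cancellation, in two forms. The first is showing that the naive leading term just described is not annihilated by the bounded edge-cancellations or by coincidences among patterns; this is delicate but essentially combinatorial bookkeeping, and can be forced through by passing to a high power of $\phi$ so that the expansion dominates the bounded-cancellation constant. The second, and more serious, is ruling out cancellation \emph{between} the leading terms of the various $\tilde h_w\circ\phi^{-k}$ when $q=\sum_w c_w\tilde h_w$ is a general non-homomorphism: to top order the leading coefficients of $q\circ\phi^{-k}$ are $\lambda^k$ times the values of a fixed family of linear functionals---built from the asymptotic pattern frequencies of the substitution defined by $\phi$---on the coefficient vector $(c_w)_w$, and one must show these cannot all vanish unless $q$ is a homomorphism. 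I would deduce this from unique ergodicity of the subshift associated with $\phi$ together with linear independence of the occurrence-counting functions modulo the relations $(w;\,\cdot\,)=-(w^{-1};\,\cdot\,)$, i.e.\ from the principle that a homogeneous Brooks combination all of whose pattern frequencies along $\phi$-orbits vanish must be a homomorphism. Setting up a notion of ``leading term'' that is simultaneously additive in $q$, compatible with the length filtration, and computable from the train track is the technical heart of the argument.
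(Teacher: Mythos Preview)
Your overall architecture---show that some orbit escapes to infinity in a norm, contradicting boundedness of finite-dimensional invariant subspaces---is exactly the paper's. But the implementation diverges in every detail, and the central step of your version is not actually carried out.

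\textbf{What the paper does.} The paper works with the ultrametric ``length'' norm $\lvert\cdot\rvert_S$ on $\tilde{\mathcal B}(F_n)$ (values in $\mathbb N$, absolute homogeneity for the trivial absolute value). A finite-dimensional subspace is then automatically bounded: if $V=\mathrm{span}(e_1,\dots,e_k)$ then $\lvert\sum c_ie_i\rvert_S\le\max_i\lvert e_i\rvert_S$. No appeal to the defect norm or equivalence of norms is needed. The growth is driven not by a fully irreducible element but by the single Nielsen generator $T^{-1}$, which has \emph{linear} speed; Sections~5--6 are a long combinatorial analysis computing $\mathrm{sp}_S(T^{-1},[\phi w])$ explicitly and then showing every class has a ``speed-reduced'' representative. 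Crucially, the paper does \emph{not} find one automorphism that works for every class: Proposition~\ref{bigger2notfixed} shows only that for each $[f]$ with $\lvert[f]\rvert_S>1$ there exists some word $X$ in Nielsen generators (depending on $[f]$, found by a five-case analysis) with $\mathrm{sp}_S(T^{-1},X[f])>0$. That is enough, since an invariant subspace would also contain $X[f]$.

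\textbf{Where your proposal has a gap.} Your ``core'' step is to exhibit a single $g$ such that $N(g^kq)\to\infty$ for \emph{every} non-homomorphism $q$. You do not prove this; you describe a plausible train-track strategy and then correctly flag the hard point yourself: ruling out cancellation between the leading terms of the various $\tilde h_w\circ\phi^{-k}$ in a general combination $q=\sum c_w\tilde h_w$. The appeal to unique ergodicity and to ``linear independence of occurrence-counting functions modulo the relations'' is exactly the substance of the theorem, not a lemma you can quote; making this precise (choosing a basis of Brooks classes, defining the ``leading term'' functionals, and proving they do not simultaneously vanish off $\mathrm{Hom}$) is comparable in difficulty to the paper's Sections~5--6. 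There is also a smaller wrinkle: your contradiction runs through equivalence of norms on a finite-dimensional space, but the defect is only a \emph{semi}norm on $\tilde V\subset\tilde{\mathcal B}(F_n)$ (it kills $\mathrm{Hom}$), so $D$-boundedness of the orbit does not by itself bound $N$ on $\tilde V$; you would need to argue directly in $H^2_b(F_n,\mathbb R)_{\mathrm{fin}}$ with a norm that genuinely descends. The paper sidesteps all of this by using the ultrametric norm and by letting the automorphism depend on the class.
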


The article is organized as follows: In Section \ref{background} we will explain the necessary background. In Section \ref{countfctquasi} we will present the Brooks space $\tilde{\mathcal{B}}(F_n)$ and introduce a norm on it. In Section \ref{OutFnaction} we will reproduce a proof of the fact that the Brooks space $\tilde{\mathcal{B}}(F_n)$ is invariant under the action of $\mathrm{Out}(F_n)$ on $\tilde{\mathcal{Q}}(F_n)$. We will also introduce the notion of the \emph{speed} of an element $X\in \mathrm{Out}(F_n)$ on an element $[f]$ of the Brooks space, which measures how fast the norm of $[f]$ grows asymptotically under repeated application of the element $X$. In Section \ref{Tsingle} we show that a special element $T^{-1}\in\mathrm{Out}(F_n)$ has linear speed on the Brooks space. In Section \ref{Tsums} we will show a way to compute the speed of $T^{-1}$ on any element of the Brooks space. This will help us to answer Ab\'{e}rt's question for the Brooks space in Section \ref{fixpts}: No non-trivial element of the Brooks space is fixed by the action of $\mathrm{Out}(F_n)$. In Section \ref{open} we will sketch questions and possible lines of further research that came up writing this article.

\section*{Acknowledgement}
This article is up to minor adjustments identical with my master's thesis at the Technion from August 2016. I want to thank my advisor Tobias Hartnick for his support in writing the thesis. I also want to acknowledge the generous financial support through the Department of Mathematics of the Technion and through ISF Grant No. 535/14.

\section{Background}\label{background}

In this section we will explain the background necessary for this article. We start in the first subsection by explaining group cohomology. We do this with the goal of explaining bounded group cohomology in the second subsection. Then we will present the $\mathrm{Out}(G)$ action on bounded group cohomology for any group $G$. We will end this section with a remark about $\mathrm{Out}(F_n)$. While we will present the definitions of (bounded) group cohomology for general coefficients, we will in low degrees only consider $\mathbb{R}$ with trivial $G$ action.

\subsection{Group cohomology}

We will mention three different view points on group cohomology - a topological, a categorical and a combinatorial one.

\subsubsection*{Topological definition}

We start with the topological definition of group cohomology, which is also historically the origin of this field of study.

\begin{defn}
We call a connected topological space $BG$ an \emph{Eilenberg-MacLane space} of a group $G$ if $\pi_1(BG)=G$ and $\pi_k(BG)=0$ for all $k>1$.
\end{defn}

\begin{prop}
For every group $G$ there exists an Eilenberg-MacLane space $BG$. It is unique up to weak homotopy equivalence. 
\end{prop}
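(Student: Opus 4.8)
The plan is to treat existence and uniqueness separately, both by elementary CW and obstruction-theoretic methods. For existence, I would build $BG$ as a CW complex by killing homotopy groups in increasing dimension. Fix a presentation $G = \langle S \mid R\rangle$ and let $X_2$ be the wedge of circles indexed by $S$ with a $2$-cell attached along each relator $r \in R$; by van Kampen's theorem $\pi_1(X_2) \cong G$. Then proceed inductively: given a complex $X_k$ with $\pi_1(X_k) \cong G$ and $\pi_i(X_k) = 0$ for $1 < i < k$, pick a set of generators of $\pi_k(X_k)$, represent them by maps $S^k \to X_k$, and attach $(k+1)$-cells along these maps to obtain $X_{k+1}$; attaching cells of dimension $\geq k+1$ leaves $\pi_i$ unchanged for $i \leq k-1$ and kills $\pi_k$. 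Setting $BG = \bigcup_k X_k$ with the weak topology gives a space with $\pi_1(BG) \cong G$ and $\pi_k(BG) = 0$ for $k > 1$, because any map of a sphere or a homotopy into $BG$ has compact image and hence factors through some $X_m$. I would remark in passing that one could instead invoke Milnor's infinite join construction $EG = G * G * G * \cdots$ with $BG = EG/G$, or take the geometric realization of the nerve of $G$ viewed as a one-object groupoid, but the cell-attachment construction is self-contained.

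For uniqueness, suppose $Y$ and $Y'$ are both Eilenberg--MacLane spaces for $G$. Replacing $Y$ by a CW approximation — which is a weak homotopy equivalence and still a $K(G,1)$ — we may assume $Y$ is a CW complex. Given an isomorphism $\phi\colon \pi_1(Y) \to \pi_1(Y')$, I would construct a map $f\colon Y \to Y'$ realizing $\phi$ by obstruction theory: define $f$ on the $1$-skeleton of $Y$ so as to induce $\phi$, and then extend cell by cell, noting that the obstruction to extending over the $(k+1)$-skeleton lies in $H^{k+1}(Y;\pi_k(Y'))$, which vanishes for all $k \geq 2$ since $\pi_k(Y') = 0$. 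The resulting map $f$ is an isomorphism on $\pi_1$ and an isomorphism of trivial groups in all higher degrees, hence a weak homotopy equivalence; so any two Eilenberg--MacLane spaces of $G$ are linked by a weak equivalence, as claimed.

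The only point demanding care — and hence the closest thing to an obstacle here — is the obstruction-theory step: in general, extending a map over successive skeleta involves cohomology with local coefficients twisted by the $\pi_1$-action on the higher homotopy groups of the target, and one must pin down basepoints and this action correctly. In the present situation every relevant coefficient group $\pi_k(Y')$ with $k \geq 2$ is zero, so the twisting is irrelevant and all obstructions automatically vanish; the substantive preliminary is therefore just the reduction to CW complexes via CW approximation, which is what makes cellular obstruction theory applicable at all.
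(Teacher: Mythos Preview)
Your proposal is correct and gives precisely the standard textbook argument. The paper itself does not prove this proposition at all: it simply cites the original Eilenberg--MacLane papers and refers the reader to Section~4.2 of Hatcher, whose treatment is exactly the cell-attachment construction for existence and the obstruction-theoretic extension for uniqueness that you have outlined.
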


\begin{proof}
See \cite{EML1} and \cite{EML2} for the original proof. Most textbooks on algebraic topology also give a proof, for example \cite{Hatcher} in Section 4.2.
\end{proof}

\begin{defn}\label{topdefn}
The \emph{group cohomology} of $G$ with coefficients in $\mathbb{R}$ is defined as the cohomology of the Eilenberg-MacLane space $BG$ with coefficients in $\mathbb{R}$, i.e. \[H^\bullet(G,\mathbb{R})=H^\bullet_{sing}(BG,\mathbb{R}).\]
\end{defn}

\begin{exmp}
Let $G=F_n$ be the free group with $n$ generators. Then $BF_n$ is a rose with $n$ petals. We get \begin{equation}H^k(F_n,\mathbb{R})=\begin{cases}
\mathbb{R},& \text{if } k=0\\
\mathbb{R}^n,& \text{if } k=1\\
0,& \text{if } k>1\\
\end{cases}.\end{equation}
\end{exmp}

\subsubsection*{Categorical definition}

Now we want to understand group cohomology as a derived functor. Let $R$ be a commutative ring and $G$ a group. We denote by $_{R}\mathrm{Mod}$ the category of left $R$-modules and by $_{RG}\mathrm{Mod}$ the category of representations of $G$ over $R$, i.e. $_{RG}\mathrm{Mod}$ is the category of left $RG$-modules, where $RG$ is the group ring. Since both categories are module categories, they are both abelian. Let $_{\epsilon}(-):_{R}\mathrm{Mod} \to _{RG}\mathrm{Mod}$ be the trivial module functor: The functor sends an object $V$ to $V$ with the trivial $G$-action and a morphism $f$ to $f$. The functor $_{\epsilon}(-)$ is left adjoint and we denote its right adjoint functor by $(-)^G$. The functor $(-)^G$ is left exact. Since module categories have enough injectives, we can consider the right-derived functors of $(-)^G$. We define the following:

\begin{defn}\label{catdefn}
The \emph{group cohomology} of $G$ with coefficients in a $RG$-module $V$ is defined by \[H^n(G,V)=R^n(-)^G(V).\]
\end{defn}

\begin{prop}
This categorical definition agrees with the topological one if $R=\mathbb{R}$ and $V=\mathbb{R}$ with the trivial $G$ action.
\end{prop}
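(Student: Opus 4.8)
The plan is to identify both sides with the same $\mathrm{Ext}$-group over the group ring $RG=\mathbb{R}G$: the categorical side directly from the definition of derived functors, and the topological side via a geometric free resolution of the trivial module.

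First I would reduce Definition \ref{catdefn} to a computation of $\mathrm{Ext}$. For any $RG$-module $V$ one has $V^G=\{v\in V: gv=v \text{ for all } g\in G\}\cong \mathrm{Hom}_{RG}({}_{\epsilon}R,V)$, naturally in $V$, where ${}_{\epsilon}R$ denotes $R$ with the trivial $G$-action. Hence $(-)^G\cong \mathrm{Hom}_{RG}({}_{\epsilon}R,-)$ and $R^n(-)^G(V)=\mathrm{Ext}^n_{RG}({}_{\epsilon}R,V)$. By the balancing property of $\mathrm{Ext}$, this group is the $n$-th cohomology of $\mathrm{Hom}_{RG}(P_\bullet,V)$ for any projective, in particular free, resolution $P_\bullet\to {}_{\epsilon}R\to 0$ of the trivial module.

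Next I would construct such a resolution topologically. Choosing a model of $BG$ that is a CW complex and letting $p\colon EG\to BG$ be its universal cover, $EG$ is contractible and carries a free, properly discontinuous $G$-action with $EG/G=BG$. Freeness of the action on $EG$ forces the $G$-action on the set of singular $n$-simplices of $EG$ to be free, so each $C_n(EG;R)$ is a free $RG$-module (on a set of orbit representatives), and contractibility of $EG$ makes the augmented complex $C_\bullet(EG;R)\to {}_{\epsilon}R\to 0$ exact. Thus $C_\bullet(EG;R)$ is a free $RG$-resolution of the trivial module.

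Finally I would match the two cochain complexes. Since $p$ is a covering and every singular simplex of $BG$ has simply connected, hence liftable, source, the singular simplices of $BG$ are exactly the $G$-orbits of singular simplices of $EG$; this yields $C_n(BG;R)\cong C_n(EG;R)\otimes_{RG}R$ and, dually, $\mathrm{Hom}_{RG}(C_n(EG;R),\mathbb{R})\cong \mathrm{Hom}_R(C_n(BG;R),\mathbb{R})=C^n_{sing}(BG;\mathbb{R})$, compatibly with the differentials. Passing to cohomology, the left-hand side is $\mathrm{Ext}^n_{RG}({}_{\epsilon}R,\mathbb{R})=R^n(-)^G(\mathbb{R})$ by the second step, and the right-hand side is $H^n_{sing}(BG,\mathbb{R})=H^n(G,\mathbb{R})$ of Definition \ref{topdefn}. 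I expect the main obstacle to be the bookkeeping in this last step: checking carefully that singular chains on $BG$ are precisely the $G$-coinvariants of singular chains on $EG$ (lifting of simplices, equivariance of boundary maps) and that the induced isomorphism intertwines the coboundary operators; everything else is a formal property of derived functors or a standard fact about covering spaces.
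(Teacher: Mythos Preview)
Your argument is correct. The paper itself does not give a proof but simply refers the reader to Section~I.4 of Brown's \emph{Cohomology of groups}; your sketch is essentially the argument found there (Brown works with cellular rather than singular chains on $EG$, but the structure---identifying $(-)^G$ with $\mathrm{Hom}_{RG}({}_{\epsilon}R,-)$ and then building a free $RG$-resolution of the trivial module from the universal cover of a CW model of $BG$---is the same).
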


\begin{proof}
See Section I.4 in \cite{Brown}.
\end{proof}

\subsubsection*{Bar complexes}

The categorical definition of group cohomology shows us that we can compute group cohomology from a lot of different complexes. Two complexes are especially well known and useful: The so called \emph{bar complexes} are so prominent that group cohomology is sometimes defined in their terms. We see them as providing a combinatorial point of view on group cohomology.

\begin{defn}
The \emph{homogeneous bar complex for $H^n(G,V)$} is \[{\xymatrix{0\ar[r] & \mathrm{Map}(G,V)^G\ar[r]^-{d^0} & \mathrm{Map}(G^2,V)^G\ar[r]^-{d^1} & \mathrm{Map}(G^3,V)^G\ar[r]^-{d^2} & ... }},\] where \begin{equation}
d^n f(g_0,...,g_{n+1})=\sum_{i=0}^{n+1} (-1)^i f(g_0,...,\hat{g_i},...,g_{n+1}).
\end{equation}
\end{defn}

\begin{prop}
We can compute the cohomology of a group $G$ using the homogeneous bar complex:\[H^\bullet(G,V)=H^\bullet(\mathrm{Map}(G^\bullet,V),d^\bullet)\]
\end{prop}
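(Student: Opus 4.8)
The plan is to recognize the homogeneous bar complex as what one gets by applying $\mathrm{Hom}_{RG}(-,V)$ to a canonical free resolution of the trivial module, and then to invoke the balancing of $\mathrm{Ext}$. In detail: the invariants functor $(-)^G$ is naturally isomorphic to $\mathrm{Hom}_{RG}(R,-)$, where $R$ carries the trivial $G$-action, so by Definition \ref{catdefn} we have $H^n(G,V)=R^n\big(\mathrm{Hom}_{RG}(R,-)\big)(V)=\mathrm{Ext}^n_{RG}(R,V)$. By standard homological algebra this $\mathrm{Ext}$-group can be computed not only from an injective resolution of $V$ but equally from any projective resolution $P_\bullet\to R\to 0$, as $H^n\big(\mathrm{Hom}_{RG}(P_\bullet,V)\big)$; so it suffices to produce one such resolution whose associated cochain complex is the homogeneous bar complex.

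First I would write down the \emph{homogeneous bar resolution}. Let $B_n$ be the free $R$-module on the set $G^{n+1}$ with the diagonal left $G$-action $g\cdot(g_0,\dots,g_n)=(gg_0,\dots,gg_n)$, let $\partial_n\colon B_n\to B_{n-1}$ be the $RG$-linear extension of $(g_0,\dots,g_n)\mapsto\sum_{i=0}^n(-1)^i(g_0,\dots,\widehat{g_i},\dots,g_n)$, and let $\varepsilon\colon B_0=R[G]\to R$ send every basis element to $1$. Two points need checking. First, each $B_n$ is free as an $RG$-module, because $G$ acts freely on the basis $G^{n+1}$ (the identity $g\cdot(g_0,\dots,g_n)=(g_0,\dots,g_n)$ already forces $g=1$), so $B_n$ is free over $RG$ on any set of orbit representatives. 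Second, the complex $\cdots\to B_1\to B_0\xrightarrow{\varepsilon}R\to 0$ is exact; this is witnessed by the $R$-linear contracting homotopy $s_n\colon B_n\to B_{n+1}$, $s_n(g_0,\dots,g_n)=(1,g_0,\dots,g_n)$, for which a short telescoping computation gives $\partial_{n+1}s_n+s_{n-1}\partial_n=\mathrm{id}_{B_n}$ together with $\varepsilon\,s_{-1}=\mathrm{id}_R$.

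It then remains to identify the cochain complex $\mathrm{Hom}_{RG}(B_\bullet,V)$. An $RG$-linear map $B_n\to V$ is the same as an $R$-linear map $f$ on the free basis $G^{n+1}$ satisfying $f(gg_0,\dots,gg_n)=g\cdot f(g_0,\dots,g_n)$, which is exactly the invariance condition defining $\mathrm{Map}(G^{n+1},V)^G$; hence $\mathrm{Hom}_{RG}(B_n,V)\cong\mathrm{Map}(G^{n+1},V)^G$, naturally in $n$. Dualizing $\partial_{n+1}$ then produces precisely the operator $d^n$ of the definition, so $\mathrm{Hom}_{RG}(B_\bullet,V)$ is the homogeneous bar complex and the first paragraph yields $H^n(G,V)=\mathrm{Ext}^n_{RG}(R,V)=H^n\big(\mathrm{Map}(G^{\bullet+1},V)^G,d^\bullet\big)$.

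The only point requiring care is the exactness step: the contracting homotopy $s_\bullet$ is only $R$-linear, not $RG$-linear, which may look illegitimate, but exactness of a complex of $RG$-modules is a property of its underlying complex of $R$-modules, so an $R$-linear null-homotopy is exactly what is needed and allowed. Everything else is routine bookkeeping with signs; a textbook treatment can be found in Chapter I of \cite{Brown}.
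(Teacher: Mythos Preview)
Your proof is correct and follows essentially the same route as the paper: both construct the homogeneous bar resolution $B_\bullet$ of the trivial module and use that a free (hence projective) resolution computes the right derived functors of $(-)^G$. The paper merely sketches this and defers the details to \cite{Brown}, whereas you spell out the balancing of $\mathrm{Ext}$, the freeness of $B_n$ over $RG$, the $R$-linear contracting homotopy, and the identification $\mathrm{Hom}_{RG}(B_n,V)\cong\mathrm{Map}(G^{n+1},V)^G$; these are exactly the ingredients one finds in Brown's Section~I.5.
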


\begin{proof}
Let $B_n$ be the free $R$-module with basis $(g_0\mid...\mid g_n)$ for $g_i\in G$ and with the diagonal $G$ action. So $B_n$ is an object of $_{RG}\mathrm{Mod}$. We define a differential $d_n: B_n\to B_{n-1}$ by $d_n (g_0\mid...\mid g_{n})=\sum_{i=0}^{n} (-1)^i (g_0\mid...\mid\hat{g_i}\mid...\mid g_{n})$. Then $(B_\bullet,d_\bullet)$ is a free resolution of $_{\epsilon}(R)$ in $_{RG}\mathrm{Mod}$. See for example Section I.5 in \cite{Brown}.
\end{proof}

\begin{defn}
The \emph{inhomogeneous bar complex for $H^n(G,V)$} is \[{\xymatrix{0\ar[r] & \mathrm{Map}(G^0,V)\ar[r]^-{\partial^0} & \mathrm{Map}(G^1,V)\ar[r]^-{\partial^1} & \mathrm{Map}(G^2,V)\ar[r]^-{\partial^2} & ... }}\] where \begin{equation}\begin{split}
\partial^n f(g_0,...,g_n)=\
&g_0f(g_1,...,g_n)+(-1)^{n+1}f(g_0,...,g_{n-1})\\
&-\sum_{j=0}^{n-1} (-1)^{j+1} f(g_0,...,g_jg_{j+1},...,g_n).
\end{split}\end{equation}
\end{defn}

\begin{cor}\label{inhom}
We can compute the cohomology of a group $G$ using the inhomogeneous bar complex:\[H^\bullet(G,V)=H^\bullet(\mathrm{Map}(G^{\bullet-1},V),\partial^{\bullet-1})\]
\end{cor}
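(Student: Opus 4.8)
The plan is to recognize the inhomogeneous bar complex as $\mathrm{Hom}_{RG}(B_\bullet,V)$ for the \emph{same} free resolution $(B_\bullet,d_\bullet)$ of ${}_{\epsilon}(R)$ used in the proof of the previous Proposition, simply rewritten in a different $RG$-basis; no new cohomology needs to be computed. Since $(B_\bullet,d_\bullet)$ is a projective resolution, $H^n(G,V)=H^n(\mathrm{Hom}_{RG}(B_\bullet,V))$ regardless of which basis one uses to describe the complex, so it is enough to identify the resulting cochain complex, together with its coboundary, with the inhomogeneous bar complex and $\partial^\bullet$.

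First I would introduce the inhomogeneous basis of $B_n$: for $h_1,\dots,h_n\in G$ set $[h_1\mid\dots\mid h_n]:=(1\mid h_1\mid h_1h_2\mid\dots\mid h_1h_2\cdots h_n)$. The identity $(g_0\mid\dots\mid g_n)=g_0\cdot[\,g_0^{-1}g_1\mid g_1^{-1}g_2\mid\dots\mid g_{n-1}^{-1}g_n\,]$ exhibits the old symbols as $G$-translates of the new ones, so $\{[h_1\mid\dots\mid h_n]:h_i\in G\}$ is again an $RG$-basis of the free module $B_n$; hence evaluation on it gives an isomorphism $\mathrm{Hom}_{RG}(B_n,V)\cong\mathrm{Map}(G^n,V)$, which is precisely the isomorphism $\mathrm{Map}(G^{n+1},V)^G\cong\mathrm{Map}(G^n,V)$, $\varphi\mapsto\bigl((h_1,\dots,h_n)\mapsto\varphi(1,h_1,h_1h_2,\dots,h_1\cdots h_n)\bigr)$, that pairs the degree-$n$ term of the homogeneous bar complex with the degree-$n$ term of the inhomogeneous one (this is the source of the index shift appearing in the statement). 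Next I would rewrite the differential $d_n$ in the new basis: expanding $d_n(g_0\mid\dots\mid g_n)=\sum_{i=0}^n(-1)^i(g_0\mid\dots\mid\hat{g_i}\mid\dots\mid g_n)$ on $[h_1\mid\dots\mid h_n]$ and re-collecting yields
\[
 d_n[h_1\mid\dots\mid h_n]=h_1\cdot[h_2\mid\dots\mid h_n]+\sum_{i=1}^{n-1}(-1)^i[h_1\mid\dots\mid h_ih_{i+1}\mid\dots\mid h_n]+(-1)^n[h_1\mid\dots\mid h_{n-1}],
\]
where the $i=0$ face produces the $G$-twisted leading term (deleting the leading $1$ from $(1\mid h_1\mid h_1h_2\mid\dots)$ gives $h_1\cdot(1\mid h_2\mid h_2h_3\mid\dots)$), and the interior faces produce the merging terms via $h_1\cdots h_{i+1}=(h_1\cdots h_{i-1})(h_ih_{i+1})$. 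Applying $\mathrm{Hom}_{RG}(-,V)$ to this formula and transporting along the isomorphism above turns the coboundary of $\mathrm{Hom}_{RG}(B_\bullet,V)$ into $\partial^\bullet$, and the Corollary follows.

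The hard part will be purely bookkeeping: checking that the change of basis is genuinely invertible over $RG$, keeping the direction of the translations $g_{i-1}^{-1}g_i$ and all signs consistent, and making transparent that it is precisely the $G$-equivariance of homogeneous cochains that is converted into the honest $G$-action appearing in $\partial^\bullet$; there is no conceptual content beyond the previous Proposition. An equivalent route, which I would mention but not follow since it does not reuse work already done, is to define the mutually inverse maps $\mathrm{Map}(G^{n+1},V)^G\rightleftarrows\mathrm{Map}(G^n,V)$ directly---the forward one as above, the backward one $f\mapsto\bigl((g_0,\dots,g_n)\mapsto g_0f(g_0^{-1}g_1,g_1^{-1}g_2,\dots,g_{n-1}^{-1}g_n)\bigr)$---and verify directly that they intertwine $d^\bullet$ and $\partial^\bullet$.
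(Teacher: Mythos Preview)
Your proposal is correct and takes essentially the same approach as the paper: the paper's proof simply says to ``use an isomorphism between $\mathrm{Map}(G^{n+1},V)^G$ and $\mathrm{Map}(G^n,V)$'' and refers to Brown, while you spell out exactly that isomorphism (via the inhomogeneous $RG$-basis of $B_n$) and verify it intertwines $d^\bullet$ with $\partial^\bullet$. In fact the ``equivalent route'' you mention at the end is precisely what the paper gestures at, so your write-up is a strict elaboration of the paper's one-line argument.
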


\begin{proof}
For this proof use an isomorphism between $\mathrm{Map}(G^{n+1},V)^G$ and $\mathrm{Map}(G^n,V)$. See again Section I.5 in \cite{Brown}.
\end{proof}

\subsubsection*{Low degrees}

The inhomogeneous bar complex is a very useful tool to compute group cohomology in low degrees. We get:\begin{enumerate}
\item $H^0(G,\mathbb{R})=\ker(\partial^0)=\mathbb{R}$,
\item $H^1(G,\mathbb{R})=\ker(\partial^1)/\text{im}(\partial^0)=\mathrm{Hom}(G,\mathbb{R})$.
\end{enumerate} To classify $H^2(G,\mathbb{R})$ more work is necessary.

\begin{defn}
A \emph{central extension} of a group $G$ by a group $A$ is a short exact sequence ${\xymatrix{0\ar[r] & A\ar[r] & \hat{G}\ar[r] & G\ar[r] & 1}}$ such that $A$ is in the center of $\hat{G}$. 
\end{defn}

\begin{prop}
The second cohomology $H^2(G,\mathbb{R})$ classifies central extensions of $G$ by $\mathbb{R}$ up to isomorphism.
\end{prop}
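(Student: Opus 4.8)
The plan is to construct an explicit bijection between $H^2(G,\mathbb{R})$, computed via the inhomogeneous bar complex of Corollary \ref{inhom}, and the set of isomorphism classes of central extensions of $G$ by $\mathbb{R}$, and then check it is natural enough to deserve the name ``classification''. Here ``isomorphism of extensions'' must be read as a group isomorphism $\hat G\to\hat G'$ fitting into a commuting ladder which is the identity on $\mathbb{R}$ and on $G$; pinning down this notion is part of the statement.

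First I would go from extensions to cohomology classes. Given a central extension $0\to\mathbb{R}\xrightarrow{\iota}\hat G\xrightarrow{\pi}G\to 1$, choose a set-theoretic section $s\colon G\to\hat G$ of $\pi$ normalized by $s(1)=1$. Since $\pi(s(g)s(h))=gh=\pi(s(gh))$, the element $s(g)s(h)s(gh)^{-1}$ lies in $\ker\pi=\iota(\mathbb{R})$, so there is a unique $c(g,h)\in\mathbb{R}$ with $s(g)s(h)=\iota(c(g,h))\,s(gh)$. Expanding associativity of $s(g_0)s(g_1)s(g_2)$ in the two obvious ways and using that $\iota(\mathbb{R})$ is central yields exactly $\partial^2 c=0$, and the normalization gives $c(1,g)=c(g,1)=0$. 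Replacing $s$ by another normalized section $s'=\iota(b(\cdot))\,s$ with $b\colon G\to\mathbb{R}$, $b(1)=0$, changes $c$ by $\partial^1 b$, so the class $[c]\in H^2(G,\mathbb{R})$ is well defined; and an isomorphism of extensions $\varphi\colon\hat G\to\hat G'$ sends a section of $\pi$ to one of $\pi'$ and identifies the two cocycles, so we obtain a well-defined map from isomorphism classes of extensions to $H^2(G,\mathbb{R})$.

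Conversely, from a cocycle $c\in\mathrm{Map}(G^2,\mathbb{R})$ with $\partial^2 c=0$ I would build $\hat G_c$ on the underlying set $\mathbb{R}\times G$ with product $(a,g)\cdot(b,h)=(a+b+c(g,h),\,gh)$; the cocycle identity is precisely associativity, and after normalizing $c$ inside its cohomology class so that $c(1,g)=c(g,1)=0$ the pair $(0,1)$ is a unit and inverses exist. The maps $a\mapsto(a,1)$ and $(a,g)\mapsto g$ present $\hat G_c$ as a central extension of $G$ by $\mathbb{R}$, and two cocycles differing by $\partial^1 b$ give extensions related by $(a,g)\mapsto(a-b(g),g)$. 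Finally the two constructions are mutually inverse: the section $g\mapsto(0,g)$ of $\hat G_c$ recovers $c$ on the nose, and for $\hat G_c$ built from a section $s$ of a given extension the map $(a,g)\mapsto\iota(a)\,s(g)$ is an isomorphism of extensions.

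I expect the only real difficulty to be bookkeeping rather than anything conceptual: one must keep every section and every cocycle normalized at the identity, or else carry correction terms through every identity, and one must be disciplined about using the strict notion of isomorphism of extensions throughout, since a coarser notion would alter the classifying set. Everything else is a direct dictionary between the associativity and unit axioms in $\hat G$ and the conditions $\partial^2 c=0$ and $c\in\operatorname{im}\partial^1$. For the argument carried out in full see Section IV.3 of \cite{Brown}.
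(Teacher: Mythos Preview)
Your proposal is correct and follows exactly the standard argument that the paper defers to: the paper's own proof is simply the reference ``See for example Section IV.3 in \cite{Brown}'', which is precisely the cocycle/section correspondence you have spelled out. If anything, you have provided more detail than the paper, and your concluding citation matches the paper's.
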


\begin{proof}
See for example Section IV.3 in \cite{Brown}.
\end{proof} There are more complicated group theoretic interpretations of the higher cohomology groups. For more information see \cite{Brown}.

\subsection{Bounded group cohomology}

As above we will mention the topological, categorical and combinatorial point of view on bounded cohomology. Since we are interested in $H^2_b(F_n, \mathbb{R})$, we will pay much more attention on low degrees.

\subsubsection*{Topological definition}

\begin{defn}
Let $X$ be a topological space. We denote by $S_n(x)$ the set of $n$-dimensional singular simplices in $X$. Then we call a bounded function $S_n(X)\to \mathbb{R}$ a \emph{bounded $n$-cochain}. Let $B^n(X)$ denote the space of bounded $n$-cochains. If $d$ is the usual differential of the cochain complex for singular cohomology, we get that $d^n(B^n(X))\subset B^{n+1}(X)$. So we find a cochain complex \[{\xymatrix{0\ar[r] & B^0(X)\ar[r]^-{d^0} & B^1(X)\ar[r]^-{d^1} & B^2(X)\ar[r]^-{d^2} & ...}},\] where $d$ is the usual differential of the cochain complex for singular cohomology. We call the cohomology of this complex the \emph{bounded cohomology} of $X$.
\end{defn}

\begin{defn}
The \emph{bounded cohomology} of $G$ with coefficients in $\mathbb{R}$ is the bounded cohomology of the Eilenberg-MacLane space of $G$.
\end{defn}

\subsubsection*{Categorical definition}

A categorical definition for bounded group cohomology was only found rather recently by B\"uhler. I will only be able to sketch his construction here, for the whole story see \cite{Buehler-Foundations}. The idea is to mimic the construction for group cohomology as long as possible. Since we have to make sense of the notion of boundedness, we can not consider all modules over a commutative ring $R$. Let $G$ be a group and for simplicity let $R$ be a field $k\in \{\mathbb{R},\mathbb{C}\}$. We denote by $\mathrm{Ban}_k$ the category of Banach spaces over $k$ and by $\mathrm{G-Ban}_k$ the category of isometric representations of $G$ on Banach spaces over $k$. More explicitely we have for $\mathrm{Ban}_k$: \begin{enumerate}
\item $V$ is an object of $\mathrm{Ban}_k$ if and only if $V$ is a Banach space over $k$.
\item $f:V_1 \to V_2$ is a morphism of $\mathrm{Ban}_k$ if and only if $f:V_1 \to V_2$ is $k$-linear and bounded.
\end{enumerate}and we have for $\mathrm{G-Ban}_k$:\begin{enumerate}
\item $V$ is an object of $\mathrm{G-Ban}_k$ if and only if $V$ is a Banach space over $k$ and $G$ acts on $V$ by $k$-linear isometries.
\item $f:V_1 \to V_2$ is a morphism of $\mathrm{G-Ban}_k$ if and only if $f:V_1 \to V_2$ is $k$-linear and bounded and $G$-equivariant.
\end{enumerate} Let $_{\epsilon}(-): \mathrm{Ban}_k \to \mathrm{G-Ban}_k$ be the trivial module functor. The functor $_{\epsilon}(-)$ is left adjoint and we denote the right adjoint functor of $_{\epsilon}(-)$ by $(-)^G$. The functor $(-)^G$ is left exact and now we would like to consider the right-derived functors of $(-)^G$. This is a priori not possible, since $\mathrm{Ban}_k$ and $\mathrm{G-Ban}_k$ are not abelian categories. But they are exact categories, which is almost as good. The following small digression in exact categories is from \cite{Buehler-Exact}.

\begin{defn}
Let $\mathcal{A}$ be an additive category. A \emph{kernel-cokernel pair} $(i,p)$ in $\mathcal{A}$ is a pair of composable morphisms ${\xymatrix{A'\ar[r]^-{i} & A\ar[r]^-{p} & A''}}$ such that $i$ is a kernel of $p$ and $p$ is a cokernel of $i$. If a class $\mathcal{E}$ of kernel-cokernel pairs on $\mathcal{A}$ is fixed, an \emph{admissable monic} is a morphism $i$ for which there exists a morphism $p$ such that $(i,p)\in \mathcal{E}$.
We depict admissible monics by $\rightarrowtail$. If a class $\mathcal{E}$ of kernel-cokernel pairs on $\mathcal{A}$ is fixed, an \emph{admissable epic} is a morphism $p$ for which there exists a morphism $i$ such that $(i,p)\in \mathcal{E}$.
We depict admissible epics by $\twoheadrightarrow$. An \emph{exact structure} on $\mathcal{A}$ is a class $\mathcal{E}$ of kernel-cokernel pairs, which is closed under isomorphisms and satisfies the following axioms:

\begin{enumerate}
\item[$E0$] For all objects $A\in \mathcal{A}$, the identity morphism $1_A$ is an admissible monic.
\item[$E0^\text{op}$] For all objects $A\in \mathcal{A}$, the identity morphism $1_A$ is an admissable epic.
\item[$E1$] The class of admissible monics is closed under composition.
\item[$E1^\text{op}$] The class of admissible epics is closed under composition.
\item[$E2$] The push-out of an admissible monic along an arbitrary morphism exists and yields an admissible monic.
\item[$E2^\text{op}$] The pull-back of an admissible epic along an arbitrary morphism exists and yields an admissible epic.
\end{enumerate} An \emph{exact category} is a pair $(\mathcal{A},\mathcal{E})$ consisting of an additive category $\mathcal{A}$ and an exact strucure $\mathcal{E}$ on $\mathcal{A}$. Elements of $\mathcal{E}$ are called \emph{short exact sequences}.
\end{defn}

\begin{defn}
An object $I$ of an exact category $(\mathcal{A},\mathcal{E})$ is called \emph{injective} if every admissible monic $I\rightarrowtail A$ splits. An exact category $(\mathcal{A},\mathcal{E})$ has \emph{enough injectives} if for every object $A\in \mathcal{A}$ there exists an injective object $I$ and an admissible monic $A\rightarrowtail I$.
\end{defn}

\begin{prop}
If $(\mathcal{A},\mathcal{E})$ has enough injectives, then every object $A\in \mathcal{A}$ has an injective resolution $A\rightarrowtail I^\bullet$.
\end{prop}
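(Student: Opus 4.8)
The plan is to construct the resolution inductively, by the usual dimension-shifting argument, with element chases replaced by the formal properties of the exact structure recalled above. For the base case, since $(\mathcal{A},\mathcal{E})$ has enough injectives we may choose an injective object $I^0$ together with an admissible monic $\varepsilon\colon A\rightarrowtail I^0$. By the definition of an exact structure, $\varepsilon$ is the admissible monic of some short exact sequence; write it as $A\rightarrowtail I^0\twoheadrightarrow C^0$, where $p^0\colon I^0\twoheadrightarrow C^0$ is an admissible epic of which $\varepsilon$ is a kernel.

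For the inductive step, suppose an object $C^{k-1}\in\mathcal{A}$ has been constructed. Applying ``enough injectives'' to it yields an injective object $I^k$ and an admissible monic $j^k\colon C^{k-1}\rightarrowtail I^k$, which fits into a short exact sequence $C^{k-1}\rightarrowtail I^k\twoheadrightarrow C^k$ with admissible epic $p^k\colon I^k\twoheadrightarrow C^k$. Define $d^{k-1}\colon I^{k-1}\to I^k$ to be the composite $j^k\circ p^{k-1}$, where for the first step we put $p^{-1}=\mathrm{id}_A$, so that $C^{-1}=A$ and $d^{-1}=\varepsilon$. This produces the augmented sequence $0\to A\xrightarrow{\varepsilon}I^0\xrightarrow{d^0}I^1\xrightarrow{d^1}\cdots$ with every $I^k$ injective.

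It remains to check that this is a complex and that it is acyclic. For the complex property, $d^k\circ d^{k-1}=j^{k+1}\circ(p^k\circ j^k)\circ p^{k-1}$, and $p^k\circ j^k=0$ since $p^k$ is a cokernel of $j^k$; hence $d^k\circ d^{k-1}=0$. For acyclicity, since $j^{k+1}$ is a monic we get $\ker(d^k)=\ker(p^k)$, and by construction this kernel is $j^k\colon C^{k-1}\rightarrowtail I^k$; as $p^{k-1}$ is an admissible epic, $j^k$ is also the image (the admissible-monic part of the epic--monic factorization) of $d^{k-1}=j^k\circ p^{k-1}$. Therefore $\ker(d^k)=\operatorname{im}(d^{k-1})$ for all $k\ge 0$ and $\varepsilon=\ker(d^0)$; equivalently, the augmented sequence is obtained by splicing the short exact sequences $C^{k-1}\rightarrowtail I^k\twoheadrightarrow C^k$, which is exactly what it means for $A\rightarrowtail I^\bullet$ to be an injective resolution.

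There is no genuine obstacle here: the content is the standard inductive construction, and the only points requiring care are (i) to read $C^{-1}$ as $A$ in the base step, and (ii) to use, in place of diagram chases, the three formal facts that in $(\mathcal{A},\mathcal{E})$ every admissible monic has a cokernel and is a kernel of it, that a monic composed with its own cokernel is zero, and that precomposing an admissible monic with an admissible epic does not change its image. All three follow immediately from the definitions above, and the construction is carried out in detail in \cite{Buehler-Exact}.
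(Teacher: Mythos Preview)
Your proof is correct and is precisely the standard inductive splicing construction; the paper itself does not spell this out but simply cites \cite[Proposition~12.2]{Buehler-Exact}, where the same argument is carried out. So your approach agrees with the paper's (deferred) proof, only with more detail supplied.
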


\begin{proof}
See Proposition 12.2 in \cite{Buehler-Exact}.
\end{proof}

\begin{thm}
Let $(\mathcal{A},\mathcal{E})$ be an exact category with enough injectives and let $F:\mathcal{A}\rightarrow\mathcal{B}$ be an additive functor. Let $A$ be an object in $\mathcal{A}$ and let $A\rightarrowtail I^\bullet$ be an injective resolution. Then we can define the \emph{right derived functors} of $F$ as $R^iF(A)=H^i(F(I^\bullet))$.
\end{thm}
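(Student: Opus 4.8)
The content of this statement is that the prescription $R^iF(A)=H^i(F(I^\bullet))$ is legitimate: the resulting objects do not depend on the choice of injective resolution, and they are functorial in $A$. The plan is to derive both facts from one comparison lemma together with the observation that an additive functor respects the chain homotopy relation.

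First I would prove the comparison lemma in the exact-category setting: if $A\rightarrowtail I^\bullet$ and $A'\rightarrowtail J^\bullet$ are injective resolutions and $\varphi\colon A\to A'$ is a morphism, then there is a chain map $\tilde\varphi\colon I^\bullet\to J^\bullet$ lifting $\varphi$, and any two such lifts are chain homotopic. Both the lift and the homotopy are built by induction on the cohomological degree: at each stage one must extend a given morphism defined on an admissible subobject $Z^n\rightarrowtail I^n$ (the relevant cycle object coming from the source resolution) over $I^n$, with target an injective object $J^n$ of the other resolution. Such an extension exists by pushing the admissible monic $Z^n\rightarrowtail I^n$ out along the given morphism $Z^n\to J^n$, obtaining an admissible monic $J^n\rightarrowtail P$ by axiom $E2$, and then splitting it using injectivity of $J^n$.

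Next I would record that if $f,g\colon C^\bullet\to D^\bullet$ are chain maps with $f-g=\partial s+s\partial$ for maps $s$ of degree $-1$, then applying $F$ and using additivity and functoriality yields $F(f)-F(g)=\partial F(s)+F(s)\partial$, so $F(f)$ and $F(g)$ induce the same map on cohomology (taken in $\mathcal{B}$ in whatever sense $\mathcal{B}$ allows; for the categories relevant here this is $\ker/\operatorname{im}$). Combining the two ingredients: applying the comparison lemma to $\varphi=\mathrm{id}_A$ and two resolutions $I^\bullet,J^\bullet$ of the same object $A$ gives chain maps between them that are mutually inverse up to homotopy, so $F(I^\bullet)$ and $F(J^\bullet)$ are chain homotopy equivalent and $H^i(F(I^\bullet))\cong H^i(F(J^\bullet))$ via canonical isomorphisms, coherent for three resolutions; this makes $R^iF(A)$ well defined. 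For functoriality, an arbitrary $\varphi\colon A\to A'$ lifts, uniquely up to homotopy, to a chain map of chosen resolutions, hence induces a well-defined $R^iF(\varphi)$, and compatibility with composition and identities follows from that uniqueness.

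The only step that is not purely formal is the comparison lemma in the exact setting: without kernels, images and elements to manipulate freely, the inductive construction of the lift and of the homotopy has to be arranged so that at every stage the morphism to be extended is defined on an \emph{admissible} subobject and the target is injective, which rests on the structural axioms of exact categories (closure of admissible monics under composition and pushout) and on the properties of injective resolutions established in the preceding proposition; the careful verification is carried out in \cite{Buehler-Exact}. Granting it, the remaining arguments are the classical ones and carry over verbatim from the abelian case.
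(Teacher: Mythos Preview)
Your sketch is correct and follows the standard approach: comparison lemma for injective resolutions (built by iterated extension over admissible monics using pushout axiom $E2$ and injectivity), preservation of chain homotopies by additive functors, hence independence of the resolution and functoriality. The paper itself does not give an argument at all---its entire proof is the citation ``See Remark 12.11 in \cite{Buehler-Exact}''---so your proposal is in fact more informative than the original, and it is exactly the content one finds when following that reference.
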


\begin{proof}
See Remark 12.11 in \cite{Buehler-Exact}.
\end{proof}

Now we can apply the theory of exact categories to our situation: Let $\mathcal{E}_{\text{max}}$ be the class of all kernel-cokernel pairs in $\mathrm{Ban}_k$ and let $\mathcal{E}^G_{\text{rel}}$ be the class of all kernel-cokernel pairs $(i,p)$ in $\mathrm{G-Ban}_k$ such that $(i,p)$ splits in $\mathrm{Ban}_k$. Then $(\mathrm{Ban}_k,\mathcal{E}_{\text{max}})$ and $(\mathrm{G-Ban}_k,\mathcal{E}^G_{\text{rel}})$ are exact categories. The exact category $(\mathrm{G-Ban}_k,\mathcal{E}^G_{\text{rel}})$ has enough injectives, so we can define right derived functors.

\begin{defn}
The \emph{bounded group cohomology} of $G$ with coefficients in a $G$-Banach space $V$ is defined by \[H_b^n(G,V)=R^n(-)^G(V),\] where $(-)^G: (\mathrm{G-Ban}_k,\mathcal{E}^G_{\text{rel}}) \to (\mathrm{Ban}_k,\mathcal{E}_{\text{max}})$ is the right adjoint of the trivial module functor $_{\epsilon}(-): (\mathrm{Ban}_k,\mathcal{E}_{\text{max}}) \to (\mathrm{G-Ban}_k,\mathcal{E}^G_{\text{rel}})$. 
\end{defn}

\begin{prop}
This categorical definition agrees with the topological one if $k=\mathbb{R}$ and $V=\mathbb{R}$ with the trivial $G$ action.
\end{prop}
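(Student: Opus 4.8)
The plan is to identify both definitions with the cohomology of a single complex — the \emph{bounded bar complex} — and then to appeal to the uniqueness of right derived functors in B\"uhler's framework, i.e.\ the relative homological-algebra ``fundamental lemma'': a $G$-morphism between coefficient modules extends to a chain map from a strong resolution of the source to a relatively injective resolution of the target, uniquely up to $G$-homotopy (see \cite{Buehler-Exact,Buehler-Foundations}). Everything thus reduces to producing two good resolutions of $_{\epsilon}(\mathbb{R})$ and checking that they are good in the precise exact-category sense.

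First I would treat the categorical definition. Consider the homogeneous bounded bar complex $\ell^\infty(G^{\bullet+1},\mathbb{R})$, the subcomplex of bounded functions in the homogeneous bar complex, with the diagonal isometric $G$-action, augmented by the constants $\mathbb{R}\hookrightarrow\ell^\infty(G,\mathbb{R})$. This is a resolution of $_{\epsilon}(\mathbb{R})$ in $(\mathrm{G-Ban}_\mathbb{R},\mathcal{E}^G_{\text{rel}})$ by relatively injective objects: it is \emph{strong} because the usual contracting homotopy $h^nf(g_1,\dots,g_n)=f(e,g_1,\dots,g_n)$ is $\mathbb{R}$-linear and norm non-increasing, so every differential sits in a kernel-cokernel pair splitting in $\mathrm{Ban}_\mathbb{R}$, hence in $\mathcal{E}^G_{\text{rel}}$; and each $\ell^\infty(G^{n+1},\mathbb{R})\cong\ell^\infty\bigl(G,\ell^\infty(G^n,\mathbb{R})\bigr)$ is $\ell^\infty$-induced from the trivial subgroup, and such induced modules are injective in this exact category (the exact-category phrasing of Ivanov's classical relative-injectivity lemma). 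Hence $H^n_b(G,\mathbb{R})=R^n(-)^G(\mathbb{R})$ is the cohomology of $\ell^\infty(G^{\bullet+1},\mathbb{R})^G$, which via the untwisting isomorphism $\mathrm{Map}(G^{n+1},V)^G\cong\mathrm{Map}(G^n,V)$ of Corollary \ref{inhom} — an isometry on bounded cochains — is the inhomogeneous bounded bar complex $\bigl(\ell^\infty(G^\bullet,\mathbb{R}),\partial^\bullet\bigr)$.

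Next the topological definition. Let $p\colon EG\to BG$ be a universal cover, so $EG$ is contractible and $G$ acts freely. I would show $B^\bullet(EG)=\ell^\infty(S_\bullet(EG),\mathbb{R})$ with the induced $G$-action is again a strong, relatively injective resolution of $_{\epsilon}(\mathbb{R})$: strongness comes from feeding a contraction of $EG$ to a point through the prism/cone construction on singular chains, yielding an $\mathbb{R}$-linear contracting cochain homotopy that is a finite sum of face-type operators in each degree, hence bounded; relative injectivity comes from freeness of the $G$-action on $S_n(EG)$ (a deck transformation fixing a simplex fixes one of its points), so after choosing orbit representatives $B^n(EG)\cong\ell^\infty\bigl(G,\ell^\infty(S_n(EG)/G,\mathbb{R})\bigr)$ is once more $\ell^\infty$-induced from the trivial subgroup. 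Since singular simplices of $BG$ lift to $EG$ uniquely up to the $G$-action, $S_n(EG)/G\cong S_n(BG)$, so pullback along $p$ identifies $B^\bullet(BG)$ isometrically with $B^\bullet(EG)^G$; thus $H^\bullet_b(BG,\mathbb{R})=H^\bullet\bigl(B^\bullet(BG)\bigr)=H^\bullet\bigl(B^\bullet(EG)^G\bigr)$. Comparing the two strong relatively injective resolutions $\ell^\infty(G^{\bullet+1},\mathbb{R})$ and $B^\bullet(EG)$ of $_{\epsilon}(\mathbb{R})$ by the fundamental lemma gives a canonical isomorphism $H^\bullet\bigl(B^\bullet(EG)^G\bigr)\cong H^\bullet\bigl(\ell^\infty(G^{\bullet+1},\mathbb{R})^G\bigr)$ independent of all auxiliary choices, and combining with the previous paragraph identifies the topological and categorical bounded cohomologies.

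The main obstacle I anticipate is not conceptual but lies in carefully matching B\"uhler's definitions to the classical relative homological algebra of bounded cohomology: one must verify that the candidate complexes really are \emph{admissible} resolutions — that each augmentation and differential fits into a kernel-cokernel pair splitting in $\mathrm{Ban}_\mathbb{R}$, which is exactly where the explicit, manifestly bounded contracting homotopies are needed — and that the $\ell^\infty$-induced modules $\ell^\infty(G,E)$ are genuinely injective objects of $(\mathrm{G-Ban}_\mathbb{R},\mathcal{E}^G_{\text{rel}})$ (the usual argument: extend a $G$-morphism into $\ell^\infty(G,E)$ by evaluating at the identity, precompose with a bounded $\mathbb{R}$-linear splitting of the admissible monic, and re-average to land back among $G$-morphisms). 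Once these formal points are in place the comparison of resolutions is automatic, and nothing further is needed for $G=F_n$, where $BF_n$ is a rose.
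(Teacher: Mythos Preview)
Your proposal is correct and is precisely the Ivanov--B\"uhler argument that the paper defers to by citing \cite{Buehler-Exact} and \cite{Ivanov}; the paper gives no proof of its own beyond those references. Your outline faithfully reconstructs the content of those citations: the bounded bar complex and $B^\bullet(EG)$ are both strong relatively injective resolutions of $_{\epsilon}(\mathbb{R})$ in $(\mathrm{G\text{-}Ban}_\mathbb{R},\mathcal{E}^G_{\text{rel}})$, and the fundamental lemma compares them.
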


\begin{proof}
See Proposition 11.3 in \cite{Buehler-Exact} and \cite{Ivanov}.
\end{proof}

\subsubsection*{Bar complexes}

Let $V$ be an object of $\mathrm{G-Ban}_k$. Then we can find a subcomplex of the homogeneous and inhomogeneous bar complex that computes bounded cohomology.

\begin{defn}
The \emph{bounded homogeneous bar complex for $H_b^n(G,V)$} is \[{\xymatrix{0\ar[r] & \mathrm{Map}_b(G,V)^G\ar[r]^-{d^0} & \mathrm{Map}_b(G^2,V)^G\ar[r]^-{d^1} & \mathrm{Map}_b(G^3,V)^G\ar[r]^-{d^2} & ... }},\] where \begin{equation}
d^n f(g_0,...,g_{n+1})=\sum_{i=0}^{n+1} (-1)^i f(g_0,...,\hat{g_i},...,g_{n+1}).
\end{equation}
\end{defn}

\begin{cor}
We can compute the bounded cohomology of a group $G$ using the bounded homogeneous bar complex:\[H_b^\bullet(G,V)=H^\bullet(\mathrm{Map}_b(G^\bullet,V),d^\bullet)\]
\end{cor}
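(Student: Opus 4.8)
The plan is to run, inside B\"uhler's exact category $(\mathrm{G-Ban}_k,\mathcal{E}^G_{\text{rel}})$, the same argument that in the unbounded case shows the homogeneous bar resolution computes group cohomology (cf.\ Section~I.5 of \cite{Brown}): realize the augmented bounded homogeneous bar complex as an injective resolution of $V$ and then apply the functor $(-)^G$. Concretely, set $C^n=\mathrm{Map}_b(G^{n+1},V)$ with the diagonal $G$-action, so each $C^n$ is an object of $\mathrm{G-Ban}_k$, the maps $d^n$ are bounded and $G$-equivariant, and $(C^\bullet,d^\bullet)$ is a complex in $\mathrm{G-Ban}_k$. The augmentation $\varepsilon\colon V\to C^0$, $\varepsilon(v)=(g\mapsto v)$, is an isometric $G$-embedding onto the closed subspace of constant functions, it satisfies $d^0\varepsilon=0$, and $f\mapsto f(e)$ is a bounded linear retraction of it, so $\varepsilon$ is an admissible monic.

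The first real task is to show that the augmented complex $0\to V\xrightarrow{\varepsilon}C^0\xrightarrow{d^0}C^1\xrightarrow{d^1}\cdots$ is $\mathcal{E}^G_{\text{rel}}$-exact. For this I would write down the standard simplicial contraction $h^0\colon C^0\to V$, $h^0 f=f(e)$, and $h^n\colon C^n\to C^{n-1}$, $(h^n f)(g_0,\dots,g_{n-1})=f(e,g_0,\dots,g_{n-1})$ for $n\ge 1$. A direct computation --- needing no sign corrections, since inserting $e$ in the first slot meshes cleanly with the alternating sum defining $d^n$ --- yields $h^0\varepsilon=\mathrm{id}_V$ and $d^{n-1}h^n+h^{n+1}d^n=\mathrm{id}_{C^n}$ for all $n\ge 0$. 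Each $h^n$ is bounded $k$-linear of norm at most $1$; it is merely not $G$-equivariant, which is exactly what $\mathcal{E}^G_{\text{rel}}$ allows. Hence the augmented complex is contractible in $(\mathrm{Ban}_k,\mathcal{E}_{\text{max}})$, so every kernel-cokernel pair occurring in it splits as a sequence of Banach spaces and therefore lies in $\mathcal{E}^G_{\text{rel}}$.

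It remains to check that each $C^n$ is an injective object of $(\mathrm{G-Ban}_k,\mathcal{E}^G_{\text{rel}})$ --- the only step that is not purely formal. As in the unbounded case this is done by recognizing $C^n=\mathrm{Map}_b(G^{n+1},V)$, up to $G$-isometry, as a module of the coinduced type $\mathrm{Map}_b(G,W)$ with $W\in\mathrm{G-Ban}_k$, and then solving the extension problem for such modules directly: given an admissible monic $A\rightarrowtail B$, a bounded linear retraction $\sigma$ of it, and a $G$-morphism $\alpha\colon A\to\mathrm{Map}_b(G,W)$, one builds a $G$-equivariant extension $B\to\mathrm{Map}_b(G,W)$ by the usual evaluation-at-the-identity-and-translate construction. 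This is the bounded analogue of the injectivity of coinduced modules; in B\"uhler's setting the relative injectivity of the bar modules is part of the package (\cite{Ivanov}, \cite{Buehler-Exact}, \cite{Buehler-Foundations}). Once $(C^\bullet,d^\bullet)$ is established as an injective resolution of $V$, the comparison theorem for right derived functors in exact categories (\cite{Buehler-Exact}) gives $H^\bullet_b(G,V)=R^\bullet(-)^G(V)=H^\bullet\bigl((C^\bullet)^G,d^\bullet\bigr)$, and since $(-)^G$ is additive and, on a function module, returns the $G$-invariant functions, the right-hand side equals $H^\bullet\bigl(\mathrm{Map}_b(G^\bullet,V)^G,d^\bullet\bigr)$, as asserted. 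I expect the only mild obstacle to be pinning down the coinduction isomorphism and the explicit extension for $C^n$ in the exact-category language; the rest (signs in the homotopy identity, closedness of $\varepsilon(V)$, interchanging $(-)^G$ with the complex) is routine transcription of the classical argument.
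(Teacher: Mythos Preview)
Your proposal is correct and follows exactly the approach contained in the references the paper cites: the paper's own proof is simply ``See Proposition 11.3 in \cite{Buehler-Exact} and Lemma 3.2.2 in \cite{Ivanov}'', and what you have written is a faithful unpacking of that argument (contracting homotopy giving $\mathcal{E}^G_{\text{rel}}$-exactness, relative injectivity of the coinduced modules, comparison theorem for derived functors in exact categories). There is nothing to add.
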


\begin{proof}
See Proposition 11.3 in \cite{Buehler-Exact} and Lemma 3.2.2 in \cite{Ivanov}.
\end{proof}

\begin{defn}
The \emph{bounded inhomogeneous bar complex for $H_b^n(G,V)$} is \[{\xymatrix{0\ar[r] & \mathrm{Map}_{b}(G^0,V)\ar[r]^-{\partial^0} & \mathrm{Map}_{b}(G^1,V)\ar[r]^-{\partial^1} & \mathrm{Map}_{b}(G^2,V)\ar[r]^-{\partial^2} & \cdots }},\] where \begin{equation}\begin{split}
\partial^n f(g_0,...,g_n)=\
&g_0f(g_1,...,g_n)+(-1)^{n+1}f(g_0,...,g_{n-1})\\
&-\sum_{j=0}^{n-1} (-1)^{j+1} f(g_0,...,g_jg_{j+1},...,g_n).
\end{split}\end{equation}
\end{defn}

\begin{cor}
We can compute the bounded cohomology of a group $G$ using the bounded inhomogeneous bar complex:\[H_b^\bullet(G,V)=H^\bullet(\mathrm{Map}_b(G^{\bullet-1},V),\partial^{\bullet-1}).\]
\end{cor}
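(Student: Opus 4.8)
The plan is to imitate the proof of Corollary~\ref{inhom} and to transport it verbatim into the bounded world. By the preceding corollary on the bounded homogeneous bar complex, $H^\bullet_b(G,V)$ is already computed by $(\mathrm{Map}_b(G^{\bullet+1},V)^G,d^\bullet)$, so it suffices to exhibit, in each degree $n$, an isomorphism $\mathrm{Map}_b(G^{n+1},V)^G\cong\mathrm{Map}_b(G^n,V)$ which intertwines the homogeneous differential $d^n$ with the inhomogeneous differential $\partial^n$; passing to cohomology then yields the claim (with the same degree convention as in the unbounded statement).

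First I would recall the explicit chain isomorphism from the unbounded case. For $F\in\mathrm{Map}(G^{n+1},V)^G$ set
\[(\phi_nF)(g_1,\dots,g_n)=F(e,g_1,g_1g_2,\dots,g_1g_2\cdots g_n),\]
and conversely, for $f\in\mathrm{Map}(G^n,V)$ set
\[(\psi_nf)(g_0,\dots,g_n)=g_0\cdot f(g_0^{-1}g_1,g_1^{-1}g_2,\dots,g_{n-1}^{-1}g_n).\]
As recorded in Section~I.5 of \cite{Brown}, $\psi_nf$ is $G$-invariant, $\phi_n$ and $\psi_n$ are mutually inverse $k$-linear maps, and they intertwine $d^\bullet$ with $\partial^\bullet$; none of this needs to be redone here.

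The only new ingredient is that $\phi_n$ and $\psi_n$ respect boundedness, and this is precisely where the isometric $G$-action is used. Since $G$ acts on $V$ by isometries, $\|(\psi_nf)(g_0,\dots,g_n)\|=\|f(g_0^{-1}g_1,\dots,g_{n-1}^{-1}g_n)\|$, and the substitution $(g_0,\dots,g_n)\mapsto(g_0^{-1}g_1,\dots,g_{n-1}^{-1}g_n)$ is a surjection onto $G^n$ (take $g_0=e$ and solve recursively); hence $\sup\|\psi_nf\|=\sup\|f\|$, so $\psi_n$ restricts to an isometry $\mathrm{Map}_b(G^n,V)\to\mathrm{Map}_b(G^{n+1},V)^G$. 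On the other hand $\phi_nF$ takes only values already taken by $F$, so $\sup\|\phi_nF\|\le\sup\|F\|$; thus $\phi_\bullet$ and $\psi_\bullet$ restrict to mutually inverse isomorphisms of the bounded homogeneous bar complex with the bounded inhomogeneous one, and taking cohomology finishes the proof. I do not anticipate a genuine obstacle; the only point requiring care is the boundedness/isometry statement for $\psi_n$ — which is exactly the place where working with isometric representations rather than arbitrary ones pays off — together with keeping the degree shift consistent with Corollary~\ref{inhom}.
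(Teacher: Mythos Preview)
Your proposal is correct and follows exactly the approach indicated in the paper: the paper's proof is the single sentence ``The isomorphism used in the proof of \ref{inhom} induces an isomorphism between $\mathrm{Map}_{b}(G^{n+1},V)^G$ and $\mathrm{Map}_{b}(G^n,V)$,'' and you have simply written out explicitly what that isomorphism is and why it restricts to the bounded subcomplexes. The only additional content you supply is the verification that $\phi_n$ and $\psi_n$ preserve boundedness via the isometry assumption, which is precisely the detail the paper leaves implicit.
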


\begin{proof}
The isomorphism used in the proof of \ref{inhom} induces an isomorphism between $\mathrm{Map}_{b}(G^{n+1},V)^G$ and $\mathrm{Map}_{b}(G^n,V)$.
\end{proof}

\subsubsection*{Low degrees}

The bounded inhomogeneous bar complex is a very useful tool to compute bounded group cohomology in low degrees. We get:\begin{enumerate}
\item $H^0(G,\mathbb{R})=\ker(\partial^0)=\mathbb{R}$,
\item $H^1(G,\mathbb{R})=\ker(\partial^1)/\text{im}(\partial^0)=\mathrm{Hom}_b(G,\mathbb{R})=0$.
\end{enumerate} Since we are interested in $H_b^2(F_n,\mathbb{R})$, we will consider the group theoretic interpretation of $H_b^2(G,\mathbb{R})$ carefully. The space is $H_b^2(G,\mathbb{R})$ is related to the space of \emph{quasimorphisms} on $G$.

\begin{defn}
The chain map of inclusions $\mathrm{Map}_b(G^\bullet,\mathbb{R})\hookrightarrow \mathrm{Map}(G^\bullet,\mathbb{R})$ induces a map on the level of cohomology: \[c^\bullet: H_b^\bullet(G,\mathbb{R})\to H^\bullet(G,\mathbb{R}), [f]_b \mapsto [f].\] This map is called the \emph{comparison map}.
\end{defn}

One of the main results in the study of bounded cohomology is the fact that the comparison map is in general neither injective nor surjective. It is easy to see in an example that $c^\bullet$ does not have to be surjective.

\begin{exmp}
The map $c^1: H_b^1(\mathbb{Z},\mathbb{R})\to H^\bullet(\mathbb{Z},\mathbb{R})$ is not surjective. We already saw that $H_b^1(\mathbb{Z},\mathbb{R})=\mathrm{Hom}_b(\mathbb{Z},\mathbb{R})=0$ and $H^1(\mathbb{Z},\mathbb{R})=\mathrm{Hom}(\mathbb{Z},\mathbb{R})\neq 0$.
\end{exmp}

To find an example in which $c^\bullet$ is not injective is harder. What helps us is that we can find a nice description of the kernel of $c^2$. Here quasimorphisms enter the picture.

\begin{defn}
A function $f$ on a group $G$ is called a \emph{quasimorphism} if \begin{equation} \lVert \partial^1f \rVert_\infty = \sup_{g,h\in G} \lvert f(gh) - f(g) - f(h)\rvert =D(f)< \infty. \end{equation} $D(f)$ is called the \emph{defect} of the quasimorphism $f$. We denote by $\mathcal{Q}(G)$ the space of quasimorphisms on a group $G$. We define an equivalence relation on $\mathcal{Q}(G)$ by \[f_1\sim f_2\ \text{if and only if}\ \lVert f_1-f_2 \rVert_\infty < \infty.\] We denote by $\tilde{\mathcal{Q}}(G)=\mathcal{Q}(G)/\sim$ the space of equivalence classes. 
\end{defn}

Note that $f\sim 0$ for $f\in \mathcal{Q}(G)$ if and only if $f\in \mathrm{Map}_b(G,\mathbb{R})$. Instead of looking at equivalence classes of quasimorphisms, one can also pick a suitable representative for every equivalence class and consider the space of representatives.

\begin{defn}
A quasimorphism is called \emph{homogeneous} if for every $g\in G$ and $n\in \mathbb{N}$ we have $f(g^n)=nf(g)$.
\end{defn}

\begin{prop}\label{homquasi}
In every equivalence class $[f]\in \tilde{\mathcal{Q}}(G)$ there is exactly one homogeneous quasimorphism $\tilde{f}$. So we can identify $\tilde{\mathcal{Q}}(G)$ with the space of homogeneous quasimorphisms.
\end{prop}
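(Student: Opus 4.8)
The plan is to produce, inside each class $[f]\in\tilde{\mathcal{Q}}(G)$, a canonical homogeneous representative by the standard homogenization formula, and then to prove uniqueness by an elementary estimate. Given $f\in\mathcal{Q}(G)$, I would set
\[
\tilde f(g):=\lim_{n\to\infty}\frac{f(g^n)}{n}
\]
and check, in order: (a) the limit exists and is finite; (b) $\lVert \tilde f-f\rVert_\infty<\infty$, so $\tilde f\sim f$; (c) $\tilde f$ is again a quasimorphism; (d) $\tilde f$ is homogeneous; (e) two homogeneous quasimorphisms lying in the same class are equal. Points (a)--(d) show that $[f]$ contains a homogeneous representative, and (e) shows it is unique; since the limit defining $\tilde f$ does not change if $f$ is altered by a bounded function, $\tilde f$ depends only on $[f]$, and $[f]\mapsto\tilde f$ is the desired identification.

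For (a), the point is that $a_n:=f(g^n)$ is \emph{almost additive}: the definition of the defect gives $\lvert a_{m+n}-a_m-a_n\rvert\le D(f)$ for all $m,n\in\mathbb{N}$. Hence $a_n+D(f)$ is subadditive and $a_n-D(f)$ is superadditive, and Fekete's subadditivity lemma gives that $\tfrac{a_n+D(f)}{n}$ and $\tfrac{a_n-D(f)}{n}$ both converge to a common value (they differ by $\tfrac{2D(f)}{n}\to0$), namely $\tilde f(g)$. Telescoping $\lvert a_{k+1}-a_k-a_1\rvert\le D(f)$ yields $\lvert a_n-na_1\rvert\le(n-1)D(f)$, hence $\bigl\lvert \tfrac{a_n}{n}-f(g)\bigr\rvert\le D(f)$; letting $n\to\infty$ this proves that the limit in (a) is finite and simultaneously gives $\lvert \tilde f(g)-f(g)\rvert\le D(f)$, which is (b). Then (c) is automatic: $\tilde f-f$ is bounded, so $\partial^1\tilde f=\partial^1 f+\partial^1(\tilde f-f)$ is bounded and $\tilde f\in\mathcal{Q}(G)$ (one even gets $D(\tilde f)\le 2D(f)$, but no explicit bound is needed here). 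For (d), observe that $\bigl(\tfrac{f(g^{kn})}{kn}\bigr)_n$ is a subsequence of $\bigl(\tfrac{f(g^j)}{j}\bigr)_j$, so for $k\in\mathbb{N}$
\[
\tilde f(g^k)=\lim_{n\to\infty}\frac{f((g^k)^n)}{n}=k\lim_{n\to\infty}\frac{f(g^{kn})}{kn}=k\,\tilde f(g).
\]

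For uniqueness (e): if $h_1,h_2$ are homogeneous quasimorphisms with $\lVert h_1-h_2\rVert_\infty=C<\infty$, then for every $g\in G$ and $n\in\mathbb{N}$,
\[
n\lvert h_1(g)-h_2(g)\rvert=\lvert h_1(g^n)-h_2(g^n)\rvert\le C,
\]
so $\lvert h_1(g)-h_2(g)\rvert\le C/n$ for all $n$, forcing $h_1=h_2$. The whole argument is elementary; the only step that needs a moment's thought is the existence and finiteness of the limit in (a), where the trick is to apply Fekete's lemma to the two shifted sequences $a_n\pm D(f)$ and to use the telescoping bound to keep the limit finite. Everything else is bookkeeping.
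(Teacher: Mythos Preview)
Your proof is correct and follows essentially the same four-step outline as the paper's proof: existence of the limit via Fekete, bounded distance to $f$, homogeneity via the subsequence trick, and uniqueness via the $n\lvert h_1(g)-h_2(g)\rvert\le C$ estimate. The only cosmetic difference is in step~(a): the paper reduces to the case $f(g)\ge 0$ and applies Fekete to the single shifted sequence $f(g^n)+(n+1)D(f)$, whereas you apply Fekete to both $a_n+D(f)$ (subadditive) and $a_n-D(f)$ (superadditive) and use that their difference tends to zero to pin down a finite common limit --- this is arguably cleaner since it avoids the WLOG, but it is the same idea.
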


\begin{proof}
We prove the proposition in four steps: Let $f\in \mathcal{Q}(G)$. Then \begin{enumerate}
\item $\tilde{f}(g)=\lim_{n}\frac{f(g^n)}{n}$ exists for every $g\in G$.
\item $\lVert \tilde{f}-f \rVert_\infty < \infty$. In particular $\tilde{f}$ is a quasimorphism.
\item $\tilde{f}$ is homogeneous.
\item $\tilde{f}$ is the unique homogeneous quasimorphism such that $\tilde{f}\sim f$.
\end{enumerate} Together these points prove the proposition.
\begin{enumerate}
\item Since $f$ is a quasimorphism, we know that $\lvert f(g^n)-nf(g)\rvert\leq (n-1)D(f)$ for every $g\in G$. So we know that $-(n-1)D(f)\leq f(g^n)-nf(g)\leq (n-1)D(f)$. After switching $f$ to $-f$ we can assume wlog that $f(g)\geq 0$. (If $\tilde{-f}$ exists, so does $\tilde{f}$.) Define $a_n=f(g^n)+(n+1)D(f)$. Then we have $a_n\geq nf(g)+2D(f)\geq 0$ and \begin{equation}\begin{split}
a_{m+n}=& f(g^{m+n})+(m+n+1)D(f)\\
\leq& f(g^m)+f(g^n)+D(f)+(m+n+1)D(f)\\
=& f(g^m)+(m+1)D(f)+f(g^n)+(n+1)D(f)\\
=& a_m+a_n.
\end{split}\end{equation} So we can apply Fekete's Lemma and get that $\lim_n \frac{a_n}{n}$ exists. Then it follows that $\tilde{f}(g)=\lim_n \frac{a_n}{n}-\frac{n+1}{n}D(f)$ exists as well.
\item For every $g\in G$ and $n\in \mathbb{N}$ we have $\lvert \frac{f(g^n)}{n}-f(g) \rvert = \frac{1}{n}\lvert f(g^n)-nf(g)\rvert\leq \frac{n-1}{n} D(f)$. So taking the limit we get that $\lVert \tilde{f}-f \rVert_\infty\leq D(f)<\infty$. 

Now we prove in general that $f_2$ is a quasimorphism, if $f_1$ is a quasimorphism and $\lVert f_1-f_2 \rVert_\infty$. This follows, since\begin{equation}\begin{split}
\lvert f_2(gh)-f_2(g)-f_2(h)\rvert=&\lvert f_2(gh)-f_1(gh)+f_1(gh)\\
&-f_2(g)-f_1(g)+f_1(g)\\
&-f_2(h)-f_1(h)+f_1(h)\rvert\\
\leq& \lvert f_1(gh)-f_1(g)-f_1(h)\rvert\\
&+ \lvert f_2(gh)-f_1(gh)\rvert\\
&+ \lvert f_2(g)-f_1(g)\rvert\\
&+\lvert f_2(h)-f_1(h)\rvert\\
\leq& D(f_1)+3\lVert f_1-f_2 \rVert_\infty\\
<&\infty.
\end{split}\end{equation}
\item Let $g\in G$ and $k\in \mathbb{N}$. Then we have $\tilde{f}(g^k)=\lim_{n}\frac{f(g^{kn})}{n}=k\lim_{n}\frac{f(g^{kn})}{kn}=k\tilde{f}(g)$.
\item Assume that $f_1$ and $f_2$ are homogeneous quasimorphisms and $f_1\sim f_2$. Then for every $g\in G$ and every $n\in \mathbb{N}$ we get that $\lvert f_1(g^n) - f_2(g^n)\rvert=n\lvert f_1(g) - f_2(g)\rvert<\infty$. So we have $\lvert f_1(g) - f_2(g)\rvert=0$ for every $g\in G$, so $f_1(g)=f_2(g)$.
\end{enumerate}
\end{proof}

In particular \ref{homquasi} shows us that different homomorphisms are in different equivalence classes. So in a slight abuse of notation we can consider $\mathrm{Hom}(G,\mathbb{R})$ as a subspace of $\tilde{\mathcal{Q}}(G)$. We will do so several times throughout this article. Now we finally explain the connection between quasimorphisms and (bounded) group cohomology.

\begin{prop}\label{iso}
The map \[\tilde{q}: \tilde{\mathcal{Q}}(G)/\mathrm{Hom}(G,\mathbb{R}) \xrightarrow{\sim} \ker(c^2), [f] \mapsto [\partial^1 f]_b\] is an isomorphism.
\end{prop}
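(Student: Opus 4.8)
The plan is to establish the isomorphism directly at the cochain level of the bounded and unbounded inhomogeneous bar complexes, using two facts already recorded above: by definition a $1$-cochain $f\in\mathrm{Map}(G,\mathbb{R})$ lies in $\mathcal{Q}(G)$ precisely when $\partial^1 f\in\mathrm{Map}_b(G^2,\mathbb{R})$ (indeed $\lVert\partial^1 f\rVert_\infty=D(f)$), and $\ker(\partial^1)=\mathrm{Hom}(G,\mathbb{R})$ inside $\mathrm{Map}(G,\mathbb{R})$ (from the low-degree computation, since $\mathrm{im}(\partial^0)=0$ for trivial $\mathbb{R}$-coefficients). With these in hand I would check in turn that $\tilde{q}$ is (i) a well-defined linear map landing in $\ker(c^2)$, (ii) injective, and (iii) surjective.

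For (i): if $f\in\mathcal{Q}(G)$ then $\partial^1 f$ is a bounded $2$-cochain which is automatically a cocycle (as $\partial^2\partial^1=0$), so $[\partial^1 f]_b\in H^2_b(G,\mathbb{R})$ is defined; if $f\sim f'$ then $\partial^1 f-\partial^1 f'=\partial^1(f-f')$ is the coboundary of a \emph{bounded} $1$-cochain, so $[\partial^1 f]_b=[\partial^1 f']_b$; and if $\phi\in\mathrm{Hom}(G,\mathbb{R})$ then $\partial^1\phi=0$, so the map descends to $\tilde{\mathcal{Q}}(G)/\mathrm{Hom}(G,\mathbb{R})$; finally $\partial^1 f$ is an unbounded coboundary, so $c^2([\partial^1 f]_b)=[\partial^1 f]=0$ in $H^2(G,\mathbb{R})$, i.e. the image lies in $\ker(c^2)$. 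Linearity is immediate from linearity of $\partial^1$. For (ii): if $\tilde{q}([f])=0$, then $\partial^1 f=\partial^1 b$ for some bounded $1$-cochain $b$, hence $f-b\in\ker(\partial^1)=\mathrm{Hom}(G,\mathbb{R})$; since $b$ is bounded, $[f]=[f-b]$ in $\tilde{\mathcal{Q}}(G)$ lies in the image of $\mathrm{Hom}(G,\mathbb{R})$, so $[f]$ is trivial in the quotient.

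Step (iii) is the only one with any content: given $\alpha\in\ker(c^2)$, represent it by a bounded $2$-cocycle $\omega$; since $c^2(\alpha)=[\omega]=0$ in $H^2(G,\mathbb{R})$, there is an \emph{a priori unbounded} $1$-cochain $f$ with $\partial^1 f=\omega$, and boundedness of $\omega$ then forces $f\in\mathcal{Q}(G)$, whence $\tilde{q}([f])=[\partial^1 f]_b=[\omega]_b=\alpha$. That is the whole argument, and I do not expect a genuine obstacle: everything rests on the dictionary ``quasimorphism $\leftrightarrow$ $1$-cochain with bounded coboundary, homomorphism $\leftrightarrow$ $1$-cocycle'' which is already in place. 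If a more structural packaging is preferred, the same statement is the connecting-homomorphism portion of the long exact sequence in cohomology attached to the short exact sequence of complexes $0\to\mathrm{Map}_b(G^\bullet,\mathbb{R})\to\mathrm{Map}(G^\bullet,\mathbb{R})\to\mathrm{Map}(G^\bullet,\mathbb{R})/\mathrm{Map}_b(G^\bullet,\mathbb{R})\to0$, where $H^1_b(G,\mathbb{R})=0$ identifies $\ker(c^2)$ with the cokernel of $H^1(G,\mathbb{R})=\mathrm{Hom}(G,\mathbb{R})\to H^1$ of the quotient complex, the latter being $\tilde{\mathcal{Q}}(G)$.
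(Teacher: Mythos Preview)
Your proof is correct and follows essentially the same approach as the paper: both arguments work at the cochain level, checking that $\partial^1 f$ is a bounded cocycle with trivial image under $c^2$, that surjectivity follows by lifting a bounded coboundary-in-$H^2$ to an unbounded $1$-cochain (automatically a quasimorphism), and that the kernel is accounted for by $\mathrm{Map}_b(G,\mathbb{R})\oplus\mathrm{Hom}(G,\mathbb{R})$. The only organizational difference is that the paper defines $q$ on $\mathcal{Q}(G)$ and computes its kernel at the end, whereas you build the quotient structure into the well-definedness step and then verify injectivity directly; your closing remark packaging this via the long exact sequence of $0\to\mathrm{Map}_b\to\mathrm{Map}\to\mathrm{Map}/\mathrm{Map}_b\to0$ is a nice structural addition not present in the paper.
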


\begin{proof}
We consider the map $q:\mathcal{Q}(G)\to \ker(c^2), f\mapsto [\partial^1 f]_b$. The statement follows if we can prove that $q$ is well defined, surjective and $\ker q=\mathrm{Map}_b(G,\mathbb{R})\oplus \mathrm{Hom}(G,\mathbb{R})$.

We start by proving that $q$ is well defined. Let $f\in \mathcal{Q}(G)$. Then $\partial^1 f$ is bounded, i.e. $\partial^1 f\in \mathrm{Map}_b(G^2,\mathbb{R})$. We have $\partial^2\circ\partial^1=0$, so $\partial^1 f\in \ker(\partial^2)$. So we have $[\partial^1 f]_b\in H_b^2(G,\mathbb{R})$. We get $c^2([\partial^1 f]_b)=[\partial^1 f]=0\in H^2(G,\mathbb{R})$, so $q$ is well defined.

We now prove that $q$ is surjective. Let $[f]_b\in \ker (c^2)$. Then $[f]=0\in H^2(G,\mathbb{R})$, so there is $h\in \mathrm{Map}(G,\mathbb{R})$ such that $\partial^1 h=f$. We get $\lVert \partial^1h \rVert_\infty=\lVert f \rVert_\infty<\infty$, since $f\in \mathrm{Map}_b(G^2,\mathbb{R})$. So $h\in \mathcal{Q}(G)$ and we get $q(h)=[f]_b$.

To finish the proof, we have to show that $\ker q=\mathrm{Map}_b(G,\mathbb{R})\oplus \mathrm{Hom}(G,\mathbb{R})$. We start by proving $\ker q\subset \mathrm{Map}_b(G,\mathbb{R})\oplus \mathrm{Hom}(G,\mathbb{R})$. Let $f\in \ker q$. Then $[\partial^1 f]_b=0$, so there is $f_1\in \mathrm{Map}_b(G,\mathbb{R})$ such that $\partial^1 f_1=\partial^1 f$. We have $\partial^1 (f-f_1)=0$ and $\ker (\partial^1)=\mathrm{Hom}(G,\mathbb{R})$, so there is $f_2\in \mathrm{Hom}(G,\mathbb{R})$ such that $f=f_1+f_2$. Only $\mathrm{Map}_b(G,\mathbb{R})\oplus \mathrm{Hom}(G,\mathbb{R})\subset \ker q$ is left to prove. Let $f=f_1+f_2$ with $f_1\in \mathrm{Map}_b(G,\mathbb{R})$ and $f_2\in \mathrm{Hom}(G,\mathbb{R})$. Then $q(f)=[\partial^1 f]_b=[\partial^1 f_1]_b+[\partial^1 f_2]_b=0\in H^2_b(G,\mathbb{R})$.
\end{proof}

Following in this direction, we can find a group theoretic interpretation of $H^2(G,\mathbb{R})$ (see \cite{primer}). But since we are interested in the case $G=F_n$ this is not necessary for us:

\begin{exmp}
We have $H_b^2(F_n,\mathbb{R})=\ker(c^2)=\tilde{\mathcal{Q}}(F_n)/\mathrm{Hom}(F_n,\mathbb{R})$.
\end{exmp}

Note that our characterization of the kernel $c^2$ is not enough to establish that $c^2$ is not injective: We have not proved yet that there are homogeneous quasimorphisms, which are not homomorphisms. But this is the case if $G=F_2$ as we will see in the Example \ref{noninj} in Section \ref{countfctquasi}.

The bounded cohomology of degree higher than 2 is still widely mysterious. Very little is known about these spaces: For free groups and degree 3 a notable exception is \cite{thirddegree}.

\subsection{The action of the outer automorphisms on (bounded) group cohomology}

The $\mathrm{Out}(G)$ action on (bounded) group cohomology can be looked at from the topological, the categorical and the combinatorial point of view. We will use the combinatorial point of view because of its relation to quasimorphisms.

\subsubsection*{Group cohomology}

We have an action $\mathrm{Aut}(G)\curvearrowright \mathrm{Map}(G^\bullet,\mathbb{R})$ by $X.f(g_0,...,g_n)=f(X^{-1}g_0,...,X^{-1}g_n)$. Using the inhomogeneous bar complex and \begin{equation}\label{equivariant}
\partial (f\circ X^{-1})=(\partial f) \circ X^{-1},
\end{equation} this gives us an action $\mathrm{Aut}(G)\curvearrowright H^\bullet(G,\mathbb{R})$.

\begin{prop}
The action $\mathrm{Aut}(G)\curvearrowright H^\bullet(G,\mathbb{R})$ factors through $\mathrm{Out}(G)$.
\end{prop}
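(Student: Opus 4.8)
The plan is to reduce the statement to the triviality of the induced action of $\mathrm{Inn}(G)$ on cohomology, and then handle inner automorphisms via the fundamental lemma of homological algebra.

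First I would record that $X\mapsto(f\mapsto f\circ X^{-1})$ defines a left action of $\mathrm{Aut}(G)$ on the cochain complex $(\mathrm{Map}(G^\bullet,\mathbb{R}),\partial^\bullet)$ — this is immediate from $(XY)^{-1}=Y^{-1}X^{-1}$ — and that by \eqref{equivariant} every $X$ acts through a cochain map, so we obtain a group homomorphism $\rho\colon\mathrm{Aut}(G)\to\mathrm{GL}(H^\bullet(G,\mathbb{R}))$. Since $\mathrm{Out}(G)=\mathrm{Aut}(G)/\mathrm{Inn}(G)$, the assertion that $\rho$ factors through $\mathrm{Out}(G)$ is equivalent to $\mathrm{Inn}(G)\subseteq\ker\rho$; that is, it suffices to show that for every $\gamma\in G$ the inner automorphism $c_\gamma\colon g\mapsto\gamma g\gamma^{-1}$ induces the identity on $H^\bullet(G,\mathbb{R})$.

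It is most convenient to compute with the homogeneous bar complex $\mathrm{Map}(G^{\bullet+1},\mathbb{R})^G=\mathrm{Hom}_{\mathbb{R}G}(B_\bullet,\mathbb{R})$, where $(B_\bullet,d_\bullet)$ is the bar resolution of the trivial module $\mathbb{R}$ from the proof above; here the induced action is the coordinatewise one $X.f(g_0,\dots,g_n)=f(X^{-1}g_0,\dots,X^{-1}g_n)$. Fix $\gamma\in G$ and take $X=c_\gamma$, so $X^{-1}g=\gamma^{-1}g\gamma$. Using that every $f\in\mathrm{Map}(G^{n+1},\mathbb{R})^G$ is invariant under the diagonal left $G$-action (apply $h=\gamma$), one gets
\[
(c_\gamma.f)(g_0,\dots,g_n)=f(\gamma^{-1}g_0\gamma,\dots,\gamma^{-1}g_n\gamma)=f(g_0\gamma,\dots,g_n\gamma),
\]
so $c_\gamma$ acts on cochains as the dual $R_\gamma^{\ast}$ of the map $R_\gamma\colon B_\bullet\to B_\bullet$, $(g_0\mid\cdots\mid g_n)\mapsto(g_0\gamma\mid\cdots\mid g_n\gamma)$. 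The crucial point — and the reason the argument works for \emph{inner} rather than arbitrary automorphisms — is that right multiplication by $\gamma$ commutes with the left diagonal $G$-action, so $R_\gamma$ is an honest $\mathbb{R}G$-module chain map, and it is compatible with the augmentation $B_0\to\mathbb{R}$. Thus $R_\gamma$ and $\mathrm{id}_{B_\bullet}$ are two $\mathbb{R}G$-chain maps lifting $\mathrm{id}_{\mathbb{R}}$; by the uniqueness of such lifts up to $\mathbb{R}G$-chain homotopy we get $R_\gamma\simeq\mathrm{id}_{B_\bullet}$, and applying the additive functor $\mathrm{Hom}_{\mathbb{R}G}(-,\mathbb{R})$ (which preserves chain homotopies) yields $R_\gamma^{\ast}\simeq\mathrm{id}$ as cochain maps, hence $c_\gamma$ induces the identity on $H^\bullet(G,\mathbb{R})$. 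This proves the proposition.

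The only step that carries content is the identity $c_\gamma.f=R_\gamma^{\ast}f$ on invariant cochains, which uses the diagonal $G$-invariance in an essential way; everything after that is formal. Alternatively one could replace the appeal to the fundamental lemma by writing down an explicit prism homotopy $(hf)(g_0,\dots,g_{n-1})=\sum_{i=0}^{n-1}(-1)^i f(g_0,\dots,g_i,g_i\gamma,g_{i+1}\gamma,\dots,g_{n-1}\gamma)$ and checking $d^{n-1}h+hd^n=R_\gamma^{\ast}-\mathrm{id}$ by a direct (but sign-heavy) computation, or argue topologically — an inner automorphism of $\pi_1(BG)$ is realized by a free self-homotopy equivalence of $BG$, to which singular cohomology is insensitive — but the homological-algebra route is the shortest and fits the combinatorial viewpoint adopted here.
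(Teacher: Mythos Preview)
Your argument is correct and is precisely the standard proof: the paper does not give its own argument but simply cites Section~II.6 of Brown, and what you have written is exactly the proof one finds there (reduce to $\mathrm{Inn}(G)$, identify the action of $c_\gamma$ on $G$-invariant cochains with the dual of right multiplication by $\gamma$ on the bar resolution, and invoke the comparison theorem to get a chain homotopy to the identity). There is nothing to add.
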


\begin{proof}
See Section II.6 in \cite{Brown}.
\end{proof}

Unfortunately this action is not very interesting if $G=F_n$ is a free group.

\begin{exmp}
Let $G=F_n$ be the free group with $n$ generators. We get an action \begin{equation}\mathrm{Out}(F_n)\curvearrowright H^k(F_n,\mathbb{R})=\begin{cases}
\mathbb{R},& \text{if } k=0\\
\mathbb{R}^n,& \text{if } k=1\\
0,& \text{if } k>1\\
\end{cases}.\end{equation} This does not give us any information on $\mathrm{Out}(F_n)$, since \begin{enumerate}
\item $\mathrm{Out}(F_n)\curvearrowright H^0(F_n,\mathbb{R})=\mathbb{R}$ is the trivial action.
\item $\mathrm{Out}(F_n)\curvearrowright H^1(F_n,\mathbb{R})=\mathbb{R}^n$ factors through $GL_n(\mathbb{Z})\curvearrowright \mathbb{R}^n$.
\end{enumerate}
\end{exmp}

Fortunately things get a lot more interesting looking at bounded cohomology.

\subsubsection*{Bounded group cohomology}

We get the $\mathrm{Out}(G)$ action on bounded cohomology $H_b^k(G,\mathbb{R})$ for any group $G$ and $k\in \mathbb{N}_0$ in an analogous way. In this article we want to study this action in the case $G=F_n$ and $k=2$. Here we can use the description of $H_b^2(F_n,\mathbb{R})$ by equivalence classes of quasimorphisms to study the $\mathrm{Out}(F_n)$ action.

\begin{lem}
The $\mathrm{Aut}(F_n)$ action on $\tilde{\mathcal{Q}}(F_n)$ by $X[f]=[f\circ X^{-1}]$ factors through the action of $\mathrm{Out}(F_n)$.
\end{lem}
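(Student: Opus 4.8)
The plan is to show that every inner automorphism of $F_n$ acts trivially on $\tilde{\mathcal{Q}}(F_n)$. Since $\mathrm{Out}(F_n)=\mathrm{Aut}(F_n)/\mathrm{Inn}(F_n)$, combined with the fact that $X[f]=[f\circ X^{-1}]$ genuinely defines a left $\mathrm{Aut}(F_n)$-action, this is exactly what forces the action to descend to $\mathrm{Out}(F_n)$.

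First I would record the routine well-definedness points. For $X\in\mathrm{Aut}(F_n)$ and $f\in\mathcal{Q}(F_n)$, the function $f\circ X^{-1}$ is again a quasimorphism with $D(f\circ X^{-1})=D(f)$, because $X^{-1}$ is a surjective homomorphism, so that $\sup_{g,h}\lvert f(X^{-1}g\cdot X^{-1}h)-f(X^{-1}g)-f(X^{-1}h)\rvert$ ranges over exactly the same set of values as the defect of $f$. If $f_1\sim f_2$ then $\lVert f_1\circ X^{-1}-f_2\circ X^{-1}\rVert_\infty=\lVert f_1-f_2\rVert_\infty<\infty$ since precomposition with a bijection preserves the supremum norm. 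The same bijectivity observation gives $X(Y[f])=[f\circ Y^{-1}\circ X^{-1}]=[f\circ (XY)^{-1}]=(XY)[f]$, so we really have a left action of $\mathrm{Aut}(F_n)$.

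The heart of the argument is the classical fact that homogeneous quasimorphisms are class functions: if $\tilde f$ is the (unique) homogeneous representative of $[f]$ provided by Proposition \ref{homquasi}, then $\tilde f(aga^{-1})=\tilde f(g)$ for all $a,g\in F_n$. I would prove this by first noting that homogeneity gives $\tilde f(a^{-1})=-\tilde f(a)$ (from $n\lvert\tilde f(a^{-1})+\tilde f(a)\rvert=\lvert\tilde f(a^{-n})+\tilde f(a^n)\rvert\le D(\tilde f)$ for all $n$), and then applying the defect bound twice to get $\lvert\tilde f(ag^na^{-1})-\tilde f(a)-\tilde f(g^n)-\tilde f(a^{-1})\rvert\le 2D(\tilde f)$, hence $\lvert\tilde f(ag^na^{-1})-\tilde f(g^n)\rvert\le 2D(\tilde f)$. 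Dividing by $n$, using $\tilde f(ag^na^{-1})=\tilde f((aga^{-1})^n)=n\tilde f(aga^{-1})$ and $\tilde f(g^n)=n\tilde f(g)$, and letting $n\to\infty$ yields the claim.

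Finally, for an inner automorphism $\phi_a\colon g\mapsto aga^{-1}$ we have $\phi_a^{-1}=\phi_{a^{-1}}$, which is again conjugation, so the class-function property gives $\tilde f\circ\phi_a^{-1}=\tilde f$ identically. Since $f\sim\tilde f$ implies $f\circ\phi_a^{-1}\sim\tilde f\circ\phi_a^{-1}=\tilde f\sim f$, we conclude $\phi_a[f]=[f]$ for every $[f]\in\tilde{\mathcal{Q}}(F_n)$, so the action is trivial on $\mathrm{Inn}(F_n)$ and therefore factors through $\mathrm{Out}(F_n)$. I do not expect a serious obstacle here: the only step needing genuine (if brief) work is the conjugation-invariance of homogeneous quasimorphisms, and everything else is bookkeeping about well-definedness and the passage between $f$ and $\tilde f$.
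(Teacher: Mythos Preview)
Your proof is correct. The paper takes a slightly more direct route: rather than passing to the homogeneous representative $\tilde f$, it works with an arbitrary $f\in\mathcal{Q}(F_n)$ and bounds $\lVert f\circ X_w^{-1}-f\rVert_\infty$ directly by two applications of the defect inequality, obtaining
\[
\lvert f(wvw^{-1})-f(v)\rvert \le \lvert f(w)+f(w^{-1})\rvert + 2D(f),
\]
which is finite uniformly in $v$. This avoids invoking Proposition~\ref{homquasi} altogether. Your approach, by contrast, isolates the stronger intermediate fact that homogeneous quasimorphisms are genuine class functions (not just class functions up to bounded error), and then transfers this back to $f$ via $f\sim\tilde f$. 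Both arguments are standard; the paper's is a bit shorter, while yours makes explicit a structural property of $\tilde{\mathcal{Q}}(F_n)$ that is worth knowing in its own right.
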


\begin{proof}
Let $w\in F_n$ and denote the corresponding inner automorphism by $X_w$. Let $[f]\in \tilde{\mathcal{Q}}(F_n)$. Then \begin{equation}\begin{split}
\lVert f\circ X_w^{-1}-f \rVert_\infty =& \sup_{v\in F_n} \lvert f(X_w^{-1}v)-f(v)\rvert\\
=& \sup_{v\in F_n} \lvert f(wvw^{-1})-f(v)\rvert\\
\leq& \sup_{v\in F_n} \lvert f(w)+f(vw^{-1})-f(v)\rvert+D(f)\\
\leq& \sup_{v\in F_n} \lvert f(w)+f(w^{-1})\rvert+2D(f)\\
<& \infty
\end{split}\end{equation}
So $X_w[f]=[f]$ for every $w\in F_n$ and $[f]\in \tilde{\mathcal{Q}}(F_n)$.
\end{proof}

\begin{prop}\label{sameaction}
The action of $\mathrm{Out}(F_n)$ on $H_b^2(F_n,\mathbb{R})$ is isomorphic to the $\mathrm{Out}(F_n)$ action on $\tilde{\mathcal{Q}}(F_n)/\mathrm{Hom}(F_n,\mathbb{R})$
by $X([f]+\mathrm{Hom}(F_n,\mathbb{R}))=[f\circ X^{-1}]+\mathrm{Hom}(F_n,\mathbb{R})$.
\end{prop}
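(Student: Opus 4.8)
The plan is to exhibit an explicit $\mathrm{Aut}(F_n)$-equivariant linear isomorphism between the two spaces, and then to observe that equivariance for $\mathrm{Aut}(F_n)$ automatically upgrades to equivariance for $\mathrm{Out}(F_n)$ because both actions are already known to factor through $\mathrm{Out}(F_n)$. The isomorphism I would use is the map $\tilde q$ of Proposition \ref{iso}: since $H^2(F_n,\mathbb{R})=0$, the comparison map $c^2$ is the zero map, hence $\ker(c^2)=H^2_b(F_n,\mathbb{R})$, and Proposition \ref{iso} already supplies a linear isomorphism
\[
\tilde q\colon \tilde{\mathcal{Q}}(F_n)/\mathrm{Hom}(F_n,\mathbb{R})\xrightarrow{\ \sim\ } H^2_b(F_n,\mathbb{R}),\qquad [f]\mapsto[\partial^1 f]_b .
\]

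First I would confirm that both sides carry the asserted $\mathrm{Aut}(F_n)$-actions. On $H^2_b(F_n,\mathbb{R})$ this is the action induced on cohomology by $X.\omega=\omega\circ X^{-1}$ on the bounded inhomogeneous bar complex. On the quotient $\tilde{\mathcal{Q}}(F_n)/\mathrm{Hom}(F_n,\mathbb{R})$ one first checks that $\mathrm{Hom}(F_n,\mathbb{R})$ is invariant under $[f]\mapsto[f\circ X^{-1}]$ — precomposing a homomorphism with an automorphism is again a homomorphism, and by Proposition \ref{homquasi} distinct homomorphisms lie in distinct classes, so $\mathrm{Hom}(F_n,\mathbb{R})\subseteq\tilde{\mathcal{Q}}(F_n)$ is a genuine invariant subspace — whence the quotient action is well defined. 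Both actions factor through $\mathrm{Out}(F_n)$: for the quasimorphism side this is the Lemma proved just above, and for the bounded-cohomology side it is the general fact about the $\mathrm{Out}(G)$-action recalled earlier (it also follows a posteriori from the equivariance of $\tilde q$ established next).

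The heart of the argument is the equivariance of $\tilde q$. Writing $H:=\mathrm{Hom}(F_n,\mathbb{R})$ for brevity, for $X\in\mathrm{Aut}(F_n)$ and $[f]\in\tilde{\mathcal{Q}}(F_n)$ the computation I would carry out is
\begin{align*}
\tilde q\bigl(X([f]+H)\bigr)&=\tilde q\bigl([f\circ X^{-1}]+H\bigr)=[\partial^1(f\circ X^{-1})]_b\\
&=[(\partial^1 f)\circ X^{-1}]_b=X\,[\partial^1 f]_b=X\,\tilde q([f]+H),
\end{align*}
where the only non-formal step is the third equality, which is precisely the cochain-level identity \eqref{equivariant}. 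This shows $\tilde q$ intertwines the two $\mathrm{Aut}(F_n)$-actions; being a linear isomorphism, it is an isomorphism of $\mathrm{Aut}(F_n)$-representations, and since both are inflated from $\mathrm{Out}(F_n)$ it is an isomorphism of $\mathrm{Out}(F_n)$-representations, which is the claim.

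I do not expect a genuine obstacle here: the real content is the naturality of the bar complex under the substitution $g\mapsto X^{-1}g$, already packaged in \eqref{equivariant}. The only points needing care are bookkeeping ones — keeping the action conventions consistent between the two sides (precomposition with $X^{-1}$ rather than $X$, and the diagonal action on $G^2$) so that \eqref{equivariant} applies verbatim, and confirming that $\tilde q$ descends to the quotient by $\mathrm{Hom}(F_n,\mathbb{R})$ in an $\mathrm{Aut}(F_n)$-equivariant way, which reduces to the invariance of $\mathrm{Hom}(F_n,\mathbb{R})$ noted above.
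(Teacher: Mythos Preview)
Your proposal is correct and follows exactly the paper's approach: the paper's proof is the single line ``The isomorphism given in \ref{iso} is $\mathrm{Out}(F_n)$ equivariant by \eqref{equivariant},'' and you have simply unpacked this by writing out the equivariance computation for $\tilde q$ explicitly and noting the auxiliary well-definedness checks.
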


\begin{proof}
The isomorphism given in \ref{iso} is $\mathrm{Out}(F_n)$ equivariant by \ref{equivariant}.
\end{proof}

\subsection{The outer automorphisms of the free group}

While we do not need a lot of background regarding $\mathrm{Out}(F_n)$, we need that it is generated by Nielsen transformations. Different authors mean different things when using the term \emph{Nielsen transformation}, so we will clarify what we mean now. Throughout this article we will always stick to the following convention:

\begin{defn}\label{Nielsen}
Let $F_n$ be the free group on the alphabet $S=\{a_1,a_2,...,a_n\}$, i.e. the collection of all reduced words over the extended alphabet $\bar{S}=\{a_1,...,a_n,a_1^{-1},...,a_n^{-1}\}$. We call an element $X\in \mathrm{Aut}(F_n)$ a \emph{Nielsen transformation} if it is one of the following:\begin{align}
P_1(a_1,...,a_n)&=(a_2,a_1,...,a_n),\\
P_2(a_1,...,a_n)&=(a_2,...,a_n,a_1),\\
H(a_1,...,a_n)&=(a_1^{-1},a_2,...,a_n),\\
T(a_1,...,a_n)&=(a_1a_2,a_2,...,a_n).
\end{align}
\end{defn}

\begin{thm}\label{NielsenOut}
The group of outer automorphism $\mathrm{Out}(F_n)$ is generated as a group by the Nielsen transformations.
\end{thm}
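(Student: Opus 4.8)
The plan is to prove the a priori stronger statement that $\mathrm{Aut}(F_n)$ itself is generated by the four Nielsen transformations $P_1,P_2,H,T$; the assertion for $\mathrm{Out}(F_n)$ is then immediate, since a quotient of a group generated by a set $S$ is generated by the image of $S$, and $\mathrm{Out}(F_n)=\mathrm{Aut}(F_n)/\mathrm{Inn}(F_n)$. To handle $\mathrm{Aut}(F_n)$ I would exploit the correspondence between automorphisms and bases: an automorphism $\phi$ is determined by the ordered tuple $B_\phi=(\phi(a_1),\dots,\phi(a_n))$, which is a free basis of $F_n$, and conversely every generating $n$-tuple is automatically a free basis (via Hopficity of $F_n$) and so arises from a unique automorphism. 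Under this correspondence, precomposition $\phi\mapsto\phi\circ X$ by a Nielsen transformation acts on the tuple by an \emph{elementary move}: $P_1,P_2$ realise a transposition and an $n$-cycle of the entries, hence composed give every permutation in $S_n$; $H$, conjugated by these permutations, realises every sign change $w_i\mapsto w_i^{-1}$; and $T$, conjugated by permutations and sign changes, realises every transvection $w_i\mapsto w_iw_j^{\pm1}$ and, conjugating further, $w_i\mapsto w_j^{\pm1}w_i$. Thus it suffices to show that from an arbitrary basis $B_\phi$ a finite sequence of elementary moves produces the standard basis $(a_1,\dots,a_n)$, i.e. produces $B_Z$ for some product $Z$ of Nielsen transformations: then $\phi\circ Y=Z$ for the corresponding product $Y$ of Nielsen transformations, so $\phi=ZY^{-1}$ lies in the subgroup generated by the Nielsen transformations.

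The core is Nielsen reduction. Assign to a tuple $(w_1,\dots,w_n)$ the complexity $\sum_i|w_i|$, the sum of reduced word lengths, and among all tuples reachable from $B_\phi$ by elementary moves pick one, $(u_1,\dots,u_n)$, of minimal complexity; minimality is attained because complexities are non-negative integers. I would then show this minimal tuple is \emph{Nielsen reduced}: if one of the standard cancellation conditions failed — say, in some product $u_i^{\varepsilon}u_j^{\delta}$ with $u_i^{\varepsilon}\ne u_j^{-\delta}$ cancellation removed at least half of $u_j$ — then a suitable elementary move $u_j\mapsto u_ju_i^{\mp\varepsilon}$ (or a left-handed variant) would strictly decrease the complexity, contradicting minimality. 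Finally, expressing each generator $a_k$ as a reduced word in the $u_i$ and using the defining property of a Nielsen-reduced family (a reduced product of $m$ syllables in the $u_i$ has word length at least $m$), one obtains $1=|a_k|\ge 1$ with equality forced, so each $a_k$ is a single syllable $u_{i_k}^{\pm1}$; hence $\{u_1^{\pm1},\dots,u_n^{\pm1}\}=\{a_1^{\pm1},\dots,a_n^{\pm1}\}$ and the minimal tuple is the standard basis up to permutation and inversion, completing the reduction.

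I expect the main obstacle to be precisely the cancellation bookkeeping in the Nielsen reduction step: checking that the \emph{only} obstruction to a complexity-decreasing elementary move is that the tuple is already Nielsen reduced, and that a Nielsen-reduced generating $n$-tuple of $F_n$ must consist of length-one words. The remaining ingredients — that $P_1,P_2,H,T$ generate all permutations, sign changes and transvections of the tuple, and the passage from $\mathrm{Aut}(F_n)$ to $\mathrm{Out}(F_n)$ — are routine; one only needs, in addition, the harmless observation that the set of Nielsen transformations is closed under the conjugations and inversions used above, so that left-handed moves and inverse moves are all genuinely available. This is the classical theorem of Nielsen, and the argument sketched is essentially his.
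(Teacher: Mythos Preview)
Your proposal is correct and follows the classical Nielsen reduction argument. The paper itself does not give a proof at all: it simply cites Nielsen's original 1924 paper and refers the reader to a standard textbook (Lyndon--Schupp) for an English account, so your sketch is in fact \emph{more} detailed than what appears in the paper, and it is precisely the argument those references contain.
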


\begin{proof}
See \cite{Nielsen} for the original proof in German. For a proof in English see any textbook on combinatorial group theory, for example \cite{LS}.
\end{proof}

\section{Counting functions and counting quasimorphisms (up to bounded distance)}\label{countfctquasi}

\subsection{Counting functions}

Let $F_n$ be the free group on the alphabet $S=\{a_1,a_2,...,a_n\}$, i.e. the collection of all reduced words over the extended alphabet $\bar{S}=\{a_1,...,a_n,a_1^{-1},...,a_n^{-1}\}$. We fix the generating set $S$ for the rest of this article. Let $\lvert v\rvert_S$ be the word length for $v\in F_n$ with respect to $S$.

\begin{defn}
A reduced word $s_1...s_k\in F_n$ is a \emph{reduced subword} of a reduced word $r_1...r_l\in F_n$ if there exists $j\in \{1,...,l-k\}$ such that $s_i=r_{j+i}$ for all $i\in \{1,...,l\}$. 
\end{defn}

\begin{defn}
Given a reduced word $v=s_1...s_k\in F_n\setminus\{e\}$ and a reduced word $w=r_1...r_l\in F_n$, let $\#v(w)$ denote the number of $j\in \{1,...,l-k\}$ such that $s_i=r_{j+i}$ for all $i\in \{1,...,k\}$. This defines for every $v\in F_n\setminus\{e\}$ a function $\#v:F_n \to \mathbb{N}$, which we call the $v$-\emph{counting function}.
\end{defn}

\begin{exmp}
Let $F_2$ be the free group on the alphabet $S=\{a,b\}$. Then we get for example the following values for the $aba$-counting function:\begin{enumerate}
\item $\#aba(aba^3)=1$
\item $\#aba(aba^3b^{-4}a^2ba)=2$
\item $\#aba(a^3baba)=2$
\end{enumerate}
\end{exmp}

$\#v(w)$ counts how often $v$ appears as a reduced subword of $w$. As the third example shows, these reduced subwords are allowed to overlap.

\begin{defn}
Let $\mathcal{C}(F_n,S)$ be the space of real-valued functions on $F_n$ spanned by the counting functions $\{\#v\}_{v\in F_n\setminus\{e\}}$ with respect to the alphabet $S$.
\end{defn}

\begin{rem}
The space $\mathcal{C}(F_n,S)$ is defined a bit differently in \cite{HT}. But both definitions are equivalent.
\end{rem}

\begin{defn}
We call a finitely supported real-valued function $\alpha: F_n\setminus \{e\} \to \mathbb{R}$ a \emph{weight} on $F_n\setminus\{e\}$.
\end{defn}

It is clear that every element $f\in \mathcal{C}(F_n,S)$ can be written as $f=\sum_{v\in I}\alpha(v)\#v$ for some finite set $I\subset F_n\setminus\{e\}$ and some weight $\alpha$ that is supported on $I$. We now show now this representation of $f$ can be made unique.

\begin{prop}
The counting functions $\{\#v\}_{v\in F_n\setminus\{e\}}$ form a basis for $\mathcal{C}(F_n,S)$.
\end{prop}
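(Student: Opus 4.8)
The plan is to show linear independence of the family $\{\#v\}_{v\in F_n\setminus\{e\}}$, since spanning holds by definition of $\mathcal{C}(F_n,S)$. So suppose we have a finite set $I\subset F_n\setminus\{e\}$ and a weight $\alpha$ supported on $I$ with $\sum_{v\in I}\alpha(v)\#v = 0$ as a function on $F_n$; I want to conclude $\alpha\equiv 0$. The natural idea is to evaluate this identity on cleverly chosen test words and extract the coefficients one at a time, organizing the argument by length.

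First I would stratify: let $L=\max_{v\in I}|v|_S$ and argue by downward induction on word length, showing $\alpha(v)=0$ for all $v\in I$ with $|v|_S=\ell$, starting from $\ell=L$ and decreasing. For the base case, fix $v\in I$ with $|v|_S=L$ and simply evaluate the identity at $w=v$ itself. Then $\#v(v)=1$, while for any other $u\in I$ we have $\#u(v)=0$: indeed, if $u$ occurred as a reduced subword of $v$ then $|u|_S\le|v|_S=L$, and if $|u|_S=L$ this forces $u=v$, while if $|u|_S<L$... this needs a little care, so the cleaner move is to use a better test word. The key construction is: for a reduced word $v=s_1\dots s_k$, build a test word $w_v$ in which $v$ appears exactly once as a reduced subword and no shorter word of $I$ that could interfere appears — for instance pad $v$ on both sides with a long run of a letter not equal to $s_1^{-1}$ resp. $s_k^{-1}$, or more robustly take $w_v = v$ conjugated/extended by a word built from a syllable pattern that cannot embed any $u\in I$ except as a subword of $v$. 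Concretely, since $F_n$ has at least two generators, one can append to $v$ a "sentinel" block such as $a_1 a_2 a_1 a_2^2 a_1 a_2^3 \cdots$ long enough and with increasing exponents, which forces any occurrence of a word $u\in I$ inside $w_v$ to lie entirely within the copy of $v$; then $\sum_{u\in I}\alpha(u)\#u(w_v) = \sum_{u\in I, \text{$u$ subword of }v}\alpha(u)\#u(w_v)$, and the top-length term isolates $\alpha(v)$.

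Then, for the inductive step, assuming $\alpha$ vanishes on all elements of $I$ of length $>\ell$, I restrict attention to the subfamily $I_{\le\ell}$ of remaining words and repeat the same test-word construction for each $v\in I$ with $|v|_S=\ell$: evaluating the (now truncated) identity at $w_v$, every $u\in I_{\le\ell}$ occurring in $w_v$ must have $|u|_S\le\ell$, and the count of $v$ in $w_v$ can be made to strictly dominate (e.g. be exactly $1$ while shorter $u$'s either don't appear or appear a controlled number of times, and one chooses $w_v$ so that only $u=v$ has length exactly $\ell$ and appears). Carefully one wants: a single word $w_v$ with $\#v(w_v)=1$ and $\#u(w_v)=0$ for every $u\in I$, $u\ne v$, which is achievable by isolating the single occurrence of $v$ between two long asymmetric sentinel blocks chosen to depend on $I$. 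With such a $w_v$ the identity collapses to $\alpha(v)=0$ directly, and one does not even need the induction — but the induction/length stratification is the safety net in case constructing a fully "clean" $w_v$ for every $v$ simultaneously is awkward.

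The main obstacle is precisely the combinatorial bookkeeping of the sentinel words: one must guarantee that the appended padding never accidentally creates a new occurrence of some $u\in I$ straddling the boundary between $v$ and the padding, and never creates occurrences inside the padding itself. This is where the freedom $n\ge 1$ (in fact any free group, even rank $1$ needs a separate trivial argument, but here we may assume the setting of the paper) and the ability to use unbounded syllable exponents is used: a block like $a_1 a_1^{-1}$-free, strictly-increasing-exponent word of length exceeding $L$ cannot contain any reduced word of length $\le L$ with a bounded syllable structure more than a controlled number of times, and by taking exponents large enough relative to $L=\max_{v\in I}|v|_S$ one kills all unwanted boundary and interior occurrences. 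Once that lemma about sentinel words is in place, the extraction of each $\alpha(v)$ is immediate, so the proof reduces to stating and verifying this padding lemma cleanly.
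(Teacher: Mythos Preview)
Your sentinel construction has a genuine obstruction: if some $u\in I$ happens to be a proper reduced subword of $v\in I$, then \emph{every} word $w_v$ containing $v$ as a subword necessarily contains $u$ as a subword too, so $\#u(w_v)\ge 1$ whenever $\#v(w_v)\ge 1$. No amount of padding on either side of $v$ can force $\#u(w_v)=0$. Your downward induction does not rescue this either: after killing the coefficients in lengths $>\ell$ and evaluating at a length-$\ell$ word $v$, you are left with $\alpha(v)+\sum_{|u|_S<\ell}\alpha(u)\,\#u(v)=0$, which only expresses $\alpha(v)$ in terms of strictly shorter coefficients you have not yet controlled. The phrase ``the count of $v$ can be made to strictly dominate'' does not translate into an argument here, since the unknown $\alpha(u)$'s could be arbitrarily large.

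The fix is simply to reverse the direction. Pick $v_0\in I=\mathrm{supp}(\alpha)$ of \emph{minimal} word length and evaluate the identity at $w=v_0$. Every other $u\in I$ has $|u|_S\ge |v_0|_S$, and a reduced word of length $\ge |v_0|_S$ occurring as a subword of $v_0$ must equal $v_0$; hence $\#u(v_0)=0$ for all $u\in I\setminus\{v_0\}$ and the identity collapses to $\alpha(v_0)=0$, a contradiction. This is exactly the paper's one-line proof, and it needs no sentinel words, no padding lemma, and no induction (or, equivalently, upward rather than downward induction on length).
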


\begin{proof}
The counting functions $\{\#v\}_{v\in F_n\setminus\{e\}}$ span $\mathcal{C}(F_n,S)$ by definition. So we have to prove that they are linearly independent. Assume that $0=\sum_{v\in I}\alpha(v)\#v$, where $\alpha$ is a weight supported on $I$. Choose $v_0\in I$ such that $\lvert v_0\rvert_S=\min_{v\in I} \lvert v\rvert_S$. Then $\sum_{v\in I}\alpha(v)\#v(v_0)=\alpha(v_0)\neq 0$. This is a contradiction.
\end{proof}

It follows that we can write every element $f\in \mathcal{C}(F_n,S)$ uniquely as $f=\sum_{v\in I}\alpha(v)\#v$, where $\alpha$ is a weight on $F_n\setminus\{e\}$ and $I=\mathrm{supp}(\alpha)$. In the following we will usually abandon this uniqueness in favour of more flexibility. So if we write $f=\sum_{v\in I}\alpha(v)\#v$, we only demand that $\mathrm{supp}(\alpha)\subset I$ unless stated otherwise.

\begin{defn}
We define the \emph{length} of an element $f=\sum_{v\in I}\alpha(v)\#v \in \mathcal{C}(F_n,S)$ by \[\lVert f\rVert_S= \max\{\lvert v\rvert_S\mid \alpha(v)\neq 0\}.\] If $\alpha(v)=0$ for all $v\in F_n\setminus \{e\}$, we set $\lVert f\rVert_S=0$.
\end{defn}

We will now see that length gives us a norm on $\mathcal{C}(F_n,S)$. Actually we get even more: We call a norm an \emph{ultrametric norm} if it induces an ultrametric distance. We denote by $\lvert -\rvert_0$ the trivial absolute value on $\mathbb{R}$, i.e. \begin{equation}
\lvert x\rvert_0=\begin{cases} 0,\text{if}\ x=0\\
1,\text{otherwise}\end{cases}.\end{equation}

\begin{prop}\label{normV}
Endow $\mathbb{R}$ with the trivial absolute value $\lvert -\rvert_0$. Then $\lVert -\rVert_S$ is an ultrametric norm on $\mathcal{C}(F_n,S)$.
\end{prop}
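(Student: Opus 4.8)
The plan is to check the three defining properties of an ultrametric norm directly from the definition of $\lVert-\rVert_S$, with the key input being the previous proposition that the counting functions $\{\#v\}_{v\in F_n\setminus\{e\}}$ form a basis of $\mathcal{C}(F_n,S)$. The first thing I would record is that this basis property makes $\lVert-\rVert_S$ well defined: although we allow over-supported representations $f=\sum_{v\in I}\alpha(v)\#v$ with $\mathrm{supp}(\alpha)\subsetneq I$, the quantity $\max\{\lvert v\rvert_S\mid \alpha(v)\neq 0\}$ only depends on the set $\{v\mid\alpha(v)\neq0\}=\mathrm{supp}(\alpha)$, and by linear independence this support is the same for every representation of $f$; so write $\alpha_f$ for the (unique) weight of $f$.

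For positive definiteness, observe that every $v\in\mathrm{supp}(\alpha_f)$ satisfies $v\neq e$, hence $\lvert v\rvert_S\geq 1$; therefore $\lVert f\rVert_S=0$ forces $\mathrm{supp}(\alpha_f)=\emptyset$, i.e.\ $f=0$, and conversely $\lVert 0\rVert_S=0$ by convention. For homogeneity with respect to the trivial absolute value $\lvert-\rvert_0$: if $\lambda=0$ then $\lambda f=0$ and both sides vanish, while if $\lambda\neq 0$ then $\alpha_{\lambda f}=\lambda\alpha_f$ has the same support as $\alpha_f$, so $\lVert\lambda f\rVert_S=\lVert f\rVert_S=\lvert\lambda\rvert_0\lVert f\rVert_S$.

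Finally, for the strong triangle inequality, note $\alpha_{f+g}=\alpha_f+\alpha_g$, so $\mathrm{supp}(\alpha_{f+g})\subset\mathrm{supp}(\alpha_f)\cup\mathrm{supp}(\alpha_g)$; taking the maximum of $\lvert v\rvert_S$ over the left-hand set is therefore at most the maximum over the union, which is $\max\{\lVert f\rVert_S,\lVert g\rVert_S\}$. This yields $\lVert f+g\rVert_S\leq\max\{\lVert f\rVert_S,\lVert g\rVert_S\}$, which in particular implies the ordinary triangle inequality, and hence $d(f,g)=\lVert f-g\rVert_S$ is an ultrametric distance.

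The proof is essentially routine; the only point that genuinely needs the previously established results is the well-definedness of $\lVert-\rVert_S$, which rests on the linear independence of the counting functions, so I would state that dependence explicitly rather than treat the formula as manifestly representation-independent.
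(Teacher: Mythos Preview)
Your proof is correct and follows essentially the same approach as the paper: a direct verification of homogeneity, the ultrametric triangle inequality via $\mathrm{supp}(\alpha_{f+g})\subset\mathrm{supp}(\alpha_f)\cup\mathrm{supp}(\alpha_g)$, and positive definiteness. The only difference is that you make the well-definedness step (via the basis property of the $\#v$) explicit, whereas the paper leaves it implicit in the definition.
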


\begin{proof}
Let $f=\sum_{v\in I}\alpha(v)\#v$ and $g=\sum_{v\in J}\beta(v)\#v$. Then we have: \begin{enumerate}
\item We show that $\lVert -\rVert_S$ is absolutely homogeneous. If $a\neq 0$, then \begin{equation}\begin{split}\lVert af\rVert_S
&=\lVert \sum_{v\in I}a\alpha(v)\#v\rVert_S\\
&=\max\{\lvert v\rvert_S\mid v\in I, a\alpha(v)\neq 0\}\\
&=\max\{\lvert v\rvert_S\mid v\in I, \alpha(v)\neq 0\}\\
&=\lVert f\rVert_S\\
&=\lvert a\rvert_0\lVert f\rVert_S.\end{split}\end{equation} If $a=0$, then \begin{equation}\begin{split}\lVert af\rVert_S
&=\max\{\lvert v\rvert_S\mid v\in I, 0\alpha(v)\neq 0\}\\
&=0\\
&=\lvert a\rvert_0\lVert f\rVert_S.\end{split}\end{equation}
\item We show that the ultrametric triangle inequality holds for $\lVert -\rVert_S$. \begin{equation}\label{ultra}\begin{split}\lVert f+g\rVert_S
&=\lVert \sum_{v\in I\cup J}(\alpha(v)+\beta(v))\#v\rVert_S\\
&=\max\{\lvert v\rvert_S\mid v\in I\cup J, \alpha(v)+\beta(v)\neq 0\}\\
&\leq \max\{\lvert v\rvert_S\mid v\in I\cup J, \alpha(v)\neq 0 or \beta(v)\neq 0\}\\
&=\max(\lVert f\rVert_S,\lVert g\rVert_S).\end{split}\end{equation} So in particular $\lVert f+g\rVert_S \leq \lVert f\rVert_S+\lVert g\rVert_S.$
\item We show that $\lVert f\rVert_S=0$ if and only if $f=0$. This follows since both statements are equivalent to $\alpha(v)=0$ for all $v\in I$.
\end{enumerate}
\end{proof}

Note that $\lVert -\rVert_S$ takes values in the natural numbers $\mathbb{N}$. In particular the norm $\lVert -\rVert_S$ induces the discrete topology on $\mathcal{C}(F_n,S)$: Let $A\subset \mathcal{C}(F_n,S)$ be a subset and let $\{a_k\}\subset A$ be a sequence such that $a_k \to a\in \mathcal{C}(F_n,S)$. Then $\lVert a-a_k\rVert_S \to 0$. Since the values of $\lVert -\rVert_S$ are in $\mathbb{N}$, we get that there is $k_0 \in \mathbb{N}$ such that $\lVert a-a_k\rVert_S = 0$ for $k>k_0$. This shows $a\in A$, so $A$ is closed.

How should we think of counting functions and their length? Let $T_n$ be the right Cayley tree of $F_n$ with respect to the generating set $S$. We think of $T_n$ as an edge coloured rooted tree with root $e$, where edges are coloured by elements of $\bar{S}$. An element $f=\sum_{v\in I}\alpha(v)\#v \in \mathcal{C}(F_n,S)$ corresponds to an vertex labeling of $T_n$ by the weight $\alpha$. We can visualize $f$ by drawing the weighted subtree spanned by $I\cup\{e\}$.

\begin{exmp}
Let $F_2$ be the free group on the alphabet $S=\{a,b\}$. Then we can visualize $f=5\#aa-3\#ab+\#b$ by
\begin{center}
\begin{tikzpicture} [every tree node/.style={draw,circle},
   level distance=1.75cm,sibling distance=1cm, 
   edge from parent path={(\tikzparentnode) -- (\tikzchildnode)}]
\Tree [.\node {0}; 
    \edge node[auto=right] {$a$};
    [.0  
        \edge node[auto=right] {$aa$}; [.5  ] 
        \edge node[auto=left] {$ab$}; [.-3 ] 
    ]
    \edge node[auto=left] {$b$};
    [.1
    ] ]
\end{tikzpicture}
\end{center}
\end{exmp}

We can visualize the length of an element $f \in \mathcal{C}(F_n,S)$ as the maximal distance of the root $e$ to a vertex not labeled $0$ in the Cayley tree $T_n$. It is the maximal depth of a vertex in the weighted subtree corresponding to $f$.

\begin{exmp}
Again let $f=5\#aa-3\#ab+\#b$. We have $\lVert f\rVert_S=2$. Both the vertex $aa$ and $ab$ are of distance $2$ from the root, the vertex $b$ is of distance $1$.
\end{exmp}

\subsection{Counting functions up to bounded distance}

\begin{defn}
We define an equivalence relation on $\mathcal{C}(F_n,S)$ by \[f_1\sim f_2\ \text{if and only if}\ \lVert f_1-f_2 \rVert_\infty < \infty.\] We denote by $\tilde{\mathcal{C}}(F_n,S)=\mathcal{C}(F_n,S)/\sim$ the space of equivalence classes and we denote by $\pi:\mathcal{C}(F_n,S)\to \tilde{\mathcal{C}}(F_n,S)$ the natural projection. We call $f\in \ker(\pi)$ a \emph{relation} and $\ker(\pi)$ the \emph{space of relations}.
\end{defn}

\begin{lem}\label{kerclosedsubspace}
The space of relations is a closed linear subspace of $(\mathcal{C}(F_n,S),\lVert -\rVert_S)$.
\end{lem}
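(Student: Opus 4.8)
The statement asks to show that $\ker(\pi)$ is a closed linear subspace of $(\mathcal{C}(F_n,S),\lVert -\rVert_S)$. Linearity is immediate: if $f_1\sim 0$ and $f_2\sim 0$, meaning $\lVert f_1\rVert_\infty<\infty$ and $\lVert f_2\rVert_\infty<\infty$, then for scalars $a,b$ we have $\lVert af_1+bf_2\rVert_\infty\leq \lvert a\rvert\lVert f_1\rVert_\infty+\lvert b\rvert\lVert f_2\rVert_\infty<\infty$, so $af_1+bf_2\in\ker(\pi)$; also $0\in\ker(\pi)$. Hence $\ker(\pi)$ is a linear subspace, being the kernel of the linear map $\pi$.

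For closedness, the key observation is the one already made in the excerpt right after Proposition \ref{normV}: the norm $\lVert -\rVert_S$ takes values in $\mathbb{N}$, so it induces the \emph{discrete} topology on $\mathcal{C}(F_n,S)$, and \emph{every} subset of a discrete space is closed. Indeed, if $\{f_k\}\subset\ker(\pi)$ is a sequence converging to some $f\in\mathcal{C}(F_n,S)$ in the $\lVert-\rVert_S$-topology, then $\lVert f-f_k\rVert_S\to 0$; since the values lie in $\mathbb{N}$, there is $k_0$ with $\lVert f-f_k\rVert_S=0$ for all $k>k_0$, and by the positive-definiteness established in Proposition \ref{normV}(3) this forces $f=f_{k_0+1}\in\ker(\pi)$. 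So $f\in\ker(\pi)$, and $\ker(\pi)$ is closed.

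There is essentially no obstacle here: the proposition is a direct corollary of two facts already in hand, namely that $\ker(\pi)$ is the kernel of the linear projection $\pi$ and that $\lVert-\rVert_S$ induces the discrete topology (equivalently, that $\mathcal{C}(F_n,S)$ is a discrete topological vector space under this norm, in which all linear subspaces — indeed all subsets — are closed). The only thing worth spelling out is why $\pi$ is linear, which follows because $\sim$ is defined via the seminorm $\lVert-\rVert_\infty$ on the quotient, and the sum and scalar multiples of bounded functions are bounded; so the quotient $\mathcal{C}(F_n,S)\to\tilde{\mathcal{C}}(F_n,S)$ is a quotient of vector spaces. Thus I expect the author's proof to be a two- or three-line argument, possibly just citing the discreteness remark preceding the lemma.
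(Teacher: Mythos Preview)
Your proposal is correct and matches the paper's approach essentially line for line: the paper invokes the discreteness of the $\lVert-\rVert_S$-topology (the remark after Proposition~\ref{normV}) to reduce the claim to showing $\ker(\pi)$ is a linear subspace, and then checks closure under scalar multiplication and addition using the triangle inequality for $\lVert-\rVert_\infty$. Your anticipation of the author's argument is accurate.
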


\begin{proof}
Since $\lVert -\rVert_S$ induces the discrete topology on $\mathcal{C}(F_n,S)$, it is enough to prove that $\ker(\pi)$ is a linear subspace.\begin{enumerate}
\item Let $f\in \ker(\pi)$ and $a\in \mathbb{R}$. Then $\lVert f\rVert_\infty < C_f<\infty$ and $\lvert a\rvert<\infty$, where $\lvert -\rvert$ is the usual norm on $\mathbb{R}$. It follows that $\lVert af\rVert_\infty\leq \lvert a\rvert\lVert f\rVert_\infty < \lvert a\rvert C_f<\infty$. So $af\in \ker(\pi)$.
\item Let $f,g \in \ker(\pi)$. Then $\lVert f\rVert_\infty < C_f<\infty$ and $\lVert g\rVert_\infty < C_g<\infty$. It follows that $\lVert f+g\rVert_\infty\leq \lVert f\rVert_\infty+\lVert g\rVert_\infty < C_f+C_g<\infty$. So $f+g \in \ker(\pi)$.
\end{enumerate}
\end{proof}

Now we introduce a new notion of length adapted to our equivalence relation.

\begin{defn}
We define the \emph{reduced length} of an element $f\in \mathcal{C}(F_n,S)$ by \[\lvert f\rvert_S=\min\{\lVert f+h\rVert_S\mid h\in \ker(\pi)\}=\min\{\lVert g\rVert_S\mid f\sim g\}.\] We call $f \in \mathcal{C}(F_n,S)$ \emph{reduced}, if $\lvert f\rvert_S=\lVert f\rVert_S$. 
\end{defn}

In the next subsection we will discuss how to check if an element $f \in \mathcal{C}(F_n,S)$ is reduced. Since we do not know this yet, we have a hard time giving examples of reduced elements $f \in \mathcal{C}(F_n,S)$ at this stage. But we can consider elements with very low length.

\begin{exmp}\label{0red}
Let $f \in \mathcal{C}(F_n,S)$ have $\lVert f\rVert_S=0$. Then $f=0$ and $\lvert f\rvert_S=0$. So the element $f=0 \in \mathcal{C}(F_n,S)$ is reduced.
\end{exmp}

\begin{exmp}\label{1red}
Let $f \in \mathcal{C}(F_n,S)$ have $\lVert f\rVert_S=1$. Then $f=\sum_{v\in \bar{S}}\alpha(v)\#v$. Let $v_0\in I$ such that $\alpha(v_0)\neq 0$. Then $f(v_0^k)=k\alpha(v_0)$ for every $k\in \mathbb{N}$, so $\lvert f\rvert_S>0$. If follows that $\lvert f\rvert_S=1$. So every element $f \in \mathcal{C}(F_n,S)$ with $\lVert f\rVert_S=1$ is reduced.
\end{exmp}

\begin{cor}
Endow $\mathbb{R}$ with the trivial absolute value $\lvert -\rvert_0$. Then $\lvert -\rvert_S$ is a seminorm on $\mathcal{C}(F_n,S)$.
\end{cor}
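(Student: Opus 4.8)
The plan is to deduce the seminorm axioms for $\lvert-\rvert_S$ from the fact that $\lVert-\rVert_S$ is a norm (Proposition~\ref{normV}) and that $\ker(\pi)$ is a linear subspace (Lemma~\ref{kerclosedsubspace}); this is just the standard ``quotient seminorm'' construction, adapted to the trivial absolute value. As a preliminary point I would note that the minimum in the definition of $\lvert f\rvert_S$ is genuinely attained: the set $\{\lVert f+h\rVert_S\mid h\in\ker(\pi)\}$ is a non-empty subset of $\mathbb{N}$, hence has a least element, so $\lvert-\rvert_S$ is well-defined on all of $\mathcal{C}(F_n,S)$.

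For absolute homogeneity with respect to $\lvert-\rvert_0$, I would split into the cases $a\neq 0$ and $a=0$. If $a\neq 0$, the map $h\mapsto ah$ is a bijection of the subspace $\ker(\pi)$, so $\{af+h\mid h\in\ker(\pi)\}=\{a(f+h)\mid h\in\ker(\pi)\}$; together with the absolute homogeneity of $\lVert-\rVert_S$ and the equality $\lvert a\rvert_0=1$ this gives $\lvert af\rvert_S=\min_{h\in\ker(\pi)}\lVert a(f+h)\rVert_S=\min_{h\in\ker(\pi)}\lvert a\rvert_0\lVert f+h\rVert_S=\min_{h\in\ker(\pi)}\lVert f+h\rVert_S=\lvert f\rvert_S=\lvert a\rvert_0\lvert f\rvert_S$. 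If $a=0$, then since $0\in\ker(\pi)$ we get $\lvert 0\rvert_S\leq\lVert 0\rVert_S=0$, hence $\lvert 0\cdot f\rvert_S=0=\lvert 0\rvert_0\lvert f\rvert_S$.

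For the triangle inequality, given $h_1,h_2\in\ker(\pi)$ I would use $h_1+h_2\in\ker(\pi)$ together with the triangle inequality for $\lVert-\rVert_S$ to obtain $\lvert f+g\rvert_S\leq\lVert(f+h_1)+(g+h_2)\rVert_S\leq\lVert f+h_1\rVert_S+\lVert g+h_2\rVert_S$; taking the minimum first over $h_1\in\ker(\pi)$ and then over $h_2\in\ker(\pi)$ yields $\lvert f+g\rvert_S\leq\lvert f\rvert_S+\lvert g\rvert_S$. There is no serious obstacle here; the only point requiring care is the bookkeeping around the trivial absolute value, namely that homogeneity amounts to the identity $\lvert af\rvert_S=\lvert f\rvert_S$ for every $a\neq 0$ rather than to an honest scaling relation. (It is a seminorm and not a norm only because $\lvert f\rvert_S=0$ merely says $f\in\ker(\pi)$, and $\ker(\pi)$ contains nonzero elements.)
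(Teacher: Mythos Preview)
Your proof is correct and follows the same approach as the paper: the paper's proof is the single sentence ``This follows from \ref{normV} and \ref{kerclosedsubspace}'', and what you have written is precisely the standard quotient-seminorm verification that this sentence leaves implicit. The only minor remark is that you correctly use only the \emph{linear subspace} part of Lemma~\ref{kerclosedsubspace}; closedness plays no role in the seminorm axioms here (it is automatic anyway since the topology is discrete).
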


\begin{proof}
This follows from \ref{normV} and \ref{kerclosedsubspace}.
\end{proof}

We can use this seminorm on $\mathcal{C}(F_n,S)$ to get a norm on $\tilde{\mathcal{C}}(F_n,S)$.

\begin{defn}
We define the \emph{length} of an element $[f] \in \tilde{\mathcal{C}}(F_n,S)$ by \[\lvert [f]\rvert_S=\lvert f\rvert_S.\]
\end{defn}

\begin{cor}\label{normontildeC}
Endow $\mathbb{R}$ with the trivial absolute value $\lvert -\rvert_0$. Then $\lvert -\rvert_S$ is an ultrametric norm on $\tilde{\mathcal{C}}(F_n,S)$.
\end{cor}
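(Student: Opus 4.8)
The plan is to deduce that $\lvert-\rvert_S$ is an ultrametric norm on $\tilde{\mathcal{C}}(F_n,S)$ directly from the preceding corollary, which already establishes that $\lvert-\rvert_S$ is a seminorm on $\mathcal{C}(F_n,S)$. The only genuinely new content is that passing to the quotient $\tilde{\mathcal{C}}(F_n,S)=\mathcal{C}(F_n,S)/\ker(\pi)$ turns the seminorm into an honest norm, and that the ultrametric triangle inequality survives the descent. First I would check that $\lvert[f]\rvert_S$ is well-defined, i.e.\ independent of the chosen representative: if $f\sim g$ then $\{h\in\ker(\pi)\mid \text{representatives of }[f]\}=\{h\in\ker(\pi)\mid\text{representatives of }[g]\}$ as sets, so the minima defining $\lvert f\rvert_S$ and $\lvert g\rvert_S$ are taken over the same set and hence agree. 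Concretely, $\lvert f\rvert_S=\min\{\lVert g'\rVert_S\mid f\sim g'\}=\min\{\lVert g'\rVert_S\mid g\sim g'\}=\lvert g\rvert_S$ since $\sim$ is an equivalence relation.

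Next I would verify the norm axioms on $\tilde{\mathcal{C}}(F_n,S)$. Absolute homogeneity with respect to $\lvert-\rvert_0$ and the ultrametric inequality $\lvert[f]+[g]\rvert_S\leq\max(\lvert[f]\rvert_S,\lvert[g]\rvert_S)$ are inherited from the fact that $\lvert-\rvert_S$ is an ultrametric seminorm on $\mathcal{C}(F_n,S)$: for the ultrametric inequality, pick representatives $f',g'$ of $[f],[g]$ with $\lVert f'\rVert_S=\lvert f\rvert_S$ and $\lVert g'\rVert_S=\lvert g\rvert_S$ (these exist because $\lVert-\rVert_S$ takes values in $\mathbb{N}$, so the minimum is attained); then $f'+g'$ represents $[f]+[g]$ and by Proposition \ref{normV}, $\lvert[f]+[g]\rvert_S\leq\lVert f'+g'\rVert_S\leq\max(\lVert f'\rVert_S,\lVert g'\rVert_S)=\max(\lvert[f]\rvert_S,\lvert[g]\rvert_S)$. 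Homogeneity is analogous and slightly easier since $\lvert a\rvert_0\in\{0,1\}$. The one axiom that requires the quotient, rather than just the seminorm, is definiteness: $\lvert[f]\rvert_S=0$ means $\min\{\lVert g\rVert_S\mid f\sim g\}=0$, i.e.\ there is $g\sim f$ with $\lVert g\rVert_S=0$; by Example \ref{0red} this forces $g=0$, hence $f\sim 0$, i.e.\ $[f]=0$ in $\tilde{\mathcal{C}}(F_n,S)$. Conversely $\lvert[0]\rvert_S=0$ is immediate.

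I do not expect any serious obstacle here; the corollary is essentially bookkeeping, and the only point deserving care is making sure the minimum in the definition of $\lvert-\rvert_S$ is actually attained (so that one may choose optimal representatives when proving the triangle inequality), which follows from $\lVert-\rVert_S$ being $\mathbb{N}$-valued and bounded below by $0$. If one wanted to avoid invoking attainment, one could instead argue with $\varepsilon$-approximate representatives, but since the norm is integer-valued this is unnecessary. So the proof is simply: well-definedness via the equivalence relation, then transport the ultrametric seminorm axioms through the quotient using attained representatives, and finally use Example \ref{0red} to upgrade the seminorm to a norm.
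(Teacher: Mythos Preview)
Your proposal is correct and follows essentially the same route as the paper: both arguments pick reduced (optimal) representatives $f',g'$ and push the ultrametric inequality for $\lVert-\rVert_S$ on $\mathcal{C}(F_n,S)$ down to $\lvert-\rvert_S$ on the quotient. The paper is terser about well-definedness and definiteness (packaging them into the general fact that a norm modulo a closed subspace yields a norm, via \ref{normV} and \ref{kerclosedsubspace}), while you spell these out explicitly using Example \ref{0red}; this is a difference in exposition only.
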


\begin{proof}
It follows from \ref{normV} and \ref{kerclosedsubspace} that $\lvert -\rvert_S$ is a norm on $\tilde{\mathcal{C}}(F_n,S)$. So it remains to show that $\lvert -\rvert_S$ induces an ultrametric distance on $\tilde{\mathcal{C}}(F_n,S)$: Let $f_1, f_2 \in \mathcal{C}(F_n,S)$. Then there are reduced elements $g_1, g_2 \in \mathcal{C}(F_n,S)$ such that $f_1\sim g_1$ and $f_2\sim g_2$. We get using \ref{ultra}:
\begin{equation}\begin{split}\lvert f_1+f_2\rvert_S
&=\min\{\lVert h\rVert_S\mid f_1+f_2\sim h\}\\
&\leq \min\{\lVert h_1+h_2\rVert_S\mid f_1\sim h_1, f_2\sim h_2\}\\
&\leq \lVert g_1+g_2\rVert_S\\
&\leq \max(\lVert g_1\rVert_S,\lVert g_2\rVert_S)\\
&=\max (\lvert [f_1]\rvert_S, \lvert [f_2]\rvert_S).\end{split}\end{equation}
\end{proof}

As in the subsection before we see that $\lvert -\rvert_S$ induces the discrete topology on $\tilde{\mathcal{C}}(F_n,S)$.

\subsection{Relations and recognizing reduced elements}

Visualizing elements of $\tilde{\mathcal{C}}(F_n,S)$ is not as straightforward as visualizing elements of $\mathcal{C}(F_n,S)$. In the same spirit as in the previous subsection we can think of $[f]$ as an equivalence class of weighted subtrees of $T_n$. While this gives us some intuition, at this stage we do not even have a way to decide if two weighted subtrees of $T_n$ represent the same equivalence class. Hartnick and Talambutsa solve this problem in \cite{HT}: They develop an algorithm to decide if $[f]=0\in \tilde{\mathcal{C}}(F_n,S)$. So for two elements $f,g \in \mathcal{C}(F_n,S)$ we can decide if $[f]=[g]\in \tilde{\mathcal{C}}(F_n,S)$ by applying the algorithm to $[f-g]\in \tilde{\mathcal{C}}(F_n,S)$. One major step towards this algorithm in \cite{HT} is the introduction of a condition on $f$ that guarantees $[f]\neq 0$. In this subsection we will show that the condition on $f$ in \cite{HT} even guarantees that $f$ is reduced. Then we will give a cruder criterion to show that $f$ is reduced, which is easier to check. This cruder criterion for reducedness will be one of our main tools in the rest of the article.

One key insight in \cite{HT} is a very good understanding of $\ker(\pi)$, which makes it possible to pass from one element $f \in \mathcal{C}(F_n,S)$ to an equivalent element $g \in \mathcal{C}(F_n,S)$. We will mention these results now, since they are also useful in the following sections.

\begin{defn}
Let $w=w_1...w_k\in F_n\setminus \{e\}$ be a reduced word. \begin{enumerate}[(i)]
\item Then we call $l_w=\#w-\sum_{s\in \bar{S}\setminus \{v_1^{-1}\}} \#sw$ a \emph{left-extension relation}.
\item Then we call $r_w=\#w-\sum_{s\in \bar{S}\setminus \{v_k^{-1}\}} \#ws$ a \emph{right-extension relation}.
\end{enumerate}
\end{defn}

\begin{lem}
Let $w=w_1...w_k\in F_n\setminus \{e\}$ be a reduced word. Then $l_w$ and $r_w$ are relations.
\end{lem}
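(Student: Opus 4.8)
The plan is to show directly that $\partial^1 l_w$ and $\partial^1 r_w$ are bounded functions on $F_n^2$; by definition of the equivalence relation this is exactly what it means for $l_w$ and $r_w$ to lie in $\ker(\pi)$. By symmetry (left/right), it suffices to treat $l_w$, so I will work with $l_w = \#w - \sum_{s \in \bar{S}\setminus\{w_1^{-1}\}} \#sw$.

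The key observation is an \emph{almost-additivity} identity for counting functions. For a reduced word $w$ and two words $u, v \in F_n$, when one concatenates $u$ and $v$ and reduces, the occurrences of $w$ as a reduced subword of $uv$ are: those entirely inside $u$, those entirely inside $v$, and a bounded number (at most $2|w|_S$, say) straddling the junction or affected by the cancellation that occurs when forming the reduced form of $uv$. Thus $\#w(uv) = \#w(u) + \#w(v) + O_w(1)$, where the error term is bounded by a constant depending only on $|w|_S$. The same estimate holds for each $\#sw$. The first step of the proof is to make this precise: fix reduced representatives, analyze the cancellation region, and record that $|\#w(uv) - \#w(u) - \#w(v)| \le C(w)$ for an explicit constant $C(w)$ depending only on $|w|_S$. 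This is the routine combinatorial core.

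The second step is the crucial cancellation. The point of the left-extension relation is that every occurrence of $w$ \emph{in the interior} of a word — i.e. one that is immediately preceded by some letter $s$ — is counted once by $\#w$ and once (with a minus sign) by exactly one of the $\#sw$ for $s \in \bar{S}\setminus\{w_1^{-1}\}$ (the letter $s = w_1^{-1}$ is impossible since $sw$ would not be reduced). Hence $l_w(v)$ counts, up to sign, only those occurrences of $w$ in $v$ that sit at the very beginning of $v$ or are preceded by a letter that cancels — a number bounded by a constant depending only on $|w|_S$, uniformly in $v$. So $l_w$ itself is already a bounded function: $\|l_w\|_\infty \le C'(w) < \infty$. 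In fact this immediately gives $l_w \sim 0$, hence $l_w \in \ker(\pi)$, which is the claim — one does not even need to pass through $\partial^1$.

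The main obstacle is bookkeeping the edge effects cleanly: an "occurrence of $w$ at the start of $v$" and an "occurrence straddling a cancellation boundary" must be counted with care so that the bounds are genuinely uniform in $v$ and not merely finite for each $v$. I would handle this by fixing, for a reduced word $v = r_1\cdots r_l$, the decomposition of its occurrences of $w$ according to the index of the first letter: the occurrences with starting index $\ge 2$ are precisely the "interior" ones and cancel perfectly between $\#w$ and $\sum_s \#sw$; only the occurrence with starting index $1$ (if any) survives, contributing at most $1$ in absolute value times $\max_s|\alpha|$ — here there is no weight, so at most $1$. Thus in fact $\|l_w\|_\infty \le 1$, and similarly $\|r_w\|_\infty \le 1$, which is a clean and sharp bound. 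This reduces the whole lemma to the elementary remark that $w$ occurs at the start of $v$ at most once.
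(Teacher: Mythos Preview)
Your argument ultimately lands on the correct proof, but the opening plan is confused: showing that $\partial^1 l_w$ is bounded would only establish that $l_w$ is a \emph{quasimorphism}, not that it lies in $\ker(\pi)$. Membership in $\ker(\pi)$ means $\|l_w\|_\infty < \infty$, which is a strictly stronger statement. You recognize this yourself in the ``second step'' and give the right argument there, so the detour through $\partial^1$ and the almost-additivity estimate in your ``first step'' are irrelevant and should be dropped.

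The core argument you give at the end is correct and sharp: for a reduced word $v = r_1\cdots r_l$, the occurrences of $w$ in $v$ starting at position $j \ge 2$ are in bijection with occurrences of $r_{j-1}w$ (and $r_{j-1} \in \bar S \setminus \{w_1^{-1}\}$ since $v$ is reduced), so these cancel exactly against $\sum_s \#sw(v)$; what remains is the single possible occurrence of $w$ at position $1$, giving $l_w(v) \in \{0,1\}$. This is the standard argument, and the paper itself simply defers to \cite{HT} (Lemma~2.2) rather than spelling it out. Your final two paragraphs are essentially that proof; just discard the false start.
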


\begin{proof}
See the proof of Lemma 2.2 in \cite{HT}.
\end{proof}

In the sections \ref{Tsingle}, \ref{Tsums} and \ref{fixpts} we will often want to pass from one element $f \in \mathcal{C}(F_n,S)$ to an equivalent element $g \in \mathcal{C}(F_n,S)$, which has better properties. We will always use only iterated left- and right-extension relations to do so. This is not a coincidence.

\begin{thm}[Calegari-Walker, Hartnick-Talambutsa]\label{relations}
The space of relations $\ker (\pi)$ is spanned by the set of left- and right-extension relation $\bigcup_{w\in F_n\setminus \{e\}} \{l_w,r_w\}$.
\end{thm}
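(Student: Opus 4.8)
The plan is to prove that the left- and right-extension relations span all of $\ker(\pi)$ by a double induction: the outer induction is on the length $\lVert f \rVert_S$ of a relation $f$, and the inner induction is on the number of ``top-level'' vertices $v$ (those with $\lvert v \rvert_S = \lVert f \rVert_S$) carrying nonzero weight. The key structural fact to establish first is a \emph{local obstruction} statement: if $f = \sum_{v \in I} \alpha(v)\#v$ is a relation with $\lVert f \rVert_S = k$ and $v_0$ is a reduced word of length $k$ with $\alpha(v_0) \neq 0$, then every way of extending $v_0$ on the right (or left) by one letter that keeps the word reduced must also occur in $I$ with suitable weights. More precisely, evaluating $f$ on long words of the form $u v_0^{\text{-ish}}$ or probing $f$ with the counting-function behaviour forces $\alpha(v_0) = \sum_{s} \alpha(v_0 s)$ where $s$ ranges over $\bar S \setminus \{w_k^{-1}\}$ — exactly the coefficient pattern of the right-extension relation $r_{v_0}$. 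This is essentially the content of the proof of Lemma 2.2 in \cite{HT} (the statement that $l_w, r_w$ are relations) read backwards, combined with the observation that a top-length term cannot be ``cancelled'' by lower-length terms when evaluated on a cleverly chosen test word.

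Granting that local obstruction, the reduction step is as follows. Let $f \in \ker(\pi)$ with $k = \lVert f \rVert_S \geq 2$ (the cases $k = 0, 1$ are handled by Examples \ref{0red} and \ref{1red}: a relation of length $\leq 1$ is necessarily $0$). Pick a top-level word $v_0$ with $\alpha(v_0) \neq 0$, say $v_0 = w_1 \cdots w_k$. The local obstruction at $v_0$ applied, say, on the right tells us $\alpha(v_0) = \sum_{s \in \bar S \setminus \{w_k^{-1}\}} \alpha(v_0 s)$; but the words $v_0 s$ have length $k+1 > \lVert f \rVert_S$, so all those weights vanish, forcing $\alpha(v_0) = 0$ unless $k+1$ is still $\leq \lVert f \rVert_S$ — contradiction. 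Wait: this would say no relation has a top-level term at all, which is false. The fix is that the correct local obstruction is phrased one level down: a relation's length-$k$ coefficients are constrained by its length-$(k-1)$ coefficients via $l_w$ and $r_w$ for $\lvert w \rvert_S = k-1$. So instead, I would argue: for each word $w$ of length $k-1$ appearing in $f$ (with the appropriate weight), subtract $\alpha(w)\, r_w$ (a relation of length $k$) from $f$; after doing this for all such $w$, the resulting $f'$ is still a relation, still has $\lVert f' \rVert_S \leq k$, and — by the local obstruction run in the forward direction — has \emph{no} length-$k$ terms whose weights are ``pinned'' by length-$(k-1)$ data. Iterating with $l_w$ as well kills all length-$k$ terms, dropping $\lVert f' \rVert_S$ below $k$, and the outer induction finishes.

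The genuinely delicate point — and the one I expect to be the main obstacle — is making the ``peeling'' argument in the previous paragraph actually terminate and actually produce the extension relations as the correction terms, rather than some uncontrolled combination. The issue is that subtracting $r_w$ introduces new length-$k$ terms ($\#ws$ for various $s$), so one must choose an ordering of the length-$(k-1)$ words $w$ and of the corrections so that the process is triangular. I would organize this by the ``first letter'' (for left extensions) or ``last letter'' structure, or more robustly by working in the quotient: consider the span $R_k \subseteq \ker(\pi)$ of all $l_w, r_w$ with $\lvert w \rvert_S \leq k-1$, and show by induction on $k$ that $\ker(\pi) \cap \{f : \lVert f \rVert_S \leq k\} = R_k$. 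The inductive step needs: given a relation $f$ of length exactly $k$, its image in $\mathcal{C}(F_n,S) / R_k$ has length $< k$, i.e.\ the length-$k$ part of $f$ lies in the length-$k$ part of $R_k$. This is exactly where the local obstruction enters: it says the length-$k$ part of any relation is determined (as a linear functional of the lower-length parts) by the same formulas defining $l_w, r_w$, so it lies in the span of their length-$k$ parts. Finally one invokes the $\lVert \cdot \rVert_S$-discreteness (Proposition \ref{normV}) so there are no convergence subtleties in ``spanned by.'' I would also remark that the attribution to Calegari--Walker covers the $n=2$ or maximal-cyclic-word case and Hartnick--Talambutsa the general statement, citing \cite{HT} for the detailed version.
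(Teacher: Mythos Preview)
The paper does not give a proof here; it simply cites Theorem~1.3 of \cite{HT} and the preprint of \cite{CalegariWalker}. So the only comparison to make is between your sketch and the actual argument in \cite{HT}, which the paper summarizes just after Corollary~\ref{unbalancedreduced}.

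Your outer structure --- induct on $\lVert f\rVert_S$, peel off extension relations to drop the length --- is the right one and matches \cite{HT}. The gap is the ``local obstruction'': none of your three formulations is correct. The first ($\alpha(v_0)=\sum_s\alpha(v_0 s)$) you rightly discard. The last (``the length-$k$ part of any relation is determined as a linear functional of the lower-length parts by the same formulas defining $l_w,r_w$'') is false as stated: $l_w$ and $r_w$ for the same $w$ with $\lvert w\rvert_S=k-1$ have identical length-$(k{-}1)$ part (namely $\#w$) but distinct length-$k$ parts, so the top level is not a function of the lower levels. The correct obstruction is exactly Theorem~\ref{unbalancedtrivial}: a relation is never \emph{unbalanced}. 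The inductive step in \cite{HT} is then an algorithm that takes a balanced $f$ of length $k\geq 2$ and, using the brotherhood structure in the definition of ``balanced'' (not a single coefficient identity), subtracts suitable $l_w,r_w$ with $\lvert w\rvert_S=k-1$ to produce an equivalent element of strictly smaller length or fewer top-level vertices. Your middle proposal --- subtract $\alpha(w)\,r_w$ for every $w$ of length $k-1$ --- does not do this: it \emph{adds} length-$k$ terms $\alpha(w)\#ws$ rather than removing them, and you never establish why the resulting length-$k$ layer vanishes. In short, the reduction step and the non-triviality criterion both hinge on the balanced/unbalanced dichotomy, which your sketch does not invoke; that is where the substance of the HT proof lives.
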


\begin{proof}
See the proof of Theorem 1.3 in \cite{HT}. See also the preprint version of \cite{CalegariWalker}.
\end{proof}

Now we want to explain the sufficient condition on $f$ for $[f]\neq 0$ given in \cite{HT}. The following definitions are different from \cite{HT}, since Hartnick and Talambutsa actually work on the weighted trees which we only use for visualization, but they are essentially equivalent.

\begin{defn}
Let $v,w \in F_n$ and assume $\lvert v\rvert_S=\lvert w\rvert_S\geq 2$.\begin{enumerate}
\item We call $v$ and $w$ \emph{right brothers} if $\lvert v^{-1}w\rvert_S=2$.
\item We call $v$ and $w$ \emph{left brothers} if $\lvert vw^{-1}\rvert_S=2$.
\end{enumerate}
We also introduce a notation for the sets of right- and left-brothers.\begin{enumerate}
\item We call $\mathrm{rBr}(v)=\{w\in F_n \mid v,w\ \text{are right brothers}\}$ the \emph{right brotherhood} of $v$.
\item We call $\mathrm{lBr}(v)=\{w\in F_n \mid v,w\ \text{are left brothers}\}$ the \emph{left brotherhood} of $v$.
\end{enumerate}
\end{defn}

If $v,w$ are right brothers, the reduced words representing $v$ and $w$ differ only by the last letter. Right brothers are easy to visualize in the rooted right Cayley tree $T_n$. (Actually what we call a visualization is the definition of \emph{brothers} in \cite{HT}.) Two vertices $v$ and $w$ of $T_n$ are right-brothers if the vertices on the unique geodesic between $v$ and $e$ excluding $v$ are exactly the vertices on the unique geodesic between $w$ and $e$ excluding $w$. If $v,w$ are left brothers, the reduced words representing $v$ and $w$ differ only by the first letter. Left brothers are harder to visualize in the right-Cayley tree $T_n$. One can visualize them exactly as right brothers in the left Cayley tree of $F_n$.

\begin{defn}
Let $f=\sum_{v\in I} \alpha(v)\#v \in \mathcal{C}(F_n,S)$ and assume that $\lVert f\rVert_S\geq 2$. Then $f$ is called \emph{unbalanced} if there is $v_0\in I$ with $\lvert v_0\rvert_S=\lVert f\rVert_S$ and $\alpha(v_0)\neq 0$ such that \begin{enumerate}
\item There is $v_1 \in \mathrm{rBr}(v_0)$ such that $\alpha(v_1)\neq \alpha(v_0)$.
\item There is $v_2 \in \mathrm{lBr}(v_0)$ such that $\alpha(v_2)=0$ and $\alpha(w)=0$ for all $w\in \mathrm{rBr}(v_2)$.
\end{enumerate}
\end{defn}

\begin{rem}
Let $f=\sum_{v\in I} \alpha(v)\#v \in \mathcal{C}(F_n,S)$. Being unbalanced is a property of the value of $\alpha$ on words $v\in I$ with maximal word length $\lvert v\rvert_S=\lVert f\rVert_S$.
\end{rem}

The term \emph{unbalanced} comes from the visualization of $f\in \mathcal{C}(F_n,S)$. We will give two examples and hope this gives the intuition.

\begin{exmp}
Let $F_2$ be the free group on the alphabet $S=\{a,b\}$. Then for example $f=5\#aa-3\#ab+\#b$ is unbalanced. Its visualization is
\begin{center}
\begin{tikzpicture} [every tree node/.style={draw,circle},
   level distance=1.75cm,sibling distance=1cm, 
   edge from parent path={(\tikzparentnode) -- (\tikzchildnode)}]
\Tree [.\node {0}; 
    \edge node[auto=right] {$a$};
    [.0  
        \edge node[auto=right] {$aa$}; [.5  ] 
        \edge node[auto=left] {$ab$}; [.-3 ] 
    ]
    \edge node[auto=left] {$b$};
    [.1
    ] ]
\end{tikzpicture}
\end{center}
\end{exmp}

\begin{exmp}
Let $F_2$ be the free group on the alphabet $S=\{a,b\}$. Then for example $f=\#a+4\#b^{-1}+5\#ab-2\#a^{-1}b-2\#ba+\#bb+\#b^{-1}a$ is balanced. Its visualization is
\begin{center}
\begin{tikzpicture} [every tree node/.style={draw,circle},
   level distance=1.75cm,sibling distance=1cm, 
   edge from parent path={(\tikzparentnode) -- (\tikzchildnode)}]
\Tree [.\node {0}; 
    \edge node[auto=right] {$a$};
    [.1  
        \edge node[auto=right] {$ab$}; [.5 ] 
    ]
    \edge node[auto=left] {$a^{-1}$};
    [.0
    		\edge node[auto=right] {$a^{-1}b$}; [.-2 ] 
    ]
    \edge node[auto=right] {$b$};
    [.0  
        \edge node[auto=right] {$ba$}; [.-2 ]
        \edge node[auto=left] {$bb$}; [.1 ] 
    ]
    \edge node[auto=left] {$b^{-1}$};
    [.4  
    		\edge node[auto=left] {$b^{-1}a$}; [.1 ]
    ] ]
\end{tikzpicture}
\end{center}
\end{exmp}

\begin{thm} [Hartnick, Talambutsa]\label{unbalancedtrivial}
Every unbalanced element in $\mathcal{C}(F_n,S)$ represents a non-trivial element in $\tilde{\mathcal{C}}(F_n,S)$.
\end{thm}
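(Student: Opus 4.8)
The plan is to show that if $f = \sum_{v \in I}\alpha(v)\#v$ is unbalanced with witness $v_0$, then no element $g \sim f$ can have length strictly less than $\lVert f\rVert_S$, which in particular forces $[f] \neq 0$ in $\tilde{\mathcal{C}}(F_n,S)$. By Theorem \ref{relations}, any $g$ with $g \sim f$ differs from $f$ by a finite linear combination of left- and right-extension relations $l_w$ and $r_w$. So the whole argument reduces to understanding how adding such relations can possibly alter the weights on words of maximal length $\lVert f\rVert_S$, and to showing that the unbalancedness condition obstructs cancelling \emph{all} of them.

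First I would record the local effect of the extension relations on top-length coefficients. Writing $L = \lVert f\rVert_S$ and $h = f + \sum_w c_w l_w + \sum_w d_w r_w$, only relations $l_w, r_w$ with $|w|_S \leq L$ can affect the coefficients of $h$ on length-$L$ words (relations indexed by longer words introduce new words of length $> L-1$ that would have to be cancelled among themselves, pushing the length up, not down — this monotonicity point needs a short lemma). Among these, a relation $l_w$ with $|w|_S = L$ contributes $\alpha_w$ to the coefficient of $w$ and subtracts it from each $sw$ of length $L+1$; a relation $l_w$ with $|w|_S = L-1$ redistributes weight \emph{onto} the length-$L$ words $sw$, uniformly (the same coefficient $+c_w$ for every admissible prefix $s$). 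Symmetrically for $r_w$. The key structural fact is therefore: the only way a combination of extension relations changes the length-$L$ coefficients without creating length-$(L+1)$ or length-$L$ words elsewhere is via relations of the form $l_w$ and $r_w$ with $|w|_S = L-1$, and each such relation adds a constant along an entire right-brotherhood (for $l_w$) or left-brotherhood (for $r_w$) of length-$L$ words.

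Now I would run the unbalancedness hypothesis against this. Suppose toward a contradiction that $g \sim f$ has $\lVert g\rVert_S < L$, i.e. all length-$L$ coefficients of $h := f$-plus-relations vanish. Condition (2) gives $v_2 \in \mathrm{lBr}(v_0)$ with $\alpha$ vanishing on $v_2$ and on all of $\mathrm{rBr}(v_2)$; so to kill the (already zero) coefficients on $\mathrm{rBr}(v_2)$ while also not disturbing them back to nonzero, the net contribution of $r$-relations along the left-brotherhood through $v_2$, and of $l$-relations along right-brotherhoods meeting $\mathrm{rBr}(v_2)$, must be balanced in a rigid way. Tracing this constraint along $\mathrm{rBr}(v_0)$ — which shares its "left part" with appropriate brothers — one finds that the correction added to the coefficient of $v_0$ must equal the correction added to the coefficient of $v_1$ (the two top-length words get the \emph{same} uniform adjustment from the only relations that are allowed to act). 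But $\alpha(v_0) \neq \alpha(v_1)$ by condition (1), so they cannot both become zero simultaneously — contradiction. Hence $\lVert g\rVert_S \geq L$ for all $g \sim f$, so $\lvert f\rvert_S = L = \lVert f\rVert_S > 0$ (as $L \geq 2$), and $[f] \neq 0$; this in fact proves the stronger statement that $f$ is reduced, which is what the next subsection claims.

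The main obstacle will be the bookkeeping in the last step: making precise the claim that the admissible relations force \emph{equal} adjustments on $v_0$ and $v_1$. One must be careful that a relation $l_w$ with $|w|_S = L-1$ touches an entire right-brotherhood at once, a relation $r_w$ with $|w|_S = L-1$ touches an entire left-brotherhood at once, and that $v_0, v_1$ lie in a common right-brotherhood while $v_0, v_2$ lie in a common left-brotherhood; juggling which relations are "allowed" (i.e. do not reintroduce length-$L$ weight somewhere we have not accounted for, and do not create uncancellable length-$(L+1)$ weight) is exactly where the two conditions in the definition of \emph{unbalanced} are both needed. I expect the cleanest route is to project everything to the quotient by relations indexed by words of length $> L-1$ first, reducing to a finite-dimensional linear-algebra statement about the brotherhood incidence structure at level $L$, and then verify the incompatibility directly there.
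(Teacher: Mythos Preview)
The paper does not supply its own proof here; it simply refers to Theorem~4.2 of \cite{HT}. So there is no detailed argument in the present paper to compare against, and the substantive question is whether your sketch can be completed.

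Your strategy is the natural one, but two points need fixing. First, you have the brotherhoods reversed: for $|w|_S=L-1$, the relation $l_w=\#w-\sum_s\#sw$ shifts the coefficients of the words $sw$, which share the suffix $w$ and differ only in their first letter, hence form a \emph{left}-brotherhood; symmetrically $r_w$ acts along a \emph{right}-brotherhood. With the correct orientation the endgame is clean: writing $v_0=s_1\cdots s_L$, $v_1=s_1\cdots s_{L-1}t$, $v_2=t's_2\cdots s_L$, the vanishing equations $\alpha(u)+c_{\operatorname{suffix}(u)}+d_{\operatorname{prefix}(u)}=0$ on $\{v_2\}\cup\mathrm{rBr}(v_2)$ (all of which share the prefix $t's_2\cdots s_{L-1}$) force $c_{s_2\cdots s_{L-1}r}$ to be independent of the last letter $r$; since $v_0$ and $v_1$ share the prefix $s_1\cdots s_{L-1}$, this yields $\alpha(v_0)=\alpha(v_1)$, contradicting condition~(1).

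Second, and more seriously, the ``monotonicity lemma'' you defer is the real content and is not a formality. Theorem~\ref{relations} only says that $\ker(\pi)$ is spanned by the $l_w,r_w$; it does not say that a relation of length $\le L$ lies in the span of those with $|w|_S\le L-1$, nor even that its restriction to length-$L$ words is a sum of brotherhood-constants. A priori, a cancellation among extension relations indexed by long words could produce a length-$\le L$ element whose top slice is something else. You must either (a) verify from \cite{HT} that the spanning theorem is proved in this filtered form and is logically prior to the unbalanced criterion (beware: in \cite{HT} the reduction algorithm intertwines both statements, so there is a real circularity risk if you just cite Theorem~\ref{relations} as a black box), or (b) prove the filtered statement directly. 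For (b), the syzygy
\[
l_w-r_w \;=\; \sum_{s\neq w_k^{-1}} l_{ws}\;-\;\sum_{t\neq w_1^{-1}} r_{tw}
\]
is the relevant identity among extension relations, and a downward induction on the maximal $|w|_S$ appearing in a given representation of $h\in\ker(\pi)_{\le L}$, using the top-degree cancellation constraint $c_{\operatorname{suffix}(u)}+d_{\operatorname{prefix}(u)}=0$ to rewrite via this syzygy, is the expected route.
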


\begin{proof}
See the proof of Theorem 4.2 in \cite{HT}.
\end{proof}

\begin{cor}\label{unbalancedreduced}
Every unbalanced element in $\mathcal{C}(F_n,S)$ is reduced.
\end{cor}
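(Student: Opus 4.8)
The plan is to derive reducedness directly from Theorem \ref{unbalancedtrivial}, exploiting the ultrametric structure of $\lVert-\rVert_S$ together with the explicit description of $\ker(\pi)$ in Theorem \ref{relations}. Let $f\in \mathcal{C}(F_n,S)$ be unbalanced, say with $\lVert f\rVert_S = N$; note $N\geq 2$. We want $\lvert f\rvert_S = N$, i.e.\ that no $g\sim f$ has $\lVert g\rVert_S < N$. So suppose for contradiction that there is a relation $h\in\ker(\pi)$ with $\lVert f+h\rVert_S < N$.

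The key claim is that the unbalancedness of $f$ persists after adding any relation supported in length $\leq N$ whenever the top layer (length-$N$ part) of $f$ is not entirely cancelled; more precisely, I would show that if $f$ is unbalanced with $\lVert f\rVert_S=N$ and $h\in\ker(\pi)$, then $f+h$ cannot have $\lVert f+h\rVert_S<N$. First reduce to the case $\lVert h\rVert_S \leq N$: the top layer (length exactly $\lVert h\rVert_S$) of any relation must itself be a relation of that length, since relations of a given length are detected on words of exactly that length (this is implicit in the structure of $l_w, r_w$ and Theorem \ref{relations}); hence if $\lVert h\rVert_S>N$ its top layer would have to cancel inside $h$, contradicting $h$ being its own top layer, or else $\lVert f+h\rVert_S = \lVert h\rVert_S > N$, not $<N$. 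So assume $\lVert h\rVert_S\leq N$. If $\lVert h\rVert_S<N$ then the length-$N$ parts of $f$ and $f+h$ agree, so $\lVert f+h\rVert_S=N$, done. Thus $\lVert h\rVert_S=N$, and $h$ is a linear combination of left- and right-extension relations $l_w, r_w$ with $\lvert w\rvert_S\leq N-1$ (these are the only generators of $\ker(\pi)$ whose length can reach $N$), equivalently $h$'s length-$N$ part is a combination of the length-$N$ terms of such $l_w, r_w$.

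Now I examine the witnessing word $v_0\in I$ with $\lvert v_0\rvert_S=N$, $\alpha(v_0)\neq 0$, together with the right-brother $v_1$ (with $\alpha(v_1)\neq\alpha(v_0)$) and the left-brother $v_2$ (with $\alpha(v_2)=0$ and $\alpha(w)=0$ for all $w\in\mathrm{rBr}(v_2)$). A left-extension relation $l_w$ with $\lvert w\rvert_S = N-1$ contributes, at length $N$, a coefficient $-1$ on each word $sw$ with $s\in\bar S\setminus\{w_1^{-1}\}$; crucially, all these words $sw$ share the same suffix $w$ of length $N-1$, so they form a single left-brotherhood at level $N$, and $l_w$ assigns them all the coefficient $-1$. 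A right-extension relation $r_w$ with $\lvert w\rvert_S = N-1$ contributes, at length $N$, the coefficient $-1$ uniformly to all $ws$, $s\in\bar S\setminus\{w_k^{-1}\}$, which form a single right-brotherhood. The point is that in the top layer, each generating relation is "balanced" across a brotherhood: it is constant on a right-brotherhood (for $r_w$) and constant on a left-brotherhood (for $l_w$). I would then argue that adding such $h$ preserves both defining properties of unbalancedness: the difference $\alpha(v_0)-\alpha(v_1)$ on right-brothers can only be changed by the $l_w$-type terms, and one checks these act on right-brothers in a way that does not force equality unless it was already forced (here one needs that $v_0, v_1$ being right brothers means they have different length-$N$ prefixes... rather, different final letters, so they lie in the same right-brotherhood, and an $l_w$ term is \emph{not} constant on a right-brotherhood in general — so I must track this carefully); symmetrically the vanishing of $\alpha$ on $v_2$ and all of $\mathrm{rBr}(v_2)$ is preserved because the relevant relations, being constant on the right-brotherhood $\mathrm{rBr}(v_2)$, either leave the whole brotherhood at $0$ or shift it uniformly, and a uniform nonzero shift on $\mathrm{rBr}(v_2)$ would by the right-extension relation $r_{v_2'}$ (where $v_2'$ is the common prefix) have to be compensated elsewhere. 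Hence $f+h$ is again unbalanced with $\lVert f+h\rVert_S=N$, so $[f+h]\neq 0$ by Theorem \ref{unbalancedtrivial}, in particular $f+h\neq 0$ but more to the point $\lVert f+h\rVert_S = N$, contradicting $\lVert f+h\rVert_S < N$.

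The main obstacle I anticipate is precisely the bookkeeping in the previous paragraph: verifying that \emph{some} length-$N$ witness of unbalancedness survives the addition of an arbitrary top-layer relation $h$. The cleanest route is probably not to track a fixed $v_0$ but to show the \emph{property} "unbalanced" is invariant under $\ker(\pi)$ in top degree, by reducing to the two generator types $l_w, r_w$ and checking that each alters the length-$N$ coefficients only within the pattern (constant on a left- resp.\ right-brotherhood, with the $-1/(\text{missing letter})$ combinatorics) that the definition of unbalanced is designed to be insensitive to — this is, after all, the moral content of Hartnick--Talambutsa's Theorem \ref{unbalancedtrivial}, whose proof in \cite{HT} should supply exactly the needed invariance, so in practice I would cite \cite{HT} for the hard combinatorial core and only assemble the reducedness conclusion here.
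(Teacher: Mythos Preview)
Your approach takes a much harder route than necessary and does not close. You set out to show that when $\lVert h\rVert_S=N$, the unbalanced witness for $f$ survives in $f+h$, by dissecting how the generators $l_w,r_w$ act on left- and right-brotherhoods at level $N$. As you yourself note, this bookkeeping is exactly the combinatorial core of the Hartnick--Talambutsa proof of Theorem~\ref{unbalancedtrivial}; you do not carry it out, and your sketch contains an error along the way (the assertion that ``the top layer of any relation must itself be a relation of that length'' is false---for instance the top layer of $r_w$ is $-\sum_s \#ws$, which is unbounded). The reduction to $\lVert h\rVert_S\le N$ is fine, but it follows in one line from the strict ultrametric inequality, not from that false claim.

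The paper's argument sidesteps all of this with a one-line trick you missed: instead of analysing $f+h$, analyse $f-g$ where $g:=f+h$ is the assumed shorter representative. Since $\lVert g\rVert_S<N$, the level-$N$ coefficients of $f-g$ coincide with those of $f$; unbalancedness depends only on these coefficients, so $f-g$ is unbalanced. By Theorem~\ref{unbalancedtrivial} this forces $[f-g]\neq 0$, contradicting $f\sim g$. No structural analysis of $\ker(\pi)$ is needed at all---Theorem~\ref{unbalancedtrivial} is applied as a black box to $f-g$, not to $f+h$.
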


\begin{proof}
Actually the proof of Theorem 4.2 in \cite{HT} already shows this implicitly, but we can also deduce this statement from \ref{unbalancedtrivial}: Assume $f=\sum_{v\in I} \alpha(v)\#v$ is unbalanced, but not reduced. Then there is $g=\sum_{v\in J} \beta(v)\#v$ such that $[f]=[g]$ and $\lVert g\rVert_S<\lVert f\rVert_S$. But $f-g$ is still unbalanced, since $\alpha(v)-\beta(v)=\alpha(v)$ for all $v$ such that $\lvert v\rvert_S=\lVert f\rVert_S$. By the theorem $[f-g]\neq 0$, so $[f]\neq [g]$. This is a contradiction.
\end{proof}

At this point we want to desribe the algorithm of \cite{HT} without going into any details: The algorithm starts with an element $f\in \mathcal{C}(F_n,S)$ of which we want to decide if $[f]\neq 0$. After every step $k$ we get an element $g_k\in \mathcal{C}(F_n,S)$ such that $[f]=[g_k]$. The algorithm ends in one of the three following ways:\begin{enumerate}
\item The element $g_k\in \mathcal{C}(F_n,S)$ is unbalanced.
\item The element $g_k\in \mathcal{C}(F_n,S)$ has $\lVert g_k\rVert_S=1$.
\item The element $g_k\in \mathcal{C}(F_n,S)$ is $0$.
\end{enumerate} In the first two cases the algorithm returns that $[f]=[g_k]\neq 0$, in the last case it returns $[f]=0$. By \ref{unbalancedreduced} and \ref{1red}, we can conclude that $g_k$ is actually reduced if the algorithm ends after $k$ steps. So the algorithm does not only decide if $[f]=0$, but it finds a reduced element in $[f]$. In general an equivalence class $[f]$ has more than one reduced representative.

Now we will introduce a rather crude criterion to check if $f \in \mathcal{C}(F_n,S)$ is unbalanced. This will be our main tool to prove that $f \in \mathcal{C}(F_n,S)$ is reduced in the rest of this article.

\begin{defn}
Let $w=s_1...s_k$ be a reduced word in $F_n$ and let $s\in S$. Then $w$ is called \emph{$s$-truncated} if $s_1,s_k\in \bar{S}\setminus\{s,s^{-1}\}$.
\end{defn}

\begin{defn}
Let $w=s_1...s_k$ be a reduced word in $F_n\setminus \{e\}$ and let $s\in S$. We define the \emph{truncation operator} $\tau_s$ the following way:\begin{enumerate}
\item Assume that $s_i\in \{s,s^{-1}\}$ for $1\leq i\leq k$. Then we set $\tau_s(w)=\varnothing$. 
\item Assume there is $1\leq i\leq k$ such that $s_i\notin \{s,s^{-1}\}$. Let \begin{align} i_0&=\min \{1\leq j\leq k\mid s_j\notin \{s,s^{-1}\}\}\\
i_1&=\max \{1\leq j\leq k\mid s_j\notin \{s,s^{-1}\}\}.\end{align} Then we set $\tau_s(w)=s_{i_0}...s_{i_1}$
\end{enumerate} In particular $w$ is $s$-truncated if $\tau_s(w)=w$.
\end{defn}

\begin{defn}
Given a finite set of words $I\subset F_n\setminus \{e\}$, we define: \[n_s(I)=\max \{\lvert v\rvert_S\mid v\in I, \tau_s(v)\neq v\}.\]  The number $n_s(I)$ is equal to the maximal length of $v\in I$, which is not $s$-truncated. We call $E_s(I)=\{v\in I\mid \lvert v\rvert_S>n_b(I)\}$ the \emph{s-truncated end} of $I$.
\end{defn} 

Using these definitions, we can finally formulate the promised criterion for reducedness.

\begin{prop}\label{redtree}
Let $f=\sum_{v\in I} \alpha(v)\#v$ be a sum of counting functions. Assume there exists $w\in E_s(I)$ such that $\alpha(w)\neq 0$. Then $f$ is reduced. In particular $\lvert f\rvert_S\geq \lvert w\rvert_S$.
\end{prop}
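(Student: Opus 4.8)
The plan is to deduce this from Corollary \ref{unbalancedreduced}: I will show that, apart from the trivial cases $\lVert f\rVert_S\le 1$, the hypothesis forces $f$ itself to be unbalanced, and then reducedness together with the inequality $\lvert f\rvert_S\ge\lvert w\rvert_S$ drops out. The first, purely bookkeeping, observation is that the hypothesis already pins down $\lVert f\rVert_S$ relative to $n_s(I)$: since $\alpha(w)\neq 0$ we have $\lvert w\rvert_S\le\lVert f\rVert_S$, and since $w\in E_s(I)$ we have $\lvert w\rvert_S>n_s(I)$, so $\lVert f\rVert_S>n_s(I)$. If $\lVert f\rVert_S=0$ then $f=0$, contradicting $\alpha(w)\neq 0$; if $\lVert f\rVert_S=1$ then $f$ is reduced by Example \ref{1red} and there is nothing to prove. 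So assume $\lVert f\rVert_S\ge 2$ and fix any $v_0\in I$ with $\lvert v_0\rvert_S=\lVert f\rVert_S$ and $\alpha(v_0)\neq 0$. Since $\lvert v_0\rvert_S=\lVert f\rVert_S>n_s(I)$, the word $v_0$ is not among the non-$s$-truncated words of $I$ (those have length at most $n_s(I)$), so $v_0$ is $s$-truncated: writing $v_0=s_1\cdots s_k$ we have $s_1,s_k\in\bar S\setminus\{s,s^{-1}\}$.

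The key step is to produce the two witnesses in the definition of ``unbalanced'' for this $v_0$. I will use repeatedly the following: if $z$ is a reduced word with $\lvert z\rvert_S=\lVert f\rVert_S$ that is \emph{not} $s$-truncated, then $\alpha(z)=0$; indeed, if $\alpha(z)\neq 0$ then $z\in I$, but a non-$s$-truncated word of $I$ has length at most $n_s(I)<\lVert f\rVert_S=\lvert z\rvert_S$, a contradiction. Now, for the right brother: because $s_k\notin\{s,s^{-1}\}$ I may pick $u\in\{s,s^{-1}\}$ with $u\neq s_{k-1}^{-1}$ (at most one of $s,s^{-1}$ is excluded by reducedness), and set $v_1=s_1\cdots s_{k-1}u$. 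Then $v_1$ is reduced, $v_1\neq v_0$ (as $u\in\{s,s^{-1}\}$ while $s_k\notin\{s,s^{-1}\}$), and $v_0^{-1}v_1=s_k^{-1}u$ has length exactly $2$ (here $s_k\notin\{s,s^{-1}\}$ is what prevents further cancellation), so $v_1\in\mathrm{rBr}(v_0)$; its last letter lies in $\{s,s^{-1}\}$, so $v_1$ is not $s$-truncated and $\alpha(v_1)=0\neq\alpha(v_0)$. Symmetrically, since $s_1\notin\{s,s^{-1}\}$ I pick $u'\in\{s,s^{-1}\}$ with $u'\neq s_2^{-1}$ and set $v_2=u's_2\cdots s_k\in\mathrm{lBr}(v_0)$; every element of $\{v_2\}\cup\mathrm{rBr}(v_2)$ has first letter $u'\in\{s,s^{-1}\}$, hence is not $s$-truncated and has length $\lVert f\rVert_S$, so $\alpha(v_2)=0$ and $\alpha(w')=0$ for all $w'\in\mathrm{rBr}(v_2)$. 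Thus $f$ is unbalanced.

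It then remains only to assemble the conclusion: by Corollary \ref{unbalancedreduced} an unbalanced element is reduced, so $\lvert f\rvert_S=\lVert f\rVert_S$; and since $\alpha(w)\neq 0$ we have $\lVert f\rVert_S\ge\lvert w\rvert_S$, giving $\lvert f\rvert_S\ge\lvert w\rvert_S$. The only place that requires genuine care — the ``main obstacle'' — is the middle step: choosing the brothers $v_1,v_2$ with a first/last letter in $\{s,s^{-1}\}$ while keeping the words reduced and making sure the products $v_0^{-1}v_1$ and $v_2v_0^{-1}$ collapse to a word of length exactly $2$ rather than $0$; this is precisely where the $s$-truncatedness of $v_0$ (i.e. $s_1,s_k\notin\{s,s^{-1}\}$) is used. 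Everything else is immediate once one notices that $\lVert f\rVert_S>n_s(I)$, which localizes the whole argument to the ``$s$-truncated end'' of the tree.
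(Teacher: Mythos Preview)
Your proof is correct and follows essentially the same route as the paper's: show that a word of maximal length in the support is $s$-truncated, produce a non-$s$-truncated right brother and a non-$s$-truncated left brother (whose entire right brotherhood is then non-$s$-truncated), conclude that $f$ is unbalanced, and invoke Corollary~\ref{unbalancedreduced}. The only cosmetic difference is that the paper isolates the existence of non-$s$-truncated left and right brothers of an $s$-truncated word as a separate lemma (Lemma~\ref{lrbrotherstrunc}) and then applies it, whereas you inline that construction directly; your argument is in fact slightly more explicit than the paper's in justifying why a maximal-length word $v_0$ with $\alpha(v_0)\neq 0$ must lie in $E_s(I)$.
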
 

To prove Proposition \ref{redtree} we want to show that $f=\sum_{v\in I} \alpha(v)\#v$ is unbalanced if there is $w\in E_s(I)$ such that $\alpha(w)\neq 0$. We need the following lemma.

\begin{lem}\label{lrbrotherstrunc}
Every $s$-truncated word $w$ with $\lvert w\rvert_S\geq 2$ has a left and a right brother, which are not $s$-truncated.
\end{lem}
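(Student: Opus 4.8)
The plan is to exhibit the two brothers explicitly, in each case by replacing a single extremal letter of $w$ with a letter from $\{s,s^{-1}\}$. Write $w=s_1\cdots s_k$ in reduced form with $k=\lvert w\rvert_S\geq 2$; since $w$ is $s$-truncated, both $s_1$ and $s_k$ lie in $\bar S\setminus\{s,s^{-1}\}$. The point is that introducing a letter from $\{s,s^{-1}\}$ at one end immediately destroys $s$-truncation, while changing only one extremal letter keeps the new word a brother of $w$; the sole constraint is staying reduced, and that forbids at most one of the two candidates $s,s^{-1}$.

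For the right brother I would set $v=s_1\cdots s_{k-1}t$ for a suitable $t\in\{s,s^{-1}\}$. The word $v$ is reduced as soon as $t\neq s_{k-1}^{-1}$, and since $\{s,s^{-1}\}$ has two elements while $s_{k-1}^{-1}$ is at most one of them, such a $t$ exists; this is the only place the hypothesis $k\geq 2$ enters, as it guarantees the penultimate letter $s_{k-1}$ exists. Then $\lvert v\rvert_S=k=\lvert w\rvert_S$, and $v^{-1}w=t^{-1}s_k$ is reduced of length $2$ because $t\neq s_k$ (indeed $t\in\{s,s^{-1}\}$ but $s_k\notin\{s,s^{-1}\}$), so $v\in\mathrm{rBr}(w)$. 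Finally $v$ is not $s$-truncated, since its last letter $t$ belongs to $\{s,s^{-1}\}$, hence $\tau_s(v)\neq v$.

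The left brother is handled symmetrically: I would set $u=t's_2\cdots s_k$ with $t'\in\{s,s^{-1}\}$ chosen so that $t'\neq s_2^{-1}$, which is again possible by the two-element pigeonhole. Then $\lvert u\rvert_S=k$ and $uw^{-1}=t's_1^{-1}$ is reduced of length $2$ because $t'\neq s_1$ (as $t'\in\{s,s^{-1}\}$ and $s_1\notin\{s,s^{-1}\}$), so $u\in\mathrm{lBr}(w)$; and $u$ is not $s$-truncated because its first letter $t'$ lies in $\{s,s^{-1}\}$.

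No step is a serious obstacle; the only point requiring care is the case in which the penultimate letter $s_{k-1}$ (respectively the second letter $s_2$) itself equals $s$ or $s^{-1}$, so that one of the two candidate replacements is forbidden by reducedness. In that case the remaining candidate still works, and I would write out this elementary case distinction explicitly, together with the verifications that $v$ (respectively $u$) is genuinely a brother of $w$ and genuinely fails to be $s$-truncated, to make the argument complete.
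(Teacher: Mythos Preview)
Your proof is correct and follows essentially the same approach as the paper's: replace one extremal letter of $w$ by a letter from $\{s,s^{-1}\}$, using the freedom between $s$ and $s^{-1}$ to avoid violating reducedness against the neighbouring letter. The paper writes out the four cases explicitly (according to whether $s_{k-1}$ equals $s$ or not, and similarly for $s_2$), whereas you package the same case distinction as a two-element pigeonhole; the content is identical.
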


\begin{proof}
Let $w=s_1s_2...s_{k-1}s_k$ be a $s$-truncated word with $k\geq 2$. Since $w$ is $s$-truncated, we know that $s_1,s_k\in \bar{S}\setminus\{s,s^{-1}\}$.\begin{enumerate}
\item If $s_{k-1}\neq s$, then $v=s_1s_2...s_{k-1}s^{-1}$ is a reduced word and not $s$-truncated. We have $\lvert v\rvert_S=k=\lvert w\rvert_S$ and since $s_k\neq s^{-1}$ we get that $\lvert w^{-1}v\rvert_S=\lvert s_k^{-1}s^{-1}\rvert_S=2$. So $w$ and $v$ are right brothers.
\item If $s_{k-1}\neq s^{-1}$, then $v=s_1s_2...s_{k-1}s$ is a reduced word and not $s$-truncated. We have $\lvert v\rvert_S=k=\lvert w\rvert_S$ and since $s_k\neq s$ we get that $\lvert w^{-1}v\rvert_S=\lvert s_k^{-1}s\rvert_S=2$. So $w$ and $v$ are right brothers.
\item If $s_2\neq s$, then $v=s^{-1}s_2...s_{k-1}s_k$ is a reduced word and not $s$-truncated. We have $\lvert v\rvert_S=k=\lvert w\rvert_S$ and since $s_1\neq s^{-1}$ we get that $\lvert wv^{-1}\rvert_S=\lvert s_1s\rvert_S=2$. So $w$ and $v$ are left brothers.
\item If $s_2\neq s^{-1}$, then $v=ss_2...s_{k-1}s_k$ is a reduced word and not $s$-truncated. We have $\lvert v\rvert_S=k=\lvert w\rvert_S$ and since $s_1\neq s$ we get that $\lvert wv^{-1}\rvert_S=\lvert s_1s^{-1}\rvert_S=2$. So $w$ and $v$ are left brothers.
\end{enumerate}
\end{proof} 

Now we can prove Proposition \ref{redtree} using the lemma.

\begin{proof}
Since $E_s(I)\neq \varnothing$, we can take $w_{max}\in E_s(I)$ such that $\alpha(w_{max})\neq 0$ and $\lvert w_{max} \rvert_S=\lVert f\rVert_S$. If $\lvert w_{max}\rvert_S=1$, then $\lVert f \rVert=1$, so $f$ is reduced by \ref{1red}. We can thus assume that $\lvert w_{max}\rvert_S=k\geq 2$. By \ref{lrbrotherstrunc} we know that $w_{max}$ has a right brother $w_1$ such that $w_1$ is not $s$-truncated. Since $w_{max}\in E_s(I)$, we get that $w_1\notin I$ and therefore that $\alpha(w_1)=0\neq \alpha(w_{max})$. Again by \ref{lrbrotherstrunc} we know that $w_{max}$ has a left brother $w_2$ such that $w_2$ is not $s$-truncated, thence $w_2^{-1}v$ is reduced for every reduced, $s$-truncated word $v$. So every right brother of $w_2$ is not $s$-truncated. Since $w_{max}\in E_s(I)$, it follows that $\alpha(w_2)=0$ and $\alpha(v)=0$ for all $v\in \mathrm{rBr}(w_2)$. We have proved that $f$ is unbalanced. By \ref{unbalancedreduced} it follows that $f$ is reduced. We get $\lvert f\rvert_S=\lvert w_{max}\rvert_S$.
\end{proof} 

We end this subsection by mentioning an easy property of $n_s(I)$ for future refence:

\begin{lem}\label{nsmax}
Let $I$ and $J$ be finite sets in $F_n\setminus\{e\}$. Then \begin{equation}
n_s(I\cup J)=\max (n_s(I),n_s(J)).
\end{equation}
\end{lem}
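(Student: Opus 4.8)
The plan is to unwind the definition and observe that the maximum of a function over a union of sets is the maximum of the two separate maxima. Concretely, set $I' = \{v \in I \mid \tau_s(v) \neq v\}$ and $J' = \{v \in J \mid \tau_s(v) \neq v\}$ to be the subsets of words that are not $s$-truncated. The key observation is that whether a word $v$ is $s$-truncated depends only on $v$ itself (it is determined by the first and last letters of the reduced word representing $v$), not on the ambient set; hence the set of non-$s$-truncated words in $I \cup J$ is exactly $I' \cup J'$.

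First I would record this identity $(I \cup J)' = I' \cup J'$. Then by definition $n_s(I \cup J) = \max\{|v|_S \mid v \in I' \cup J'\}$, and since $|{-}|_S$ is a fixed function $F_n \setminus \{e\} \to \mathbb{N}$, we have $\max\{|v|_S \mid v \in I' \cup J'\} = \max\big(\max\{|v|_S \mid v \in I'\},\, \max\{|v|_S \mid v \in J'\}\big) = \max(n_s(I), n_s(J))$. This is the standard fact that taking the maximum commutes with taking unions, applied to finite sets so that all maxima are attained.

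The only point requiring a word of care is the degenerate case where one of $I'$, $J'$ is empty, i.e.\ when every word of (say) $I$ is $s$-truncated, so that $n_s(I)$ is a maximum over the empty set; here one adopts the convention (consistent with $\lVert 0 \rVert_S = 0$ used earlier) that this value is $0$, and the identity still holds because then $n_s(I \cup J) = n_s(J) = \max(0, n_s(J))$. There is no real obstacle here: the lemma is a direct consequence of the definition of $n_s$ together with the set-theoretic fact that $\max$ distributes over finite unions, and it is stated only for convenient reference in later sections.
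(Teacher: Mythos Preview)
Your proof is correct and follows exactly the same approach as the paper: both simply unfold the definition of $n_s$ and use that the maximum over a union splits as the maximum of the two separate maxima. The only minor discrepancy is your convention that $n_s$ of a set with no non-$s$-truncated words equals $0$; the paper later (Lemma~\ref{nbspeed}) uses $-\infty$ for this degenerate case, but either convention makes the identity hold and the argument goes through unchanged.
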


\begin{proof}
This follows by an easy computation: \begin{equation}\begin{split} n_s(I\cup J)&=\max \{\lvert v\rvert_S\mid v\in I\cup J, \tau_s(v)\neq v\}\\
&=\max (\max\{\lvert v\rvert_S\mid v\in I, \tau_s(v)\neq v\},\max \{\lvert v\rvert_S\mid v\in J, \tau_s(v)\neq v\})\\
&=\max (n_s(I),n_s(J))\end{split}\end{equation} 
\end{proof}

\subsection{Counting quasimorphisms (up to bounded distance)}

This whole section so far dealt with sums of counting functions and their equivalence classes. How does this help us to study the action of $\mathrm{Out}(F_n)$ on $H^2_b(F_n, \mathbb{R})$? One of the many reasons to care about counting functions is the following classical result:

\begin{prop}[Brooks]
The symmetrized counting functions $\phi v=\#v-\#v^{-1}$ for $v\in F_n\setminus \{e\}$ are quasimorphisms on $F_n$.
\end{prop}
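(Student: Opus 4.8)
The plan is to show that $\phi v = \#v - \#v^{-1}$ has uniformly bounded defect, i.e.\ that $|\phi v(gh) - \phi v(g) - \phi v(h)|$ is bounded independently of $g,h \in F_n$ (the bound depending only on $|v|_S$). First I would reduce to the case where $g$, $h$ and $gh$ are all given by their reduced words. Write the reduced word of $g$ as $g = uc$ and of $h$ as $h = c^{-1}w$, where $c$ is the maximal cancelling part, so that the reduced word of $gh$ is $uw$ (with possibly a little more cancellation at the $u$--$w$ seam, which one handles the same way). The point is that $\#v(gh)$ and $\#v(g) + \#v(h)$ count occurrences of $v$ as a reduced subword, and these counts differ only because of occurrences that straddle the cancellation point.

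The key step is a bookkeeping estimate. An occurrence of $v$ in the reduced word for $g = uc$ that lies entirely within $u$ survives in $gh = uw$; an occurrence lying entirely within $c$ is destroyed; and there are at most $|v|_S - 1$ occurrences of $v$ straddling the $u$--$c$ boundary inside $g$, similarly at most $|v|_S - 1$ straddling the $c^{-1}$--$w$ boundary inside $h$, and at most $|v|_S - 1$ straddling the $u$--$w$ boundary inside $gh$. The crucial observation is that occurrences of $v$ inside the cancelled block $c$ of $g$ correspond bijectively to occurrences of $v^{-1}$ inside the cancelled block $c^{-1}$ of $h$ (reading $c$ backwards and inverting each letter sends subwords equal to $v$ to subwords equal to $v^{-1}$). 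Hence in the symmetrized count $\phi v = \#v - \#v^{-1}$ these interior-cancellation contributions cancel exactly, and one is left only with the boundary-straddling terms. Collecting these, one obtains $|\phi v(gh) - \phi v(g) - \phi v(h)| \le C(|v|_S)$ for an explicit constant, e.g.\ something like $6(|v|_S - 1)$, which in particular is finite. This proves $\phi v \in \mathcal{Q}(F_n)$.

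The main obstacle is purely organizational: handling the cancellation carefully so that nothing is double-counted or missed, especially the extra cancellation that can occur at the newly-formed seam in $gh$ and the various edge cases when $u$, $c$, or $w$ is short (shorter than $|v|_S$, or empty). One clean way to control this is to pass to the infinite reduced word / Cayley-tree picture: think of $\#v(w)$ as counting labelled edges along the geodesic from $e$ to $w$ in $T_n$, and track how the geodesics for $g$, $h$, $gh$ overlap; the symmetrization $\#v - \#v^{-1}$ is then manifestly orientation-reversing, which is exactly what makes the cancelled segment contribute zero. Either way, once the boundary terms are isolated the estimate is immediate, so I would spend the bulk of the write-up on a precise statement of the cancellation decomposition and leave the final inequality as a one-line count.
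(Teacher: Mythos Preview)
Your proposal is correct and follows essentially the same approach as the paper: decompose $g = uc$, $h = c^{-1}w$ with $c$ maximal, use the symmetrization so that occurrences of $v$ in $c$ cancel against occurrences of $v^{-1}$ in $c^{-1}$, and bound the remaining boundary-straddling occurrences by a constant multiple of $|v|_S$. The paper packages this slightly more algebraically---it first proves the bound $|\phi v(xy)-\phi v(x)-\phi v(y)|\le 2|v|_S$ for reduced products and then telescopes the general case into three such reduced products using $\phi v(z^{-1}) = -\phi v(z)$---and note that once $c$ is chosen maximal the concatenation $uw$ is automatically reduced in a free group, so your worry about ``a little more cancellation at the seam'' does not arise.
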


\begin{proof}
Let $v$ be a reduced word in $F_n\setminus \{e\}$ and let $x$, $y$ be reduced words in $F_n$.
If $w=xy$ is a reduced word, then \begin{equation}
\lvert \#v(w)-\#v(x)-\#v(y)\rvert\leq \lvert v\rvert_S.
\end{equation} So we get \begin{equation}
\lvert \phi v(w)-\phi v(x)-\phi v(y)\rvert\leq 2\lvert v\rvert_S<\infty.
\end{equation} If $w=xy$ is not a reduced word, we do not have necessarily that $\lvert \#v(w)-\#v(x)-\#v(y)\rvert$ is bounded. (Otherwise counting functions would be quasimorphisms.) But we get that there are reduced word $x_1$, $y_2$ in $F_n$ and a reduced word $z$ in $F_n\setminus \{e\}$ such that $x=x_1z$, $y=z^{-1}y_2$ and $w=x_1y_2$ are reduced. Then \begin{equation}\begin{split}
\lvert \phi v(w)-\phi v(x)-\phi v(y)\rvert=& \lvert \phi v(x_1y_2)-\phi v(x_1z)-\phi v(z^{-1}y_2)\rvert\\
=& \lvert \phi v(x_1y_2)-\phi v(x_1)-\phi v(y_2)\\
&-\phi v(x_1z)+\phi v(x_1)+\phi v(z)\\
&-\phi v(z^{-1}y_2)-\phi v(z)+\phi v(y_2)\rvert\\
\leq& 2\lvert v\rvert_S+2\lvert v\rvert_S+2\lvert v\rvert_S\\
<&\infty.\end{split}\end{equation} So $\phi v$ is a quasimorphism.
\end{proof}

\begin{defn}
The quasimorphism $\phi_v$ for $v\in F_n\setminus \{e\}$ is called the $v$-\emph{counting (or Brooks) quasimorphism}. We denote by $\mathcal{B}(F_n,S)$ the space of real-valued functions on $F_n$ spanned by the counting quasimorphisms $\{\phi_v\}_{v\in F_n\setminus \{e\}}$. We define an equivalence relation on $\mathcal{B}(F_n,S)$ by \[f_1\sim f_2\ \text{if and only if}\ \lVert f_1-f_2 \rVert_\infty < \infty.\] We denote by $\tilde{\mathcal{B}}(F_n,S)=\mathcal{B}(F_n,S)/\sim$ the space of equivalence classes and call it the \emph{Brooks space}.
\end{defn}

\begin{exmp}\label{noninj}
Let $F_2$ be the free group on the alphabet $S=\{a,b\}$. Then the homogenization of the counting quasimorphism $\phi ab$ is not a homomorphism. We have $\tilde{\phi ab}(a)= \lim_{n}\frac{\phi ab(a^n)}{n}=0$ and $\tilde{\phi ab}(b)= \lim_{n}\frac{\phi ab(b^n)}{n}=0$, but $\tilde{\phi ab}(ab)= \lim_{n}\frac{\phi ab((ab)^n)}{n}=1$.
\end{exmp}

We can consider $\mathcal{B}(F_n,S)$ as a closed linear subspace of $\mathcal{C}(F_n,S)$ by identifying $\mathcal{B}(F_n,S)$ with $\{f\in \mathcal{C}(F_n,S)\mid f(w)=-f(w^{-1})\ \text{for all}\ w\in F_n\}$. This way we can see $\tilde{\mathcal{B}}(F_n,S)$ as a closed linear subspace of $\tilde{\mathcal{C}}(F_n,S)$. Therefore we can apply the definitions and results from the previous subsections of this section. In particular we obtain a norm $\lVert -\rVert_S$ on $\mathcal{B}(F_n,S)$ and a norm $\lvert -\rvert_S$ on $\tilde{\mathcal{B}}(F_n,S)$. A sum of counting quasimorphisms is reduced, if it is reduced as an element of $\mathcal{C}(F_n,S)$. We can use our Criterion \ref{redtree} to recognize reduced sums of counting quasimorphisms.

On the other hand we can consider $\mathcal{B}(F_n,S)$ as a closed linear subspace of $\mathcal{Q}(F_n)$ and $\tilde{\mathcal{B}}(F_n,S)$ as a closed linear subspace of $\tilde{\mathcal{Q}}(F_n)$. 

\begin{lem}\label{hom=1}
We have $\mathrm{Hom}(F_n,\mathbb{R})=\{f\in \mathcal{B}(F_n,S)\mid \lVert f\rVert_S\leq 1\}$.
\end{lem}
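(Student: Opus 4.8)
The plan is to prove the two inclusions separately, working entirely in the basis $\{\#v\}$ of $\mathcal{C}(F_n,S)$. The one computation I would set up first is the evaluation of a length-$\le 1$ element on an arbitrary reduced word: if $f\in\mathcal{C}(F_n,S)$ has $\lVert f\rVert_S\le 1$, then in its basis expansion the weight $\alpha$ is supported on words of $S$-length $\le 1$, i.e.\ on $\bar S$, so $f=\sum_{s\in\bar S}\alpha(s)\#s$; and for a reduced word $w=r_1\cdots r_l$ the value $\#s(w)$ is the number of indices $j\in\{1,\dots,l\}$ with $r_j=s$, hence
\[
f(w)=\sum_{s\in\bar S}\alpha(s)\,\#s(w)=\sum_{j=1}^{l}\alpha(r_j).
\]
In particular $f(s)=\alpha(s)$ for every $s\in\bar S$.

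For the inclusion $\mathrm{Hom}(F_n,\mathbb{R})\subseteq\{f\in\mathcal{B}(F_n,S):\lVert f\rVert_S\le 1\}$, I would take $h\in\mathrm{Hom}(F_n,\mathbb{R})$, set $c_i=h(a_i)$, and check directly that $h=\sum_{i=1}^{n}c_i\,\phi_{a_i}=\sum_{i=1}^{n}c_i(\#a_i-\#a_i^{-1})$. Indeed, defining the weight $\alpha$ by $\alpha(a_i)=c_i$ and $\alpha(a_i^{-1})=-c_i$, one has $\alpha(r)=h(r)$ for every letter $r\in\bar S$ (as $h(a_i^{-1})=-c_i$), so by the formula above the right-hand side evaluates on $w=r_1\cdots r_l$ to $\sum_{j}\alpha(r_j)=\sum_{j}h(r_j)=h(w)$. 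Thus $h$ lies in $\mathcal{B}(F_n,S)$, and since every word occurring in this expansion, namely each $a_i$ and $a_i^{-1}$, has $S$-length $1$, the definition of $\lVert\cdot\rVert_S$ gives $\lVert h\rVert_S\le 1$ (with equality unless all $c_i=0$, in which case $h=0$).

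For the reverse inclusion, let $f\in\mathcal{B}(F_n,S)$ with $\lVert f\rVert_S\le 1$, so $f=\sum_{s\in\bar S}\alpha(s)\#s$ as above. Since $f\in\mathcal{B}(F_n,S)$ it satisfies $f(w)=-f(w^{-1})$ for all $w$; applying this to $w=a_i$ together with $f(a_i)=\alpha(a_i)$ and $f(a_i^{-1})=\alpha(a_i^{-1})$ yields $\alpha(a_i^{-1})=-\alpha(a_i)$ for each $i$. Hence $f=\sum_{i=1}^{n}\alpha(a_i)(\#a_i-\#a_i^{-1})=\sum_{i=1}^{n}\alpha(a_i)\phi_{a_i}$, and by the evaluation formula $f(w)=\sum_{j=1}^{l}\alpha(r_j)$ for every reduced word $w=r_1\cdots r_l$. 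This is precisely the value on $w$ of the homomorphism $F_n\to\mathbb{R}$ determined by $a_i\mapsto\alpha(a_i)$, so $f\in\mathrm{Hom}(F_n,\mathbb{R})$, completing the proof.

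I do not expect a genuine obstacle here; the statement is essentially bookkeeping once the evaluation formula is in place. The one point that deserves a little care is confirming that $f$ and the candidate homomorphism agree as \emph{functions} on all of $F_n$ and not merely on the generators — this is exactly what the explicit formula $f(w)=\sum_j\alpha(r_j)$ and the additivity of a homomorphism along the letters of a reduced word take care of — together with observing that the antisymmetry hypothesis is needed only on the $n$ words $a_1,\dots,a_n$ in order to pin down $\alpha$ on all of $\bar S$.
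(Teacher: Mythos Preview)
Your proposal is correct and follows essentially the same two-inclusion strategy as the paper. The only cosmetic difference is that for the reverse inclusion the paper writes $f=\sum_{s\in\bar S}\alpha(s)\phi_s$ directly and then uses the relation $\phi_{s^{-1}}=-\phi_s$ to collapse to $\sum_{s\in S}\beta(s)\phi_s$, whereas you work in the $\#$-basis and deduce $\alpha(a_i^{-1})=-\alpha(a_i)$ from the antisymmetry $f(w)=-f(w^{-1})$; these are equivalent, and your version is in fact a bit more explicit about why the resulting function is a homomorphism.
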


\begin{proof}
We prove $\mathrm{Hom}(F_n,\mathbb{R})\subset\{f\in \mathcal{B}(F_n,S)\mid \lVert f\rVert_S\leq 1\}$ first. Let $\psi \in \mathrm{Hom}(F_n,\mathbb{R})$. Then $\psi$ is determined by its values on $S$. We have $\psi=\sum_{s\in S}\psi(s)\phi s$ and therefore $\psi\in \mathcal{B}(F_n,S)$ and $\lVert \psi \rVert_S\leq 1$. Now we prove $\mathrm{Hom}(F_n,\mathbb{R})\supset\{f\in \mathcal{B}(F_n,S)\mid \lVert f\rVert_S\leq 1\}$. Let $f\in \mathcal{B}(F_n,S)$ and assume that $\lVert f\rVert_S\leq 1$. Then we can write $f=\sum_{s\in \bar{S}}\alpha(s)\phi s$. Since $\phi s=\phi s^{-1}$, we have $f=\sum_{s\in S}\beta(s)\phi s$ with $\beta(s)=\alpha(s)-\alpha(s^{-1})$. So $f\in \mathrm{Hom}(F_n,\mathbb{R})$.
\end{proof}

By the slight abuse of notation we justified in \ref{homquasi} we can consider $\mathrm{Hom}(F_n,\mathbb{R})$ as a subspace of $\tilde{\mathcal{B}}(F_n,S)$. This way we can define $$H^2_b(F_n, \mathbb{R})_{\mathrm{fin}}=\tilde{\mathcal{B}}(F_n,S)/\mathrm{Hom}(F_n,\mathbb{R})\subset H^2_b(F_n,\mathbb{R}).$$ We obtain a norm on this subspace of $H^2_b(F_n,\mathbb{R})$ from the norm of the Brooks space.

\begin{cor}
We have $\mathrm{Hom}(F_n,\mathbb{R})=\{[f]\in \tilde{\mathcal{B}}(F_n,S)\mid \lvert [f]\rvert_S\leq 1\}$.
\end{cor}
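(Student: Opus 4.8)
The plan is to deduce this corollary directly from Lemma~\ref{hom=1} by passing to equivalence classes, treating the two inclusions separately. Recall that for $[f]\in\tilde{\mathcal{B}}(F_n,S)$ we have, by definition, $\lvert [f]\rvert_S=\lvert f\rvert_S=\min\{\lVert g\rVert_S\mid g\in\mathcal{C}(F_n,S),\ g\sim f\}$, and that $\mathrm{Hom}(F_n,\mathbb{R})$ is viewed inside $\tilde{\mathcal{B}}(F_n,S)$ as $\{[\psi]\mid \psi\in\mathrm{Hom}(F_n,\mathbb{R})\}$, which is legitimate by Proposition~\ref{homquasi}.

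For the inclusion $\mathrm{Hom}(F_n,\mathbb{R})\subseteq\{[f]\mid\lvert [f]\rvert_S\leq 1\}$, let $\psi\in\mathrm{Hom}(F_n,\mathbb{R})$. By Lemma~\ref{hom=1} we have $\psi\in\mathcal{B}(F_n,S)$ with $\lVert\psi\rVert_S\leq 1$, and hence $\lvert [\psi]\rvert_S=\lvert\psi\rvert_S\leq\lVert\psi\rVert_S\leq 1$. This direction is immediate.

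For the reverse inclusion, let $[f]\in\tilde{\mathcal{B}}(F_n,S)$ with $\lvert [f]\rvert_S\leq 1$. Then there is $g\in\mathcal{C}(F_n,S)$ with $g\sim f$ and $\lVert g\rVert_S\leq 1$. The one genuine point of the argument is that the minimum defining $\lvert f\rvert_S$ is taken over all of $\mathcal{C}(F_n,S)$, so a priori $g$ need not lie in $\mathcal{B}(F_n,S)$ (for instance $\#a+\#a^{-1}$ has length $1$ but is not equivalent to a homomorphism), and Lemma~\ref{hom=1} cannot be applied to $g$ directly. I would fix this by symmetrizing: write $\iota\colon F_n\to F_n$, $\iota(w)=w^{-1}$, and set $\bar g(w)=\tfrac12\bigl(g(w)-g(w^{-1})\bigr)$. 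Using the identity $\#v\circ\iota=\#v^{-1}$ one checks that $\bar g\in\mathcal{C}(F_n,S)$, and since $\bar g(w)=-\bar g(w^{-1})$ in fact $\bar g\in\mathcal{B}(F_n,S)$. Writing $g=\sum_{v}\gamma(v)\#v$, the weight of $\bar g$ is supported on $\mathrm{supp}(\gamma)\cup\mathrm{supp}(\gamma)^{-1}$, so $\lVert\bar g\rVert_S\leq\lVert g\rVert_S\leq 1$ because inversion preserves word length. Finally $\bar g\sim f$: since $f\in\mathcal{B}(F_n,S)$ we have $f(w)=-f(w^{-1})$, hence $\bar f=f$ and $\lVert\bar g-f\rVert_\infty=\lVert\overline{g-f}\rVert_\infty\leq\lVert g-f\rVert_\infty<\infty$. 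Now Lemma~\ref{hom=1} applies to $\bar g$ and gives $\bar g\in\mathrm{Hom}(F_n,\mathbb{R})$, so $[f]=[\bar g]\in\mathrm{Hom}(F_n,\mathbb{R})\subseteq\tilde{\mathcal{B}}(F_n,S)$, as desired.

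The only mild obstacle is therefore the observation that reduced length is computed in the ambient space $\mathcal{C}(F_n,S)$ and must be transported back into $\mathcal{B}(F_n,S)$ by symmetrization without increasing the length; once that is noted, the corollary is a direct consequence of Lemma~\ref{hom=1}.
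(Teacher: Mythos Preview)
Your proof is correct, and in fact more elementary than what the paper indicates. The paper's proof reads in its entirety ``This follows from \ref{hom=1} and \ref{relations},'' i.e.\ it invokes the structure theorem for the relation space $\ker(\pi)$ (that it is spanned by left- and right-extension relations) in order to bridge the gap between reduced length computed in $\mathcal{C}(F_n,S)$ and representatives lying in $\mathcal{B}(F_n,S)$. You identify precisely the same gap---that the minimizer $g$ realizing $\lvert f\rvert_S$ lives a priori only in $\mathcal{C}(F_n,S)$---but you close it by the antisymmetrization $\bar g=\tfrac12(g-g\circ\iota)$ instead. This works because $\#v\circ\iota=\#v^{-1}$, so $\bar g\in\mathcal{B}(F_n,S)$ with $\lVert\bar g\rVert_S\leq\lVert g\rVert_S$, and because $f\in\mathcal{B}(F_n,S)$ forces $\bar f=f$, whence $\bar g\sim f$. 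Your route is self-contained and does not require Theorem~\ref{relations}; the paper's route, by contrast, leans on a nontrivial structural result but presumably yields the same conclusion once one observes that extension relations respect the antisymmetry (or, alternatively, that no nonzero relation has length~$\leq 1$, which is essentially Example~\ref{1red}). Both approaches are valid; yours is the cleaner one here.
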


\begin{proof}
This follows from \ref{hom=1} and \ref{relations}.
\end{proof}

\begin{defn}
Let $[f]+\mathrm{Hom}(F_n,\mathbb{R})\in H^2_b(F_n, \mathbb{R})_{\mathrm{fin}}$. Then we define \begin{equation}
\lvert [f]+\mathrm{Hom}(F_n,\mathbb{R})\rvert_S=\begin{cases}
\lvert [f]\rvert_S,& \text{if } \lvert [f]\rvert_S\geq 2\\
0,& \text{otherwise}
\end{cases}.\end{equation} 
\end{defn}

\begin{cor}\label{H2bnorm}
Endow $\mathbb{R}$ with the trivial absolute value $\lvert -\rvert_0$. Then $\lvert -\rvert_S$ is an ultrametric norm on $H^2_b(F_n, \mathbb{R})_{\mathrm{fin}}$.
\end{cor}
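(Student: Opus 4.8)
The plan is to transfer the ultrametric norm structure on $\tilde{\mathcal{C}}(F_n,S)$ through the quotient map. By Corollary \ref{normontildeC}, $\lvert-\rvert_S$ is an ultrametric norm on $\tilde{\mathcal{C}}(F_n,S)$, and since $\tilde{\mathcal{B}}(F_n,S)$ is a closed linear subspace it restricts to an ultrametric norm there; by the corollary preceding the definition, $\mathrm{Hom}(F_n,\mathbb{R})=\{[f]\in\tilde{\mathcal{B}}(F_n,S)\mid\lvert[f]\rvert_S\leq 1\}$ is a closed linear subspace. So $H^2_b(F_n,\mathbb{R})_{\mathrm{fin}}$ is a quotient of an ultrametric normed space by a closed subspace, and it suffices to check that the given formula for $\lvert-\rvert_S$ is compatible with passing to the quotient and then satisfies the norm axioms.

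The one point needing genuine care is well-definedness: suppose $[f]-[g]\in\mathrm{Hom}(F_n,\mathbb{R})$, so that $\lvert[f]-[g]\rvert_S\leq 1$. Writing $[g]=[f]-([f]-[g])$ and using the strict form of the ultrametric inequality on $\tilde{\mathcal{B}}(F_n,S)$: if $\lvert[f]\rvert_S\geq 2$, then $\lvert[f]\rvert_S\neq\lvert[f]-[g]\rvert_S$, so $\lvert[g]\rvert_S=\max(\lvert[f]\rvert_S,\lvert[f]-[g]\rvert_S)=\lvert[f]\rvert_S\geq 2$, and both representatives fall under the first case of the definition with the common value $\lvert[f]\rvert_S$; if instead $\lvert[f]\rvert_S\leq 1$, then $\lvert[g]\rvert_S\leq\max(\lvert[f]\rvert_S,\lvert[f]-[g]\rvert_S)\leq 1$, and both fall under the second case with value $0$. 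In fact the same computation shows the defined quantity equals $\min\{\lvert[f]-[h]\rvert_S\mid[h]\in\mathrm{Hom}(F_n,\mathbb{R})\}$, i.e. it is literally the quotient ultrametric norm, which already settles the corollary; but it is just as quick to verify the axioms by hand.

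Granting well-definedness, the axioms follow quickly. Positive definiteness: the value is $0$ iff $\lvert[f]\rvert_S\leq 1$ iff $[f]\in\mathrm{Hom}(F_n,\mathbb{R})$ iff the class vanishes in $H^2_b(F_n,\mathbb{R})_{\mathrm{fin}}$. Absolute homogeneity with the trivial absolute value: for $a\neq 0$ we have $\lvert a[f]\rvert_S=\lvert a\rvert_0\lvert[f]\rvert_S=\lvert[f]\rvert_S$, so the same case of the definition applies to $[f]$ and to $a[f]$ and the assigned values agree; for $a=0$ both sides vanish. Ultrametric inequality: if $\lvert[f]+[g]\rvert_S\leq 1$ the left-hand side is $0$ and there is nothing to prove; if $\lvert[f]+[g]\rvert_S\geq 2$, then $\lvert[f]+[g]\rvert_S\leq\max(\lvert[f]\rvert_S,\lvert[g]\rvert_S)$ forces the larger of $\lvert[f]\rvert_S,\lvert[g]\rvert_S$, say $\lvert[f]\rvert_S$, to be $\geq 2$, whence $\lvert[f]+\mathrm{Hom}(F_n,\mathbb{R})\rvert_S=\lvert[f]\rvert_S\geq\lvert[f]+[g]\rvert_S=\lvert([f]+[g])+\mathrm{Hom}(F_n,\mathbb{R})\rvert_S$, as required. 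The main obstacle is thus purely bookkeeping: confirming that the cutoff at $2$ in the definition is exactly what the ultrametric quotient norm does; beyond the strict form of the ultrametric inequality, no new idea is involved.
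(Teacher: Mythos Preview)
Your proof is correct and follows the same approach as the paper, which simply says the result follows from Corollary~\ref{normontildeC} and the fact that $\mathrm{Hom}(F_n,\mathbb{R})$ is a closed linear subspace of $\tilde{\mathcal{B}}(F_n,S)$. You have unpacked in detail what the paper leaves as an implicit appeal to the general principle that the quotient of an ultrametric normed space by a closed subspace is again ultrametric normed; in particular, your use of the strict ultrametric inequality to verify well-definedness and to identify the defined quantity with the quotient norm is the right way to make that principle explicit here.
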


\begin{proof}
This follows from \ref{normontildeC} and the fact that $\mathrm{Hom}(F_n,\mathbb{R})$ is a closed linear subspace of $\tilde{\mathcal{B}}(F_n,S)$.
\end{proof}

\section{The action of the outer automorphisms}\label{OutFnaction}

We saw in \ref{sameaction} that we can study the $\mathrm{Out}(F_n)$ action on $H_b^2(F_n, \mathbb{R})$ by studying the $\mathrm{Out}(F_n)$ action on $\tilde{\mathcal{Q}}(F_n)$. In the first subsection of this section we will follow \cite{HS} to show that the $\mathrm{Out}(F_n)$ action on $\tilde{\mathcal{Q}}(F_n)$ leaves the Brooks space $\tilde{\mathcal{B}}(F_n)\subset \tilde{\mathcal{Q}}(F_n)$ invariant. We will give a reason why it is sensible to restrict our attention to the action of $\mathrm{Out}(F_n)$ on the Brooks space $\tilde{\mathcal{B}}(F_n)$, which we will do from the second subsection onwards. In the second subsection we will introduce the notion of \emph{speed} of an element $X\in \mathrm{Out}(F_n)$ on an element $[f]$ of the Brooks space, which measures how fast the norm of $[f]$ grows asymptotically under repeated application of the element $X$. This is an interesting object in its own right and we will study it for $X=T^{-1}$ in the next sections. It will also be our main tool to answer Ab\'{e}rt's question \ref{47} for the Brooks space in Section \ref{fixpts}.

\subsection{Invariance of the Brooks space}\label{InvBrooks}

The main result of this subsection is the following:

\begin{thm}\label{Outinv}
The Brooks space $\tilde{\mathcal{B}}(F_n,S)$ is invariant under the action of $\mathrm{Out}(F_n)$ on $\tilde{\mathcal{Q}}(F_n)$ by $X[f]=[f\circ X^{-1}]$.
\end{thm}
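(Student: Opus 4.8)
The plan is to reduce the statement to a finite check on the generators of $\mathrm{Out}(F_n)$ and a single key computation: that $T$ sends a Brooks quasimorphism $\phi_v$ to something equivalent (up to bounded distance) to a finite linear combination of Brooks quasimorphisms. Since $\mathrm{Out}(F_n)$ is generated by the Nielsen transformations $P_1,P_2,H,T$ (Theorem \ref{NielsenOut}), and since the action of $\mathrm{Out}(F_n)$ on $\tilde{\mathcal{Q}}(F_n)$ is by linear automorphisms, it suffices to check that each generator $X\in\{P_1,P_2,H,T\}$ and its inverse maps the countable spanning set $\{[\phi_v] : v\in F_n\setminus\{e\}\}$ into $\tilde{\mathcal{B}}(F_n,S)$. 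Linearity then propagates invariance to the whole span, and invariance under each generator and its inverse gives invariance under the group they generate.

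First I would dispose of the ``easy'' generators $P_1,P_2,H$: these are the automorphisms induced by permutations and inversions of the alphabet $S$, hence they map the extended alphabet $\bar S$ bijectively to itself. Consequently, for such an $X$ and any reduced word $v=s_1\cdots s_k$, the function $\#v\circ X^{-1}$ is literally again a counting function $\#(X^{-1}(v))$ (up to reading off the image word), because being a reduced subword is preserved under a letter-wise bijection of $\bar S$. Therefore $\phi_v\circ X^{-1}=\phi_{X^{-1}(v)}$ exactly, with no bounded error at all, so $\tilde{\mathcal{B}}(F_n,S)$ is even literally preserved (not just up to $\sim$) by $P_1^{\pm1},P_2^{\pm1},H^{\pm1}$. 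The content of the theorem is therefore entirely in the transvection $T$ (and $T^{-1}$).

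The heart of the proof is thus: show that $\phi_v\circ T^{-1}$ is at bounded distance from an element of $\mathcal{B}(F_n,S)$, for every reduced word $v$. Here $T^{-1}$ is the automorphism $a_1\mapsto a_1a_2^{-1}$ (fixing the other generators), so $\phi_v\circ T^{-1}(w)=\phi_v\big(T^{-1}(w)\big)$, and one must understand how the reduced form of $T^{-1}(w)$ relates to that of $w$. The point is that applying $T^{-1}$ replaces each letter $a_1^{\pm1}$ in (the reduced word) $w$ by $a_1a_2^{-1}$ or $a_2a_1^{-1}$, followed by free reduction; the cancellations that occur are all \emph{local}, confined to bounded-length neighborhoods of the substituted letters, and are of a bounded number of possible local ``shapes''. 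Counting occurrences of a fixed pattern $v$ in $T^{-1}(w)$ can then be rewritten, up to an error bounded by a constant times the number of letters $a_1^{\pm1}$ in $w$ — which is \emph{not} bounded — so a naive estimate fails; the correct move, which is exactly the argument of \cite{HS}, is to organize the count so that the per-letter discrepancies telescope into a quasimorphism error. Concretely one exhibits an explicit weight $\alpha$ with finite support (supported on words of length at most $|v|_S+C$ for an absolute constant $C$) such that $\phi_v\circ T^{-1}-\sum_{u}\alpha(u)\phi_u$ is bounded in sup-norm, by checking the identity on reduced words and tracking the finitely many boundary contributions.

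The main obstacle I expect is precisely this last bookkeeping: making the substitution-and-reduction analysis for $T^{-1}$ rigorous, i.e. showing the error between $\phi_v\circ T^{-1}$ and the proposed finite combination of Brooks quasimorphisms is genuinely bounded rather than merely sublinear. This requires a careful case analysis of how subwords of length $|v|_S$ in $T^{-1}(w)$ arise — entirely inside a substituted block, straddling one substitution boundary, or straddling a cancellation site — and an argument that the straddling contributions either match a counting function of a longer word or contribute only $O(1)$ in total via a telescoping/splitting-point argument analogous to the proof that the $\phi_v$ are quasimorphisms. Since the excerpt says this subsection ``follow[s] \cite{HS}'', I would cite \cite{HS} for the substitution lemma and reproduce the bounded-error estimate, emphasizing that the output weight is finitely supported with length controlled by $|v|_S$, which is all that is needed for $[\phi_v\circ T^{-1}]\in\tilde{\mathcal{B}}(F_n,S)$.
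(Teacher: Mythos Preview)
Your reduction to the Nielsen generators and your treatment of $P_1,P_2,H$ match the paper exactly. The difference is in how you handle the transvection, and there your proposal is both more pessimistic and less complete than the paper's argument.

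First, the paper does \emph{not} settle for a bounded-distance approximation. For any reduced $w$ that is not a power of $b=a_2$, Lemma~\ref{wisTinv} produces a finite set $W_1(w)$ and proves the \emph{exact} identity of counting functions
\[
\#w\circ T \;=\; \sum_{u\in W_1(w)} \#u,
\]
by exhibiting a bijection between occurrences of $w$ as a subword of $Tv$ and occurrences of elements of $W_1(w)$ as subwords of $v$. The bookkeeping is done in the representation $w\equiv_S(m_0,s_1,m_1,\dots,s_k,m_k)$ and the ``middle'' of each matching word is simply $\tau_b\circ T^{-1}\circ\tau_b(w)$, with finitely many left/right extensions listed explicitly. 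So your worry about errors that are ``a constant times the number of letters $a_1^{\pm1}$'' and the need for a telescoping argument never arises: the identity is pointwise exact, and passing to $\phi_w=\#w-\#w^{-1}$ is immediate (Corollary~\ref{phiwisTinv}).

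Second, you omit a case the paper treats separately: when $w=b^m$ is a power of $b$, the construction of $W_1(w)$ does not apply. The paper handles this via extension relations (Lemma~\ref{powerofbtob}), rewriting $\phi_{b^m}$ up to bounded distance as $\alpha(b)\phi_b$ plus Brooks quasimorphisms on words that are not powers of $b$, together with the elementary $T^{-1}\phi_b=\phi_a+\phi_b$. Without this step your argument is incomplete, since the substitution analysis you sketch breaks down precisely when $\tau_b(w)=\varnothing$.
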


This is Theorem 2.3 in \cite{HS}. Since we will use later many of the techniques appearing in the proof of this theorem, we will reproduce the proof now. This subsection rephrases pages 8 to 12 in \cite{HS}.

By \ref{NielsenOut} we know that $\mathrm{Out}(F_n)$ is generated as a group by the Nielsen transformations $\{P_1,P_2,H,T\}$. Note that $P_1$, $P_2$ and $H$ are of finite order, so if $X\in \{P_1,P_2,H\}$ we can write $X^{-1}=X^k$ for some $k\in \mathbb{N}$. Note also that $T=P_1HP_1T^{-1}P_1HP_1$. So it is enough to prove that the Brooks space is invariant under the action of $\{P_1,P_2,H\}$ and $T^{-1}$.

\begin{lem}\label{Xinv}
If $X\in \{P_1,P_2,H\}$, then $\phi w\circ X^{-1}=\phi (Xw)$.
\end{lem}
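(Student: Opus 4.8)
The plan is to verify the identity $\phi w \circ X^{-1} = \phi(Xw)$ directly from the definitions, using the fact that each of $P_1$, $P_2$, $H$ acts on the free group by a permutation (possibly with sign changes) of the letters of $\bar S$. First I would recall that for a reduced word $r = r_1 \cdots r_l$, the automorphism $X \in \{P_1, P_2, H\}$ sends $r$ to the reduced word obtained by applying $X$ letterwise: $Xr = (Xr_1)(Xr_2)\cdots(Xr_l)$, where $X$ is extended to $\bar S$ by $X(s^{-1}) = (Xs)^{-1}$. The key point is that this letterwise substitution is a \emph{length-preserving bijection} $F_n \to F_n$ that does not introduce or cancel any letters, because $X$ maps $\bar S$ bijectively to itself; in particular it preserves the reduced-subword relation. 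Hence $\#v(w) = \#(Xv)(Xw)$ for all $v \in F_n \setminus \{e\}$ and all $w \in F_n$.

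From this the computation is short. Writing $Y = X^{-1}$, I would compute, for any $w \in F_n \setminus \{e\}$ and any $u \in F_n$,
\begin{equation}
(\phi w \circ Y)(u) = \phi w(Yu) = \#w(Yu) - \#w^{-1}(Yu).
\end{equation}
Applying the substitution identity with the automorphism $X$ (so that $X(Yu) = u$), we get $\#w(Yu) = \#(Xw)(u)$ and likewise $\#w^{-1}(Yu) = \#(X(w^{-1}))(u) = \#((Xw)^{-1})(u)$, using that $X$ commutes with inversion. Therefore
\begin{equation}
(\phi w \circ Y)(u) = \#(Xw)(u) - \#((Xw)^{-1})(u) = \phi(Xw)(u),
\end{equation}
which is the claimed identity since $u$ was arbitrary.

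I do not expect any genuine obstacle here; the statement is essentially bookkeeping. The one point that deserves care is checking that each of the three generators $P_1, P_2, H$ really does act by a letterwise bijective substitution on $\bar S$ — for $P_1$ and $P_2$ this is the permutation of generators, and for $H$ it is the sign change $a_1 \mapsto a_1^{-1}$; in all cases the induced map on $\bar S$ is a bijection, so no reduction or cancellation occurs when forming $Xr$, and both word length and occurrences of subwords are preserved. (This is exactly the feature that fails for $T$, which is why $T$ and $T^{-1}$ must be treated separately.) Once this observation is recorded, the lemma follows immediately, and as an aside one notes $\|\phi w \circ X^{-1}\|_S = \|\phi(Xw)\|_S = |w|_S$, so the action of $\{P_1, P_2, H\}$ on $\mathcal{B}(F_n, S)$ is even length-preserving.
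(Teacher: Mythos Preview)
Your proof is correct and is essentially an explicit unpacking of the paper's own proof, which consists of the single sentence ``This follows from the definitions in \cite{Nielsen}.'' The key observation you isolate---that each of $P_1$, $P_2$, $H$ acts by a bijection of the extended alphabet $\bar S$ and hence preserves the reduced-subword relation---is exactly what makes the lemma immediate, and your write-up supplies the details the paper leaves implicit.
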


\begin{proof}
This follows from the definitions in \cite{Nielsen}.
\end{proof}

Now we concentrate on $T^{-1}$. (Essentially this is true for the rest of this article.) Since $a_1$ and $a_2$ play a special role for $T^{-1}$, we set $a=a_1$ and $b=a_2$. Also for convenience we define $S_b=\bar{S}\setminus \{b,b^{-1}\}$ for the rest of this article. A last useful introduction is the following representation of elements in $F_n$. Every reduced word $w\in F_n$ can be written as $w=b^{m_0}s_1b^{m_1}...s_kb^{m_k}$, where the following conditions are satified:\begin{enumerate}
\item $k\geq 0$,
\item $m_j \in \mathbb{Z}$ for all $0\leq j \leq k$,
\item $s_j \in S_b$ for all $1\leq j \leq k$,
\item $s_j\neq s_{j+1}^{-1}$ if $m_j=0$ for all $1\leq j \leq k-1$.
\end{enumerate} We use the following notation to denote this representation \[w\equiv_S (m_0,s_1,m_1,...,s_k,m_k).\]
If $w\equiv_S (m_0,s_1,m_1,...,s_k,m_k)$, we denote its \textit{$b$-length} by $\lvert w \rvert_b=k$.

Computations with $T$ and $T^{-1}$ are a lot easier in this representation: Let $w\in F_n$ be a reduced word. We can write $w\equiv_S (m_0,s_1,m_1,...,s_k,m_k)$. We set $s_0=s_{k+1}=e$ and define \begin{equation}\label{mj+}
m_j^+=m_j+\#a(s_j)-\#a^{-1}(s_{j+1}).
\end{equation} for all $0\leq j \leq k$. Then by the definition of $T$ we get \begin{equation}\label{Tw}Tw\equiv_S (m_0^+,s_1,...,s_k,m_k^+).\end{equation} In the same vein we can define \begin{equation}\label{mj-}
m_j^-=m_j-\#a(s_j)+\#a^{-1}(s_{j+1}).
\end{equation} for all $0\leq j \leq k$. Then by the definition of $T^{-1}$ we get \begin{equation}\label{T-1w}T^{-1}w\equiv_S (m_0^-,s_1,...,s_k,m_k^-).\end{equation} 

\begin{prop}\label{btrsubwords}
Another very convenient feature of this representation is that it simplifies describing $b$-truncated subwords of $w$ a lot: For $1\leq i\leq j\leq k$ we can define the word \begin{equation}w_{i,j}\equiv_S (s_i,m_i,...,s_j).\end{equation} Then $w_{i,j}$ is a $b$-truncated subword of $w$. On the other hand if $v$ is a $b$-truncated subword of $w$, then $v=w_{i,j}$ for some $1\leq i\leq j\leq k$. In particular we get the following bijections between the $b$-truncated subwords of $w$ and the $b$-truncated subwords of $Tw$ and $T^{-1}w$:\begin{enumerate}[(i)]
\item $w_{i,j} \leftrightarrow (Tw)_{i,j}=\tau_b(T(w_{i,j}))\equiv_S (s_i,m_i^+,...,s_j)$
\item $w_{i,j} \leftrightarrow (T^{-1}w)_{i,j}=\tau_b(T^{-1}(w_{i,j}))\equiv_S (s_i,m_i^-,...,s_j)$
\end{enumerate}
\end{prop}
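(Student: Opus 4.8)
The plan is to derive everything directly from the normal form $w \equiv_S (m_0, s_1, m_1, \dots, s_k, m_k)$ and from the transformation rules \eqref{Tw} and \eqref{T-1w}; the notion of a ``$b$-truncated subword'' is exactly the combinatorial shadow of this normal form, so the proposition should reduce to unwinding definitions. First I would record the elementary observation that in the reduced word $w = b^{m_0} s_1 b^{m_1} \cdots s_k b^{m_k}$ the letters not lying in $\{b, b^{-1}\}$ are precisely $s_1, \dots, s_k$, occurring in this order and separated by the (possibly empty) pure $b$-blocks $b^{m_i}$. Given $1 \le i \le j \le k$, the factor $s_i b^{m_i} s_{i+1} \cdots b^{m_{j-1}} s_j$ is a contiguous piece of the reduced word $w$, hence itself reduced, with first and last letters $s_i, s_j \in S_b = \bar{S} \setminus \{b, b^{-1}\}$; so it is $b$-truncated, proving that $w_{i,j}$ is a $b$-truncated subword of $w$. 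Conversely, if $v = v_1 \cdots v_l$ is a nonempty $b$-truncated subword of $w$, then $v_1, v_l \notin \{b, b^{-1}\}$, and since $v$ occurs as a contiguous factor of $w$ whose only non-$b$ letters are the $s_p$, the occurrence of $v$ must begin at some $s_i$ and end at some $s_j$ with $i \le j$; contiguity then forces $v = s_i b^{m_i} \cdots s_j = w_{i,j}$. Hence the pairs $(i,j)$ with $1 \le i \le j \le k$ index exactly the $b$-truncated subwords (occurrences) of $w$.

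Next I would apply the same bookkeeping to $Tw$ and $T^{-1}w$. By \eqref{Tw}, $Tw \equiv_S (m_0^+, s_1, m_1^+, \dots, s_k, m_k^+)$, so $Tw$ has the same $b$-length $k$ and the same sequence of non-$b$ letters $s_1, \dots, s_k$ as $w$; the previous paragraph applies verbatim, and its $b$-truncated subwords are the words $(Tw)_{i,j} \equiv_S (s_i, m_i^+, s_{i+1}, \dots, m_{j-1}^+, s_j)$, again indexed by $1 \le i \le j \le k$. Matching the two index sets gives the claimed correspondence $w_{i,j} \leftrightarrow (Tw)_{i,j}$, and likewise for $T^{-1}$ via \eqref{T-1w}. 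What remains is the identity $(Tw)_{i,j} = \tau_b(T(w_{i,j}))$. The normal form of $w_{i,j}$ is $(0, s_i, m_i, s_{i+1}, \dots, m_{j-1}, s_j, 0)$, so applying \eqref{mj+} inside $w_{i,j}$ (with end-conventions $s_0 = s_{k+1} = e$ for $w_{i,j}$ itself) sends the interior exponent between $s_l$ and $s_{l+1}$ to $m_l + \#a(s_l) - \#a^{-1}(s_{l+1}) = m_l^+$, the very value computed inside $w$, while the two boundary exponents become $-\#a^{-1}(s_i)$ and $\#a(s_j)$, each a power of $b$ of absolute value at most one. Thus $T(w_{i,j}) \equiv_S (-\#a^{-1}(s_i), s_i, m_i^+, \dots, m_{j-1}^+, s_j, \#a(s_j))$, and stripping the two $b$-power blocks at the ends with $\tau_b$ leaves exactly $(s_i, m_i^+, \dots, s_j) = (Tw)_{i,j}$. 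The case of $T^{-1}$ is identical after swapping $\#a \leftrightarrow \#a^{-1}$ and $m_l^+ \leftrightarrow m_l^-$ via \eqref{mj-}.

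I do not anticipate a real obstacle: this is a bookkeeping lemma. The only step needing care is the last identity, where one must notice that the exponent rule \eqref{mj+} is local --- depending only on $m_l, s_l, s_{l+1}$ --- so interior exponents coincide whether computed inside $w$ or inside the truncated factor $w_{i,j}$, and that the mismatch at the two ends contributes only $b$-powers, which $\tau_b$ discards. Everything else is a direct unpacking of the normal form.
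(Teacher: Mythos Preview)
Your proof is correct and follows exactly the approach the paper takes: the paper's entire proof is the single sentence ``This follows from \ref{Tw} and \ref{T-1w},'' and what you have written is a careful, explicit unpacking of precisely that claim. Your observation that the exponent rule \eqref{mj+} is local in $(m_l,s_l,s_{l+1})$, so that interior exponents agree whether computed in $w$ or in $w_{i,j}$, is the only nontrivial point, and you handle it cleanly.
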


\begin{proof}
This follows from \ref{Tw} and \ref{T-1w}.
\end{proof}

With this preparation we prove now that $T^{-1}\phi w\in \tilde{\mathcal{B}}(F_n,S)$ if $w$ is not a power of $b$.

\newlength{\lena}
\settowidth{\lena}{$\{b^{m_k}\}\cup\{b^{m_k-1}s\mid s\in S_b\setminus\{a^{-1}\}\}$,}

\begin{lem}\label{wisTinv}
Let $w\equiv_S (m_0,s_1,...,s_k,m_k)$ and assume that $w$ is not a power of $b$. Let $W_{1}(w)=W^{l}_1(w)\cdot W^{m}_1(w)\cdot W^{r}_1(w)$, where \begin{align*} 
W^{l}_1(w)&=\begin{cases}
\mathmakebox[\lena][l]{\{e\},}& \text{if } m_0=0\\
\{b^{m_0}\}\cup\{ab^{m_0-1}\},& \text{if } m_0>0, s_1\neq a^{-1}\\
\{b^{m_0+1}\}\cup\{ab^{m_0}\},& \text{if } m_0>0, s_1=a^{-1}\\
\{b^{m_0-1}\}\cup\{sb^{m_0}\mid s\in S_b\setminus\{a\}\},& \text{if } m_0<0, s_1\neq a^{-1}\\
\{b^{m_0}\}\cup\{sb^{m_0+1}\mid s\in S_b\setminus\{a\}\},& \text{if } m_0<0, s_1=a^{-1}
\end{cases},\\
W^{m}_1(w)&=\{\tau_b\circ T^{-1}\circ \tau_b(w)\},\\
W^{r}_1(w)&=\begin{cases}
\{e\},& \text{if } m_k=0\\
\{b^{m_k}\}\cup\{b^{m_k-1}s\mid s\in S_b\setminus\{a^{-1}\}\},& \text{if } m_k>0, s_k=a\\
\{b^{m_k+1}\}\cup\{b^{m_k}s\mid s\in S_b\setminus\{a^{-1}\}\},& \text{if } m_k>0, s_k\neq a\\
\{b^{m_k-1}\}\cup\{b^{m_k}a^{-1}\},& \text{if } m_k<0, s_k=a\\
\{b^{m_k}\}\cup\{b^{m_{l+1}}a^{-1}\},& \text{if } m_k<0, s_k\neq a
\end{cases}.
\end{align*}

Then \begin{equation}
\#w\circ T=\sum_{u\in W_{1}(w)}\#u.
\end{equation}
\end{lem}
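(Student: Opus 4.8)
The plan is to unwind the definitions and compute $\#w \circ T$ directly, by asking: for which reduced words $u$ does $Tu$ contain $w$ as a reduced subword? Since $\#w(Tu) = \#w \circ T(u)$, the claim is that this count equals $\sum_{v \in W_1(w)} \#v(u)$, i.e.\ that occurrences of $w$ in $Tu$ are in bijection with occurrences in $u$ of the (finitely many) words listed in $W_1(w)$. First I would fix a reduced word $u$ and write it in the normal form $u \equiv_S (n_0, t_1, n_1, \dots, t_l, n_l)$, so that by \eqref{Tw} we have $Tu \equiv_S (n_0^+, t_1, \dots, t_l, n_l^+)$ with $n_j^+ = n_j + \#a(t_j) - \#a^{-1}(t_{j+1})$. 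The key structural observation, which is exactly Proposition \ref{btrsubwords}, is that the letters $t_1, \dots, t_l$ from $S_b$ are untouched by $T$; only the exponents of $b$ between them change. So an occurrence of $w$ in $Tu$ is determined by where its ``$S_b$-skeleton'' $s_1, \dots, s_k$ sits among $t_1, \dots, t_l$, together with how the $b$-blocks of $w$ fit into the (modified) $b$-blocks of $Tu$.

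The main case is when $w$ has at least one letter from $S_b$, i.e.\ $k \geq 1$ (the hypothesis that $w$ is not a power of $b$), so that $w$'s middle part $\tau_b(w) \equiv_S (s_1, m_1, \dots, s_k)$ is a nonempty $b$-truncated word. The next step is a case split according to the signs of $m_0$ and $m_k$, the leading and trailing $b$-exponents of $w$. If $m_0 = 0$ then $w$ starts with $s_1 \in S_b$, and the left end of an occurrence of $w$ in $Tu$ forces nothing beyond the skeleton matching; this contributes the factor $\{e\}$ to $W_1^l(w)$. If $m_0 > 0$, then $w$ begins with $b^{m_0} s_1 \cdots$, and matching this inside $Tu$ means we are inside some block $n_{j-1}^+ = n_{j-1} + \#a(t_{j-1}) - \#a^{-1}(t_j)$ of $Tu$ with $t_j = s_1$; depending on whether $s_1 = a^{-1}$ (which makes the $-\#a^{-1}(t_j)$ term kick in) one needs $u$ to contain $b^{m_0} s_1$, $a b^{m_0 - 1} s_1$, $b^{m_0+1} s_1$, or $a b^{m_0} s_1$ just before the skeleton — these are exactly the words in $W_1^l(w)$ (and the requirement $s \in S_b \setminus \{a\}$ in the $m_0 < 0$ lines comes from keeping things reduced when the preceding letter $t_{j-1}$ contributes $\#a(t_{j-1})$, which is $1$ iff $t_{j-1} = a$). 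The trailing part $W_1^r(w)$ is handled symmetrically using the $+\#a(t_j)$ term at the right end. The middle factor is $W_1^m(w) = \{\tau_b \circ T^{-1} \circ \tau_b(w)\}$: the $b$-truncated core of $w$ corresponds, via the bijection of Proposition \ref{btrsubwords}(ii), to the $b$-truncated word in $u$ obtained by applying $T^{-1}$ to it and truncating. The upshot is that a matching of $w$ in $Tu$ decomposes uniquely as a left-end choice, a forced middle core in $u$, and a right-end choice, giving a bijection with occurrences of $W_1^l(w) \cdot W_1^m(w) \cdot W_1^r(w)$ in $u$, hence $\#w \circ T = \sum_{v \in W_1(w)} \#v$.

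Two bookkeeping points need care. First, one must check that the concatenations $W_1^l(w) \cdot W_1^m(w) \cdot W_1^r(w)$ yield reduced words (so that each summand $\#v$ is legitimate) and that no cancellation occurs at the junctions; this is where the reducedness conditions in the normal form (condition (4): $s_j \neq s_{j+1}^{-1}$ when $m_j = 0$) and the exclusions like $s \in S_b \setminus \{a\}$ or $s \in S_b \setminus \{a^{-1}\}$ in the definitions of $W_1^l$ and $W_1^r$ are used — they precisely rule out the cancellations. Second, one must verify the bijection is exact: every occurrence of some $v \in W_1(w)$ in $u$ really does produce an occurrence of $w$ in $Tu$ (the ``easy'' direction, by applying \eqref{Tw} to the relevant block of $u$), and conversely every occurrence of $w$ in $Tu$ arises this way and from a unique $v$ (so no double-counting).

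I expect the main obstacle to be the sheer number of sub-cases in the second direction — verifying that each of the ten branches in the definitions of $W_1^l(w)$ and $W_1^r(w)$ correctly accounts for the effect of the $\pm\#a(\cdot), \mp\#a^{-1}(\cdot)$ corrections in \eqref{mj+}/\eqref{mj-} at the appropriate end-block of $Tu$, including the boundary conventions $s_0 = s_{k+1} = e$ when $w$ sits at the very start or end of $Tu$. Each branch is a short, local computation with $b$-exponents, but getting all signs and exclusion sets to match requires patience rather than any new idea; once the local picture at each end is pinned down, the global bijection assembles formally from Proposition \ref{btrsubwords}.
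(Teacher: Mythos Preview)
Your proposal is correct and follows essentially the same approach as the paper's proof: fix an arbitrary word in normal form, use Proposition~\ref{btrsubwords} to reduce matching $w$ in $Tv$ to matching its $b$-truncated core plus compatibility conditions on the end $b$-blocks, and then do the case split on the signs of $m_0,m_k$ and the values of $s_1,s_k$ to identify these conditions with the sets $W_1^l(w)$ and $W_1^r(w)$. The paper organizes the argument as two inequalities $\#w(Tv)\leq\sum_{u}\#u(v)$ and its reverse rather than as a single bijection, and makes explicit the point you allude to under ``no double-counting'': distinct elements of $W_1(w)$ are never subwords of one another, which is what guarantees different $u\in W_1(w)$ produce different occurrences of $w$ in $Tv$.
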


\begin{proof}
Let $v\equiv_S (n_0,r_1,...,r_l,n_l)$ be a word in $F_n$. We will prove the the result in the following two steps:\begin{enumerate}
\item $\#w(Tv)\leq \sum_{u\in W_{1}(w)}\#u(v)$
\item $\sum_{u\in W_{1}(w)}\#u(v)\leq \#w(Tv)$
\end{enumerate}

We start with the first step. If $w$ is a reduced subword of $Tv$, then $\tau_b(w)$ is a reduced subword of $Tv$. Since $\tau_b(w)$ is $b$-truncated, we know that $\tau_b(w)$ is a reduced subword of $Tv$ if and only if there is $1\leq i\leq l-k$ such that $Tv_{i,i+k}=\tau_b(w)$. We can not necessarily find $w$ as a reduced subword, when we see $Tv_{i,i+k}=\tau_b(w)$. By \ref{Tw} we know that $Tv\equiv_S (n_0^+,r_1,...,r_k,n_k^+)$. So if we can extend $Tv_{i,i+k}=\tau_b(w)$ to $w$ or not depends on the values of $n_{i-1}^+$ and $n_{i+k}^+$. We get that $\#w(Tv)$ is the number of $1\leq i\leq l-k$ such that\begin{enumerate}
\item $Tv_{i,i+k}=\tau_b(w)\equiv_S (s_1,...,s_k)$,
\item $n_{i-1}^+\cdot m_0>0$ and $\lvert n_{i-1}^+\rvert > \lvert m_0\rvert$,
\item $n_{i+k}^+\cdot m_k>0$ and $\lvert n_{i+k}^+\rvert > \lvert m_k\rvert$.
\end{enumerate} By \ref{btrsubwords} the first condition is satisfied if and only if \begin{equation}
v_{i,i+k}=\tau_b(T^{-1}(\tau_b(w)))\equiv_S (s_1,m_i^-,...,s_k).
\end{equation} In other words, the first condition is satisfied if and only if $\{v_{i,i+k}\}=W^{m}_1(w)$. We set $r_0=r_{l+1}=e$ and get by \ref{mj+} that\begin{align}
n_{i-1}^+=& n_i+\#a(r_i)-\#a^{-1}(s_{1})\\
n_{i+k}^+=& n_{i+k}+\#a(s_k)-\#a^{-1}(r_{i+k+1}).
\end{align} This gives us that the second and third condition are satisfied if and only if $r_{i-1}b^{n_{i-1}}\in W^{l}_1(w)$ and $b^{n_{i+k}}r_{i+k+1}\in W^{r}_1(w)$ respectively. So whenever we find $w$ as a reduced subword of $Tv$, we find an element of $W_{1}(w)$ as a reduced subword of $v$. This proves $\#w(Tv)\leq \sum_{u\in W_{1}(w)}\#u(v)$.

We now prove the second step. If $u\in W_{1}(w)$ is a reduced subword of $v$, then by definition of $W_{1}(w)$, we get that $\tau_b\circ T^{-1}\circ \tau_b(w)$ is a reduced subword of $v$. So there is $1\leq i\leq l-k$ such that $v_{i,i+k}=\tau_b\circ T^{-1}\circ \tau_b(w)$. By turning the argument in the first step upside down we find that the three conditions above are satisfied. So we find $w$ as a reduced subword of $Tv$. It remains to show that different reduced subwords $u_1\neq u_2 \in W_{1}(w)$ of $v$, give us different occurrences of $w$ as a reduced subword of $Tv$, namely different $1\leq i_1,i_2\leq l-k$. This follows since no element $u_1\in W_{1}(w)$ is a reduced subword of another element $u_2\in W_{1}(w)$.
\end{proof}

\begin{cor}\label{phiwisTinv}
>From the Lemma \ref{wisTinv} we can read off that:\begin{enumerate}[(i)]
\item If $w\in F_n$ is not a power of $b$, then $T^{-1}\phi w=\sum_{u\in W_{1}(w)}\phi u$.
\item $T^{-1}$ is a bijection on the subspace of $\tilde{\mathcal{B}}(F_n,S)$ spanned by $\{[\phi v]\mid \tau_b(v)=v\}$.
\end{enumerate}
\end{cor}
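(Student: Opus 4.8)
The plan is to deduce everything from Lemma~\ref{wisTinv}, the identity $\phi w = \#w - \#w^{-1}$, and three elementary observations: $T$ restricts to a group automorphism (so $T(w^{-1}) = T(w)^{-1}$), the truncation operator commutes with inversion ($\tau_b(w^{-1}) = \tau_b(w)^{-1}$), and $\tau_b$ always returns a $b$-truncated word.

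For part (i), I would write $\phi w\circ T = \#w\circ T - \#w^{-1}\circ T$ and apply Lemma~\ref{wisTinv} to both $w$ and $w^{-1}$ (neither of which is a power of $b$), obtaining $\#w\circ T = \sum_{u\in W_1(w)}\#u$ and $\#w^{-1}\circ T = \sum_{u\in W_1(w^{-1})}\#u$. The crux is then the identity $W_1(w^{-1}) = \{u^{-1}\mid u\in W_1(w)\}$. For the middle factor $W_1^m$ this is immediate from $W_1^m(w) = \{\tau_b\circ T^{-1}\circ\tau_b(w)\}$ together with the observations above. For the two end factors one uses that inverting $w$ interchanges its two extreme $b$-blocks: if $w\equiv_S(m_0,s_1,\dots,s_k,m_k)$ then $w^{-1}\equiv_S(-m_k,s_k^{-1},\dots,s_1^{-1},-m_0)$, and a case-by-case check through the (at most) five cases in the definitions of $W_1^l$ and $W_1^r$ shows $W_1^l(w^{-1}) = W_1^r(w)^{-1}$ and $W_1^r(w^{-1}) = W_1^l(w)^{-1}$; here one exploits that $s\mapsto s^{-1}$ maps $S_b\setminus\{a\}$ onto $S_b\setminus\{a^{-1}\}$. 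Since concatenation of subsets of $F_n$ reverses order under inversion, $W_1(w^{-1}) = W_1^l(w^{-1})\cdot W_1^m(w^{-1})\cdot W_1^r(w^{-1}) = W_1^r(w)^{-1}\cdot W_1^m(w)^{-1}\cdot W_1^l(w)^{-1} = W_1(w)^{-1}$. Hence $\phi w\circ T = \sum_{u\in W_1(w)}\#u - \sum_{u\in W_1(w)}\#u^{-1} = \sum_{u\in W_1(w)}\phi u$, which is part (i).

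For part (ii), note first that a nontrivial $b$-truncated word $v$ has $m_0 = m_k = 0$ and $k\geq 1$ in its representation $v\equiv_S(m_0,s_1,\dots,s_k,m_k)$; in particular it is not a power of $b$, so part (i) applies and the two end factors of $W_1(v)$ are $\{e\}$, giving $W_1(v) = \{\tau_b\circ T^{-1}(v)\}$, again a $b$-truncated word. Thus $T^{-1}$ preserves the subspace $V$ spanned by $\{[\phi v]\mid\tau_b(v)=v\}$, acting there by $[\phi v]\mapsto[\phi\sigma(v)]$ with $\sigma := \tau_b\circ T^{-1}$. I would then verify that $\sigma$ is a bijection of the set of nontrivial $b$-truncated words with inverse $\rho := \tau_b\circ T$: using \eqref{mj+} and \eqref{mj-}, the internal $b$-exponents of $\rho\sigma(v)$ return to $(m_j - \#a(s_j) + \#a^{-1}(s_{j+1})) + \#a(s_j) - \#a^{-1}(s_{j+1}) = m_j$, while the $b$-exponents created at the two ends at each stage have absolute value at most $1$ and are erased by the following $\tau_b$, so $\rho\sigma(v) = v$, and symmetrically $\sigma\rho(v) = v$. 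Finally, since $\phi v^{-1} = -\phi v$, picking one word from each pair $\{v,v^{-1}\}$ of nontrivial $b$-truncated words gives a spanning set of $V$ that is in fact a basis: a nontrivial finite combination of these $\phi v$, rewritten as an element of $\mathcal{C}(F_n,S)$, is supported on $b$-truncated words only, so its $b$-truncated end is all of its support and Criterion~\ref{redtree} shows it is reduced, hence nonzero. Because $\sigma$ is a bijection of the $b$-truncated words commuting with $v\mapsto v^{-1}$, it carries this basis bijectively, up to signs, onto another basis of $V$; therefore $T^{-1}|_V$ is a linear isomorphism, with inverse induced by $\rho$.

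The main obstacle I anticipate is purely combinatorial bookkeeping: matching up the five-way case splits to establish $W_1^l(w^{-1}) = W_1^r(w)^{-1}$ (and keeping track of how inversion acts on $S_b$), and checking that $\tau_b T$ and $\tau_b T^{-1}$ are genuinely mutually inverse on $b$-truncated words. One also needs the convention that $n_b(I)$ of a set $I$ containing no non-$b$-truncated word is smaller than $1$, so that $E_b(I) = I$ and Criterion~\ref{redtree} applies to the combinations appearing in the linear-independence step.
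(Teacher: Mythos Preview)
Your proposal is correct. The paper gives no proof beyond the phrase ``we can read off,'' so your argument is supplying exactly the details the paper omits, along the natural route the paper intends.

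Two minor remarks. First, for part~(ii) the linear-independence step via Criterion~\ref{redtree} is correct but not strictly necessary: since $T^{-1}$ is induced by an automorphism of $F_n$, it is automatically injective on all of $\tilde{\mathcal{Q}}(F_n)$, hence on the subspace $V$; once you have shown that $\sigma=\tau_b\circ T^{-1}$ is a bijection of the $b$-truncated words (so that $T^{-1}$ maps the spanning set of $V$ onto itself), surjectivity of $T^{-1}|_V$ follows immediately. Your basis argument is still valid and has the virtue of being self-contained. Second, your anticipated convention $n_b(I)=-\infty$ when $I$ contains only $b$-truncated words is indeed the one the paper uses (cf.\ Lemma~\ref{nbspeed}(i)), so Criterion~\ref{redtree} applies as you describe.
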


To finish the proof of \ref{Outinv}, we have to show that $\phi b^m\circ T \in \tilde{\mathcal{B}}(F_n,S)$ for $m\in \mathbb{Z}\setminus \{0\}$.

\begin{lem}\label{powerofbtob}
Consider $\phi b^m$ for $m\in \mathbb{Z}\setminus \{0,1\}$. Then there is a weight $\alpha$ and a finite set $J$ such that no element of $J$ is a power of $b$ and \begin{equation}
\phi b^m\sim \alpha(b)\phi b + \sum_{y\in J}\alpha(y)\phi y.\end{equation}
\end{lem}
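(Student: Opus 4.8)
The plan is to reduce the statement about $\phi b^m$ to the case already handled in Lemma~\ref{wisTinv}, exploiting the fact that $\phi b^m$ differs from $\phi b$ by something controlled. The key observation is that for a reduced word $w$, the value $\#b^m(w)$ counts maximal (or overlapping) runs of $b$'s of length at least $\lvert m\rvert$, and such runs are precisely the maximal $b$-syllables appearing in the syllable decomposition $w\equiv_S(n_0,r_1,\dots,r_l,n_l)$. So $\#b^m(w)$ can be rewritten purely in terms of how often certain $b$-truncated words of the form $s b^{j} s'$ (with $s,s'\in S_b$ and $\lvert j\rvert\geq\lvert m\rvert-1$, roughly) occur as subwords of $w$, plus boundary corrections coming from $n_0$ and $n_l$. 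The boundary corrections are where the genuine powers of $b$ reappear, but they enter only in a bounded way.

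\textbf{Key steps.}
First I would fix $m$ with $\lvert m\rvert\geq 2$ and write out, for an arbitrary reduced $w\equiv_S(n_0,r_1,\dots,r_l,n_l)$, an exact combinatorial formula for $\#b^m(w)$: it is the number of indices $j$ with $0\leq j\leq l$ such that the $b$-syllable $b^{n_j}$ has the same sign as $b^m$ and $\lvert n_j\rvert\geq\lvert m\rvert$. Second, I would split this count according to whether the syllable is interior ($1\leq j\leq l-1$, flanked on both sides by letters $r_j,r_{j+1}\in S_b$) or is one of the two ends ($j=0$ or $j=l$). For an interior syllable, the event ``$\mathrm{sign}(n_j)=\mathrm{sign}(m)$ and $\lvert n_j\rvert\geq\lvert m\rvert$'' is detected by the occurrence of one of finitely many $b$-truncated words $s\,b^{n}\,s'$ (where $s,s'$ range over $S_b$ and $n$ ranges over the finitely many values with the right sign and $\lvert n\rvert=\lvert m\rvert-1$ — using a left/right-extension-type identity to collapse all larger exponents, exactly in the spirit of Theorem~\ref{relations}), so these contributions assemble into a finite sum $\sum_{y\in J}\alpha(y)\#y$ with no $y\in J$ a power of $b$. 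Third, the two boundary contributions ($j=0,l$) are each bounded functions of $w$ up to adding a multiple of $\#b$ (or $\#b^{-1}=\#b$ after symmetrization): a prefix of the form $b^{n_0}$ with $\lvert n_0\rvert\geq\lvert m\rvert$ occurs in $w$ with the same bounded ambiguity as $\#b$ does, so it equals $\#b(w)$ plus a bounded error, and similarly for the suffix. Collecting: $\#b^m\sim \gamma\,\#b+\sum_{y\in J}\alpha(y)\#y$ for a suitable constant $\gamma$ and finite $b$-truncated-free $J$; symmetrizing (replacing each $\#u$ by $\phi u$, which only changes things by bounded amounts since $\phi b^m=\#b^m-\#b^{-m}$ and the analogous decomposition holds for $\#b^{-m}$) yields $\phi b^m\sim \alpha(b)\phi b+\sum_{y\in J}\alpha(y)\phi y$ as claimed.

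\textbf{Main obstacle.}
The delicate point is the bookkeeping at the two ends of the word. One must be careful that a ``run of $b$'s'' at the very start or end of $w$ is not flanked by a letter of $S_b$, so it is genuinely \emph{not} captured by any $b$-truncated subword, and this is exactly why a residual $\phi b$ term is forced to survive. Getting the constant in front of $\phi b$ right — and checking that whatever is left after subtracting $\gamma\#b$ really is bounded uniformly in $w$, rather than merely ``smaller'' — is the part that needs genuine care; everything else is the same overlapping-subword arithmetic already used in the proof of Lemma~\ref{wisTinv} together with the extension relations of Theorem~\ref{relations}. Once this is done, combining with Corollary~\ref{phiwisTinv}(i) applied to each $y\in J$ (none of which is a power of $b$) completes the proof that $\phi b^m\circ T\in\tilde{\mathcal{B}}(F_n,S)$, finishing Theorem~\ref{Outinv}.
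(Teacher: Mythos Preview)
Your approach is substantially more complicated than the paper's, and it contains a concrete error. The paper's proof is essentially one line: for $m>1$, iterate the right-extension relation $r_{b^{i}}$ (which gives $\#b^{i+1}\sim\#b^{i}-\sum_{s\in S_b}\#b^{i}s$) for $i=m-1,m-2,\dots,1$, obtaining
\[
\phi b^{m}\sim\phi b-\sum_{i=1}^{m-1}\sum_{s\in S_b}\phi b^is,
\]
so $J=\{b^is:1\le i\le m-1,\ s\in S_b\}$ and $\alpha(b)=1$. The case $m<0$ follows from $\phi b^m=-\phi b^{-m}$. There is no delicate boundary bookkeeping and no constant to chase.

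By contrast, your combinatorial formula for $\#b^m(w)$ is wrong as stated: for $m>0$ and $w\equiv_S(n_0,r_1,\dots,r_l,n_l)$ one has
\[
\#b^m(w)=\sum_{j:\,n_j\ge m}(n_j-m+1),
\]
not merely the number of indices $j$ with $n_j\ge m$. (For instance $\#b^2(b^5)=4$, not $1$.) The quantity you wrote down is $\#b^m(w)-\#b^{m+1}(w)$, not $\#b^m(w)$ itself. Your strategy could be repaired, since the correct formula still decomposes into interior and boundary contributions, but the interior part is then an \emph{unbounded} linear combination of $b$-truncated counting functions (with coefficients depending on the $n_j$), and you would end up invoking extension relations anyway to collapse it to a finite sum. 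At that point you are doing the paper's argument with extra steps. The moral: extension relations already do all the work here; there is no need for a direct subword analysis.
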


\begin{proof}
Since $\phi b^m=-\phi b^{-m}$, it is enough to prove the statement for $m>1$. We get by applying $m-1$ right-extension relations that \begin{equation}\phi b^{m}\sim\phi b-\sum_{i=1}^{m-1}\sum_{s\in S_b}\phi b^is.\end{equation}
\end{proof}

We know that $T^{-1}\phi b=\phi a+\phi b$, so we get that $\phi b^m\circ T\in \tilde{\mathcal{B}}(F_n,S)$ for $m\in \mathbb{Z}$. This finishes the proof of \ref{Outinv}.

Note that we actually did more than prove the invariance of the Brooks space $\tilde{\mathcal{B}}(F_n,S)$: We gave an explicit way to find an representative of $X[f]$ for every $[f]\in \tilde{\mathcal{B}}(F_n,S)$ and a set of $X$'s that generate $\mathrm{Out}(F_n)$ as a semigroup. So we can give a representative of $X[f]$ for $[f]\in \tilde{\mathcal{B}}(F_n,S)$ and $X \in \mathrm{Out}(F_n)$. Unfortunately this is very much a theoretical possibility for a general $X$: As it is easy to see, computations involving \ref{wisTinv} are not pleasant. Nevertheless we will use this procedure for special elements (namely powers of $T^{-1}$) in the following sections. Now we want to give a surprising consequence of \ref{Outinv}:

\begin{cor}
$\tilde{\mathcal{B}}(F_n,S)$ is independent of $S$.
\end{cor}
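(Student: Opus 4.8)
The plan is to deduce this from Theorem~\ref{Outinv} together with the fact that any two generating sets of $F_n$ of the same cardinality are related by an element of $\mathrm{Aut}(F_n)$. Let $S=\{a_1,\dots,a_n\}$ and $S'=\{a_1',\dots,a_n'\}$ be two free generating sets of $F_n$. There is an automorphism $\Phi\in\mathrm{Aut}(F_n)$ with $\Phi(a_i)=a_i'$ for all $i$. The key observation is that $\Phi$ carries the $v$-counting quasimorphism with respect to $S$ to the $\Phi(v)$-counting quasimorphism with respect to $S'$: a reduced word in the alphabet $\bar S$ is sent by $\Phi$ to the reduced word in $\bar S'$ obtained by replacing each letter, and subword occurrences correspond bijectively under this relabelling, exactly as in Lemma~\ref{Xinv}. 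Hence $\phi^S_v\circ\Phi^{-1}=\phi^{S'}_{\Phi(v)}$, so precomposition with $\Phi^{-1}$ maps $\mathcal{B}(F_n,S)$ isomorphically onto $\mathcal{B}(F_n,S')$, and passing to equivalence classes gives a linear isomorphism $\tilde{\mathcal B}(F_n,S)\xrightarrow{\sim}\tilde{\mathcal B}(F_n,S')$ inside $\tilde{\mathcal Q}(F_n)$.

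First I would make precise the claim that relabelling letters commutes with counting subwords, which is the one genuinely combinatorial point but is entirely routine: $\Phi$ induces a bijection of $F_n$ that sends the reduced form $r_1\cdots r_l$ over $\bar S$ to the reduced form $\Phi(r_1)\cdots\Phi(r_l)$ over $\bar S'$ (here $\Phi(a_i)=a_i'$ and $\Phi(a_i^{-1})=(a_i')^{-1}$), and this is a length-preserving, subword-preserving bijection, so $\#^{S}_v(w)=\#^{S'}_{\Phi(v)}(\Phi(w))$ for every $v,w$. Taking the symmetrized version gives $\phi^S_v=\phi^{S'}_{\Phi(v)}\circ\Phi$, i.e. $\phi^S_v\circ\Phi^{-1}=\phi^{S'}_{\Phi(v)}$. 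Since every element of $\mathcal B(F_n,S)$ is a finite linear combination of the $\phi^S_v$, precomposition by $\Phi^{-1}$ sends $\mathcal B(F_n,S)$ into $\mathcal B(F_n,S')$, and applying the same argument to $\Phi^{-1}$ shows the map is onto.

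Finally I would assemble the conclusion. The action of $\mathrm{Aut}(F_n)$ on $\tilde{\mathcal Q}(F_n)$ by $\Phi[f]=[f\circ\Phi^{-1}]$ factors through $\mathrm{Out}(F_n)$, and by Theorem~\ref{Outinv} it preserves $\tilde{\mathcal B}(F_n,S)$. But the computation above shows $\Phi\cdot\tilde{\mathcal B}(F_n,S)=\tilde{\mathcal B}(F_n,S')$ as subspaces of $\tilde{\mathcal Q}(F_n)$. Combining, $\tilde{\mathcal B}(F_n,S')=\Phi\cdot\tilde{\mathcal B}(F_n,S)=\tilde{\mathcal B}(F_n,S)$, so the Brooks space does not depend on the choice of $S$.

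\emph{Main obstacle.} There is no real obstacle here — this is a short corollary. The only thing to be careful about is logical circularity: we should check that the relabelling statement $\phi^S_v\circ\Phi^{-1}=\phi^{S'}_{\Phi(v)}$ does not secretly require $\tilde{\mathcal B}$ to already be known $S$-independent (it does not, since it is a statement about the unreduced functions $\phi_v$), and that the invariance input is exactly Theorem~\ref{Outinv} applied with respect to the single fixed generating set $S$. Once those are pinned down, the rest is formal.
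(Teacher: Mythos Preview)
Your proof is correct and follows essentially the same approach as the paper: both use an automorphism $\Phi$ carrying one generating set to the other, the relabelling identity $\phi^{S}_v\circ\Phi^{-1}=\phi^{S'}_{\Phi(v)}$, and Theorem~\ref{Outinv} to conclude. Your version is somewhat more explicit (in particular, your care about avoiding circularity is well placed), but the underlying argument is the same.
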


\begin{proof}
To prove this statement, we have to consider different free generating sets of $F_n$. Since our usual notation relies on one fixed free generating set, we have to adjust it for the sake of this proof. Let $v\in F_n\setminus \{e\}$ and let $S_i$ be a free generating set of $F_n$. Then we denote the $v$-counting quasimorphism with respect to $S_i$ by $\phi^{S_i}v$.

Now let $S_1$ and $S_2$ be different free generating sets of $F_n$. Then there is an element $X\in \mathrm{Out}(F_n)$ such that $XS_1=S_2$. Let $v\in F_n\setminus \{e\}$ and $w\in F_n$. Then we have \begin{equation}
\phi^{S_1}w(v)=\phi^{XS_1}_{Xw}(Xv)=\phi^{S_2}_{Xw}(Xv).
\end{equation} So we get that $\phi^{S_1}w\in \tilde{\mathcal{B}}(F_n,S_2)$ by \ref{Outinv}.
\end{proof}

This result makes it unnecessary to consider the generating set $S$ when talking about the Brooks space. So from now on we write $\tilde{\mathcal{B}}(F_n)=\tilde{\mathcal{B}}(F_n,S)$.

\begin{cor}
The subspace $H^2_b(F_n, \mathbb{R})_{\mathrm{fin}}\subset H_b^2(F_n,\mathbb{R})$ is invariant under the action of $\mathrm{Out}(F_n)$ on $H_b^2(F_n,\mathbb{R})$.
\end{cor}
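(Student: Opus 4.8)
The plan is to reduce the statement to the invariance of the Brooks space, already established in Theorem \ref{Outinv}, together with the identification of the $\mathrm{Out}(F_n)$-action on $H^2_b(F_n,\mathbb{R})$ provided by Proposition \ref{sameaction}. In other words, I do not expect to do any new computation: this corollary is a formal consequence of the preceding results.

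First I would recall that by Proposition \ref{sameaction} the $\mathrm{Out}(F_n)$-action on $H^2_b(F_n,\mathbb{R})$ is, up to isomorphism, the action on $\tilde{\mathcal{Q}}(F_n)/\mathrm{Hom}(F_n,\mathbb{R})$ given by $X([f]+\mathrm{Hom}(F_n,\mathbb{R}))=[f\circ X^{-1}]+\mathrm{Hom}(F_n,\mathbb{R})$, and that under this isomorphism the subspace $H^2_b(F_n,\mathbb{R})_{\mathrm{fin}}$ is by definition the image of the Brooks space $\tilde{\mathcal{B}}(F_n)$ under the quotient map $\tilde{\mathcal{Q}}(F_n)\to\tilde{\mathcal{Q}}(F_n)/\mathrm{Hom}(F_n,\mathbb{R})$.

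Next I would record two invariance facts at the level of $\tilde{\mathcal{Q}}(F_n)$. The Brooks space $\tilde{\mathcal{B}}(F_n)$ is $\mathrm{Out}(F_n)$-invariant by Theorem \ref{Outinv}. And $\mathrm{Hom}(F_n,\mathbb{R})$ is $\mathrm{Out}(F_n)$-invariant as well: precomposing a homomorphism $F_n\to\mathbb{R}$ with an automorphism yields again a homomorphism, and inner automorphisms act trivially on $\mathrm{Hom}(F_n,\mathbb{R})$ since $\mathbb{R}$ is abelian. Therefore the image of the invariant subspace $\tilde{\mathcal{B}}(F_n)$ under the quotient by the invariant subspace $\mathrm{Hom}(F_n,\mathbb{R})$ is invariant in $\tilde{\mathcal{Q}}(F_n)/\mathrm{Hom}(F_n,\mathbb{R})$; transporting this back through the isomorphism of Proposition \ref{sameaction} shows that $H^2_b(F_n,\mathbb{R})_{\mathrm{fin}}$ is $\mathrm{Out}(F_n)$-invariant inside $H^2_b(F_n,\mathbb{R})$.

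The main work has already been done in Subsection \ref{InvBrooks} (Theorem \ref{Outinv}, via Lemmas \ref{Xinv}, \ref{wisTinv} and \ref{powerofbtob}); there is no genuine obstacle at this stage. The only point that deserves a sentence of care is verifying that one is quotienting by an invariant subspace so that the action descends to the quotient at all, and this is exactly the observation about $\mathrm{Hom}(F_n,\mathbb{R})$ above.
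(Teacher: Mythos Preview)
Your proposal is correct and follows essentially the same approach as the paper, which simply cites Proposition \ref{sameaction} and Theorem \ref{Outinv}. You spell out one extra detail the paper leaves implicit, namely the $\mathrm{Out}(F_n)$-invariance of $\mathrm{Hom}(F_n,\mathbb{R})$, but this is a harmless elaboration of the same argument.
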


\begin{proof}
This follows from \ref{sameaction} and \ref{Outinv}.
\end{proof}

>From now on we will concentrate on understanding the $\mathrm{Out}(F_n)$ action on $\tilde{\mathcal{B}}(F_n)$. By \ref{Outinv} this is possible and by \ref{dense} it is also reasonable: While we reduce the difficulty of the problem, we do not reduce scope of the problem by too much: 

\begin{thm}[Grigorchuk]\label{dense}
The space $\tilde{\mathcal{B}}(F_n)$ is dense in $\tilde{\mathcal{Q}}(F_n)$ with respect to the topology of pointwise convergence of homogeneous representatives. In particular the action of $\mathrm{Out}(F_n)$ on $H_b^2(F_n,\mathbb{R})$ is uniquely determined by the action of $\mathrm{Out}(F_n)$ on the space $H^2_b(F_n, \mathbb{R})_{\mathrm{fin}}$.
\end{thm}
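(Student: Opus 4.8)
Since this is Grigorchuk's theorem, one option is simply to cite his original argument; below I sketch a self-contained proof by interpolation, followed by a soft-analysis deduction of the consequence. The topology of pointwise convergence of homogeneous representatives is Hausdorff, so for the density statement it suffices to show: for every homogeneous quasimorphism $f$ on $F_n$ and every finite set $E\subset F_n$ there is $\psi\in\mathcal{B}(F_n,S)$ whose homogenization $\tilde\psi$ agrees with $f$ on all of $E$ — then enumerating $F_n$ and letting $E$ exhaust it produces a sequence in $\tilde{\mathcal{B}}(F_n)$ converging pointwise to $f$. Using that every homogeneous quasimorphism is homogeneous and conjugation–invariant, I would first replace each $g\in E\setminus\{e\}$ by the primitive, cyclically reduced root $\gamma$ of a conjugate of $g$: if $g$ is conjugate to $\gamma^{d}$ then $f(g)=d\,f(\gamma)$ and likewise $\tilde\psi(g)=d\,\tilde\psi(\gamma)$, so matching $f$ on the roots matches it on $E$. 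Collecting these roots into a list $\gamma_1,\dots,\gamma_r$ chosen pairwise non-conjugate \emph{and} with no $\gamma_i$ conjugate to $\gamma_j^{-1}$ for $i\neq j$ (one representative per ``conjugacy-or-inverse-conjugacy class''; note $\gamma\sim\gamma^{-1}$ forces $f(\gamma)=0$), it remains to produce $\psi$ with $\tilde\psi(\gamma_i)=f(\gamma_i)$ for all $i$.

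\textbf{The building block.} For a primitive cyclically reduced word $\gamma$ of length $\ell$ and $k\in\mathbb{N}$, consider the Brooks quasimorphism $\phi_{\gamma^{k}}$. A combinatorics–on–words computation (primitivity, via a Fine–Wilf type argument) shows that for $n\geq k$ the only subwords of $\gamma^{n}$ equal to $\gamma^{\pm k}$ are the $n-k+1$ copies of $\gamma^{k}$ sitting at the period positions, and none equal to $\gamma^{-k}$ (the exceptional case $\gamma\sim\gamma^{-1}$ being already excluded). Hence $\#\gamma^{k}(\gamma^{n})-\#\gamma^{-k}(\gamma^{n})=n-k+1$ and $\widetilde{\phi_{\gamma^{k}}}(\gamma)=\lim_{n}(n-k+1)/n=1$. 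Moreover, if $\gamma'$ is primitive cyclically reduced and not conjugate to $\gamma^{\pm 1}$, the bi-infinite periodic words $\cdots\gamma\gamma\gamma\cdots$ and $\cdots\gamma'\gamma'\gamma'\cdots$ share only a bounded segment, so there is $k_0$ such that for $k\geq k_0$ the word $\gamma^{\pm k}$ never occurs in any power of $\gamma'$, giving $\widetilde{\phi_{\gamma^{k}}}(\gamma')=0$. Choosing $k$ large enough to work simultaneously for all pairs $\gamma_i,\gamma_j$ in our list, the element
\[
\psi=\sum_{i=1}^{r} f(\gamma_i)\,\phi_{\gamma_i^{k}}\in\mathcal{B}(F_n,S)
\]
satisfies $\tilde\psi(\gamma_i)=\sum_{j} f(\gamma_j)\,\widetilde{\phi_{\gamma_j^{k}}}(\gamma_i)=f(\gamma_i)$ for each $i$, completing the interpolation and hence the density statement.

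\textbf{The consequence.} The $\mathrm{Out}(F_n)$–action $X\cdot[f]=[f\circ X^{-1}]$ on $\tilde{\mathcal{Q}}(F_n)$ is continuous for the pointwise topology: precomposition by $X^{-1}$ preserves pointwise convergence and commutes with homogenization (since $X^{-1}$ is an automorphism, the homogenization of $f\circ X^{-1}$ is $\tilde f\circ X^{-1}$). As $\tilde{\mathcal{B}}(F_n)$ is a dense $\mathrm{Out}(F_n)$–invariant subspace of the Hausdorff space $\tilde{\mathcal{Q}}(F_n)$, the action on $\tilde{\mathcal{Q}}(F_n)$ is the unique continuous extension of the action on $\tilde{\mathcal{B}}(F_n)$; passing to the quotient by the $\mathrm{Out}(F_n)$–invariant subspace $\mathrm{Hom}(F_n,\mathbb{R})$ yields that the $\mathrm{Out}(F_n)$–action on $H_b^2(F_n,\mathbb{R})=\tilde{\mathcal{Q}}(F_n)/\mathrm{Hom}(F_n,\mathbb{R})$ is determined by its restriction to $H^2_b(F_n,\mathbb{R})_{\mathrm{fin}}=\tilde{\mathcal{B}}(F_n)/\mathrm{Hom}(F_n,\mathbb{R})$.

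\textbf{Main obstacle.} The delicate point is the combinatorial heart of the middle step: showing that occurrences of $\gamma^{\pm k}$ inside $\gamma^{n}$ lie exactly at the period positions, and that non-conjugate primitive roots exhibit only bounded overlap between their periodic bi-infinite words. Both are standard facts in combinatorics on words, but they must be stated carefully enough that the limits defining the homogenizations come out to be \emph{exactly} $1$ and $0$, and the case $\gamma\sim\gamma^{-1}$ (and more generally $\gamma_i\sim\gamma_j^{-1}$) must be handled so that the chosen list of roots genuinely separates the data $\{f(\gamma_i)\}$.
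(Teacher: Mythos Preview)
The paper does not give a proof of this theorem at all: it simply cites Theorem~2.3 in Hartnick--Schweitzer, which in turn relies on Grigorchuk's original result. So there is no ``paper's own proof'' to compare against beyond a reference.

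Your self-contained sketch is correct and is essentially Grigorchuk's interpolation argument: reduce to matching the values of a homogeneous quasimorphism on finitely many primitive cyclically reduced roots, then use high powers $\phi_{\gamma^{k}}$ as ``bump functions'' that evaluate to $1$ on $\gamma$ and $0$ on the other roots once $k$ is large. The Fine--Wilf reasoning you invoke is the right tool for both claims (period rigidity for the value $1$, bounded common factor for the value $0$). One small remark: in a free group no non-trivial element is conjugate to its inverse (free groups are bi-orderable), so the case $\gamma\sim\gamma^{-1}$ you single out is actually vacuous --- you may still need to avoid choosing both $\gamma$ and a conjugate of $\gamma^{-1}$ among the $\gamma_i$, which you do handle. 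Your deduction of the consequence from density plus continuity and Hausdorffness is also fine.
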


\begin{proof}
See Theorem 2.3 in Hartnick-Schweitzer \cite{HS}, which relies on \cite{Grigorchuk}.
\end{proof}

This theorem shows that there is reasonable hope of extending any results we find in the next sections for $\tilde{\mathcal{B}}(F_n)$ to a result on $\tilde{\mathcal{Q}}(F_n)$. However we will not attempt to carry out this extension step in this article.

\subsection{Linear speed}\label{linspeed}

It is hard to compare $X[f]$ with $[f]$, where $X\in \mathrm{Out}(F_n)$ and $[f]\in \tilde{\mathcal{B}}(F_n)$. There are two ideas behind the notion of \emph{speed}, which we will introduce in this subsection:\begin{enumerate}
\item Instead of comparing elements, we compare the length of elements in $\tilde{\mathcal{B}}(F_n)$. 
\item Instead of comparing the length of $X[f]$ with the length of $[f]$, we will consider the asymptotic behaviour of the length of $X^n[f]$ under repeated applications of $X$.
\end{enumerate}

Since we look in the following sections at the speed of the special element $T^{-1}$, we concentrate here on \emph{linear speed}. But speed can be defined in a far more general fashion, which we give in the Appendix \ref{speed}.

\begin{defn} 
We use the following notations given by Knuth in \cite{Knuth}: Let $f$ and $g$ be functions from $\mathbb{N}$ to $\mathbb{R}$. Then:\begin{enumerate}
\item We write $g(n)=O(f(n))$ if there is $n_0\in \mathbb{N}$ and $C\in \mathbb{R}_{+}$ such that $\lvert g(n)\rvert \leq Cf(n)$ for all $n>n_0$.
\item We write $g(n)=\Theta(f(n))$ if there is $n_0\in \mathbb{N}$ and $C, C'\in \mathbb{R}_{+}$ such that $Cf(n)\leq g(n)\leq C'f(n)$ for all $n>n_0$.
\end{enumerate} Let $X\in \mathrm{Out}(F_n)$ and $[f]\in \tilde{\mathcal{B}}(F_n)$. We say that $X$ has \emph{linear speed on $[f]$} if \[\lvert X^{n}[f]\rvert_S=\Theta (n).\] We say that $X$ has \emph{linear speed} if it satisfies:\begin{enumerate}
\item $\lvert X^{n}[f]\rvert_S=O(n)$ for all $[f]\in \tilde{\mathcal{B}}(F_n)$.
\item There is $[f_0]\in \tilde{\mathcal{B}}(F_n)$ such that $X$ has linear speed on $[f_0]$.
\end{enumerate} We do not only care about the type of speed of $X$, we actually want to measure it. For this goal we introduce the following: \begin{enumerate}
\item We call $\mathrm{usp}_S(X,[f])=\limsup_n \frac{\lvert X^{n}[f]\rvert_S}{n}$ the \emph{upper speed} of $X$ on $[f]$.
\item We call $\mathrm{lsp}_S(X,[f])=\liminf_n \frac{\lvert X^{n}[f]\rvert_S}{n}$ the \emph{lower speed} of $X$ on $[f]$.
\item Assume $\lim_n \frac{\lvert X^{n}[f]\rvert_S}{n}\in \mathbb{R}$. Then we call $\mathrm{sp}_S(X,[f])=\lim_n \frac{\lvert X^{n}[f]\rvert_S}{n}$ the \emph{speed} of $X$ on $[f]$ and say that \emph{the speed of $X$ on $[f]$ exists}.
\end{enumerate}
\end{defn}

These (possibly) different speeds all inherit some, but not the same, properties from $\lvert -\rvert_S$. In particular we will see that the upper speed is a far nicer object than the lower speed.

\begin{prop}\label{speedseminorm}
Endow $\mathbb{R}$ with the trivial absolute value $\lvert -\rvert_0$ and let $X\in \mathrm{Out}(F_n)$. Then we get:\begin{enumerate}[(i)]
\item Assume that $\mathrm{usp}_S(X,[f])<\infty$ for all $[f]\in \tilde{\mathcal{B}}(F_n)$. Then the upper speed $\mathrm{usp}_S(X,-)$ is a seminorm on $\tilde{\mathcal{B}}(F_n)$.
\item Assume that $\mathrm{sp}_S(X,[f])$ exists for all $[f]\in \tilde{\mathcal{B}}(F_n)$. Then the speed $\mathrm{sp}_S(X,-)$ is a seminorm on $\tilde{\mathcal{B}}(F_n)$.
\end{enumerate}
\end{prop}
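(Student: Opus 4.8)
\emph{Proof proposal.} The plan is to exploit the fact that for each fixed $n$ the operator $[f] \mapsto X^{n}[f] = [f \circ X^{-n}]$ is a \emph{linear} self-map of $\tilde{\mathcal{B}}(F_n)$ (it is the restriction of the $\mathrm{Out}(F_n)$-action, which is linear because precomposition with $X^{-n}$ is, and which preserves $\tilde{\mathcal{B}}(F_n)$ by \ref{Outinv}), while $\lvert - \rvert_S$ is already an ultrametric norm on $\tilde{\mathcal{B}}(F_n)$ by \ref{normontildeC}. Both $\mathrm{usp}_S(X,-)$ and $\mathrm{sp}_S(X,-)$ are built from the sequences $n \mapsto \lvert X^{n}[f] \rvert_S / n$ by applying $\limsup$ resp.\ $\lim$, so the seminorm axioms should descend from the corresponding properties of $\lvert - \rvert_S$ together with elementary facts about $\limsup$ and limits.

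First I would check absolute homogeneity with respect to $\lvert - \rvert_0$. For $a \in \mathbb{R}\setminus\{0\}$, linearity of $X^{n}$ gives $X^{n}(a[f]) = a\,X^{n}[f]$, and absolute homogeneity of $\lvert - \rvert_S$ (\ref{normV}) gives $\lvert a\,X^{n}[f] \rvert_S = \lvert a \rvert_0\,\lvert X^{n}[f] \rvert_S = \lvert X^{n}[f] \rvert_S$. Thus the sequences defining the speeds of $a[f]$ and of $[f]$ agree termwise, so $\mathrm{usp}_S(X,a[f]) = \mathrm{usp}_S(X,[f])$, and likewise $\mathrm{sp}_S(X,a[f]) = \mathrm{sp}_S(X,[f])$ when these exist. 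For $a = 0$ we have $X^{n}(0) = 0$, so both quantities vanish at $0$. Next I would establish subadditivity. By linearity $X^{n}([f]+[g]) = X^{n}[f] + X^{n}[g]$, so the ultrametric inequality for $\lvert - \rvert_S$ yields
\[
\frac{\lvert X^{n}([f]+[g]) \rvert_S}{n} \le \max\!\left(\frac{\lvert X^{n}[f] \rvert_S}{n},\, \frac{\lvert X^{n}[g] \rvert_S}{n}\right).
\]
For part (i), taking $\limsup_{n}$ and using $\limsup_{n}\max(c_n,d_n) = \max(\limsup_{n} c_n, \limsup_{n} d_n)$ gives $\mathrm{usp}_S(X,[f]+[g]) \le \max(\mathrm{usp}_S(X,[f]), \mathrm{usp}_S(X,[g]))$, hence in particular $\le \mathrm{usp}_S(X,[f]) + \mathrm{usp}_S(X,[g])$; the standing hypothesis that the upper speeds are finite ensures that $\mathrm{usp}_S(X,-)$ is genuinely $\mathbb{R}$-valued, as a seminorm must be. For part (ii), the speeds of $[f]$, $[g]$ and $[f]+[g]$ all exist by assumption, so taking $\lim_{n}$ and using continuity of $\max$ gives the analogous (ultrametric, hence a fortiori ordinary) triangle inequality for $\mathrm{sp}_S(X,-)$.

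I do not expect a serious obstacle: the statement is essentially a bookkeeping exercise resting on the linearity of the $\mathrm{Out}(F_n)$-action and the already-established ultrametric norm $\lvert - \rvert_S$. The only points needing real care are (a) using the finiteness hypothesis of (i) precisely where it is needed to keep $\mathrm{usp}_S(X,-)$ real-valued, and (b) noting that in (ii) the hypothesis ``$\mathrm{sp}_S(X,[h])$ exists for all $[h]$'' must be applied to $[h] = a[f]$ and $[h] = [f]+[g]$ — this is exactly what makes the seminorm axioms well-posed. It is worth remarking that the argument in fact proves slightly more than stated: in both cases the resulting seminorm is \emph{ultrametric}, since $\max$ commutes with both $\limsup$ and $\lim$.
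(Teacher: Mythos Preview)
Your proposal is correct and follows essentially the same route as the paper: linearity of the $\mathrm{Out}(F_n)$-action plus the ultrametric norm $\lvert - \rvert_S$ give termwise absolute homogeneity and the ultrametric inequality, and then one passes to $\limsup$ (resp.\ $\lim$) using $\limsup_n \max(c_n,d_n)=\max(\limsup_n c_n,\limsup_n d_n)$. Your treatment is in fact slightly more careful than the paper's in spelling out the $a=0$ case, the role of the finiteness hypothesis in (i), and the observation that the resulting seminorm is ultrametric.
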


\begin{proof}
It is enough to prove that $\mathrm{usp}_S(X,-)$ is absolutely homogeneous and satisfies the triangle inequality. We will show that it even satisfies the ultrametric triangle inequality.\begin{enumerate}
\item Let $[f]\in \tilde{\mathcal{B}}(F_n)$ and $a\in \mathbb{R}$. Then we get by the absolute homogeneity of $\lvert -\rvert_S$ on $\tilde{\mathcal{B}}(F_n)$ that
\begin{equation}\label{speedhomogeneous}
\mathrm{usp}_S(X,[af])=\limsup_n \frac{\lvert X^{n}[af]\rvert_S}{n}=\limsup_n \frac{\lvert a\rvert_0 \lvert X^{n}[f]\rvert_S}{n}=\lvert a\rvert_0\mathrm{usp}_S(X,[f]).
\end{equation}
\item Let $[f], [g]\in \tilde{\mathcal{B}}(F_n)$. Then we get that \begin{equation}\label{ultraspeed}\begin{split}\mathrm{usp}_S(X,[f+g])
&=\limsup_n \frac{\lvert X^{n}[f+g]\rvert_S}{n}\\
&=\limsup_n \frac{\lvert X^{n}[f]+X^{n}[g]\rvert_S}{n}\\
&\leq \limsup_n \frac{\max(\lvert X^{n}[f]\rvert_S,\lvert X^{n}[g]\rvert_S)}{n}\\
&=\sup (\limsup_n\frac{\lvert X^{n}[f]\rvert_S}{n},\limsup_n\frac{\lvert X^{n}[g]\rvert_S}{n})\\
&=\sup (\mathrm{usp}_S(X,[f]),\mathrm{usp}_S(X,[g])).\end{split}\end{equation} So  $\mathrm{usp}_S(X,[f+g])\leq \mathrm{usp}_S(X,[f])+\mathrm{usp}_S(X,[g])$.
\end{enumerate} 
\end{proof}

Note that this is in general not true for the lower speed. While the lower speed is absolutely homogeneous, we can not prove the triangle inequality. The difference lies in the following fact for sequences $a_k$ and $b_k$ of real numbers:\begin{align}
\limsup_k \max(a_k,b_k)=&\sup (\limsup_k a_k,\limsup_k b_k)\\
\liminf_k \max(a_k,b_k)\geq& \sup (\liminf_k a_k,\liminf_k b_k)
\end{align}

Knowing the speed of $X\in \mathrm{Out}(F_n)$ on $[f]\in \tilde{\mathcal{B}}(F_n)$ gives us information on $X$ and on $[f]$. In this article we will mainly acquire the latter kind of information. One example is the following:

\begin{cor}\label{speedgiveslength}
Let $X\in \mathrm{Out}(F_n)$ and assume that the speed of $X$ exists on $[f_1], [f_2]\in \tilde{\mathcal{B}}(F_n)$. If $\mathrm{sp}_S(X,[f_1])>\mathrm{sp}_S(X,[f_2])$, then for every constant $C\in \mathbb{R}$ there is $N\in \mathbb{N}$ such that $\lvert X^{n}[f_1]\rvert_S>\lvert X^{n}[f_2]\rvert_S+C$ for $n>N$.
\end{cor}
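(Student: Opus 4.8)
The statement is essentially a comparison of two sequences whose normalized limits are strictly ordered, so the plan is to unwind the definitions of speed and of $\lim$. Write $a_n = \lvert X^{n}[f_1]\rvert_S$ and $b_n = \lvert X^{n}[f_2]\rvert_S$, and set $\alpha = \mathrm{sp}_S(X,[f_1]) = \lim_n a_n/n$ and $\beta = \mathrm{sp}_S(X,[f_2]) = \lim_n b_n/n$, which exist by hypothesis, with $\alpha > \beta$. Fix $\varepsilon = (\alpha-\beta)/3 > 0$. By definition of the limit there is $N_1$ with $a_n/n > \alpha - \varepsilon$ for all $n > N_1$, and $N_2$ with $b_n/n < \beta + \varepsilon$ for all $n > N_2$. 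Hence for $n > \max(N_1,N_2)$ we get $a_n - b_n > (\alpha - \beta - 2\varepsilon)\, n = \varepsilon\, n$.

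The remaining point is purely about the real constant $C$: since $\varepsilon > 0$ and $\varepsilon n \to \infty$, there is $N_3$ with $\varepsilon n > C$ for all $n > N_3$ (take $N_3 = \max(\lceil C/\varepsilon\rceil, 0)$, noting that if $C \le 0$ the inequality $\varepsilon n > C$ already holds for all $n \ge 1$). Then for $n > N := \max(N_1, N_2, N_3)$ we have
\begin{equation*}
a_n - b_n > \varepsilon n > C,
\end{equation*}
i.e. $\lvert X^{n}[f_1]\rvert_S > \lvert X^{n}[f_2]\rvert_S + C$, which is exactly the claim.

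I do not expect any serious obstacle here: this is a routine $\varepsilon$--$N$ argument, and the only thing to be slightly careful about is that $C$ may be negative (so one cannot blindly divide by $C$) and that the two convergences $a_n/n \to \alpha$ and $b_n/n \to \beta$ must be invoked separately before combining. One could also phrase the whole thing more slickly by observing that $(a_n - b_n)/n \to \alpha - \beta > 0$, so $a_n - b_n \to +\infty$, which immediately gives the result; I would likely present it this way, with the $\varepsilon$-bookkeeping above as the justification that $(a_n-b_n)/n$ has limit $\alpha-\beta$.
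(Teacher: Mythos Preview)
Your proof is correct and uses essentially the same elementary idea as the paper: both arguments boil down to the fact that if $a_n/n\to\alpha$ and $b_n/n\to\beta$ with $\alpha>\beta$, then $a_n-b_n\to+\infty$. The only difference is presentation---you argue directly with an explicit $\varepsilon$, while the paper argues by contradiction, assuming a subsequence along which $a_{n_k}\le b_{n_k}+C$ and dividing by $n_k$ to force $\alpha\le\beta$.
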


\begin{proof}
Assume otherwise. Then there is a strictly increasing sequence of natural numbers $a_k$ such that $\lvert X^{a_k}[f_1]\rvert_S\leq \lvert X^{a_k}[f_2]\rvert_S+C$. Then \begin{equation}\begin{split}sp(X,f_1)
&=\lim_n \frac{\lvert X^{n}[f_1]\rvert_S}{n}\\
&=\lim_k \frac{\lvert X^{a_k}[f_1]\rvert_S}{a_k}\\
&\leq\lim_k \frac{\lvert X^{a_k}[f_2]\rvert_S+C}{a_k}\\
&=\lim_k \frac{\lvert X^{a_k}[f_2]\rvert_S}{a_k}\\
&=\lim_n \frac{\lvert X^{n}[f_2]\rvert_S}{n}\\
&=sp(X,f_2).
\end{split}\end{equation} This is a contradiction.
\end{proof}

At this stage we would like to show some examples of the (lower or upper) speed of some $X\in \mathrm{Out}(F_n)$ on some $[f]\in \tilde{\mathcal{B}}(F_n)$. Unfortunately the only example in our reach is quite depressing.

\begin{lem}
If $X\in \mathrm{Out}(F_n)$ has finite order, then $\mathrm{lsp}_S(X,[f])=\mathrm{usp}_S(X,[f])=0$ for any $[f]\in \tilde{\mathcal{B}}(F_n)$.
\end{lem}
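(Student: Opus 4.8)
The plan is to exploit that finite order makes the sequence $n \mapsto \lvert X^n[f]\rvert_S$ periodic, hence bounded, after which dividing by $n$ kills it.

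First, since $X \in \mathrm{Out}(F_n)$ has finite order, there is some $d \in \mathbb{N}$ with $X^d = 1$ in $\mathrm{Out}(F_n)$. Then for every $n \in \mathbb{N}$ we have $X^{n+d}[f] = X^n(X^d[f]) = X^n[f]$, so the sequence $a_n := \lvert X^n[f]\rvert_S$ is $d$-periodic: $a_{n+d} = a_n$ for all $n$.

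Next, a $d$-periodic sequence takes only finitely many values, so it is bounded: setting $M = \max\{\, a_j \mid 0 \le j \le d-1 \,\}$ we get $0 \le a_n \le M < \infty$ for all $n$, where the lower bound is because $\lvert - \rvert_S$ is a norm (Corollary \ref{normontildeC}). Consequently
\begin{equation*}
0 \le \frac{\lvert X^n[f]\rvert_S}{n} = \frac{a_n}{n} \le \frac{M}{n} \xrightarrow[n\to\infty]{} 0,
\end{equation*}
so $\lim_n \frac{\lvert X^n[f]\rvert_S}{n}$ exists and equals $0$. In particular $\mathrm{lsp}_S(X,[f]) = \liminf_n \frac{\lvert X^n[f]\rvert_S}{n} = 0$ and $\mathrm{usp}_S(X,[f]) = \limsup_n \frac{\lvert X^n[f]\rvert_S}{n} = 0$, as claimed.

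There is no real obstacle here; the only thing worth stating carefully is that finite order in $\mathrm{Out}(F_n)$ (rather than merely in $\mathrm{Aut}(F_n)$) is exactly what one needs, and this is fine since the whole discussion takes place with the $\mathrm{Out}(F_n)$-action on $\tilde{\mathcal{B}}(F_n)$ (Theorem \ref{Outinv}). The Nielsen generators $P_1, P_2, H$ are the relevant examples of finite-order elements, so this lemma explains why the interesting dynamics must come from elements like $T^{-1}$.
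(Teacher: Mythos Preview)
Your proof is correct and follows essentially the same approach as the paper: finite order gives a bounded (in fact periodic) sequence $\lvert X^n[f]\rvert_S$, and dividing a bounded sequence by $n$ yields limit $0$. The paper's version is simply terser, omitting the explicit periodicity step and the bound $M$.
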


\begin{proof}
Since $X\in \mathrm{Out}(F_n)$ is of finite order, $\lvert X^{n}[f]\rvert_S$ is bounded for any $[f]\in \tilde{\mathcal{B}}(F_n)$. So the statement follows by the definition of the upper and lower speed of $X$ on $[f]$.
\end{proof}

We will prove in the Sections \ref{Tsingle} and \ref{Tsums} that the speed of $T^{-1}$ on $[f]$ exists for every $[f]\in \tilde{\mathcal{B}}(F_n)$ and present a way to compute it. In particular we will prove that $T^{-1}$ has linear speed. We now sketch how we will do this: How can we check if $T^{-1}$ has linear speed on some $[f]\in \tilde{\mathcal{B}}(F_n)$? We will in praxis use something a bit stronger.

\begin{defn}
Assume that the speed of $X$ on $[f]$ exists and $\mathrm{sp}_S(X,[f])>0$. Then we say that $X$ has \emph{strongly linear speed on $[f]$}.
\end{defn}

\begin{lem}
If $X$ has strongly linear speed on $[f]$, then $X$ has linear speed on $[f]$.
\end{lem}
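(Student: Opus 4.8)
The plan is to simply unfold the definition of \emph{strongly linear speed} and extract a two-sided linear bound from the hypothesis that the limit defining $\mathrm{sp}_S(X,[f])$ exists and is strictly positive. Write $c=\mathrm{sp}_S(X,[f])=\lim_n \frac{\lvert X^{n}[f]\rvert_S}{n}$, and recall that by hypothesis $c>0$. We must produce $n_0\in\mathbb{N}$ and constants $C,C'\in\mathbb{R}_{+}$ with $Cn\leq \lvert X^{n}[f]\rvert_S\leq C'n$ for all $n>n_0$, i.e. $\lvert X^{n}[f]\rvert_S=\Theta(n)$.

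First I would apply the definition of the limit with the choice $\varepsilon=c/2>0$: there is $n_0\in\mathbb{N}$ such that for all $n>n_0$ we have $\bigl\lvert \frac{\lvert X^{n}[f]\rvert_S}{n}-c\bigr\rvert<\tfrac{c}{2}$, hence
\begin{equation}
\frac{c}{2}\,n \;<\; \lvert X^{n}[f]\rvert_S \;<\; \frac{3c}{2}\,n \qquad \text{for all } n>n_0.
\end{equation}
Taking $C=c/2$ and $C'=3c/2$ (both positive since $c>0$) this is precisely the statement $\lvert X^{n}[f]\rvert_S=\Theta(n)$, so $X$ has linear speed on $[f]$ by definition.

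There is essentially no obstacle here; the only point that deserves a word of care is \emph{why the hypothesis $c>0$ is needed}: the lower bound $Cn\leq \lvert X^{n}[f]\rvert_S$ with $C>0$ is exactly the part of $\Theta(n)$ that fails when $c=0$, whereas the upper bound $\lvert X^{n}[f]\rvert_S=O(n)$ would follow from the mere existence (or even finiteness of the $\limsup$) of the speed. So the implication is tight in the sense that the positivity assumption in ``strongly linear'' is used precisely to get the lower half of the $\Theta$-estimate.
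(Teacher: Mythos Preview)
Your proof is correct and follows essentially the same approach as the paper: both apply the definition of the limit with a tolerance $\epsilon$ strictly less than $c=\mathrm{sp}_S(X,[f])$ (you make the specific choice $\epsilon=c/2$, the paper leaves $0<\epsilon<c$ arbitrary) and then multiply the resulting two-sided inequality by $n$ to obtain $\Theta(n)$.
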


\begin{proof}
Let $0<\epsilon<\mathrm{sp}_S(X,[f])$. Then there is $N\in \mathbb{N}$ such that for all $n>N$\begin{equation}\mathrm{sp}_S(X,[f])-\epsilon\leq \frac{\lvert X^{n}[f]\rvert_S}{n}\leq \mathrm{sp}_S(X,[f])+\epsilon.\end{equation} This implies for all $n>N$ \begin{equation}(\mathrm{sp}_S(X,[f])-\epsilon)n\leq \lvert X^{n}[f]\rvert_S\leq (\mathrm{sp}_S(X,[f])+\epsilon)n.\end{equation} So $X$ has linear speed on $[f]$.
\end{proof}

Of course this only shifts the question: How can we check that $T^{-1}$ has strongly linear speed on $[f]\in \tilde{\mathcal{B}}(F_n)$? In other words how can we check if the speed of $X$ on $[f]$ exists and how can we compute it, if it exists? We will do that in two steps: First we will consider the counting quasimorphism $[\phi w]$ for every $w\in F_n$ and get our hands dirty. This will happen in Section \ref{Tsingle}. In a second step in Section \ref{Tsums} we will reduce the case of sums of counting quasimorphisms $[f]$ to the case of a single quasimorphism $[\phi w]$. For a special set of elements in $\mathcal{B}(F_n,S)$ this reduction is very easy:

\begin{defn}
Let $f=\sum_{v\in I} \alpha(v)\phi v$ be a sum of counting quasimorphisms. Then $f$ is \emph{speed reduced} for $X$ if $\mathrm{sp}_S(X,[f])=\sup_{v\in \mathrm{supp}(\alpha)} \mathrm{sp}_S(X,[\phi v])$ and both sides of the equation exist.
\end{defn}

\begin{rem}
Note that being speed reduced is a property of the element $f\in \mathcal{B}(F_n,S)$ and not of the equivalence class $[f]\in \tilde{\mathcal{B}}(F_n)$.
\end{rem}

In the spirit of this definition we can reformulate the problem of Section \ref{Tsums}: We want to find a speed reduced element in every equivalence class. The following statement will be our tool to recognize speed reduced elements:

\begin{lem}\label{howprovespeedred}
Let $f=\sum_{v\in I} \alpha(v)\phi v$ be a sum of counting quasimorphisms. Then $f$ is speed reduced for $X$ if the speed of $X$ on $[\phi v]$ exists for all $v\in \mathrm{supp}(\alpha)$ and \begin{equation}
\mathrm{lsp}_S(X,[f])\geq\sup_{v\in \mathrm{supp}(\alpha)} \mathrm{usp}_S(X,[\phi v]).
\end{equation}
\end{lem}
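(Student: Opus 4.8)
The plan is to squeeze the speed of $X$ on $[f]$ between matching upper and lower bounds. Set $M=\sup_{v\in\mathrm{supp}(\alpha)}\mathrm{sp}_S(X,[\phi v])$; since $f$ is a finite sum the set $\mathrm{supp}(\alpha)$ is finite, and since each $\mathrm{sp}_S(X,[\phi v])$ exists by hypothesis, $M$ is a finite maximum. Because the speed exists on every $[\phi v]$ with $v\in\mathrm{supp}(\alpha)$, we also have $M=\sup_{v\in\mathrm{supp}(\alpha)}\mathrm{usp}_S(X,[\phi v])$ (the quantity appearing in the hypothesis) $=\sup_{v\in\mathrm{supp}(\alpha)}\mathrm{lsp}_S(X,[\phi v])$. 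The case $f=0$ is trivial, so we may assume $\mathrm{supp}(\alpha)\neq\varnothing$.

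First I would bound the upper speed of $[f]$ from above. Writing $[f]=\sum_{v\in\mathrm{supp}(\alpha)}\alpha(v)[\phi v]$, iterating the ultrametric inequality \eqref{ultraspeed} together with the absolute homogeneity \eqref{speedhomogeneous}, and using that $\lvert\alpha(v)\rvert_0=1$ whenever $v\in\mathrm{supp}(\alpha)$, we obtain
\[
\mathrm{usp}_S(X,[f])\ \leq\ \sup_{v\in\mathrm{supp}(\alpha)}\mathrm{usp}_S(X,[\phi v])\ =\ M .
\]

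Next I would combine this with the hypothesis of the lemma, which reads $\mathrm{lsp}_S(X,[f])\geq\sup_{v\in\mathrm{supp}(\alpha)}\mathrm{usp}_S(X,[\phi v])=M$, and with the trivial bound $\mathrm{lsp}_S(X,[f])\leq\mathrm{usp}_S(X,[f])$. Chaining these,
\[
M\ \leq\ \mathrm{lsp}_S(X,[f])\ \leq\ \mathrm{usp}_S(X,[f])\ \leq\ M ,
\]
so $\mathrm{lsp}_S(X,[f])=\mathrm{usp}_S(X,[f])=M$. Hence $\lim_n\tfrac{\lvert X^n[f]\rvert_S}{n}$ exists and equals $M$, i.e. $\mathrm{sp}_S(X,[f])=M=\sup_{v\in\mathrm{supp}(\alpha)}\mathrm{sp}_S(X,[\phi v])$ with both sides existing; this is exactly the assertion that $f$ is speed reduced for $X$.

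There is no real obstacle in this argument. The only subtlety worth flagging is that the two displays \eqref{ultraspeed} and \eqref{speedhomogeneous} from the proof of Proposition \ref{speedseminorm} are used here as standalone $\limsup$-identities; they hold unconditionally, so they may be invoked even though we do not yet know $\mathrm{usp}_S(X,[f])<\infty$ — the hypothesis in the statement of \ref{speedseminorm} that is not a priori available. In fact the computation above is what establishes that finiteness, since $M$ is finite.
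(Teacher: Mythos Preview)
Your proof is correct and follows essentially the same squeeze argument as the paper: both bound $\mathrm{usp}_S(X,[f])$ above by $\sup_{v\in\mathrm{supp}(\alpha)}\mathrm{usp}_S(X,[\phi v])$ via \eqref{ultraspeed}, invoke the hypothesis for the lower bound, and sandwich with $\mathrm{lsp}_S\leq\mathrm{usp}_S$. Your added remark that \eqref{ultraspeed} and \eqref{speedhomogeneous} hold unconditionally (so finiteness of $\mathrm{usp}_S(X,[f])$ need not be assumed a priori) is a useful clarification the paper leaves implicit.
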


\begin{proof}
Let $f=\sum_{v\in I} \alpha(v)\phi v$ and assume for convenience that $I=\mathrm{supp}(\alpha)$. Then we have by \ref{ultraspeed} that \begin{equation}\begin{split}
\mathrm{lsp}_S(X,[f])\leq & \mathrm{usp}_S(X,[f])\\
\leq & \sup_{v\in I} \mathrm{usp}_S(X,[\phi v])\\
\leq & \mathrm{lsp}_S(X,[f]).\end{split}\end{equation} Since the speed of $X$ on $[\phi v]$ exists for all $v\in I$, it follows that the speed of $X$ on $[f]$ exists and that $\mathrm{sp}_S(X,[f])=\sup_{v\in I} \mathrm{sp}_S(X,[\phi v])$. So $f$ is speed reduced for $X$.
\end{proof}

\section{The speed of $T^{-1}$ on a counting quasimorphism}\label{Tsingle}

In this section we consider the action of $T^{-1}$ on a single counting quasimorphism $[\phi w]$. If $w$ is not a power of $b$, we will show that the speed of $T^{-1}$ on $[\phi w]$ exists and give a formula to compute it. Our strategy is the following: First we find a good element in the equivalence class $T^{-n}[\phi w]$. We call this element the \emph{n-representative} of $[\phi w]$. The n-representative is good in two aspects: It is a reduced element of $\mathcal{B}(F_n,S)$ and we can say something about its length. Using this information we can say enough about the reduced length of $T^{-n}[\phi w]$ to compute the speed of $T^{-1}$ on $[\phi w]$.

\subsection{Finding the n-representative}

In this subsection we define the n-representative of a single counting quasimorphism $[\phi w]$, where $w$ is not a power of $b$.

\newlength{\lenc}
\settowidth{\lenc}{$\{b^{m_k+n}\}\cup\bigcup_{i=1}^n\{b^{m_k+i-1}s\mid s\in \bar{S}\setminus\{a^{-1},b,b^{-1}\}\}$,}

\begin{cor}\label{induction}
Let $w\equiv_S (m_0,s_1,...,s_k,m_k)$ assume that $w$ is not a power of $b$. Let $W^{\star}_{n}(w)=W^{l}_n(w)\cdot W^{m}_n(w)\cdot W^{r}_n(w)$, where

\begin{align*}
W^{l}_n(w)&=\begin{cases}
\mathmakebox[\lenc][l]{\{e\},}& \text{if } m_0=0\\
\{b^{m_0}\}\cup\bigcup_{i=1}^n\{ab^{m_0-i}\},& \text{if } m_0>0, s_1\neq a^{-1}\\
\{b^{m_0+n}\}\cup\bigcup_{i=1}^n\{ab^{m_0+i-1}\},& \text{if } m_0>0, s_1=a^{-1}\\
\{b^{m_0-n}\}\cup\bigcup_{i=1}^n\{sb^{m_0-i+1}\mid s\in \bar{S}\setminus\{a,b,b^{-1}\}\},& \text{if } m_0<0, s_1\neq a^{-1}\\
\{b^{m_0}\}\cup\bigcup_{i=1}^n\{sb^{m_0+i}\mid s\in \bar{S}\setminus\{a,b,b^{-1}\}\},& \text{if } m_0<0, s_1=a^{-1}
\end{cases},\\
W^{m}_n(w)&=\{(\tau_b\circ T^{-1}\circ \tau_b)^n(w)\},\\
W^{r}_n(w)&=\begin{cases}
\mathmakebox[\lenc][l]{\{e\},}& \text{if } m_k=0\\
\{b^{m_k}\}\cup \bigcup_{i=1}^n\{b^{m_k-i}s\mid s\in \bar{S}\setminus\{a^{-1},b,b^{-1}\}\},& \text{if } m_k>0, s_k=a\\
\{b^{m_k+n}\}\cup\bigcup_{i=1}^n\{b^{m_k+i-1}s\mid s\in \bar{S}\setminus\{a^{-1},b,b^{-1}\}\},& \text{if } m_k>0, s_k\neq a\\
\{b^{m_k-n}\}\cup\bigcup_{i=1}^n\{b^{m_k-i+1}a^{-1}\},& \text{if } m_k<0, s_k=a\\
\{b^{m_k}\}\cup \bigcup_{i=1}^n\{b^{m_k+i}a^{-1}\},& \text{if } m_k<0, s_k\neq a
\end{cases}.
\end{align*}
Then \begin{equation}
\phi w\circ T^n=\sum_{u\in W^{\star}_{n}(w)}\phi u.
\end{equation} 
\end{cor}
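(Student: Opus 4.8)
The plan is to prove the identity by induction on $n$, with Lemma~\ref{wisTinv} --- equivalently Corollary~\ref{phiwisTinv}(i) --- serving as the base case and as the engine of the inductive step. For $n=1$ one checks that the sets $W^l_1(w),W^m_1(w),W^r_1(w)$ obtained by specialising the formulas in the statement coincide with the sets of the same name in Lemma~\ref{wisTinv}; this is a routine comparison of the case splits, using that $S_b\setminus\{a\}=\bar S\setminus\{a,b,b^{-1}\}$ and $S_b\setminus\{a^{-1}\}=\bar S\setminus\{a^{-1},b,b^{-1}\}$ and that $(\tau_b\circ T^{-1}\circ\tau_b)^1=\tau_b\circ T^{-1}\circ\tau_b$. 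Since $w$ is not a power of $b$, Corollary~\ref{phiwisTinv}(i) then gives $\phi w\circ T=T^{-1}\phi w=\sum_{u\in W^\star_1(w)}\phi u$, which is the assertion for $n=1$.

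For the inductive step, assume $\phi w\circ T^n=\sum_{u\in W^\star_n(w)}\phi u$ and write $\phi w\circ T^{n+1}=(\phi w\circ T^n)\circ T=\sum_{u\in W^\star_n(w)}\phi u\circ T$. The structural observation that makes this work is that every $u\in W^\star_n(w)$ has the form $u=u_l\cdot w^m_n\cdot u_r$ with $u_l\in W^l_n(w)$, $u_r\in W^r_n(w)$, and $w^m_n:=(\tau_b\circ T^{-1}\circ\tau_b)^n(w)$ the single (fixed) word in $W^m_n(w)$. Since $w$ is not a power of $b$, $\tau_b(w)$ is a non-empty $b$-truncated word whose first and last letters $s_1,s_k$ lie in $S_b$, and $\tau_b\circ T^{-1}\circ\tau_b$ preserves those letters by \ref{T-1w}; hence $w^m_n\neq e$, so no $u\in W^\star_n(w)$ is a power of $b$, and the case distinctions built into $W^l_n$ and $W^r_n$ ensure that each concatenation $u_l\,w^m_n\,u_r$ is already a reduced word. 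Therefore Corollary~\ref{phiwisTinv}(i) applies to every summand, $\phi u\circ T=\sum_{u'\in W_1(u)}\phi u'$, and the proof reduces to the set identity
\[
\bigsqcup_{u\in W^\star_n(w)}W_1(u)=W^\star_{n+1}(w),
\]
the union on the left being disjoint.

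To prove this, one expands each $W_1(u)=W^l_1(u)\cdot W^m_1(u)\cdot W^r_1(u)$ by Lemma~\ref{wisTinv} and compares with $W^\star_{n+1}(w)=W^l_{n+1}(w)\cdot W^m_{n+1}(w)\cdot W^r_{n+1}(w)$. The middle factor behaves well: $\tau_b\circ T^{-1}\circ\tau_b$ carries $w^m_n$ to $w^m_{n+1}$, and since $w^m_n$ is $b$-truncated the word $\tau_b T^{-1}\tau_b(u_l\,w^m_n\,u_r)$ is $w^m_{n+1}$ with possibly extra letters appended on the left coming from $u_l$ and on the right coming from $u_r$ (computed via \ref{mj-}); these extra pieces, reassembled with the contributions $W^l_1(u)$ and $W^r_1(u)$, combine over all $u_l\in W^l_n(w)$ and $u_r\in W^r_n(w)$ to give exactly $W^l_{n+1}(w)$ and $W^r_{n+1}(w)$ --- this is the step where each ``$\bigcup_{i=1}^n$'' is enlarged to ``$\bigcup_{i=1}^{n+1}$'' and the exponents shift by $\pm1$. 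Disjointness of the union follows exactly as in the proof of Lemma~\ref{wisTinv}, from the fact that no element of $W^\star_{n+1}(w)$ is a reduced subword of another.

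I expect this last verification to be the main obstacle --- not conceptually, but as a lengthy finite bookkeeping: there are five cases for $W^l$ and five for $W^r$, each split further according to whether $u_l$ (resp.\ $u_r$) is a pure power of $b$ such as $b^{m_0}$ or $b^{m_0-n}$ or a word ending (resp.\ beginning) in a genuine non-$b$ letter, and each case is an elementary computation in the $(m_0,s_1,\dots,s_k,m_k)$-representation using \ref{mj+}--\ref{T-1w}. A route that avoids the induction entirely, at the cost of a comparably case-heavy one-shot argument, is to imitate the proof of Lemma~\ref{wisTinv} directly for $T^n$: for arbitrary $v\equiv_S(n_0,r_1,\dots,r_l,n_l)$, iterating \ref{Tw} gives $T^nv\equiv_S(n^{(n)}_0,r_1,\dots,r_l,n^{(n)}_l)$ with $n^{(n)}_j=n_j+n(\#a(r_j)-\#a^{-1}(r_{j+1}))$, so counting the occurrences of $w$ as a reduced subword of $T^nv$ (via Proposition~\ref{btrsubwords}) yields $\#w\circ T^n=\sum_{u\in W^\star_n(w)}\#u$; symmetrising --- with $W^\star_n(w^{-1})=\{u^{-1}:u\in W^\star_n(w)\}$ read off from the $l\leftrightarrow r$, $(m_0,s_1)\leftrightarrow(m_k,s_k)$, $a\leftrightarrow a^{-1}$ symmetry of the formulas --- then gives the stated identity for $\phi w$.
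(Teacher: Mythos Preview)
Your proposal is correct and follows essentially the same approach as the paper: induction on $n$ with the $n=1$ case given by Lemma~\ref{wisTinv}/Corollary~\ref{phiwisTinv}(i), and the inductive step reduced to the set identity $\bigcup_{u\in W^\star_n(w)}W^\star_1(u)=W^\star_{n+1}(w)$, which the paper dispatches with the single line ``This follows by considering all 25 possible cases for $W^\star_n$ separately.'' Your write-up is in fact more explicit than the paper's --- you spell out why each $u$ is not a power of $b$, how the middle factor iterates, and you even sketch the alternative direct $T^n$-count argument --- but the underlying proof is the same.
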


\begin{proof}
We prove this using induction on $n$. The base case $n=1$ is exactly \ref{phiwisTinv}.
Now we assume that the statement is true for $n$. We want to prove it for $n+1$. We know by assumption and the base case that \begin{equation}\begin{split}
\phi w\circ T^{n+1} &= \phi w\circ T^n\circ T\\
&= \sum_{u\in W^{\star}_{n}(w)}\phi u\circ T\\
&= \sum_{u\in W^{\star}_{n}(w)}\sum_{v\in W^{\star}_{1}(u)} \phi v
\end{split}\end{equation} So it is enough to prove \begin{equation}W^{\star}_{n+1}(w)=\bigcup_{u\in W^{\star}_{n}(w)}W^{\star}_{1}(u)\end{equation} This follows by considering all 25 possible cases for $W^{\star}_{n}$ separately.\end{proof}

The sum of counting quasimorphisms $\sum_{u\in W^{\star}_{n}(w)}\phi u$ is a representative of $T^{-n}[\phi w]$. But it is not a good representative: We can not use our Criterion \ref{redtree} to show reducedness. So we use left- and right-extension relations to find a better representative. In the following statement our notation relies on two conventions:\begin{enumerate}
\item The product of the empty set with another set is the empty set.
\item Any sum over the elements of the empty set is $0$.
\end{enumerate}

\newlength{\lend}
\settowidth{\lend}{$\{b^{m_k}\}\cup \bigcup_{i=1}^n\{b^{m_k-i}s\mid s\in S_b\setminus\{a^{-1}\}\}$,}

\begin{defn}\label{def:phiwn}
Let $w\equiv_S (m_0,s_1,...,s_k,m_k)$ and assume that $w$ is not a power of $b$. For $n\geq 1$ we define the sets\begin{enumerate}
\item $W^{+}_{n}(w)=L^{+}_{n}(w)\cdot M_{n}(w)\cdot R^{+}_{n}(w) \cup L^{-}_{n}(w)\cdot M_{n}(w)\cdot R^{-}_{n}(w)$ and
\item $W^{-}_{n}(w)=L^{+}_{n}(w)\cdot M_{n}(w)\cdot R^{-}_{n}(w) \cup L^{-}_{n}(w)\cdot M_{n}(w)\cdot R^{+}_{n}(w)$,\end{enumerate}where \begin{align*}
L^{+}_{n}(w)&=\begin{cases}
\mathmakebox[\lend][l]{\{e\},}& \text{if } m_0=0\\
\{b^{m_0}\}\cup\bigcup_{i=1}^n\{ab^{m_0-i}\},& \text{if } m_0>0, s_1\neq a^{-1}\\
\{b^{m_0}\},& \text{if } m_0>0, s_1=a^{-1}\\
\{b^{m_0}\},& \text{if } m_0<0, s_1\neq a^{-1}\\
\{b^{m_0}\}\cup\bigcup_{i=1}^n\{sb^{m_0+i}\mid s\in S_b\setminus\{a\}\},& \text{if } m_0<0, s_1=a^{-1}
\end{cases},\\
L^{-}_{n}(w)&=\begin{cases}
\mathmakebox[\lend][l]{\varnothing,}& \text{if } m_0=0\\
\varnothing,& \text{if } m_0>0, s_1\neq a^{-1}\\
\bigcup_{i=1}^{n}\{sb^{m_0+i-1}\mid s\in S_b\setminus\{a\}\},& \text{if } m_0>0, s_1=a^{-1}\\
\bigcup_{i=1}^{n}\{ab^{m_0-i+1}\},& \text{if } m_0<0, s_1\neq a^{-1}\\
\varnothing,& \text{if } m_0<0, s_1=a^{-1}
\end{cases},\\
M_{n}(w)&=\{(\tau_b\circ T^{-1}\circ \tau_b)^n(w)\},\\
R^{+}_{n}(w)&=\begin{cases}
\mathmakebox[\lend][l]{\{e\},}& \text{if } m_k=0\\
\{b^{m_k}\}\cup \bigcup_{i=1}^n\{b^{m_k-i}s\mid s\in S_b\setminus\{a^{-1}\}\},& \text{if } m_k>0, s_k=a\\
\{b^{m_k}\},& \text{if } m_k>0, s_k\neq a\\
\{b^{m_k}\},& \text{if } m_k<0, s_k=a\\
\{b^{m_k}\}\cup \bigcup_{i=1}^n\{b^{m_k+i}a^{-1}\},& \text{if } m_k<0, s_k\neq a
\end{cases},\\
R^{-}_{n}(w)&=\begin{cases}
\mathmakebox[\lend][l]{\varnothing,}& \text{if } m_k=0\\
\varnothing,& \text{if } m_k>0, s_k=a\\
\bigcup_{i=1}^n\{b^{m_k+i-1}a^{-1}\},& \text{if } m_k>0, s_k\neq a\\
\bigcup_{i=1}^n\{b^{m_k-i+1}s\mid s\in S_b\setminus\{a^{-1}\}\},& \text{if } m_k<0, s_k=a\\
\varnothing,& \text{if } m_k<0, s_k\neq a
\end{cases}.
\end{align*}
Then we call \[(\phi w)_n=\sum_{u\in W^{+}_{n}(w)}\phi u - \sum_{u\in W^{-}_{n}(w)}\phi u\] the \emph{n-representative} of $[\phi w]$.
\end{defn}

\begin{prop}\label{phiwnequiv}
Let $w\equiv_S (m_0,s_1,...,s_k,m_k)$ and assume that $w$ is not a power of $b$. Then $\phi w\circ T^n$ is equivalent to $(\phi w)_n$.
\end{prop}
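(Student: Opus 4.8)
The plan is to start from the exact identity $\phi w\circ T^n=\sum_{u\in W^{\star}_{n}(w)}\phi u$ furnished by Corollary \ref{induction} and to rewrite its right‑hand side, using \emph{only} left‑ and right‑extension relations, until it becomes $(\phi w)_n$. Since every left‑ and right‑extension relation lies in $\ker(\pi)$ (the lemma preceding Theorem \ref{relations}), adding such relations does not change the equivalence class, so it will suffice to land on the nose at the expression of Definition \ref{def:phiwn}.

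First I would record the extension relations in the symmetrized form needed for Brooks quasimorphisms: for a reduced word $v=v_1\cdots v_\ell$ one has $\phi v\sim\sum_{s\in\bar S\setminus\{v_1^{-1}\}}\phi(sv)$ and $\phi v\sim\sum_{s\in\bar S\setminus\{v_\ell^{-1}\}}\phi(vs)$; these follow from $\phi v=\#v-\#v^{-1}$ by applying $l_v$ to $\#v$ and $r_{v^{-1}}$ to $\#v^{-1}$ (respectively $r_v$ and $l_{v^{-1}}$) and reindexing $s\mapsto s^{-1}$. Applying the first relation to a word with a nontrivial $b$‑prefix $b^{j}$ and peeling off one power of $b$ at a time yields, for every word $y$ whose first letter lies in $S_b$ and every $n\ge 1$,
\begin{equation}\label{bpeel}
\phi(b^{j}y)\ \sim\ \phi(b^{\,j+\varepsilon n}y)+\sum_{i=1}^{n}\sum_{s\in S_b}\phi\bigl(s\,b^{\,j+\varepsilon(i-1)}y\bigr),
\end{equation}
where $\varepsilon=+1$ if $j>0$ and $\varepsilon=-1$ if $j<0$, and symmetrically for $b$‑suffixes.

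Next, by the construction in Lemma \ref{wisTinv} and Corollary \ref{induction} the set $W^{\star}_{n}(w)=W^{l}_{n}(w)\cdot W^{m}_{n}(w)\cdot W^{r}_{n}(w)$ consists of reduced words and concatenation is injective on it (the middle word $m$, the unique element of $M_{n}(w)=W^{m}_{n}(w)$, is a nonempty $b$‑truncated word because $w$ is not a power of $b$, and thus acts as a separator), so $\phi w\circ T^n=\sum_{l\in W^{l}_{n}(w)}\sum_{r\in W^{r}_{n}(w)}\phi(l\,m\,r)$. I would then compare $W^{l}_{n}(w)$ with $L^{\pm}_{n}(w)$ case by case in the sign of $m_0$: in the three cases $m_0=0$; $m_0>0,\ s_1\ne a^{-1}$; $m_0<0,\ s_1=a^{-1}$ one reads off directly that $W^{l}_{n}(w)=L^{+}_{n}(w)$ and $L^{-}_{n}(w)=\varnothing$ (using $S_b\setminus\{a\}=\bar S\setminus\{a,b,b^{-1}\}$); in the two remaining cases, \eqref{bpeel} applied with $j=m_0$ and $y=m\,r$, after absorbing the letter $a^{-1}$ into the full sum over $S_b$, gives $\sum_{l\in W^{l}_{n}(w)}\phi(l\,m\,r)\sim\sum_{l\in L^{+}_{n}(w)}\phi(l\,m\,r)-\sum_{l\in L^{-}_{n}(w)}\phi(l\,m\,r)$ for every $r\in W^{r}_{n}(w)$. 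The symmetric analysis with the suffix relation (five cases in the sign of $m_k$) gives, for every word $z$ ending in a letter of $S_b$, $\sum_{r\in W^{r}_{n}(w)}\phi(z\,r)\sim\sum_{r\in R^{+}_{n}(w)}\phi(z\,r)-\sum_{r\in R^{-}_{n}(w)}\phi(z\,r)$. Performing the prefix substitution and then the suffix substitution on each resulting term (with $z=l\,m$) turns $\sum_{l,r}\phi(l\,m\,r)$ into the expansion of the ``bilinear'' product $\bigl(\sum_{L^{+}_{n}}-\sum_{L^{-}_{n}}\bigr)\cdot M_{n}\cdot\bigl(\sum_{R^{+}_{n}}-\sum_{R^{-}_{n}}\bigr)$, i.e.\ into $\sum_{u\in L^{+}_{n}\cdot M_{n}\cdot R^{+}_{n}}\phi u+\sum_{u\in L^{-}_{n}\cdot M_{n}\cdot R^{-}_{n}}\phi u-\sum_{u\in L^{+}_{n}\cdot M_{n}\cdot R^{-}_{n}}\phi u-\sum_{u\in L^{-}_{n}\cdot M_{n}\cdot R^{+}_{n}}\phi u$; since $L^{+}_{n}\cdot M_{n}\cdot R^{+}_{n}$ and $L^{-}_{n}\cdot M_{n}\cdot R^{-}_{n}$ have disjoint images (their elements begin with different letters whenever $L^{-}_{n}\ne\varnothing$), and likewise for $W^{-}_{n}$, this regroups exactly into $\sum_{u\in W^{+}_{n}(w)}\phi u-\sum_{u\in W^{-}_{n}(w)}\phi u=(\phi w)_n$, as required.

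The only real difficulty is the bookkeeping: there are five cases for the prefix and five for the suffix, and in each nontrivial case one must check that stripping powers of $b$ via \eqref{bpeel} reproduces precisely the sets $L^{\pm}_{n}(w)$, $R^{\pm}_{n}(w)$ of Definition \ref{def:phiwn}, paying attention to the degenerate situations in which an exponent $m_0\mp i$ or $m_k\pm i$ meets $0$ (so a $b$‑block collapses) and to the special role of the letter $a^{\pm1}$ singled out by $T$. I would carry out one prefix case and one suffix case in detail and leave the remaining eight, which are entirely analogous, to the reader; the worry that an intermediate word might fail to be reduced never materializes, since the elements of $W^{\star}_{n}(w)$ are reduced by Corollary \ref{induction} and the extra words appearing in $L^{\pm}_{n}(w)$, $R^{\pm}_{n}(w)$ always retain a nonempty intervening $b$‑block between the attached letter and $m$.
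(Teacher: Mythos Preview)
Your approach is essentially identical to the paper's: both start from Corollary \ref{induction}, treat prefix and suffix separately with three trivial and two nontrivial cases each (the latter handled via iterated left/right-extension relations), and then combine bilinearly to obtain $(\phi w)_n$. Two small slips to fix: in the nontrivial prefix cases the letter that gets absorbed is $a$, not $a^{-1}$; and your final claim about a nonempty intervening $b$-block between the attached letter and $m$ fails e.g.\ for $ab^{m_0-i}$ with $i=m_0$, though reducedness of $lmr$ still holds and is exactly what the paper records as Lemma \ref{lmrred}.
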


\begin{proof}
By \ref{induction} it is enough to prove that $\sum_{u\in W^{\star}_{n}(w)}\phi u$ is equivalent to $(\phi w)_n$. First we consider the middle of the words: Note that $W^{m}_n(w)=M_{n}(w)$. We fix for the proof $\{m\}=M_{n}(w)$. Now we consider the left side of the words: Let $x=x_1...x_t$ be a reduced word and assume that $x_1\notin \{b,b^{-1}\}$. We want to show that \begin{equation}\label{leftind}\sum_{l\in W^{l}_{n}(w)}\phi lx \sim \sum_{l^{+}\in L^{+}_{n}(w)}\phi l^{+}x - \sum_{l^{-}\in L^{-}_{n}(w)}\phi l^{-}x.\end{equation} We have to consider three cases:\begin{enumerate}
\item If $m_0=0$ or $m_0>0$ and $s_1\neq a^{-1}$ or $m_0<0$ and $s_1=a^{-1}$, then we get that $L^{+}_{n}(w)=W^{l}_{n}(w)$ and $L^{-}_{n}(w)=\varnothing$, so we have \ref{leftind}.
\item If $m_0>0$ and $s_1=a^{-1}$, we get using left-extension relations $n$ times that \begin{equation}\begin{split}
\sum_{l\in W^{l}_{n}(w)}\phi lx = & \phi b^{m_0+n}x + \sum_{i=1}^n \phi ab^{m_0+i-1}x\\
\sim & \phi b^{m_0+n-1}x + \sum_{i=1}^{n-1}\phi ab^{m_0+i-1}x - \sum_{s\in S_b\setminus\{a\}} \phi sb^{m_0+n-1}x\\
\sim & ...\\
\sim & \phi b^{m_0}x - \sum_{i=1}^{n}\sum_{s\in S_b\setminus\{a\}}\phi sb^{m_0+i-1}x\\
= & \sum_{l^{+}\in L^{+}_{n}(w)}\phi l^{+}x - \sum_{l^{-}\in L^{-}_{n}(w)}\phi l^{-}x
\end{split}\end{equation}
\item If $m_0<0$ and $s_1\neq a^{-1}$, we get using left-extension relations $n$ times that \begin{equation}\begin{split}
\sum_{l\in W^{l}_{n}(w)}\phi lx =& \phi b^{m_0-n}x + \sum_{i=1}^n\sum_{s\in S_b\setminus\{a\}}\phi sb^{m_0-i+1}x\\
\sim& \phi b^{m_0-n+1}x + \sum_{i=1}^{n-1}\sum_{s\in S_b\setminus\{a\}}\phi sb^{m_0-i+1}x -\phi ab^{m_0-n+1}x\\
\sim& ...\\
\sim& \phi b^{m_0}x - \sum_{i=1}^{n}\phi ab^{m_0-i+1}x\\
=&\sum_{l^{+}\in L^{+}_{n}(w)}\phi l^{+}x - \sum_{l^{-}\in L^{-}_{n}(w)}\phi l^{-}x
\end{split}\end{equation}
\end{enumerate}Now we consider the right side of the words: Let $x=x_1...x_t$ be a reduced word and assume that $x_t\notin \{b,b^{-1}\}$. We want to show that \begin{equation}\label{rightind}\sum_{r\in W^{r}_{n}(w)}\phi xr \sim \sum_{r^{+}\in R^{+}_{n}(w)}\phi xr^{+} - \sum_{r^{-}\in R^{-}_{n}(w)}\phi xr^{-}.\end{equation} Again we have to consider three cases: \begin{enumerate}
\item If $m_k=0$ or $m_k>0$ and $s_k=a$ or $m_k<0$ and $s_k\neq a$, then we get that $R^{+}_{n}(w)=W^{r}_{n}(w)$ and $R^{-}_{n}(w)=\varnothing$, so we have \ref{rightind}.
\item If $m_k>0$ and $s_k\neq a$, we get using right-extension relations $n$ times that \begin{equation}\begin{split} \sum_{r\in W^{r}_{n}(w)}\phi xr =& \phi xb^{m_k+n} + \sum_{i=1}^n\sum_{s\in S_b\setminus\{a^{-1}\}} \phi xb^{m_k+i-1}s\\
\sim& \phi xb^{m_k+n-1} + \sum_{i=1}^{n-1}\sum_{s\in S_b\setminus\{a^{-1}\}} \phi xb^{m_k+i-1}s - \phi xb^{m_k+n-1}a^{-1}\\
\sim& ...\\
\sim& \phi xb^{m_k} - \sum_{i=1}^{n} \phi xb^{m_k+i-1}a^{-1}\\
=&\sum_{r^{+}\in R^{+}_{n}(w)}\phi xr^{+} - \sum_{r^{-}\in R^{-}_{n}(w)}\phi xr^{-}\end{split}\end{equation}
\item If $m_k<0$ and $s_k=a$, we get using right-extension relations $n$ times that \begin{equation}\begin{split} \sum_{r\in W^{r}_{n}(w)}\phi xr =& \phi xb^{m_k-n} + \sum_{i=1}^n \phi xb^{m_k-i+1}a^{-1}\\
\sim& \phi xb^{m_k-n+1} + \sum_{i=1}^{n-1} \phi xb^{m_k-i+1}a^{-1} - \sum_{s\in S_b\setminus\{a^{-1}\}}\phi xb^{m_k-n+1}s\\
\sim& ...\\
\sim& \phi xb^{m_k} - \sum_{i=1}^{n}\sum_{s\in S_b\setminus\{a^{-1}\}}\phi xb^{m_k-i+1}s\\
=&\sum_{r^{+}\in R^{+}_{n}(w)}\phi xr^{+} - \sum_{r^{-}\in R^{-}_{n}(w)}\phi xr^{-}\end{split}\end{equation}
\end{enumerate} Using \ref{leftind} and \ref{rightind} and the fact that $m$ is $b$-truncated, we finally get that\begin{equation}\begin{split}
\sum_{u\in W^{\star}_{n}(w)}\phi u= &\sum_{r\in W^{r}_{n}(w)} \sum_{l\in W^{l}_{n}(w)}\phi lmr\\
\sim& \sum_{r\in W^{r}_{n}(w)} (\sum_{l^{+}\in L^{+}_{n}(w)}\phi l^{+}mr - \sum_{l^{-}\in L^{-}_{n}(w)}\phi l^{-}mr)\\
=& \sum_{l^{+}\in L^{+}_{n}(w)}\sum_{r\in W^{r}_{n}(w)}\phi l^{+}mr - \sum_{l^{-}\in L^{-}_{n}(w)}\sum_{r\in W^{r}_{n}(w)}\phi l^{-}mr\\
\sim& \sum_{l^{+}\in L^{+}_{n}(w)}(\sum_{r^{+}\in R^{+}_{n}(w)}\phi l^{+}mr^{+} - \sum_{r^{-}\in R^{-}_{n}(w)}\phi l^{+}mr^{-})\\
&- \sum_{l^{-}\in L^{-}_{n}(w)}(\sum_{r^{+}\in R^{+}_{n}(w)}\phi l^{-}mr^{+} - \sum_{r^{-}\in R^{-}_{n}(w)}\phi l^{-}mr^{-})\\
=&\sum_{u\in W^{+}_{n}(w)}\phi u - \sum_{u\in W^{-}_{n}(w)}\phi u\\
=&(\phi w)_n. \end{split}\end{equation}
\end{proof}

The look of this representative suggests that we are able to apply our Criterion \ref{redtree} if $n$ is big enough. This turns out to be true. To handle $(\phi w)_n$ more conveniently we want to write it as one sum and not as the difference of two sums. We can do this using the following:

\begin{prop}\label{disjoint+-}
Let $w$ be a reduced word and assume that $w$ is not a power of $b$. Then $W^{+}_{n}(w)\cap W^{-}_{n}(w)=\varnothing$.
\end{prop}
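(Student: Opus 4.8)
The plan is to exhibit, for each $u\in W^{+}_{n}(w)\cup W^{-}_{n}(w)$, a \emph{unique} way to write $u=l\,m\,r$ with $l$ a left piece, $m$ the middle word and $r$ a right piece, and then to recover the two signs from $l$ and $r$. Concretely, I would reduce the statement to two facts: (a) the decomposition $u=l\,m\,r$ with $l\in L^{+}_{n}(w)\cup L^{-}_{n}(w)$ and $r\in R^{+}_{n}(w)\cup R^{-}_{n}(w)$ is unique; and (b) $L^{+}_{n}(w)\cap L^{-}_{n}(w)=\varnothing$ and $R^{+}_{n}(w)\cap R^{-}_{n}(w)=\varnothing$. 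Granting (a) and (b): if $u\in W^{+}_{n}(w)\cap W^{-}_{n}(w)$, write $u=l_1mr_1=l_2mr_2$ with $(l_1,r_1)$ from one of the two products defining $W^{+}_{n}(w)$ and $(l_2,r_2)$ from one defining $W^{-}_{n}(w)$; by (a) we get $l_1=l_2$ and $r_1=r_2$, and by (b) the set ($L^{+}$ vs.\ $L^{-}$) containing $l_1$ and the set ($R^{+}$ vs.\ $R^{-}$) containing $r_1$ are unambiguous, so the two products coincide, contradicting that one pairs equal signs and the other opposite ones.

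For the structural input I would record three facts. First, since $w$ is not a power of $b$, $\tau_b(w)$ is a nonempty $b$-truncated word; write its non-$b$ syllable sequence as $(s_1,\dots,s_k)$ with $k\ge 1$. By \eqref{T-1w} the operator $T^{-1}$ does not change the non-$b$ syllables of a word, and $\tau_b$ never alters non-$b$ letters, so $m:=(\tau_b\circ T^{-1}\circ\tau_b)^{n}(w)$ is again a nonempty $b$-truncated word whose non-$b$ syllable sequence is $(s_1,\dots,s_k)$; in particular $m$ begins with $s_1$ and ends with $s_k$, both of which lie in $S_b$. Second, inspecting Definition \ref{def:phiwn}, every $l\in L^{+}_{n}(w)\cup L^{-}_{n}(w)$ has one of the forms $e$, or $b^{j}$ with $j\ne 0$, or $\sigma b^{j}$ with $\sigma\in S_b$ and $j\in\mathbb Z$ (possibly $j=0$), and $l=e$ occurs only when $m_0=0$, in which case $L^{+}_{n}(w)\cup L^{-}_{n}(w)=\{e\}$; dually every $r\in R^{+}_{n}(w)\cup R^{-}_{n}(w)$ is $e$, or $b^{j}$ with $j\ne 0$, or $b^{j}\rho$ with $\rho\in S_b$, and $r=e$ only when $m_k=0$. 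Third, each product $l\,m\,r$ occurring in $W^{\pm}_{n}(w)$ is reduced as written, because the only possible cancellation is at a junction of the shape $\sigma\cdot s_1$ (when $j=0$) or $s_k\cdot\rho$, and the case hypotheses of Definition \ref{def:phiwn} always force $\sigma\ne s_1^{-1}$ and $\rho\ne s_k^{-1}$ there.

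Next I would prove uniqueness of the decomposition. Suppose $l\,m\,r=l'\,m\,r'$ in $F_n$. If $m_0=0$ then $l=l'=e$ by the second fact; otherwise neither $l$ nor $l'$ is $e$. Using the syllable form $l\,m\,r\equiv_S(\dots)$ guaranteed by the third fact: if $l=b^{j}$ then the leading $b$-power of $u$ is $j\ne0$ and its first non-$b$ syllable is $s_1$; if $l=\sigma b^{j}$ then the leading $b$-power of $u$ is $0$, its first non-$b$ syllable is $\sigma$, and the $b$-power immediately following that syllable equals $j$ (here one uses that $m$ is $b$-truncated, so $m$ contributes leading $b$-power $0$). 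In every case the type of $l$, the letter $\sigma$ and the exponent $j$ are determined by the leading part of the common word $u$; hence $l=l'$, and cancelling $l$ on the left and then $m$ gives $r=r'$ — the right-hand end being handled by the mirror-image argument. Finally, for (b): in three of the five left cases $L^{-}_{n}(w)=\varnothing$, and in the other two ($m_0>0,s_1=a^{-1}$ and $m_0<0,s_1\ne a^{-1}$) one has $L^{+}_{n}(w)=\{b^{m_0}\}$, a pure nonzero power of $b$, while every element of $L^{-}_{n}(w)$ has first letter in $S_b$; similarly $R^{+}_{n}(w)\cap R^{-}_{n}(w)=\varnothing$. Combining (a), (b) and the reduction in the first paragraph gives $W^{+}_{n}(w)\cap W^{-}_{n}(w)=\varnothing$.

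The main obstacle is the uniqueness-of-decomposition step: one has to handle the degenerate possibilities $j=0$ (so $l$ or $r$ collapses to a single letter) and $\sigma=s_1$ (or $\rho=s_k$, so the same letter repeats at the junction). The point that makes these harmless is precisely the first structural fact — that $m$ is $b$-truncated with prescribed end syllables $s_1,s_k$ — together with the reducedness in the third fact, so no cancellation hides the location where $l$ ends and $m$ begins.
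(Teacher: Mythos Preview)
Your proof is correct and follows essentially the same approach as the paper. The paper packages your second and third structural facts as Corollary~\ref{lrlook} and Lemma~\ref{lmrred}, then gives a two-line proof: $L^{+}_{n}(w)\cap L^{-}_{n}(w)=\varnothing$ and $R^{+}_{n}(w)\cap R^{-}_{n}(w)=\varnothing$ from inspection of Definition~\ref{def:phiwn}, and the uniqueness of the decomposition $u=lmr$ follows from the reducedness asserted in Lemma~\ref{lmrred}; you spell out the uniqueness step in more detail than the paper does, but the content is the same.
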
 Thus if we set $W_{n}(w)=W^{+}_{n}(w)\cup W^{-}_{n}(w)$ and define the weight \[w_n(u)=\begin{cases}
1,& \text{if } u\in W^{+}_{n}(w)\\
-1,& \text{if } u\in W^{-}_{n}(w)\\
0,& \text{otherwise}
\end{cases},\] we obtain: \begin{equation}\label{phiwn}
(\phi w)_n=\sum_{u\in W_{n}(w)}w_n(u)\phi u.
\end{equation} We call $W_{n}(w)$ the \emph{n-support} of $w$. Note that $W_{n}(w)=L_{n}(w)\cdot M_{n}(w)\cdot R_{n}(w)$, where\begin{enumerate}
\item $L_{n}(w)=L^{+}_{n}(w)\cup L^{-}_{n}(w)$,
\item $R_{n}(w)=R^{+}_{n}(w)\cup R^{-}_{n}(w)$.
\end{enumerate} We call $L_{n}(w)$ the \emph{left n-support} of $w$ and $R_{n}(w)$ be the \emph{right n-support} of $w$. For later reference we prove the following lemma

\begin{lem}
Let $w$ be a reduced word and assume that $w$ is not a power of $b$. Then $$W_{n}(w)^{-1}=W_{n}(w^{-1}).$$
\end{lem}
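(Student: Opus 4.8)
The plan is to exploit the factorisation $W_{n}(w)=L_{n}(w)\cdot M_{n}(w)\cdot R_{n}(w)$ recorded just above the statement, together with the fact that inverting a set of reduced words built as a product reverses the order of the factors, $(A\cdot B\cdot C)^{-1}=C^{-1}\cdot B^{-1}\cdot A^{-1}$ (a reduced word is reduced iff its inverse is, so the products on both sides are legitimate). Since $w\equiv_S(m_0,s_1,\dots,s_k,m_k)$ gives $w^{-1}\equiv_S(-m_k,s_k^{-1},-m_{k-1},\dots,s_1^{-1},-m_0)$, it then suffices to prove the three identities
\[
R_{n}(w)^{-1}=L_{n}(w^{-1}),\qquad M_{n}(w)^{-1}=M_{n}(w^{-1}),\qquad L_{n}(w)^{-1}=R_{n}(w^{-1}),
\]
and concatenate them. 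The third is just the first applied with $w^{-1}$ in place of $w$ (using $(w^{-1})^{-1}=w$), so only the first two require work.

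For the middle factor I would first record two elementary commutation facts valid for every reduced word $v$: $\tau_b(v^{-1})=\tau_b(v)^{-1}$ (stripping leading and trailing powers of $b$ is symmetric under word reversal, and each $b^{\pm1}$ is carried into $\{b,b^{-1}\}$), and $T^{-1}(v^{-1})=(T^{-1}v)^{-1}$ (the automorphism $T^{-1}$, being a homomorphism, preserves inverses). Combining them yields $(\tau_b\circ T^{-1}\circ\tau_b)(v^{-1})=\big((\tau_b\circ T^{-1}\circ\tau_b)(v)\big)^{-1}$, and an immediate induction on $n$ gives $M_{n}(w)^{-1}=\{((\tau_b T^{-1}\tau_b)^{n}w)^{-1}\}=\{(\tau_b T^{-1}\tau_b)^{n}(w^{-1})\}=M_{n}(w^{-1})$. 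I would note here that no degenerate (empty) cases occur: since $T^{-1}v$ is a power of $b$ only when $v$ is, and $\tau_b v$ is a power of $b$ only when $v$ is, the assumption that $w$ is not a power of $b$ propagates through every application of $\tau_b\circ T^{-1}\circ\tau_b$.

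The remaining identity $R_{n}(w)^{-1}=L_{n}(w^{-1})$ is checked directly from Definition \ref{def:phiwn}. The structural observation making this painless is that $R_{n}(w)=R^{+}_{n}(w)\cup R^{-}_{n}(w)$ depends only on the pair $(m_k,\,[s_k=a])$, while $L_{n}(w^{-1})$ depends only on $(m_0',\,[s_1'=a^{-1}])=(-m_k,\,[s_k=a])$; hence both sides are functions of the same data and one simply matches them through the five cases $m_k=0$, $m_k>0$ (with $s_k=a$ or $s_k\ne a$), $m_k<0$ (with $s_k=a$ or $s_k\ne a$). In each case inverting the listed words turns $b^{j}$ into $b^{-j}$, reverses the two letters of a two-letter word, and sends $a^{\pm1}\mapsto a^{\mp1}$ and $S_b\setminus\{a^{\pm1}\}\mapsto S_b\setminus\{a^{\mp1}\}$ (using that $S_b$ is closed under inversion); a short bookkeeping check identifies the outcome with the corresponding case of $L_{n}(w^{-1})$. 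This case analysis is the only laborious ingredient, and the main—though purely mechanical—obstacle, since one must keep track of the $\pm$ superscripts and the index shifts $b^{m_k-i}$ versus $b^{m_k+i-1}$, etc. Once the three identities are in hand, $W_{n}(w)^{-1}=R_{n}(w)^{-1}M_{n}(w)^{-1}L_{n}(w)^{-1}=L_{n}(w^{-1})M_{n}(w^{-1})R_{n}(w^{-1})=W_{n}(w^{-1})$, completing the proof.
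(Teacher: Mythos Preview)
Your proof is correct and follows essentially the same approach as the paper: establish $M_n(w)^{-1}=M_n(w^{-1})$ from the commutation of $\tau_b$ with inversion together with $T^{-1}$ being a homomorphism, and verify $L_n(w)^{-1}=R_n(w^{-1})$ (equivalently your $R_n(w)^{-1}=L_n(w^{-1})$) by direct case comparison against Definition~\ref{def:phiwn}. You are simply more explicit than the paper's two-sentence sketch, in particular in spelling out the product reversal $(L\cdot M\cdot R)^{-1}=R^{-1}M^{-1}L^{-1}$ and the non-degeneracy of the iterated $\tau_b\circ T^{-1}\circ\tau_b$.
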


\begin{proof}
Since $\tau_b(w)^{-1}=\tau_b(w^{-1})$ and $T^{-1}$ is an automorphism, we get $M_{n}(w)^{-1}=M_{n}(w^{-1})$. We see that $L_{n}(w)^{-1}=R_{n}(w^{-1})$ by explicit comparison in all cases.
\end{proof}

Now we want to prove the Proposition \ref{disjoint+-}. Looking at \ref{def:phiwn} we can observe the following:

\begin{cor}\label{lrlook}
Let $w\equiv_S (m_0,s_1,...,s_k,m_k)$ and assume that $w$ is not a power of $b$. Let $l\in L_{n}(w)$. \begin{enumerate}[(i)]
\item If $m_0=0$, then $l=e$.
\item If $m_0\neq 0$, then $l=b^{m_0}$ or $l=sb^i$ for $s\in S_b\setminus \{s_1^{-1}\}$ and $i\in \mathbb{Z}$.
\end{enumerate} Let $r\in R_{n}(w)$. Then\begin{enumerate}[(i)]
\item If $m_k=0$, then $r=e$.
\item If $m_k\neq 0$, then $r=b^{m_k}$ or $r=b^js$ for $s\in S_b\setminus \{s_k^{-1}\}$ and $j\in \mathbb{Z}$.
\end{enumerate}
\end{cor}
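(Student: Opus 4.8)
The plan is to prove Corollary \ref{lrlook} by a direct inspection of Definition \ref{def:phiwn}: one unwinds $L_{n}(w)=L^{+}_{n}(w)\cup L^{-}_{n}(w)$ (and likewise $R_{n}(w)$) and simply reads off the shape of each word listed there. Starting with the left $n$-support, each of $L^{\pm}_{n}(w)$ is defined by a five-way split on the sign of $m_0$ and on whether $s_1=a^{-1}$. If $m_0=0$ then $L^{+}_{n}(w)=\{e\}$ and $L^{-}_{n}(w)=\varnothing$, so every $l\in L_{n}(w)$ equals $e$, which is (i). If $m_0\neq0$, I would check the remaining four cases and observe that every listed word is either the pure power $b^{m_0}$ (the only pure power of $b$ occurring) or of the form $sb^{i}$ with $s\in S_b$ and $i\in\mathbb{Z}$ (the exponents appearing are visibly integers). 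So (ii) reduces to the claim that the side letter $s$ always lies in $S_b\setminus\{s_1^{-1}\}$, and this is where I would spend the (small) care: when $s_1\neq a^{-1}$ the only side letter occurring is $a$, and $a\neq s_1^{-1}$ precisely because $s_1\neq a^{-1}$; when $s_1=a^{-1}$ the side letters range over $S_b\setminus\{a\}$, and $S_b\setminus\{a\}=S_b\setminus\{s_1^{-1}\}$ since then $s_1^{-1}=a$. Collecting this over the (mostly empty or singleton) sub-cases of $L^{+}_{n}(w)$ and $L^{-}_{n}(w)$ settles the left-hand statement.

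For the right $n$-support I would not repeat the bookkeeping but use the lemma immediately preceding the corollary, whose proof records $L_{n}(w)^{-1}=R_{n}(w^{-1})$, together with the observation that $w\equiv_S(m_0,s_1,\dots,s_k,m_k)$ forces $w^{-1}\equiv_S(-m_k,s_k^{-1},\dots,s_1^{-1},-m_0)$. Thus every $r\in R_{n}(w)$ is the inverse of some $l\in L_{n}(w^{-1})$, and for $w^{-1}$ the first $b$-exponent is $-m_k$ while the first non-$b$ letter is $s_k^{-1}$. Applying the already-proved $L_n$ statement to $w^{-1}$ gives that $l$ is $b^{-m_k}$ or $sb^{i}$ with $s\in S_b\setminus\{(s_k^{-1})^{-1}\}$; taking inverses, and using that $S_b$ is closed under inversion (so $t\neq s_k$ iff $t^{-1}\neq s_k^{-1}$), this says that $r$ is $b^{m_k}$ or $b^{j}s$ with $s\in S_b\setminus\{s_k^{-1}\}$, which is exactly (i) and (ii) for $R_{n}(w)$. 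Alternatively one can rerun the identical case analysis directly on $R^{\pm}_{n}(w)$.

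I do not expect any genuine obstacle: the corollary is advertised in its own statement as an observation from Definition \ref{def:phiwn}. The one place a slip is possible is in matching the index sets that occur in the definition — the singleton $\{a\}$ or its complement $S_b\setminus\{a\}$ — with the sets $S_b\setminus\{s_1^{-1}\}$ and $S_b\setminus\{s_k^{-1}\}$ named in the corollary, i.e.\ in keeping straight which of the sub-cases $s_1=a^{-1}$, $s_1\neq a^{-1}$ one is in and, on the right-hand side, in reading off correctly that the leading non-$b$ letter of $w^{-1}$ is $s_k^{-1}$.
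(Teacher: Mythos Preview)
Your proposal is correct and matches the paper's approach: the paper's entire proof is the single sentence ``Looking at \ref{def:phiwn} we can observe the following'', i.e.\ a direct read-off of the cases in Definition~\ref{def:phiwn}, which is exactly what you do for $L_n(w)$. Your use of the symmetry $L_n(w^{-1})^{-1}=R_n(w)$ (from the lemma just above) to handle $R_n(w)$ is a small shortcut the paper does not spell out, but since you also note that one can simply rerun the same case analysis on $R_n^{\pm}(w)$, the two arguments are effectively the same.
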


The following statement is not only useful for reformulating the n-representative, but fundamental for our later computations of its length.

\begin{lem}\label{lmrred}
Let $w\equiv_S (m_0,s_1,...,s_k,m_k)$ and assume that $w$ is not a power of $b$. Let $l\in L_{n}(w)$, $\{m\}=M_{n}(w)$ and $r\in R_{n}(w)$. Then $u=lmr\in W_n(w)$ is a reduced word.
\end{lem}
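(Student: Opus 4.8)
The plan is to combine Corollary~\ref{lrlook}, which classifies the possible forms of $l$ and $r$, with a description of the shape of the word $m$, and then check that no cancellation can occur where $l$, $m$ and $r$ meet. Since $l$, $m$, $r$ will be seen to be individually reduced words and $m$ is nonempty, it will suffice to rule out cancellation at the two junctions $l\mid m$ and $(lm)\mid r$.

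First I would pin down the shape of $m=(\tau_b\circ T^{-1}\circ\tau_b)^n(w)$. Since $w$ is not a power of $b$ we have $k\geq 1$, so $\tau_b(w)\equiv_S(0,s_1,m_1,\dots,s_k,0)$ is a nonempty $b$-truncated word whose sequence of non-$b$ letters is $s_1,\dots,s_k$. By \eqref{T-1w}, applying $T^{-1}$ leaves this sequence of non-$b$ letters unchanged and only alters the $b$-exponents, after which $\tau_b$ strips the (possibly nonzero) leading and trailing $b$-powers again. Iterating $n$ times, an easy induction yields $m\equiv_S(0,s_1,\widehat{m}_1,s_2,\dots,s_{k-1},\widehat{m}_{k-1},s_k,0)$ for suitable integers $\widehat{m}_j$; in particular $m$ is a reduced word, it is $b$-truncated, its first letter is $s_1$ and its last letter is $s_k$.

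Next I would record the shapes of $l$ and $r$. By Corollary~\ref{lrlook}, $l\in L_n(w)$ is either $e$ (if $m_0=0$), or $b^{m_0}$ (if $m_0\neq 0$), or of the form $sb^i$ with $s\in S_b\setminus\{s_1^{-1}\}$ and $i\in\mathbb{Z}$; dually $r\in R_n(w)$ is either $e$ (if $m_k=0$), or $b^{m_k}$, or of the form $b^js$ with $s\in S_b\setminus\{s_k^{-1}\}$ and $j\in\mathbb{Z}$. Since $S_b\cap\{b,b^{-1}\}=\varnothing$, each of these expressions is already a reduced word; note that the exponent $i$ (resp.\ $j$) may vanish, in which case $l$ (resp.\ $r$) collapses to the single letter $s$.

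Finally I would check the junctions. At $l\mid m$: if $l=e$ there is nothing to check; if the last letter of $l$ is $b^{\pm1}$ (the cases $l=b^{m_0}$ with $m_0\neq0$, or $l=sb^i$ with $i\neq0$) it cannot cancel the first letter $s_1\in S_b$ of $m$; and if $l$ is the single letter $s\in S_b\setminus\{s_1^{-1}\}$ (the case $i=0$) then $s\neq s_1^{-1}$, so again no cancellation occurs. Hence $lm$ is reduced and ends with the last letter of $m$, namely $s_k$. The junction $(lm)\mid r$ is handled by the same argument, the first letter of $r$ being either $b^{\pm1}$ (which cannot cancel $s_k\in S_b$) or a single letter $s\in S_b\setminus\{s_k^{-1}\}$. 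Therefore $u=lmr$ is reduced, and $u\in W_n(w)=L_n(w)\cdot M_n(w)\cdot R_n(w)$ by definition. The only delicate point is precisely this degenerate sub-case where the $b$-exponent in $l$ or $r$ is $0$ and the word collapses to a single letter; it is taken care of by the membership $s\in S_b\setminus\{s_1^{-1}\}$ (resp.\ $s\in S_b\setminus\{s_k^{-1}\}$) built into Corollary~\ref{lrlook}. Everything else is routine bookkeeping.
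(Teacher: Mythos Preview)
Your proof is correct and follows essentially the same approach as the paper. The paper's proof is a two-line argument: it cites Proposition~\ref{btrsubwords} to obtain $m\equiv_S(s_1,m_1^-,\dots,m_{k-1}^-,s_k)$ (which is exactly the shape of $m$ you rederive by induction), and then invokes Corollary~\ref{lrlook}, leaving the junction checks implicit; you have simply made those checks explicit.
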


\begin{proof}
We know by \ref{btrsubwords} that $m\equiv_S(s_1,m_1^-,...,m_{k-1}^-,s_k)$, where \begin{equation}
m_j^-=m_j-n\#a(s_j)+n\#a^{-1}(s_{j+1}).\end{equation} So the statement follows from \ref{lrlook}.
\end{proof}

Now we can finally prove the Proposition \ref{disjoint+-}:

\begin{proof}
By \ref{def:phiwn} we have that $L^{+}_{n}(w)\cap L^{-}_{n}(w)=\varnothing$ and $R^{+}_{n}(w)\cap R^{-}_{n}(w)=\varnothing$. So we get by \ref{lmrred} that $W^{+}_{n}(w)\cap W^{-}_{n}(w)=\varnothing$.
\end{proof}

Until now we only found the n-representative for words that are not a power of $b$. If $w=b^k$ for some $k\in \mathbb{Z}\setminus \{0,1\}$ this will have to wait till Section \ref{Tsums}, since we regard them by \ref{powerofbtob} as sums of counting quasimorphisms. However we will check the case $w=b$ right away.

\begin{defn}\label{phibn}
We define the \emph{n-support} of $b$ by $W_n(b)=\{a,b\}$. We define the \emph{n-representative} of $\phi b$ by $(\phi b)_n=\sum_{u\in W_{n}(b)}b_n(u)\phi u$ with the weight \[b_n(u)=\begin{cases}
n,& \text{if } u=a\\
1,& \text{if } u=b\\
0,& \text{otherwise}
\end{cases}.\]
\end{defn}

\begin{lem}\label{phibnequiv}
With the definitions above we get that $\phi b\circ T^n$ is equivalent to $(\phi b)_n$.
\end{lem}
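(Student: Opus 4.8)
The plan is to prove the statement by induction on $n$, exploiting the fact that under $T$ both $\phi a$ and $\phi b$ transform in a completely explicit way. Since $(\phi b)_n = n\,\phi a + \phi b$ by Definition \ref{phibn}, it suffices — and is in fact slightly stronger — to show that $\phi b\circ T^n = n\,\phi a + \phi b$ holds as an identity of functions on $F_n$, which of course implies the claimed equivalence.

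For the base case $n=1$ I would note that $(\phi b)_1 = \phi a + \phi b$ and invoke the identity $T^{-1}\phi b = \phi a + \phi b$ recorded at the end of Section \ref{InvBrooks}. If one prefers a self-contained argument: for a reduced word $w$, $\phi b(w)$ and $\phi a(w)$ are just the net exponents of $b=a_2$ and $a=a_1$ in $w$, and since $T$ replaces each occurrence of $a_1^{\pm 1}$ by $(a_1a_2)^{\pm 1}$ while fixing the remaining generators, the net $a_2$-exponent of $Tw$ equals $\phi b(w)+\phi a(w)$; that is, $\phi b\circ T = \phi a + \phi b$.

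The inductive step needs the auxiliary fact $\phi a\circ T = \phi a$. Here I would observe that $a = a_1\equiv_S(0,a,0)$ is $b$-truncated, in particular not a power of $b$, so Corollary \ref{phiwisTinv}(i) applies and gives $\phi a\circ T = \sum_{u\in W_{1}(a)}\phi u$; reading off the relevant cases of Lemma \ref{wisTinv} one finds $W^{l}_{1}(a)=W^{r}_{1}(a)=\{e\}$ (since $m_0=m_1=0$) and $W^{m}_{1}(a) = \{\tau_b\circ T^{-1}\circ\tau_b(a)\} = \{\tau_b(ab^{-1})\} = \{a\}$, hence $W_{1}(a)=\{a\}$ and $\phi a\circ T = \phi a$. (Equivalently, $\phi a$ is literally the homomorphism sending $a_1\mapsto 1$ and the other generators to $0$, which is fixed by precomposition with $T$.) Granting this, if $\phi b\circ T^n = n\,\phi a + \phi b$, then
\[ \phi b\circ T^{n+1} = (\phi b\circ T^n)\circ T = n(\phi a\circ T) + \phi b\circ T = n\,\phi a + (\phi a + \phi b) = (n+1)\phi a + \phi b = (\phi b)_{n+1}, \]
which closes the induction.

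I do not expect a genuine obstacle here: the whole argument is the two-line induction above together with the two elementary identities $\phi b\circ T = \phi a+\phi b$ and $\phi a\circ T = \phi a$. The only spot that calls for a short verification is $W_{1}(a)=\{a\}$, and even that can be bypassed by thinking of $\phi a$ as a coordinate homomorphism fixed by $T^\ast$.
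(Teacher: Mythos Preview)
Your proof is correct and follows essentially the same route as the paper's: both rely on the identities $\phi b\circ T=\phi a+\phi b$ and $\phi a\circ T=\phi a$ (the paper phrases the latter as $(\phi a)_n=\phi a$, obtained from Definition~\ref{def:phiwn}) and then an immediate induction. Your version is more explicit and even slightly stronger, since you establish equality rather than equivalence; your alternative justification of $\phi a\circ T=\phi a$ via the observation that $\phi a$ is a coordinate homomorphism is a nice shortcut.
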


\begin{proof}
We know that $\phi b\circ T=\phi a+\phi b$. By \ref{def:phiwn} we get that $(\phi a)_n=\phi a$. The statement follows.
\end{proof}

Note that the n-support of $w$ is the support of the weight $w_n$. This gives us the possibility to study $\lVert T^{-n}[\phi w]\rVert_S$ and eventually $\lvert T^{-n}[\phi w]\rvert_S$ by looking at the n-support of $w$, which will in turn allow us to compute the speed of $T^{-1}$ on $[\phi w]$. In the case $w=b$ this is really easy:

\begin{cor}\label{speedonphib}
We get that $\lvert T^{-n}[\phi b]\rvert_S=1$ for all $n\in \mathbb{N}$. In particular the speed of $T^{-1}$ on $[\phi b]$ exists and $sp(T^{-1},[\phi b])=0$.
\end{cor}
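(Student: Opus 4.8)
The plan is to extract an explicit, manifestly reduced representative of $T^{-n}[\phi b]$ from the material already developed, read off its length, and then pass to the limit. Recall that the $\mathrm{Out}(F_n)$-action is $T^{-n}[\phi b]=[\phi b\circ T^n]$, and by Definition \ref{phibn} together with Lemma \ref{phibnequiv} we have $\phi b\circ T^n\sim (\phi b)_n$, where $(\phi b)_n=n\,\phi a+\phi b$. Hence $T^{-n}[\phi b]=[(\phi b)_n]$, and everything reduces to computing $\lvert(\phi b)_n\rvert_S$.

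The steps, in order, would be: First, observe that $(\phi b)_n=n\,\phi a+\phi b=n\#a-n\#a^{-1}+\#b-\#b^{-1}$ as an element of $\mathcal{C}(F_n,S)$; since $n\geq 1$, all four counting functions appear with nonzero coefficient, and each of the words $a,a^{-1},b,b^{-1}$ has $S$-length $1$, so $\lVert(\phi b)_n\rVert_S=1$. Second, invoke Example \ref{1red}: any element of $\mathcal{C}(F_n,S)$ of length $1$ is reduced, so $\lvert(\phi b)_n\rvert_S=\lVert(\phi b)_n\rVert_S=1$, and therefore $\lvert T^{-n}[\phi b]\rvert_S=1$ for every $n\in\mathbb{N}$ (the case $n=0$ is the same computation applied to $\phi b$ itself, or may simply be excluded if $\mathbb{N}$ does not contain $0$). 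Third, conclude that $\lim_n\frac{\lvert T^{-n}[\phi b]\rvert_S}{n}=\lim_n\frac1n=0$, so the speed of $T^{-1}$ on $[\phi b]$ exists and equals $0$.

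There is essentially no serious obstacle here; the only points requiring a moment of care are (a) keeping the direction of the action straight, i.e.\ that $T^{-n}$ acts by precomposition with $T^{n}$, so that Lemma \ref{phibnequiv} is exactly what is needed, and (b) noting that no cancellation or shortening can occur when passing from $(\phi b)_n$ to a reduced representative, which is guaranteed precisely by Example \ref{1red}. This corollary is really just the payoff of having the $n$-representative machinery in place: for $[\phi b]$ the $n$-support stays bounded (it is always $\{a,b\}$), so the length cannot grow, and the sublinear growth forces speed $0$.
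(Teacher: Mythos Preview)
Your proposal is correct and follows essentially the same route as the paper: use Lemma \ref{phibnequiv} to identify $T^{-n}[\phi b]$ with $[(\phi b)_n]$, observe that $(\phi b)_n$ has length $1$, invoke Example \ref{1red} to conclude it is reduced, and read off the speed as $0$. The only difference is that you spell out the expansion of $(\phi b)_n$ into counting functions and the action convention explicitly, whereas the paper compresses these into single references.
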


\begin{proof}
By \ref{phibnequiv} we get that $\lvert T^{-n}[\phi b]\rvert_S=\lvert (\phi b)_n\rvert_S$. By \ref{1red} we get $\lvert (\phi b)_n\rvert_S=\lVert (\phi b)_n\rVert_S=1$. The rest of the statement follows by the definition of speed.
\end{proof}

If $w\neq b$ we have a lot more work to do. We will need the next two subsections to get a formula for $sp(T^{-1},[\phi w])$ and show that $T^{-1}$ has linear speed.

\subsection{Study of the n-support}\label{studynsupportw}

Let $w\equiv_S (m_0,s_1,...,s_k,m_k)$ be a reduced word and assume that $w$ is not a power of $b$. In this subsection we prove that the n-representative of $[\phi w]$ is reduced if $n$ is big enough. We want to apply our Criterion \ref{redtree}, so we need to show that the $b$-truncated end of the n-support is not empty if $n$ is big enough. We start our study of the n-support by defining a partition on it:

\begin{defn}\label{partition}
Let $w\equiv_S (m_0,s_1,...,s_k,m_k)$ and assume that $w$ is not a power of $b$. We extend the definitions of the left and right n-support of $w$ by\begin{enumerate}
\item $L_0(w)=\{b^{m_0}\}$ and $L_{-1}(w)=\varnothing$,
\item $R_0(w)=\{b^{m_k}\}$ and $R_{-1}(w)=\varnothing$.
\end{enumerate}Note that we have for $i\geq 0$\begin{enumerate}
\item $L_{i-1}(w)\subset L_{i}(w)$,
\item $R_{i-1}(w)\subset R_{i}(w)$.
\end{enumerate}So we can define the following sets for $i\geq 0$\begin{enumerate}
\item $dL_{i}(w)=L_{i}(w)\setminus L_{i-1}(w)$,
\item $dR_{i}(w)=R_{i}(w)\setminus R_{i-1}(w)$.
\end{enumerate}We get the following partitions:\begin{enumerate}
\item $L_{n}(w)=\amalg_{i=0}^n dL_{i}(w)$,
\item $R_{n}(w)=\amalg_{i=0}^n dR_{i}(w)$.
\end{enumerate}These partitions give us a partition of the n-support of $w$: \begin{equation}\label{partitionn}
W_{n}(w)=\amalg_{i=1}^n \amalg_{j=0}^n dL_{i}(w)\cdot M_{n}(w)\cdot dR_{j}(w)
\end{equation}
\end{defn}

\begin{defn}
Let $w$ be a reduced word and assume that $w$ is not a power of $b$. We say that $u=lmr\in W_n(w)$ is \emph{of type $(i,j)$} if $l\in dL_{i}(w)$ and $r\in dL_{j}(w)$. The type of an element $u\in W_n(w)$ is well defined by \ref{partitionn}.
\end{defn} 

We will now spend some time to explain our program for the rest of this subsection: Let $w\equiv_S (m_0,s_1,...,s_k,m_k)$ and assume that $w$ is not a power of $b$. As a first step we visualize the partition \ref{partitionn} of the n-support the following way: We depict $W_n(w)$ as an rectangle partitioned into squares. Each square is denoted by a pair $(i,j)$ with $0\leq i,j\leq n$ and is thought of containing the words of type $(i,j)$. We call $W_n(w)$ a \emph{rectangle} and $\{v\in W_n(w)\mid v\ \text{is of type}\ (i,j)\}$ a \emph{square}. We see in \ref{4shapes} that there are four different shapes of rectangles depending on whether $m_0=0$ and $m_k=0$. We call them \emph{cell}, \emph{column}, \emph{row} and \emph{box} respectively.

\begin{center}
\begin{tikzpicture}
\draw[step=1cm] (0,2) grid (1,3);
\node at (0.5,2.5) {(0,0)};

\draw[step=1cm] (2,0) grid (3,5);
\node at (2.5,0.5) {(n,0)};
\node at (2.5,1.5) {...};
\node at (2.5,2.5) {...};
\node at (2.5,3.5) {...};
\node at (2.5,4.5) {(0,0)};

\draw[step=1cm] (4,2) grid (9,3);
\node at (4.5,2.5) {(0,0)};
\node at (5.5,2.5) {...};
\node at (6.5,2.5) {...};
\node at (7.5,2.5) {...};
\node at (8.5,2.5) {(0,n)};

\draw[step=1cm] (10,0) grid (15,5);

\node at (10.5,4.5) {(0,0)};
\node at (11.5,4.5) {...};
\node at (12.5,4.5) {...};
\node at (13.5,4.5) {...};
\node at (14.5,4.5) {(0,n)};

\node at (10.5,3.5) {...};
\node at (11.5,3.5) {...};
\node at (12.5,3.5) {...};
\node at (13.5,3.5) {...};
\node at (14.5,3.5) {...};

\node at (10.5,2.5) {...};
\node at (11.5,2.5) {...};
\node at (12.5,2.5) {...};
\node at (13.5,2.5) {...};
\node at (14.5,2.5) {...};

\node at (10.5,1.5) {...};
\node at (11.5,1.5) {...};
\node at (12.5,1.5) {...};
\node at (13.5,1.5) {...};
\node at (14.5,1.5) {...};

\node at (10.5,0.5) {(n,0)};
\node at (11.5,0.5) {...};
\node at (12.5,0.5) {...};
\node at (13.5,0.5) {...};
\node at (14.5,0.5) {(n,n)};
\end{tikzpicture}
\end{center}

In a surprising twist the most comfortable way to study the length of elements in the n-support is to forget about elements. Thinking about squares instead we get the results we need and actually a nice geometric intuition. We prove in \ref{ijlength} that the length of an element $u\in W_n(w)$ depends only on its type $(i,j)$ and introduce in \ref{def:ijlength} the \emph{length of a square} $(i,j)$, which is just the length of an element $u\in W_n(w)$ of type $(i,j)$. From there on we will concentrate on the length of squares.

We see in \ref{ijbendWnw} that the elements of $W_n(w)$, which are not $b$-truncated, sit in the leftmost or highest squares. We colour these squares black in the visualization.

\begin{center}
\begin{tikzpicture}
\draw[step=1cm] (0,2) grid (1,3);
\node at (0.5,2.5) {(0,0)};

\filldraw[fill=black!30!white] (2,4) rectangle (3,5);
\draw[step=1cm] (2,0) grid (3,5);
\node at (2.5,0.5) {(n,0)};
\node at (2.5,1.5) {...};
\node at (2.5,2.5) {...};
\node at (2.5,3.5) {...};
\node at (2.5,4.5) {(0,0)};

\filldraw[fill=black!30!white] (4,2) rectangle (5,3);
\draw[step=1cm] (4,2) grid (9,3);
\node at (4.5,2.5) {(0,0)};
\node at (5.5,2.5) {...};
\node at (6.5,2.5) {...};
\node at (7.5,2.5) {...};
\node at (8.5,2.5) {(0,n)};

\filldraw[fill=black!30!white] (10,4) rectangle (15,5);
\filldraw[fill=black!30!white] (10,4) rectangle (11,0);
\draw[step=1cm] (10,0) grid (15,5);

\node at (10.5,4.5) {(0,0)};
\node at (11.5,4.5) {...};
\node at (12.5,4.5) {...};
\node at (13.5,4.5) {...};
\node at (14.5,4.5) {(0,n)};

\node at (10.5,3.5) {...};
\node at (11.5,3.5) {...};
\node at (12.5,3.5) {...};
\node at (13.5,3.5) {...};
\node at (14.5,3.5) {...};

\node at (10.5,2.5) {...};
\node at (11.5,2.5) {...};
\node at (12.5,2.5) {...};
\node at (13.5,2.5) {...};
\node at (14.5,2.5) {...};

\node at (10.5,1.5) {...};
\node at (11.5,1.5) {...};
\node at (12.5,1.5) {...};
\node at (13.5,1.5) {...};
\node at (14.5,1.5) {...};

\node at (10.5,0.5) {(n,0)};
\node at (11.5,0.5) {...};
\node at (12.5,0.5) {...};
\node at (13.5,0.5) {...};
\node at (14.5,0.5) {(n,n)};
\end{tikzpicture}
\end{center}

We want to show that for $n$ big enough, there is a square with bigger length then any black square. We will use two lemmas for this. The first lemma \ref{LRgrow} gives us the following for $n>N=\max (\lvert m_0\rvert, \lvert m_k\rvert)$: If $m_0\neq 0$, we draw a red line under the squares whose horizontal coordinate is $\lvert m_0\rvert$. If you want to walk from the upper, left corner to the lower, right corner of the rectangle, you have to cross the red line. Once you crossed the red line, the length of the squares you walk on grows with every step down by 1. If $m_k\neq 0$, we draw a blue line right of the squares whose vertical coordinate is $\lvert m_k\rvert$. If you want to walk from the upper, left corner to the lower, right corner of the rectangle, you have to cross the blue line. Once you crossed the blue line, the length of the squares you walk on grows with every step to the right by 1.

\begin{center}
\begin{tikzpicture}
\draw[step=1cm] (0,2) grid (1,3);
\node at (0.5,2.5) {(0,0)};

\filldraw[fill=black!30!white] (2,4) rectangle (3,5);
\draw[step=1cm] (2,0) grid (3,5);

\node at (2.5,0.5) {(n,0)};
\node at (2.5,1.5) {...};
\node at (2.5,2.5) {...};
\node at (2.5,3.5) {...};
\node at (2.5,4.5) {(0,0)};
\draw[thick, red] (1.75,3)--(3.25,3);

\filldraw[fill=black!30!white] (4,2) rectangle (5,3);
\draw[step=1cm] (4,2) grid (9,3);

\node at (4.5,2.5) {(0,0)};
\node at (5.5,2.5) {...};
\node at (6.5,2.5) {...};
\node at (7.5,2.5) {...};
\node at (8.5,2.5) {(0,n)};
\draw[thick, blue] (6,1.75)--(6,3.25);

\filldraw[fill=black!30!white] (10,4) rectangle (15,5);
\filldraw[fill=black!30!white] (10,4) rectangle (11,0);
\draw[step=1cm] (10,0) grid (15,5);

\node at (10.5,4.5) {(0,0)};
\node at (11.5,4.5) {...};
\node at (12.5,4.5) {...};
\node at (13.5,4.5) {...};
\node at (14.5,4.5) {(0,n)};

\node at (10.5,3.5) {...};
\node at (11.5,3.5) {...};
\node at (12.5,3.5) {...};
\node at (13.5,3.5) {...};
\node at (14.5,3.5) {...};

\node at (10.5,2.5) {...};
\node at (11.5,2.5) {...};
\node at (12.5,2.5) {...};
\node at (13.5,2.5) {...};
\node at (14.5,2.5) {...};

\node at (10.5,1.5) {...};
\node at (11.5,1.5) {...};
\node at (12.5,1.5) {...};
\node at (13.5,1.5) {...};
\node at (14.5,1.5) {...};

\node at (10.5,0.5) {(n,0)};
\node at (11.5,0.5) {...};
\node at (12.5,0.5) {...};
\node at (13.5,0.5) {...};
\node at (14.5,0.5) {(n,n)};
\draw[thick, red] (9.75,3)--(15.25,3);
\draw[thick, blue] (12,-0.25)--(12,5.25);
\end{tikzpicture}
\end{center}

The second lemma \ref{LRanlauf} gives us the following for $n>2N=2\max (\lvert m_0\rvert, \lvert m_k\rvert)$: If $m_0\neq 0$, we colour the squares red whose horizontal coordinate is $2\lvert m_0\rvert$. If you want to walk from the upper, left corner to the lower, right corner of the rectangle, you have to step on a red square. Once you walked over the red square, the length of any square you stand on is bigger than the length of every square above it with the same vertical coordinate. If $m_k\neq 0$, we colour the squares blue whose vertical coordinate is $2\lvert m_k\rvert$. If you want to walk from the upper, left corner to the lower, right corner of the rectangle, you have to step on a blue square. Once you walked over the blue square, the length of any square you stand on is bigger than the length of every square left of it with the same horizontal coordinate.

\begin{center}
\begin{tikzpicture}
\draw[step=1cm] (0,2) grid (1,3);
\node at (0.5,2.5) {(0,0)};

\filldraw[fill=red!30!white] (2,2) rectangle (3,3);
\filldraw[fill=black!30!white] (2,4) rectangle (3,5);
\draw[step=1cm] (2,0) grid (3,5);
\node at (2.5,0.5) {(n,0)};
\node at (2.5,1.5) {...};
\node at (2.5,2.5) {...};
\node at (2.5,3.5) {...};
\node at (2.5,4.5) {(0,0)};
\draw[thick, red] (1.75,3)--(3.25,3);

\filldraw[fill=blue!30!white] (7,2) rectangle (8,3);
\filldraw[fill=black!30!white] (4,2) rectangle (5,3);
\draw[step=1cm] (4,2) grid (9,3);
\node at (4.5,2.5) {(0,0)};
\node at (5.5,2.5) {...};
\node at (6.5,2.5) {...};
\node at (7.5,2.5) {...};
\node at (8.5,2.5) {(0,n)};
\draw[thick, blue] (6,1.75)--(6,3.25);

\filldraw[fill=black!30!white] (10,4) rectangle (15,5);
\filldraw[fill=black!30!white] (10,4) rectangle (11,0);
\filldraw[fill=red!30!white] (10,2) rectangle (13,3);
\filldraw[fill=red!30!white] (14,2) rectangle (15,3);
\filldraw[fill=blue!30!white] (13,0) rectangle (14,2);
\filldraw[fill=blue!30!white] (13,3) rectangle (14,5);
\filldraw[fill=blue!50!red!50!white] (13,2) rectangle (14,3);
\draw[step=1cm] (10,0) grid (15,5);

\node at (10.5,4.5) {(0,0)};
\node at (11.5,4.5) {...};
\node at (12.5,4.5) {...};
\node at (13.5,4.5) {...};
\node at (14.5,4.5) {(0,n)};

\node at (10.5,3.5) {...};
\node at (11.5,3.5) {...};
\node at (12.5,3.5) {...};
\node at (13.5,3.5) {...};
\node at (14.5,3.5) {...};

\node at (10.5,2.5) {...};
\node at (11.5,2.5) {...};
\node at (12.5,2.5) {...};
\node at (13.5,2.5) {...};
\node at (14.5,2.5) {...};

\node at (10.5,1.5) {...};
\node at (11.5,1.5) {...};
\node at (12.5,1.5) {...};
\node at (13.5,1.5) {...};
\node at (14.5,1.5) {...};

\node at (10.5,0.5) {(n,0)};
\node at (11.5,0.5) {...};
\node at (12.5,0.5) {...};
\node at (13.5,0.5) {...};
\node at (14.5,0.5) {(n,n)};
\draw[thick, red] (9.75,3)--(15.25,3);
\draw[thick, blue] (12,-0.25)--(12,5.25);
\end{tikzpicture}
\end{center}

Once we have proved all these results, we will see in \ref{btruncendWnw} that the $b$-truncated end of the $W_n(w)$ is not empty if $n>2N=2\max (\lvert m_0\rvert, \lvert m_k\rvert)$. Then we can apply our Criterion \ref{redtree} and get in \ref{tentacle} that $(\phi w)_n$ is reduced.

Now we start our program by proving that there are four different shapes of rectangles. We get an explicit description of the sets $dL_{i}(w)$ and $dR_{i}(w)$ by looking at \ref{def:phiwn}:

\newlength{\lene}
\settowidth{\lene}{$\{b^{m_k-i+1}s\mid s\in \bar{S}\setminus\{a^{-1},b,b^{-1}\}\}$,}

\begin{lem}\label{LRexpl}
Let $w\equiv_S (m_0,s_1,...,s_k,m_k)$ and assume that $w$ is not a power of $b$. We get $dL_{0}(w)=\{b^{m_0}\}$ and $dR_{0}(w)=\{b^{m_k}\}$. For $i\geq 1$ and $j\geq 1$ we have:

$dL_{i}(w)=\begin{cases}
\mathmakebox[\lene][l]{\varnothing,}& \text{if } m_0=0\\
\{ab^{m_0-i}\},& \text{if } m_0>0, s_1\neq a^{-1}\\
\{sb^{m_0+i-1}\mid s\in S_b\setminus\{a\}\},& \text{if } m_0>0, s_1=a^{-1}\\
\{ab^{m_0-i+1}\},& \text{if } m_0<0, s_1\neq a^{-1}\\
\{sb^{m_0+i}\mid s\in S_b\setminus\{a\}\},& \text{if } m_0<0, s_1=a^{-1}
\end{cases}$,

$dR_{j}(w)=\begin{cases}
\mathmakebox[\lene][l]{\varnothing,}& \text{if } m_k=0\\
\{b^{m_k-j}s\mid s\in S_b\setminus\{a^{-1}\}\},& \text{if } m_k>0, s_k=a\\
\{b^{m_k+j-1}a^{-1}\},& \text{if } m_k>0, s_k\neq a\\
\{b^{m_k-j+1}s\mid s\in S_b\setminus\{a^{-1}\}\},& \text{if } m_k<0, s_k=a\\
\{b^{m_k+j}a^{-1}\},& \text{if } m_k<0, s_k\neq a
\end{cases}$.
\end{lem}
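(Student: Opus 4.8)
The identities $dL_0(w)=\{b^{m_0}\}$ and $dR_0(w)=\{b^{m_k}\}$ are immediate from the conventions $L_0(w)=\{b^{m_0}\}$, $L_{-1}(w)=\varnothing$, $R_0(w)=\{b^{m_k}\}$, $R_{-1}(w)=\varnothing$ of \ref{partition}, so all the content lies in the cases $i,j\geq 1$. The plan is to unfold the definitions and read off the successive shells. Recall that $L_n(w)=L^+_n(w)\cup L^-_n(w)$; substituting the explicit descriptions from \ref{def:phiwn} one observes that, in each of the five cases for the pair $(m_0,s_1)$, exactly one of $L^+_n(w)$, $L^-_n(w)$ carries the index-$n$ union $\bigcup_{i=1}^n(\cdots)$ while the other is either $\varnothing$ or the singleton $\{b^{m_0}\}$. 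Combining the two, one gets in every case a presentation $L_n(w)=\{b^{m_0}\}\cup\bigcup_{i=1}^n X_i(w)$, where $X_i(w)$ is precisely the set claimed for $dL_i(w)$ in the statement (with $X_i(w)=\varnothing$ when $m_0=0$).

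Next I would check that this presentation is telescoping, i.e.\ that each shell $X_i(w)$ is disjoint from $\{b^{m_0}\}\cup\bigcup_{i'<i}X_{i'}(w)$; granting this, $dL_i(w)=L_i(w)\setminus L_{i-1}(w)=X_i(w)$, which is the assertion. For the disjointness: every element of $X_i(w)$ is a reduced word whose first letter lies in $S_b$ (it is $a$, $a^{-1}$, or another non-$b$ generator or inverse), hence it is never equal to the power of $b$ $b^{m_0}$; and the exponent of $b$ occurring in the $b$-tail of the elements of $X_i(w)$ is a strictly monotone function of $i$ --- it equals $m_0-i$, $m_0-i+1$, $m_0+i-1$, or $m_0+i$ according to the case --- so it determines $i$ and distinguishes distinct shells, even as this exponent passes through $0$ and changes sign. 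Within a single shell distinct generators $s$ give distinct words, since the first letter already differs.

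For the sets $dR_j(w)$ I would avoid repeating the computation and instead use the symmetry $w\mapsto w^{-1}$. By the identity $L_n(w)^{-1}=R_n(w^{-1})$ established in the proof that $W_n(w)^{-1}=W_n(w^{-1})$, one has $dR_j(w)=R_j(w)\setminus R_{j-1}(w)=\bigl(L_j(w^{-1})\setminus L_{j-1}(w^{-1})\bigr)^{-1}=dL_j(w^{-1})^{-1}$. Since $w^{-1}\equiv_S(-m_k,s_k^{-1},-m_{k-1},\dots,s_1^{-1},-m_0)$, the leading exponent of $w^{-1}$ is $-m_k$ and its leading letter is $s_k^{-1}$; feeding these into the $dL$-formula just proved and inverting the resulting words converts each of the five cases for $(m_k,s_k)$ into exactly the claimed description of $dR_j(w)$, using only that $s\mapsto s^{-1}$ is a bijection of $S_b$ swapping $a$ and $a^{-1}$ and that $(sb^m)^{-1}=b^{-m}s^{-1}$. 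The argument is mechanical throughout; the one place that needs care is the bookkeeping of the $b$-exponents in the five cases --- confirming both that the exponent in $X_i(w)$ runs through a strictly monotone sequence in $i$, so that no later shell revisits an earlier element, and that the five cases for $w$ are matched with the correct five cases for $w^{-1}$ in the symmetry step.
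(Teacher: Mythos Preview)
Your proposal is correct and in spirit matches the paper, whose entire proof is the single line ``This follows from \ref{def:phiwn}.'' You simply make explicit what the paper leaves implicit: the case split on $(m_0,s_1)$, the telescoping disjointness of the shells $X_i(w)$, and the analogous statement for $dR_j$.

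The one genuine difference is your treatment of $dR_j(w)$. The paper implicitly expects the reader to repeat the same five-case inspection of $R_n^\pm(w)$ from \ref{def:phiwn}; you instead invoke the inversion symmetry $dR_j(w)=dL_j(w^{-1})^{-1}$, which follows from the identity $L_n(w)^{-1}=R_n(w^{-1})$ already recorded in the paper. This is a legitimate and slightly more economical route, and your bookkeeping of how the five cases for $(m_k,s_k)$ correspond to the five cases for $(-m_k,s_k^{-1})$ is correct. Either way the lemma is a routine unpacking of definitions; your version has the advantage of spelling out the disjointness check (monotonicity of the $b$-exponent in $i$) that justifies why the union $\{b^{m_0}\}\cup\bigcup_i X_i(w)$ really is a disjoint union, which the paper leaves to the reader.
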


\begin{proof}
This follows from \ref{def:phiwn}.
\end{proof} 

We see that how the sets $dL_{i}(w)$ and $dR_{i}(w)$ look like depends primarily on the values of $m_0$ and $m_k$. This motivates the following definition:

\begin{defn}
Let $w\equiv_S (m_0,s_1,...,s_k,m_k)$ and assume that $w$ is not a power of $b$. We introduce the following terms:
\begin{enumerate}
\item $w$ is $b$-\emph{left} if $m_0\neq 0$ and $m_k=0$.
\item $w$ is \emph{right}-$b$ if $m_0=0$ and $m_k\neq 0$.
\item $w$ is $b$-\emph{and}-$b$ if $m_0\neq 0$ and $m_k\neq0$.
\end{enumerate}
\end{defn}

Note that every word in $F_n\setminus \{e\}$ is either a power of $b$ or $b$-truncated or $b$-left or right-$b$ or $b$-and-$b$. Now we can read the four different shapes of rectangles off from the Lemma \ref{LRexpl}

\begin{cor}\label{4shapes}
Let $w$ be a reduced word and assume that $w$ is not a power of $b$.\begin{enumerate}[(i)]
\item Assume $w$ is $b$-truncated. Then every element $v\in W_n(w)$ is of type $(0,0)$. We call $W_n(w)$ a \emph{cell}.
\item Assume $w$ is $b$-left. Then every element $v\in W_n(w)$ is of type $(i,0)$ for $0\leq i\leq n$. We call $W_n(w)$ a \emph{column}.
\item Assume $w$ is right-$b$. Then every element $v\in W_n(w)$ is of type $(0,j)$ for $0\leq j\leq n$. We call $W_n(w)$ a \emph{row}.
\item Assume $w$ is $b$-and-$b$. Then every element $v\in W_n(w)$ is of type $(i,j)$ for $0\leq i,j\leq n$. We call $W_n(w)$ a \emph{box}.
\end{enumerate}
\end{cor}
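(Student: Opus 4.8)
The plan is to deduce everything from the explicit description of the sets $dL_i(w)$ and $dR_j(w)$ in Lemma \ref{LRexpl}, together with the partition \ref{partitionn} of the $n$-support and the reducedness statement of Lemma \ref{lmrred}. First I would note that by \ref{partitionn} every element $v\in W_n(w)$ lies in exactly one piece $dL_i(w)\cdot M_n(w)\cdot dR_j(w)$ with $0\le i,j\le n$, so its type $(i,j)$ is well defined and automatically satisfies $0\le i,j\le n$. Hence the only thing to check in each of the four cases is which types \emph{actually occur}, i.e.\ for which pairs $(i,j)$ the set $dL_i(w)\cdot M_n(w)\cdot dR_j(w)$ is nonempty. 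Since $M_n(w)$ is the single word $(\tau_b\circ T^{-1}\circ\tau_b)^n(w)$, this product is nonempty precisely when both $dL_i(w)\neq\varnothing$ and $dR_j(w)\neq\varnothing$; the nontrivial implication here is that nonemptiness of the two factors forces nonemptiness of the product, and this is exactly Lemma \ref{lmrred}, which says that for $l\in L_n(w)$, $\{m\}=M_n(w)$ and $r\in R_n(w)$ the concatenation $lmr$ is reduced and hence a genuine element of $W_n(w)$ of type $(i,j)$, with no cancellation.

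Next I would read off the emptiness pattern of the $dL_i$ and $dR_j$ from Lemma \ref{LRexpl}. One has $dL_0(w)=\{b^{m_0}\}\neq\varnothing$ always; and for $i\ge 1$ whether $dL_i(w)$ is empty depends only on whether $m_0=0$, uniformly in $i$: if $m_0=0$ then $dL_i(w)=\varnothing$ for every $i\ge 1$, while if $m_0\neq 0$ then $dL_i(w)\neq\varnothing$ for all $1\le i\le n$ (in each of the four subcases of \ref{LRexpl} the set is a singleton or of the form $\{sb^{\,\cdot}\mid s\in S_b\setminus\{a\}\}$, which is nonempty since $\lvert S_b\setminus\{a\}\rvert\ge 1$ for $n\ge 2$). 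The same holds for $dR_j(w)$ with $m_k$ in place of $m_0$. Combining this with the previous paragraph, the set of occurring types is $\{0,\dots,n_L\}\times\{0,\dots,n_R\}$, where $n_L=0$ if $m_0=0$ and $n_L=n$ otherwise, and similarly $n_R$ is governed by $m_k$.

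Finally I would split into the four cases according to whether $m_0=0$ and whether $m_k=0$. The case $m_0=0$, $m_k=0$ ($w$ is $b$-truncated) gives only type $(0,0)$, the cell; $m_0\neq 0$, $m_k=0$ ($w$ is $b$-left) gives types $(i,0)$ with $0\le i\le n$, the column; $m_0=0$, $m_k\neq 0$ ($w$ is right-$b$) gives types $(0,j)$ with $0\le j\le n$, the row; and $m_0\neq 0$, $m_k\neq 0$ ($w$ is $b$-and-$b$) gives all types $(i,j)$ with $0\le i,j\le n$, the box. This is exactly the statement. The argument is essentially bookkeeping, and I do not expect a serious obstacle: the only real inputs are the appeal to Lemma \ref{lmrred} to see that nonempty products contribute elements to $W_n(w)$, and the observation from Lemma \ref{LRexpl} that emptiness of $dL_i(w)$ (resp.\ $dR_j(w)$) for $i\ge 1$ is controlled solely by the sign condition $m_0=0$ (resp.\ $m_k=0$), uniformly in the index. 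The mild point of care is the rank-two boundary case, where $S_b\setminus\{a\}$ degenerates to the single element $a^{-1}$, but it is still nonempty and so the argument goes through unchanged.
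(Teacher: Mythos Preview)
Your proposal is correct and follows the same route as the paper, whose proof is the single line ``This follows from \ref{LRexpl}.'' You have simply unpacked that line: the emptiness pattern of $dL_i(w)$ and $dR_j(w)$ from Lemma~\ref{LRexpl} is indeed all that is needed, and your extra appeal to Lemma~\ref{lmrred} (to ensure nonempty factors yield a nonempty product of the expected type) makes explicit what the paper leaves implicit in asserting the partition~\eqref{partitionn}.
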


We now give a concrete example: We pick four words such that one is $b$-truncated, one $b$-left, one right-$b$ and one $b$-and-$b$. Then we will visualize their n-support for $n=4$ and $n=8$.

\begin{exmp}
Let $w_1=abab^3a$. Then $w_1$ is $b$-truncated and we visualize $W_4(w_1)$ as a cell:

\begin{center}
\begin{tikzpicture}
\draw[step=1cm] (0,0) grid (1,1);
\node at (0.5,0.5) {(0,0)};
\end{tikzpicture}
\end{center}The same way we visualize $W_8(w_1)$ as:

\begin{center}
\begin{tikzpicture}
\draw[step=1cm] (0,0) grid (1,1);
\node at (0.5,0.5) {(0,0)};
\end{tikzpicture}
\end{center}Let $w_2=bab^2a$. Then $w_2$ is $b$-left and we visualize $W_4(w)$ as a column:

\begin{center}
\begin{tikzpicture}
\draw[step=1cm] (0,0) grid (1,5);
\node at (0.5,0.5) {(4,0)};
\node at (0.5,1.5) {(3,0)};
\node at (0.5,2.5) {(2,0)};
\node at (0.5,3.5) {(1,0)};
\node at (0.5,4.5) {(0,0)};
\end{tikzpicture}
\end{center}The same way we visualize $W_8(w_2)$ as:

\begin{center}
\begin{tikzpicture}
\draw[step=1cm] (0,0) grid (1,9);
\node at (0.5,0.5) {(8,0)};
\node at (0.5,1.5) {(7,0)};
\node at (0.5,2.5) {(6,0)};
\node at (0.5,3.5) {(5,0)};
\node at (0.5,4.5) {(4,0)};
\node at (0.5,5.5) {(3,0)};
\node at (0.5,6.5) {(2,0)};
\node at (0.5,7.5) {(1,0)};
\node at (0.5,8.5) {(0,0)};
\end{tikzpicture}
\end{center}Let $w_3=ab^2ab^2$. Then $w_3$ is right-$b$ and we visualize $W_4(w)$ as a row:

\begin{center}
\begin{tikzpicture}
\draw[step=1cm] (0,0) grid (5,1);
\node at (0.5,0.5) {(0,0)};
\node at (1.5,0.5) {(0,1)};
\node at (2.5,0.5) {(0,2)};
\node at (3.5,0.5) {(0,3)};
\node at (4.5,0.5) {(0,4)};
\end{tikzpicture}
\end{center}The same way we visualize $W_8(w_3)$ as:

\begin{center}
\begin{tikzpicture}
\draw[step=1cm] (0,0) grid (9,1);
\node at (0.5,0.5) {(0,0)};
\node at (1.5,0.5) {(0,1)};
\node at (2.5,0.5) {(0,2)};
\node at (3.5,0.5) {(0,3)};
\node at (4.5,0.5) {(0,4)};
\node at (5.5,0.5) {(0,5)};
\node at (6.5,0.5) {(0,6)};
\node at (7.5,0.5) {(0,7)};
\node at (8.5,0.5) {(0,8)};
\end{tikzpicture}
\end{center}Let $w_4=bab^2$. Then $w_4$ is $b$-and-$b$ and we visualize $W_4(w)$ as a box:

\begin{center}
\begin{tikzpicture}
\draw[step=1cm] (0,0) grid (5,5);

\node at (0.5,4.5) {(0,0)};
\node at (1.5,4.5) {(0,1)};
\node at (2.5,4.5) {(0,2)};
\node at (3.5,4.5) {(0,3)};
\node at (4.5,4.5) {(0,4)};

\node at (0.5,3.5) {(1,0)};
\node at (1.5,3.5) {(1,1)};
\node at (2.5,3.5) {(1,2)};
\node at (3.5,3.5) {(1,3)};
\node at (4.5,3.5) {(1,4)};

\node at (0.5,2.5) {(2,0)};
\node at (1.5,2.5) {(2,1)};
\node at (2.5,2.5) {(2,2)};
\node at (3.5,2.5) {(2,3)};
\node at (4.5,2.5) {(2,4)};

\node at (0.5,1.5) {(3,0)};
\node at (1.5,1.5) {(3,1)};
\node at (2.5,1.5) {(3,2)};
\node at (3.5,1.5) {(3,3)};
\node at (4.5,1.5) {(3,4)};

\node at (0.5,0.5) {(4,0)};
\node at (1.5,0.5) {(4,1)};
\node at (2.5,0.5) {(4,2)};
\node at (3.5,0.5) {(4,3)};
\node at (4.5,0.5) {(4,4)};
\end{tikzpicture}
\end{center}The same way we visualize $W_8(w_4)$ as:

\begin{center}
\begin{tikzpicture}
\draw[step=1cm] (0,0) grid (9,9);

\node at (0.5,8.5) {(0,0)};
\node at (1.5,8.5) {(0,1)};
\node at (2.5,8.5) {(0,2)};
\node at (3.5,8.5) {(0,3)};
\node at (4.5,8.5) {(0,4)};
\node at (5.5,8.5) {(0,5)};
\node at (6.5,8.5) {(0,6)};
\node at (7.5,8.5) {(0,7)};
\node at (8.5,8.5) {(0,8)};

\node at (0.5,7.5) {(1,0)};
\node at (1.5,7.5) {(1,1)};
\node at (2.5,7.5) {(1,2)};
\node at (3.5,7.5) {(1,3)};
\node at (4.5,7.5) {(1,4)};
\node at (5.5,7.5) {(1,5)};
\node at (6.5,7.5) {(1,6)};
\node at (7.5,7.5) {(1,7)};
\node at (8.5,7.5) {(1,8)};

\node at (0.5,6.5) {(2,0)};
\node at (1.5,6.5) {(2,1)};
\node at (2.5,6.5) {(2,2)};
\node at (3.5,6.5) {(2,3)};
\node at (4.5,6.5) {(2,4)};
\node at (5.5,6.5) {(2,5)};
\node at (6.5,6.5) {(2,6)};
\node at (7.5,6.5) {(2,7)};
\node at (8.5,6.5) {(2,8)};

\node at (0.5,5.5) {(3,0)};
\node at (1.5,5.5) {(3,1)};
\node at (2.5,5.5) {(3,2)};
\node at (3.5,5.5) {(3,3)};
\node at (4.5,5.5) {(3,4)};
\node at (5.5,5.5) {(3,5)};
\node at (6.5,5.5) {(3,6)};
\node at (7.5,5.5) {(3,7)};
\node at (8.5,5.5) {(3,8)};

\node at (0.5,4.5) {(4,0)};
\node at (1.5,4.5) {(4,1)};
\node at (2.5,4.5) {(4,2)};
\node at (3.5,4.5) {(4,3)};
\node at (4.5,4.5) {(4,4)};
\node at (5.5,4.5) {(4,5)};
\node at (6.5,4.5) {(4,6)};
\node at (7.5,4.5) {(4,7)};
\node at (8.5,4.5) {(4,8)};

\node at (0.5,3.5) {(5,0)};
\node at (1.5,3.5) {(5,1)};
\node at (2.5,3.5) {(5,2)};
\node at (3.5,3.5) {(5,3)};
\node at (4.5,3.5) {(5,4)};
\node at (5.5,3.5) {(5,5)};
\node at (6.5,3.5) {(5,6)};
\node at (7.5,3.5) {(5,7)};
\node at (8.5,3.5) {(5,8)};

\node at (0.5,2.5) {(6,0)};
\node at (1.5,2.5) {(6,1)};
\node at (2.5,2.5) {(6,2)};
\node at (3.5,2.5) {(6,3)};
\node at (4.5,2.5) {(6,4)};
\node at (5.5,2.5) {(6,5)};
\node at (6.5,2.5) {(6,6)};
\node at (7.5,2.5) {(6,7)};
\node at (8.5,2.5) {(6,8)};

\node at (0.5,1.5) {(7,0)};
\node at (1.5,1.5) {(7,1)};
\node at (2.5,1.5) {(7,2)};
\node at (3.5,1.5) {(7,3)};
\node at (4.5,1.5) {(7,4)};
\node at (5.5,1.5) {(7,5)};
\node at (6.5,1.5) {(7,6)};
\node at (7.5,1.5) {(7,7)};
\node at (8.5,1.5) {(7,8)};

\node at (0.5,0.5) {(8,0)};
\node at (1.5,0.5) {(8,1)};
\node at (2.5,0.5) {(8,2)};
\node at (3.5,0.5) {(8,3)};
\node at (4.5,0.5) {(8,4)};
\node at (5.5,0.5) {(8,5)};
\node at (6.5,0.5) {(8,6)};
\node at (7.5,0.5) {(8,7)};
\node at (8.5,0.5) {(8,8)};
\end{tikzpicture}
\end{center}
\end{exmp}

Note that the rectangles grow with $n$ if $w$ is not $b$-truncated. This follows since $L_{n_1}(w)\subsetneq L_{n_2}(w)$ and $R_{n_1}(w)\subsetneq R_{n_2}(w)$ for $n_1<n_2$ and $w$ not $b$-truncated. But note also that in general $W_{n_1}(w)$ is not a subset of $W_{n_2}(w)$, because $M_{n_1}(w)\neq M_{n_2}(w)$. In the above example $W_{4}(w_4)\subset W_{8}(w_4)$, but $W_{4}(w_i)\not\subset W_{8}(w_i)$ for $i\in \{1,2,3\}$.

Now we examine the length of elements in the n-support. Let $w\equiv_S (m_0,s_1,...,s_k,m_k)$ and assume that $w$ is not a power of $b$. Let $l\in L_{n}(w)$, $\{m\}=M_{n}(w)$ and $r\in R_{n}(w)$. Then $u=lmr$ is in the n-support of $w$ and by \ref{lmrred} we know that \begin{equation}\lvert u\rvert_S=\lvert l\rvert_S+\lvert m\rvert_S+\lvert r\rvert_S.\end{equation} So it makes sense to study the length of the elements in $L_{n}(w)$ and $R_{n}(w)$. The partition of these sets in \ref{partition} helps us to do so.

\newlength{\lenf}
\settowidth{\lenf}{$1+\lvert m_k+j\rvert$,}

\begin{cor}\label{LRlength}
Let $w\equiv_S (m_0,s_1,...,s_k,m_k)$ and assume that $w$ is not a power of $b$.
\begin{enumerate}[(i)]
\item Let $i=0$ and $l_i\in dL_{i}(w)$. Then we have $\lvert l_i \rvert_S=\lvert m_0\rvert$.
\item Let $i\geq 1$ and $l_i\in dL_{i}(w)$. Then we have \begin{equation}\lvert l_i \rvert_S=\begin{cases}
\mathmakebox[\lenf][l]{1+\lvert m_0-i\rvert,}& \text{if } m_0>0, s_1\neq a^{-1}\\
m_0+i,& \text{if } m_0>0, s_1=a^{-1}\\
-m_0+i,& \text{if } m_0<0, s_1\neq a^{-1}\\
1+\lvert m_0+i\rvert,& \text{if } m_0<0, s_1=a^{-1}
\end{cases}.\end{equation}
\item Let $j=0$ and $r_j\in dR_{j}(w)$. Then we have $\lvert r_j \rvert_S=\lvert m_k\rvert$.
\item Let $j\geq 1$ and $r_j\in dR_{j}(w)$. Then we have \begin{equation}\lvert r_j \rvert_S=\begin{cases}
\mathmakebox[\lenf][l]{1+\lvert m_k-j\rvert,}& \text{if } m_k>0, s_k=a\\
m_k+j,& \text{if } m_k>0, s_k\neq a\\
-m_k+j,& \text{if } m_k<0, s_k=a\\
1+\lvert m_k+j\rvert,& \text{if } m_k<0, s_k\neq a
\end{cases}.\end{equation}
\end{enumerate}
\end{cor}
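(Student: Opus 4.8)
The plan is to read the statement off directly from the explicit descriptions of $dL_i(w)$ and $dR_j(w)$ provided by Lemma \ref{LRexpl}; the only work is to compute word lengths and simplify the absolute values that appear, keeping careful track of signs.

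First I would dispose of the cases $i=0$ and $j=0$. By \ref{LRexpl} we have $dL_0(w)=\{b^{m_0}\}$ and $dR_0(w)=\{b^{m_k}\}$, so the unique element $l_0\in dL_0(w)$ is $b^{m_0}$ with $\lvert l_0\rvert_S=\lvert m_0\rvert$, and similarly $\lvert r_0\rvert_S=\lvert m_k\rvert$. This gives (i) and (iii). For $i\geq 1$ I would then run through the four non-empty cases of \ref{LRexpl}. In each of them every element $l_i\in dL_i(w)$ is a reduced word of the shape $xb^m$ with $x\in\bar{S}\setminus\{b,b^{-1}\}$ a single letter, so $\lvert l_i\rvert_S=1+\lvert m\rvert$, and it remains to evaluate $\lvert m\rvert$. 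If $m_0>0$ and $s_1\neq a^{-1}$ then $l_i=ab^{m_0-i}$ and the exponent $m_0-i$ can have either sign, so $\lvert l_i\rvert_S=1+\lvert m_0-i\rvert$. If $m_0>0$ and $s_1=a^{-1}$ then $l_i=sb^{m_0+i-1}$; since $m_0\geq 1$ and $i\geq 1$ the exponent $m_0+i-1$ is $\geq 1$, hence positive, so $\lvert l_i\rvert_S=1+(m_0+i-1)=m_0+i$. If $m_0<0$ and $s_1\neq a^{-1}$ then $l_i=ab^{m_0-i+1}$; since $m_0\leq -1$ and $i\geq 1$ the exponent $m_0-i+1$ is $\leq -1$, hence negative, so $\lvert l_i\rvert_S=1+(-(m_0-i+1))=-m_0+i$. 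Finally if $m_0<0$ and $s_1=a^{-1}$ then $l_i=sb^{m_0+i}$ with $m_0+i$ of either sign, so $\lvert l_i\rvert_S=1+\lvert m_0+i\rvert$. This proves (ii).

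For (iv) I would either repeat the same case analysis with the $dR_j(w)$ from \ref{LRexpl}, or deduce it from (ii) by applying that part to $w^{-1}$ and using $dL_j(w)^{-1}=dR_j(w^{-1})$ (which follows from $L_n(w)^{-1}=R_n(w^{-1})$ established before \ref{disjoint+-}) together with the invariance $\lvert v^{-1}\rvert_S=\lvert v\rvert_S$ of word length under inversion; note that passing to $w^{-1}$ interchanges the roles of $m_0,s_1$ with $-m_k,s_k^{-1}$, which matches the cases listed for $dR_j$. There is no genuine obstacle here — the whole argument is bookkeeping — and the only point demanding a little care is keeping track of the sign of the $b$-exponents so that the absolute values collapse correctly in the second and third subcases (and, in the inversion approach, checking that the subcase conditions for $w$ correspond correctly to those for $w^{-1}$).
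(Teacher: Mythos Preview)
Your proposal is correct and follows exactly the paper's approach: the paper's proof is the single line ``This follows from \ref{LRexpl},'' and you have simply unpacked that line by reading off the explicit elements and computing their word lengths. One tiny remark: the identity $L_n(w)^{-1}=R_n(w^{-1})$ you invoke for the alternative route to (iv) is actually recorded \emph{after} Proposition~\ref{disjoint+-}, not before it, but since the direct case analysis already suffices this is immaterial.
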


\begin{proof}
This follows from \ref{LRexpl}.
\end{proof}

\begin{cor}\label{ijlength}
Let $w\equiv_S (m_0,s_1,...,s_k,m_k)$ be a reduced word and assume that $w$ is not a power of $b$. We have: \begin{enumerate}[(i)]
\item Let $l_1, l_2\in dL_{i}(w)$. Then $\lvert l_1\rvert_S=\lvert l_2\rvert_S$.
\item Let $r_1, r_2\in dR_{j}(w)$. Then $\lvert r_1\rvert_S=\lvert r_2\rvert_S$.
\end{enumerate}
\end{cor}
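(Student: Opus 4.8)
The plan is to read the claim straight off the explicit descriptions of the difference sets $dL_i(w)$ and $dR_j(w)$ provided by Lemma \ref{LRexpl}. The first step is to recall that, for each fixed $i\geq 0$, that lemma exhibits $dL_i(w)$ in one of exactly three shapes: it is empty (when $m_0=0$ and $i\geq 1$); it is a singleton, namely $\{b^{m_0}\}$ when $i=0$ or $\{ab^{m}\}$ for a suitable integer $m$ when $i\geq 1$ and $s_1\neq a^{-1}$; or it is a family $\{sb^{m}\mid s\in S_b\setminus\{a\}\}$ for a suitable integer $m$ when $i\geq 1$ and $s_1=a^{-1}$. Which of these occurs, and what $m$ is, depends only on $i$, on the sign of $m_0$, and on whether $s_1=a^{-1}$ — never on the choice of a particular element. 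In the empty and singleton cases the assertion $\lvert l_1\rvert_S=\lvert l_2\rvert_S$ is vacuous or trivial, so nothing is left to do there.

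The second step disposes of the family case. If $dL_i(w)=\{sb^{m}\mid s\in S_b\setminus\{a\}\}$, then each such word is reduced, because $s\in S_b$ means $s\notin\{b,b^{-1}\}$, so no cancellation occurs between $s$ and $b^{m}$; hence $\lvert sb^{m}\rvert_S=1+\lvert m\rvert$, a value independent of $s$. Therefore any two elements $l_1,l_2\in dL_i(w)$ have the same $S$-length, which proves (i). Part (ii) follows by the identical argument applied to the description of $dR_j(w)$ in Lemma \ref{LRexpl}, with the data $m_0,s_1$ replaced by $m_k,s_k$.

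The cleanest way to actually phrase this, and the version I would write down, is simply to invoke Corollary \ref{LRlength}: the formulas given there for $\lvert l_i\rvert_S$ with $l_i\in dL_i(w)$ and for $\lvert r_j\rvert_S$ with $r_j\in dR_j(w)$ are written purely in terms of $i$ (resp. $j$), $m_0$ (resp. $m_k$) and the case distinction on $s_1$ (resp. $s_k$), with no reference to the individual word chosen. Consequently all elements lying in a common $dL_i(w)$ (resp. $dR_j(w)$) are assigned one and the same length. There is no genuine obstacle here; the only point worth checking explicitly is that the cases $i=0$ and $j=0$ fall under the singleton alternative $dL_0(w)=\{b^{m_0}\}$, $dR_0(w)=\{b^{m_k}\}$, so they require no separate discussion.
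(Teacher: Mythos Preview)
Your proposal is correct and takes essentially the same approach as the paper, which simply says ``This follows from \ref{LRlength}.'' You have merely spelled out in more detail why the formulas in \ref{LRlength} depend only on the index $i$ (resp.\ $j$) and the fixed data $m_0,s_1$ (resp.\ $m_k,s_k$), and you even identify this citation as the cleanest phrasing yourself.
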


\begin{proof}
This follows from \ref{LRlength}.
\end{proof}

So elements of the n-support that are of the same type have the same length. This motivates the following definition:

\begin{defn}\label{def:ijlength}
Assume that $w$ is not a power of $b$. Let $u\in W_n(w)$ be of type $(i,j)$. Then \ref{ijlength} and \ref{lmrred} allow us to define \[\lvert (i,j)\rvert_{W_n(w)}=\lvert u\rvert_S.\] We call $\lvert (i,j)\rvert_{W_n(w)}$ the \emph{length of the square $(i,j)$}.
\end{defn}

Now we want to locate the elements of $W_n(w)$, which are not $b$-truncated, in the rectangle $W_n(w)$. We see that they sit in the leftmost or highest squares.

\begin{lem}\label{ijbendWnw}
Let $w\equiv_S (m_0,s_1,...,s_k,m_k)$ be a reduced word and assume that $w$ is not a power of $b$. \begin{enumerate}[(i)]
\item If $w$ is $b$-truncated, then the element of $W_n(w)$ is $b$-truncated.
\item If $w$ is $b$-left, then $v\in W_n(w)$ is not $b$-truncated if and only if $v$ is of type $(0,0)$.
\item If $w$ is right-$b$, then $v\in W_n(w)$ is not $b$-truncated if and only if $v$ is of type $(0,0)$.
\item If $w$ is $b$-and-$b$, then $v\in W_n(w)$ is not $b$-truncated if and only if $v$ is of type $(i,0)$ or $(0,j)$ for $0\leq i,j\leq n$.\end{enumerate}
\end{lem}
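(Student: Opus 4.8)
The plan is to reduce the statement to a letter-by-letter inspection of the elements of $W_n(w)$. Fix $u\in W_n(w)$ of type $(i,j)$, so that $u=lmr$ with $l\in dL_i(w)$, $\{m\}=M_n(w)$ and $r\in dR_j(w)$. By \ref{lmrred} this product is already reduced, hence by the definition of $b$-truncatedness $u$ is $b$-truncated if and only if its first letter and its last letter both lie in $S_b=\bar S\setminus\{b,b^{-1}\}$. I will therefore pin down these two letters in terms of $i$, $j$, $m_0$ and $m_k$.

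First I would record the shape of $m$: exactly as in the proof of \ref{lmrred}, Proposition \ref{btrsubwords} gives $m\equiv_S(s_1,m_1^-,\dots,m_{k-1}^-,s_k)$, and since $w$ is not a power of $b$ we have $k\geq1$, so $m$ is a nonempty $b$-truncated word beginning with $s_1\in S_b$ and ending with $s_k\in S_b$. Consequently the first letter of $u=lmr$ is the first letter of $l$ when $l\neq e$ and equals $s_1\in S_b$ when $l=e$; symmetrically, the last letter of $u$ is the last letter of $r$ when $r\neq e$ and equals $s_k\in S_b$ when $r=e$.

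Next I would read off the relevant endpoint letters of $l$ and $r$ from \ref{LRexpl}. For $i=0$ we have $dL_0(w)=\{b^{m_0}\}$, which is $\{e\}$ when $m_0=0$ and is $\{b^{m_0}\}$, with leading letter $b^{+1}$ or $b^{-1}\notin S_b$, when $m_0\neq0$. For $i\geq1$ every element listed for $dL_i(w)$ has the form $ab^{\cdots}$ or $sb^{\cdots}$ with $s\in S_b\setminus\{a\}$, hence is nonempty with leading letter in $S_b$ (and $dL_i(w)=\varnothing$ anyway when $m_0=0$). Combining with the previous paragraph, the first letter of $u$ lies in $S_b$ if and only if $m_0=0$ or $i\neq0$; equivalently, the left end of $u$ fails to be $b$-truncated precisely when $m_0\neq0$ and $i=0$. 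Running the same argument on the right, using $dR_0(w)=\{b^{m_k}\}$ and the fact that every element of $dR_j(w)$ for $j\geq1$ ends in a letter of $S_b$, the right end of $u$ fails precisely when $m_k\neq0$ and $j=0$. Therefore $u$ of type $(i,j)$ is $b$-truncated if and only if ($m_0=0$ or $i\neq0$) and ($m_k=0$ or $j\neq0$).

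It then remains to feed this equivalence into \ref{4shapes}, which records which types $(i,j)$ occur. In case (i), $w$ is $b$-truncated, so $m_0=m_k=0$, only the type $(0,0)$ occurs (in fact $W_n(w)=\{m\}$), and the equivalence holds automatically, so every element of $W_n(w)$ is $b$-truncated. In case (ii), $w$ is $b$-left, so $m_0\neq0$, $m_k=0$, only types $(i,0)$ occur, and the equivalence becomes "$i\neq0$"; thus $v\in W_n(w)$ fails to be $b$-truncated exactly when it has type $(0,0)$. Case (iii) is symmetric. In case (iv), $w$ is $b$-and-$b$, so $m_0\neq0$, $m_k\neq0$, all types $(i,j)$ occur, and the equivalence becomes "$i\neq0$ and $j\neq0$"; hence $v$ fails to be $b$-truncated exactly when $i=0$ or $j=0$, i.e.\ when $v$ has type $(i,0)$ or $(0,j)$. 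This is precisely the claimed dichotomy. I expect no genuine obstacle here: the one point that must be handled with care is the separate treatment of the degenerate pieces $l=e$, $r=e$ together with $dL_0(w)$, $dR_0(w)$ according to whether $m_0$ and $m_k$ vanish, which is exactly why isolating the endpoint letters of $m$, $l$ and $r$ before assembling the equivalence is worthwhile; the rest is routine bookkeeping with \ref{LRexpl}.
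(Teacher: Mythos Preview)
Your proof is correct and takes essentially the same approach as the paper: both arguments use \ref{lmrred} to ensure $lmr$ is reduced and then inspect the first and last letters via the explicit description of $dL_i(w)$ and $dR_j(w)$ (you cite \ref{LRexpl}, the paper cites the equivalent \ref{lrlook}). The only difference is presentational: you first establish the uniform equivalence ``$u$ is $b$-truncated iff ($m_0=0$ or $i\neq0$) and ($m_k=0$ or $j\neq0$)'' and then specialize to the four shapes via \ref{4shapes}, whereas the paper treats the four cases directly; your organization is arguably cleaner but the substance is identical.
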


\begin{proof}
Let $v=lmr\in W_n(w)$ with $l\in L_{n}(w)$, $\{m\}=M_{n}(w)$ and $r\in R_{n}(w)$.
\begin{enumerate}[(i)]
\item If $w$ is $b$-truncated, then $v=m$ by \ref{def:phiwn} and we see that $v$ is also $b$-truncated.
\item If $w$ is $b$-left, then $v=lm$ by \ref{def:phiwn}. If $v$ is of type $(0,0)$, then $l=b^{m_0}$ and we see that $v$ is not $b$-truncated. If $v$ is not of type $(0,0)$, then $l\neq b^{m_0}$. By \ref{lrlook} we get that $l=sb^i$ for $s\in \bar{S}\setminus \{s_1^{-1}\}$ and $i\in \mathbb{Z}$ and so we see that $v$ is $b$-truncated.
\item If $w$ is right-$b$, then $v=mr$ by \ref{def:phiwn}. If $v$ is of type $(0,0)$, then $r=b^{m_k}$ and we see that $v$ is not $b$-truncated. If $v$ is not of type $(0,0)$, then $r\neq b^{m_k}$. By \ref{lrlook} we get that $r=b^js$ for $s\neq \bar{S}\setminus \{s_k^{-1}\}$ and $j\in \mathbb{Z}$ and so we see that $v$ is $b$-truncated.
\item If $w$ be $b$-and-$b$, then $v=lmr$ by \ref{def:phiwn}. If $v$ is of type $(i,0)$ for $0\leq i\leq n$, then $l=b^{m_0}$. If $v$ is of type $(0,j)$ for $0\leq j\leq n$, then $r=b^{m_k}$. In both cases we see that $v$ is not $b$-truncated. If $v$ is not of type $(i,0)$ or $(0,j)$ for $0\leq i,j\leq n$, then $l\neq b^{m_0}$ and $r\neq b^{m_k}$. By \ref{lrlook} we get that $l=sb^i$ for $s\in \bar{S}\setminus \{s_1^{-1}\}$ and $i\in \mathbb{Z}$ and that $r=b^js$ for $s\neq \bar{S}\setminus \{s_k^{-1}\}$ and $j\in \mathbb{Z}$. So we see that $v$ is $b$-truncated.\end{enumerate}
\end{proof}

\begin{exmp}
We now want to illustrate the result \ref{ijbendWnw} in a concrete example. Again we consider the words $w_1=abab^3a$, $w_2=bab^2a$, $w_3=ab^2ab^2$ and $w_4=bab^2$. In this example we examine the rectangles $W_8(w_i)$ for $i\in \{1,2,3,4\}$ and locate the elements of $W_8(w_i)$, that are not $b$-truncated. We start with $w_1=abab^3a$. By \ref{ijbendWnw} the element of $W_8(w_1)$ is $b$-truncated. So we visualize $W_8(w_1)$ as before:

\begin{center}
\begin{tikzpicture}
\draw[step=1cm] (0,0) grid (1,1);
\node at (0.5,0.5) {(0,0)};
\end{tikzpicture}
\end{center}Now we consider $w_2=bab^2a$. By \ref{ijbendWnw} an element $v\in W_8(w_2)$ is not $b$-truncated if and only if $v$ is of type $(0,0)$. To indicate this, we colour the square $(0,0)$ black in the visualization of $W_8(w_2)$.

\begin{center}
\begin{tikzpicture}
\filldraw[fill=black!30!white] (0,8) rectangle (1,9);
\draw[step=1cm] (0,0) grid (1,9);
\node at (0.5,0.5) {(8,0)};
\node at (0.5,1.5) {(7,0)};
\node at (0.5,2.5) {(6,0)};
\node at (0.5,3.5) {(5,0)};
\node at (0.5,4.5) {(4,0)};
\node at (0.5,5.5) {(3,0)};
\node at (0.5,6.5) {(2,0)};
\node at (0.5,7.5) {(1,0)};
\node at (0.5,8.5) {(0,0)};
\end{tikzpicture}
\end{center}Now we consider $w_3=ab^2ab^2$. By \ref{ijbendWnw} an element $v\in W_8(w_3)$ is not $b$-truncated if and only if $v$ is of type $(0,0)$. We visualize as above:

\begin{center}
\begin{tikzpicture}
\filldraw[fill=black!30!white] (0,0) rectangle (1,1);
\draw[step=1cm] (0,0) grid (9,1);
\node at (0.5,0.5) {(0,0)};
\node at (1.5,0.5) {(0,1)};
\node at (2.5,0.5) {(0,2)};
\node at (3.5,0.5) {(0,3)};
\node at (4.5,0.5) {(0,4)};
\node at (5.5,0.5) {(0,5)};
\node at (6.5,0.5) {(0,6)};
\node at (7.5,0.5) {(0,7)};
\node at (8.5,0.5) {(0,8)};
\end{tikzpicture}
\end{center}Finally we consider $w_4=bab^2$. Then by \ref{ijbendWnw} an element $v\in W_8(w_4)$ is not $b$-truncated if and only if $v$ is of type $(i,0)$ or $(0,j)$ for $0\leq i,j\leq 8$. We visualize as above:

\begin{center}
\begin{tikzpicture}
\filldraw[fill=black!30!white] (0,8) rectangle (9,9);
\filldraw[fill=black!30!white] (0,0) rectangle (1,9);
\draw[step=1cm] (0,0) grid (9,9);

\node at (0.5,8.5) {(0,0)};
\node at (1.5,8.5) {(0,1)};
\node at (2.5,8.5) {(0,2)};
\node at (3.5,8.5) {(0,3)};
\node at (4.5,8.5) {(0,4)};
\node at (5.5,8.5) {(0,5)};
\node at (6.5,8.5) {(0,6)};
\node at (7.5,8.5) {(0,7)};
\node at (8.5,8.5) {(0,8)};

\node at (0.5,7.5) {(1,0)};
\node at (1.5,7.5) {(1,1)};
\node at (2.5,7.5) {(1,2)};
\node at (3.5,7.5) {(1,3)};
\node at (4.5,7.5) {(1,4)};
\node at (5.5,7.5) {(1,5)};
\node at (6.5,7.5) {(1,6)};
\node at (7.5,7.5) {(1,7)};
\node at (8.5,7.5) {(1,8)};

\node at (0.5,6.5) {(2,0)};
\node at (1.5,6.5) {(2,1)};
\node at (2.5,6.5) {(2,2)};
\node at (3.5,6.5) {(2,3)};
\node at (4.5,6.5) {(2,4)};
\node at (5.5,6.5) {(2,5)};
\node at (6.5,6.5) {(2,6)};
\node at (7.5,6.5) {(2,7)};
\node at (8.5,6.5) {(2,8)};

\node at (0.5,5.5) {(3,0)};
\node at (1.5,5.5) {(3,1)};
\node at (2.5,5.5) {(3,2)};
\node at (3.5,5.5) {(3,3)};
\node at (4.5,5.5) {(3,4)};
\node at (5.5,5.5) {(3,5)};
\node at (6.5,5.5) {(3,6)};
\node at (7.5,5.5) {(3,7)};
\node at (8.5,5.5) {(3,8)};

\node at (0.5,4.5) {(4,0)};
\node at (1.5,4.5) {(4,1)};
\node at (2.5,4.5) {(4,2)};
\node at (3.5,4.5) {(4,3)};
\node at (4.5,4.5) {(4,4)};
\node at (5.5,4.5) {(4,5)};
\node at (6.5,4.5) {(4,6)};
\node at (7.5,4.5) {(4,7)};
\node at (8.5,4.5) {(4,8)};

\node at (0.5,3.5) {(5,0)};
\node at (1.5,3.5) {(5,1)};
\node at (2.5,3.5) {(5,2)};
\node at (3.5,3.5) {(5,3)};
\node at (4.5,3.5) {(5,4)};
\node at (5.5,3.5) {(5,5)};
\node at (6.5,3.5) {(5,6)};
\node at (7.5,3.5) {(5,7)};
\node at (8.5,3.5) {(5,8)};

\node at (0.5,2.5) {(6,0)};
\node at (1.5,2.5) {(6,1)};
\node at (2.5,2.5) {(6,2)};
\node at (3.5,2.5) {(6,3)};
\node at (4.5,2.5) {(6,4)};
\node at (5.5,2.5) {(6,5)};
\node at (6.5,2.5) {(6,6)};
\node at (7.5,2.5) {(6,7)};
\node at (8.5,2.5) {(6,8)};

\node at (0.5,1.5) {(7,0)};
\node at (1.5,1.5) {(7,1)};
\node at (2.5,1.5) {(7,2)};
\node at (3.5,1.5) {(7,3)};
\node at (4.5,1.5) {(7,4)};
\node at (5.5,1.5) {(7,5)};
\node at (6.5,1.5) {(7,6)};
\node at (7.5,1.5) {(7,7)};
\node at (8.5,1.5) {(7,8)};

\node at (0.5,0.5) {(8,0)};
\node at (1.5,0.5) {(8,1)};
\node at (2.5,0.5) {(8,2)};
\node at (3.5,0.5) {(8,3)};
\node at (4.5,0.5) {(8,4)};
\node at (5.5,0.5) {(8,5)};
\node at (6.5,0.5) {(8,6)};
\node at (7.5,0.5) {(8,7)};
\node at (8.5,0.5) {(8,8)};
\end{tikzpicture}
\end{center}
\end{exmp}

Now we start proving results about the length of squares. These results look quite technical at a first view, but we hope to have made the intuition behind them clear in the outline of the program of this subsection. We will also give concrete examples following the proofs. The results also might give the impression of being unnecessary complicated. This is in fact true if we only consider the goal of this subsection, but they will be very useful in Subsection \ref{T-1speedsingle} and Section \ref{Tsums}. The basic tool to compute with the length of squares is the following statement:

\begin{lem}\label{ijdiff}
Let $w\equiv_S (m_0,s_1,...,s_k,m_k)$ be a reduced word and assume that $w$ is not a power of $b$.\begin{enumerate}[(i)]
\item Assume there are $u_1,u_2\in W_n(w)$ such that $u_1$ is of type $(i_1,j_1)$ and $u_2$ is of type $(i_2,j_2)$. Then there are elements\begin{align} l_{i_1}&\in dL_{i_1}(w),\\
l_{i_2}&\in dL_{i_2}(w),\\
r_{j_1}&\in dR_{j_1}(w),\\
r_{j_2}&\in dR_{j_2}(w).\end{align} We have \begin{equation}\label{ijdiff1} \lvert (i_1,j_1)\rvert_{W_n(w)}-\lvert (i_2,j_2)\rvert_{W_n(w)}=\lvert l_{i_1} \rvert_S-\lvert l_{i_2} \rvert_S+\lvert r_{j_1} \rvert_S-\lvert r_{j_2} \rvert_S.\end{equation}
\item Assume there are $u_1\in W_{n_1}(w)$ and $u_2\in W_{n_2}(w)$ such that $u_1$ and $u_2$ are of type $(i,j)$. Then there are elements \begin{align} M_{n_1}(w)&=\{m_{n_1}\},\\
M_{n_2}(w)&=\{m_{n_2}\}.\end{align} We have \begin{equation}\label{ijdiff2}\lvert (i,j)\rvert_{W_{n_1}(w)}-\lvert (i,j)\rvert_{W_{n_2}(w)}=\lvert m_{n_1} \rvert_S-\lvert m_{n_2} \rvert_S.\end{equation}
\end{enumerate} 
\end{lem}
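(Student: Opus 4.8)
The plan is to deduce both identities from one fact that is already available, namely the additivity of word length along the factorisation $u=lmr$ of an element $u\in W_n(w)$. Indeed, by Lemma \ref{lmrred} the concatenation $lmr$ is a reduced word whenever $l\in L_n(w)$, $\{m\}=M_n(w)$ and $r\in R_n(w)$, so no cancellation occurs and $\lvert lmr\rvert_S=\lvert l\rvert_S+\lvert m\rvert_S+\lvert r\rvert_S$. Together with Definition \ref{def:ijlength} this yields the working formula: if $u\in W_n(w)$ has type $(i,j)$, then writing $u=l\,m\,r$ with $l\in dL_i(w)$, $m$ the unique element of $M_n(w)$ and $r\in dR_j(w)$ (which is possible since $W_n(w)=L_n(w)\cdot M_n(w)\cdot R_n(w)$ and by the definition of type), we have $\lvert(i,j)\rvert_{W_n(w)}=\lvert l\rvert_S+\lvert m\rvert_S+\lvert r\rvert_S$. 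For the second part I will also use that the sets $dL_i(w)$ and $dR_j(w)$ are defined from $w$ and the index alone (see \ref{partition} and \ref{LRexpl}), hence are independent of $n$, and that by Corollary \ref{ijlength} any two elements of $dL_i(w)$, respectively of $dR_j(w)$, have equal length.

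For part (i): by hypothesis choose $u_1,u_2\in W_n(w)$ of types $(i_1,j_1)$ and $(i_2,j_2)$ and factor them as $u_s=l_{i_s}\,m\,r_{j_s}$ with $l_{i_s}\in dL_{i_s}(w)$ and $r_{j_s}\in dR_{j_s}(w)$; the middle factor is the \emph{same} $m$ in both cases because $M_n(w)$ is a singleton. These $l_{i_1},l_{i_2},r_{j_1},r_{j_2}$ are the asserted elements. Applying the working formula to $u_1$ and to $u_2$ and subtracting, the common term $\lvert m\rvert_S$ cancels and \eqref{ijdiff1} follows.

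For part (ii): choose $u_1\in W_{n_1}(w)$ and $u_2\in W_{n_2}(w)$, both of type $(i,j)$, and factor $u_t=l_t\,m_{n_t}\,r_t$ with $l_t\in dL_i(w)$, $r_t\in dR_j(w)$ and $\{m_{n_t}\}=M_{n_t}(w)$. The working formula gives $\lvert(i,j)\rvert_{W_{n_t}(w)}=\lvert l_t\rvert_S+\lvert m_{n_t}\rvert_S+\lvert r_t\rvert_S$ for $t=1,2$; subtracting and using Corollary \ref{ijlength} (so that $\lvert l_1\rvert_S=\lvert l_2\rvert_S$ and $\lvert r_1\rvert_S=\lvert r_2\rvert_S$) yields \eqref{ijdiff2}.

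There is no genuinely hard step: the entire content is the no-cancellation statement of Lemma \ref{lmrred}, already proved, together with bookkeeping about which of the three slots the left, middle and right factors of an element of $W_n(w)$ occupy. The one point that deserves a moment's attention is that for a fixed type $(i,j)$ the left and right factors range over the $n$-independent sets $dL_i(w)$ and $dR_j(w)$, so in part (ii) only the middle factor genuinely varies with $n$; this is exactly what makes the right-hand side of \eqref{ijdiff2} depend on the $M$-parts alone.
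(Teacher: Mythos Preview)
Your proof is correct and follows essentially the same approach as the paper: both derive the identity $\lvert(i,j)\rvert_{W_n(w)}=\lvert l\rvert_S+\lvert m\rvert_S+\lvert r\rvert_S$ from Lemma~\ref{lmrred} and Definition~\ref{def:ijlength}, then subtract. Your explicit remark that $dL_i(w)$ and $dR_j(w)$ are independent of $n$ (so only the middle factor varies in part~(ii)) is a point the paper uses tacitly but does not spell out.
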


\begin{proof}
Assume we have $l\in dL_{i}(w)$, $\{m\}=M_{n}(w)$ and $r\in dR_{j}$. Then $u=lmr\in W_n(w)$ is of type $(i,j)$. By \ref{def:ijlength} and by \ref{lmrred} we have \begin{equation} \lvert (i,j)\rvert_{W_n(w)}=\lvert u \rvert_S=\lvert l \rvert_S+\lvert m \rvert_S+\lvert r \rvert_S.\end{equation} By \ref{ijlength} we have for every $l_{i}\in dL_{i}(w)$ and $r_{j}\in dR_{j}$ \begin{equation}\label{ij=l+m+r} \lvert (i,j)\rvert_{W_n(w)}=\lvert l_i \rvert_S+\lvert m \rvert_S+\lvert r_j \rvert_S.\end{equation} The statements follow:\begin{enumerate}[(i)]
\item Let $\{m_n\}=M_{n}(w)$. By \ref{ij=l+m+r} we have \begin{equation}\begin{split} \lvert (i_1,j_1)\rvert_{W_n(w)}-\lvert (i_2,j_2)\rvert_{W_n(w)}=&(\lvert l_{i_1} \rvert_S+\lvert m_n \rvert_S+\lvert r_{j_1} \rvert_S)\\
&-(\lvert l_{i_2} \rvert_S+\lvert m_n \rvert_S+\lvert r_{j_2} \rvert_S)\\
=&\lvert l_{i_1} \rvert_S-\lvert l_{i_2} \rvert_S+\lvert r_{j_1} \rvert_S-\lvert r_{j_2} \rvert_S.\end{split}\end{equation}
\item Let $l\in dL_{i}(w)$ and $r\in dR_{j}$. By \ref{ij=l+m+r} we have \begin{equation}\begin{split}\lvert (i,j)\rvert_{W_{n_1}(w)}-\lvert (i,j)\rvert_{W_{n_2}(w)}=&(\lvert l_i \rvert_S+\lvert m_{n_1} \rvert_S+\lvert r_j \rvert_S)\\
&-(\lvert l_i \rvert_S+\lvert m_{n_2} \rvert_S+\lvert r_j \rvert_S)\\
=&\lvert m_{n_1} \rvert_S-\lvert m_{n_2} \rvert_S.\end{split}\end{equation}
\end{enumerate}
\end{proof}

Now we can prove the first of the promised lemmas on the length of squares. We recall the statement given in the outline of this subsection: Let $w\equiv_S (m_0,s_1,...,s_k,m_k)$ be a reduced word and assume that $w$ is not a power of $b$. Let $N=\max (\lvert m_0\rvert, \lvert m_k\rvert)$ and assume $n>N$. Then we draw a red line under the squares whose horizontal coordinate is $\lvert m_0\rvert$ if $m_0\neq 0$ and we draw a blue line right of the squares whose vertical coordinate is $\lvert m_k\rvert$ if $m_k\neq 0$. Imagine you start at the upper, left corner of a big enough rectangle and want to walk to the lower, right corner. Then you have to cross the lines we drew before. After you crossed the lines the length of the squares you walk on grows with every step down or to the right by 1. To prove the statement we have to formulate it in a more technical way:

\begin{lem}\label{LRgrow}
Let $w\equiv_S (m_0,s_1,...,s_k,m_k)$ be a reduced word and assume that $w$ is not a power of $b$. Let $N=\max (\lvert m_0\rvert, \lvert m_k\rvert)$. Let $n>N$. Assume there are $u_1,u_2\in W_n(w)$ such that $u_1$ is of type $(i_1,j_1)$ and $u_2$ is of type $(i_2,j_2)$. Then we have the following:\begin{enumerate}[(i)]
\item \label{LRgrow1} Assume that $j_1=j_2$ and $\max(1,\lvert m_0\rvert)\leq i_1, i_2\leq n$. Then \begin{equation}\lvert (i_1,j_1)\rvert_{W_n(w)}-\lvert (i_2,j_2)\rvert_{W_n(w)}=i_1-i_2. \end{equation}
\item \label{LRgrow2} Assume that $i_1=i_2$ and $\max(1,\lvert m_k\rvert)\leq j_1, j_2\leq n$. Then \begin{equation}\lvert (i_1,j_1)\rvert_{W_n(w)}-\lvert (i_2,j_2)\rvert_{W_n(w)}=j_1-j_2.\end{equation}
\item Assume that $\max(1,N)\leq i_1, i_2, j_1, j_2\leq n$. Then \begin{equation}\lvert (i_1,j_1)\rvert_{W_n(w)}-\lvert (i_2,j_2)\rvert_{W_n(w)}=i_1-i_2+j_1-j_2.\end{equation}\end{enumerate}
\end{lem}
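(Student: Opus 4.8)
The plan is to reduce all three assertions to Lemma~\ref{ijdiff}(i), which writes the difference of square lengths as $|l_{i_1}|_S - |l_{i_2}|_S + |r_{j_1}|_S - |r_{j_2}|_S$ for suitable representatives $l_i\in dL_i(w)$, $r_j\in dR_j(w)$, and then to evaluate these lengths via the explicit formulas of Corollary~\ref{LRlength}. The key point to isolate first is that, once $i$ is at least $\max(1,|m_0|)$, the length of an element of $dL_i(w)$ is an affine function of $i$ of slope $1$; symmetrically for $dR_j(w)$ once $j\geq\max(1,|m_k|)$.

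Concretely, I would first record the following observation: if $m_0\neq 0$ and $\max(1,|m_0|)\leq i\leq n$ and $l_i\in dL_i(w)$, then $|l_i|_S=i+c_L$ for a constant $c_L=c_L(w)$ independent of $i$. This is checked case by case from Corollary~\ref{LRlength}(ii): when $m_0>0$, $s_1\neq a^{-1}$, the bound $i\geq m_0$ gives $|l_i|_S=1+|m_0-i|=i-m_0+1$; when $m_0>0$, $s_1=a^{-1}$, $|l_i|_S=m_0+i$; when $m_0<0$, $s_1\neq a^{-1}$, $|l_i|_S=-m_0+i$; and when $m_0<0$, $s_1=a^{-1}$, the bound $i\geq -m_0$ gives $|l_i|_S=1+|m_0+i|=i+m_0+1$. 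The left-right mirror of this computation, using Corollary~\ref{LRlength}(iv), shows that if $m_k\neq 0$ and $\max(1,|m_k|)\leq j\leq n$ then $|r_j|_S=j+c_R$ with $c_R=c_R(w)$ independent of $j$.

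Given this, the three parts drop out of Lemma~\ref{ijdiff}(i). For \ref{LRgrow1}, the hypothesis $\max(1,|m_0|)\leq i_1,i_2\leq n$ lets the observation apply to both $l_{i_1}$ and $l_{i_2}$, so $|l_{i_1}|_S-|l_{i_2}|_S=i_1-i_2$, while $j_1=j_2$ together with Corollary~\ref{ijlength}(ii) gives $|r_{j_1}|_S=|r_{j_2}|_S$; hence the difference is $i_1-i_2$. (When $m_0=0$ there is, by Lemma~\ref{LRexpl}, no element of $W_n(w)$ of type $(i,\cdot)$ with $i\geq 1$, so the statement is vacuous there, which is exactly why the threshold is written $\max(1,|m_0|)$.) Part \ref{LRgrow2} is the left-right mirror image. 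For (iii), the hypothesis $\max(1,N)\leq i_1,i_2,j_1,j_2\leq n$ simultaneously forces $\max(1,|m_0|)\leq i_1,i_2\leq n$ and $\max(1,|m_k|)\leq j_1,j_2\leq n$, so the observation applies on both sides and Lemma~\ref{ijdiff}(i) yields $i_1-i_2+j_1-j_2$ at once.

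I do not expect a real obstacle here: the only delicate point is making sure that in each of the four cases of Corollary~\ref{LRlength}(ii) (resp.\ (iv)) the bound $i\geq\max(1,|m_0|)$ (resp.\ $j\geq\max(1,|m_k|)$) resolves the absolute value in the direction claimed, so that the ``slope $1$'' description is literally correct; the hypotheses $n>N$ and the two thresholds $\max(1,|m_0|)$, $\max(1,|m_k|)$ are chosen precisely so that this bookkeeping goes through.
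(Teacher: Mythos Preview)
Your proof is correct and follows essentially the same approach as the paper: reduce via Lemma~\ref{ijdiff}(i) to differences $|l_{i_1}|_S-|l_{i_2}|_S$ and $|r_{j_1}|_S-|r_{j_2}|_S$, then use the case-by-case formulas of Corollary~\ref{LRlength} to see that above the thresholds these lengths are affine in $i$ (resp.\ $j$) with slope $1$. The only cosmetic difference is that the paper derives part (iii) by chaining (i) and (ii) through the intermediate square $(i_2,j_1)$, whereas you apply Lemma~\ref{ijdiff}(i) directly; both are fine.
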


\begin{proof}
Let $l_{i_1}\in dL_{i_1}(w)$, $l_{i_2}\in dL_{i_2}(w)$, $r_{j_1}\in dR_{j_1}(w)$ and $r_{j_2}\in dR_{j_2}(w)$. By \ref{ijdiff1} we have \begin{equation} \lvert (i_1,j_1)\rvert_{W_n(w)}-\lvert (i_2,j_2)\rvert_{W_n(w)}=\lvert l_{i_1} \rvert_S-\lvert l_{i_2} \rvert_S+\lvert r_{j_1} \rvert_S-\lvert r_{j_2} \rvert_S.\end{equation} Now we prove the three statements:\begin{enumerate}[(i)]
\item Since $j_1=j_2$, we have by \ref{ijlength} that $\lvert r_{j_1} \rvert_S-\lvert r_{j_2} \rvert_S=0$. So it is enough to compute $\lvert l_{i_1} \rvert_S-\lvert l_{i_2}\rvert_S$. For every $\max(1,\lvert m_0\rvert)\leq i$ we can define $i'=i-\lvert m_0\rvert$. Since $i>0$, we get by \ref{LRlength} that\begin{equation}
\lvert l_{i} \rvert_S=\begin{cases}
\mathmakebox[\lenf][l]{1+i',}& \text{if } m_0>0, s_1\neq a^{-1}\\
2m_0+i',& \text{if } m_0>0, s_1=a^{-1}\\
-2m_0+i',& \text{if } m_0<0, s_1\neq a^{-1}\\
1+i',& \text{if } m_0<0, s_1=a^{-1}
\end{cases}.
\end{equation} It follows that \begin{equation} \lvert l_{i_1} \rvert_S-\lvert l_{i_2} \rvert_S=i_1'-i_2'=i_1-i_2.\end{equation}
\item Since $i_1=i_2$, we have by \ref{ijlength} that $\lvert l_{i_1} \rvert_S-\lvert l_{i_2} \rvert_S=0$. So it is enough to compute $\lvert r_{j_1} \rvert_S-\lvert r_{j_2} \rvert_S$. For every $\max(1,\lvert m_0\rvert)\leq j$ we can define $j'=j-\lvert m_k\rvert$. Since $j>0$, we get by \ref{LRlength} that\begin{equation}\lvert r_j \rvert_S=\begin{cases}
\mathmakebox[\lenf][l]{1+j',}& \text{if } m_k>0, s_k=a\\
2m_k+j',& \text{if } m_k>0, s_k\neq a\\
-2m_k+j',& \text{if } m_k<0, s_k=a\\
1+j',& \text{if } m_k<0, s_k\neq a
\end{cases}.\end{equation} It follows that \begin{equation} \lvert r_{j_1} \rvert_S-\lvert r_{j_2} \rvert_S=j_1'-j_2'=j_1-j_2.\end{equation}
\item Using \ref{LRgrow1} and \ref{LRgrow2} we get that \begin{equation}\begin{split}
\lvert (i_1,j_1)\rvert_{W_n(w)}-\lvert (i_2,j_2)\rvert_{W_n(w)}
=&\lvert (i_1,j_1)\rvert_{W_n(w)}-\lvert (i_2,j_1)\rvert_{W_n(w)}\\
&+\lvert (i_2,j_1)\rvert_{W_n(w)}-\lvert (i_2,j_2)\rvert_{W_n(w)}\\
=& i_1-i_2+ j_1-j_2
\end{split}\end{equation}
\end{enumerate}
\end{proof}

\begin{exmp}
We want to visualize the result of \ref{LRgrow} in a concrete example: Again we consider the words $w_1=abab^3a$, $w_2=bab^2a$, $w_3=ab^2ab^2$ and $w_4=bab^2$. For convenience we set again $n=8$, i.e. we consider the rectangles $W_8(w_i)$ for $i\in \{1,2,3,4\}$. Since the rectangle $W_8(w_1)$ consists of just one square $(0,0)$, we can not apply the result. We move on to $w_2=bab^2a\equiv_S (1,a,2,a,0)$. Above we visualized $W_8(w_2)$ as a column with a black top square. Now we add a red horizontal line under the square whose horizontal coordinate is $\lvert m_0\rvert=1$. By \ref{LRgrow} the distance of any two squares under this red line measures the difference in length of these squares, where the lower square has the bigger length.

\begin{center}
\begin{tikzpicture}
\filldraw[fill=black!30!white] (0,8) rectangle (1,9);
\draw[step=1cm] (0,0) grid (1,9);
\node at (0.5,0.5) {(8,0)};
\node at (0.5,1.5) {(7,0)};
\node at (0.5,2.5) {(6,0)};
\node at (0.5,3.5) {(5,0)};
\node at (0.5,4.5) {(4,0)};
\node at (0.5,5.5) {(3,0)};
\node at (0.5,6.5) {(2,0)};
\node at (0.5,7.5) {(1,0)};
\node at (0.5,8.5) {(0,0)};
\draw[thick, red] (-0.5,7)--(1.5,7);
\end{tikzpicture}
\end{center}

Now we consider $w_3=ab^2ab^2\equiv_S (0,a,2,a,2)$. Above we visualized $W_8(w)$ as a row with a black leftmost square. Now we add a blue vertical line right of the square whose vertical coordinate is $\lvert m_k\rvert=2$. By \ref{LRgrow} the distance of any two squares right of this blue line measures the difference in length of these squares, where the square on the right of the other has the bigger length.

\begin{center}
\begin{tikzpicture}
\filldraw[fill=black!30!white] (0,0) rectangle (1,1);
\draw[step=1cm] (0,0) grid (9,1);
\node at (0.5,0.5) {(0,0)};
\node at (1.5,0.5) {(0,1)};
\node at (2.5,0.5) {(0,2)};
\node at (3.5,0.5) {(0,3)};
\node at (4.5,0.5) {(0,4)};
\node at (5.5,0.5) {(0,5)};
\node at (6.5,0.5) {(0,6)};
\node at (7.5,0.5) {(0,7)};
\node at (8.5,0.5) {(0,8)};
\draw[thick, blue] (3,-0.5)--(3,1.5);
\end{tikzpicture}
\end{center}

Lastly we consider $w_4=bab^2\equiv_S (1,a,2)$. We visualized $W_8(w_4)$ before as a box, where the leftmost and highest squares are coloured black. Now we draw a red horizontal and a blue vertical line as above: The red horizontal line is drawn under the squares whose horizontal coordinate is $\lvert m_0\rvert=1$, the blue vertical line is drawn right of the squares whose vertical coordinate is $\lvert m_k\rvert=2$. Here \ref{LRgrow} gives the following results: If two square with the same vertical coordinate are under the red line, the distance of these two squares measures the difference in length of these squares, where the lower square has the bigger length. If two square with the same horizontal coordinate are right of the blue line, the distance of these two squares measures the difference in length of these squares, where the square on the right of the other has the bigger length. In particular the distance of any two squares under the red line and right of the blue line measures the difference in length of these squares.

\begin{center}
\begin{tikzpicture}
\filldraw[fill=black!30!white] (0,8) rectangle (9,9);
\filldraw[fill=black!30!white] (0,0) rectangle (1,9);
\draw[step=1cm] (0,0) grid (9,9);

\node at (0.5,8.5) {(0,0)};
\node at (1.5,8.5) {(0,1)};
\node at (2.5,8.5) {(0,2)};
\node at (3.5,8.5) {(0,3)};
\node at (4.5,8.5) {(0,4)};
\node at (5.5,8.5) {(0,5)};
\node at (6.5,8.5) {(0,6)};
\node at (7.5,8.5) {(0,7)};
\node at (8.5,8.5) {(0,8)};

\node at (0.5,7.5) {(1,0)};
\node at (1.5,7.5) {(1,1)};
\node at (2.5,7.5) {(1,2)};
\node at (3.5,7.5) {(1,3)};
\node at (4.5,7.5) {(1,4)};
\node at (5.5,7.5) {(1,5)};
\node at (6.5,7.5) {(1,6)};
\node at (7.5,7.5) {(1,7)};
\node at (8.5,7.5) {(1,8)};

\node at (0.5,6.5) {(2,0)};
\node at (1.5,6.5) {(2,1)};
\node at (2.5,6.5) {(2,2)};
\node at (3.5,6.5) {(2,3)};
\node at (4.5,6.5) {(2,4)};
\node at (5.5,6.5) {(2,5)};
\node at (6.5,6.5) {(2,6)};
\node at (7.5,6.5) {(2,7)};
\node at (8.5,6.5) {(2,8)};

\node at (0.5,5.5) {(3,0)};
\node at (1.5,5.5) {(3,1)};
\node at (2.5,5.5) {(3,2)};
\node at (3.5,5.5) {(3,3)};
\node at (4.5,5.5) {(3,4)};
\node at (5.5,5.5) {(3,5)};
\node at (6.5,5.5) {(3,6)};
\node at (7.5,5.5) {(3,7)};
\node at (8.5,5.5) {(3,8)};

\node at (0.5,4.5) {(4,0)};
\node at (1.5,4.5) {(4,1)};
\node at (2.5,4.5) {(4,2)};
\node at (3.5,4.5) {(4,3)};
\node at (4.5,4.5) {(4,4)};
\node at (5.5,4.5) {(4,5)};
\node at (6.5,4.5) {(4,6)};
\node at (7.5,4.5) {(4,7)};
\node at (8.5,4.5) {(4,8)};

\node at (0.5,3.5) {(5,0)};
\node at (1.5,3.5) {(5,1)};
\node at (2.5,3.5) {(5,2)};
\node at (3.5,3.5) {(5,3)};
\node at (4.5,3.5) {(5,4)};
\node at (5.5,3.5) {(5,5)};
\node at (6.5,3.5) {(5,6)};
\node at (7.5,3.5) {(5,7)};
\node at (8.5,3.5) {(5,8)};

\node at (0.5,2.5) {(6,0)};
\node at (1.5,2.5) {(6,1)};
\node at (2.5,2.5) {(6,2)};
\node at (3.5,2.5) {(6,3)};
\node at (4.5,2.5) {(6,4)};
\node at (5.5,2.5) {(6,5)};
\node at (6.5,2.5) {(6,6)};
\node at (7.5,2.5) {(6,7)};
\node at (8.5,2.5) {(6,8)};

\node at (0.5,1.5) {(7,0)};
\node at (1.5,1.5) {(7,1)};
\node at (2.5,1.5) {(7,2)};
\node at (3.5,1.5) {(7,3)};
\node at (4.5,1.5) {(7,4)};
\node at (5.5,1.5) {(7,5)};
\node at (6.5,1.5) {(7,6)};
\node at (7.5,1.5) {(7,7)};
\node at (8.5,1.5) {(7,8)};

\node at (0.5,0.5) {(8,0)};
\node at (1.5,0.5) {(8,1)};
\node at (2.5,0.5) {(8,2)};
\node at (3.5,0.5) {(8,3)};
\node at (4.5,0.5) {(8,4)};
\node at (5.5,0.5) {(8,5)};
\node at (6.5,0.5) {(8,6)};
\node at (7.5,0.5) {(8,7)};
\node at (8.5,0.5) {(8,8)};

\draw[thick, red] (-0.5,7)--(9.5,7);
\draw[thick, blue] (3,-0.5)--(3,9.5);
\end{tikzpicture}
\end{center}
\end{exmp}

Now we want to prove the second of the promised lemmas on the length of squares. Again we recall the statement given in the outline of this subsection before proving a more technical formulation. Let $w\equiv_S (m_0,s_1,...,s_k,m_k)$ be a reduced word and assume that $w$ is not a power of $b$. Let $N=\max (\lvert m_0\rvert, \lvert m_k\rvert)$ and assume $n>2N$. Then we colour the squares red whose horizontal coordinate is $2\lvert m_0\rvert$ if $m_0\neq 0$ and we colour the squares blue whose vertical coordinate is $2\lvert m_k\rvert$ if $m_k\neq 0$. Imagine you start at the upper, left corner of a big enough rectangle and want to walk to the lower, right corner. Then you have to walk over squares we have coloured before. Once you walked over these squares the length of the square you stand on is bigger than the length of any square you walked over before.

\begin{lem}\label{LRanlauf}
Let $w\equiv_S (m_0,s_1,...,s_k,m_k)$ be a reduced word and assume that $w$ is not a power of $b$. Let $N=\max (\lvert m_0\rvert, \lvert m_k\rvert)$. Then we get for $n\geq 2N$:\begin{enumerate}[(i)]
\item Let $m_0\neq 0$. Assume that $dR_{j}(w)\neq \varnothing$. Then for $0\leq i<2\lvert m_0\rvert$ we get \begin{equation}\lvert (2\lvert m_0\rvert,j)\rvert_{W_n(w)}>\lvert (i,j)\rvert_{W_n(w)}.\end{equation}
\item Let $m_k\neq 0$. Assume that $dL_{i}(w)\neq \varnothing$. Then for $0\leq j<2\lvert m_k\rvert$ we get \begin{equation}\lvert (i,2\lvert m_k\rvert)\rvert_{W_n(w)}>\lvert (i,j)\rvert_{W_n(w)}.\end{equation}
\end{enumerate} 
\end{lem}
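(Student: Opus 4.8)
The plan is to reduce the inequality to a comparison between the lengths of left parts $l_i\in dL_i(w)$, and then to verify that comparison directly from the explicit length formulas in \ref{LRlength}, running through the four cases according to the sign of $m_0$ and whether $s_1=a^{-1}$ (and symmetrically for part (ii)).

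I would begin with part (i). Fix $j$ with $dR_j(w)\neq\varnothing$. Since $m_0\neq 0$, \ref{LRexpl} shows that $dL_i(w)\neq\varnothing$ for every $1\leq i\leq n$ (in the two cases with $s_1=a^{-1}$ this uses that $S_b\setminus\{a\}$ contains $a^{-1}$, hence is nonempty), while $dL_0(w)=\{b^{m_0}\}$; and the hypothesis $n\geq 2N$ ensures $2\lvert m_0\rvert\leq n$. Therefore the squares $(2\lvert m_0\rvert,j)$ and $(i,j)$ for $0\leq i<2\lvert m_0\rvert$ all genuinely occur in the rectangle $W_n(w)$. Picking $l_{2\lvert m_0\rvert}\in dL_{2\lvert m_0\rvert}(w)$, $l_i\in dL_i(w)$, $r_j\in dR_j(w)$ and $\{m\}=M_n(w)$, part (i) of \ref{ijdiff} applied with equal second coordinate $j_1=j_2=j$, together with \ref{ijlength} (which makes the two $R$-contributions equal, hence cancel), gives
\[
\lvert (2\lvert m_0\rvert,j)\rvert_{W_n(w)}-\lvert (i,j)\rvert_{W_n(w)}=\lvert l_{2\lvert m_0\rvert}\rvert_S-\lvert l_i\rvert_S .
\]
So it suffices to prove $\lvert l_{2\lvert m_0\rvert}\rvert_S>\lvert l_i\rvert_S$ for all $0\leq i<2\lvert m_0\rvert$.

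For this I would use that $\lvert l_0\rvert_S=\lvert m_0\rvert$ and that $2\lvert m_0\rvert\geq 1$, so $\lvert l_{2\lvert m_0\rvert}\rvert_S$ is computed by the $i\geq 1$ branch of \ref{LRlength}. In the case $m_0>0,\ s_1=a^{-1}$ one has $\lvert l_i\rvert_S=m_0+i$ for $i\geq 1$, which is strictly increasing and exceeds $\lvert l_0\rvert_S=m_0$, so $\lvert l_{2m_0}\rvert_S=3m_0$ is strictly larger than every earlier value; the case $m_0<0,\ s_1\neq a^{-1}$ is identical after replacing $m_0$ by $-\lvert m_0\rvert$. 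In the case $m_0>0,\ s_1\neq a^{-1}$ one has $\lvert l_i\rvert_S=1+\lvert m_0-i\rvert$ for $i\geq 1$, whose maximum over $1\leq i\leq 2m_0-1$ equals $m_0$ (attained at $i=1$ and $i=2m_0-1$), while $\lvert l_{2m_0}\rvert_S=1+m_0$ and $\lvert l_0\rvert_S=m_0$. Finally, in the case $m_0<0,\ s_1=a^{-1}$ one has $\lvert l_i\rvert_S=1+\lvert m_0+i\rvert$ for $i\geq 1$, and for $1\leq i\leq 2\lvert m_0\rvert-1$ the integer $m_0+i$ lies in $[-\lvert m_0\rvert+1,\lvert m_0\rvert-1]$, so $\lvert l_i\rvert_S\leq\lvert m_0\rvert$, whereas $\lvert l_{2\lvert m_0\rvert}\rvert_S=1+\lvert m_0\rvert$ and $\lvert l_0\rvert_S=\lvert m_0\rvert$. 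In all four cases $\lvert l_{2\lvert m_0\rvert}\rvert_S>\lvert l_i\rvert_S$, which proves (i).

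Part (ii) then follows by the symmetric argument: interchange the roles of the left and right parts (and of $m_0,s_1$ with $m_k,s_k$), cancel the common $L$-contributions using \ref{ijlength}, and apply the $R$-length formulas of \ref{LRlength} verbatim. The only thing demanding care is the bookkeeping of absolute values across the four cases, and verifying that the square $(2\lvert m_0\rvert,j)$ really exists — which is exactly the reason for requiring $n\geq 2N$; beyond this routine case analysis there is no genuine obstacle.
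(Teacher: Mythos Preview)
Your proposal is correct and follows essentially the same approach as the paper: reduce via \ref{ijdiff} to comparing $\lvert l_{2\lvert m_0\rvert}\rvert_S$ with $\lvert l_i\rvert_S$, then verify the inequality case by case from \ref{LRlength}. The only cosmetic difference is that the paper first disposes of the range $\lvert m_0\rvert\leq i<2\lvert m_0\rvert$ by invoking \ref{LRgrow} and then carries out the explicit length computation only for $i<\lvert m_0\rvert$, whereas you do the explicit computation uniformly over the whole range $0\leq i<2\lvert m_0\rvert$; both work.
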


\newlength{\leng}
\settowidth{\leng}{$-2m_k-j'$,}

\begin{proof}
This proof consists mainly in a few computations using \ref{LRlength}. \begin{enumerate}[(i)]
\item Since $m_0\neq 0$, we know that $1\leq \lvert m_0\rvert$. So if $\lvert m_0\rvert\leq i\leq n$, then we have by \ref{LRgrow1} that \begin{equation}\lvert (2\lvert m_0\rvert,j)\rvert_{W_n(w)}-\lvert (i,j)\rvert_{W_n(w)}=2\lvert m_0\rvert-i>0.\end{equation} We can assume from now that $i <\lvert m_0\rvert$. Using \ref{ijdiff1} we get as in the proof of \ref{LRgrow1}, that it is enough to compute $\lvert l_{2\lvert m_0\rvert} \rvert_S-\lvert l_{i}\rvert_S$ for $l_{2\lvert m_0\rvert}\in dL_{2\lvert m_0\rvert}(w)$ and $l_{i}\in dL_{i}(w)$. We do this by calculating the respective length explicitely:\begin{enumerate}
\item Let $i=0$. Then by \ref{LRlength} we get $\lvert l_{i}\rvert_S=\lvert m_0\rvert$
\item Let $1\leq i<\lvert m_0\rvert$. Then we can define $i'=\lvert m_0\rvert-i$. We have $1<i'<\lvert m_0\rvert$. We get by \ref{LRlength} that \begin{equation}\lvert l_i \rvert_S=\begin{cases}
\mathmakebox[\leng][l]{1+i',}& \text{if } m_0>0, s_1\neq a^{-1}\\
2m_0-i',& \text{if } m_0>0, s_1=a^{-1}\\
-2m_0-i',& \text{if } m_0<0, s_1\neq a^{-1}\\
1+i',& \text{if } m_0<0, s_1=a^{-1}
\end{cases}.\end{equation} 
\item We get by \ref{LRlength} that \begin{equation}\lvert l_{2\lvert m_0\rvert} \rvert_S=\begin{cases}
\mathmakebox[\leng][l]{1+m_0,}& \text{if } m_0>0, s_1\neq a^{-1}\\
3m_0,& \text{if } m_0>0, s_1=a^{-1}\\
-3m_0,& \text{if } m_0<0, s_1\neq a^{-1}\\
1-m_0,& \text{if } m_0<0, s_1=a^{-1}
\end{cases}.\end{equation} 
\end{enumerate} It follows that \begin{equation} \lvert (2\lvert m_0\rvert,j)\rvert_{W_n(w)}-\lvert (i,j)\rvert_{W_n(w)}=\lvert l_{2\lvert m_0\rvert} \rvert_S-\lvert l_i \rvert_S>0.\end{equation}
\item Since $m_k\neq 0$, we know that $1\leq \lvert m_k\rvert$. So if $\lvert m_k\rvert\leq j\leq n$, then we have by \ref{LRgrow2} that \begin{equation}\lvert (i,2\lvert m_k\rvert)\rvert_{W_n(w)}-\lvert (i,j)\rvert_{W_n(w)}=2\lvert m_k\rvert-j>0.\end{equation} We can assume from now that $j <\lvert m_k\rvert$. Using \ref{ijdiff1} we get as in the proof of \ref{LRgrow2}, that it is enough to compute $\lvert r_{2\lvert m_k\rvert} \rvert_S-\lvert r_{j}\rvert_S$ for $r_{2\lvert m_k\rvert}\in dR_{2\lvert m_k\rvert}(w)$ and $r_{j}\in dR_{j}(w)$. We do this by calculating the respective length explicitely:\begin{enumerate}
\item Let $j=0$. Then by \ref{LRlength} we get $\lvert r_{j}\rvert_S=\lvert m_k\rvert$
\item Let $1\leq j<\lvert m_k\rvert$. Then we can define $j'=\lvert m_k\rvert-j$. We have $1<j'<\lvert m_k\rvert$. We get by \ref{LRlength} that \begin{equation}\lvert r_j \rvert_S=\begin{cases}
\mathmakebox[\leng][l]{1+j',}& \text{if } m_k>0, s_k=a\\
2m_k-j',& \text{if } m_k>0, s_k\neq a\\
-2m_k-j',& \text{if } m_k<0, s_k=a\\
1+j',& \text{if } m_k<0, s_k\neq a
\end{cases}.\end{equation}
\item We get by \ref{LRlength} that \begin{equation}\lvert r_{2\lvert m_k\rvert} \rvert_S=\begin{cases}
\mathmakebox[\leng][l]{1+m_k,}& \text{if } m_k>0, s_k=a\\
3m_k,& \text{if } m_k>0, s_k\neq a\\
-3m_k,& \text{if } m_k<0, s_k=a\\
1-m_k,& \text{if } m_k<0, s_k\neq a
\end{cases}.\end{equation}
\end{enumerate} It follows that \begin{equation} \lvert (i,2\lvert m_k\rvert)\rvert_{W_n(w)}-\lvert (i,j)\rvert_{W_n(w)}=\lvert r_{2\lvert m_k\rvert} \rvert_S-\lvert r_j \rvert_S>0.\end{equation}\end{enumerate}
\end{proof}

\begin{exmp}
Now we want to visualize the result of \ref{LRanlauf} in a concrete example. For convenience we stay with the same example as before: We consider the rectangles $W_8(w_i)$ for $i\in \{1,2,3,4\}$, where $w_1=abab^3a$, $w_2=bab^2a$, $w_3=ab^2ab^2$ and $w_4=bab^2$. Since the rectangle $W_8(w_1)$ consists of just one square $(0,0)$, we can not apply the result. We move on to $w_2=bab^2a\equiv_S (1,a,2,a,0)$. We visualized $W_8(w_2)$ as a column with a black top square and a red line due to \ref{LRgrow}. Now we colour the square red whose horizontal coordinate is $2\lvert m_0\rvert=2$. By \ref{LRanlauf} the length of this square is bigger than of all of the squares above him.

\begin{center}
\begin{tikzpicture}
\filldraw[fill=black!30!white] (0,8) rectangle (1,9);
\filldraw[fill=red!30!white] (0,6) rectangle (1,7);
\draw[step=1cm] (0,0) grid (1,9);
\node at (0.5,0.5) {(8,0)};
\node at (0.5,1.5) {(7,0)};
\node at (0.5,2.5) {(6,0)};
\node at (0.5,3.5) {(5,0)};
\node at (0.5,4.5) {(4,0)};
\node at (0.5,5.5) {(3,0)};
\node at (0.5,6.5) {(2,0)};
\node at (0.5,7.5) {(1,0)};
\node at (0.5,8.5) {(0,0)};
\draw[thick, red] (-0.5,7)--(1.5,7);
\end{tikzpicture}
\end{center}

Now we consider $w_3=ab^2ab^2\equiv_S (0,a,2,a,2)$. We visualized $W_8(w_3)$ as a row with a black left square and a blue line due to \ref{LRgrow}. Now we colour the square blue whose vertical coordinate is $2\lvert m_k\rvert=4$. By \ref{LRanlauf} the length of this square is bigger than of all of the squares left of him.

\begin{center}
\begin{tikzpicture}
\filldraw[fill=black!30!white] (0,0) rectangle (1,1);
\filldraw[fill=blue!30!white] (4,0) rectangle (5,1);
\draw[step=1cm] (0,0) grid (9,1);
\node at (0.5,0.5) {(0,0)};
\node at (1.5,0.5) {(0,1)};
\node at (2.5,0.5) {(0,2)};
\node at (3.5,0.5) {(0,3)};
\node at (4.5,0.5) {(0,4)};
\node at (5.5,0.5) {(0,5)};
\node at (6.5,0.5) {(0,6)};
\node at (7.5,0.5) {(0,7)};
\node at (8.5,0.5) {(0,8)};
\draw[thick, blue] (3,-0.5)--(3,1.5);
\end{tikzpicture}
\end{center}

Lastly we consider $w_4=bab^2\equiv_S (1,a,2)$. We visualized $W_8(w_4)$ as a box, where the leftmost and highest squares are coloured black with a red and a blue line due to \ref{LRgrow}. Parallel to the red line of \ref{LRgrow}, we now colour every square with horizontal coordinate $2\lvert m_0\rvert=2$ in red. Parallel to the blue line of \ref{LRgrow}, we colour every square with vertical coordinate $2\lvert m_k\rvert=4$ in blue. By \ref{LRanlauf} the length of a red square is bigger than the length of the squares above it. Likewise the length of a blue square is bigger than the length of the squares left of him.

\begin{center}
\begin{tikzpicture}
\filldraw[fill=black!30!white] (0,8) rectangle (9,9);
\filldraw[fill=black!30!white] (0,0) rectangle (1,9);
\filldraw[fill=red!30!white] (0,6) rectangle (4,7);
\filldraw[fill=red!30!white] (5,6) rectangle (9,7);
\filldraw[fill=blue!30!white] (4,0) rectangle (5,6);
\filldraw[fill=blue!30!white] (4,7) rectangle (5,9);
\filldraw[fill=blue!50!red!50!white] (4,6) rectangle (5,7);
\draw[step=1cm] (0,0) grid (9,9);

\node at (0.5,8.5) {(0,0)};
\node at (1.5,8.5) {(0,1)};
\node at (2.5,8.5) {(0,2)};
\node at (3.5,8.5) {(0,3)};
\node at (4.5,8.5) {(0,4)};
\node at (5.5,8.5) {(0,5)};
\node at (6.5,8.5) {(0,6)};
\node at (7.5,8.5) {(0,7)};
\node at (8.5,8.5) {(0,8)};

\node at (0.5,7.5) {(1,0)};
\node at (1.5,7.5) {(1,1)};
\node at (2.5,7.5) {(1,2)};
\node at (3.5,7.5) {(1,3)};
\node at (4.5,7.5) {(1,4)};
\node at (5.5,7.5) {(1,5)};
\node at (6.5,7.5) {(1,6)};
\node at (7.5,7.5) {(1,7)};
\node at (8.5,7.5) {(1,8)};

\node at (0.5,6.5) {(2,0)};
\node at (1.5,6.5) {(2,1)};
\node at (2.5,6.5) {(2,2)};
\node at (3.5,6.5) {(2,3)};
\node at (4.5,6.5) {(2,4)};
\node at (5.5,6.5) {(2,5)};
\node at (6.5,6.5) {(2,6)};
\node at (7.5,6.5) {(2,7)};
\node at (8.5,6.5) {(2,8)};

\node at (0.5,5.5) {(3,0)};
\node at (1.5,5.5) {(3,1)};
\node at (2.5,5.5) {(3,2)};
\node at (3.5,5.5) {(3,3)};
\node at (4.5,5.5) {(3,4)};
\node at (5.5,5.5) {(3,5)};
\node at (6.5,5.5) {(3,6)};
\node at (7.5,5.5) {(3,7)};
\node at (8.5,5.5) {(3,8)};

\node at (0.5,4.5) {(4,0)};
\node at (1.5,4.5) {(4,1)};
\node at (2.5,4.5) {(4,2)};
\node at (3.5,4.5) {(4,3)};
\node at (4.5,4.5) {(4,4)};
\node at (5.5,4.5) {(4,5)};
\node at (6.5,4.5) {(4,6)};
\node at (7.5,4.5) {(4,7)};
\node at (8.5,4.5) {(4,8)};

\node at (0.5,3.5) {(5,0)};
\node at (1.5,3.5) {(5,1)};
\node at (2.5,3.5) {(5,2)};
\node at (3.5,3.5) {(5,3)};
\node at (4.5,3.5) {(5,4)};
\node at (5.5,3.5) {(5,5)};
\node at (6.5,3.5) {(5,6)};
\node at (7.5,3.5) {(5,7)};
\node at (8.5,3.5) {(5,8)};

\node at (0.5,2.5) {(6,0)};
\node at (1.5,2.5) {(6,1)};
\node at (2.5,2.5) {(6,2)};
\node at (3.5,2.5) {(6,3)};
\node at (4.5,2.5) {(6,4)};
\node at (5.5,2.5) {(6,5)};
\node at (6.5,2.5) {(6,6)};
\node at (7.5,2.5) {(6,7)};
\node at (8.5,2.5) {(6,8)};

\node at (0.5,1.5) {(7,0)};
\node at (1.5,1.5) {(7,1)};
\node at (2.5,1.5) {(7,2)};
\node at (3.5,1.5) {(7,3)};
\node at (4.5,1.5) {(7,4)};
\node at (5.5,1.5) {(7,5)};
\node at (6.5,1.5) {(7,6)};
\node at (7.5,1.5) {(7,7)};
\node at (8.5,1.5) {(7,8)};

\node at (0.5,0.5) {(8,0)};
\node at (1.5,0.5) {(8,1)};
\node at (2.5,0.5) {(8,2)};
\node at (3.5,0.5) {(8,3)};
\node at (4.5,0.5) {(8,4)};
\node at (5.5,0.5) {(8,5)};
\node at (6.5,0.5) {(8,6)};
\node at (7.5,0.5) {(8,7)};
\node at (8.5,0.5) {(8,8)};

\draw[thick, red] (-0.5,7)--(9.5,7);
\draw[thick, blue] (3,-0.5)--(3,9.5);
\end{tikzpicture}
\end{center}
\end{exmp}

Now having proved the Lemmas \ref{LRgrow} and \ref{LRanlauf}, we are finally prepared to show that the $b$-truncated end of the n-support is not empty. Recall that the $b$-truncated end of the n-support of a word $w$ is \begin{equation}\begin{split}
E_b(W_n(w))&=\{v\in W_n(w)\mid \lvert v\rvert_S>n_b(W_n(w))\}\\
&=\{v\in W_n(w)\mid \lvert v\rvert_S>\max \{\lvert v\rvert_S\mid v\in W_n(w), \tau_b(v)\neq v\}\}.\end{split}\end{equation}

\begin{lem}\label{btruncendWnw}
Let $w\equiv_S (m_0,s_1,...,s_k,m_k)$ be a reduced word and assume that $w$ is not a power of $b$. Let $N=\max (\lvert m_0\rvert, \lvert m_k\rvert)$. Then for $n>2N$, we get $E_b(W_n(w))\neq \varnothing$.
\end{lem}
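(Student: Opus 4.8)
The plan is to unwind the definition
$E_b(W_n(w)) = \{v\in W_n(w)\mid \lvert v\rvert_S > n_b(W_n(w))\}$
and, for each of the four shapes of the rectangle $W_n(w)$ given by \ref{4shapes}, to exhibit one $b$-truncated square whose length strictly exceeds $n_b(W_n(w))$, i.e. the largest length attained by a \emph{non}-$b$-truncated element. By \ref{ijbendWnw} the non-$b$-truncated elements sit precisely in the square of type $(0,0)$ (cell, column, row) or in the squares of type $(i,0)$ and $(0,j)$ (box); since by \ref{ijlength} length depends only on type, $n_b(W_n(w))$ is the maximum of $\lvert(i,j)\rvert_{W_n(w)}$ over these "black" squares. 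So it suffices to name a square of type $(i,j)$ with $i,j\geq 1$ (automatically $b$-truncated by \ref{ijbendWnw}, and nonempty since the relevant $dL_i(w),dR_j(w)$ are nonempty) beating all the black ones.

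First I would treat the easy shapes. If $w$ is $b$-truncated then $W_n(w)$ has a single element, which is $b$-truncated by \ref{ijbendWnw}(i) and of positive length (its $b$-length is that of $\tau_b(w)$, at least $1$ since $w$ is not a power of $b$), so $E_b(W_n(w))=W_n(w)\neq\varnothing$. If $w$ is $b$-left ($m_0\neq 0$, $m_k=0$), then $W_n(w)$ is a column and $n_b(W_n(w))=\lvert(0,0)\rvert_{W_n(w)}$; \ref{LRanlauf}(i) applied with $j=0$ (legitimate since $dR_0(w)=\{b^{m_k}\}\neq\varnothing$) gives $\lvert(2\lvert m_0\rvert,0)\rvert_{W_n(w)}>\lvert(0,0)\rvert_{W_n(w)}$, and the square $(2\lvert m_0\rvert,0)$ lies in $W_n(w)$ because $2\lvert m_0\rvert\leq 2N<n$, so it belongs to $E_b(W_n(w))$. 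The right-$b$ case is symmetric via \ref{LRanlauf}(ii) and the square $(0,2\lvert m_k\rvert)$.

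The main work is the box case $m_0\neq 0\neq m_k$, where $n_b(W_n(w))$ is the maximum of $\lvert(i,0)\rvert_{W_n(w)}$ and $\lvert(0,j)\rvert_{W_n(w)}$ over $0\leq i,j\leq n$, and since by \ref{LRgrow} length grows in both coordinate directions the winner is the corner square $(n,n)$. I would prove $\lvert(n,n)\rvert_{W_n(w)}>\lvert(i,0)\rvert_{W_n(w)}$ for every $0\leq i\leq n$ (the bound against $(0,j)$ being symmetric) by telescoping the difference as $[\lvert(n,n)\rvert-\lvert(n,0)\rvert]+[\lvert(n,0)\rvert-\lvert(i,0)\rvert]$: the first bracket splits again through the square $(n,2\lvert m_k\rvert)$ and is positive by \ref{LRgrow2} (giving $n-2\lvert m_k\rvert\geq 0$) together with \ref{LRanlauf}(ii) (giving $\lvert(n,2\lvert m_k\rvert)\rvert>\lvert(n,0)\rvert$, with $dL_n(w)\neq\varnothing$ since $n\geq 1$); the second bracket is $\geq 0$, either directly from \ref{LRgrow1} when $i\geq\lvert m_0\rvert$, or, when $i<\lvert m_0\rvert$, by passing through $(2\lvert m_0\rvert,0)$ via \ref{LRgrow1} and \ref{LRanlauf}(i). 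Finally $(n,n)$ is of type $(n,n)$ with $n>2N\geq 2$, hence $b$-truncated by \ref{ijbendWnw}(iv), so the type-$(n,n)$ elements of $W_n(w)$ lie in $E_b(W_n(w))$.

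The genuinely fiddly point — and the only real obstacle — is bookkeeping in the box case: every application of \ref{LRgrow} and \ref{LRanlauf} requires checking $n\geq 2N$, the range constraints $\max(1,\lvert m_0\rvert)\leq\cdot\leq n$ and $\max(1,\lvert m_k\rvert)\leq\cdot\leq n$, and the non-emptiness of the relevant $dL_i(w)$ and $dR_j(w)$; getting the telescoping decomposition to use only differences that one of these lemmas actually controls (in particular avoiding any comparison that straddles the region $i<\lvert m_0\rvert$ or $j<\lvert m_k\rvert$ without a detour through $2\lvert m_0\rvert$ or $2\lvert m_k\rvert$) is where the hypothesis $n>2N$ rather than merely $n>N$ is used.
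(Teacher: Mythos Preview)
Your argument is correct. In the $b$-truncated, $b$-left, and right-$b$ cases it coincides with the paper's proof (which is phrased as a contradiction but uses the same squares $(2\lvert m_0\rvert,0)$ and $(0,2\lvert m_k\rvert)$ and the same appeal to \ref{LRanlauf}).

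In the box case you take a different route. You fix the single square $(n,n)$ and show it dominates every black square $(i,0)$ and $(0,j)$ by a telescoping argument through $(n,2\lvert m_k\rvert)$ and $(2\lvert m_0\rvert,0)$, combining \ref{LRgrow} and \ref{LRanlauf}. The paper instead argues by contradiction: assuming a non-$b$-truncated $x$ has maximal length, it takes the type of $x$ --- say $(i,0)$ --- and moves \emph{perpendicularly} to the black edge, to $(i,2\lvert m_k\rvert)$, getting a strictly longer element from a single application of \ref{LRanlauf}(ii); symmetrically for $(0,j)$ via $(2\lvert m_0\rvert,j)$ and \ref{LRanlauf}(i). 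The paper's version is shorter because it only needs one inequality per black square and never touches \ref{LRgrow}; yours is a bit more laborious but has the advantage of explicitly exhibiting the element of $E_b(W_n(w))$ (namely any element of type $(n,n)$), which is exactly what gets used downstream in \ref{nnbiggest}.
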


\begin{proof}
Suppose otherwise. Then there is $n>2N$ and $x\in W_n(w)$ such that $x$ is not $b$-truncated and for every $v\in W_n(w)$ we have $\lvert v\rvert_S\leq \lvert x\rvert_S$.\begin{enumerate}
\item Let $w$ be $b$-truncated. Then by \ref{ijbendWnw} we get that the element of $W_n(w)$ is $b$-truncated. This is a contradiction.
\item Let $w$ be $b$-left. Then by \ref{ijbendWnw} we get that $x$ is of type $(0,0)$. Let $v\in W_n(w)$ be of type $(2\lvert m_0\rvert,0)$. Then by \ref{LRanlauf} we get that $\lvert v\rvert_S=\lvert (2\lvert m_0\rvert,0)\rvert_{W_n(w)}>\lvert (0,0)\rvert_{W_n(w)}=\lvert x\rvert_S.$ This is a contradiction.
\item Let $w$ be right-$b$. Then by \ref{ijbendWnw} we get that $x$ is of type $(0,0)$. Let $v\in W_n(w)$ be of type $(0,2\lvert m_k\rvert)$. Then by \ref{LRanlauf} we get that $\lvert v\rvert_S=\lvert (0,2\lvert m_k\rvert)\rvert_{W_n(w)}>\lvert (0,0)\rvert_{W_n(w)}=\lvert x\rvert_S.$ This is a contradiction.
\item Let $w$ be $b$-and-$b$. Then by \ref{ijbendWnw} we get that $x$ is of type $(i,0)$ or $(0,j)$ for $0\leq i,j\leq n$. We consider two cases seperately:\begin{enumerate}
\item Let $x$ be of type $(i,0)$. Then let $v\in W_n(w)$ be of type $(i,2\lvert m_k\rvert)$. We get by \ref{LRanlauf} that $\lvert v\rvert_S=\lvert (i,2\lvert m_k\rvert)\rvert_{W_n(w)}>\lvert (i,0)\rvert_{W_n(w)}=\lvert x\rvert_S.$ This is a contradiction. 
\item Let $x$ be of type $(0,j)$. Then let $v\in W_n(w)$ be of type $(2\lvert m_0\rvert,j)$. We get by \ref{LRanlauf} that $\lvert v\rvert_S=\lvert (2\lvert m_0\rvert,j)\rvert_{W_n(w)}>\lvert (0,j)\rvert_{W_n(w)}=\lvert x\rvert_S.$ This is a contradiction. 
\end{enumerate}
\end{enumerate}
\end{proof}

\begin{thm}\label{tentacle}
Let $w\equiv_S (m_0,s_1,...,s_k,m_k)$ be a reduced word and assume that $w$ is not a power of $b$. Let $N=\max (\lvert m_0\rvert, \lvert m_k\rvert)$. Then $(\phi w)_n$ is reduced for all $n>2N$.
\end{thm}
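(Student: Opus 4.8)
\emph{Plan.} The plan is to apply the reducedness criterion \ref{redtree} with $s=b$ to the $n$-representative $(\phi w)_n$, read as a sum of counting functions. By \ref{phiwnequiv} together with \eqref{phiwn} we have $(\phi w)_n=\sum_{u\in W_{n}(w)}w_n(u)\bigl(\#u-\#u^{-1}\bigr)$, which after collecting terms is a counting-function sum supported on $I:=W_{n}(w)\cup W_{n}(w)^{-1}=W_{n}(w)\cup W_{n}(w^{-1})$ with weight $\alpha(v)=w_n(v)-w_n(v^{-1})$, where we extend $w_n$ by $0$ off $W_{n}(w)$. So it suffices to produce an element of the $b$-truncated end $E_b(I)$ on which $\alpha$ does not vanish.

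\emph{Locating the $b$-truncated end.} Inversion preserves both $S$-length and the property of being $b$-truncated, so $n_b(J^{-1})=n_b(J)$ for every finite $J\subset F_n\setminus\{e\}$; combined with $W_{n}(w)^{-1}=W_{n}(w^{-1})$ and \ref{nsmax} this gives $n_b(I)=\max\bigl(n_b(W_{n}(w)),n_b(W_{n}(w)^{-1})\bigr)=n_b(W_{n}(w))$. Now invoke \ref{btruncendWnw}: for $n>2N$ the set $E_b(W_{n}(w))$ is non-empty. Pick $x\in E_b(W_{n}(w))$ of maximal $S$-length. Then $\lvert x\rvert_S>n_b(W_{n}(w))=n_b(I)$, so $x\in E_b(I)$; moreover $x$ is $b$-truncated (it is longer than every non-$b$-truncated word in $I$), and $x\in\mathrm{supp}(w_n)$, so $w_n(x)\neq 0$.

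\emph{No cancellation at the top.} It remains to check that $\alpha(x)=w_n(x)-w_n(x^{-1})\neq 0$, and for this it suffices to show $x^{-1}\notin W_{n}(w)$. Suppose not; then $x\in W_{n}(w)\cap W_{n}(w^{-1})$, so $x=lmr$ with $l\in L_{n}(w)$, $\{m\}=M_{n}(w)$, $r\in R_{n}(w)$, and also $x=l'm'r'$ for the analogous decomposition with respect to $w^{-1}$, where $m=(\tau_b\circ T^{-1}\circ\tau_b)^n(w)=:\mu$ and $m'=(\tau_b\circ T^{-1}\circ\tau_b)^n(w^{-1})=\mu^{-1}$ (using $\tau_b(v^{-1})=\tau_b(v)^{-1}$ and that $T^{-1}$ is an automorphism). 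Reading off the first and last letters of the reduced word $x$, and of $x^{-1}=r^{-1}\mu^{-1}l^{-1}$, from the forms allowed by \ref{lrlook} and \ref{def:phiwn} (each of $l,r,l',r'$ is $e$, a power of $b$, or a single non-$b$ letter adjacent to a power of $b$, with the exposed letter lying in $S_b\setminus\{s_1^{-1}\}$ on the left and $S_b\setminus\{s_k^{-1}\}$ on the right), and comparing the two length-$\lvert\mu\rvert$ occurrences of $\mu$ and $\mu^{-1}$ inside $x^{-1}$, forces either $\mu=\mu^{-1}$ or a word of $b$-length $>1$ in $L_{n}(w)$ or $R_{n}(w)$ — both impossible (a free group has no nontrivial involution, and $\mu\neq e$; and $b$-length $\leq 1$ by \ref{lrlook}). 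The four shapes of \ref{4shapes} are handled by the same endpoint-and-$b$-length comparison, the cell case $W_{n}(w)=\{\mu\}$ being immediate. Hence $\alpha(x)=w_n(x)\neq 0$, so by \ref{redtree} the element $(\phi w)_n$ is reduced, with $\lvert(\phi w)_n\rvert_S\geq\lvert x\rvert_S$.

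\emph{Main obstacle.} All of the geometric content is already contained in \ref{btruncendWnw}; the one step that genuinely needs argument rather than bookkeeping is the last one — verifying that the antisymmetrization $\phi u=\#u-\#u^{-1}$ does not annihilate the maximal-length terms singled out by \ref{btruncendWnw}, i.e.\ that no element of $E_b(W_{n}(w))$ has its inverse again in $W_{n}(w)$ with matching weight. I would organize this as the shape-by-shape case check sketched above, leaning on the explicit descriptions in \ref{lrlook} and \ref{def:phiwn} and on the exclusion of $s_1^{-1}$ (resp.\ $s_k^{-1}$) from the left (resp.\ right) $n$-support.
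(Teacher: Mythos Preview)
Your route is the paper's: invoke \ref{btruncendWnw} to produce an element of the $b$-truncated end of $W_n(w)$ with nonzero weight, then apply \ref{redtree}. The paper's own proof is exactly this two-liner and applies \ref{redtree} directly to the $\phi$-sum with index set $W_n(w)$ and weight $w_n$, without ever discussing the passage to the $\#$-expansion. You are right that, strictly, \ref{redtree} is stated for $\#$-sums, so one should expand $(\phi w)_n=\sum_u w_n(u)(\#u-\#u^{-1})$ over $W_n(w)\cup W_n(w)^{-1}$ and check that the $\#$-coefficient $w_n(x)-w_n(x^{-1})$ does not vanish at the top; the paper itself takes this care later (the condition $u_n\notin W_n(I)^{-1}$ in \ref{celllinebox}), so your extra step is to the point rather than pedantry.

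That said, your argument for the no-cancellation step has a gap. The dichotomy you assert---``either $\mu=\mu^{-1}$ or some word of $b$-length exceeding $1$ in $L_n(w)\cup R_n(w)$''---fails in the column and row cases. Take $w$ $b$-left. Then $x=lm$ with $l\in L_n(w)$ and, if also $x\in W_n(w^{-1})$, $x=m^{-1}r'$ with $r'\in R_n(w^{-1})$. Since $x$ is $b$-truncated one finds (via \ref{lmrinWnv} and \ref{lrlook}) that $l=s_k^{-1}b^{p}$ and $r'=b^{q}s_k$ for suitable $p,q$, so both outer pieces have $b$-length exactly $1$; the $b$-length count is $k+1$ on both sides and no blowup is forced. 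The contradiction comes instead from a letter-by-letter comparison: matching the two $\equiv_S$-representations of $x$ gives $s_i=s_{k-i}^{-1}$ for $1\le i\le k-1$ and $m_i^{(n)}=-m_{k-1-i}^{(n)}$ for $1\le i\le k-2$. If $k$ is even this yields $s_{k/2}=s_{k/2}^{-1}$, impossible. If $k$ is odd it yields $m_{(k-1)/2}^{(n)}=0$; but since $s_{(k-1)/2}=s_{(k+1)/2}^{-1}$, the coefficient $\#a^{-1}(s_{(k+1)/2})-\#a(s_{(k-1)/2})$ vanishes, so $m_{(k-1)/2}^{(n)}=m_{(k-1)/2}$ for all $n$, forcing $m_{(k-1)/2}=0$ and hence $s_{(k-1)/2}s_{(k+1)/2}=e$ inside the reduced word $w$---contradiction. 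The $b$-truncated and $b$-and-$b$ cases do work as you say (they reduce, via the mechanism of \ref{kindintempty} and \ref{lmrinWnv}, to $m=m^{-1}$), but the column and row cases need the argument above, not the endpoint/$b$-length comparison you sketch.
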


\begin{proof}
By \ref{btruncendWnw} we get that for $n>2N$ there is $v\in E_b(W_n(w))$. We have by \ref{phiwn} \[(\phi w)_n=\sum_{u\in W_{n}(w)}\alpha(u)\phi u\ \text{with}\ \alpha(u)=\begin{cases}
1,& \text{if } u\in W^{+}_{n}(w)\\
-1,& \text{if } u\in W^{-}_{n}(w)\end{cases}.\] It follows that $\alpha(v)\neq 0$. This implies by \ref{redtree} that $(\phi w)_n$ is reduced.
\end{proof}

\subsection{The speed of $T^{-1}$}\label{T-1speedsingle}

Let $w\equiv_S (m_0,s_1,...,s_k,m_k)$ be a reduced word and assume that $w$ is not a power of $b$. In this subsection we want to compute the speed of $T^{-1}$ on $[\phi w]$. We proved in Subsection \ref{studynsupportw} that the n-representative of $[\phi w]$ is reduced if $n$ is big enough. So by Proposition \ref{phiwnequiv} it is enough to know the length of the n-representative of $[\phi w]$ to compute the speed of $T^{-1}$ on $[\phi w]$. With our results on the length of squares, it is not hard to collect information about $\lVert(\phi w)_n\rVert_S$: We will show in \ref{nnbiggest} that for $n$ big enough the square of length $\lVert(\phi w)_n\rVert_S$ is in the lower, right corner of the rectangle. We colour this square green in the visualization.

\begin{center}
\begin{tikzpicture}
\filldraw[fill=green!50!white] (0,2) rectangle (1,3);
\draw[step=1cm] (0,2) grid (1,3);
\node at (0.5,2.5) {(0,0)};

\filldraw[fill=green!50!white] (2,0) rectangle (3,1);
\filldraw[fill=red!30!white] (2,2) rectangle (3,3);
\filldraw[fill=black!30!white] (2,4) rectangle (3,5);
\draw[step=1cm] (2,0) grid (3,5);
\node at (2.5,0.5) {(n,0)};
\node at (2.5,1.5) {...};
\node at (2.5,2.5) {...};
\node at (2.5,3.5) {...};
\node at (2.5,4.5) {(0,0)};
\draw[thick, red] (1.75,3)--(3.25,3);

\filldraw[fill=green!50!white] (8,2) rectangle (9,3);
\filldraw[fill=blue!30!white] (7,2) rectangle (8,3);
\filldraw[fill=black!30!white] (4,2) rectangle (5,3);
\draw[step=1cm] (4,2) grid (9,3);
\node at (4.5,2.5) {(0,0)};
\node at (5.5,2.5) {...};
\node at (6.5,2.5) {...};
\node at (7.5,2.5) {...};
\node at (8.5,2.5) {(0,n)};
\draw[thick, blue] (6,1.75)--(6,3.25);

\filldraw[fill=green!50!white] (14,0) rectangle (15,1);
\filldraw[fill=black!30!white] (10,4) rectangle (15,5);
\filldraw[fill=black!30!white] (10,4) rectangle (11,0);
\filldraw[fill=red!30!white] (10,2) rectangle (13,3);
\filldraw[fill=red!30!white] (14,2) rectangle (15,3);
\filldraw[fill=blue!30!white] (13,0) rectangle (14,2);
\filldraw[fill=blue!30!white] (13,3) rectangle (14,5);
\filldraw[fill=blue!50!red!50!white] (13,2) rectangle (14,3);
\draw[step=1cm] (10,0) grid (15,5);

\node at (10.5,4.5) {(0,0)};
\node at (11.5,4.5) {...};
\node at (12.5,4.5) {...};
\node at (13.5,4.5) {...};
\node at (14.5,4.5) {(0,n)};

\node at (10.5,3.5) {...};
\node at (11.5,3.5) {...};
\node at (12.5,3.5) {...};
\node at (13.5,3.5) {...};
\node at (14.5,3.5) {...};

\node at (10.5,2.5) {...};
\node at (11.5,2.5) {...};
\node at (12.5,2.5) {...};
\node at (13.5,2.5) {...};
\node at (14.5,2.5) {...};

\node at (10.5,1.5) {...};
\node at (11.5,1.5) {...};
\node at (12.5,1.5) {...};
\node at (13.5,1.5) {...};
\node at (14.5,1.5) {...};

\node at (10.5,0.5) {(n,0)};
\node at (11.5,0.5) {...};
\node at (12.5,0.5) {...};
\node at (13.5,0.5) {...};
\node at (14.5,0.5) {(n,n)};
\draw[thick, red] (9.75,3)--(15.25,3);
\draw[thick, blue] (12,-0.25)--(12,5.25);
\end{tikzpicture}
\end{center}

\begin{thm}\label{nnbiggest}
Let $w\equiv_S (m_0,s_1,...,s_k,m_k)$ be a reduced word and assume that $w$ is not a power of $b$. Let $N=\max (\lvert m_0\rvert, \lvert m_k\rvert)$. Then we get for $n>2N$:
\begin{enumerate}[(i)]
\item Let $w$ be $b$-truncated. Then $\lvert(\phi w)_n\rvert_S=\lvert (0,0)\rvert_{W_n(w)}$.
\item Let $w$ be $b$-left. Then $\lvert(\phi w)_n\rvert_S=\lvert (n,0)\rvert_{W_n(w)}$.
\item Let $w$ be right-$b$. Then $\lvert(\phi w)_n\rvert_S=\lvert (0,n)\rvert_{W_n(w)}$.
\item Let $w$ be $b$-and-$b$. Then $\lvert(\phi w)_n\rvert_S=\lvert (n,n)\rvert_{W_n(w)}$.
\end{enumerate}
\end{thm}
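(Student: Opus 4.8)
The plan is to reduce the computation of $\lvert(\phi w)_n\rvert_S$ to locating the longest square of the rectangle $W_n(w)$, and then to apply the two monotonicity lemmas \ref{LRgrow} and \ref{LRanlauf} to show that this longest square always lies in the corner opposite $(0,0)$. First I would record the reduction. By Theorem \ref{tentacle} the $n$-representative $(\phi w)_n$ is reduced for $n>2N$, so $\lvert(\phi w)_n\rvert_S=\lVert(\phi w)_n\rVert_S$. Writing $(\phi w)_n=\sum_{u\in W_n(w)}w_n(u)\phi u$ as in \ref{phiwn}, the weight $w_n$ takes values in $\{+1,-1\}$ on all of $W_n(w)$ and hence never vanishes there, so $\lVert(\phi w)_n\rVert_S=\max\{\lvert u\rvert_S\mid u\in W_n(w)\}$. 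By Lemma \ref{lmrred} and Definition \ref{def:ijlength} the length of $u\in W_n(w)$ depends only on its type, so this maximum equals $\max\{\lvert(i,j)\rvert_{W_n(w)}\mid (i,j)\text{ a type occurring in }W_n(w)\}$. Thus the theorem is equivalent to the statement that the maximal square length is attained at $(0,0)$ in the cell case, at $(n,0)$ in the column case, at $(0,n)$ in the row case, and at $(n,n)$ in the box case.

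The cell case is immediate from \ref{4shapes}(i), since $W_n(w)$ then consists of the single square $(0,0)$. For the column case ($w$ is $b$-left, so $m_0\neq 0$ and $m_k=0$), \ref{4shapes}(ii) says the squares are exactly $(i,0)$ for $0\le i\le n$, and $dR_0(w)=\{b^{m_k}\}=\{e\}\neq\varnothing$. For $0\le i<2\lvert m_0\rvert$, Lemma \ref{LRanlauf}(i) gives $\lvert(2\lvert m_0\rvert,0)\rvert_{W_n(w)}>\lvert(i,0)\rvert_{W_n(w)}$; for the range $2\lvert m_0\rvert\le i\le n$, which is nonempty since $n>2N\ge 2\lvert m_0\rvert$, Lemma \ref{LRgrow}(i) (applicable because $\max(1,\lvert m_0\rvert)\le 2\lvert m_0\rvert\le i$) gives $\lvert(n,0)\rvert_{W_n(w)}-\lvert(i,0)\rvert_{W_n(w)}=n-i\ge 0$. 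As $\max(1,\lvert m_0\rvert)\le 2\lvert m_0\rvert$, these two ranges cover all of $\{0,\dots,n\}$, so $\lvert(n,0)\rvert_{W_n(w)}$ dominates every square length. The row case is the mirror image, exchanging the roles of $m_0$ and $m_k$ and of the left and right supports, using $dL_0(w)=\{b^{m_0}\}=\{e\}\neq\varnothing$ and Lemmas \ref{LRanlauf}(ii), \ref{LRgrow}(ii).

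For the box case ($w$ is $b$-and-$b$, so $m_0,m_k\neq 0$) I would walk to the corner in two stages, using that $dL_i(w)\neq\varnothing$ and $dR_j(w)\neq\varnothing$ for all $0\le i,j\le n$ by \ref{LRexpl}. Stage one: for a fixed $i$, show $\lvert(i,n)\rvert_{W_n(w)}\ge\lvert(i,j)\rvert_{W_n(w)}$ for all $j$, by dominating the range $0\le j<2\lvert m_k\rvert$ with the square $(i,2\lvert m_k\rvert)$ via Lemma \ref{LRanlauf}(ii), and the range $\max(1,\lvert m_k\rvert)\le j\le n$ via Lemma \ref{LRgrow}(ii); since $n>2N\ge 2\lvert m_k\rvert$ and $\max(1,\lvert m_k\rvert)\le 2\lvert m_k\rvert$, these ranges overlap and cover $\{0,\dots,n\}$. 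Stage two: with $j=n$ fixed, show $\lvert(n,n)\rvert_{W_n(w)}\ge\lvert(i,n)\rvert_{W_n(w)}$ for all $i$ by the identical argument with $m_0$ in place of $m_k$ and Lemmas \ref{LRanlauf}(i), \ref{LRgrow}(i). Concatenating, $\lvert(n,n)\rvert_{W_n(w)}\ge\lvert(i,n)\rvert_{W_n(w)}\ge\lvert(i,j)\rvert_{W_n(w)}$ for every square, which is exactly the claim.

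The genuine work has all been front-loaded into Lemmas \ref{LRgrow} and \ref{LRanlauf}, so I do not anticipate a real obstacle here; the one thing to be careful with is the case bookkeeping — verifying that the hypotheses $\max(1,\lvert m_0\rvert)\le i\le n$, $0\le i<2\lvert m_0\rvert$, $dR_j(w)\neq\varnothing$, and their right-hand counterparts, partition the squares correctly in each of the four shapes — and in using the bound $n>2N$ precisely to force $2\lvert m_0\rvert\le n$ and $2\lvert m_k\rvert\le n$, so that the run-up region of \ref{LRanlauf} and the steady-growth region of \ref{LRgrow} overlap rather than leaving a gap where neither lemma applies.
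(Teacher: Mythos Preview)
Your proposal is correct and follows essentially the same approach as the paper's own proof: both reduce to $\lVert(\phi w)_n\rVert_S$ via Theorem \ref{tentacle}, then use Lemmas \ref{LRgrow} and \ref{LRanlauf} to locate the square of maximal length at the corner opposite $(0,0)$. The only cosmetic difference is that the paper phrases the case analysis as a contradiction (assume the maximum is at $(i,j)$ and force $(i,j)$ to be the corner), whereas you argue directly that the corner dominates every square; the two-range covering you spell out is exactly the content of the paper's two subcases in each shape.
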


\begin{proof}
By \ref{tentacle} we know that $\lvert(\phi w)_n\rvert_S=\lVert(\phi w)_n\rVert_S$ for $n>2N$. So it is enough to prove the four equalities for $\lVert(\phi w)_n\rVert_S$. Assume $\lVert(\phi w)_n\rVert_S=\lvert (i,j)\rvert_{W_n(w)}$ for $0\leq i,j\leq n$. \begin{enumerate}[(i)]
\item Let $w$ be $b$-truncated. Then by \ref{4shapes} $(i,j)=(0,0)$.
\item Let $w$ be $b$-left. Then by \ref{4shapes} $(i,j)=(i,0)$. \begin{enumerate}
\item If $\lvert m_0\rvert\leq i<n$, we get by \ref{LRgrow} that \begin{equation}\lvert (n,0)\rvert_{W_n(w)}-\lvert (i,0)\rvert_{W_n(w)}=n-i>0.\end{equation} This contradicts $\lVert(\phi w)_n\rVert_S=\lvert (i,j)\rvert_{W_n(w)}$.
\item If $i<2\lvert m_0\rvert$, we get by \ref{LRanlauf} that \begin{equation}\lvert (2\lvert m_0\rvert,0)\rvert_{W_n(w)}>\lvert (i,0)\rvert_{W_n(w)}.\end{equation} This contradicts $\lVert(\phi w)_n\rVert_S=\lvert (i,j)\rvert_{W_n(w)}$.
\end{enumerate} It follows that $i=n$.
\item Let $w$ be right-$b$. Then by \ref{4shapes} $(i,j)=(0,j)$. \begin{enumerate}
\item If $\lvert m_k\rvert\leq j<n$, we get by \ref{LRgrow} that \begin{equation}\lvert (0,n)\rvert_{W_n(w)}-\lvert (0,j)\rvert_{W_n(w)}=n-j>0.\end{equation} This contradicts $\lVert(\phi w)_n\rVert_S=\lvert (i,j)\rvert_{W_n(w)}$.
\item If $j<2\lvert m_k\rvert$, we get by \ref{LRanlauf} that \begin{equation}\lvert (0,2\lvert m_k\rvert)\rvert_{W_n(w)}>\lvert (0,j)\rvert_{W_n(w)}.\end{equation} This contradicts $\lVert(\phi w)_n\rVert_S=\lvert (i,j)\rvert_{W_n(w)}$.
\end{enumerate} It follows that $j=n$.
\item Let $w$ be $b$-and-$b$. We can prove $i=n$ and $j=n$ exactly as above.\end{enumerate} 
\end{proof}

Now we are in shape to compute the speed of $T^{-1}$ on $[\phi w]$ explicitly:

\begin{defn}\label{def:sp}
Let $w\equiv_S (m_0,s_1,...,s_k,m_k)$ be a reduced word and assume that $w$ is not a power of $b$. We define \begin{align}
A^+&=\{j\in \{1,...,l-1\}\mid s_j\neq a, s_{j+1}=a^{-1}\},\\
A^-&=\{j\in \{1,...,l-1\}\mid s_j=a, s_{j+1}\neq a^{-1}\}.
\end{align} Using this we define \begin{equation}
sp(w)=\lvert A^+\rvert+\lvert A^-\rvert+\begin{cases}
0,& \text{if $w$ is $b$-truncated}\\
1,& \text{if $w$ is $b$-left or right-$b$}\\
2,& \text{if $w$ is $b$-and-$b$}
\end{cases}.\end{equation}
\end{defn}

\begin{prop}\label{diff=sp}
Let $w\equiv_S (m_0,s_1,...,s_k,m_k)$ be a reduced word and assume that $w$ is not a power of $b$. Let $N=2\max\{\lvert m_j\rvert \mid j\in \{0,...,k\}\}$. Then for all $n>N$ we have \begin{equation}
\lvert(\phi w)_{n+1}\rvert_S-\lvert(\phi w)_{n}\rvert_S=sp(w).
\end{equation}
\end{prop}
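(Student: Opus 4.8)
The plan is to use Theorem \ref{nnbiggest} to replace $|(\phi w)_n|_S$ by the length of a single, explicitly located square of the rectangle $W_n(w)$, and then to track separately the three additive contributions to that length --- the left piece, the middle piece, and the right piece --- as $n$ is increased by one. Since $N = 2\max\{|m_j| : 0\le j\le k\}\ge 2\max(|m_0|,|m_k|)$, for every $n>N$ both $n$ and $n+1$ exceed the threshold of Theorem \ref{nnbiggest}, so $|(\phi w)_n|_S$ is the length of the corner square: $(0,0)$ if $w$ is $b$-truncated, $(n,0)$ if $w$ is $b$-left, $(0,n)$ if $w$ is right-$b$, and $(n,n)$ if $w$ is $b$-and-$b$ (and similarly at level $n+1$, with $n$ replaced by $n+1$). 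By Lemma \ref{lmrred} and Definition \ref{def:ijlength}, writing such a square as a reduced word $lmr$ with $l\in dL_i(w)$, $\{m\}=M_n(w)$, $r\in dR_j(w)$, its length is $|l|_S+|m|_S+|r|_S$, and hence
\[
|(\phi w)_{n+1}|_S-|(\phi w)_n|_S=\big(|l'|_S-|l|_S\big)+\big(|m'|_S-|m|_S\big)+\big(|r'|_S-|r|_S\big),
\]
where $l',m',r'$ are the corresponding pieces at level $n+1$.

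For the middle term, recall from the proof of Lemma \ref{lmrred} that the unique element of $M_n(w)$ is $\equiv_S(s_1,m_1+nc_1,s_2,\dots,m_{k-1}+nc_{k-1},s_k)$ with $c_j=-\#a(s_j)+\#a^{-1}(s_{j+1})\in\{-1,0,1\}$, so $|m|_S=k+\sum_{j=1}^{k-1}|m_j+nc_j|$. Because $n>\max_j|m_j|$, for every $j$ with $c_j\neq 0$ the quantities $m_j+nc_j$ and $m_j+(n+1)c_j$ have the same sign as $c_j$, and therefore $|m'|_S-|m|_S=\#\{j:c_j\neq 0\}$. Now $c_j=1$ exactly when $s_j\neq a$ and $s_{j+1}=a^{-1}$, i.e. $j\in A^+$; $c_j=-1$ exactly when $s_j=a$ and $s_{j+1}\neq a^{-1}$, i.e. $j\in A^-$; and $c_j=0$ in the remaining cases (including $s_j=a$, $s_{j+1}=a^{-1}$). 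Hence $|m'|_S-|m|_S=|A^+|+|A^-|$.

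For the left term: if $m_0=0$ (the $b$-truncated and right-$b$ cases) the corner square has left-coordinate $0$, so $l=l'=b^{m_0}=e$ and the contribution is $0$; if $m_0\neq 0$ (the $b$-left and $b$-and-$b$ cases) we are comparing $dL_{n+1}(w)$ with $dL_n(w)$, and Lemma \ref{LRlength}(ii) together with $n>|m_0|$ shows that in each of the four subcases $|l_i|_S$ is an affine function of $i$ with slope $1$ on the range $i\ge|m_0|$, so $|l'|_S-|l|_S=1$ (all choices of representative agree by Corollary \ref{ijlength}). The right term is handled identically with $m_k,s_k$ in place of $m_0,s_1$, giving contribution $1$ if $m_k\neq 0$ and $0$ if $m_k=0$. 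Summing the three pieces, $|(\phi w)_{n+1}|_S-|(\phi w)_n|_S$ equals $|A^+|+|A^-|$ plus the number of nonzero entries among $m_0,m_k$, which is $0$, $1$, or $2$ precisely when $w$ is $b$-truncated, $b$-left or right-$b$, or $b$-and-$b$; this is exactly $sp(w)$.

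The computations themselves are routine once the decomposition is in place; the one point requiring genuine care is the middle term --- one must check that the sign of $m_j+nc_j$ has already stabilized over the entire admissible range of $n$, which is exactly why the hypothesis demands $n$ larger than twice $\max_j|m_j|$ over \emph{all} $b$-blocks, rather than just the end blocks $m_0,m_k$ as in Theorem \ref{nnbiggest} --- together with the bookkeeping that identifies the values $c_j=\pm 1$ with membership in $A^\pm$.
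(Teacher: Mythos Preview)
Your proof is correct and follows essentially the same approach as the paper: both invoke Theorem \ref{nnbiggest} to identify $|(\phi w)_n|_S$ with the length of the corner square, split that length additively into left/middle/right pieces, and compute the middle increment $|m_{n+1}|_S-|m_n|_S=|A^+|+|A^-|$ in the same way. The only organisational difference is that the paper packages the left/right increments via Lemmas \ref{LRgrow} and \ref{ijdiff} whereas you read them off directly from Corollary \ref{LRlength}, which amounts to the same computation.
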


\begin{proof}
For $n>N$ we set $M_{n}(w)=\{m_{n}\}$.\begin{enumerate}
\item Let $w$ be $b$-truncated. Then for $n>N$ we get by \ref{nnbiggest} that $\lvert(\phi w)_n\rvert_S=\lvert (0,0)\rvert_{W_n(w)}$. By \ref{ijdiff2} we get for $n>N$ that \begin{equation}\begin{split}
\lvert (\phi w)_{n+1}\rvert_S-\lvert(\phi w)_{n}\rvert_S&=\lvert (0,0)\rvert_{W_{n+1}(w)}-\lvert (0,0)\rvert_{W_n(w)}\\
&=\lvert m_{n+1} \rvert_S-\lvert m_{n} \rvert_S
\end{split}\end{equation}
\item Let $w$ be $b$-left. Then for $n>N$ we get by \ref{nnbiggest} that $\lvert(\phi w)_n\rvert_S=\lvert (n,0)\rvert_{W_n(w)}$. Then by \ref{LRgrow1} and \ref{ijdiff2} we get for $n>N$ that\begin{equation}\begin{split}
\lvert(\phi w)_{n+1}\rvert_S-\lvert(\phi w)_{n}\rvert_S=&\lvert (n+1,0)\rvert_{W_{n+1}(w)}-\lvert (n,0)\rvert_{W_n(w)}\\
=&\lvert (n+1,0)\rvert_{W_{n+1}(w)}-\lvert (n,0)\rvert_{W_{n+1}(w)}\\
&+\lvert (n,0)\rvert_{W_{n+1}(w)}-\lvert (n,0)\rvert_{W_n(w)}\\
=&n+1-n+\lvert m_{n+1} \rvert_S-\lvert m_{n} \rvert_S\\
=&1+\lvert m_{n+1} \rvert_S-\lvert m_{n} \rvert_S
\end{split}\end{equation} 
\item Let $w$ be right-$b$. Then for $n>N$ we get by \ref{nnbiggest} that $\lvert(\phi w)_n\rvert_S=\lvert (0,n)\rvert_{W_n(w)}$. Then by \ref{LRgrow2} and \ref{ijdiff2} we get for $n>N$ that\begin{equation}\begin{split}
\lvert(\phi w)_{n+1}\rvert_S-\lvert(\phi w)_{n}\rvert_S=&\lvert (0,n+1)\rvert_{W_{n+1}(w)}-\lvert (0,n)\rvert_{W_n(w)}\\
=&\lvert (0,n+1)\rvert_{W_{n+1}(w)}-\lvert (0,n)\rvert_{W_{n+1}(w)}\\
&+\lvert (0,n)\rvert_{W_{n+1}(w)}-\lvert (0,n)\rvert_{W_n(w)}\\
=&n+1-n+\lvert m_{n+1} \rvert_S-\lvert m_{n} \rvert_S\\
=&1+\lvert m_{n+1} \rvert_S-\lvert m_{n} \rvert_S
\end{split}\end{equation} 
\item Let $w$ be $b$-and-$b$. Then for $n>N$ we get by \ref{nnbiggest} that $\lvert(\phi w)_n\rvert_S=\lvert (n,n)\rvert_{W_n(w)}$. Then by \ref{LRgrow} and \ref{ijdiff2} we get for $n>N$ that\begin{equation}\begin{split}
\lvert(\phi w)_{n+1}\rvert_S-\lvert(\phi w)_{n}\rvert_S=&\lvert (n+1,n+1)\rvert_{W_{n+1}(w)}-\lvert (n,n)\rvert_{W_n(w)}\\
=&\lvert (n+1,n+1)\rvert_{W_{n+1}(w)}-\lvert (n,n)\rvert_{W_{n+1}(w)}\\
&+\lvert (n,n)\rvert_{W_{n+1}(w)}-\lvert (n,n)\rvert_{W_n(w)}\\
=&(n+1-n)+(n+1-n)+\lvert m_{n+1} \rvert_S-\lvert m_{n} \rvert_S\\
=&2+\lvert m_{n+1} \rvert_S-\lvert m_{n} \rvert_S
\end{split}\end{equation}\end{enumerate} It remains to show that \begin{equation}
\lvert m_{n+1} \rvert_S-\lvert m_{n} \rvert_S=\lvert A^+\rvert+\lvert A^-\rvert.
\end{equation} For easier notation, define $A^0=\{1,...,k-1\}\setminus (A^+\cup A^-)$. By \ref{btrsubwords} we get that \begin{equation}\begin{split}
\lvert m_{n} \rvert_S=& k+\sum_{j=1}^{k-1} \lvert m_j+n(\#a(s_j)-\#a^{-1}(s_j))\rvert\\
=& k+\sum_{j\in A^+} \lvert m_j+n\rvert+\sum_{j\in A^-} \lvert m_j-n\rvert+\sum_{j\in A^0} \lvert m_j\rvert\end{split}\end{equation} Since $n>N$, we know that $m_j+n>0$ and $m_j-n<0$ for all $j\in \{1,...,k-1\}$. So we have \begin{equation}\begin{split} \lvert m_{n+1} \rvert_S-\lvert m_{n} \rvert_S=& (k+\sum_{j\in A^+} (m_j+n+1)-\sum_{j\in A^-} (m_j-n-1)+\sum_{j\in A^0} \lvert m_j\rvert)\\
&-(k+\sum_{j\in A^+} (m_j+n)-\sum_{j\in A^-} (m_j-n)+\sum_{j\in A^0} \lvert m_j\rvert)\\
=& \sum_{j\in A^+} 1 + \sum_{j\in A^-} 1\\
=& \lvert A^+\rvert+\lvert A^-\rvert \end{split}.\end{equation}
\end{proof}

\begin{cor}\label{speed=sp}
Let $w\equiv_S (m_0,s_1,...,s_k,m_k)$ be a reduced word and assume that $w$ is not a power of $b$. Then $\mathrm{sp}(T^{-1},[\phi w])=sp(w)$ for every counting function $\phi w$. In particular the speed of $T^{-1}$ on $[\phi w]$ exists. 
\end{cor}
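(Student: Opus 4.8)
The plan is to combine the three main results of this section. First I would use Proposition \ref{phiwnequiv}: since $\phi w\circ T^n$ is equivalent to the $n$-representative $(\phi w)_n$, and since the $\mathrm{Out}(F_n)$-action is $X[f]=[f\circ X^{-1}]$, we get $T^{-n}[\phi w]=[(\phi w)_n]$ and therefore
\[
\lvert T^{-n}[\phi w]\rvert_S=\lvert (\phi w)_n\rvert_S
\]
for all $n$. Computing $\mathrm{sp}(T^{-1},[\phi w])=\lim_n \tfrac1n\lvert T^{-n}[\phi w]\rvert_S$ thus reduces to understanding the growth of the sequence $n\mapsto\lvert (\phi w)_n\rvert_S$.

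Second, with $N=\max(\lvert m_0\rvert,\lvert m_k\rvert)$, Theorem \ref{tentacle} tells us that $(\phi w)_n$ is reduced for all $n>2N$, so $\lvert (\phi w)_n\rvert_S=\lVert(\phi w)_n\rVert_S$ for all sufficiently large $n$. Third, setting $N'=2\max\{\lvert m_j\rvert\mid j\in\{0,\dots,k\}\}$, Proposition \ref{diff=sp} gives
\[
\lvert (\phi w)_{n+1}\rvert_S-\lvert (\phi w)_n\rvert_S=sp(w)\qquad\text{for all }n>N'.
\]
Telescoping this identity from $N'+1$ up to $n$ yields $\lvert (\phi w)_n\rvert_S=\lvert (\phi w)_{N'+1}\rvert_S+(n-N'-1)\,sp(w)$ for all $n>N'$; in other words, the sequence $n\mapsto\lvert (\phi w)_n\rvert_S$ is eventually an arithmetic progression with common difference $sp(w)$.

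Finally I would divide by $n$ and let $n\to\infty$: the additive constant is killed and
\[
\mathrm{sp}(T^{-1},[\phi w])=\lim_{n\to\infty}\frac{\lvert (\phi w)_n\rvert_S}{n}=sp(w),
\]
so in particular the limit exists. The only elementary fact needed at the end is that a real sequence whose consecutive differences are eventually equal to a constant $c$ satisfies $a_n/n\to c$, which is immediate from the telescoping formula above. I do not expect a genuine obstacle here: essentially all of the substance has already been established in Theorem \ref{tentacle} (reducedness of the $n$-representative, so that $\lvert-\rvert_S$ agrees with $\lVert-\rVert_S$ for large $n$) and in Proposition \ref{diff=sp} (constancy of the length increment), and this corollary is simply the bookkeeping that packages those two statements into the assertion about speed. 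If anything, the single point deserving a line of care is the transition from "$\lvert-\rvert_S=\lVert-\rVert_S$ for $n>2N$'' and "the increment formula holds for $n>N'$'' to a single threshold beyond which both apply; one just takes $\max(2N,N')$ (indeed $N'\ge 2N$ already), after which the telescoping argument runs without change.
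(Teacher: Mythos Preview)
Your proposal is correct and follows essentially the same approach as the paper: use Proposition~\ref{phiwnequiv} to identify $\lvert T^{-n}[\phi w]\rvert_S$ with $\lvert(\phi w)_n\rvert_S$, then telescope the constant-increment formula from Proposition~\ref{diff=sp} and divide by $n$. One minor redundancy: your separate appeal to Theorem~\ref{tentacle} is not needed here, since Proposition~\ref{diff=sp} is already stated for the reduced length $\lvert(\phi w)_n\rvert_S$ (reducedness having been absorbed via \ref{nnbiggest} in its proof), so you can go straight from \ref{phiwnequiv} to \ref{diff=sp}.
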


\begin{proof}
By \ref{diff=sp} there is $N\in \mathbb{N}$ such that $\lvert(\phi w)_{n+1}\rvert_S-\lvert(\phi w)_{n}\rvert_S=sp(w)$ for all $n>N$. Then \begin{equation}\begin{split}\lim_n \frac{\lvert T^{-n}[\phi w]\rvert_S}{n}
&=\lim_n \frac{\lvert (\phi w)_n\rvert_S}{n}\\
&=\lim_n \frac{(n-N)sp(w)+\lvert (\phi w)_N\rvert_S}{n}\\
&=sp(w)+\lim_n \frac{-Nsp(w)+\lvert (\phi w)_N\rvert_S}{n}\\
&=sp(w).\end{split}\end{equation}
\end{proof}

\begin{exmp}
We now can quite compute the speed of $T^{-1}$ on $[\phi w]$ as long as $w$ is not a power of $b$. We give three examples for $w\in F_2$:\begin{enumerate}
\item Let $w=aba^{-1}b^7a$. Then $sp(T^{-1},[\phi w])=0$.
\item Let $w=b^3a^5$. Then $sp(T^{-1},[\phi w])=5$.
\item Let $w=baba^2b^{-2}$. Then $sp(T^{-1},[\phi w])=4$.
\end{enumerate}
\end{exmp}

\begin{cor}\label{T-1strongly}
Let $w\equiv_S (m_0,s_1,...,s_k,m_k)$ be a reduced word and assume that $w$ is not a power of $b$. Then $T^{-1}$ is strongly of linear speed on $[\phi w]$ if and only if $sp(w)>0$.
\end{cor}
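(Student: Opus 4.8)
The plan is to combine the definition of strongly linear speed with the computation of $\mathrm{sp}(T^{-1},[\phi w])$ already carried out in Corollary \ref{speed=sp}. Recall that $X$ has strongly linear speed on $[f]$ precisely when the speed $\mathrm{sp}_S(X,[f])$ exists and is strictly positive. By \ref{speed=sp}, for $w$ not a power of $b$ the speed of $T^{-1}$ on $[\phi w]$ exists and equals $sp(w)$. Hence $T^{-1}$ has strongly linear speed on $[\phi w]$ if and only if $\mathrm{sp}_S(T^{-1},[\phi w])=sp(w)>0$, which is exactly the asserted equivalence.

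First I would observe that $sp(w)$ is always non-negative: by \ref{def:sp} it is the sum $\lvert A^+\rvert+\lvert A^-\rvert$ of two cardinalities together with a summand lying in $\{0,1,2\}$, so $sp(w)\geq 0$, and the condition $sp(w)>0$ is exactly the requirement that this non-negative quantity not vanish. This is what makes the biconditional meaningful rather than vacuous. Next I would spell out the two directions, which are both immediate unwindings of definitions: if $T^{-1}$ has strongly linear speed on $[\phi w]$, then by definition $\mathrm{sp}_S(T^{-1},[\phi w])>0$, and substituting the value from \ref{speed=sp} gives $sp(w)>0$; conversely, if $sp(w)>0$, then by \ref{speed=sp} the limit $\mathrm{sp}_S(T^{-1},[\phi w])=sp(w)$ exists and is positive, which is precisely the definition of strongly linear speed on $[\phi w]$.

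There is no genuine obstacle here, since all the work has been done in the preceding subsection. The only mild subtlety worth a sentence in the write-up is that strongly linear speed requires the existence of the limit $\lim_n \lvert T^{-n}[\phi w]\rvert_S/n$; this is guaranteed by \ref{speed=sp}, which in turn rests on \ref{diff=sp}, where for $n$ large enough the increments $\lvert(\phi w)_{n+1}\rvert_S-\lvert(\phi w)_{n}\rvert_S$ stabilize at the constant value $sp(w)$. Once this existence statement is invoked, the corollary is a one-line consequence.
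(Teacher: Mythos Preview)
Your proof is correct and takes essentially the same approach as the paper, which simply states that the corollary follows from \ref{speed=sp}. Your version is a more detailed unpacking of the same one-line argument, with the additional (harmless but unnecessary) observation that $sp(w)\geq 0$.
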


\begin{proof}
This follows from \ref{speed=sp}.
\end{proof}

If $w$ is not a power of $b$ or $w=b$, we know now that the speed of $T^{-1}$ on $[\phi w]$ exist and we actually can compute it explicitly. But for a general element $[f]\in \tilde{\mathcal{B}}(F_n)$ we do not know at this point if the speed of $T^{-1}$ on $[f]$ exist. (For example we do not even know how the length of $[\phi b^k]$ for some $k\in \mathbb{Z}\setminus \{0,1\}$ behaves asymptotically under applications of $T^{-1}$.) This will be the topic of Section \ref{Tsums}. Nevertheless we already can establish some results concerning the speed of $T^{-1}$ on $\tilde{\mathcal{B}}(F_n)$. The key for this is the following lemma:

\begin{lem}\label{normalform1}
Every sum of counting quasimorphisms $f=\sum_{v\in I} \alpha(v)\phi v$ is equivalent to an element $f_0=\sum_{v\in I_0} \gamma_0(v)\phi v \in \mathcal{B}(F_n,S)$ such that $v\in I_0$ and $\tau_b(v)=\varnothing$ imply $v=b$. 
\end{lem}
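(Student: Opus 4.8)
The plan is to derive this directly from Lemma \ref{powerofbtob}, using that $\sim$ is a linear equivalence relation (Lemma \ref{kerclosedsubspace}) so that finitely many replacements can be carried out simultaneously.

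First I would split the index set. Write $f=\sum_{v\in I}\alpha(v)\phi v$ and put $I_b=\{v\in I\mid \tau_b(v)=\varnothing\}$, the set of words in $I$ that are nontrivial powers of $b$, and $I'=I\setminus I_b$. Then $f=\sum_{v\in I'}\alpha(v)\phi v+\sum_{b^m\in I_b}\alpha(b^m)\phi b^m$, and every $v\in I'$ already satisfies $\tau_b(v)\neq\varnothing$. The only terms to be dealt with are the $\phi b^m$ with $m\neq 0$.

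Next I would rewrite each such term. Since $\phi b^{-m}=-\phi b^m$, it suffices to treat $m\geq 1$. For $m=1$ the term $\phi b$ is already of the permitted shape. For $m\geq 2$, Lemma \ref{powerofbtob} provides a constant $c_m\in\mathbb{R}$, a finite set $J_m\subset F_n\setminus\{e\}$ none of whose elements is a power of $b$, and a weight $\beta_m$ supported on $J_m$, with $\phi b^m\sim c_m\phi b+\sum_{y\in J_m}\beta_m(y)\phi y$; in particular every $y\in J_m$ satisfies $\tau_b(y)\neq\varnothing$. Substituting these expressions into $f$ and using that $\ker(\pi)$ is a linear subspace of $\mathcal{C}(F_n,S)$ (Lemma \ref{kerclosedsubspace}) — so that finite linear combinations of $\sim$-equivalent elements remain $\sim$-equivalent — I obtain
\[
f_0=\gamma_0(b)\phi b+\sum_{v\in I_0\setminus\{b\}}\gamma_0(v)\phi v\ \sim\ f,
\]
where $I_0$ is the finite union of $I'$, $\{b\}$ and the sets $J_{|m|}$ over the finitely many relevant $m$, and $\gamma_0$ is the weight obtained by collecting all contributions (in particular the coefficient of $\phi b$ gathers $\alpha(b)$ together with the various $c_{|m|}\alpha(b^{m})$ terms). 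Then $f_0\in\mathcal{B}(F_n,S)$ as a finite combination of counting quasimorphisms, and by construction any $v\in I_0$ with $\tau_b(v)=\varnothing$ must equal $b$, which is exactly the claim.

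I do not expect a genuine obstacle here: the entire analytic content is packaged in Lemma \ref{powerofbtob}, and the remaining work is the bookkeeping of a finite substitution plus the observation that $\sim$ respects it. The only point that needs a word of care is that the auxiliary sets $J_m$ never reintroduce a power of $b$, but this is precisely guaranteed by Lemma \ref{powerofbtob}.
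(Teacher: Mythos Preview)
Your proposal is correct and follows essentially the same approach as the paper: both arguments reduce to Lemma~\ref{powerofbtob} to replace each $\phi b^{m}$ with $m\neq 0,1$ by a combination of $\phi b$ and terms $\phi y$ with $y$ not a power of $b$. The only difference is organizational---the paper performs the replacements one at a time by a short induction on the number of bad terms, whereas you carry them out simultaneously and invoke linearity of $\ker(\pi)$; the content is identical.
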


\begin{proof}
Let $f=\sum_{v\in I} \alpha(v)\phi v$. Assume there are $k$ elements $v\in I$ such that $\tau_b(v)=\varnothing$, but $v\neq b$. Then pick one such element $v_0$. We have that $v_0=b^{m_0}$ for some $m_0\in \mathbb{Z}\setminus \{0,1\}$. By \ref{powerofbtob} we know that there is a weight $\beta_0$ and a finite set $J_0$ such that no element of $J_0$ is a power of $b$ and \begin{equation} f\sim g_0=\sum_{v\in I\setminus \{v_0\}} \alpha(v)\phi v+\alpha(v_0)\beta_0(b)\phi b + \sum_{y\in J_0}\alpha(v_0)\beta_0(y)\phi y.\end{equation} There are at most $k-1$ elements $v\in (I\setminus \{v_0\})\cup \{b\}\cup J_0$ such that $\tau_b(v)=\varnothing$ and $v\neq b$. Repeating this argument at most $k-1$ times, we get that $f$ is equivalent to a sum of counting quasimorphisms $f_0=\sum_{v\in I_0} \gamma_0(v)\phi v$ such that $v\in I_0$ and $\tau_b(v)=\varnothing$ imply $v=b$.
\end{proof}

Our results so far culminate in the following corollaries:

\begin{cor}
$T^{-1}$ has linear speed.
\end{cor}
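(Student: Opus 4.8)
The plan is to check directly the two conditions in the definition of linear speed for $X=T^{-1}$: (1) $\lvert T^{-n}[f]\rvert_S=O(n)$ for every $[f]\in\tilde{\mathcal{B}}(F_n)$, and (2) the existence of some $[f_0]$ on which $T^{-1}$ has linear speed. Condition (2) is already available: by Corollary \ref{T-1strongly} together with Corollary \ref{speed=sp} it suffices to exhibit a reduced word $w$ that is not a power of $b$ with $sp(w)>0$. For instance $w=ba\equiv_S(1,a,0)$ is $b$-left with $A^+=A^-=\varnothing$, so $sp(w)=1$, and hence $T^{-1}$ is strongly of linear speed, in particular of linear speed, on $[\phi w]$.

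For condition (1) I would fix $[f]\in\tilde{\mathcal{B}}(F_n)$ and first choose a convenient representative: by Lemma \ref{normalform1} there is $f_0=\sum_{v\in I_0}\gamma_0(v)\phi v\in\mathcal{B}(F_n,S)$ with $f_0\sim f$ such that every $v\in I_0$ is either not a power of $b$ or equals $b$. Since $T^{-n}$ acts linearly on $\tilde{\mathcal{B}}(F_n)$ by $[g]\mapsto[g\circ T^n]$, we get $T^{-n}[f]=T^{-n}[f_0]=\sum_{v\in I_0}\gamma_0(v)\,T^{-n}[\phi v]$, and since $\lvert-\rvert_S$ is an ultrametric norm on $\tilde{\mathcal{B}}(F_n)$ by Corollary \ref{normontildeC}, this yields
\[
\lvert T^{-n}[f]\rvert_S\le\max_{v\in I_0}\lvert\gamma_0(v)\rvert_0\cdot\lvert T^{-n}[\phi v]\rvert_S\le\max_{v\in I_0}\lvert T^{-n}[\phi v]\rvert_S.
\]

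It then remains to bound each term individually. If $v=b$, then $\lvert T^{-n}[\phi b]\rvert_S=1$ by Corollary \ref{speedonphib}. If $v$ is not a power of $b$, then $T^{-n}[\phi v]=[(\phi v)_n]$ by Proposition \ref{phiwnequiv}, and Proposition \ref{diff=sp} provides a constant $N_v$ with $\lvert(\phi v)_{n+1}\rvert_S-\lvert(\phi v)_n\rvert_S=sp(v)$ for all $n>N_v$, whence $\lvert(\phi v)_n\rvert_S\le C_v+sp(v)\cdot n$ for a suitable constant $C_v$. As $I_0$ is finite, $\max_{v\in I_0}\lvert T^{-n}[\phi v]\rvert_S=O(n)$, and therefore $\lvert T^{-n}[f]\rvert_S=O(n)$. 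Combining (1) and (2) gives the corollary.

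I do not expect a genuine obstacle here: everything is assembled from results already proved. The only points requiring care are that passing to the representative $f_0$ of Lemma \ref{normalform1} is legitimate (so that the power-of-$b$ summands are controlled by Corollary \ref{speedonphib} rather than left unaccounted for), and that linearity of the $T^{-n}$-action combined with the ultrametric triangle inequality correctly reduces the general bound to the single-quasimorphism bounds $\lvert T^{-n}[\phi v]\rvert_S=O(n)$.
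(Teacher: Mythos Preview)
Your proof is correct and follows essentially the same route as the paper: use Lemma \ref{normalform1} to pass to a representative whose summands are either $\phi b$ or $\phi v$ with $v$ not a power of $b$, apply the ultrametric inequality for $\lvert-\rvert_S$ to reduce to single counting quasimorphisms, and then invoke the Section~\ref{Tsingle} results (existence of the speed on each $[\phi v]$) to get the $O(n)$ bound; condition (2) comes from Corollary~\ref{T-1strongly}. The only cosmetic difference is that you spell out the linear bound via Proposition~\ref{diff=sp} and give an explicit witness $w=ba$, whereas the paper simply notes that the speed exists on each $[\phi v]$ and that this already forces $\lvert T^{-n}[\phi v]\rvert_S=O(n)$.
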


\begin{proof}
We have to show that $T^{-1}$ satisfies\begin{enumerate}
\item $\lvert T^{-n}[f]\rvert_S=O(n)$ for all $[f]\in \tilde{\mathcal{B}}(F_n)$.
\item There is $[f_0]\in \tilde{\mathcal{B}}(F_n)$ such that $T^{-1}$ has linear speed on $[f_0]$.
\end{enumerate} The second condition is satisfied by \ref{T-1strongly}. For the first condition we have to work a bit. Let $[f]\in \tilde{\mathcal{B}}(F_n)$ for $f=\sum_{v\in I} \alpha(v)\phi v$. By \ref{normalform1} we can assume that $v\in I$ and $\tau_b(v)=\varnothing$ imply $v=b$. So we know that the speed of $T^{-1}$ on $[\phi v]$ exists for all $v\in I$. In particular we know that $\lvert T^{-n}[\phi v]\rvert_S=O(n)$ for all $v\in I$. Then we get using the properties of $\lvert -\rvert_S$ that \begin{equation}\begin{split}
\lvert T^{-n}[f]\rvert_S=&\lvert \sum_{v\in I} \alpha(v)T^{-n}[\phi v]\rvert_S\\
\leq& \sup_{v\in I} \lvert T^{-n}[\phi v]\rvert_S\\
=&O(n)\end{split}\end{equation}
\end{proof}

\begin{cor}\label{ulseminorm}
Endow $\mathbb{R}$ with the trivial absolute value $\lvert -\rvert_0$. Then $\mathrm{usp}_S(T^{-1},-)$ is a seminorm on $\tilde{\mathcal{B}}(F_n)$.
\end{cor}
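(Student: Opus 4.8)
The plan is to deduce this immediately from the preceding corollary together with Proposition \ref{speedseminorm}. The only hypothesis needed to invoke part (i) of that proposition for $X=T^{-1}$ is that $\mathrm{usp}_S(T^{-1},[f])<\infty$ for every $[f]\in\tilde{\mathcal{B}}(F_n)$, and this finiteness is exactly what was established in the corollary asserting that $T^{-1}$ has linear speed: there it is shown that $\lvert T^{-n}[f]\rvert_S=O(n)$ for all $[f]\in\tilde{\mathcal{B}}(F_n)$, i.e. there are $n_0$ and $C$ with $\lvert T^{-n}[f]\rvert_S\leq Cn$ for $n>n_0$.

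Concretely, the first step I would carry out is to recall that linear speed of $T^{-1}$ gives, for each fixed $[f]$, a constant $C=C_{[f]}$ such that $\lvert T^{-n}[f]\rvert_S/n\leq C$ for all large $n$; taking $\limsup_n$ yields $\mathrm{usp}_S(T^{-1},[f])=\limsup_n \lvert T^{-n}[f]\rvert_S/n\leq C<\infty$. The second step is simply to apply Proposition \ref{speedseminorm}(i) with $X=T^{-1}$: since the finiteness hypothesis now holds for every $[f]$, the proposition concludes that $\mathrm{usp}_S(T^{-1},-)$ is a seminorm on $\tilde{\mathcal{B}}(F_n)$, which (recalling the proof of \ref{speedseminorm}, see \eqref{speedhomogeneous} and \eqref{ultraspeed}) is even an ultrametric seminorm when $\mathbb{R}$ carries the trivial absolute value $\lvert -\rvert_0$.

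There is essentially no obstacle here; the statement is a bookkeeping corollary recording that the hypothesis of \ref{speedseminorm} is met. The only thing worth a sentence is to make explicit why $O(n)$ growth of $\lvert T^{-n}[f]\rvert_S$ forces $\limsup_n \lvert T^{-n}[f]\rvert_S/n<\infty$ rather than leaving it implicit, since the definition of $\mathrm{usp}_S$ is via $\limsup$ and one wants the reader to see that the bound passes to the limit superior. Thus the write-up will be only two or three lines: invoke linear speed to get the uniform-in-$n$ bound, pass to $\limsup_n$, and quote \ref{speedseminorm}(i).

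\begin{proof}
By the previous corollary, $T^{-1}$ has linear speed, so $\lvert T^{-n}[f]\rvert_S=O(n)$ for every $[f]\in\tilde{\mathcal{B}}(F_n)$; that is, for each $[f]$ there are $n_0\in\mathbb{N}$ and $C\in\mathbb{R}_{+}$ with $\lvert T^{-n}[f]\rvert_S\leq Cn$ for all $n>n_0$. Taking the limit superior gives $\mathrm{usp}_S(T^{-1},[f])=\limsup_n \frac{\lvert T^{-n}[f]\rvert_S}{n}\leq C<\infty$. Since this holds for all $[f]\in\tilde{\mathcal{B}}(F_n)$, the hypothesis of \ref{speedseminorm}(i) is satisfied for $X=T^{-1}$, and that proposition shows $\mathrm{usp}_S(T^{-1},-)$ is a seminorm on $\tilde{\mathcal{B}}(F_n)$.
\end{proof}
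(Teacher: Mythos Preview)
Your proof is correct and follows essentially the same approach as the paper: reduce to \ref{speedseminorm}(i) by verifying $\mathrm{usp}_S(T^{-1},[f])<\infty$ for all $[f]$. The only cosmetic difference is that you cite the preceding linear-speed corollary to obtain the $O(n)$ bound, whereas the paper re-derives that bound by again decomposing $f$ via \ref{normalform1} and applying the ultrametric inequality \eqref{ultraspeed} together with \ref{speed=sp}; your route is slightly more economical since the previous corollary already packaged exactly that argument.
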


\begin{proof}
By \ref{speedseminorm} it is enough to show that $\mathrm{usp}_S(T^{-1},[f])<\infty$ for every $[f]\in \tilde{\mathcal{B}}(F_n)$. Let $[f]\in \tilde{\mathcal{B}}(F_n)$ for $f=\sum_{v\in I} \alpha(v)\phi v$. By \ref{normalform1} we can assume that $v\in I$ and $\tau_b(v)=\varnothing$ imply $v=b$. Then by \ref{speedhomogeneous} and \ref{ultraspeed} we get \begin{equation}\begin{split}
\mathrm{usp}_S(T^{-1},[f])=&\mathrm{usp}_S(T^{-1},[\sum_{v\in I} \alpha(v)\phi v])\\
\leq& \sup_{v\in I} \mathrm{usp}_S(T^{-1},[\alpha(v)\phi v])\\
=& \sup_{v\in I} \mathrm{usp}_S(T^{-1},[\phi v]).\end{split}\end{equation}
By \ref{speed=sp} we know that $\mathrm{usp}_S(T^{-1},[\phi v])<\infty$ for every $v\in I$. This completes the proof.
\end{proof}

\section{The speed of $T^{-1}$ on a sum of counting quasimorphisms}\label{Tsums}

In this section we want to describe a way to compute the speed of $T^{-1}$ on any element of $\tilde{\mathcal{B}}(F_n)$. Our goal is to find for any equivalence class $[f] \in \tilde{\mathcal{B}}(F_n)$ an element $f=\sum_{v\in I} \alpha(v)\phi v$ that is speed reduced for $T^{-1}$. Then we have reduced the problem of computing the speed of $T^{-1}$ on $[f]$ to the problem we successfully tackled in Section \ref{Tsingle}. 

To check if an element $f=\sum_{v\in I} \alpha(v)\phi v$ is speed reduced for $T^{-1}$, we need to compare the speed of $T^{-1}$ on $[f]$ with the speed of $T^{-1}$ on $[\phi v]$ for $v\in I$. By the results in Section \ref{Tsingle} we can compute the speed of $T^{-1}$ on $[\phi v]$ as long as $v\in I$ with $\tau_b(v)=\varnothing$ implies $v=b$. This gives us a first condition on $f$. We get a second condition on $f$ by our need to understand the speed of $T^{-1}$ on $[f]$: We need $f$ to give us a good representative of $T^{-n}[f]$, that enables us to say something about $\lvert T^{-n}[f]\rvert_S$.

To make the presentation a bit easier we turn this natural order of the section up side down: We will start by stating the conditions on $f$ in the first subsection. Then we will show in the second and third subsection that every $f$ satisfying these conditions gives us a good representative of $T^{-n}[f]$ and ultimately that $f$ is speed reduced for $T^{-1}$.

\subsection{Finding the n-representative}

Let $f=\sum_{v\in I} \alpha(v)\phi v$ be a sum of counting quasimorphisms such that $v\in I$ with $\tau_b(v)=\varnothing$ implies $v=b$. Then using results from Section \ref{Tsingle} it is easy to find a representative $f_n$ of $T^{-n}[f]$.

\begin{defn}\label{def:fn}
Let $f=\sum_{v\in I} \alpha(v)\phi v$ be a sum of counting quasimorphisms such that $v\in I$ with $\tau_b(v)=\varnothing$ implies $v=b$. Then we denote \[f_n=\sum_{v\in I} \alpha(v)(\phi v)_n.\]
\end{defn}

\begin{cor}\label{fnequiv}
Let $f=\sum_{v\in I} \alpha(v)\phi v$ be a sum of counting quasimorphisms such that $v\in I$ with $\tau_b(v)=\varnothing$ implies $v=b$. Then $f\circ T^n$ is equivalent to $f_n$.
\end{cor}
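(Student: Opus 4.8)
The plan is to deduce this immediately from the single-quasimorphism results of Section~\ref{Tsingle}, using that both precomposition with $T^n$ and the equivalence relation $\sim$ are compatible with finite linear combinations. First I would note that precomposition with $T^n$ is linear, so
\[
f\circ T^n=\Bigl(\sum_{v\in I}\alpha(v)\phi v\Bigr)\circ T^n=\sum_{v\in I}\alpha(v)\,(\phi v\circ T^n).
\]

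The next step is a case analysis on the elements $v\in I$. Since $\tau_b(v)=\varnothing$ holds precisely when $v$ is a power of $b$, the hypothesis on $f$ says that the only power of $b$ occurring in $I$ is $b$ itself (recall $e\notin I$, as counting functions are indexed by $F_n\setminus\{e\}$). Hence each $v\in I$ either is not a power of $b$, in which case Proposition~\ref{phiwnequiv} gives $\phi v\circ T^n\sim(\phi v)_n$, or equals $b$, in which case Lemma~\ref{phibnequiv} gives $\phi b\circ T^n\sim(\phi b)_n$. In particular $(\phi v)_n$ is defined for every $v\in I$ by Definitions~\ref{def:phiwn} and~\ref{phibn}, so $f_n=\sum_{v\in I}\alpha(v)(\phi v)_n$ is well defined — this is exactly what the hypothesis on $f$ buys us.

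Finally I would assemble the pieces: by Lemma~\ref{kerclosedsubspace} the space of relations $\ker(\pi)$ is a linear subspace, so if $g_v\sim h_v$ for finitely many indices $v$ then $\sum_v\alpha(v)g_v\sim\sum_v\alpha(v)h_v$. Applying this with $g_v=\phi v\circ T^n$ and $h_v=(\phi v)_n$ yields $f\circ T^n\sim\sum_{v\in I}\alpha(v)(\phi v)_n=f_n$, which is the claim.

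There is no real obstacle here; all the substance lives in Section~\ref{Tsingle}. The only point that needs a word of care is the elementary observation that $\tau_b(v)=\varnothing$ is equivalent to $v$ being a power of $b$, which is what makes the stated hypothesis on $f$ precisely the condition under which every summand $\phi v\circ T^n$ is covered by either Proposition~\ref{phiwnequiv} or Lemma~\ref{phibnequiv} and the target $f_n$ makes sense.
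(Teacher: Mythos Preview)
Your proposal is correct and follows essentially the same approach as the paper: linearity of precomposition, the case split into $v=b$ versus $v$ not a power of $b$, the application of Proposition~\ref{phiwnequiv} and Lemma~\ref{phibnequiv}, and then passing to the linear combination. The paper's proof is slightly terser but identical in substance.
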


\begin{proof}
By \ref{phibnequiv} we know that $\phi b\circ T^n$ is equivalent to $(\phi b)_n$. By \ref{phiwnequiv} we know that $\phi v\circ T^n$ is equivalent to $(\phi v)_n$ if $v$ is not a power of $b$. So we get \begin{equation}f\circ T^n=\sum_{v\in I} \alpha(v)\phi v\circ T^n\sim\sum_{v\in I} \alpha(v)(\phi v)_n=f_n.\end{equation}
\end{proof}

This gives us a representative of $T^{-n}[f]$ for every $[f] \in \tilde{\mathcal{B}}(F_n)$: By \ref{normalform1} we always find a sum of counting quasimorphisms $f_0=\sum_{v\in I_0} \gamma_0(v)\phi v$ equivalent to $f$ such that $v\in I_0$ and $\tau_b(v)=\varnothing$ imply $v=b$. Then $(f_0)_n$ is a representative of $T^{-n}[f]$ by \ref{fnequiv}. For convenience we will in the following always consider $f_0$ instead of $f$, i.e. we assume that $f=\sum_{v\in I} \alpha(v)\phi v$ has the property that $v\in I$ with $\tau_b(v)=\varnothing$ implies $v=b$. So $f_n$ is a representative for $T^{-n}[f]$. We now reformulate $f_n$ as one sum of counting quasimorphisms.

\begin{defn}
Let $I$ be a finite subset of $F_n\setminus \{e\}$ such that $v\in I$ with $\tau_b(v)=\varnothing$ implies $v=b$. Then we call $W_n(I)=\bigcup_{v\in I} W_n(v)$ the \emph{n-support} of $I$.
\end{defn}

To visualize the n-support of a set $I$ we use the visualization of the n-support of every $v\in I$. We see $W_n(I)$ as a union of possibly intersecting rectangles.

\begin{exmp}
Let $I=\{bab^2,bab^2a,ab^2ab^2,abab^3a\}\subset F_2$. Then $W_n(I)$ is the union of the box $W_n(bab^2)$, the column $W_n(bab^2a)$, the row $W_n(ab^2ab^2)$ and the cell $W_n(abab^3a)$.
\end{exmp}

\begin{cor}\label{fn}
Let $f=\sum_{v\in I} \alpha(v)\phi v$ be a sum of counting quasimorphisms such that $v\in I$ with $\tau_b(v)=\varnothing$ implies $v=b$. Define \[\alpha_n(u)=\sum_{v\in I}\alpha(v)v_n(u).\] Then we have $f_n=\sum_{u\in W_n(I)} \alpha_n(u)\phi u$.
\end{cor}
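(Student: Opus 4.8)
The plan is a routine rearrangement of finite sums. First I would expand $f_n$ using Definition \ref{def:fn}, writing $f_n=\sum_{v\in I}\alpha(v)(\phi v)_n$. Next, for each $v\in I$ I replace $(\phi v)_n$ by its expression as a single sum of counting quasimorphisms: if $v$ is not a power of $b$ this is \eqref{phiwn}, namely $(\phi v)_n=\sum_{u\in W_n(v)}v_n(u)\phi u$; if $v=b$ (the only other possibility allowed by our standing hypothesis on $I$) this is Definition \ref{phibn}, namely $(\phi b)_n=\sum_{u\in W_n(b)}b_n(u)\phi u$. In both cases the weight $v_n$ is, by definition, supported on $W_n(v)\subseteq W_n(I)$, so I may harmlessly regard $v_n$ as a weight defined on all of $W_n(I)$, setting $v_n(u)=0$ for $u\in W_n(I)\setminus W_n(v)$; this does not change the value of the sum $\sum_u v_n(u)\phi u$.

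Having done this, every inner sum runs over the common finite index set $W_n(I)$, so I can interchange the two finite sums:
\[
f_n=\sum_{v\in I}\alpha(v)\sum_{u\in W_n(I)}v_n(u)\phi u=\sum_{u\in W_n(I)}\Big(\sum_{v\in I}\alpha(v)v_n(u)\Big)\phi u=\sum_{u\in W_n(I)}\alpha_n(u)\phi u,
\]
where in the last step I use the defining formula $\alpha_n(u)=\sum_{v\in I}\alpha(v)v_n(u)$. This is exactly the claimed identity, so the proof is complete.

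The only point requiring any care is the bookkeeping around the supports of the individual weights $v_n$: one must check that extending each $v_n$ by zero to $W_n(I)$ is consistent with the conventions fixed in \eqref{phiwn} and Definition \ref{phibn}, and that the hypothesis ``$v\in I$ with $\tau_b(v)=\varnothing$ implies $v=b$'' is precisely what guarantees that every $v\in I$ falls into one of the two cases for which $(\phi v)_n$ has been defined. There is no genuine obstacle here; this corollary is merely the reformulation of $f_n$ as a single sum promised in the text, and it follows immediately from Definition \ref{def:fn} together with \eqref{phiwn} and Definition \ref{phibn}.
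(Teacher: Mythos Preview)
Your proof is correct and follows essentially the same approach as the paper's own proof: expand $f_n$ via Definition \ref{def:fn}, replace each $(\phi v)_n$ by its single-sum expression from \eqref{phiwn} (or Definition \ref{phibn} when $v=b$), and interchange the two finite sums. Your added remarks about extending each $v_n$ by zero to $W_n(I)$ and about the role of the hypothesis on $I$ make the bookkeeping slightly more explicit than the paper does, but the argument is the same.
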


\begin{proof}
Using \ref{phiwn} and \ref{phibnequiv} we get \begin{equation}\begin{split}
f_n=& \sum_{v\in I} \alpha(v)(\phi v)_n\\
=&\sum_{v\in I} \alpha(v)(\sum_{u\in W_{n}(v)}v_n(u)\phi u)\\
=&\sum_{u\in W_n(I)}\sum_{v\in I}\alpha(v)v_n(u)\phi u\\
=&\sum_{u\in W_n(I)} \alpha_n(u)\phi u
\end{split}\end{equation}
\end{proof}

While the representative $f_n$ is good for some $f$, it is unfortunately bad in many cases. It can fail to be good in two ways: Firstly for some $f\in \mathcal{B}(F_n,S)$ there is no $N\in\mathbb{N}$ such that $f_n$ is reduced for $n>N$.

\begin{exmp}
Let $f=\phi a+ \sum_{s\in \bar{S}\setminus \{a^{-1}\}}\phi as$. Then $\lim_{n\to \infty} \lVert f_n \rVert_S=\infty$, but $\lvert f_n \rvert_S=0$ for all $n\in \mathbb{N}$ by a right-extension relation.
\end{exmp}

Secondly even if we know that $f_n$ is reduced for $f=\sum_{v\in I} \alpha(v)\phi v$ and $n$ big enough, we may have a hard time saying something about the reduced length of $f_n$. The reason is that we can not just study the the n-support of $I$ to study the length of $f_n$ as in Section \ref{Tsingle}: In general we know only that the support of $\alpha_n$ is included in the n-support of $I$, but it is not necessarily equal. This is true even if $I=\mathrm{supp}(\alpha)$.

\begin{exmp}
Let $f=\phi b^{-2}a-\phi b^{-1}a+\phi ab^{-1}a$. We have \begin{equation}\begin{split}
W_2(\{b^{-2}a,b^{-1}a,ab^{-1}a\})=&W_2(b^{-2}a)\cup W_2(b^{-1}a)\cup W_2(ab^{-1}a)\\
=&\{b^{-2}a,ab^{-2}a,ab^{-3}a\}\cup \{b^{-1}a,ab^{-1}a,ab^{-2}a\}\cup \{ab^{-3}a\}\\
=&\{b^{-2}a,ab^{-2}a,ab^{-3}a,b^{-1}a,ab^{-1}a\}.
\end{split}\end{equation} In particular the element of maximal length in $W_2(\{b^{-2}a,b^{-1}a,ab^{-1}a\})$ has length $5$. But on the other hand \begin{equation}\begin{split}
f_2=&\phi b^{-2}a-\phi ab^{-2}a-\phi ab^{-3}a\\
&-\phi b^{-1}a+\phi ab^{-1}a+\phi ab^{-2}a\\
&+\phi ab^{-3}a\\
=&\phi b^{-2}a-\phi b^{-1}a+\phi ab^{-1}a.
\end{split}\end{equation} So $\mathrm{supp}(\alpha_2)=\{b^{-2}a, b^{-1}a, ab^{-1}a\}$ and in particular $\lVert f_2 \rVert_S=3$.
\end{exmp}

So we have to impose a further condition on the representative $f$ of $[f] \in \tilde{\mathcal{B}}(F_n)$ to make sure that $f_n$ is a good representative of $T^{-n}[f]$. This condition will ensure that $f_n$ is reduced for $n$ big enough and that we can say something about $\lVert f_n\rVert_S$.

\begin{defn}
We say that $v,w\in F_n\setminus \{e\}$ are of \emph{one kind} if $v,w$ are both $b$-truncated or both $b$-left or both right-$b$ or both $b$-and-$b$.
\end{defn}

\begin{defn}
We say that $v,w\in F_n\setminus \{e\}$ are of \emph{opposite kind} if $v,w$ are both $b$-truncated, or both $b$-and-$b$, or one of them is $b$-left and the other one is right-$b$.
\end{defn}

\begin{defn}
Let $I$ be a finite subset of $F_n\setminus \{e\}$. We say $I$ is in \emph{normal form} if the following requirements are satisfied: \begin{enumerate}
\item Let $v\in I$ and $\tau_b(v)=\varnothing$. Then $v=b$.
\item Let $v,w\in I$ be of one kind and let $\tau_b(v)=\tau_b(w)$. Then $v=w$.
\item Let $v, w$ be of opposite kind and let $\tau_b(v^{-1})=\tau_b(w)$. If $v\in I$, then $w\notin I$.
\end{enumerate} Let $f=\sum_{v\in I} \alpha(v)\phi v$ be a sum of counting quasimorphisms. We say $f$ is in \emph{normal form} if $\mathrm{supp}(\alpha)$ is in normal form. In the following we will always demand wlog that $I$ is in normal form, if $f=\sum_{v\in I} \alpha(v)\phi v$ is in normal form.
\end{defn}

The first condition is necessary to make sure that $f_n$ is defined. We admit that at the current stage it is not clear at all why the second and third condition make sure that $f_n$ is a good representative of $T^{-n}[f]$. A full explanation of this fact will be given in the following subsections. Now we can only give the idea: We consider $W_n(I)$ as a union of rectangles and by Section \ref{Tsingle} we understand one rectangle on its own very well. So our problem is essentially controlling the intersection of rectangles. We will see in \ref{kindintempty} that the second condition gives us that the n-supports of different elements of one kind do not intersect. The third condition guarantees that this stays true, if we use $\phi v=-\phi v^{-1}$. So considering $f$ in normal form, we reduce the problem to controlling intersections of the n-supports of elements which are not of the same kind. This is in fact an easier problem, because we can use the different shapes of cells, columns, rows and boxes.

Now want to prove that every sum of counting quasimorphisms is equivalent to a sum of counting quasimorphisms in normal form. This is unfortunately very technical, the idea is to use a lot of left- and right-extension relations. We start with the following lemma.

\begin{lem}\label{rewrite}
Let $x=x_1...x_t$ be a $b$-truncated word.\begin{enumerate}[(i)]
\item\label{rewrite1} Let $m_k\neq 0$, $n_k\neq 0$ and $m_0\in \mathbb{Z}$. Then \begin{equation}\phi b^{m_0}xb^{m_k}\sim \alpha(b^{m_0}xb^{n_k})\phi b^{m_0}xb^{n_k}+\sum_{v\in J}\alpha(v)\phi v,\end{equation} where $\alpha$ is a weight on $T_n$ and $J$ is a finite set such that $y\in J$ implies $y=b^{m_0}y'$ for some $b$-truncated word $y'$.
\item\label{rewrite2} Let $m_0\neq 0$, $n_0\neq 0$ and $m_k\in \mathbb{Z}$. Then \begin{equation}\phi b^{m_0}xb^{m_k}\sim \alpha(b^{n_0}xb^{m_k})\phi b^{n_0}xb^{m_k}+\sum_{v\in J}\alpha(v)\phi v,\end{equation} where $\alpha$ is a weight on $T_n$ and $J$ is a finite set such that $y\in J$ implies $y=y'b^{m_k}$ for some $b$-truncated word $y'$.
\end{enumerate}
\end{lem}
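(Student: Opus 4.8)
The statement is purely about rewriting a single Brooks quasimorphism $\phi b^{m_0}xb^{m_k}$ with $x$ a $b$-truncated word, changing the exponent on one end from $m_k$ to $n_k$ (part (i)) or from $m_0$ to $n_0$ (part (ii)), at the cost of introducing correction terms $\phi v$ whose $v$ all share the untouched block $b^{m_0}$ (resp.\ $b^{m_k}$) on that side. By the symmetry $\phi v=-\phi v^{-1}$ and the fact that inversion swaps the roles of the two ends (and the truncation operator commutes with inversion, $\tau_b(v)^{-1}=\tau_b(v^{-1})$), part (ii) follows from part (i) applied to $x^{-1}b^{-m_0}$ with the exponents renamed; so the plan is to prove (i) and then deduce (ii) by this reflection. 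Throughout I will use only iterated left- and right-extension relations, which are relations by Theorem~\ref{relations} (and the preceding lemma), so that the resulting element is genuinely $\sim$-equivalent to the original.

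\textbf{The core computation for (i).} Fix $m_0\in\mathbb Z$ and $m_k,n_k\neq 0$. The right-extension relation $r_w=\#w-\sum_{s\in\bar S\setminus\{w_k^{-1}\}}\#ws$ is a relation, and since $\phi$ is linear and $\phi w=\#w-\#w^{-1}$, the corresponding symmetrized identity $\phi w\sim\sum_{s\in\bar S\setminus\{w_k^{-1}\}}\phi ws$ holds. Applying this to $w=b^{m_0}xb^{i}$ repeatedly, I can push the $b$-exponent on the right end up or down one step at a time: each application replaces $\phi b^{m_0}xb^{i}$ by $\phi b^{m_0}xb^{i+1}$ (the term with $s=b$, when the last letter is $b$, i.e.\ $i>0$) plus terms $\phi b^{m_0}xb^{i}s$ with $s\in S_b$, each of which ends in a letter of $S_b$ and hence — after truncation — contributes only words of the form $b^{m_0}y'$ with $y'$ $b$-truncated. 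Iterating $|n_k-m_k|$ times, moving from exponent $m_k$ to exponent $n_k$ (and doing the analogous thing with $s=b^{-1}$ when the relevant exponents are negative), produces exactly $\phi b^{m_0}xb^{n_k}$ as the "surviving" term together with a finite collection $J$ of correction words, every one of which begins with $b^{m_0}$ followed by a $b$-truncated tail. I would organize this as a short induction on $|n_k-m_k|$, with the two cases $n_k>m_k>0$ and $n_k<m_k<0$ (and the mixed cases $m_k$ and $n_k$ of opposite sign handled by going through an intermediate small exponent), mirroring the telescoping computations already carried out in the proof of Proposition~\ref{phiwnequiv}.

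\textbf{Main obstacle.} The only real subtlety is bookkeeping: one must make sure that when the exponent on the right passes through small or zero values the extension relations still apply with the stated last-letter exclusion, and that the correction words really do keep the full block $b^{m_0}$ on the left untouched — a right-extension relation only modifies the last letter, so this is automatic, but it needs to be stated carefully because $x$ itself is $b$-truncated and $b^{m_0}x$ might, after concatenation, have cancellation if $m_0$ and the first letter of $x$ interact; since $x$ is $b$-truncated its first letter is in $S_b$, so $b^{m_0}x$ is already reduced and no such cancellation occurs. Likewise I should note that distinct correction words obtained at different stages are genuinely distinct subwords so the weight $\alpha$ is well defined; this is the same non-overlap observation used at the end of the proof of Lemma~\ref{wisTinv}. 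With these points checked, part (i) is immediate, and part (ii) follows by applying (i) to the inverse word and invoking $\phi v=-\phi v^{-1}$ together with $\tau_b(v^{-1})=\tau_b(v)^{-1}$, so that a correction word of the form $(b^{-m_0}x')^{-1}$ becomes one of the form $x''b^{m_k}$ with $x''$ $b$-truncated, as claimed.
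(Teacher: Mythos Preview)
Your approach to part (i) is essentially the same as the paper's: iterate right-extension relations to shift the right exponent one step at a time, splitting into the three sign cases $m_k>n_k>0$, $0>m_k>n_k$, and $m_k>0>n_k$ (the paper first reduces to $m_k>n_k$ by the same rearrangement you describe), and check that every correction term is of the form $b^{m_0}y'$ with $y'$ $b$-truncated. Your remarks about reducedness of $b^{m_0}x$ and about the passage through exponent zero are exactly the bookkeeping the paper carries out explicitly.

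Where you differ is in part (ii). The paper does \emph{not} reduce (ii) to (i); instead it repeats the whole computation verbatim using left-extension relations, again in three sign cases, and exhibits the explicit formulas. Your route---invert the word, apply (i) to $b^{-m_k}x^{-1}b^{-m_0}$ changing the right exponent from $-m_0$ to $-n_0$, then use $\phi v=-\phi v^{-1}$ together with $\tau_b(v^{-1})=\tau_b(v)^{-1}$ to flip everything back---is correct and more economical: a correction word $b^{-m_k}y'$ with $y'$ $b$-truncated becomes, after inversion, $(y')^{-1}b^{m_k}$ with $(y')^{-1}$ $b$-truncated, exactly as required. The trade-off is that the paper's longer argument produces the explicit coefficients (used tacitly in the examples following Corollary~\ref{cor:rewrite}), whereas your symmetry argument gives existence of the weight $\alpha$ without writing it down.
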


\begin{proof}
Assume \ref{rewrite1} is true for $m_k>n_k$, i.e. assume we have $\alpha$ and $J$ as in \ref{rewrite1}. Then \begin{equation}\phi b^{m_0}xb^{n_k}\sim \alpha(b^{m_0}xb^{n_k})^{-1}\phi b^{m_0}xb^{m_k}+\sum_{v\in J}-\alpha(v)\alpha(b^{m_0}xb^{n_k})^{-1}\phi v.\end{equation} So we get that that \ref{rewrite1} is true for $n_k>m_k$. It is therefore enough to prove \ref{rewrite1} for $m_k>n_k$. We consider three cases seperately:

\begin{enumerate}
\item Let $m_k>n_k>0$. Using right-extension relations $m_k-n_k$ times we get:\begin{equation}\label{r1}\begin{split}\phi b^{m_0}xb^{m_k}&\sim \phi b^{m_0}xb^{m_k-1}-\sum_{s\in S_b}\phi b^{m_0}xb^{m_k-1}s\\
&\sim ...\\
&\sim \phi b^{m_0}xb^{m_k-(m_k-n_k)}-\sum_{i=1}^{m_k-n_k}\sum_{s\in S_b}\phi b^{m_0}xb^{m_k-i}s\\
&=\phi b^{m_0}xb^{n_k}-\sum_{i=n_k}^{m_k-1}\sum_{s\in S_b}\phi b^{m_0}xb^{i}s.\end{split}\end{equation}

\item Let $0>m_k>n_k$. Using right-extension relations $m_k-n_k$ times we get:\begin{equation}\label{r2}\begin{split}\phi b^{m_0}xb^{n_k}&\sim \phi b^{m_0}xb^{n_k+1}-\sum_{s\in S_b}\phi b^{m_0}xb^{n_k+1}s\\
&\sim ...\\
&\sim \phi b^{m_0}xb^{n_k+(m_k-n_k)}-\sum_{i=1}^{m_k-n_k}\sum_{s\in S_b}\phi b^{m_0}xb^{n_k+i}s\\
&=\phi b^{m_0}xb^{m_k}-\sum_{i=n_k+1}^{m_k}\sum_{s\in S_b}\phi b^{m_0}xb^{i}s.\end{split}\end{equation} This shows \begin{equation}\phi b^{m_0}xb^{m_k}\sim\phi b^{m_0}xb^{n_k}+\sum_{i=n_k+1}^{m_k}\sum_{s\in S_b}\phi b^{m_0}xb^{i}s.\end{equation}

\item Let $m_k>0>n_k$. By \ref{r1} we know \begin{equation}\phi b^{m_0}xb^{m_k}\sim\phi b^{m_0}xb^{1}-\sum_{i=1}^{m_k-1}\sum_{s\in S_b}\phi b^{m_0}xb^is.\end{equation} Using right-extension relations, we get \begin{equation}\phi b^{m_0}xb^{1}\sim \phi b^{m_0}x-\phi b^{m_0}xb^{-1}-\sum_{s\in S_b\setminus\{x_t^{-1}\}}\phi b^{m_0}xs.\end{equation} By \ref{r2}, we know \begin{equation}\phi b^{m_0}xb^{-1}\sim\phi b^{m_0}xb^{n_k}+\sum_{i=n_n+1}^{-1}\sum_{s\in S_b}\phi b^{m_0}xb^{i}s.\end{equation} Together these three statements give us\begin{equation}\label{r3}\begin{split}\phi b^{m_0}xb^{m_k}
&\sim\phi b^{m_0}xb^{1}-\sum_{i=1}^{m_k-1}\sum_{s\in S_b}\phi b^{m_0}xb^is\\
&\sim\phi b^{m_0}x-\phi b^{m_0}xb^{-1}-\sum_{s\in S_b\setminus\{x_t^{-1}\}}\phi b^{m_0}xs-\sum_{i=1}^{m_k-1}\sum_{s\in S_b}\phi b^{m_0}xb^is\\
&\sim\phi b^{m_0}x-\phi b^{m_0}xb^{n_k}\\
&\quad -\sum_{i=n_k+1}^{-1}\sum_{s\in S_b}\phi b^{m_0}xb^{i}s-\sum_{s\in S_b\setminus\{x_t^{-1}\}}\phi b^{m_0}xs-\sum_{i=1}^{m_k-1}\sum_{s\in S_b}\phi b^{m_0}xb^is.\end{split}\end{equation}
\end{enumerate} The proof of \ref{rewrite2} is completely analogous, using left-extension relations instead of right-extension relations. Again it is enough to prove \ref{rewrite2} for $m_0>n_0$. We consider three cases seperately:

\begin{enumerate}
\item Let $m_0>n_0>0$. Using left-extension relations $m_0-n_0$ times we get:\begin{equation}\label{t1}\begin{split}\phi b^{m_0}xb^{m_k}&\sim \phi b^{m_0-1}xb^{m_k}-\sum_{s\in S_b}\phi sb^{m_0-1}xb^{m_k}\\
&\sim ...\\
&\sim \phi b^{m_0-(m_0-n_0)}xb^{m_k}-\sum_{i=1}^{m_0-n_0}\sum_{s\in S_b}\phi sb^{m_0-i}xb^{m_k}\\
&=\phi b^{n_0}xb^{m_k}-\sum_{i=n_0}^{m_0-1}\sum_{s\in S_b}\phi sb^ixb^{m_k}.\end{split}\end{equation}

\item Let $0>m_0>n_0$. Using left-extension relations $m_0-n_0$ times we get:\begin{equation}\label{t2}\begin{split}\phi b^{n_0}xb^{m_k}&\sim \phi b^{n_0+1}xb^{m_k}-\sum_{s\in S_b}\phi sb^{n_0+1}xb^{m_k}\\
&\sim ...\\
&\sim \phi b^{n_0+(m_0-n_0)}xb^{m_k}-\sum_{i=1}^{m_0-n_0}\sum_{s\in S_b}\phi sb^{n_0+i}xb^{m_k}\\
&=\phi b^{m_0}xb^{m_k}-\sum_{i=n_0+1}^{m_0}\sum_{s\in S_b}\phi sb^ixb^{m_k}.\end{split}\end{equation} This shows \begin{equation}\phi b^{m_0}xb^{m_k}\sim\phi b^{n_0}xb^{m_k}+\sum_{i=n_0+1}^{m_0}\sum_{s\in S_b}\phi sb^ixb^{m_k}.\end{equation}

\item Let $m_0>0>n_0$. By \ref{t1} we know \begin{equation}\phi b^{m_0}xb^{m_k}\sim\phi b^{1}xb^{m_k}-\sum_{i=1}^{m_0-1}\sum_{s\in S_b}\phi sb^ixb^{m_k}.\end{equation} Using right-extension relations, we get \begin{equation}\phi b^{1}xb^{m_k}\sim \phi xb^{m_k}-\phi b^{-1}xb^{m_k}-\sum_{s\in S_b\setminus\{x_1^{-1}\}}\phi sxb^{m_k}.\end{equation} By \ref{t2}, we know \begin{equation}\phi b^{-1}xb^{m_k}\sim\phi b^{n_0}xb^{m_k}+\sum_{i=n_0+1}^{-1}\sum_{s\in S_b}\phi sb^ixb^{m_k}.\end{equation} Together these three statements give us \begin{equation}\label{t3}\begin{split}\phi b^{m_0}xb^{m_k}
&\sim\phi b^{1}xb^{m_k}-\sum_{i=1}^{m_0-1}\sum_{s\in S_b}\phi sb^ixb^{m_k}\\
&\sim \phi xb^{m_k}-\phi b^{-1}xb^{m_k}-\sum_{s\in S_b\setminus\{x_1^{-1}\}}\phi sxb^{m_k}-\sum_{i=1}^{m_0-1}\sum_{s\in S_b}\phi sb^ixb^{m_k}\\
&\sim \phi xb^{m_k}-\phi b^{n_0}xb^{m_k}\\
&\quad -\sum_{i=n_0+1}^{-1}\sum_{s\in S_b}\phi sb^ixb^{m_k}-\sum_{s\in S_b\setminus\{x_1^{-1}\}}\phi sxb^{m_k}-\sum_{i=1}^{m_0-1}\sum_{s\in S_b}\phi sb^ixb^{m_k}.\end{split}\end{equation}
\end{enumerate}
\end{proof}

\begin{cor}\label{cor:rewrite}
Let $v,w \in F_n\setminus \{e\}$ and $\tau_b(v)=\tau_b(w)$. Then we have:\begin{enumerate}[(i)]
\item Assume $v,w$ are both $b$-left. Then there is a weight $\alpha$ and a finite set $J$ such that all elements of $J$ are $b$-truncated and \begin{equation}
\phi v\sim \alpha(w)\phi w + \sum_{y\in J}\alpha(y)\phi y\end{equation}
\item Assume $v,w$ are both right-$b$. Then there is a weight $\alpha$ and a finite set $J$ such that all elements of $J$ are $b$-truncated and \begin{equation}
\phi v\sim \alpha(w)\phi w + \sum_{y\in J}\alpha(y)\phi y\end{equation}
\item Assume $v,w$ are both $b$-and-$b$. Then there is a weight $\alpha$ and a finite set $J$ such that no element of $J$ is $b$-and-$b$ and \begin{equation}
\phi v\sim \alpha(w)\phi w + \sum_{y\in J}\alpha(y)\phi y\end{equation}
\end{enumerate}
\end{cor}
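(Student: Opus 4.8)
The plan is to deduce all three cases directly from Lemma \ref{rewrite}, which is designed exactly for this kind of rewriting, after first putting $v$ and $w$ into the standard form $b^{p}xb^{q}$. Write $x=\tau_b(v)=\tau_b(w)$; this is a fixed $b$-truncated word, and it is nonempty in each of the three cases since otherwise $v$ would be a power of $b$. If $v,w$ are both $b$-left, then $v=b^{m_0}x$ and $w=b^{n_0}x$ with $m_0,n_0\neq 0$; if both right-$b$, then $v=xb^{m_k}$ and $w=xb^{n_k}$ with $m_k,n_k\neq 0$; if both $b$-and-$b$, then $v=b^{m_0}xb^{m_k}$ and $w=b^{n_0}xb^{n_k}$ with all four exponents nonzero.

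For case (i) I would apply part \ref{rewrite2} of Lemma \ref{rewrite} with $m_k=0$: it yields $\phi v=\phi b^{m_0}x\sim\alpha(b^{n_0}x)\phi b^{n_0}x+\sum_{y\in J}\alpha(y)\phi y$, where each $y\in J$ has the form $y'b^{0}=y'$ with $y'$ a $b$-truncated word, so $J$ consists of $b$-truncated words and $b^{n_0}x=w$. Case (ii) is the mirror statement: apply part \ref{rewrite1} of Lemma \ref{rewrite} with $m_0=0$ to get $\phi v=\phi xb^{m_k}\sim\alpha(xb^{n_k})\phi xb^{n_k}+\sum_{y\in J}\alpha(y)\phi y$ with $J$ again a set of $b$-truncated words and $xb^{n_k}=w$. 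In both cases the conclusion is exactly the claimed one.

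For case (iii) I would apply Lemma \ref{rewrite} twice. First, part \ref{rewrite1} (with the actual nonzero left exponent $m_0$) replaces the right exponent: $\phi v=\phi b^{m_0}xb^{m_k}\sim\alpha_1(b^{m_0}xb^{n_k})\phi b^{m_0}xb^{n_k}+\sum_{y\in J_1}\alpha_1(y)\phi y$, where each $y\in J_1$ has the form $b^{m_0}y'$ with $y'$ $b$-truncated; since $m_0\neq 0$ and $y'$ has no trailing $b$-block, such a $y$ is $b$-left, hence not $b$-and-$b$. Second, part \ref{rewrite2} applied to the surviving term $\phi b^{m_0}xb^{n_k}$ (whose right exponent is now $n_k$) replaces the left exponent: $\phi b^{m_0}xb^{n_k}\sim\alpha_2(b^{n_0}xb^{n_k})\phi b^{n_0}xb^{n_k}+\sum_{y\in J_2}\alpha_2(y)\phi y$, where each $y\in J_2$ has the form $y'b^{n_k}$ with $y'$ $b$-truncated, hence is right-$b$ and not $b$-and-$b$. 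Substituting and collecting terms gives $\phi v\sim\bigl(\alpha_1(b^{m_0}xb^{n_k})\alpha_2(b^{n_0}xb^{n_k})\bigr)\phi w+\sum_{y\in J_1\cup J_2}\alpha(y)\phi y$ with no index in $J_1\cup J_2$ being $b$-and-$b$; taking $J=J_1\cup J_2$ and $\alpha$ the resulting weight finishes the case.

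I would also record in passing that in every application of Lemma \ref{rewrite} the coefficient of the main term is $\pm1$, so $\alpha(w)=\pm1$ in cases (i)--(ii) and $\alpha(w)=\pm1$ (a product of two such signs) in case (iii); this is not needed for the statement but will be convenient later. The only real care required is the bookkeeping in case (iii): one must verify that the leftover terms of the first rewriting are $b$-left (so that the second rewriting, which only disturbs the core $b^{m_0}xb^{n_k}$, cannot reintroduce $b$-and-$b$ words) and that the leftover terms of the second rewriting are right-$b$. Both follow immediately from the explicit shape $b^{m_0}y'$, resp.\ $y'b^{n_k}$, with $y'$ $b$-truncated, that Lemma \ref{rewrite} guarantees. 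This is the step most prone to a slip, but there is no substantive difficulty.
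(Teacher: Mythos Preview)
Your proof is correct and follows essentially the same approach as the paper: set $x=\tau_b(v)=\tau_b(w)$, apply Lemma~\ref{rewrite} once for cases (i) and (ii), and twice for case (iii). The only cosmetic difference is that in case (iii) you apply part~\ref{rewrite1} first and then part~\ref{rewrite2}, whereas the paper does these in the opposite order; either order works, and your bookkeeping on the shape of the leftover terms is accurate.
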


\begin{proof}
Let $\tau_b(v)=\tau_b(w)=x$. We use \ref{rewrite}:
\begin{enumerate}[(i)]
\item Let $v,w$ be both $b$-left. Then there are $m_0\neq 0$ and $n_0\neq 0$ such that $v=b^{m_0}x$ and $w=b^{n_0}x$. The statement follows from \ref{rewrite2}.
\item Let $v,w$ be both right-$b$. Then there are $m_k\neq 0$ and $n_k\neq 0$ such that $v=xb^{m_k}$ and $w=xb^{n_k}$. The statement follows from \ref{rewrite1}.
\item Let $v,w$ be both $b$-and-$b$. Then there are $m_0\neq 0$,$m_k\neq 0$ and $n_0\neq 0$, $n_k\neq 0$ such that $v=b^{m_0}xb^{m_k}$ and $w=b^{n_0}xb^{n_k}$. By \ref{rewrite2} we have \begin{equation}\phi v\sim \alpha_1(b^{n_0}xb^{m_k})\phi b^{n_0}xb^{m_k}+\sum_{y_1\in J_1}\alpha_1(y_1)\phi y_1,\end{equation} where $\alpha_1$ is a weight on $T_n$ and $J_1$ is a set such all elements of $J_1$ are right-$b$. By \ref{rewrite1} we have \begin{equation}\phi b^{n_0}xb^{m_k}\sim \alpha_2(w)\phi w+\sum_{y_2\in J_2}\alpha_2(y_2)\phi y_2,\end{equation} where $\alpha_2$ is a weight on $T_n$ and $J_2$ is a set such that all elements of $J_1$ are $b$-left. Together this gives us \begin{equation}\phi v\sim \alpha_1(b^{n_0}xb^{m_k})\alpha_2(w)\phi w+\sum_{y_2\in J_2}\alpha_1(b^{n_0}xb^{m_k})\alpha_2(y_2)\phi y_2+\sum_{y_1\in J_1}\alpha_1(y_1)\phi y_1.\end{equation}
\end{enumerate}
\end{proof}

\begin{exmp}
Let $v=ab^5$ and $w=ab^{-1}$ be elements of $F_2$. Then \begin{equation}v\sim -w+a-aa-\sum_{i=1}^{4}ab^ia-\sum_{i=1}^{4}ab^ia^{-1}\end{equation}
\end{exmp}

\begin{exmp}
Let $v=b^3ab^{-2}$ and $w=b^2ab^1$ be elements of $F_2$. Then \begin{equation}v \sim -w+b^2a-b^2a^2-ab^2ab^{-2}-a^{-1}b^2ab^{-2}-b^2ab^{-1}a-b^2ab^{-1}a^{-1}\end{equation}
\end{exmp}

\begin{thm}\label{normalform}
Let $f=\sum_{v\in I} \alpha(v)\phi v$ be a sum of counting quasimorphisms. Then $f$ is equivalent to a sum of counting quasimorphisms in normal form.
\end{thm}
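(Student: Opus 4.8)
The plan is to enforce the three conditions defining normal form one after another, exploiting the following hierarchy among the kinds of words: $b$-and-$b\ \succ\ \{b\text{-left},\ \text{right-}b\}\ \succ\ b\text{-truncated}$. The three rewriting tools at our disposal — Lemma~\ref{normalform1}, Corollary~\ref{cor:rewrite}, and the identity $\phi v=-\phi v^{-1}$ — each replace one counting quasimorphism by a finite sum of counting quasimorphisms all of whose new summands are strictly lower in this hierarchy, and (crucially) never a power of $b$ other than $b$ itself. Hence once a given kind has been brought into shape it will not be disturbed by later work on lower kinds, and suitable non-negative integer complexity measures will strictly decrease at each step, which gives termination.

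First I would apply Lemma~\ref{normalform1} to replace $f$ by an equivalent sum of counting quasimorphisms $f_0=\sum_{v\in I_0}\gamma_0(v)\phi v$ for which $v\in I_0$ with $\tau_b(v)=\varnothing$ forces $v=b$; this is exactly condition~(1). Inspecting Corollary~\ref{cor:rewrite}, all the rewrites used below only ever introduce summands $\phi y$ with $y$ being $b$-truncated, $b$-left, or right-$b$, never a power of $b$, so condition~(1) survives unchanged throughout. From now on write every representative with index set equal to the support of its weight.

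Next I would arrange conditions (2) and (3) for $b$-and-$b$ words. Let $B(f)$ be the number of $b$-and-$b$ words in the support of $f$. If the $b$-and-$b$ part of $f$ violates (2) or (3), there are distinct $b$-and-$b$ words $v,w$ in the support with $\tau_b(v)=\tau_b(w)$ or $\tau_b(v^{-1})=\tau_b(w)$. In the first case Corollary~\ref{cor:rewrite}(iii) gives $\phi v\sim\alpha(w)\phi w+\sum_{y\in J}\alpha(y)\phi y$ with no $y\in J$ of kind $b$-and-$b$; in the second case we apply Corollary~\ref{cor:rewrite}(iii) to the pair $(v^{-1},w)$ (both $b$-and-$b$, with equal $\tau_b$) and combine with $\phi v=-\phi v^{-1}$ to get the analogous relation. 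Substituting this relation for $\phi v$ in $f$ deletes $v$ from the support and adds no new $b$-and-$b$ word, so $B$ drops by at least one; after finitely many steps the $b$-and-$b$ part satisfies (2) and (3). I would then run the same argument one level down, with $L(f)$ the number of words in the support that are $b$-left or right-$b$: a violation of (2) among the $b$-left words, of (2) among the right-$b$ words, or of (3) between a $b$-left and a right-$b$ word is removed using Corollary~\ref{cor:rewrite}(i) or (ii) (and, for condition~(3), $\phi v=-\phi v^{-1}$, which interchanges these two kinds), a step that deletes one such word and introduces only $b$-truncated words, hence strictly decreases $L$ while leaving the $b$-and-$b$ part — already in shape — untouched. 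Finally, for $b$-truncated words $v$ we have $\tau_b(v)=v$, so (2) is automatic and (3) says only that $v,v^{-1}$ are not both in the support; merging $\alpha(v)\phi v+\alpha(v^{-1})\phi v^{-1}=(\alpha(v)-\alpha(v^{-1}))\phi v$ for each such inverse pair introduces no new summand and disturbs nothing. The resulting representative of $[f]$ then satisfies (1), (2), and (3).

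The only real difficulty is the bookkeeping: one must verify at each step that the chosen rewrite (a) eliminates exactly one offending word, (b) introduces only words strictly lower in the hierarchy, so that $B$ and then $L$ genuinely decrease and no previously-secured condition is reinstated, and (c) never reintroduces a power of $b$ distinct from $b$. With the hierarchy fixed, each of these is immediate from the precise statements of Lemma~\ref{normalform1} and Corollary~\ref{cor:rewrite} applied kind by kind, so the proof is organizational rather than computational.
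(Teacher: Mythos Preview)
Your proposal is correct and follows essentially the same approach as the paper: both enforce condition~(1) via Lemma~\ref{normalform1}, then descend the hierarchy $b$-and-$b \succ \{b\text{-left},\text{right-}b\} \succ b$-truncated, eliminating violations of (2) and (3) at each level using Corollary~\ref{cor:rewrite} together with $\phi v=-\phi v^{-1}$. Your packaging with the complexity counters $B$ and $L$ is slightly cleaner than the paper's sequential treatment (which first converts each condition-(3) violation into a condition-(2) violation via $\phi w=-\phi w^{-1}$ and only then applies Corollary~\ref{cor:rewrite}), but the substance is identical.
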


\begin{proof}
By \ref{normalform1} we know that $f\in \mathcal{B}(F_n,S)$ is equivalent to an element $f_0=\sum_{v\in J} \beta(v)\phi v \in \mathcal{B}(F_n,S)$ such that $v\in \mathrm{supp}(\beta)$ and $\tau_b(v)=\varnothing$ imply $v=b$. 

By considering $f_0$ instead of $f$, we can assume wlog that $f$ satisfies the above condition. If $v, w\in I$ are $b$-and-$b$ and $\tau_b(v^{-1})=\tau_b(w)$, we use $\phi w=-\phi w^{-1}$ to rewrite $f$ such that $w\notin I$. Assume there are $k$ unordered pairs $\{v,w\}\subset I$ such that $v,w$ are both $b$-and-$b$ and $\tau_b(v)=\tau_b(w)$. Then pick one such pair $\{v_1,w_1\}$. By \ref{cor:rewrite} there is a weight $\beta_1$ and a set $J_1$ such that no element of $J_1$ is $b$-and-$b$ and \begin{equation} f\sim g_1=\sum_{v\in I\setminus \{v_1\}} \alpha(v)\phi v+\alpha(v_1)\beta_1(w_1)\phi w_1 + \sum_{y\in J_1}\alpha(v_1)\beta(y)\phi y.\end{equation} Since no element of $J_1$ is $b$-and-$b$ there are at most $k-1$ unordered pairs $\{v,w\}\subset I\setminus \{v_1\}\cup J_1$ such that $v,w$ are both $b$-and-$b$ and $\tau_b(v)=\tau_b(w)$. Repeating this argument at most $k-1$ times, we get that $f$ is equivalent to a sum of counting quasimorphisms $f_1=\sum_{v\in I_1} \gamma_1(v)\phi v$ such that \begin{enumerate} 
\item $v\in I_1$ with $\tau_b(v)=\varnothing$ implies $v=b$,
\item $v,w\in I_1$ are both $b$-and-$b$, $\tau_b(v)=\tau_b(w)$ implies $v=w$.
\item Let $v\in I$ and let $\tau_b(v^{-1})=\tau_b(w)$. If $v,w$ are both $b$-and-$b$, then $w\notin I$.\end{enumerate}

By considering $f_1$ instead of $f$, we can assume wlog that $f$ satisfies these conditions. If $v\in I$ is $b$-left, $w\in I$ is right-$b$ and $\tau_b(v^{-1})=\tau_b(w)$, we use $\phi w=-\phi w^{-1}$ to rewrite $f$ such that $w\notin I$. Assume there are $k$ unordered pairs $\{v,w\}\subset I$ such that $v,w$ are both $b$-left and $\tau_b(v)=\tau_b(w)$. Then pick one such pair $\{v_2,w_2\}$. By \ref{cor:rewrite} there is a weight $\beta_2$ and a set $J_2$ such that all elements of $J_2$ are $b$-truncated and \begin{equation} f\sim g_2=\sum_{v\in I\setminus \{v_2\}} \alpha(v)\phi v+\alpha(v_2)\beta(w_2)\phi w_2 + \sum_{y\in J_2}\alpha(v_2)\beta_2(y)\phi y.\end{equation} Since all elements of $J_2$ are $b$-truncated there are at most $k-1$ unordered pairs $\{v,w\}\subset I\setminus \{v_2\}\cup J_2$ such that $v,w$ are both $b$-left and $\tau_b(v)=\tau_b(w)$. Repeating this argument $k-1$ times, we get that $f$ is equivalent to a sum of counting quasimorphisms $f_2=\sum_{v\in I_2} \gamma_2(v)\phi v$ such that\begin{enumerate} 
\item $v\in I_2$ with $\tau_b(v)=\varnothing$ implies $v=b$.
\item $v,w\in I_2$ are both $b$-and-$b$, $\tau_b(v)=\tau_b(w)$ implies $v=w$.
\item $v,w\in I_2$ are both $b$-left, $\tau_b(v)=\tau_b(w)$ implies $v=w$.
\item Let $v\in I$ and let $\tau_b(v^{-1})=\tau_b(w)$. If $v,w$ are both $b$-and-$b$, or one of them is $b$-left and the other one is right-$b$, then $w\notin I$.
\end{enumerate}

By considering $f_2$ instead of $f$, we can assume wlog that $f$ satisfies these conditions. Assume there are $k$ unordered pairs $\{v,w\}\subset I$ such that $v,w$ are both right-$b$ and $\tau_b(v)=\tau_b(w)$. Then pick one such pair $\{v_3,w_3\}$. By \ref{cor:rewrite} there is a weight $\beta_3$ and a set $J_3$ such that all elements of $J_3$ are $b$-truncated and \begin{equation} f\sim g_3=\sum_{v\in I\setminus \{v_3\}} \alpha(v)\phi v+\alpha(v_3)\beta(w_3)\phi w_3 + \sum_{y\in J_3}\alpha(v_3)\beta_3(y)\phi y.\end{equation} Since all elements of $J_3$ are $b$-truncated there are at most $k-1$ unordered pairs $\{v,w\}\subset I\setminus \{v_3\}\cup J_3$ such that $v,w$ are both $b$-left and $\tau_b(v)=\tau_b(w)$. Repeating this argument at most $k-1$ times, we get that $f$ is equivalent to a sum of counting quasimorphisms $f_3=\sum_{v\in I_3} \gamma_3(v)\phi v$ such that \begin{enumerate} 
\item $v\in I_3$ with $\tau_b(v)=\varnothing$ implies $v=b$.
\item $v,w\in I_3$ are both $b$-and-$b$, $\tau_b(v)=\tau_b(w)$ implies $v=w$.
\item $v,w\in I_3$ are both $b$-left, $\tau_b(v)=\tau_b(w)$ implies $v=w$.
\item $v,w\in I_3$ are both right-$b$, $\tau_b(v)=\tau_b(w)$ implies $v=w$.
\item Let $v\in I$ and let $\tau_b(v^{-1})=\tau_b(w)$. If $v,w$ are both $b$-and-$b$, or one of them is $b$-left and the other one is right-$b$, then $w\notin I$.
\end{enumerate}

If $v,w$ are both $b$-truncated and $\tau_b(v)=\tau_b(w)$, then $v=\tau_b(v)=\tau_b(w)=w$. So $f_3$ also satisfies this demand. If $v,w\in I$ are both $b$-truncated and $\tau_b(v^{-1})=\tau_b(w)$, then $v^{-1}=w$. So using $\phi w=-\phi w^{-1}$, we can ensure that $w\notin I$ in this case. We get that we can write $f_3$ in normal form. This finishes the proof.
\end{proof}

Note that we actually gave an algorithm to find an element in normal form in every equivalence class $[f] \in \tilde{\mathcal{B}}(F_n)$.

\begin{defn}
Let $f\in \mathcal{B}(F_n,S)$. By \ref{normalform} there is a sum of counting functions $g\in \mathcal{B}(F_n,S)$ such that $g$ is in normal form and $f\sim g$. Then we call $g_n$ a \emph{n-representative} of $[f]$.
\end{defn}

Note that a sum of counting quasimorphisms $f\in \mathcal{B}(F_n,S)$ can have more than one n-representative. In this regard there is difference between a single counting quasimorphism $\phi v$ and sums of counting quasimorphisms $f$.

\subsection{Study of the n-support}\label{studynsuppI}

By \ref{normalform} we can assume from now on that $f=\sum_{v\in I} \alpha(v)\phi v$ is in normal form. Then $f_n$ is a n-representative of $[f]$. In this subsection we will show that $f_n$ is indeed a good representative of $T^{-n}[f]$. We will do so by studying the n-support of $I$. This course of action is only made possible by choosing $I$ in normal form.

\begin{rem}
We actually only consider sets $I$ in normal form such that $b\notin I$. This condition could be lifted, but that would make everything even more unreadable than it already is.
\end{rem}

We start by trying to compute the length of $f_n$. This is the maximal word length of any element in $\mathrm{supp}(\alpha_n)$, where $\alpha_n(u)=\sum_{v\in I}\alpha(v)v_n(u)$. So we can bound the length of $f_n$ from below if we find elements in $\mathrm{supp}(\alpha_n)$. At the moment we do not know more than that $\mathrm{supp}(\alpha_n)\subset W_n(I)$. Our strategy is to find an element $u_n\in W_n(I)$ such that $u_n \notin W_n(I\setminus \{v_0\})\cup W_n(I^{-1}\setminus \{v_0^{-1}\})$ for some $v_0\in \mathrm{supp}(\alpha)$. Then we have by definition that $\alpha_n(u_n)=(v_0)_n(u_n)\neq 0$ and therefore $u_n \in \mathrm{supp}(\alpha_n)$.

We reformulate this strategy in the terms of our visualization: We see that $W_n(I)$ is a union of rectangles. If we find an element of one of these rectangles that does not lie in any other rectangle, we know that this element is in $\mathrm{supp}(\alpha_n)$. This motivates studying the intersection of rectangles. We start with this technical lemma:

\begin{lem}\label{lmrinWnv}
Let $v\in F_n\setminus \{e\}$ not be a power of $b$. Let $x=b^{m_0}s_1b^{m_1}...b^{m_{k-1}}s_kb^{m_k}$ be reduced and assume $x=lmr$ for $l\in L_{n}(v)$, $\{m\}=M_{n}(v)$ and $r\in R_{n}(v)$.\begin{enumerate}[(i)]
\item If $m_0\neq 0$, then $l=b^{m_0}$. If $m_0=0$, then $l=e$ or $l=s_1b^{m_1}$.
\item If $m_k\neq 0$, then $r=b^{m_k}$. If $m_k=0$, then $r=e$ or $r=b^{m_{k-1}}s_k$.
\end{enumerate}
\end{lem}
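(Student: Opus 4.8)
The plan is to read off the canonical $b$-syllable form of $x = lmr$ directly from the three possible shapes of $l \in L_n(v)$ supplied by Corollary \ref{lrlook}. First I would fix the notation $v \equiv_S (p_0, t_1, p_1, \ldots, t_h, p_h)$ with $h \geq 1$ (since $v$ is not a power of $b$), and record two preliminary facts. One: $m = (\tau_b \circ T^{-1} \circ \tau_b)^n(v)$ is $b$-truncated, nonempty, and of the form $m \equiv_S (t_1, q_1, \ldots, q_{h-1}, t_h)$, so its first and last letters are $t_1, t_h \in S_b$; this follows from Proposition \ref{btrsubwords} applied iteratively, exactly as in the proof of Lemma \ref{lmrred}. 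Two: $x = lmr$ is reduced, which is part of the hypothesis and consistent with Lemma \ref{lmrred}.

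Next I would prove part (i) by a case analysis on $l$, using that by Corollary \ref{lrlook} we have $l = e$ when $p_0 = 0$, and otherwise $l = b^{p_0}$ or $l = tb^i$ for some $t \in S_b \setminus \{t_1^{-1}\}$ and $i \in \mathbb{Z}$. In each case the key observation is that no cancellation or syllable-merging happens at the junction of $l$ with $m$: $m$ begins with $t_1 \in S_b$, while $l$ ends in a power of $b$, or in the letter $t \neq t_1^{-1}$, or is empty; so the canonical form of $x$ is obtained simply by concatenating. This yields $l = e \Rightarrow m_0 = 0$; $l = b^{p_0}$ with $p_0 \neq 0 \Rightarrow m_0 = p_0 \neq 0$ and $l = b^{m_0}$; and $l = tb^i \Rightarrow m_0 = 0$, $s_1 = t$, $m_1 = i$, hence $l = s_1 b^{m_1}$ (the sub-case $i = 0$, where $l = t$ is a single letter, needs the constraint $t \neq t_1^{-1}$ so that $(0, t, 0, t_1, \ldots)$ is a legitimate canonical form, but that constraint is already built into \ref{lrlook}). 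Combining the cases gives the dichotomy: $m_0 \neq 0$ forces $l = b^{m_0}$, and $m_0 = 0$ forces $l \in \{e, s_1 b^{m_1}\}$. Part (ii) is the mirror image, reading off the right-hand end of $x$, using the description of $R_n(v)$ from \ref{lrlook} and the fact that $m$ ends in $t_h \in S_b$.

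I do not expect a genuine obstacle here; this is a bookkeeping argument about uniqueness of the $b$-syllable normal form. The one point requiring care is the boundary sub-case $l = t$ (a single non-$b$ letter, $i = 0$): one must check that appending $t$ to $m$ produces the syllable data $(0, t, 0, t_1, \ldots)$ rather than a degenerate form, which amounts to $t \neq t_1^{-1}$ — and that is guaranteed both by reducedness of $x$ and by the constraint already present in the statement of \ref{lrlook}.
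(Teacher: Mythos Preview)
Your proposal is correct and takes essentially the same approach as the paper: the paper's proof is simply the two-sentence remark that $m$ is $b$-truncated and that the statement then follows from Lemma~\ref{lmrred} and Corollary~\ref{lrlook}. You have unpacked exactly this, spelling out the case analysis on the shape of $l$ (and symmetrically $r$) and how reducedness at the $l$--$m$ junction forces the claimed $b$-syllable form; your treatment of the boundary sub-case $l=t$ is a nice sanity check but not something the paper isolates.
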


\begin{proof}
We know that $m$ is $b$-truncated. So the statement follows from \ref{lmrred} and \ref{lrlook}. 
\end{proof}

The following statement is the main reason behind our definition of \emph{normal form}.

\begin{prop}\label{kindintempty}
Let $I$ be in normal form and assume $b\notin I$. Let $v,w\in I$ be of one kind and $v\neq w$. Then $W_n(v)\cap W_n(w)=\varnothing$.
\end{prop}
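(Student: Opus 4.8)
The plan is to reduce the disjointness of the $n$-supports $W_n(v)$ and $W_n(w)$ to the disjointness of their ``middles'', left $n$-supports and right $n$-supports, then to invoke the normal form conditions in each of the four shape cases. Since $v$ and $w$ are of one kind, we know they are either both $b$-truncated, both $b$-left, both right-$b$, or both $b$-and-$b$. In the first case $v$ and $w$ are themselves $b$-truncated words, so $W_n(v)=\{(\tau_b\circ T^{-1}\circ\tau_b)^n(v)\}=M_n(v)$ and similarly for $w$; since $T^{-1}$ is an automorphism and $\tau_b$ is injective on $b$-truncated words, $\tau_b(v)=\tau_b(w)$ would force $v=w$, so condition (2) of normal form gives $M_n(v)\neq M_n(w)$ and hence $W_n(v)\cap W_n(w)=\varnothing$.

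For the remaining three cases I would first establish the following uniqueness statement: if $x\in W_n(v)$ then the decomposition $x=lmr$ with $l\in L_n(v)$, $\{m\}=M_n(v)$, $r\in R_n(v)$ is forced, i.e. $l$, $m$, $r$ are determined by $x$ and $v$. This is exactly what Lemma \ref{lmrinWnv} delivers: the prefix $l$ is read off from whether $m_0=0$ and, if not, equals $b^{m_0}$, and symmetrically for $r$; once $l$ and $r$ are stripped off, $m$ is what remains. Combined with the fact that $m=(\tau_b\circ T^{-1}\circ\tau_b)^n(v)$ is $b$-truncated, this means that an element $x$ lying in $W_n(v)\cap W_n(w)$ would carry decompositions $x=l_vm_vr_v=l_wm_wr_w$, and matching up the $b$-truncated middle pieces of $x$ (which are determined by $x$ alone once one knows where the leading/trailing $b$-blocks end) would give $m_v=m_w$, whence $\tau_b(v)=\tau_b(w)$ again because $T^{-1}$ is an automorphism. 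Now condition (2) of the normal form forces $v=w$ in the $b$-left, right-$b$, and $b$-and-$b$ cases alike, contradicting $v\neq w$.

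The step I expect to be the main obstacle is making rigorous the claim that the middle pieces $m_v$ and $m_w$ of a shared element $x$ must coincide — i.e. that knowing $x$ pins down where the leading and trailing $b$-powers that come from $L_n$ and $R_n$ end. The subtlety is that $x$, as an abstract reduced word $b^{m_0}s_1b^{m_1}\cdots s_kb^{m_k}$, has a leading block $b^{m_0}$ and a trailing block $b^{m_k}$, and I must argue that $l_v$ is always exactly this leading block $b^{m_0}$ (when $m_0\neq0$) or empty (when $m_0=0$), and never something like $s_1b^{m_1}$ that would eat into the word. Lemma \ref{lmrinWnv}(i)--(ii) handles precisely this: in case $m_0=0$ one could a priori have $l=s_1b^{m_1}$, but then the resulting $m$ would not be $b$-truncated on its left unless $s_1b^{m_1}$ were genuinely a prefix coming from $L_n(v)$, and one checks from \ref{lrlook} and \ref{def:phiwn} that the sets $L_n(v)$ for the shapes in question force $l$ to be either $e$ or $b^{m_0}$ (the $b$-left and $b$-and-$b$ shapes have $m_0\neq0$, so $l=b^{m_0}$; the right-$b$ shape has $m_0=0$ and $l=e$). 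The same reasoning on the right side pins down $r$, and then $m_v$ and $m_w$ are the common ``interior'' of $x$, hence equal.

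Having shown $M_n(v)=M_n(w)$ leads to a contradiction via normal form in each of the four cases, I would conclude $W_n(v)\cap W_n(w)=\varnothing$. I would write this up by stating the uniqueness-of-decomposition observation as a short preliminary paragraph built on Lemma \ref{lmrinWnv}, then running the four cases in parallel, collapsing them since the argument ``shared element $\Rightarrow$ shared middle $\Rightarrow$ $\tau_b(v)=\tau_b(w)$ $\Rightarrow$ $v=w$ by normal form'' is uniform once the decomposition is known to be unique.
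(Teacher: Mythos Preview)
Your approach is essentially the paper's: take a hypothetical common element $x$, decompose it both ways as $l_vm_vr_v = l_wm_wr_w$, argue $l_v=l_w$ and $r_v=r_w$ via Lemma~\ref{lmrinWnv}, conclude $m_v=m_w$ and hence $\tau_b(v)=\tau_b(w)$, then invoke normal-form condition~(2). The separate treatment of the $b$-truncated case is fine but unnecessary; the paper runs all four shapes at once.

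Where your writeup wobbles is the ``subtlety'' paragraph. The claim that ``the sets $L_n(v)$ for the shapes in question force $l$ to be either $e$ or $b^{m_0}$'' is not correct: for $b$-left or $b$-and-$b$ words $v$, the set $L_n(v)$ also contains elements of the form $sb^i$ with $s\in S_b$ (see \ref{def:phiwn}), and correspondingly $x\in W_n(v)$ can have leading exponent $m_0(x)=0$ even though $m_0(v)\neq 0$. So you cannot read off $l_v$ from the shape of $v$ alone. The paper's fix is cleaner than your case analysis: the \emph{only} consequence of ``of one kind'' that you need is $l_v=e \Leftrightarrow l_w=e$ (and symmetrically on the right), which holds because $L_n(v)=\{e\}$ precisely when $v$ is $b$-truncated or right-$b$. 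Once both $l_v,l_w$ are non-$e$, Lemma~\ref{lmrinWnv}(i) says each lies in $\{b^{m_0(x)}\}$ if $m_0(x)\neq 0$, or in $\{e,\,s_1b^{m_1(x)}\}$ if $m_0(x)=0$; either way the non-$e$ option is unique and determined by $x$, so $l_v=l_w$. No shape-by-shape analysis of $L_n(v)$ is needed.
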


\begin{proof}
Assume on the contrary that $x\in W_n(v)\cap W_n(w)$. We write $x=b^{m_0}s_1...s_kb^{m_k}$. We get that $x=l_vm_vr_v=l_wm_wr_w$ for some $l_v\in L_{n}(v)$, $m_v\in L_{n}(v)$, $r_v \in R_{n}(v)$ and $l_w\in L_{n}(w)$, $m_w\in L_{n}(w)$, $r_w \in R_{n}(w)$. Since $v$ and $w$ are of one kind, we get that $l_v=e$ if and only if $l_w=e$ and $r_v=e$ if and only if $r_w=e$. By \ref{lmrinWnv} we get therefore that $l_v=l_w$ and $r_v=r_w$. So we have $m_v=m_w$. By \ref{phiwisTinv} this gives us $\tau_b(v)=\tau_b(w)$. Since $I$ is in normal form, we get $v=w$. This is a contradiction.
\end{proof}

\begin{cor}\label{oppkindintempty}
Let $I$ be in normal form and assume $b\notin I$. Let $v,w\in I$ be of opposite kind. Then $W_n(v)\cap W_n(w^{-1})=\varnothing$.
\end{cor}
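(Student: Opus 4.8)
The plan is to reduce the statement directly to Proposition \ref{kindintempty} by exploiting the relation $W_n(w)^{-1}=W_n(w^{-1})$ proved earlier in the lemma following Proposition \ref{disjoint+-}. First I would observe that the definition of \emph{opposite kind} is precisely arranged so that $v$ and $w^{-1}$ are of \emph{one kind}: if $v$ and $w$ are both $b$-truncated, then so is $w^{-1}$, hence $v$ and $w^{-1}$ are both $b$-truncated; if $v$ and $w$ are both $b$-and-$b$, then $w^{-1}$ is $b$-and-$b$ as well; and if one of $v,w$ is $b$-left while the other is right-$b$, then inverting the right-$b$ one turns it into a $b$-left word (and inverting the $b$-left one turns it into a right-$b$ word), so in either case $v$ and $w^{-1}$ are both $b$-left or both right-$b$. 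In all three cases $v$ and $w^{-1}$ are of one kind. This case check is short and is the only place the precise shape of the two definitions is used.

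Next I would handle the hypotheses needed to apply Proposition \ref{kindintempty}: we are assuming $I$ is in normal form and $b\notin I$, and $w\in I$ gives $w^{-1}$, but $w^{-1}$ need not lie in $I$, so I cannot apply Proposition \ref{kindintempty} to the pair $v,w^{-1}$ inside $I$ verbatim. Instead I would argue by contradiction directly: suppose $x\in W_n(v)\cap W_n(w^{-1})$. Then $x^{-1}\in W_n(v^{-1})\cap W_n(w)$ by the inversion identity for $n$-supports. Writing $x^{-1}=l m r$ with respect to the decomposition for $v^{-1}$ and with respect to the decomposition for $w$, and using that $v^{-1}$ and $w$ are of one kind (by the same case analysis as above, applied to the pair $v^{-1},w$), Lemma \ref{lmrinWnv} forces the left and right pieces to agree, hence the middle pieces agree, hence $\tau_b(v^{-1})=\tau_b(w)$ by Proposition \ref{btrsubwords} (via the description of $M_n$). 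But condition (3) in the definition of normal form says exactly that if $v,w$ are of opposite kind and $\tau_b(v^{-1})=\tau_b(w)$, then $v\in I$ implies $w\notin I$, contradicting $w\in I$. (One also needs $v\neq w$ here in the degenerate sub-case; but if $v=w$ then $v,w$ of opposite kind forces $v$ to be $b$-truncated or $b$-and-$b$, and $\tau_b(v^{-1})=\tau_b(v^{-1})$ trivially holds, so condition (3) still applies and gives $v\notin I$, a contradiction.)

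Alternatively — and this may be cleaner to write — I would simply note that the proof of Proposition \ref{kindintempty} itself goes through mutatis mutandis: given $x\in W_n(v)\cap W_n(w^{-1})$, write $x^{-1}\in W_n(v^{-1})\cap W_n(w)$, apply Lemma \ref{lmrinWnv} to both decompositions using that $v^{-1},w$ are of one kind, conclude $\tau_b(v^{-1})=\tau_b(w)$, and invoke normal-form condition (3) to reach a contradiction. The only genuinely new content beyond Proposition \ref{kindintempty} is the bookkeeping that "opposite kind for $v,w$" is the same as "one kind for $v,w^{-1}$", together with the switch from condition (2) to condition (3) of normal form. I expect no real obstacle here; the main thing to get right is the $b$-left/right-$b$ swap under inversion and making sure the degenerate case $v=w$ (possible only for $b$-truncated or $b$-and-$b$ words) is covered by condition (3).
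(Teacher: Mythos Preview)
Your proposal is correct and follows essentially the same route as the paper: observe that ``$v,w$ of opposite kind'' is equivalent to ``$v,w^{-1}$ of one kind'', and then rerun the argument of Proposition~\ref{kindintempty} on the pair $(v,w^{-1})$, this time invoking condition (3) of the normal-form definition rather than condition (2). The paper's own proof is terser --- it simply says the result ``follows from Proposition~\ref{kindintempty}'' --- and your version is in fact more careful in making explicit that $w^{-1}$ need not lie in $I$, so that one must repeat the proof rather than quote the statement. Your treatment of the degenerate case $v=w$ is slightly overcomplicated (in fact $\tau_b(v^{-1})=\tau_b(v)$ cannot occur for a nontrivial $b$-truncated or $b$-and-$b$ word, so the contradiction never reaches that point), but this does no harm.
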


\begin{proof}
If $v,w$ are both $b$-truncated or $b$-and-$b$, then the statement follows directly from Proposition \ref{kindintempty}. Assume wlog that $v$ is $b$-left. Then $w$ is right-$b$, so $w^{-1}$ is $b$-left and the statement follows again from Proposition \ref{kindintempty}.
\end{proof}

The visualization of \ref{kindintempty} is straightforward: If $I$ is a set in normal form, then $W_n(I)$ is a union of rectangles such that two different rectangles of the same shape never intersect. So we do not have to worry about, for example, two boxes intersecting. Now we have to handle the intersection of rectangles of different shapes.

\begin{lem}\label{intrect}
Let $w$ be a reduced word and assume that $w$ is either $b$-truncated or $b$-and-$b$ or $\lvert w \rvert_b\geq 2$. Let $u_1\in W_n(w)$ be of type $(i_1,j_1)$ and let $u_2\in W_n(w)$ be of type $(i_2,j_2)$. Assume $u_1\in W_n(v)$ for a reduced word $v$ that is not a power of $b$. Then we get the following:
\begin{enumerate}[(i)]
\item Assume $v$ is $b$-truncated. If $i_1\neq i_2$ or $j_1\neq j_2$, then $u_2\notin W_n(v)$.
\item Assume $v$ is $b$-left. If $j_1\neq j_2$, then $u_2\notin W_n(v)$.
\item Assume $v$ is right-$b$. If $i_1\neq i_2$, then $u_2\notin W_n(v)$.
\end{enumerate}
\end{lem}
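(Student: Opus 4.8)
The plan is to prove the contrapositive in all three cases simultaneously: assuming $u_1,u_2\in W_n(w)\cap W_n(v)$, I would show $(i_1,j_1)=(i_2,j_2)$ when $v$ is $b$-truncated, $j_1=j_2$ when $v$ is $b$-left, and $i_1=i_2$ when $v$ is right-$b$. First I would write each $u_\ell$ in the two guises coming from the two $n$-supports containing it: $u_\ell=l_\ell\,m_w\,r_\ell$ with $l_\ell\in dL_{i_\ell}(w)$, $r_\ell\in dR_{j_\ell}(w)$, $\{m_w\}=M_n(w)$, and $u_\ell=l'_\ell\,m_v\,r'_\ell$ with $l'_\ell\in L_n(v)$, $r'_\ell\in R_n(v)$, $\{m_v\}=M_n(v)$. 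Here I would use that, by \ref{btrsubwords}, the cores $m_w$ and $m_v$ are $b$-truncated with $|m_w|_b=|w|_b$ and $|m_v|_b=|v|_b\geq 1$, and that the type of an element of $W_n(w)$ is well defined thanks to the partition \ref{partitionn}. The $b$-truncated case of $v$ is then immediate: $v$'s representation has $m_0=m_k=0$, so $L_n(v)=R_n(v)=\{e\}$ by \ref{def:phiwn} and $W_n(v)=M_n(v)$ is a single word, whence $u_1=u_2$ and their types coincide.

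For $v$ $b$-left (the case $v$ right-$b$ being the mirror image, with left and right $b$-forms swapped) I would proceed as follows. Since $R_n(v)=\{e\}$, each $u_\ell=l'_\ell\,m_v$, so $m_v$ is the suffix of $u_\ell$ of $b$-length $p:=|v|_b$. If $w$ is $b$-truncated or $b$-left then $R_n(w)=\{e\}$ and $j_1=j_2=0$ with nothing to prove, so I may assume $w$ is right-$b$ or $b$-and-$b$; then $w$'s representation has $m_k\neq 0$, and in the right-$b$ subcase the hypothesis on $w$ forces $|w|_b\geq 2$. Next I would check $j_\ell\geq 1$: otherwise $r_\ell=b^{m_k}\neq e$, so $u_\ell$ ends in $b^{m_k}$, contradicting that $u_\ell=l'_\ell m_v$ ends in the last (non-$b$) letter of $m_v$. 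Hence $r_\ell=b^{c_\ell}s_\ell$ with $s_\ell\in S_b$ and $c_\ell$ an explicit affine function of $j_\ell$ with coefficient $\pm1$ in $j_\ell$ read off from \ref{LRexpl}, so $c_\ell$ determines $j_\ell$ and it suffices to show $c_1=c_2$. Now $m_w\,b^{c_\ell}s_\ell$ is the suffix of $u_\ell$ of $b$-length $k+1$, $k:=|w|_b$, and counting non-$b$ letters of $u_\ell$ in the two decompositions gives $p=|l_\ell|_b+k+1-|l'_\ell|_b\geq k$. If $p\geq k+1$, then $m_w\,b^{c_\ell}s_\ell$ is a suffix of the fixed word $m_v$, hence is the same word for $\ell=1,2$ and $c_1=c_2$; if $p=k$ and $p\geq2$, then $m_v$ is the $b$-length-$k$ suffix of $m_w\,b^{c_\ell}s_\ell$, so $c_\ell$ equals the $b$-exponent of $m_v$ between its last two non-$b$ letters, again independent of $\ell$. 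Either way $c_1=c_2$, so $j_1=j_2$.

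The identification of $l_\ell,r_\ell,l'_\ell,r'_\ell$ from the $b$-form of $u_\ell$ in the above is precisely what \ref{lmrinWnv} and \ref{lrlook} provide, so that part is routine. The one point that needs genuine care is the degenerate case $p=|v|_b=1$ left open above — note that $p\geq k$ then forces $k\leq 1$ — where $m_v$ is a single non-$b$ letter; here I would instead show directly that $u_1,u_2\in W_n(w)\cap W_n(v)$ cannot happen, so that the claim is vacuously true. If $w$ is right-$b$ then $|w|_b\geq2$ makes the suffix $m_w\,b^{c_\ell}s_\ell$, of $b$-length $k+1\geq3$, too long to be a suffix of $u_\ell=l'_\ell m_v$, whose $b$-length is at most $2$; and if $w$ is $b$-and-$b$ with $|w|_b=1$, the two decompositions would force $u_\ell$ to begin both with a nontrivial power of $b$ (from the $w$-side, where $l_\ell=b^{m_0}$) and with a letter of $S_b$ (from the $v$-side), which is absurd. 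I expect this interplay between the ``long enough'' hypothesis on $w$ and the small-$|v|_b$ degeneracies of $v$ to be the main obstacle; everything else reduces to linear bookkeeping with the formulas of \ref{def:phiwn} and \ref{LRexpl}.
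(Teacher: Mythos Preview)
Your proof is correct and follows essentially the same route as the paper's. Both arguments reduce case~(ii) to the situation where $w$ is right-$b$ or $b$-and-$b$, establish that the right piece $r_{j_\ell}$ has $b$-length $1$, and then pin down $r_{j_\ell}$ (equivalently your exponent $c_\ell$) as a fixed datum read off from $m_v$. The paper packages this last step slightly differently: instead of your case split on $p\geq k+1$ versus $p=k$, it proves once and for all that $|m_v|_b\geq 2$ (using the hypothesis on $w$ in exactly the place you do) and then observes that $r_{j_1}$ is the $b$-length-$1$ tail of $m_v$, so any element of $W_n(v)$ must end in $r_{j_1}$; this forces $r_{j_2}=r_{j_1}$ and hence $j_1=j_2$. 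Your degenerate case $p=1$ is precisely the configuration the paper excludes by showing $|m_v|_b\geq 2$, and your contradiction there (for $w$ $b$-and-$b$ with $k=1$, the count $p=|l_\ell|_b+k+1-|l'_\ell|_b$ forces $|l_\ell|_b=0$, $|l'_\ell|_b=1$, hence conflicting first letters) is the same $b$-length bookkeeping the paper uses, just phrased contrapositively. Case~(iii) is indeed the mirror image, and your case~(i) (``$W_n(v)$ is a singleton, so $u_1=u_2$'') is a clean shortcut equivalent to the paper's direct argument.
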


\begin{proof}
Since $u_1\in W_n(w)$ is of type $(i_1,j_1)$, we get that $u_1=l_{i_1}m_wr_{j_1}$ with $l_{i_1}\in dL_{i_1}(w)$, $\{m_w\}=M_n(w)$ and $r_{j_1}\in dR_{j_1}(w)$. Since $u_1\in W_n(v)$, we get that $u_1=lm_vr$ with $l\in L_{n}(v)$, $\{m_v\}=M_n(v)$ and $r\in R_{n}(v)$.
\begin{enumerate}
\item Let $v$ be $b$-truncated. Then $l=e$ and $r=e$, so $l_{i_1}m_wr_{j_1}=m_v$.\begin{enumerate}
\item If $i_1\neq i_2$, then $l_{i_1}\neq l_{i_2}$ for all $l_{i_2}\in dL_{i_2}(w)$. By \ref{lmrred} it follows that $u_2\neq m_v$ and therefore $u_2\notin W_n(v)$.
\item If $j_1\neq j_2$, then $r_{j_1}\neq r_{j_2}$ for all $r_{j_2}\in dR_{j_2}(w)$. By \ref{lmrred} it follows that $u_2\neq m_v$ and therefore $u_2\notin W_n(v)$.
\end{enumerate}
\item Let $v$ be $b$-left. Then $r=e$, so $l_{i_1}m_wr_{j_1}=lm_v$. If $r_{j_1}=e$ (amongst others if $w$ is $b$-truncated), then by \ref{LRexpl} $j_1=j_2=0$. So we can assume that $r_{j_1}\neq e$. Since $m_v$ is necessarily $b$-truncated, we also know that $r_{j_1}$ is not a power of $b$. So by \ref{lmrinWnv} we get that $\lvert r_{j_1}\rvert_b=1$. If $w$ is $b$-and-$b$, then either $l_{i_1}$ is a power of $b$ or $\lvert l_{i_1} \rvert_b=1$. In the first case also $l$ is a power of $b$ by \ref{lmrinWnv} and therefore $\lvert m_v\rvert_b=\lvert lm_v\rvert_b=\lvert l_{i_1}m_wr_{j_1}\rvert_b\geq 1+1=2$. In the second case $\lvert lm_v\rvert_b=\lvert l_{i_1}m_wr_{j_1}\rvert_b\geq 1+1+1=3$. Since $\lvert l\rvert_b\leq 1$, we get $\lvert m_v\rvert_b\geq 2$. If $\lvert w \rvert_b\geq 2$, then by \ref{lmrred} $\lvert lm_v\rvert_b=\lvert l_{i_1}m_wr_{j_1}\rvert_b\geq 2+1=3$. Again we get that $\lvert m_v\rvert_b\geq 2$.So we can conclude that $\lvert m_v\rvert_b\geq 2$. Since $\lvert r_{j_1}\rvert_b\leq 1$, there is $x_v$ such that $x_vr_{j_1}=m_v$ is reduced. If $j_1\neq j_2$, then $r_{j_1}\neq r_{j_2}$ for all $r_{j_2}\in dR_{j_2}(w)$. By \ref{lmrred} it follows that $u_2\neq l_vm_v$ for every $l_v\in L_n(v)$. So $u_2\notin W_n(v)$.
\item Let $v$ be right-$b$. Then $l=e$, so $l_{i_1}m_wr_{j_1}=m_vr$. If $l_{i_1}=e$, then by \ref{LRexpl} $i_1=i_2=0$. So we can assume that $l_{i_1}\neq e$. Since $m_v$ is necessarily $b$-truncated, we also know that $l_{i_1}$ is not a power of $b$. So by \ref{lmrinWnv} we get that $\lvert l_{i_1}\rvert_b=1$. If $w$ is $b$-and-$b$, then either $r_{j_1}$ is a power of $b$ or $\lvert r_{j_1} \rvert_b=1$. In the first case also $r$ is a power of $b$ by \ref{lmrinWnv} and therefore $\lvert m_v\rvert_b=\lvert m_vr\rvert_b=\lvert l_{i_1}m_wr_{j_1}\rvert_b\geq 1+1=2$. In the second case $\lvert m_vr\rvert_b=\lvert l_{i_1}m_wr_{j_1}\rvert_b\geq 1+1+1=3$. Since $\lvert r\rvert_b\leq 1$, we get $\lvert m_v\rvert_b\geq 2$. If $\lvert w \rvert_b\geq 2$, then by \ref{lmrred} $\lvert m_vr\rvert_b=\lvert l_{i_1}m_wr_{j_1}\rvert_b\geq 2+1=3$. Again we get that $\lvert m_v\rvert_b\geq 2$.
So we can conclude that $\lvert m_v\rvert_b\geq 2$. Since $\lvert l_{i_1}\rvert_b\leq 1$, there is $x_v$ such that $l_{i_1}x_v=m_v$ is reduced. If $i_1\neq i_2$, then $l_{i_1}\neq l_{i_2}$ for all $l_{i_2}\in dL_{i_2}(w)$. By \ref{lmrred} it follows that $u_2\neq m_vr_v$ for every $r_v\in R_n(v)$. So $u_2\notin W_n(v)$.
\end{enumerate}
\end{proof}

In terms of our visualization the Lemma \ref{intrect} gives us the shape of an intersection of rectangles. We get the following:\begin{enumerate}
\item The intersection of a cell with a rectangle is empty or a cell.
\item The intersection of a column with a rectangle is empty or a column or a cell
\item The intersection of a row with a rectangle is empty or a row or a cell.
\end{enumerate} What we proved so far would actually already be enough to prove the following: Let $I$ be a finite set in normal form and $b\notin I$. Then there is $v_0\in I$ such that for $n$ big enough a square in the rectangle $W_n(v_0)$ is not intersected by any other rectangle. So we could already find an element $u_n \in \mathrm{supp}(\alpha_n)$. But this is not all what we wanted: We also need information on the length of $u_n$. This will be provided by our study of the length of squares in Section \ref{Tsingle}:

\begin{cor}\label{lengthrect}
Let $w\equiv_S (m_0,s_1,...,s_k,m_k)$ be a reduced word and assume that $w$ is not a power of $b$. Let $N=\max (\lvert m_0\rvert, \lvert m_k\rvert)$. Then for $n>2N$ we get the following:\begin{enumerate}[(i)]
\item Let $w$ be $b$-truncated. Let $u\in W_n(w)$. Then \[\lvert u\rvert_S=\lvert(\phi w)_n\rvert_S.\]
\item Let $w$ be $b$-left. Let $u\in W_n(w)$ be of type $(i,0)$ with $\max (1,\lvert m_0 \rvert)<i\leq n$. Then \[\lvert u\rvert_S=\lvert(\phi w)_n\rvert_S-(n-i).\]
\item Let $w$ be right-$b$. Let $u\in W_n(w)$ be of type $(0,j)$ with $\max (1,\lvert m_k \rvert)<j\leq n$. Then \[\lvert u\rvert_S=\lvert(\phi w)_n\rvert_S-(n-j).\]
\item Let $w$ be $b$-and-$b$. Let $u\in W_n(w)$ be of type $(i,j)$ with $\max (1,\lvert m_0 \rvert)<i\leq n$ and $\max (1,\lvert m_k \rvert)<j\leq n$. Then \[\lvert u\rvert_S=\lvert(\phi w)_n\rvert_S-(n-i)-(n-j).\]
\end{enumerate}
\end{cor}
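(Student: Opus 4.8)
The plan is to treat Corollary~\ref{lengthrect} as pure bookkeeping with the results of Subsection~\ref{studynsupportw}. The three ingredients are: \ref{def:ijlength}, which says that the length of an element $u\in W_n(w)$ depends only on its type $(i,j)$; \ref{nnbiggest}, which locates the square realising $\lvert(\phi w)_n\rvert_S$ in the lower-right corner of the rectangle $W_n(w)$; and \ref{LRgrow}, which tells us exactly how the length of squares changes as we move toward that corner. So in each of the four cases I would identify the type $(i,j)$ of the given $u$, write $\lvert u\rvert_S=\lvert(i,j)\rvert_{W_n(w)}$, and subtract it from the length of the corner square using \ref{LRgrow}.

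For $w$ $b$-truncated the rectangle is a single cell by \ref{4shapes}, so $u$ is of type $(0,0)$ and $\lvert u\rvert_S=\lvert(0,0)\rvert_{W_n(w)}=\lvert(\phi w)_n\rvert_S$ by \ref{nnbiggest}. For $w$ $b$-left, $u$ has type $(i,0)$ with $\max(1,\lvert m_0\rvert)<i\le n$; since $n>2N\ge\lvert m_0\rvert$ one has $\max(1,\lvert m_0\rvert)\le i,n\le n$, so I would apply \ref{LRgrow}(\ref{LRgrow1}) to the pair of types $(n,0)$ and $(i,0)$ to get $\lvert(n,0)\rvert_{W_n(w)}-\lvert(i,0)\rvert_{W_n(w)}=n-i$, and then \ref{nnbiggest} gives $\lvert u\rvert_S=\lvert(\phi w)_n\rvert_S-(n-i)$. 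The right-$b$ case is identical after exchanging the roles of the left and right $n$-supports, using \ref{LRgrow}(\ref{LRgrow2}).

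For $w$ $b$-and-$b$ the longest square sits at $(n,n)$ by \ref{nnbiggest}, and $u$ has type $(i,j)$ with $\max(1,\lvert m_0\rvert)<i\le n$ and $\max(1,\lvert m_k\rvert)<j\le n$. The one place that needs care is that I should \emph{not} invoke part (iii) of \ref{LRgrow} directly, since it requires all coordinates to be at least $\max(1,N)$, whereas here $i$ is only controlled against $\lvert m_0\rvert$ and $j$ only against $\lvert m_k\rvert$. Instead I would write
\begin{equation*}
\lvert(n,n)\rvert_{W_n(w)}-\lvert(i,j)\rvert_{W_n(w)}=\bigl(\lvert(n,n)\rvert_{W_n(w)}-\lvert(i,n)\rvert_{W_n(w)}\bigr)+\bigl(\lvert(i,n)\rvert_{W_n(w)}-\lvert(i,j)\rvert_{W_n(w)}\bigr),
\end{equation*}
apply \ref{LRgrow}(\ref{LRgrow1}) to the first bracket (valid since $i,n\ge\max(1,\lvert m_0\rvert)$) and \ref{LRgrow}(\ref{LRgrow2}) to the second (valid since $j,n\ge\max(1,\lvert m_k\rvert)$), obtaining $(n-i)+(n-j)$, and conclude with \ref{nnbiggest}. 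Since the whole argument reduces to already-established lemmas, I expect no real obstacle beyond keeping these index inequalities straight in the last case.
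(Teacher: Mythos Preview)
Your proposal is correct and follows essentially the same approach as the paper: invoke \ref{4shapes}/\ref{def:ijlength} to pass to the length of a square, use \ref{nnbiggest} to identify the corner square realising $\lvert(\phi w)_n\rvert_S$, and subtract via \ref{LRgrow}. Your treatment of case (iv) is in fact slightly more careful than the paper's, which cites \ref{LRgrow} without specifying a part; your observation that part~(iii) of \ref{LRgrow} is not directly applicable (since $i$ is only bounded below by $\lvert m_0\rvert$ and $j$ only by $\lvert m_k\rvert$) and your two-step workaround via parts~(i) and~(ii) are exactly right.
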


\begin{proof}
The proof consists in a few computations involving \ref{nnbiggest} and \ref{LRgrow}.
\begin{enumerate}[(i)]
\item Let $w$ be $b$-truncated. Then by \ref{4shapes} we know that $u$ is of type $(0,0)$. By \ref{nnbiggest} we know $\lvert(\phi w)_n\rvert_S=\lvert (0,0)\rvert_{W_n(w)}$. So \begin{equation}
\lvert u\rvert_S=\lvert(\phi w)_n\rvert_S.
\end{equation}
\item Let $w$ be $b$-left. Then by \ref{nnbiggest} we know $\lvert(\phi w)_n\rvert_S=\lvert (n,0)\rvert_{W_n(w)}$. By \ref{LRgrow} we know $\lvert (n,0)\rvert_{W_n(w)}-\lvert (i,0)\rvert_{W_n(w)}=n-i$. So \begin{equation}
\lvert u\rvert_S=\lvert (i,0)\rvert_{W_n(w)}=\lvert(\phi w)_n\rvert_S-(n-i).
\end{equation}
\item Let $w$ be right-$b$. Then by \ref{nnbiggest} we know $\lvert(\phi w)_n\rvert_S=\lvert (0,n)\rvert_{W_n(w)}$. By \ref{LRgrow} we know $\lvert (0,n)\rvert_{W_n(w)}-\lvert (0,j)\rvert_{W_n(w)}=n-j$. So \begin{equation}
\lvert u\rvert_S=\lvert (0,j)\rvert_{W_n(w)}=\lvert(\phi w)_n\rvert_S-(n-j).
\end{equation}
\item Let $w$ be $b$-and-$b$. Then by \ref{nnbiggest} we know $\lvert(\phi w)_n\rvert_S=\lvert (n,n)\rvert_{W_n(w)}$. By \ref{LRgrow} we know $\lvert (n,n)\rvert_{W_n(w)}-\lvert (i,j)\rvert_{W_n(w)}=n-i+n-j$. So \begin{equation}
\lvert u\rvert_S=\lvert (i,j)\rvert_{W_n(w)}=\lvert(\phi w)_n\rvert_S-(n-i)-(n-j).
\end{equation}
\end{enumerate}
\end{proof}

Now we are ready to prove the main result of this subsection. The intuition behind Proposition \ref{celllinebox} is the following:\begin{enumerate}
\item Using $k$ rows and cells to cover a long enough column, a square of distance at most $k$ to the lowest square in the column remains uncovered.
\item Using $k$ columns and cells to cover a long enough row, a square of distance at most $k$ to the rightmost square remains uncovered.
\item Using $k$ rows, columns and cells to cover a big enough box, a square of distance at most $k$ to the square in the lower, right corner of the box remains uncovered.
\end{enumerate}

\begin{prop}\label{celllinebox}
Let $I$ be a finite set in normal form and let $b\notin I$. If $I$ contains no $b$-and-$b$ element, but does contain a $b$-left (right-$b$) element, we assume that $I$ contains a $b$-left (right-$b$) element of $b$-length bigger than 1. Then there is $N\in \mathbb{N}$ and $v_0\in I$ such that for all $n>N$ we can find $u_n\in W_n(v_0)$ with \begin{enumerate}
\item\label{celllinebox1} $u_n \notin W_n(I\setminus \{v_0\})\cup W_n(I)^{-1}$
\item\label{celllinebox2} $\lvert u_n\rvert_S\geq \lvert(\phi v_0)_n\rvert_S-2\lvert I \rvert$
\end{enumerate}
\end{prop}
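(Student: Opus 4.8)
The plan is to work with the visualization of $W_n(I)$ as a union of rectangles (cells, columns, rows, boxes, in the sense of \ref{4shapes}) and to choose $v_0$ of the "largest" shape occurring in $I$, so that near the far corner of the rectangle $W_n(v_0)$ only strictly lower–dimensional rectangles of the cover can interfere. I would split into four cases according to which kinds of elements $I$ contains: (1) $I$ has a $b$-and-$b$ element; (2) $I$ has no $b$-and-$b$ element but has a $b$-left element (hence, by the hypothesis on $I$, one of $b$-length $>1$); (3) $I$ has no $b$-and-$b$ element but has a right-$b$ element of $b$-length $>1$; (4) $I$ consists only of $b$-truncated elements. In case $(k)$ one takes $v_0$ to be respectively a box, a column, a row, a cell.

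Case (1) is the heart of the matter. Fix any $b$-and-$b$ element $v_0\equiv_S(m_0,s_1,\dots,s_l,m_l)$ and set $N_0=\max(|m_0|,|m_l|)$. First I would determine which rectangles among $\{W_n(v):v\in I\setminus\{v_0\}\}$ and $\{W_n(v)^{-1}=W_n(v^{-1}):v\in I\}$ can meet the box $W_n(v_0)$: by \ref{kindintempty} the other $b$-and-$b$ boxes are disjoint from $W_n(v_0)$, and by \ref{oppkindintempty} (applied to the opposite-kind pair $v_0,v$ with $v$ $b$-and-$b$, and to the pair $v_0,v_0$) the inverse $b$-and-$b$ boxes are disjoint as well; what is left are columns, rows and cells coming from the $b$-left, right-$b$ and $b$-truncated elements of $I$ and their inverses. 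Then \ref{intrect} — applicable since $v_0$ is $b$-and-$b$ — shows that each such column meets $W_n(v_0)$ only in squares with one fixed second coordinate, each such row only in squares with one fixed first coordinate, and each such cell in at most one square. Writing $b',c',d'$ for the numbers of $b$-left, right-$b$, $b$-truncated elements of $I$, the "forbidden" data is thus: $\le b'+c'$ second coordinates, $\le b'+c'$ first coordinates, $\le 2d'$ single squares. A two-step pigeonhole finishes it. Taking $n>N:=\max(2N_0,\,2|I|+\max(1,|m_0|,|m_l|))$, among the $b'+c'+1$ values $n-(b'+c'),\dots,n$ I choose $j_0$ not among the forbidden second coordinates; then among the $(b'+c')+2d'+1$ values $n-[(b'+c')+2d'],\dots,n$ I choose $i_0$ so that $(i_0,j_0)$ avoids all forbidden first coordinates and all forbidden squares lying in the slab $\{(i,j_0)\}$. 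Any $u_n\in W_n(v_0)$ of type $(i_0,j_0)$ (a nonempty type since $v_0$ is $b$-and-$b$) then satisfies \ref{celllinebox1}, and since $i_0,j_0$ lie in the ranges demanded by \ref{lengthrect}(iv) we get $|u_n|_S=|(\phi v_0)_n|_S-(n-i_0)-(n-j_0)\ge|(\phi v_0)_n|_S-2(b'+c'+d')\ge|(\phi v_0)_n|_S-2|I|$, which is \ref{celllinebox2}.

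Cases (2)–(3) are the one-dimensional shadow of this argument. In (2), $v_0$ is a $b$-left element with $|v_0|_b\ge2$, so $W_n(v_0)$ is a column consisting exactly of the types $(i,0)$, $0\le i\le n$ (by \ref{4shapes} and \ref{LRexpl}); the other columns (from $b$-left elements) and the inverse columns (from right-$b$ elements — and normal form forces $v_0^{-1}\notin I$, so no inverse column equals $W_n(v_0)$) are disjoint from $W_n(v_0)$ by \ref{kindintempty}, while by \ref{intrect}(i),(iii) — again using $|v_0|_b\ge2$ — every row, cell and inverse row meets $W_n(v_0)$ in at most one square. Hence at most $2|I|$ of the $n+1$ squares of the column are forbidden, so among the $2|I|+1$ squares of type $(i,0)$ with $n-2|I|\le i\le n$ one is free, and its length is $|(\phi v_0)_n|_S-(n-i)\ge|(\phi v_0)_n|_S-2|I|$ by \ref{lengthrect}(ii). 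Case (3) is the mirror statement with rows and \ref{lengthrect}(iii). Case (4) is immediate: each $W_n(v)=\{(\tau_b\circ T^{-1}\circ\tau_b)^n(v)\}$ is a single word, this map is injective on $b$-truncated words (the letters $s_i$ are preserved, the interleaving $b$-powers are determined), and $v^{-1}\ne v_0$ for all $v\in I$ by torsion-freeness and normal form condition (3); so the unique element of $W_n(v_0)$ is not in $W_n(I\setminus\{v_0\})\cup W_n(I)^{-1}$ and has length exactly $|(\phi v_0)_n|_S$. The main obstacle I anticipate is the bookkeeping in case (1): invoking the disjointness lemmas \ref{kindintempty} and \ref{oppkindintempty} for precisely the right pairs of elements when enumerating the rectangles that actually touch $W_n(v_0)$, and then arranging the pigeonhole so that the corner square $(i_0,j_0)$ simultaneously dodges every forbidden coordinate and square while keeping $(n-i_0)+(n-j_0)\le2|I|$; the hypothesis that a $b$-left/right-$b$ $v_0$ of $b$-length $>1$ exists enters exactly to make \ref{intrect} applicable in cases (2)–(3).
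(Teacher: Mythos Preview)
Your argument is correct and follows the same architecture as the paper's proof: identical four-case split by the largest shape present in $I$, identical choice of $v_0$, and the same appeals to \ref{kindintempty}, \ref{oppkindintempty}, \ref{intrect}, and \ref{lengthrect} to control how the remaining rectangles can meet $W_n(v_0)$. The only cosmetic difference is that you locate the free square near the far corner by a pigeonhole count on forbidden coordinates and squares, whereas the paper runs a short iterative algorithm (start at the corner square, step once in the appropriate direction each time the current square is found to lie in some $W_n(v_1)$ or $W_n(v_1^{-1})$, terminating in at most $2|I|$ steps); both yield the same bound $|u_n|_S\ge|(\phi v_0)_n|_S-2|I|$.
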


\begin{proof}
Every element $v_i\in I$ can be written as $v_i\equiv_S (m_{i,0},s_{i,0},...,s_{i,l},m_{i,l})$. So we can choose $N=2\lvert I\rvert+2\max \bigcup_{v_i\in I} \{\lvert m_{i,0}\rvert,\lvert m_{i,l}\rvert\}$.

\begin{case} All elements of $I$ are $b$-truncated.

Then choose a $b$-truncated element $v_0\in I$. For $n>N$, take $u_n \in W_n(v_0)$. Since $I$ is in normal form and all elements of $I$ are $b$-truncated, we get by \ref{kindintempty} that $u_n \notin W_n(I\setminus \{v_0\})$ and by \ref{oppkindintempty} that $u_n \notin W_n(I^{-1})=W_n(I)^{-1}$. This shows \ref{celllinebox1}. By \ref{lengthrect} we have $\lvert u_n\rvert_S=\lvert(\phi v_0)_n\rvert_S$. This shows \ref{celllinebox2}.
\end{case}

\begin{case} There is a $b$-left element, but no $b$-and-$b$ element in $I$.

Then choose a $b$-left element $v_0\in I$. We give an algorithm to find $u_n\in W_n(v_0)$ with the desired property \ref{celllinebox1}. (Note that our algorithm does not necessarily find the maximal $u_n\in W_n(v_0)$ that satisfies \ref{celllinebox1}.)

Step 1: Pick $x_1 \in W_n(v_0)$ such that $x_1$ is of type $(n,1)$. If $x_1 \notin W_n(I\setminus \{v_0\})\cup W_n(I)^{-1}$, then set $u_n=x_1$ and stop. If $x_1 \in W_n(I\setminus \{v_0\})\cup W_n(I)^{-1}$, we have wlog $x_1\in W_n(v_1)\cup W_n(v_1)^{-1}$. We know that $v_1$ is either $b$-truncated, $b$-left or right-$b$. Assume $x_1\in W_n(v_1)$ for $v_0\neq v_1$. Then by \ref{kindintempty} we know that $v_0$ and $v_1$ are not of one kind, so $v_1$ is either $b$-truncated or right-$b$. In both cases we get by \ref{intrect} that $y\notin W_n(v_1)$ for all $y\in W_n(v_0)$, which are of type $(i,1)$ for $i<n$. In this case set $D_1=\{v_1\}\subset I$ and $E_1=\varnothing \subset I^{-1}$ and pick $x_2 \in W_n(v_0)$ such that $x_2$ is of type $(n-1,1)$. Proceed with Step 2. Otherwise we have $x_1\in W_n(v_1^{-1})$. Then by \ref{oppkindintempty} we know that $v_0$ and $v_1$ are not of opposite kind, so $v_1$ is either $b$-truncated or $b$-left. In both cases we get by \ref{intrect} that $y\notin W_n(v_1^{-1})$ for all $y\in W_n(v_0)$, which are of type $(i,1)$ for $i<n$. In this case set $D_1=\varnothing\subset I$ and $E_1=\{v_1^{-1}\} \subset I^{-1}$ and pick $x_2 \in W_n(v_0)$ such that $x_2$ is of type $(n-1,1)$. Proceed with Step 2.

Step $k$ for $1<k\leq 2\lvert I\rvert-1$: In the previous step we got $x_k \in W_n(v_0)$ such that $x_k$ is of type $(n-k+1,1)$. We know by the previous steps that there are $D_{k-1}\subset I$ and $E_{k-1}\subset I^{-1}$ such that $\lvert D_{k-1}\rvert +\lvert E_{k-1}\rvert=k-1$ and $x_k\notin W_n(I\setminus D_{k-1})\cup W_n(I^{-1}\setminus E_{k-1})$.If $x_k \notin W_n(I\setminus \{v_0\})\cup W_n(I)^{-1}$, then set $u_n=x_k$ and stop. Otherwise we repeat the argument from above: We get that there are two sets $D_{k-1}\subset D_{k}\subset I$ and $E_{k-1}\subset E_{k}\subset I^{-1}$ such that $\lvert D_{k}\rvert +\lvert E_{k}\rvert=k$ and $y\notin W_n(I\setminus D_{k})\cup W_n(I^{-1}\setminus E_{k})$ for all $y\in W_n(v_0)$, which are of type $(i,1)$ for $i<n-k+1$. Pick $x_{k+1} \in W_n(v_0)$ such that $x_{k+1}$ is of type $(n-k,1)$.

Step $2\lvert I\rvert$: In the previous step we got $x_{2\lvert I\rvert} \in W_n(v_0)$ such that $x_{2\lvert I\rvert}$ is of type $(n-2\lvert I\rvert+1,1)$. We also got that there are $D_{2\lvert I\rvert-1}\subset I$ and $E_{2\lvert I\rvert-1}\subset I^{-1}$ such that $\lvert D_{2\lvert I\rvert-1}\rvert +\lvert E_{2\lvert I\rvert-1}\rvert=2\lvert I\rvert-1$ and $x_{2\lvert I\rvert}\notin W_n(I\setminus D_{2\lvert I\rvert-1})\cup W_n(I^{-1}\setminus E_{2\lvert I\rvert-1})$. Since $x_{2\lvert I\rvert} \in W_n(v_0)$, we have $W_n(I\setminus D_{2\lvert I\rvert-1})\cup W_n(I^{-1}\setminus E_{2\lvert I\rvert-1})=W_n(I\setminus \{v_0\})\cup W_n(I)^{-1}$. So set $u_n=x_{2\lvert I\rvert}$ and stop.

This algorithm terminates after at most $2\lvert I\rvert$ steps. So we found $u_n\in W_n(v_0)$ that satisfies \ref{celllinebox1}. By the nature of our algorithm, we know that $u_n$ is of type $(i,1)$ for $i\geq n-2\lvert I\rvert+1$. Since $n>N$, we can use \ref{lengthrect} and get: \begin{equation}\begin{split} \lvert u_n\rvert_S &=\lvert(\phi v_0)_n\rvert_S-(n-i)\\
&=\lvert(\phi v_0)_n\rvert_S-n+i\\
&\geq \lvert(\phi v_0)_n\rvert_S-n+(n-2\lvert I\rvert+1)\\
&=\lvert(\phi v_0)_n\rvert_S-2\lvert I\rvert+1 \end{split}\end{equation} So $u_n$ satisfies \ref{celllinebox2}.
\end{case}

\begin{case} There is a right-$b$ element, but no $b$-left element or $b$-and-$b$ element in $I$.

Then choose a right-$b$ element $v_0\in I$ with $\lvert v_0\rvert_b\geq 2$. We use essentially the same algorithm as above to find $u_n\in W_n(v_0)$ with the desired property \ref{celllinebox1}.

Step 1: Pick $x_1 \in W_n(v_0)$ such that $x_1$ is of type $(1,n)$. If $x_1 \notin W_n(I\setminus \{v_0\})\cup W_n(I)^{-1}$, then set $u_n=x_1$ and stop. If $x_1 \in W_n(I\setminus \{v_0\})\cup W_n(I)^{-1}$, we have wlog $x_1\in W_n(v_1)\cup W_n(v_1)^{-1}$. We know that $v_1$ is either $b$-truncated or right-$b$. Assume $x_1\in W_n(v_1)$ for $v_0\neq v_1$. Then by \ref{kindintempty} we know that $v_0$ and $v_1$ are not of one kind, so $v_1$ is $b$-truncated. By \ref{intrect} we get that $y\notin W_n(v_1)$ for all $y\in W_n(v_0)$, which are of type $(1,j)$ for $j<n$. In this case set $D_1=\{v_1\}\subset I$ and $E_1=\varnothing \subset I^{-1}$ and pick $x_2 \in W_n(v_0)$ such that $x_2$ is of type $(1,n-1)$. Proceed with Step 2. Otherwise we have $x_1\in W_n(v_1^{-1})$. Then $v_1$ is either $b$-truncated or right-$b$. In both cases we get by \ref{intrect} that $y\notin W_n(v_1^{-1})$ for all $y\in W_n(v_0)$, which are of type $(1,j)$ for $j<n$. In this case set $D_1=\varnothing\subset I$ and $E_1=\{v_1^{-1}\} \subset I^{-1}$ and pick $x_2 \in W_n(v_0)$ such that $x_2$ is of type $(1,n-1)$. Proceed with Step 2.

Step $k$ for $1<k\leq 2\lvert I\rvert-1$: In the previous step we got $x_k \in W_n(v_0)$ such that $x_k$ is of type $(1,n-k+1)$. We know by the previous steps that there are $D_{k-1}\subset I$ and $E_{k-1}\subset I^{-1}$ such that $\lvert D_{k-1}\rvert +\lvert E_{k-1}\rvert=k-1$ and $x_k\notin W_n(I\setminus D_{k-1})\cup W_n(I^{-1}\setminus E_{k-1})$. If $x_k \notin W_n(I\setminus \{v_0\})\cup W_n(I)^{-1}$, then set $u_n=x_k$ and stop. Otherwise we repeat the argument from above: We get that there are two sets $D_{k-1}\subset D_{k}\subset I$ and $E_{k-1}\subset E_{k}\subset I^{-1}$ such that $\lvert D_{k}\rvert +\lvert E_{k}\rvert=k$ and $y\notin W_n(I\setminus D_{k})\cup W_n(I^{-1}\setminus E_{k})$ for all $y\in W_n(v_0)$, which are of type $(1,j)$ for $j<n-k+1$. Pick $x_{k+1} \in W_n(v_0)$ such that $x_{k+1}$ is of type $(1,n-k)$.

Step $2\lvert I\rvert$: In the previous step we got $x_{2\lvert I\rvert} \in W_n(v_0)$ such that $x_{2\lvert I\rvert}$ is of type $(1,n-2\lvert I\rvert+1)$. We also got that there are $D_{2\lvert I\rvert-1}\subset I$ and $E_{2\lvert I\rvert-1}\subset I^{-1}$ such that $\lvert D_{2\lvert I\rvert-1}\rvert +\lvert E_{2\lvert I\rvert-1}\rvert=2\lvert I\rvert-1$ and $x_{2\lvert I\rvert}\notin W_n(I\setminus D_{2\lvert I\rvert-1})\cup W_n(I^{-1}\setminus E_{2\lvert I\rvert-1})$. Since $x_{2\lvert I\rvert} \in W_n(v_0)$, we have $W_n(I\setminus D_{2\lvert I\rvert-1})\cup W_n(I^{-1}\setminus E_{2\lvert I\rvert-1})=W_n(I\setminus \{v_0\})\cup W_n(I)^{-1}$. So set $u_n=x_{2\lvert I\rvert}$ and stop.

This algorithm terminates after at most $2\lvert I\rvert$ steps. So we found $u_n\in W_n(v_0)$ that satisfies \ref{celllinebox1}. By the nature of our algorithm, we know that $u_n$ is of type $(1,j)$ for $j\geq n-2\lvert I\rvert+1$. Since $n>N$, we can use \ref{lengthrect} and get: \begin{equation}\begin{split} \lvert u_n\rvert_S &=\lvert(\phi v_0)_n\rvert_S-(n-j)\\
&=\lvert(\phi v_0)_n\rvert_S-n+j\\
&\geq \lvert(\phi v_0)_n\rvert_S-n+(n-2\lvert I\rvert+1)\\
&=\lvert(\phi v_0)_n\rvert_S-2\lvert I\rvert+1 \end{split}\end{equation} So $u_n$ satisfies \ref{celllinebox2}.
\end{case}

\begin{case} There is a $b$-and-$b$ element in $I$.

Then choose a $b$-and-$b$ element $v_0\in I$. Again we use essentially the same algorithm as above to find $u_n\in W_n(v_0)$ with the desired property \ref{celllinebox1}.

Step 1: Pick $x_1 \in W_n(v_0)$ such that $x_1$ is of type $(n,n)$. If $x_1 \notin W_n(I\setminus \{v_0\})\cup W_n(I)^{-1}$, then set $u_n=x_1$ and stop. If $x_1 \in W_n(I\setminus \{v_0\})\cup W_n(I)^{-1}$, we have wlog $x_1\in W_n(v_1)\cup W_n(v_1)^{-1}$. By \ref{kindintempty} and \ref{oppkindintempty} we know that $v_1$ is either $b$-truncated, $b$-left or right-$b$. Assume $x_1\in W_n(v_1)$ for $v_0\neq v_1$. If $v_1$ is $b$-truncated or right-$b$, we get by \ref{intrect} that $y\notin W_n(v_1)$ for all $y\in W_n(v_0)$, which are of type $(i,n)$ for $i<n$. Then pick $x_2 \in W_n(v_0)$ such that $x_2$ is of type $(n-1,n)$. If $v_1$ is $b$-left, we get by \ref{intrect} that $y\notin W_n(v_1)$ for all $y\in W_n(v_0)$, which are of type $(n,j)$ for $j<n$. Then pick $x_2 \in W_n(v_0)$ such that $x_2$ is of type $(n,n-1)$. In both cases set $D_1=\{v_1\}\subset I$ and $E_1=\varnothing \subset I^{-1}$. Proceed with Step 2. Otherwise we have $x_1\in W_n(v_1^{-1})$. If $v_1$ is $b$-truncated or $b$-left, we get by \ref{intrect} that $y\notin W_n(v_1^{-1})$ for all $y\in W_n(v_0)$, which are of type $(i,n)$ for $i<n$. Then pick $x_2 \in W_n(v_0)$ such that $x_2$ is of type $(n-1,n)$. If $v_1$ is right-$b$, we get by \ref{intrect} that $y\notin W_n(v_1)$ for all $y\in W_n(v_0)$, which are of type $(n,j)$ for $j<n$. Then pick $x_2 \in W_n(v_0)$ such that $x_2$ is of type $(n,n-1)$. In both cases set $D_1=\varnothing\subset I$ and $E_1=\{v_1^{-1}\} \subset I^{-1}$. Proceed with Step 2.

Step $k$ for $1<k\leq 2\lvert I\rvert-1$: In the previous step we got $x_k \in W_n(v_0)$ such that $x_k$ is of type $(n-i_k,n-j_k)$ with $i_k+j_k=k-1$. We know by the previous steps that there are $D_{k-1}\subset I$ and $E_{k-1}\subset I^{-1}$ such that $\lvert D_{k-1}\rvert +\lvert E_{k-1}\rvert=k-1$ and $x_k\notin W_n(I\setminus D_{k-1})\cup W_n(I^{-1}\setminus E_{k-1})$. If $x_k \notin W_n(I\setminus \{v_0\})\cup W_n(I)^{-1}$, then set $u_n=x_k$ and stop. Otherwise we repeat the argument from above: We get that there are two sets $D_{k-1}\subset D_{k}\subset I$ and $E_{k-1}\subset E_{k}\subset I^{-1}$ such that $\lvert D_{k}\rvert +\lvert E_{k}\rvert=k$ and $y\notin W_n(I\setminus D_{k})\cup W_n(I^{-1}\setminus E_{k})$ for all $y\in W_n(v_0)$, which are of type $(i,j)$ for either $i<n-i_k,j\leq n-j_k$ or $i\leq n-i_k,j<n-j_k$. Pick $x_{k+1} \in W_n(v_0)$ accordingly of type $(n-i_{k+1},n-j_{k+1})$ equal $(n-i_k-1,n-j_k)$ or $(n-i_k,n-j_k-1)$.

Step $2\lvert I\rvert$: In the previous step we got $x_{2\lvert I\rvert} \in W_n(v_0)$ such that $x_{2\lvert I\rvert}$ is of type $(n-i_{2\lvert I\rvert},n-j_{2\lvert I\rvert})$ with $i_{2\lvert I\rvert}+j_{2\lvert I\rvert}=2\lvert I\rvert-1$. We also got that there are $D_{2\lvert I\rvert-1}\subset I$ and $E_{2\lvert I\rvert-1}\subset I^{-1}$ such that $\lvert D_{2\lvert I\rvert-1}\rvert +\lvert E_{2\lvert I\rvert-1}\rvert=2\lvert I\rvert-1$ and $x_{2\lvert I\rvert}\notin W_n(I\setminus D_{2\lvert I\rvert-1})\cup W_n(I^{-1}\setminus E_{2\lvert I\rvert-1})$. Since $x_{2\lvert I\rvert} \in W_n(v_0)$, we have $W_n(I\setminus D_{2\lvert I\rvert-1})\cup W_n(I^{-1}\setminus E_{2\lvert I\rvert-1})=W_n(I\setminus \{v_0\})\cup W_n(I)^{-1}$. So set $u_n=x_{2\lvert I\rvert}$ and stop.

This algorithm terminates after at most $2\lvert I\rvert$ steps. So we found $u_n\in W_n(v_0)$, that satisfies \ref{celllinebox1}. By the nature of our algorithm, we know that $u_n$ is of type $(i,j)$ for $i+j\geq 2n-2\lvert I\rvert+1$. Since $n>N$, we can use \ref{lengthrect} and get: \begin{equation}\begin{split} \lvert u_n\rvert_S &=\lvert(\phi v_0)_n\rvert_S-(n-i)-(n-j)\\
&=\lvert(\phi v_0)_n\rvert_S-2n+i+j\\
&\geq \lvert(\phi v_0)_n\rvert_S-2n+(2n-2\lvert I\rvert+1)\\
&=\lvert(\phi v_0)_n\rvert_S-2\lvert I\rvert+1 \end{split}\end{equation} So $u_n$ satisfies \ref{celllinebox2}.
\end{case}
\end{proof}

\subsection{The speed of $T^{-1}$}

Once we have found a long enough element in $\mathrm{supp}(\alpha_n)$, it is not too hard to prove that it is in the $b$-truncated end of $\mathrm{supp}(\alpha_n)$. We can use that $\mathrm{supp}(\alpha_n)$ is in the n-support of $I$ and we know the $b$-truncated end of rectangles by Section \ref{Tsingle}.

\begin{lem}\label{nbspeed}
Let $w\equiv_S (m_0,s_1,...,s_k,m_k)$ be a reduced word and assume that $w$ is not a power of $b$. Let $N=\max (\lvert m_0\rvert, \lvert m_k\rvert)$ and \begin{align}
w_0 &=b^{m_0}s_1...s_k,\\
w_k &=s_1...s_kb^{m_k}.
\end{align} Then we get for $n>2N$ \begin{enumerate}[(i)]
\item Let $w$ be $b$-truncated. Then\begin{equation} n_b(W^n(w))=-\infty\end{equation}
\item Let $w$ be $b$-left. Then\begin{equation} n_b(W^n(w))=\lvert m_0\rvert+ \lVert (\phi w_k)_n\rVert_S.\end{equation}
\item Let $w$ be right-$b$. Then\begin{equation} n_b(W^n(w))=\lVert (\phi w_0)_n\rVert_S+\lvert m_k\rvert.\end{equation}
\item Let $w$ be $b$-and-$b$. Then\begin{equation} n_b(W^n(w))=\max(\lvert m_0\rvert + \lVert (\phi w_k)_n\rVert_S, \lVert (\phi w_0)_n\rVert_S+\lvert m_k\rvert).\end{equation}
\end{enumerate}
\end{lem}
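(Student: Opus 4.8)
The plan is to compute $n_b(W_n(w))$ directly from its definition $n_b(W_n(w))=\max\{|v|_S : v\in W_n(w),\ \tau_b(v)\neq v\}$, using Lemma \ref{ijbendWnw} to locate the non-$b$-truncated elements of the $n$-support, Lemma \ref{lmrred} to break the length of such an element additively into its left, middle and right factor, and the observation that the middle factor of $W_n(w)$ and the left/right factors coincide with those of the truncated companions $w_0$ and $w_k$. Case (i) is then immediate: if $w$ is $b$-truncated, Lemma \ref{ijbendWnw}(i) says the unique element of $W_n(w)$ is $b$-truncated, so the maximum is taken over the empty set and equals $-\infty$.

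The structural heart of the argument is the following. Writing $x=\tau_b(w)=s_1b^{m_1}\cdots s_k$, one checks $\tau_b(w)=\tau_b(w_0)=\tau_b(w_k)=x$, and since $\tau_b(x)=x$ the first application of $\tau_b\circ T^{-1}\circ\tau_b$ already identifies all three, so $(\tau_b\circ T^{-1}\circ\tau_b)^n(w)=(\tau_b\circ T^{-1}\circ\tau_b)^n(w_0)=(\tau_b\circ T^{-1}\circ\tau_b)^n(w_k)$ for every $n\geq 1$; call this common word $m_n$, so $M_n(w)=M_n(w_0)=M_n(w_k)=\{m_n\}$. Inspecting Definition \ref{def:phiwn} (equivalently Lemma \ref{LRexpl}), the sets $L_n(\cdot)$ and $R_n(\cdot)$ depend only on $(m_0,s_1)$ and $(s_k,m_k)$ respectively, so $L_n(w)=L_n(w_0)$ and $R_n(w)=R_n(w_k)$, with matching refinements $dL_i(w)=dL_i(w_0)$, $dR_j(w)=dR_j(w_k)$. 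Combining this with Lemma \ref{lmrred} (lengths add along $l\cdot m\cdot r$), with Proposition \ref{disjoint+-} (the weight of an $n$-representative is supported on the whole $n$-support), and with Theorem \ref{tentacle}/\ref{nnbiggest} applied to $w_0$ and $w_k$ — the relevant bounds $2|m_0|$ and $2|m_k|$ are both $\leq 2N$, so they are available for $n>2N$ — one gets
\[
\|(\phi w_0)_n\|_S=|m_n|_S+\max_{0\leq i\leq n}\{|l_i|_S : l_i\in dL_i(w)\},\qquad \|(\phi w_k)_n\|_S=|m_n|_S+\max_{0\leq j\leq n}\{|r_j|_S : r_j\in dR_j(w)\},
\]
which in the degenerate $b$-truncated cases read $\|(\phi w_k)_n\|_S=|m_n|_S$ and $\|(\phi w_0)_n\|_S=|m_n|_S$ (the corresponding factor being trivial).

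With this in hand I would dispatch (ii)--(iv). For (ii), $w$ is $b$-left so $m_k=0$, $R_n(w)=\{e\}$, $w_k=x$ is $b$-truncated, and Lemma \ref{ijbendWnw}(ii) singles out the type-$(0,0)$ elements, i.e. the single word $b^{m_0}m_n$ of length $|m_0|+|m_n|_S=|m_0|+\|(\phi w_k)_n\|_S$ by Lemma \ref{lmrred}. Case (iii) is the mirror image. For (iv), $w$ is $b$-and-$b$ and Lemma \ref{ijbendWnw}(iv) says the non-$b$-truncated elements are those of type $(i,0)$ or $(0,j)$; a type-$(i,0)$ element is $l_im_nb^{m_k}$ of length $|l_i|_S+|m_n|_S+|m_k|$, so $\max_{0\leq i\leq n}|(i,0)|_{W_n(w)}=\|(\phi w_0)_n\|_S+|m_k|$ by the displayed identity, and symmetrically $\max_{0\leq j\leq n}|(0,j)|_{W_n(w)}=|m_0|+\|(\phi w_k)_n\|_S$; taking the maximum of the two yields the claim.

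I expect the main obstacle to be the bookkeeping rather than any single inference: correctly extracting from Definition \ref{def:phiwn} that $L_n$ and $R_n$ depend only on the boundary data, verifying rigorously that $M_n(w)$, $M_n(w_0)$ and $M_n(w_k)$ genuinely coincide (the subtle point being that $\tau_b(x)=x$ lets the very first $\tau_b\circ T^{-1}\circ\tau_b$ step absorb the difference between $w$, $w_0$, $w_k$), and confirming in each of the four shapes that the non-$b$-truncated elements identified by Lemma \ref{ijbendWnw} are exactly the words that realise the maximum defining $n_b$. Once the reductions to $w_0$ and $w_k$ are set up, the hypothesis $n>2N$ enters only to guarantee (via Theorem \ref{tentacle}) that $(\phi w_0)_n$ and $(\phi w_k)_n$ are reduced, so that $\|(\phi w_0)_n\|_S$ and $\|(\phi w_k)_n\|_S$ are literally the maxima of word lengths over their $n$-supports.
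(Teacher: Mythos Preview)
Your proposal is correct and follows essentially the same route as the paper: both hinge on the structural observation that $L_n(w)=L_n(w_0)$, $R_n(w)=R_n(w_k)$ and $M_n(w)=M_n(w_0)=M_n(w_k)$, then use Lemma~\ref{ijbendWnw} to pick out the non-$b$-truncated squares and Lemma~\ref{lmrred} to decompose their lengths. Your treatment of case~(iv) is in fact marginally cleaner than the paper's: the paper first argues via \ref{LRgrow}/\ref{LRanlauf} that the maxima over types $(i,0)$ and $(0,j)$ are attained at $i=n$ and $j=n$, and only then invokes \ref{nnbiggest}, whereas you pass directly from $\max_i|(i,0)|_{W_n(w)}=\max_i|l_i|_S+|m_n|_S+|m_k|$ to $\lVert(\phi w_0)_n\rVert_S+|m_k|$ without ever locating the maximising index.

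One small correction: your explanation of where the hypothesis $n>2N$ enters is off. You say it is needed so that $(\phi w_0)_n$ and $(\phi w_k)_n$ are reduced and hence $\lVert\cdot\rVert_S$ equals the maximum word length over the $n$-support. But $\lVert(\phi w_0)_n\rVert_S$ is \emph{always} that maximum, by definition of $\lVert\cdot\rVert_S$ together with Proposition~\ref{disjoint+-} (which you already cite); reducedness concerns the \emph{reduced} length $|\cdot|_S$, which never appears in the statement. In your argument the bound $n>2N$ is actually never used, so you have proved the identities for all $n\geq 1$. In the paper's version, $n>2N$ genuinely enters in case~(iv) because of the appeal to \ref{LRanlauf}; your shortcut bypasses that dependence.
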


\begin{proof}
The main observation for this proof is that for $0\leq i,j,l\leq n$ we have that\begin{enumerate}
\item $L_i(w_0)=L_i(w)$, $M_l(w_0)=M_l(w)$ and $R_{j}(w_0)=\{e\}$,
\item $L_i(w_k)=\{e\}$, $M_l(w_k)=M_l(w)$ and $R_{j}(w_k)=R_{n}(w)$.\end{enumerate} Let $l_i\in dL_{i}(w)$ and $r_j\in dR_{j}$. Then it follows from the above that \begin{equation}\label{ww0wk}
\lvert (i,j)\rvert_{W_n(w)}=\lvert l_i\rvert_S + \lvert (0,j)\rvert_{W_n(w_k)}=\lvert (i,0)\rvert_{W_n(w_0)}+\lvert r_j\rvert_S.\end{equation} We get the following:
\begin{enumerate}[(i)]
\item Let $w$ be $b$-truncated. Then by \ref{ijbendWnw} the element of $W_n(w)$ is $b$-truncated. It follows that $n_b(W^n(w))=-\infty.$
\item Let $w$ be $b$-left. Then by \ref{ijbendWnw} every not $b$-truncated element of $W_n(w)$ is $(0,0)$. So $n_b(W^n(w))=\lvert (0,0)\rvert_{W_n(w)}$. By \ref{LRlength} we know that $\lvert l_0\rvert_S=\lvert m_0\rvert$ for $l_0\in dL_{0}(w)$. Since $w$ is $b$-left, we know that $w_k$ is $b$-truncated. By \ref{nnbiggest} we get that $\lvert(\phi w_k)_n\rvert_S=\lvert (0,0)\rvert_{W_n(w_k)}$. Combining these three facts with \ref{ww0wk} we get \begin{equation} n_b(W^n(w))=\lvert (0,0)\rvert_{W_n(w)}=\lvert l_0\rvert_S + \lvert (0,0)\rvert_{W_n(w_k)}=\lvert m_0\rvert+\lVert (\phi w_k)_n\rVert_S.\end{equation}
\item Let $w$ be right-$b$. Then by \ref{ijbendWnw} every not $b$-truncated element of $W_n(w)$ is $(0,0)$. So $n_b(W^n(w))=\lvert (0,0)\rvert_{W_n(w)}$. By \ref{LRlength} we know that $\lvert r_0\rvert_S=\lvert m_k\rvert$ for $r_0\in dR_{0}(w)$. Since $w$ is right-$b$, we know that $w_0$ is $b$-truncated. By \ref{nnbiggest} we get that $\lvert(\phi w_0)_n\rvert_S=\lvert (0,0)\rvert_{W_n(w_0)}$. Combining these three facts with \ref{ww0wk} we get \begin{equation} n_b(W^n(w))=\lvert (0,0)\rvert_{W_n(w)}=\lvert (0,0)\rvert_{W_n(w_0)}+\lvert r_0\rvert_S=\lVert (\phi w_k)_n\rVert_S+\lvert m_k\rvert.\end{equation}
\item Let $w$ be $b$-and-$b$. Then by \ref{ijbendWnw} every not $b$-truncated element of $W_n(w)$ is $(i,0)$ or $(0,j)$ for $0\leq i,j\leq n$. Using \ref{LRgrow} and \ref{LRanlauf} in the same way as in \ref{nnbiggest}, we get that \begin{align}
\lvert (0,j)\rvert_{W_n(w)}&\leq \lvert (0,n)\rvert_{W_n(w)}\\
\lvert (i,0)\rvert_{W_n(w)}&\leq \lvert (n,0)\rvert_{W_n(w)}
\end{align} So \begin{equation} n_b(W^n(w))=\max(\lvert (0,n)\rvert_{W_n(w)},\lvert (n,0)\rvert_{W_n(w)}).\end{equation} We compute both terms:\begin{enumerate}
\item By \ref{LRlength} we know that $\lvert l_0\rvert_S=\lvert m_0\rvert$ for $l_0\in dL_{0}(w)$. Since $w$ is $b$-and-$b$, we know that $w_k$ is right-$b$. By \ref{nnbiggest} we get that $\lvert(\phi w_k)_n\rvert_S=\lvert (0,n)\rvert_{W_n(w_k)}$. Combining these two facts with \ref{ww0wk} we get \begin{equation} \lvert (0,n)\rvert_{W_n(w)}=\lvert l_0\rvert_S + \lvert (0,n)\rvert_{W_n(w_k)}=\lvert m_0\rvert+\lVert (\phi w_k)_n\rVert_S.\end{equation}
\item By \ref{LRlength} we know that $\lvert r_0\rvert_S=\lvert m_k\rvert$ for $r_0\in dR_{0}(w)$. Since $w$ is $b$-and-$b$, we know that $w_0$ is $b$-left. By \ref{nnbiggest} we get that $\lvert(\phi w_0)_n\rvert_S=\lvert (n,0)\rvert_{W_n(w_0)}$. Combining these two facts with \ref{ww0wk} we get \begin{equation} \lvert (n,0)\rvert_{W_n(w)}=\lvert (n,0)\rvert_{W_n(w_0)}+\lvert r_0\rvert_S=\lVert (\phi w_0)_n\rVert_S+\lvert m_k\rvert.\end{equation}
\end{enumerate} It follows that \begin{equation}\begin{split} n_b(W^n(w))&=\max(\lvert (0,n)\rvert_{W_n(w)},\lvert (n,0)\rvert_{W_n(w)})\\
&=\max(\lvert m_0\rvert + \lVert (\phi w_k)_n\rVert_S, \lVert (\phi w_0)_n\rVert_S+\lvert m_k\rvert).\end{split}\end{equation}\end{enumerate}
\end{proof}

\begin{prop}\label{bendWnI}
Let $I$ be a finite set in normal form. Then by \ref{speed=sp} and \ref{speedonphib} the speed of $T^{-1}$ on $[\phi v]$ exists for all $v\in I$. So we can set \[a(I)=\max_{v\in I} \mathrm{sp}_S(T^{-1},[\phi v])\ \text{and}\ A(I)=\{v\in I\mid \mathrm{sp}_S(T^{-1},[\phi v])=a(I)\}.\] Let $a(I)>0$ and let $v\in A(I)$. Then for every $C\in\mathbb{N}$ there is $N\in\mathbb{N}$ such that $\lvert(\phi v)_n\rvert_S>n_b(W_n(I))+C$ for all $n>N$.
\end{prop}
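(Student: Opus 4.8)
The plan is to compare, as functions of $n$, the growth rate of the left-hand side with that of the right-hand side: I will show that $\lvert(\phi v)_n\rvert_S=a(I)\cdot n+O(1)$ while $n_b(W_n(I))\le (a(I)-1)\cdot n+O(1)$, so that their difference tends to $+\infty$ and in particular eventually exceeds any prescribed $C$.

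\emph{Reducing the right-hand side.} Since $W_n(I)=\bigcup_{w\in I}W_n(w)$ and $I$ is finite, repeated use of Lemma \ref{nsmax} gives $n_b(W_n(I))=\max_{w\in I}n_b(W_n(w))$. As $I$ is in normal form, each $w\in I$ is either equal to $b$ or not a power of $b$; for $w=b$ we have $W_n(b)=\{a,b\}$, whose elements are $b$-truncated, so $n_b(W_n(b))=-\infty$, and likewise $n_b(W_n(w))=-\infty$ whenever $w$ is $b$-truncated, by Lemma \ref{nbspeed}. Now let $w\in I$ be neither a power of $b$ nor $b$-truncated, written $w\equiv_S(m_0,s_1,\dots,s_k,m_k)$, and set $w_0=b^{m_0}s_1\cdots s_k$, $w_k=s_1\cdots s_k b^{m_k}$ as in Lemma \ref{nbspeed}. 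That lemma expresses $n_b(W_n(w))$, for $n$ large, in terms of $\lVert(\phi w_0)_n\rVert_S$ and/or $\lVert(\phi w_k)_n\rVert_S$ together with a bounded term $\lvert m_0\rvert$ or $\lvert m_k\rvert$. By Theorem \ref{tentacle} the representatives $(\phi w_0)_n$ and $(\phi w_k)_n$ are reduced once $n$ is large, so there $\lVert(\phi w_0)_n\rVert_S=\lvert(\phi w_0)_n\rvert_S$ and similarly for $w_k$, and Proposition \ref{diff=sp} then yields $\lvert(\phi w_0)_n\rvert_S=sp(w_0)\cdot n+O(1)$ and $\lvert(\phi w_k)_n\rvert_S=sp(w_k)\cdot n+O(1)$. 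The elementary point is that in Definition \ref{def:sp} the number $\lvert A^+\rvert+\lvert A^-\rvert$ depends only on the sequence $s_1,\dots,s_k$ of non-$b$ letters, which $w$, $w_0$ and $w_k$ all share; comparing the additive constants $0,1,2$ attached to the four types shows $sp(w_0)=sp(w)-1$ when $w$ is right-$b$ or $b$-and-$b$, and $sp(w_k)=sp(w)-1$ when $w$ is $b$-left or $b$-and-$b$. Hence in each of these cases $n_b(W_n(w))=(sp(w)-1)\cdot n+O(1)\le (a(I)-1)\cdot n+O(1)$ for $n$ large, and taking the maximum over the finite set $I$ gives $n_b(W_n(I))\le (a(I)-1)\cdot n+O(1)$ (the $b$-truncated summands and $w=b$ contributing $-\infty$).

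\emph{The left-hand side.} Since $a(I)>0$ while $\mathrm{sp}_S(T^{-1},[\phi b])=0$ by Corollary \ref{speedonphib}, the chosen $v\in A(I)$ is not equal to $b$; by the first clause of the normal form condition $v$ is therefore not a power of $b$, so $(\phi v)_n$ is defined, and Proposition \ref{diff=sp} together with Corollary \ref{speed=sp} provides an $N_0$ with $\lvert(\phi v)_{n+1}\rvert_S-\lvert(\phi v)_n\rvert_S=sp(v)=a(I)$ for all $n>N_0$; thus $\lvert(\phi v)_n\rvert_S=a(I)\cdot n+O(1)$.

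\emph{Conclusion.} Combining the two estimates, $\lvert(\phi v)_n\rvert_S-n_b(W_n(I))\ge a(I)\cdot n-(a(I)-1)\cdot n-O(1)=n-O(1)\to+\infty$, so for any $C$ we may take $N$ large enough (past the finitely many thresholds above and past the implied $O(1)$ constant plus $C$) that $\lvert(\phi v)_n\rvert_S>n_b(W_n(I))+C$ for all $n>N$. I expect the only mildly delicate point to be the bookkeeping in the first step: tracking which of $w_0,w_k$ is the relevant factor for each type of non-$b$-truncated word, and arranging that the thresholds coming from Theorem \ref{tentacle}, Proposition \ref{diff=sp} and Lemma \ref{nbspeed} hold simultaneously for all $w\in I$. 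No idea beyond the linear growth-rate comparison is required.
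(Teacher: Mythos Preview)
Your proof is correct and follows essentially the same route as the paper: both arguments use Lemma \ref{nbspeed} to see that $n_b(W_n(w))$ is, up to a bounded term, equal to $\lVert(\phi w')_n\rVert_S$ for a word $w'$ (namely $w_0$ or $w_k$) satisfying $sp(w')=sp(w)-1$, and then compare this against the growth of $\lvert(\phi v)_n\rvert_S$ at rate $a(I)$. The only cosmetic difference is that the paper packages the final growth comparison into Corollary \ref{speedgiveslength}, whereas you unpack the same content directly via Proposition \ref{diff=sp} into explicit linear estimates $a(I)\cdot n+O(1)$ versus $(a(I)-1)\cdot n+O(1)$.
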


\begin{proof}
By \ref{nbspeed} we can compute $n_b(W^n(w))$ for every element $w\in I$. Assume that $n_b(W^n(w))\neq -\infty$. Then using \ref{speed=sp} we can understand the result from \ref{nbspeed} the following way:\begin{equation}n_b(W^n(w))=k_w+ \lVert (\phi w')_n\rVert_S\end{equation} for some constant $k_w\in \mathbb{N}$ and some word $w'\in F_n\setminus \{e\}$ such that \begin{equation}\mathrm{sp}_S(T^{-1},[\phi w'])=\mathrm{sp}_S(T^{-1},[\phi w])-1.\end{equation} So in particular\begin{equation}\mathrm{sp}_S(T^{-1},[\phi w'])<a(I)=\mathrm{sp}_S(T^{-1},[\phi v]).\end{equation} By \ref{speedgiveslength} for every $w\in I$ there is $N_w\in \mathbb{N}$ such that for $n>N_w$ we have \begin{equation}
\lvert(\phi v)_n\rvert_S=\lvert T^{-n}[v]\rvert_S>n_b(W^n(w))+C
.\end{equation} Let $N=\max_{w\in I} N_w$. Then by \ref{nsmax} we get that $\lvert(\phi v)_n\rvert_S>n_b(W^n(I))+C$.
\end{proof}

We finally arrived at the main result of this section:

\begin{thm}\label{bigfish}
Let $f=\sum_{v\in I} \alpha(v)\phi v$ be a sum of counting quasimorphisms in normal form. Assume wlog that $I=\mathrm{supp}(\alpha)$. Since $f$ is in normal form, by \ref{speed=sp} and \ref{speedonphib} the speed of $T^{-1}$ on $[\phi v]$ exists for all $v\in I$. So we can define $a(I)$ and $A(I)$ as in \ref{bendWnI}. If $A(I)$ contains no $b$-and-$b$ element, but does contain a $b$-left (right-$b$) element, we assume that $A(I)$ contains a $b$-left (right-$b$) element of $b$-length bigger than 1. Then $f$ is speed reduced for $T^{-1}$.
\end{thm}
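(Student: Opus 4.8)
The plan is to verify the hypotheses of Lemma \ref{howprovespeedred}. Since $f$ is in normal form, every $v\in I=\mathrm{supp}(\alpha)$ with $\tau_b(v)=\varnothing$ equals $b$, so by \ref{speed=sp} and \ref{speedonphib} the speed of $T^{-1}$ on $[\phi v]$ exists for all $v\in I$; consequently $\sup_{v\in I}\mathrm{usp}_S(T^{-1},[\phi v])=\max_{v\in I}\mathrm{sp}_S(T^{-1},[\phi v])=a(I)$, and it only remains to prove $\mathrm{lsp}_S(T^{-1},[f])\geq a(I)$. If $a(I)=0$ this is automatic because $\lvert T^{-n}[f]\rvert_S\geq 0$, so assume $a(I)>0$ (and $f\neq 0$, so $A(I)\neq\varnothing$). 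Throughout we work with the $n$-representative $f_n=\sum_{u\in W_n(I)}\alpha_n(u)\phi u$ of $[f]$ from \ref{fn}, which satisfies $\lvert T^{-n}[f]\rvert_S=\lvert f_n\rvert_S$ by \ref{fnequiv}. The heart of the argument is to exhibit, for all large $n$, an element $u_n\in W_n(I)$ that has nonzero coefficient in $f_n$, lies in the $b$-truncated end of $W_n(I)$, and satisfies $\lvert u_n\rvert_S\geq\lvert(\phi v_0)_n\rvert_S-2\lvert A(I)\rvert$ for a suitable $v_0\in A(I)$.

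To produce $u_n$, apply Proposition \ref{celllinebox} not to $I$ but to the set $A(I)$: it is again in normal form, avoids $b$, and satisfies the $b$-length requirement precisely by the hypothesis of the theorem. This yields $N_0\in\mathbb{N}$ and $v_0\in A(I)$ such that for $n>N_0$ there is $u_n\in W_n(v_0)$ with $u_n\notin W_n(A(I)\setminus\{v_0\})\cup W_n(A(I))^{-1}$ and $\lvert u_n\rvert_S\geq\lvert(\phi v_0)_n\rvert_S-2\lvert A(I)\rvert$. It remains to rule out the rectangles coming from $I\setminus A(I)$: for each such $w$ we have $\mathrm{sp}_S(T^{-1},[\phi w])<a(I)=\mathrm{sp}_S(T^{-1},[\phi v_0])$, so \ref{speedgiveslength} (with $C=2\lvert A(I)\rvert$) gives $N_w$ with $\lvert(\phi v_0)_n\rvert_S>\lvert(\phi w)_n\rvert_S+2\lvert A(I)\rvert$ for $n>N_w$. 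For $n$ large, $(\phi w)_n$ is reduced by \ref{tentacle}, so $\lvert(\phi w)_n\rvert_S=\lVert(\phi w)_n\rVert_S$ dominates the length of every element of $W_n(w)=\mathrm{supp}(w_n)$ and, since lengths are inversion-invariant and $W_n(w)^{-1}=W_n(w^{-1})$, also of every element of $W_n(w)^{-1}$. Hence for $n>\max\bigl(N_0,\max_{w\in I\setminus A(I)}N_w\bigr)$ the element $u_n$ is strictly too long to lie in $W_n(w)\cup W_n(w)^{-1}$ for any $w\in I\setminus A(I)$; combined with the output of \ref{celllinebox} we get $u_n\notin W_n(I\setminus\{v_0\})\cup W_n(I)^{-1}$. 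Therefore $\alpha_n(u_n)=\alpha(v_0)\,(v_0)_n(u_n)\neq 0$ (using $\alpha(v_0)\neq 0$ and $(v_0)_n(u_n)=\pm 1$ since $u_n\in W_n(v_0)$), and, rewriting $f_n$ in the basis of counting functions, the coefficient of $\#u_n$ is still $\alpha_n(u_n)\neq 0$ because $u_n^{-1}\notin W_n(I)$.

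Finally, apply Proposition \ref{bendWnI} with $v=v_0$ and $C=2\lvert A(I)\rvert$ to obtain $N_1$ with $\lvert(\phi v_0)_n\rvert_S>n_b(W_n(I))+2\lvert A(I)\rvert$ for $n>N_1$. Then, using \ref{nsmax} and the inversion-invariance of $\tau_b$ and of word length (so $n_b(W_n(I)\cup W_n(I)^{-1})=n_b(W_n(I))$),
\[
\lvert u_n\rvert_S\ \geq\ \lvert(\phi v_0)_n\rvert_S-2\lvert A(I)\rvert\ >\ n_b\bigl(W_n(I)\cup W_n(I)^{-1}\bigr),
\]
so $u_n$ lies in the $b$-truncated end. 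By Criterion \ref{redtree} the element $f_n$ is reduced and $\lvert f_n\rvert_S\geq\lvert u_n\rvert_S\geq\lvert(\phi v_0)_n\rvert_S-2\lvert A(I)\rvert$ for all sufficiently large $n$. Dividing by $n$ and taking the limit,
\[
\mathrm{lsp}_S(T^{-1},[f])=\liminf_n\frac{\lvert f_n\rvert_S}{n}\ \geq\ \lim_n\frac{\lvert(\phi v_0)_n\rvert_S}{n}=\mathrm{sp}_S(T^{-1},[\phi v_0])=a(I),
\]
and Lemma \ref{howprovespeedred} gives that $f$ is speed reduced for $T^{-1}$.

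Since all the genuinely hard lemmas (\ref{celllinebox}, \ref{bendWnI}, \ref{nnbiggest}, \ref{speed=sp}) are already in place, the \textbf{main obstacle} here is organizational: one must apply \ref{celllinebox} to $A(I)$ rather than to $I$ so that the surviving word $u_n$ tracks the \emph{maximal} speed $a(I)$, and then bridge the gap to all of $I$ using the strict speed inequality for elements of $I\setminus A(I)$ together with the length estimates from Section \ref{Tsingle}. It is exactly to make this bridge work that the single uniform constant $2\lvert A(I)\rvert$ threads through \ref{celllinebox}, \ref{speedgiveslength}, and \ref{bendWnI}.
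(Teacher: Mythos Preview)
Your proof is correct and follows essentially the same route as the paper's: apply \ref{celllinebox} to $A(I)$ rather than $I$, use \ref{speedgiveslength} (via \ref{tentacle}) to show $u_n$ is too long to lie in any rectangle from $I\setminus A(I)$, invoke \ref{bendWnI} to place $u_n$ in the $b$-truncated end, and finish with \ref{redtree} and \ref{howprovespeedred}. Your added remarks about $u_n^{-1}\notin W_n(I)$ and $n_b(W_n(I)\cup W_n(I)^{-1})=n_b(W_n(I))$ make explicit a small passage from $\phi$-coefficients to $\#$-coefficients that the paper leaves implicit, but otherwise the arguments coincide.
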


\begin{proof}
If $a(I)=0$, then $f$ is speed reduced for $T^{-1}$. So we can assume $a(I)>0$. Since $A(I)\subset I$, we know that $A(I)$ is in normal form and since $a(I)>0$, we know $b\notin A(I)$. By our assumption on $A(I)$ we can apply \ref{celllinebox}: There is $N_0\in\mathbb{N}$ and $v_0\in A(I)$ such that for all $n>N_0$ there is $u_n\in W_n(v_0)$ with\begin{enumerate}
\item\label{bigfish1} $u_n \notin W_n(A(I)\setminus \{v_0\})\cup W_n(A(I))^{-1}$,
\item\label{bigfish2} $\lvert u_n\rvert_S\geq \lvert(\phi v_0)_n\rvert_S-2\lvert A(I) \rvert$.
\end{enumerate} Since $v_0\in A(I)$, we know that $\mathrm{sp}_S(T^{-1},[\phi v_0])>\mathrm{sp}_S(T^{-1},[\phi v])$ for all $v\in I\setminus A(I)$. By \ref{tentacle} and \ref{speedgiveslength} it follows that there is $N_1\in\mathbb{N}$ such that for all $n>N_1$ and $v\in I\setminus A(I)$ we have\begin{equation}\label{bigfish3} \lvert (\phi v_0)_n\rvert_S=\lvert T^{-n}[\phi v_0]\rvert_S>\lvert T^{-n}[\phi v]\rvert_S+2\lvert A(I) \rvert=\lvert(\phi v)_n\rvert_S+2\lvert A(I)\rvert.\end{equation} Since $v_0\in A(I)$, we get by \ref{bendWnI} that there is $N_2\in\mathbb{N}$ such that for all $n>N_2$ we have\begin{equation}\label{bigfish4}\lvert(\phi v_0)_n\rvert_S>n_b(W_n(I))+2\lvert A(I)\rvert.\end{equation} Now let $N=\max (N_0,N_1,N_2)$. For all $n>N$, consider $u_n\in W_n(v_0)$ as in \ref{celllinebox}. By \ref{bigfish2} and \ref{bigfish3} we have $\lvert u_n\rvert_S\geq \lvert(\phi v_0)_n\rvert_S-2\lvert A(I) \rvert>\lvert(\phi v)_n\rvert_S$ for $v\in I\setminus A(I)$. So we have $u_n \notin W_n(I\setminus A(I))\cup W_n(I\setminus A(I))^{-1}$. By \ref{bigfish1} we know that $u_n \notin W_n(A(I)\setminus \{v_0\})\cup W_n(A(I))^{-1}$. So putting these two informations together we get $u_n \notin W_n(I\setminus \{v_0\})\cup W_n(I)^{-1}$. Since $f$ is in normal form, we know that the n-representative of $f$ is $f_n=\sum_{u\in W_n(I)} \alpha_n(u)\phi u$, where \[\alpha_n(u)=\sum_{v\in I}\alpha(v)v_n(u).\] We can compute \begin{equation} \alpha_n(u_n)=\sum_{v\in I}\alpha(v)v_n(u)=\begin{cases}
\alpha(v_0),& \text{if } u_n\in W^{+}_{n}(v_0)\\
-\alpha(v_0),& \text{if } u_n\in W^{-}_{n}(v_0)
\end{cases}.\end{equation} It follows that $\alpha_n(u_n)\neq 0$. Using \ref{bigfish2} and \ref{bigfish4}, we know that\begin{equation}\lvert u_n\rvert_S\geq \lvert(\phi v_0)_n\rvert_S-2\lvert A(I) \rvert>n_b(W_n(I)).\end{equation} It follows that $u_n\in E_b(W_n(I))$. Since $u_n\in E_b(W_n(I))$ and $\alpha_n(u_n)\neq 0$ we can apply the Criterion \ref{redtree}: We get that $f_n$ is reduced and much more usefully that \[\lvert f_n\rvert_S\geq \lvert u_n\rvert_S.\] Now we can prove that $f$ is speed reduced for $T^{-1}$ using \ref{howprovespeedred}:\begin{equation}\begin{split}\mathrm{lsp}_S(T^{-1},[f])&=\liminf_n \frac{\lvert f_n\rvert_S}{n}\\
&\geq \liminf_n \frac{\lvert u_n\rvert_S}{n}\\
&\geq \liminf_n \frac{\lvert(\phi v_0)_n\rvert_S-2\lvert A(I) \rvert}{n}\\
&=\liminf_n \frac{\lvert(\phi v_0)_n\rvert_S}{n}\\
&=\mathrm{lsp}_S(T^{-1},[\phi v_0])\\
&=\mathrm{sp}_S(T^{-1},[\phi v_0])\\
&=\sup_{v\in I} \mathrm{sp}_S(T^{-1},[\phi v])\\
&=\sup_{v\in I} \mathrm{usp}_S(T^{-1},[\phi v]).\end{split}\end{equation}
\end{proof}

We have to work a bit to remove the unnecessary assumptions.

\begin{defn}
Define $$\mathrm{rot}=\phi ab - \sum_{s\in S_b\setminus\{a^{-1}\}}\phi bs.$$ See Section 4.2.5 in \cite{scl}.
\end{defn}

\begin{lem}\label{annoyingproblem}
Let $f=\sum_{v\in I} \alpha(v)\phi v$ be a sum of counting quasimorphisms in normal form, that does not satisfy the conditions of Theorem \ref{bigfish}.  Then there is a real number $\lambda$, a weight $\beta$ and a finite set $J\subset F_n\setminus \{e\}$ such that all elements of $J$ are $b$-truncated and \begin{equation*}f\sim \lambda \mathrm{rot} + \sum_{s\in S_b\setminus\{a^{-1}\}} \beta(bs)\phi bs + \sum_{y\in J}\beta(y)\phi y\end{equation*}
\end{lem}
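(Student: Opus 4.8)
The plan is to translate the hypothesis into an explicit combinatorial description of $I$, and then to rewrite $f$ one summand at a time using the extension relations collected in \ref{cor:rewrite} and \ref{rewrite}.

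First I would pin down what it means for $f$ to fail the conditions of Theorem~\ref{bigfish}. Parsing that hypothesis, failure forces $A(I)$ to be nonempty, to contain a $b$-left or right-$b$ element, and to contain no $b$-left or right-$b$ element of $b$-length $>1$. An inspection of Definition~\ref{def:sp} shows that a $b$-left or right-$b$ word of $b$-length $1$ has $A^{+}=A^{-}=\varnothing$, hence $sp=1$; since $A(I)$ contains such a word, $a(I)=1$. By \ref{speed=sp} every $v\in I$ (which, by the standing convention $b\notin I$ together with the first normal-form requirement, is not a power of $b$) satisfies $\mathrm{sp}_S(T^{-1},[\phi v])=sp(v)\leq a(I)=1$. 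This immediately excludes $b$-and-$b$ words from $I$, since those have $sp\geq 2$; and it also excludes $b$-left and right-$b$ words of $b$-length $\geq 2$, because such a word has $sp\geq 1$, so either $sp=1$, in which case it lies in $A(I)$, contradicting that $A(I)$ has no $b$-left/right-$b$ element of $b$-length $>1$, or $sp\geq 2$, contradicting $a(I)=1$. Hence $I$ is contained in the union of the $b$-truncated words with $\{\,b^{m}s \mid s\in S_b,\ m\in\mathbb{Z}\setminus\{0\}\,\}$ and $\{\,sb^{m}\mid s\in S_b,\ m\in\mathbb{Z}\setminus\{0\}\,\}$. (If $I$ consists only of $b$-truncated words the statement is trivial with $\lambda=0$ and $J=I$, so the content is in the presence of the $b$-left and right-$b$ words of $b$-length $1$.)

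Next I would rewrite $f=\sum_{v\in I}\alpha(v)\phi v$ term by term. A summand $\alpha(v)\phi v$ with $v$ $b$-truncated is left untouched and becomes part of the final $b$-truncated piece. If $v=b^{m}s$ with $s\in S_b$ and $m\neq 0$, then $v$ and $bs$ are both $b$-left with $\tau_b(v)=\tau_b(bs)=s$, so \ref{cor:rewrite}(i) gives $\phi v\sim \lambda_v\,\phi bs$ modulo Brooks quasimorphisms of $b$-truncated words. If $v=sb^{m}$ with $s\in S_b$ and $m\neq 0$, then $\phi v=-\phi v^{-1}$ and $v^{-1}=b^{-m}s^{-1}$ is a $b$-left word of $b$-length $1$ with $s^{-1}\in S_b$, so the previous case applies and yields $\phi v$ as a multiple of $\phi bs^{-1}$ modulo $b$-truncated terms. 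These rewritings produce terms $\phi bs$ with $s\in S_b$; all of them except $\phi ba^{-1}$ are already among the allowed generators $\{\phi bs\mid s\in S_b\setminus\{a^{-1}\}\}$. For $\phi ba^{-1}$ I would use $\phi ba^{-1}=-\phi ab^{-1}$, a right-extension relation applied to $\phi a$ (all of whose resulting terms other than $\phi ab$ and $\phi ab^{-1}$ are $b$-truncated), and the identity $\phi ab=\mathrm{rot}+\sum_{s\in S_b\setminus\{a^{-1}\}}\phi bs$, to obtain $\phi ba^{-1}\sim \mathrm{rot}+\sum_{s\in S_b\setminus\{a^{-1}\}}\phi bs$ modulo $b$-truncated terms.

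Finally I would reassemble: summing the rewritings weighted by the $\alpha(v)$, the total coefficient of $\mathrm{rot}$ is some $\lambda\in\mathbb{R}$, the coefficients of $\phi bs$ for $s\in S_b\setminus\{a^{-1}\}$ are the $\beta(bs)$, and what remains is a finite $\mathbb{R}$-combination $\sum_{y\in J}\beta(y)\phi y$ with every $y\in J$ a $b$-truncated word, which is exactly the claimed form. The main obstacle is the first step: one must parse the compressed hypothesis of Theorem~\ref{bigfish} correctly and check, running through the possible $b$-shapes of a word, that failure of those conditions pins $I$ down to $b$-truncated words together with $b$-left and right-$b$ words of $b$-length one. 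Once that combinatorial reduction is in hand, the remainder is routine bookkeeping with the extension relations of \ref{cor:rewrite} and \ref{rewrite} and the definition of $\mathrm{rot}$.
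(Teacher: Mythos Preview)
Your proposal is correct and takes essentially the same approach as the paper's (very terse) proof, supplying the details of the hypothesis parsing and the term-by-term rewriting via \ref{cor:rewrite} and the definition of $\mathrm{rot}$. One minor point: there is no standing convention that $b\notin I$ (the remark you recall is local to Subsection~\ref{studynsuppI}); the paper's own proof also tacitly ignores the possibility $b\in I$, so your argument is no less complete than the original on this point.
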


\begin{proof}
By assumption there are sets $N_l, N_r\subset \mathbb{N}$ and a finite set $J_0\subset F_n\setminus \{e\}$ such that all elements of $J_0$ are $b$-truncated such that $$A(I)\subset \bigcup_{s\in S_b}\bigcup_{n\in N_l} \{b^ns\} \cup \bigcup_{s\in S_b}\bigcup_{n\in N_r} \{sb^n\} \cup J_0.$$ Since $f$ is in normal form, the speed of $T^{-1}$ on $[\phi v]$ exists for all $v\in I$. By \ref{speed=sp} we get a finite set $J_1\subset F_n\setminus \{e\}$ such that all elements of $J$ are $b$-truncated and $$f=\sum_{s\in S_b}\sum_{n\in N_l} \alpha(b^ns)\phi b^ns + \sum_{s\in S_b}\sum_{n\in N_r} \alpha(sb^n)\phi sb^n+\sum_{y\in J_1}\alpha(y)\phi y.$$ The statement then follows from Corrollary \ref{cor:rewrite}.
\end{proof}

Note that the counting quasimorphism $\mathrm{rot}$ is $T^{-1}$-invariant and therefore in particular $\mathrm{sp}_S(T^{-1},[\mathrm{rot}])=0$.

\begin{lem}\label{annoyingproblem2}
Let $f=\sum_{s\in S_b\setminus\{a^{-1}\}} \beta(bs)\phi bs + \sum_{y\in J}\beta(y)\phi y$ for a weight $\beta$ and a finite set $J\subset F_n\setminus \{e\}$ such that all elements of $J$ are $b$-truncated. Let $f$ be in normal form. Then $f$ is speed reduced.
\end{lem}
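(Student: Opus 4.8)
The plan is to split according to the maximal speed $a(I):=\max_{v\in\mathrm{supp}(\beta)}\mathrm{sp}_S(T^{-1},[\phi v])$; all these speeds exist by \ref{speed=sp} since no word occurring in $f$ is a power of $b$, and $\mathrm{sp}_S(T^{-1},[\phi bs])=sp(bs)=1$ for every $s\in S_b\setminus\{a^{-1}\}$ because $bs\equiv_S(1,s,0)$ is $b$-left of $b$-length $1$. If the set $A(I)$ of speed-maximal words contains no word of the form $bs$, then $A(I)$ consists only of the $b$-truncated words in $J$, so in particular $A(I)$ contains neither a $b$-and-$b$ nor a $b$-left nor a right-$b$ element; the extra hypothesis of Theorem \ref{bigfish} is then vacuously satisfied and that theorem already gives that $f$ is speed reduced. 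So from now on I assume there is $s_0\in S_b\setminus\{a^{-1}\}$ with $\beta(bs_0)\neq 0$ and $\mathrm{sp}_S(T^{-1},[\phi bs_0])=a(I)$; then $a(I)=1$ and $\mathrm{sp}_S(T^{-1},[\phi v])\leq 1$ for all $v\in\mathrm{supp}(\beta)$, so by \ref{howprovespeedred} it suffices to prove $\mathrm{lsp}_S(T^{-1},[f])\geq 1$.

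For this I analyse the $n$-representative $f_n=\sum_{u\in W_n(I)}\alpha_n(u)\phi u$ of Corollary \ref{fn} (applicable since $f$ has no term that is a power of $b$), where $\alpha_n(u)=\sum_{v}\beta(v)v_n(u)$. From \ref{def:phiwn} applied to $bs\equiv_S(1,s,0)$ one gets $W^-_n(bs)=\varnothing$ and $W^+_n(bs)=\{bs\}\cup\{ab^{-i}s:0\leq i\leq n-1\}$, hence $(\phi bs)_n=\phi bs+\sum_{i=0}^{n-1}\phi ab^{-i}s$; and for $b$-truncated $y\in J$, \ref{def:phiwn} gives $(\phi y)_n=\phi m_y^{(n)}$ with $m_y^{(n)}=(\tau_b\circ T^{-1}\circ\tau_b)^n(y)$, a single $b$-truncated word (nonempty, since $T^{-1}$ fixes powers of $b$ and $y$ is not a power of $b$). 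Two facts follow. First, the only elements of $W_n(I)$ that fail to be $b$-truncated are the words $bs$, each of length $2$, so (using $bs_0\in W_n(I)$) $n_b(W_n(I))=2$. Second, fixing $s_0$ as above, for each $i\in\{1,\dots,n-1\}$ the coefficient $\alpha_n(ab^{-i}s_0)$ receives the summand $+\beta(bs_0)$ from $(\phi bs_0)_n$, nothing from $(\phi bs')_n$ with $s'\neq s_0$ (their $n$-supports are disjoint, as every word in $W_n(bs)$ ends in the letter $s$), and a summand from $(\phi y)_n$ only when $m_y^{(n)}=ab^{-i}s_0$.

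Since each $y\in J$ forces $m_y^{(n)}=ab^{-i}s_0$ for at most one index $i$, at most $\lvert J\rvert$ of the indices $i\in\{1,\dots,n-1\}$ satisfy $\alpha_n(ab^{-i}s_0)\neq\beta(bs_0)$. Hence for $n>\lvert J\rvert+1$ there is an index $i_n\geq n-1-\lvert J\rvert$ with $\alpha_n(u_n)=\beta(bs_0)\neq 0$ for $u_n:=ab^{-i_n}s_0$; this $u_n$ is $b$-truncated of length $i_n+2\geq n+1-\lvert J\rvert>2=n_b(W_n(I))$, so $u_n\in E_b(W_n(I))$. Criterion \ref{redtree} then shows that $f_n$ is reduced and $\lvert f_n\rvert_S\geq\lvert u_n\rvert_S\geq n+1-\lvert J\rvert$, and since $[f_n]=T^{-n}[f]$ by \ref{fnequiv} this yields $\mathrm{lsp}_S(T^{-1},[f])=\liminf_n\lvert T^{-n}[f]\rvert_S/n\geq 1$, as needed. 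The one genuine obstacle is the cancellation in the second step: a priori the single-word representatives $(\phi y)_n$ could annihilate the long ``tail'' words $ab^{-i}s_0$, and the decisive observation is precisely that each $(\phi y)_n$ can destroy only one of them, so that a tail word of length at least $n+1-\lvert J\rvert$ always survives in $f_n$.
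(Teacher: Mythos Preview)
Your argument follows the same outline as the paper's --- reduce to the case $a(I)=1$, compute $W_n(bs_0)$ explicitly, and then locate a surviving element in its ``tail'' --- and most of the steps are correct. There is, however, one genuine gap at the point where you invoke Criterion~\ref{redtree}. That criterion is stated for a sum of counting \emph{functions} $\#v$, not of counting \emph{quasimorphisms} $\phi v$; when you pass from $f_n=\sum_u\alpha_n(u)\phi u$ to an expression in the $\#v$'s via $\phi u=\#u-\#u^{-1}$, the coefficient of $\#u_n$ becomes $\alpha_n(u_n)-\alpha_n(u_n^{-1})$, not $\alpha_n(u_n)$. You verify $\alpha_n(u_n)=\beta(bs_0)\neq 0$, but you never rule out that $u_n^{-1}=s_0^{-1}b^{i_n}a^{-1}$ lies in $W_n(I)$. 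This inverse cannot sit in any $W_n(bs')$ (its first letter is $s_0^{-1}\notin\{a,b\}$ since $s_0\in S_b\setminus\{a^{-1}\}$), but it can equal $m_y^{(n)}$ for some $y\in J$, in which case $\alpha_n(u_n^{-1})=\beta(y)$ may cancel $\beta(bs_0)$ exactly.

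The fix is immediate: in your counting, also declare an index $i$ ``bad'' when $m_y^{(n)}=s_0^{-1}b^{i}a^{-1}$ for some $y\in J$. Since $m_y^{(n)}$ is a single word, each $y$ still contributes at most one bad index, so the number of bad indices is still bounded by $|J|$ (or by $2|J|$ if you prefer not to check that the two forms are mutually exclusive), and your lower bound $|u_n|_S\geq n+1-2|J|$ survives. This is exactly the content of the condition $u_n\notin W_n(I)^{-1}$ that the paper obtains by reusing the algorithm of Proposition~\ref{celllinebox}; with that one addition your direct counting argument is complete and is in fact a mild streamlining of the paper's proof.
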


\begin{proof}
Let $I=\bigcup_{s\in S_b\setminus\{a^{-1}\}}\{bs\}\cup J$. If $a(I)>1$ or $\beta(bs)=0$ for all $s\in S_b\setminus\{a^{-1}\}$, the statement follows from \ref{bigfish}. So we assume there is $s_0\in S_b\setminus\{a^{-1}\}$ such that $bs_0 \in A(I)$ and $\beta(bs_0)\neq 0$. We explicitely compute that $W_n(bs_0)=\{bs_0\}\cup \bigcup_{i=1}^n\{ab^{1-i}s_0\}$. So we can check that $W_n(bs_0)\cap W_n(bs_1)=\varnothing$ for $s_0\neq s_1\in S_b\setminus\{a^{-1}\}$ and $W_n(bs_0)\cap W_n(bs_1)^{-1}=\varnothing$ for $s_1\in S_b\setminus\{a^{-1}\}$. Since the elements of $J$ are $b$-truncated, we can use the algorithm from the proof of \ref{celllinebox} to get $N\in \mathbb{N}$ such that for all $n>N$ we can find $u_n\in W_n(bs_0)$ with \begin{enumerate}
\item $u_n \notin W_n(I\setminus \{bs_0\})\cup W_n(I)^{-1}$
\item $\lvert u_n\rvert_S\geq \lvert(\phi bs_0)_n\rvert_S-2\lvert I \rvert$
\end{enumerate} We can argue as in the proof of \ref{bigfish} to finish the proof.
\end{proof}

We summarize:

\begin{cor}\label{fequivspred}
Let $f=\sum_{v\in I} \alpha(v)\phi v$ be a sum of counting quasimorphisms. Then $f$ is equivalent to a $\lambda \mathrm{rot}+g$, where $\lambda$ is a real number and $g$ a sum of counting quasimorphisms that is speed reduced for $T^{-1}$. In particular $$\mathrm{sp}_S(T^{-1},[f])=\mathrm{sp}_S(T^{-1},[g])$$
\end{cor}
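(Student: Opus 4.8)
\begin{proof}
The plan is to assemble the results of Sections \ref{Tsingle} and \ref{Tsums}. First I would apply Theorem \ref{normalform} to replace $f$ by an equivalent sum of counting quasimorphisms in normal form; so without loss of generality assume $f=\sum_{v\in I}\alpha(v)\phi v$ is already in normal form with $I=\mathrm{supp}(\alpha)$. Now there are two cases. If $f$ satisfies the extra hypothesis of Theorem \ref{bigfish} (namely: if $A(I)$ contains a $b$-left or right-$b$ element but no $b$-and-$b$ element, then it contains such an element of $b$-length $>1$), then $f$ is already speed reduced for $T^{-1}$, and we take $\lambda=0$ and $g=f$.

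If $f$ does not satisfy that hypothesis, then by Lemma \ref{annoyingproblem} there are $\lambda\in\mathbb{R}$, a weight $\beta$, and a finite set $J\subset F_n\setminus\{e\}$ of $b$-truncated words with $f\sim \lambda\,\mathrm{rot}+h$, where $h=\sum_{s\in S_b\setminus\{a^{-1}\}}\beta(bs)\,\phi bs+\sum_{y\in J}\beta(y)\,\phi y$. I would then apply Theorem \ref{normalform} once more, this time to $h$, to obtain an equivalent $g$ in normal form. The point to check is that $g$ still has the shape required by Lemma \ref{annoyingproblem2}, i.e. that the normalization procedure started from $h$ does not create any generator which is $b$-left, right-$b$ or $b$-and-$b$ other than the $\phi bs$ already present. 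This holds because the rewriting steps of Corollary \ref{cor:rewrite} used in the proof of \ref{normalform} fire only on pairs of $b$-left, right-$b$ or $b$-and-$b$ elements sharing a $b$-truncation; among the $\phi bs$ the truncations $\tau_b(bs)=s$ are pairwise distinct, and $h$ contains no right-$b$ or $b$-and-$b$ element, so the only adjustments are merging $\phi y$ with $-\phi y^{-1}$ for $b$-truncated $y$, which stays inside the required class. Hence $g=\sum_{s\in S_b\setminus\{a^{-1}\}}\beta'(bs)\,\phi bs+\sum_{y\in J'}\beta'(y)\,\phi y$ is in normal form with $J'$ $b$-truncated, and Lemma \ref{annoyingproblem2} gives that $g$ is speed reduced for $T^{-1}$. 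In both cases $f\sim\lambda\,\mathrm{rot}+g$ with $g$ speed reduced, as claimed.

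It remains to prove $\mathrm{sp}_S(T^{-1},[f])=\mathrm{sp}_S(T^{-1},[g])$. Since $\mathrm{rot}$ is $T^{-1}$-invariant, $T^{-n}[\mathrm{rot}]=[\mathrm{rot}]$ for all $n$, so $c:=\lvert \lambda T^{-n}[\mathrm{rot}]\rvert_S=\lvert\lambda\rvert_0\,\lvert[\mathrm{rot}]\rvert_S$ is constant and $\mathrm{usp}_S(T^{-1},\lambda[\mathrm{rot}])=\lvert\lambda\rvert_0\,\mathrm{sp}_S(T^{-1},[\mathrm{rot}])=0$. Applying the ultrametric seminorm $\mathrm{usp}_S(T^{-1},-)$ (Corollary \ref{ulseminorm}, with the ultrametric inequality from \ref{speedseminorm}) to $[f]=\lambda[\mathrm{rot}]+[g]$ and to $[g]=[f]+(-\lambda)[\mathrm{rot}]$ yields $\mathrm{usp}_S(T^{-1},[f])=\mathrm{usp}_S(T^{-1},[g])$. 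As $g$ is speed reduced, $\mathrm{sp}_S(T^{-1},[g])=\mathrm{usp}_S(T^{-1},[g])$ exists. If $\mathrm{sp}_S(T^{-1},[g])>0$, then $\lvert T^{-n}[g]\rvert_S\to\infty$, so $\lvert T^{-n}[g]\rvert_S>c$ for all large $n$; since $\lvert x+y\rvert_S=\max(\lvert x\rvert_S,\lvert y\rvert_S)$ when $\lvert x\rvert_S\neq\lvert y\rvert_S$, this gives $\lvert T^{-n}[f]\rvert_S=\lvert T^{-n}[g]\rvert_S$ eventually, hence $\mathrm{sp}_S(T^{-1},[f])=\mathrm{sp}_S(T^{-1},[g])$. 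If $\mathrm{sp}_S(T^{-1},[g])=0$, then from $\lvert T^{-n}[f]\rvert_S\leq\max(c,\lvert T^{-n}[g]\rvert_S)$ we get $\limsup_n \lvert T^{-n}[f]\rvert_S/n\leq 0$, and as lengths are nonnegative $\lim_n \lvert T^{-n}[f]\rvert_S/n=0=\mathrm{sp}_S(T^{-1},[g])$. In all cases $\mathrm{sp}_S(T^{-1},[f])=\mathrm{sp}_S(T^{-1},[g])$.
\end{proof}

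The genuinely hard work (the combinatorics of the $n$-support, reducedness of $n$-representatives, the formula for $sp(w)$, and Theorem \ref{bigfish} itself) is already done in Sections \ref{Tsingle}--\ref{Tsums}, so this corollary is essentially bookkeeping. The two delicate points are: verifying that re-normalizing $h$ via Theorem \ref{normalform} does not move it out of the class handled by Lemma \ref{annoyingproblem2}; and, on the analytic side, upgrading the equality of \emph{upper} speeds to an equality of genuine speeds for $[f]$, which is not automatic because the lower speed fails subadditivity — the ultrametric dichotomy comparing $\lvert T^{-n}[g]\rvert_S$ with the constant $c$ is exactly what resolves it.
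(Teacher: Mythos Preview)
Your proof is correct and follows the same route as the paper's, which is a terse two-sentence appeal to \ref{normalform}, \ref{bigfish}, and \ref{annoyingproblem2}. You supply two details the paper leaves implicit: that the output of Lemma \ref{annoyingproblem} can be put in normal form without leaving the class handled by Lemma \ref{annoyingproblem2} (only the harmless $\phi y/\phi y^{-1}$ merges for $b$-truncated $y$ are needed), and the ultrametric argument upgrading equality of upper speeds to equality of speeds for $[f]$, which the paper does not spell out.
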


\begin{proof}
By \ref{normalform} we know that $f$ is equivalent to a sum of counting quasimorphisms $h$ in normal form. By \ref{bigfish} and \ref{annoyingproblem2} we know that $h$ is equivalent to $\lambda \mathrm{rot}+g$, where $\lambda$ is a real number and $g$ a sum of counting quasimorphisms that is speed reduced for $T^{-1}$.
\end{proof}

\begin{cor}\label{spexistseverywhere}
Let $f=\sum_{v\in I} \alpha(v)\phi v$ be a sum of counting quasimorphisms. Then the speed of $T^{-1}$ on $[f]$ exists.
\end{cor}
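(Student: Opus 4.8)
The plan is to deduce Corollary \ref{spexistseverywhere} directly from the two preceding results. By Corollary \ref{fequivspred} we know that any sum of counting quasimorphisms $f=\sum_{v\in I}\alpha(v)\phi v$ is equivalent to an element of the form $\lambda\mathrm{rot}+g$, where $\lambda\in\mathbb{R}$ and $g$ is a sum of counting quasimorphisms that is \emph{speed reduced} for $T^{-1}$. First I would recall that, by definition, $g$ being speed reduced for $T^{-1}$ means precisely that $\mathrm{sp}_S(T^{-1},[g])$ exists and equals $\sup_{v\in\mathrm{supp}(\gamma)}\mathrm{sp}_S(T^{-1},[\phi v])$, where $g=\sum_v\gamma(v)\phi v$. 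In particular the speed of $T^{-1}$ on $[g]$ exists.

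Next I would use the fact, noted immediately after Lemma \ref{annoyingproblem}, that $\mathrm{rot}$ is $T^{-1}$-invariant, so that $T^{-n}[\mathrm{rot}]=[\mathrm{rot}]$ for all $n$ and hence $\lvert T^{-n}[\lambda\mathrm{rot}]\rvert_S$ is constant (indeed bounded) in $n$; therefore $\mathrm{sp}_S(T^{-1},[\lambda\mathrm{rot}])$ exists and equals $0$. Since $[f]=[\lambda\mathrm{rot}]+[g]$ in $\tilde{\mathcal{B}}(F_n)$, the final step is to combine these: the last line of Corollary \ref{fequivspred} already records $\mathrm{sp}_S(T^{-1},[f])=\mathrm{sp}_S(T^{-1},[g])$, and since the right-hand side exists, so does the left. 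To be fully careful I would justify the equality $\mathrm{sp}_S(T^{-1},[f])=\mathrm{sp}_S(T^{-1},[g])$ once more here: from $\lvert T^{-n}[f]\rvert_S=\lvert T^{-n}[\lambda\mathrm{rot}]+T^{-n}[g]\rvert_S$ and the ultrametric inequality for $\lvert-\rvert_S$ (Corollary \ref{normontildeC}) together with the boundedness of $\lvert T^{-n}[\lambda\mathrm{rot}]\rvert_S$, one gets $\lvert T^{-n}[f]\rvert_S=\lvert T^{-n}[g]\rvert_S$ for all $n$ large enough (whenever $\lvert T^{-n}[g]\rvert_S$ exceeds the bound on $\lvert T^{-n}[\lambda\mathrm{rot}]\rvert_S$, which happens eventually unless $\mathrm{sp}_S(T^{-1},[g])=0$, in which case both sides are bounded and the limit is $0$ anyway), so dividing by $n$ and taking the limit gives the claim.

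Honestly, this corollary is essentially a bookkeeping consequence of the machinery already built: the real work was done in Sections \ref{Tsingle} and \ref{Tsums}, culminating in Theorem \ref{bigfish}, Lemma \ref{annoyingproblem2} and Corollary \ref{fequivspred}. The only point requiring a moment's thought is the handling of the $\mathrm{rot}$-summand, i.e. confirming that adding a $T^{-1}$-invariant element does not disturb existence of the speed — but this is immediate from the ultrametric property of $\lvert-\rvert_S$ and the fact that an invariant element has bounded (hence sublinear, in fact constant) orbit length. So I do not anticipate a genuine obstacle; the proof is a three-line deduction, and I would write it simply as: by Corollary \ref{fequivspred}, $f\sim\lambda\mathrm{rot}+g$ with $g$ speed reduced for $T^{-1}$, hence $\mathrm{sp}_S(T^{-1},[g])$ exists, and by the same corollary $\mathrm{sp}_S(T^{-1},[f])=\mathrm{sp}_S(T^{-1},[g])$, which proves the statement.
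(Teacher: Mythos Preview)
Your proposal is correct and follows essentially the same approach as the paper: the paper's proof is simply the one-line ``This follows from \ref{fequivspred},'' which is exactly the three-line deduction you give at the end. Your additional discussion of the $\mathrm{rot}$-summand via the ultrametric property is sound (though the phrase ``both sides are bounded'' in the $\mathrm{sp}_S=0$ case is slightly imprecise --- what you actually need and use is that both sides are $o(n)$), but it merely re-justifies the equality already stated in Corollary~\ref{fequivspred}.
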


\begin{proof}
This follows from \ref{fequivspred}.
\end{proof}

\begin{exmp}\label{bpowersp}
In particular we can compute the speed of $T^{-1}$ on $[f]$ for every $[f]\in \tilde{\mathcal{B}}(F_n)$ using an equivalent sum of counting quasimorphisms in normal form. As an example we will compute $\mathrm{sp}_S(T^{-1},[\phi b^{m_0}])$ now. Since $[\phi b^{m_0}]=[-\phi b^{-m_0}]$ it is enough to consider $m_0>0$. If $m_0=1$, we know by \ref{speedonphib} that $\mathrm{sp}_S(T^{-1},[\phi b])=0$. So only the case $m_0>1$ is left: By applying $m_0-1$ right-extension relations we get \begin{equation}\phi b^{m_0}\sim \phi b-\sum_{s\in S_b}\sum_{i=1}^{m_0-1}\phi b^is.\end{equation} Then using \ref{t1} we get for every $\phi b^is$ with $1\leq i\leq m_0-1$\begin{equation}\phi b^is \sim 
\phi bs -\sum_{j=1}^{i-1}\sum_{s'\in S_b} \phi s'b^js.\end{equation} Putting the pieces together we get\begin{equation}\begin{split}\phi b^{m_0}&\sim \phi b-\sum_{s\in S_b}\sum_{i=1}^{m_0-1}\phi b^is\\
&\sim \phi b-\sum_{s\in S_b}\sum_{i=1}^{m_0-1}(\phi bs -\sum_{j=1}^{i-1}\sum_{s'\in S_b} \phi s'b^js)\\
&=\phi b-\sum_{s\in S_b}(m_0-1)\phi bs +\sum_{s\in S_b}\sum_{s'\in S_b}\sum_{k=1}^{m_0-2} (m_0-1-k)\phi s'b^ks=f_{m_0}.\end{split}\end{equation} Then $f_{m_0}$ is in normal form and so speed reduced for $T^{-1}$ by \ref{bigfish}. Using \ref{speed=sp} it is easy to see that $\mathrm{sp}_S(T^{-1},[\phi b^{m_0}])=1$.
\end{exmp}

\begin{cor}
Endow $\mathbb{R}$ with the trivial absolute value $\lvert -\rvert_0$. Then $\mathrm{sp}_S(T^{-1},-)$ is a seminorm on $\tilde{\mathcal{B}}(F_n)$.
\end{cor}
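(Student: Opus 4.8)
The plan is to obtain this corollary as an immediate consequence of the general seminorm criterion already proved, namely Proposition \ref{speedseminorm}(ii). That proposition says that for any $X\in\mathrm{Out}(F_n)$, as soon as the speed $\mathrm{sp}_S(X,[f])$ exists for every $[f]\in\tilde{\mathcal{B}}(F_n)$, the functional $\mathrm{sp}_S(X,-)$ is automatically a seminorm on $\tilde{\mathcal{B}}(F_n)$ — in fact one satisfying the ultrametric triangle inequality, with absolute homogeneity coming from \eqref{speedhomogeneous} and the triangle inequality from \eqref{ultraspeed} applied to $\mathrm{sp}$ rather than $\mathrm{usp}$. So the only point to verify, specializing to $X=T^{-1}$, is the existence hypothesis.

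First I would recall that, by the very definition of the Brooks space, every class $[f]\in\tilde{\mathcal{B}}(F_n)$ admits a representative of the form $f=\sum_{v\in I}\alpha(v)\phi v$, a finite sum of counting quasimorphisms. Then I would invoke Corollary \ref{spexistseverywhere}, which asserts precisely that the speed of $T^{-1}$ on $[f]$ exists for every such $f$, hence for every class in $\tilde{\mathcal{B}}(F_n)$. Putting these two facts together, the hypothesis of \ref{speedseminorm}(ii) is met with $X=T^{-1}$, and the conclusion follows. The written proof is therefore a one-liner: cite \ref{spexistseverywhere} to feed \ref{speedseminorm}(ii).

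Consequently there is no real obstacle remaining at this stage of the paper: all the substance has already been spent in reaching \ref{spexistseverywhere}. That corollary rests on \ref{fequivspred}, which reduces an arbitrary sum of counting quasimorphisms to $\lambda\,\mathrm{rot}+g$ with $g$ speed reduced for $T^{-1}$; this in turn uses the normal form theorem \ref{normalform}, the speed-reducedness results \ref{bigfish}, \ref{annoyingproblem} and \ref{annoyingproblem2}, and, at the base, the explicit computation of $\mathrm{sp}_S(T^{-1},[\phi v])$ for a single counting quasimorphism in \ref{speed=sp} together with the special case \ref{speedonphib}. So the present statement is best viewed as a clean packaging of that entire development into the statement that $T^{-1}$ endows $\tilde{\mathcal{B}}(F_n)$ with a well-defined seminorm measuring asymptotic growth.
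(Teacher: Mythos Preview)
Your proposal is correct and matches the paper's own proof exactly: the paper simply cites \ref{spexistseverywhere} and \ref{speedseminorm}, which is precisely the two-step ``existence of speed $\Rightarrow$ seminorm'' argument you outline. Your additional unwinding of the dependencies behind \ref{spexistseverywhere} is accurate but not needed for the proof itself.
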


\begin{proof}
This follows from \ref{spexistseverywhere} and \ref{speedseminorm}.
\end{proof}

\section{Invariant Subspaces}\label{fixpts}

In this section we want to prove that there are no non-trivial finite-dimensional invariant subspaces in $H^2_b(F_n, \mathbb{R})_{\mathrm{fin}}$ under the action of $\mathrm{Out}(F_n)$. We will heavily rely on our results from the previous sections.

\begin{cor}\label{spnotfix}
Assume that $X\in \mathrm{Out}(F_n)$ has linear speed on $[f]$. Then $[f]$ is not contained in a finite-dimensional $\mathrm{Out}(F_n)$-invariant subspace of the Brooks space. In particular $[f]$ is not a fixpoint under the action of $\mathrm{Out}(F_n)$.
\end{cor}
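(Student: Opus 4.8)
The plan is to exploit the fact, recorded in Corollary~\ref{normontildeC}, that $\lvert -\rvert_S$ is an \emph{ultrametric} norm on $\tilde{\mathcal{B}}(F_n)$ with respect to the trivial absolute value on $\mathbb{R}$; this forces every finite-dimensional linear subspace of $\tilde{\mathcal{B}}(F_n)$ to be bounded. So suppose, for contradiction, that $[f]$ is contained in a finite-dimensional $\mathrm{Out}(F_n)$-invariant subspace $V\subseteq\tilde{\mathcal{B}}(F_n)$. First I would observe that for each $r\geq 0$ the set $B_r=\{v\in V\mid \lvert v\rvert_S\leq r\}$ is a linear subspace of $V$: absolute homogeneity with respect to $\lvert -\rvert_0$ gives $\lvert av\rvert_S\in\{0,\lvert v\rvert_S\}$ for all $a\in\mathbb{R}$, and the ultrametric inequality gives $\lvert v+w\rvert_S\leq\max(\lvert v\rvert_S,\lvert w\rvert_S)$, so $B_r$ is closed under scalar multiplication and under addition.

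Next I would use finite-dimensionality. The $B_r$ form an increasing chain of subspaces of $V$, and since $\lvert -\rvert_S$ is a (finite, $\mathbb{N}$-valued) norm every $v\in V$ lies in $B_r$ for $r=\lvert v\rvert_S$, so $V=\bigcup_{r\geq 0}B_r$. An increasing chain of subspaces of a finite-dimensional space stabilizes, so there is $R\in\mathbb{N}$ with $B_R=V$; that is, $\lvert v\rvert_S\leq R$ for every $v\in V$.

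Now the invariance enters: since $V$ is $\mathrm{Out}(F_n)$-invariant and $X\in\mathrm{Out}(F_n)$, we have $X^n[f]\in V$ for all $n$, hence $\lvert X^n[f]\rvert_S\leq R$ for all $n$. In particular $\mathrm{usp}_S(X,[f])=\limsup_n \lvert X^n[f]\rvert_S/n=0$, so $\lvert X^n[f]\rvert_S$ cannot be $\Theta(n)$: the lower bound $Cn\leq\lvert X^n[f]\rvert_S$ with $C>0$ demanded by the definition of $\Theta(n)$ fails for $n$ large. This contradicts the hypothesis that $X$ has linear speed on $[f]$. For the final assertion, observe that linear speed on $[f]$ forces $[f]\neq 0$ (otherwise $\lvert X^n[f]\rvert_S\equiv 0\neq\Theta(n)$), so if $[f]$ were a fixpoint it would span a one-dimensional $\mathrm{Out}(F_n)$-invariant subspace containing $[f]$, which has just been ruled out.

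There is no genuinely hard step here; the only point requiring a moment's care is the structural observation that, over the trivially valued field $\mathbb{R}$, the ``balls'' $B_r$ are subspaces, so that finite-dimensionality of $V$ (rather than any built-in boundedness) is what truncates the chain of balls and produces the uniform bound $R$. Everything else is immediate from the definitions of $O$, $\Theta$, and $\mathrm{usp}_S$ recalled in Section~\ref{linspeed}.
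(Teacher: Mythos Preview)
Your argument is correct and follows the same approach as the paper: the paper's proof simply asserts that every finite-dimensional subspace of the Brooks space is bounded in $\lvert -\rvert_S$ and then invokes the lower bound from the definition of linear speed, while you supply the (elementary but worth recording) justification for that boundedness via the ultrametric property and the stabilizing chain of ball-subspaces.
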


\begin{proof}
Every finite-dimensional subspace of the Brooks space is bounded in the norm $\lvert -\rvert_S$. But if $X$ has linear speed on $[f]$, then there is $k_1\in \mathbb{R}_{+}$ and $N\in \mathbb{N}$ such that $k_1n\leq\lvert X^{n}[f]\rvert_S$ for all $n>N$.
\end{proof}

So the only candidates for fixpoints under the $\mathrm{Out}(F_n)$ action on $\tilde{\mathcal{B}}(F_n)$ are elements $[f]$ such that $\mathrm{sp}_S(T^{-1},[f])=0$. We know by \ref{speed=sp} and \ref{bpowersp} that $\mathrm{sp}_S(T^{-1},[\phi v])=0$ for $v\in F_n\setminus \{e\}$ if and only if \[v\in O=\{b,b^{-1}\}\cup \{w\mid w\ \text{is not a power of}\ b, sp(w)=0\}.\]

\begin{thm}\label{linspsums}
Let $f\in \mathcal{B}(F_n,S)$ be a sum of counting quasimorphisms.
Then exactly one of the following is true:\begin{enumerate}[(i)]
\item $T^{-1}$ has strongly linear speed on $[f]$.
\item $f$ is equivalent to $\lambda \mathrm{rot}+g$, where $\lambda$ is a real number and $g=\sum_{w\in J} \beta(w)\phi w$ in normal form with $J\subset O$.
\end{enumerate}
\end{thm}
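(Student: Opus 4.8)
The plan is to exploit the structural result Corollary~\ref{fequivspred} and reduce the whole statement to a dichotomy on the maximal speed attained on the support of a good representative. First I would pass to such a representative: by Corollary~\ref{fequivspred}, $f$ is equivalent to $\lambda\,\mathrm{rot}+g$ with $\lambda\in\mathbb{R}$ and $g$ speed reduced for $T^{-1}$, and $\mathrm{sp}_S(T^{-1},[f])=\mathrm{sp}_S(T^{-1},[g])$. Inspecting the proof of that corollary, $g$ is produced either directly as a normal-form element satisfying the hypotheses of Theorem~\ref{bigfish} (hence speed reduced), or, in the exceptional case, via Lemma~\ref{annoyingproblem} followed by passing to normal form and invoking Lemma~\ref{annoyingproblem2}; in both situations $g$ may be taken to be in normal form. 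Write $g=\sum_{w\in J}\beta(w)\phi w$ with $J=\mathrm{supp}(\beta)$ in normal form. Since a set in normal form contains no power of $b$ other than $b$ itself, the speed of $T^{-1}$ on $[\phi w]$ exists for every $w\in J$ by \ref{speed=sp}, \ref{speedonphib} and \ref{bpowersp}, and speed-reducedness yields $\mathrm{sp}_S(T^{-1},[f])=\mathrm{sp}_S(T^{-1},[g])=\sup_{w\in J}\mathrm{sp}_S(T^{-1},[\phi w])$.

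Next comes the case split on this supremum. If $\sup_{w\in J}\mathrm{sp}_S(T^{-1},[\phi w])>0$, then $\mathrm{sp}_S(T^{-1},[f])>0$ and, the speed existing by Corollary~\ref{spexistseverywhere}, $T^{-1}$ has strongly linear speed on $[f]$, which is option (i). If instead the supremum is $0$, then every $w\in J$ satisfies $\mathrm{sp}_S(T^{-1},[\phi w])=0$; for $w=b$ this gives $w\in O$ by definition of $O$, while for $w$ not a power of $b$ Corollary~\ref{speed=sp} gives $sp(w)=0$, again $w\in O$. Hence $J\subset O$ and $f\sim\lambda\,\mathrm{rot}+g$ with $g=\sum_{w\in J}\beta(w)\phi w$ in normal form and $J\subset O$, which is option (ii). This already shows at least one of (i), (ii) holds.

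To see that (i) and (ii) are mutually exclusive, suppose (ii) holds. Because $\mathrm{rot}$ is $T^{-1}$-invariant we have $\mathrm{sp}_S(T^{-1},[\lambda\,\mathrm{rot}])=0$, and because $J\subset O$ and $g$ is speed reduced we get $\mathrm{sp}_S(T^{-1},[g])=\sup_{w\in J}\mathrm{sp}_S(T^{-1},[\phi w])=0$ from \ref{speed=sp} and \ref{speedonphib}. Since $\mathrm{sp}_S(T^{-1},-)$ is a seminorm, in fact satisfies the ultrametric inequality~\ref{ultraspeed}, we conclude $\mathrm{sp}_S(T^{-1},[f])\le\max\bigl(\mathrm{sp}_S(T^{-1},[\lambda\,\mathrm{rot}]),\mathrm{sp}_S(T^{-1},[g])\bigr)=0$, so $T^{-1}$ does not have strongly linear speed on $[f]$ and (i) fails. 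Together with the previous paragraph, exactly one of (i) and (ii) holds.

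The main obstacle is not the case analysis but the bookkeeping inside the first step: one must extract from Corollary~\ref{fequivspred} a representative that is simultaneously in normal form and speed reduced for $T^{-1}$, and track which kinds of words---and, crucially, which powers of $b$---can appear in the support after the normalization of Theorem~\ref{normalform} and Lemmas~\ref{annoyingproblem}, \ref{annoyingproblem2}. Essentially all of the analytic work has already been done in Theorem~\ref{bigfish} and Corollary~\ref{speed=sp}, so the remaining content is organizational.
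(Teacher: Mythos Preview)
Your proof is correct and follows essentially the same route as the paper: invoke Corollary~\ref{fequivspred} to write $f\sim\lambda\,\mathrm{rot}+g$ with $g$ speed reduced (and in normal form), then split on whether $\sup_{w\in J}\mathrm{sp}_S(T^{-1},[\phi w])$ is positive or zero, identifying the latter with $J\subset O$ via the definition of $O$. You are slightly more careful than the paper in justifying the ``exactly one'' clause and in tracking that $g$ can be taken in normal form; one small simplification in your exclusivity argument is that you do not actually need $g$ to be speed reduced there, since $J\subset O$ already forces $\mathrm{sp}_S(T^{-1},[g])\le\sup_{w\in J}\mathrm{sp}_S(T^{-1},[\phi w])=0$ by the ultrametric inequality~\ref{ultraspeed} alone.
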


\begin{proof}
By \ref{fequivspred} we know that $f$ is equivalent to $\lambda \mathrm{rot}+g$, where $\lambda$ is a real number and $g=\sum_{w\in J} \beta(w)\phi w$ is speed reduced for $T^{-1}$. We assume wlog that $\mathrm{supp}(\beta)=J$. Since $g$ is speed reduced for $T^{-1}$ exactly one of the following statements is true\begin{enumerate}[(i)]
\item $T^{-1}$ has strongly linear speed on $[g]$ and so by \ref{fequivspred} on $[f]$.
\item $\mathrm{sp}_S(T^{-1},[\phi w])=0$ for all $w\in \mathrm{supp}(\beta)=J$.
\end{enumerate}
By definition of $O$, we know that the second statement is true if and only if $J\subset O$.
\end{proof}

So to prove that no element of $\tilde{\mathcal{B}}(F_n)$ is fixed by $\mathrm{Out}(F_n)$, it is enough to show that $[f]$ is not fixed for $f=\lambda \mathrm{rot}+g$, with $\lambda \in \mathbb{R}$ and $g=\sum_{v\in I} \alpha(v)\phi v$ in normal form with $I\subset O$. This motivates looking at the elements of $O$.

\begin{lem}\label{stationarywexplicit}
Let $w\equiv_S (m_0,s_1,...,s_k,m_k)$ be a reduced word and assume that $w$ is not a power of $b$. Then $w\in O$ if and only if the following conditions are satisfied\begin{enumerate}
\item $w$ is $b$-truncated.
\item If $s_i=a$ for $1\leq i<k$, then $s_{i+1}=a^{-1}$.
\item If $s_{i+1}=a^{-1}$ for $1\leq i<k$, then $s_i=a$.
\end{enumerate}
\end{lem}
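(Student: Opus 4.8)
The plan is simply to unwind the definition of the set $O$ together with the formula for $sp(w)$ from Definition~\ref{def:sp} and Corollary~\ref{speed=sp}. Recall that $O$ consists of $\{b,b^{-1}\}$ together with all $w$ that are not a power of $b$ and satisfy $\mathrm{sp}_S(T^{-1},[\phi w])=0$. Since $w$ is assumed not to be a power of $b$, Corollary~\ref{speed=sp} gives $\mathrm{sp}_S(T^{-1},[\phi w])=sp(w)$, so $w\in O$ if and only if $sp(w)=0$. By Definition~\ref{def:sp},
\[
sp(w)=\lvert A^+\rvert+\lvert A^-\rvert+\begin{cases}0,&\text{if $w$ is $b$-truncated}\\ 1,&\text{if $w$ is $b$-left or right-$b$}\\ 2,&\text{if $w$ is $b$-and-$b$,}\end{cases}
\]
and each of the three summands is a non-negative integer. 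Hence $sp(w)=0$ if and only if simultaneously $A^+=\varnothing$, $A^-=\varnothing$, and the shape-constant equals $0$. Since for a word that is not a power of $b$ being $b$-truncated means exactly $m_0=m_k=0$, the shape-constant is $0$ precisely when $w$ is $b$-truncated, i.e.\ precisely condition~(1).

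It then remains to translate $A^+=\varnothing$ and $A^-=\varnothing$ into conditions~(3) and~(2) respectively. By definition $A^-=\{j\in\{1,\dots,k-1\}\mid s_j=a,\ s_{j+1}\neq a^{-1}\}$, so $A^-=\varnothing$ says exactly that for every $1\le j<k$ with $s_j=a$ one has $s_{j+1}=a^{-1}$; writing $i$ for $j$, this is condition~(2). Dually $A^+=\{j\in\{1,\dots,k-1\}\mid s_j\neq a,\ s_{j+1}=a^{-1}\}$, so $A^+=\varnothing$ says that for every $1\le j<k$ with $s_{j+1}=a^{-1}$ one has $s_j=a$, which is condition~(3). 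Chaining these equivalences, $w\in O \iff sp(w)=0 \iff (1)\wedge(2)\wedge(3)$, which is the assertion of the lemma.

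This argument is a pure unwinding of definitions, so I do not expect any genuine obstacle. The only points that require a little care are recalling that ``$b$-truncated'' for a non-power of $b$ is the case $m_0=m_k=0$ (so the shape-constant vanishes exactly there), and handling the logical negations correctly when passing from ``the set $A^\pm$ is empty'' to the universally quantified statements~(2) and~(3); one should also note the harmless index bookkeeping that the syllable count called $l$ in Definition~\ref{def:sp} is the $k$ of the present statement.
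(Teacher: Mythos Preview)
Your proof is correct and is essentially the same approach as the paper's, which simply says ``This follows from \ref{def:sp}.'' You have spelled out in full the unwinding that the paper leaves implicit; the only superfluous step is routing through Corollary~\ref{speed=sp}, since the set $O$ is already defined directly via $sp(w)=0$.
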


\begin{proof}
This follows from \ref{def:sp}.
\end{proof}

\begin{prop}\label{bigger2notfixed}
Let $[f]\in \tilde{\mathcal{B}}(F_n)$ and assume that $\lvert [f]\rvert_S>1$. Then there is an element $X\in \mathrm{Out}(F_n)$ such that $\mathrm{sp}_S(T^{-1},X[f])>0$.
\end{prop}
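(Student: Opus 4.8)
The plan is to first invoke Theorem~\ref{linspsums}: either $T^{-1}$ already has strongly linear speed on $[f]$, and then $X=\mathrm{id}$ works, or $f$ is equivalent to $\lambda\,\mathrm{rot}+g$ with $g=\sum_{w\in J}\beta(w)\phi w$ in normal form and $J\subseteq O$. Recall from Lemma~\ref{hom=1} that a sum of counting quasimorphisms supported on $\bar{S}$ is a homomorphism, whereas $\mathrm{rot}$ is not; hence $\lvert[f]\rvert_S>1$ forces that $\lambda\neq 0$ or that $J$ contains a word $w_0\notin\bar{S}$. I will produce $X$ as a permutation of the free generators (in one degenerate case, an elementary transvection). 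The underlying idea is Lemma~\ref{Xinv}: for a generator permutation $\pi$ one has $\pi[\phi w]=[\phi(\pi w)]$, so $\pi$ merely relabels which generator plays the distinguished role of $b$ in the speed formula of Definition~\ref{def:sp}.

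The key observation is that a counting quasimorphism $\phi w$ with $w\in O\setminus\bar{S}$ can always be given positive $T^{-1}$-speed by a suitable permutation. Indeed, if $w\in O$ is neither a power of $b$ nor a single generator, then $w$ is $b$-truncated by Lemma~\ref{stationarywexplicit}, so its reduced spelling begins with a letter $s_1\in S_b$; letting $\pi$ interchange $b$ with the generator underlying $s_1$, the word $\pi w$ begins with a nonzero power of $b$. Therefore either $\pi w$ is not a power of $b$, hence $b$-left or $b$-and-$b$ with $sp(\pi w)\geq 1>0$, or $\pi w=b^m$ with $\lvert m\rvert\geq 2$, in which case $\mathrm{sp}_S(T^{-1},[\phi\pi w])=1>0$ by Example~\ref{bpowersp}; either way $\mathrm{sp}_S(T^{-1},\pi[\phi w])>0$. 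An analogous direct computation handles $\mathrm{rot}$: interchanging $a$ and $b$ (for $n\geq 3$) or applying a transvection such as $b\mapsto ba$ (for $n=2$) yields an $X$ with $\mathrm{sp}_S(T^{-1},X[\mathrm{rot}])>0$, using Corollary~\ref{phiwisTinv} and Definition~\ref{def:sp}. This settles the case $J\subseteq\bar{S}$, $\lambda\neq 0$: then $X[f]=\lambda X[\mathrm{rot}]+(\text{homomorphism})$, the homomorphism summand has $T^{-1}$-speed $0$, and the ultrametric property of $\mathrm{sp}_S(T^{-1},-)$ (Proposition~\ref{speedseminorm}) gives $\mathrm{sp}_S(T^{-1},X[f])=\mathrm{sp}_S(T^{-1},\lambda X[\mathrm{rot}])>0$.

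It remains to treat the case where $J$ contains a word $w_0\notin\bar{S}$. Choose $\pi$ adapted to $w_0$ as above, so that $\mathrm{sp}_S(T^{-1},[\phi(\pi w_0)])>0$ and, in particular, $\pi w_0\notin O$. Now $\pi[f]=\lambda\,\pi[\mathrm{rot}]+\sum_{w\in J}\beta(w)[\phi(\pi w)]$ is a sum of Brooks quasimorphisms; I would rewrite it into normal form and pass to a speed-reduced representative via Corollary~\ref{fequivspred}, so that $\mathrm{sp}_S(T^{-1},\pi[f])$ equals the supremum of $\mathrm{sp}_S(T^{-1},[\phi v])$ over the words $v$ in the support of that representative. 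It then suffices to show this support is not contained in $O$, equivalently, by Theorem~\ref{linspsums}, that $\pi[f]$ is not equivalent to $\mu\,\mathrm{rot}+h$ with $h$ supported on $O$. To this end one should choose $\pi$ (and, if necessary, $w_0$ of maximal length in $J$) so that among all constituents of $\pi[f]$ the term of largest $T^{-1}$-speed is attained by a single word $\pi w_0$ whose $n$-support is, for large $n$, disjoint from the $n$-supports of the remaining constituents; here the normal-form hypothesis on $g$ together with Proposition~\ref{kindintempty}, Lemma~\ref{intrect} and the rewriting Corollary~\ref{cor:rewrite} provide exactly the tools from Section~\ref{Tsums}, and the argument of Theorem~\ref{bigfish} then shows $\phi(\pi w_0)$ survives into a speed-reduced representative.

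The delicate step — and the one I expect to need the most care — is this last point: controlling cancellation among constituents of $\pi[f]$ that share the maximal $T^{-1}$-speed, so that the speed-boosted term genuinely persists. When the maximal-speed constituent can be made unique, the ultrametric property of $\mathrm{sp}_S(T^{-1},-)$ closes the argument at once; otherwise one must track the lower-kind remainders produced by Corollary~\ref{cor:rewrite} during normalization — noting that every $b$-left or right-$b$ summand still has $T^{-1}$-speed $\geq 1$ by Definition~\ref{def:sp} — and exclude a collapse of $\pi[f]$ all the way into $\mathrm{Hom}(F_n,\mathbb{R})+\mathbb{R}\,\mathrm{rot}$. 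The auxiliary bookkeeping, namely that $\pi[\mathrm{rot}]$ and the homomorphism part of $\pi[g]$ contribute only constituents of bounded, small speed under the adapted $\pi$, is routine from the explicit formulas of Sections~\ref{OutFnaction} and~\ref{Tsingle}.
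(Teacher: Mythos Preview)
Your reduction via Theorem~\ref{linspsums} is correct, and the instinct to use a generator permutation to boost speed is natural. But the step you yourself flag as delicate is a genuine gap, not just bookkeeping. When you swap $b$ with another generator, you destroy the $b$-truncatedness of \emph{every} word in $J$, not only $w_0$. The resulting collection $\{\pi w:w\in J\}$ will typically contain many $b$-left, right-$b$ and $b$-and-$b$ words, all with positive $T^{-1}$-speed, and they will be far from normal form (distinct $w,w'\in J$ can satisfy $\tau_b(\pi w)=\tau_b(\pi w')$). The rewriting of Corollary~\ref{cor:rewrite} then mixes them, and nothing in your outline prevents wholesale cancellation: choosing $w_0$ of maximal length does not isolate a unique top-speed term, since words of different lengths can acquire the same speed under $\pi$, and Theorem~\ref{bigfish} only applies \emph{after} normal form is restored, not during renormalization. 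Your treatment of $\mathrm{rot}$ has the same issue in miniature --- you assert $\mathrm{sp}_S(T^{-1},X[\mathrm{rot}])>0$ but do not compute it, and for general $f$ the $\pi[\mathrm{rot}]$ term can interact with $\pi[g]$.

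The paper avoids this entirely by using $H$ (the inversion $a\mapsto a^{-1}$) rather than a permutation touching $b$. Since $H$ fixes $b$, it preserves $b$-truncatedness, so $f\circ H$ remains in normal form automatically and Theorem~\ref{bigfish} applies with no renormalization. The case split is then: does some $w\in J$ have a transition between letters in $\{a,a^{-1}\}$ and letters in $S_b\setminus\{a,a^{-1}\}$ among its $s_i$? If yes, $H$ flips that transition out of the $O$-conditions of Lemma~\ref{stationarywexplicit}, giving $Hw\notin O$ while keeping normal form. If every $s_i$ of every $w\in J$ lies in $\{a,a^{-1}\}$ (so $J$ is extremely constrained), only then is $P_1$ used, and normal form is checked by hand; if every $s_i$ avoids $\{a,a^{-1}\}$, a power of $P_2$ reduces to a previous case. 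The $\lambda\neq 0$ case is handled first by an explicit computation of $\mathrm{rot}\circ H$.
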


\begin{proof}
By \ref{linspsums} we only have to check the statement for $[f]\in \tilde{\mathcal{B}}(F_n)$, where $f=\lambda \mathrm{rot}+g$, for $\lambda\in \mathbb{R}$ and $g=\sum_{v\in I} \alpha(v)\phi v$ is in normal form with $I\subset O$. We will consider five cases separately:

\begin{case} Assume $\lambda\neq 0$.
By \ref{Xinv} we get that \begin{equation*}\begin{split} 
\mathrm{rot}\circ H &=\phi a^{-1}b - \sum_{s\in S_b\setminus\{a\}} \phi bs\\
&= -\phi b^{-1}a +\phi ab^{-1} - \sum_{s\in S_b\setminus\{a,a^{-1}\}} \phi bs\\
&\sim -\phi a + \phi ba + \sum_{s\in S_b\setminus\{a^{-1}\}} \phi sa +\phi a - \phi ab - \sum_{s\in S_b\setminus\{a^{-1}\}} \phi as - \sum_{s\in S_b\setminus\{a,a^{-1}\}} \phi bs\\
&= -\mathrm{rot} -2\sum_{s\in S_b\setminus\{a,a^{-1}\}} \phi bs+ \sum_{s\in S_b\setminus\{a,a^{-1}\}} \phi sa - \sum_{s\in S_b\setminus\{a,a^{-1}\}} \phi as
\end{split}\end{equation*} We now apply the Nielsen transformation $H$ to $[f]$. We get that $f\circ H\sim -\lambda \mathrm{rot}-2\lambda\sum_{s\in S_b\setminus\{a,a^{-1}\}} \phi bs+h$, where $h=\sum_{w\in J} \beta(w)\phi w$ for a weight $\beta$ and a finite set $J\subset F_n\setminus \{e\}$ such that all elements of $J$ are $b$-truncated. By \ref{annoyingproblem2} and \ref{fequivspred} it follows that $\mathrm{sp}_S(T^{-1},H[f])=\mathrm{sp}_S(T^{-1},[-2\sum_{s\in S_b\setminus\{a,a^{-1}\}} \phi bs+h])\geq 1$.
\end{case}

So in the following four cases we can assume that $f=\sum_{v\in I} \alpha(v)\phi v$ is in normal form and $I\subset O$. For this proof we assume wlog that $\mathrm{supp}(\alpha)=I$. By \ref{stationarywexplicit} every element of $I$ is either $b$ or $b$-truncated. 

\begin{case} There is $w\in I$ such that $w\equiv_S (m_0,s_1,...,s_k,m_k)$, where $s_i\in \{a,a^{-1}\}$ and $s_j\in \bar{S}\setminus \{a,a^{-1}\}$ for some $1\leq i<j\leq k$. 

Then there is $1\leq m<k$ such that $s_m\in \{a,a^{-1}\}$ and $s_{m+1}\in \bar{S}\setminus \{a,a^{-1}\}$. Since $w\in O$, we get by \ref{stationarywexplicit} that $s_m=a^{-1}$. Now we apply the Nielsen transformation $H$ to $[f]$. By \ref{Xinv} we get that $f\circ H=\sum_{v\in I} \alpha(v)\phi Hv$. We see that $Hw\equiv_S (m_0,t_1,...,t_k,m_k)$, where $Hs_i=t_i$ for all $1\leq i\leq k$. Then $t_m=a$ and $t_{m+1}\in \bar{S}\setminus \{a,a^{-1}\}$, so we get by \ref{stationarywexplicit} that $Hw\notin O$. We also see that $f\circ H$ is still in normal form: Since $v\in I$ is either $b$ or $b$-truncated, so is $Hv$. By \ref{bigfish} we get that $f\circ H$ is speed reduced for $T^{-1}$. It follows that $T^{-1}$ has strongly linear speed on $H[f]$.\end{case}

\begin{case} There is $w\in I$ such that $w\equiv_S (m_0,s_1,...,s_k,m_k)$, where $s_i\in \bar{S}\setminus \{a,a^{-1}\}$ and $s_j\in \{a,a^{-1}\}$ for some $1\leq i<j\leq k$. 

Then there is $1\leq m<k$ such that $s_m\in \bar{S}\setminus \{a,a^{-1}\}$ and $s_{m+1}\in \{a,a^{-1}\}$. Since $w\in O$, we get by \ref{stationarywexplicit} that $s_{m+1}=a$. Now we apply the Nielsen transformation $H$ to $[f]$. By \ref{Xinv} we get that $f\circ H=\sum_{v\in I} \alpha(v)\phi Hv$. We see that $Hw\equiv_S (m_0,t_1,...,t_k,m_k)$, where $Hs_i=t_i$ for all $1\leq i\leq k$. Then $t_m\in \bar{S}\setminus \{a,a^{-1}\}$ and $t_{m+1}=a^{-1}$, so we get by \ref{stationarywexplicit} that $Hw\notin O$. We also see that $f\circ H$ is still in normal form: Since $v\in I$ is either $b$ or $b$-truncated, so is $Hv$. By \ref{bigfish} we get that $f\circ H$ is speed reduced for $T^{-1}$. It follows that $T^{-1}$ has strongly linear speed on $H[f]$.\end{case}

So we can assume that for every word $w\in I\setminus \{b\}$ with $w\equiv_S (m_0,s_1,...,s_k,m_k)$ either $s_i\in \{a,a^{-1}\}$ or $s_i\in \bar{S}\setminus \{a,a^{-1}\}$ for all $1\leq i\leq k$.

\begin{case} There is $w\in I$ such that $w\equiv_S (m_0,s_1,...,s_k,m_k)$ with $k\geq 2$, where $s_i\in \{a,a^{-1}\}$ for all $1\leq i\leq k$. 

Then $P_1w$ is $b$-and-$b$ and therefore $P_1w\notin O$. We now want to prove that $f\circ P_1$ is still in normal form. This will need some work: Using that $\phi v=-\phi v^{-1}$ we can assume\begin{enumerate}
\item $a^{-1}\notin I$.
\item If $ab^ma^{-1}\in I$, then $m>0$.
\item If $a^{-1}b^ma\in I$, then $m<0$.
\end{enumerate} Let $v_1\neq v_2 \in I$ and write $v_1\equiv_S (m_0,s_1,...,s_k,m_k)$ and $v_2\equiv_S (n_0,t_1,...,t_l,n_l)$. Assume that $\tau_b(P_1v_1)=\tau_b(P_1v_2)$. By definition of $P_1$ we get that \begin{equation}P_1\tau_a(v_1)=\tau_b(P_1v_1)=\tau_b(P_1v_2)=P_1\tau_a(v_1).\end{equation} Since $P_1$ is an automorphism it follows that $\tau_a(v_1)=\tau_a(v_2)$. We want to show $v_1=v_2$. We will consider four cases:\begin{enumerate}
\item Assume that $\tau_a(v_1)=\tau_a(v_2)=\varnothing$. Since $a^{-1}\notin I$, we get that $v_1=v_2=a$.
\item Assume that $\#s(\tau_a(v_1))>0$ for some $s\in S_b\setminus \{a,a^{-1}\}$. Then we have $s_i\in \bar{S}\setminus \{a,a^{-1}\}$ for all $1\leq i\leq k$ and $t_j\in \bar{S}\setminus \{a,a^{-1}\}$ for all $1\leq j\leq l$. So $v_1=\tau_a(v_1)=\tau_a(v_2)=v_2$.
\item Assume that $\#s(\tau_a(v_1))>0$ for some $s\in \{a,a^{-1}\}$. Then we have $s_i\in \{a,a^{-1}\}$ for all $1\leq i\leq k$ and $t_j\in \{a,a^{-1}\}$ for all $1\leq j\leq l$. So $k=l$ and $s_1=t_1=s_2^{-1}$ and $s_k=t_k=s_{k-1}^{-1}$. So $v_1=v_2$.
\item Assume that $\tau_a(v_1)=\tau_a(v_2)=b^m$ for some $m\in \mathbb{Z}\setminus \{0\}$. By our assumptions on $I$ we get that $v_1=v_2=ab^ma^{-1}$ if $m>0$, and that $v_1=v_2=a^{-1}b^ma$ if $m<0$.\end{enumerate}

So we know that $f\circ P_1$ is still in normal form. By \ref{bigfish} we get that $f\circ P_1$ is speed reduced for $T^{-1}$. It follows that $T^{-1}$ has strongly linear speed on $P_1[f]$.\end{case}

So we can assume that for every word $w\in I$ with $w\equiv_S (m_0,s_1,...,s_k,m_k)$ either $w\in \{a,a^{-1}\}$ or $s_i\in \bar{S}\setminus \{a,a^{-1}\}$ for all $1\leq i\leq k$. Since $\lvert [f]\rvert_S>1$ only one case is left:

\begin{case} There is $w\in I$ such that $w\equiv_S (m_0,s_1,...,s_k,m_k)$ with $k\geq 2$, where $s_i\in \bar{S}\setminus \{a,a^{-1}\}$ for all $1\leq i\leq k$. 

Then there is a number $1<j<n$ such that $P_2^js_1\in \{a,a^{-1}\}$. So $P_2^j[f]$ falls in one of the other three cases. We can conclude that there is $n\in \mathbb{N}$ such that $\lvert T^{-n}P_2^j[f]\rvert_S\neq \lvert [f]\rvert_S$.\end{case}
\end{proof}

Now we can answer Ab\'{e}rt's question on the Brooks space.

\begin{cor}\label{halfAbert}
The action of $\mathrm{Out}(F_n)$ on the Brooks space $\tilde{\mathcal{B}}(F_n)$ has no non-trivial fixpoints.
\end{cor}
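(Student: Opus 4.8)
The plan is to argue by contradiction. Suppose $[f]\in\tilde{\mathcal B}(F_n)$ is non-zero and fixed by all of $\mathrm{Out}(F_n)$, so that $X[f]=[f]$ for every $X\in\mathrm{Out}(F_n)$. Since $\lvert-\rvert_S$ takes values in $\mathbb N$, exactly one of $\lvert[f]\rvert_S\geq 2$ or $\lvert[f]\rvert_S\leq 1$ holds, and I would dispose of these two cases separately.

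First, the case $\lvert[f]\rvert_S\geq 2$, i.e. $\lvert[f]\rvert_S>1$. Here I would feed $[f]$ into Proposition \ref{bigger2notfixed} to obtain some $X\in\mathrm{Out}(F_n)$ with $\mathrm{sp}_S(T^{-1},X[f])>0$; this speed exists by Corollary \ref{spexistseverywhere}, so $T^{-1}$ has strongly linear, hence linear, speed on $X[f]$. By Corollary \ref{spnotfix}, $X[f]$ then lies in no finite-dimensional $\mathrm{Out}(F_n)$-invariant subspace of the Brooks space, and in particular is not a fixpoint. But $X[f]=[f]$ is fixed by hypothesis, a contradiction. (If one prefers to avoid quoting \ref{spnotfix}, the same contradiction follows directly: linear speed gives $\lvert T^{-n}X[f]\rvert_S\to\infty$, whereas $\{T^{-n}X[f]\}=\{X[f]\}$ is a single point and hence bounded in $\lvert-\rvert_S$.)

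Next, the case $\lvert[f]\rvert_S\leq 1$. Here $[f]$ lies in $\mathrm{Hom}(F_n,\mathbb R)$ by the identification $\mathrm{Hom}(F_n,\mathbb R)=\{[g]\in\tilde{\mathcal B}(F_n,S):\lvert[g]\rvert_S\leq 1\}$, so $[f]$ is represented by a homomorphism $\psi\colon F_n\to\mathbb R$, which is its own homogeneous representative; recall $[\psi]=0$ iff $\psi=0$. I would then use the inversion automorphism $\iota\in\mathrm{Aut}(F_n)$ with $\iota(a_i)=a_i^{-1}$ for all $i$: it is an involution, so as an element of $\mathrm{Out}(F_n)$ it acts by $\iota[\psi]=[\psi\circ\iota^{-1}]=[\psi\circ\iota]=[-\psi]=-[\psi]$. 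Being fixed forces $[\psi]=-[\psi]$, hence $\psi=0$, contradicting $[f]\neq0$. (Equivalently: the $\mathrm{Out}(F_n)$-action on $\mathrm{Hom}(F_n,\mathbb R)\cong\mathbb R^n$ factors through the surjection $\mathrm{Out}(F_n)\twoheadrightarrow GL_n(\mathbb Z)$, and $-\mathrm{id}\in GL_n(\mathbb Z)$ annihilates every fixed vector.) Combining the two cases finishes the proof.

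I do not anticipate a real obstacle. The substantive content — converting the hypothesis $\lvert[f]\rvert_S>1$ into the existence of a translate on which $T^{-1}$ has strongly linear speed — is precisely Proposition \ref{bigger2notfixed}, which is already in hand, and the same speed estimate is what also yields the stronger no-finite-dimensional-invariant-subspace statement. The only points requiring care are bookkeeping: that a fixpoint is in particular contained in a finite-dimensional (indeed one-dimensional) invariant subspace, so that Corollary \ref{spnotfix} applies verbatim; and the short homomorphism case, where one must exhibit a single outer automorphism acting as $-1$ on $\mathrm{Hom}(F_n,\mathbb R)$ — the inversion automorphism does this cleanly without invoking any of the Brooks-space machinery.
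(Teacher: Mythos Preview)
Your proof is correct and the overall structure --- split into $\lvert[f]\rvert_S>1$ versus $\lvert[f]\rvert_S\leq 1$, handling the first case via Proposition~\ref{bigger2notfixed} and Corollary~\ref{spnotfix} --- matches the paper exactly.

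The only divergence is in the homomorphism case $\lvert[f]\rvert_S\leq 1$. The paper argues directly with the Nielsen generators: writing $f=\sum_{s\in S}\alpha(s)\phi s$, it observes that if $\alpha(b)\neq 0$ then $T^{-1}[f]\neq[f]$ (since $T^{-1}\phi b=\phi a+\phi b$ and the relations of Theorem~\ref{relations} do not identify these), and otherwise applies a power of $P_2$ to move a nonzero coefficient onto $b$ and repeat. Your route --- using the global inversion $\iota\colon a_i\mapsto a_i^{-1}$, which acts as $-\mathrm{id}$ on $\mathrm{Hom}(F_n,\mathbb R)$ --- is cleaner and uses nothing about the Brooks-space machinery or the relation space; it also makes transparent that the obstruction in this range is purely the linear $GL_n(\mathbb Z)$-action. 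The paper's argument, on the other hand, stays within the explicit generator set $\{P_1,P_2,H,T\}$ already in play and reuses the computation $T^{-1}\phi b=\phi a+\phi b$, so it avoids introducing a new automorphism. Both are short and valid.
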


\begin{proof}
By \ref{bigger2notfixed} and \ref{spnotfix} we only have to check $[f]\in \tilde{\mathcal{B}}(F_n)$ with $\lvert [f]\rvert_S\leq 1$. If $\lvert [f]\rvert_S=0$, we get that $[f]=0\in \tilde{\mathcal{B}}(F_n)$. If $\lvert [f]\rvert_S=1$, we can write $f=\sum_{s\in S}\alpha(s)\phi s$. If $\alpha(b)\neq 0$, we know that $T^{-1}[f]\neq [f]$ by \ref{relations} and $T^{-1}\phi b=\phi a+\phi b$. If $\alpha(b)=0$, we choose $s\in S$ such that $\alpha(s)\neq 0$. There is $j\in \mathbb{N}$ such that $P_2^js=b$. We get by the above that $T^{-1}P_2^j[f]\neq [f]$.
\end{proof}

We can even give a stronger statement for the image of $\tilde{\mathcal{B}}(F_n)$ in $H_b^2(F_n,\mathbb{R})$.

\begin{cor}\label{nofindiminv}
The action of $\mathrm{Out}(F_n)$ on $H^2_b(F_n, \mathbb{R})_{\mathrm{fin}}$ admits no non-trivial finite-dimensional $\mathrm{Out}(F_n)$-invariant subspaces. In particular it has no non-trivial fixed point. 
\end{cor}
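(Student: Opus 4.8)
The plan is to lift the problem to the Brooks space and then quote Corollary \ref{spnotfix} together with Proposition \ref{bigger2notfixed}. Write $H^2_b(F_n,\mathbb{R})_{\mathrm{fin}}=\tilde{\mathcal{B}}(F_n)/\mathrm{Hom}(F_n,\mathbb{R})$ and let $\pi\colon\tilde{\mathcal{B}}(F_n)\to H^2_b(F_n,\mathbb{R})_{\mathrm{fin}}$ be the quotient map, which is $\mathrm{Out}(F_n)$-equivariant by \ref{sameaction} (and the $\mathrm{Out}(F_n)$-invariance of $\mathrm{Hom}(F_n,\mathbb{R})$ explained in Section \ref{countfctquasi}). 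Suppose $V\subseteq H^2_b(F_n,\mathbb{R})_{\mathrm{fin}}$ is a finite-dimensional $\mathrm{Out}(F_n)$-invariant subspace. Then $W:=\pi^{-1}(V)$ is an $\mathrm{Out}(F_n)$-invariant subspace of $\tilde{\mathcal{B}}(F_n)$ containing $\ker\pi=\mathrm{Hom}(F_n,\mathbb{R})$, and since $W/\mathrm{Hom}(F_n,\mathbb{R})\cong V$ we have $\dim W=\dim\mathrm{Hom}(F_n,\mathbb{R})+\dim V=n+\dim V<\infty$. So $W$ is a \emph{finite-dimensional} $\mathrm{Out}(F_n)$-invariant subspace of the Brooks space.

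The first step is to observe that $T^{-1}$ has speed $0$ on every element of $W$. By \ref{spexistseverywhere} the number $\mathrm{sp}_S(T^{-1},[f])$ is defined for every $[f]\in W$. If it were strictly positive for some $[f]\in W$, then $T^{-1}$ would be of strongly linear, hence of linear, speed on $[f]$, and Corollary \ref{spnotfix} would forbid $[f]$ from lying in a finite-dimensional $\mathrm{Out}(F_n)$-invariant subspace of the Brooks space, contradicting $[f]\in W$. Hence $\mathrm{sp}_S(T^{-1},[f])=0$ for all $[f]\in W$.

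The second step is to apply Proposition \ref{bigger2notfixed}. If some $[f]\in W$ satisfied $\lvert[f]\rvert_S>1$, there would be $X\in\mathrm{Out}(F_n)$ with $\mathrm{sp}_S(T^{-1},X[f])>0$; but $X[f]\in W$ by invariance, contradicting the first step. Therefore $\lvert[f]\rvert_S\le 1$ for every $[f]\in W$, which by the identification $\mathrm{Hom}(F_n,\mathbb{R})=\{[f]\in\tilde{\mathcal{B}}(F_n)\mid\lvert[f]\rvert_S\le 1\}$ means $W\subseteq\mathrm{Hom}(F_n,\mathbb{R})$. Together with $\mathrm{Hom}(F_n,\mathbb{R})\subseteq W$ this gives $W=\mathrm{Hom}(F_n,\mathbb{R})$, and applying $\pi$ yields $V=0$. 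The ``in particular'' is immediate, since any non-zero fixed point spans a one-dimensional $\mathrm{Out}(F_n)$-invariant subspace (alternatively it follows directly from Corollary \ref{halfAbert} and the equivariance of $\pi$).

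I do not expect a genuine obstacle: all the quantitative input is already contained in Sections \ref{Tsingle}--\ref{fixpts}. The only points that want a little care are the bookkeeping that $\pi^{-1}(V)$ stays finite-dimensional (this uses $\dim\mathrm{Hom}(F_n,\mathbb{R})=n<\infty$) and the elementary remark that ``$\mathrm{sp}_S(T^{-1},[f])$ exists and equals $0$'' is incompatible with $T^{-1}$ having linear speed on $[f]$, so that the dichotomy ``linear speed versus speed zero'' on $W$ is exhaustive.
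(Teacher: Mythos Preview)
Your proof is correct and follows the same strategy as the paper: combine \ref{bigger2notfixed} with \ref{spnotfix} to show any invariant finite-dimensional subspace must lie in $\mathrm{Hom}(F_n,\mathbb{R})$. The only cosmetic difference is that the paper works directly in $H^2_b(F_n,\mathbb{R})_{\mathrm{fin}}$ using the ultrametric norm there (\ref{H2bnorm}) to run the boundedness argument of \ref{spnotfix}, whereas you pull back along $\pi$ to a finite-dimensional invariant subspace $W$ of the Brooks space and apply \ref{spnotfix} verbatim; this is the same argument in slightly different packaging.
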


\begin{proof}
This follows from \ref{H2bnorm}, \ref{bigger2notfixed} and \ref{spnotfix}.
\end{proof}

\section{Open problems}\label{open}

Not only does this article leave questions open, but it leaves us even with more questions than we started with. To start with the obvious: Ab\'{e}rt's question \ref{47} is still open. We answered the question in \ref{halfAbert} for the Brooks space $\tilde{\mathcal{B}}(F_n)$, which is a dense subspace of $\tilde{\mathcal{Q}}(F_n)$. But the step up to the whole space $\tilde{\mathcal{Q}}(F_n)$ was not even attempted in this article. Here is still work to do. But apart from this, our introduction of the (general) notion of speed in \ref{speed} sparks several new questions concerning $\tilde{\mathcal{B}}(F_n)$ and $\mathrm{Out}(F_n)$.

Does the type of speed $g$ of an element $X\in \mathrm{Out}(F_n)$ depend on the generating set $S$? This boils down to the question if different generating sets $S_1$ and $S_2$ give us equivalent norms $\lvert -\rvert_{S_i}$ on $\tilde{\mathcal{B}}(F_n)$, which we think is true.

Every element $X\in \mathrm{Out}(F_n)$ has speed of some type $g$. So we get for every $X\in \mathrm{Out}(F_n)$ a seminorm on $\tilde{\mathcal{B}}(F_n)$ by looking at its upper $g$ speed. How do these seminorms look like? Can we extend this seminorms in any meaningful way to the bigger space $\tilde{\mathcal{Q}}(F_n)$?

Which types of speed $g$ exist? We saw an example of linear speed in this article and it is easy to find an example of exponential speed. But are there elements $X\in \mathrm{Out}(F_n)$ with intermediate speed?

Assume $X\in \mathrm{Out}(F_n)$ has speed of some type $g$. Does the $g$ speed of $X$ on $[f]$ exist for every $[f]\in \tilde{\mathcal{B}}(F_n)$? This is the case for $T^{-1}$. Once this would be proved the notions of upper and lower speed would become superfluous.

Can we see properties of elements in $\mathrm{Out}(F_n)$ in their speed? Examples in $\mathrm{Out}(F_2)$ suggest that there unipotent elements have linear speed, whereas semisimple elements of infinite order have exponential speed. So there seems to be some connection between the speed of $X$ and other already known properties. One might also try to use the map from $\mathrm{Out}(F_n)$ to the space of seminorms on $\tilde{\mathcal{B}}(F_n)$ given by the upper speed to get information on $\mathrm{Out}(F_n)$.

Last but not least there is of course the question of how far our results can be extended. Analogous constructions to the Brook space have been defined in a vast variety of environments (see page 6 of \cite{HT} for a list). These spaces are still widely mysterious. Can we define analogues of our norm $\lvert -\rvert_S$ or the speed $\mathrm{sp}_S(X,[f],g)$ in these cases? Given the resemblance of $\mathrm{Out}(F_n)$ and mapping class groups, can we extend our results to say something about surfaces (for example about pseudo-Anosov maps)?

We plan to investigate some of these questions in future work.

\appendix

\section{Speed}\label{speed}

In \ref{linspeed} we introduced linear speed. In this appendix we expand this to get a general notion of the speed of an element $X\in \mathrm{Out}(F_n)$. The definitions, results and proofs are exactly analogous to their counterparts in \ref{linspeed}. Therefore we will not write down any proofs in this appendix. We assume in the following that $g:\mathbb{N}\to \mathbb{R_+}$.

\begin{defn}
Let $X\in \mathrm{Out}(F_n)$ and $[f]\in \tilde{\mathcal{B}}(F_n)$. We say that $X$ has \emph{speed of type $g$ on $[f]$} if $\lvert X^{n}[f]\rvert_S=\Theta (g)$.
\end{defn}

\begin{defn}
Let $X\in \mathrm{Out}(F_n)$. We say that $X$ has \emph{speed of type $g$} if it satisfies:\begin{enumerate}
\item $\lvert X^{n}[f]\rvert_S=O(g)$ for all $[f]\in \tilde{\mathcal{B}}(F_n)$.
\item There is $[f_0]\in \tilde{\mathcal{B}}(F_n)$ such that $X$ has speed of type $g$ on $[f_0]$.\end{enumerate}
\end{defn}

\begin{exmp}
\begin{enumerate}
\item If $X$ has speed of type $Id$, we also say that $X$ has linear speed.
\item If $X$ has speed of type $p$ for some polynomial $p$, we also say that $X$ has polynomial speed.
\item If $X$ has speed of type $a^n$ for some $a>1$, we also say that $X$ has has exponential speed.
\end{enumerate}
\end{exmp}

\begin{defn}
Let $X\in \mathrm{Out}(F_n)$ and $[f]\in \tilde{\mathcal{B}}(F_n)$.\begin{enumerate}
\item We call $\mathrm{usp}_S(X,[f],g)=\limsup_n \frac{\lvert X^{n}[f]\rvert_S}{g(n)}$ the \emph{upper $g$ speed} of $X$ on $[f]$.
\item We call $\mathrm{lsp}_S(X,[f],g)=\liminf_n \frac{\lvert X^{n}[f]\rvert_S}{g(n)}$ the \emph{lower $g$ speed} of $X$ on $[f]$.
\item Assume $\lim_n \frac{\lvert X^{n}[f]\rvert_S}{g(n)}\in \mathbb{R}$. Then we call $\mathrm{sp}_S(X,[f],g)=\lim_n \frac{\lvert X^{n}[f]\rvert_S}{g(n)}$ the \emph{$g$ speed} of $X$ on $[f]$ and say that \emph{the $g$ speed of $X$ on $[f]$ exists}.
\end{enumerate}
\end{defn}

These three different speeds have the same properties as in the linear case.

\begin{lem}
Endow $\mathbb{R}$ with the trivial absolute value $\lvert -\rvert_0$. Let $[f]\in \tilde{\mathcal{B}}(F_n)$ and $a\in \mathbb{R}$. Then \begin{enumerate}[(i)]
\item $\mathrm{usp}_S(X,[af],g)=\lvert a\rvert_0\mathrm{usp}_S(X,[f],g)$
\item $\mathrm{lsp}_S(X,[af],g)=\lvert a\rvert_0\mathrm{lsp}_S(X,[f],g)$
\item $\mathrm{sp}_S(X,[af],g)=\lvert a\rvert_0\mathrm{sp}_S(X,[f],g)$ if both sides are defined.
\end{enumerate}
\end{lem}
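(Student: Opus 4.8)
The statement is the exact analogue of \eqref{speedhomogeneous} (and the homogeneity half of Proposition \ref{speedseminorm}) for a general scaling function $g$, so the proof is a routine computation that I would carry out as follows. First I would record the single structural input: the action of $X\in\mathrm{Out}(F_n)$ on $\tilde{\mathcal{B}}(F_n)$ is $\mathbb{R}$-linear, so $X^n[af]=aX^n[f]$ for every $n\in\mathbb{N}$ and every $a\in\mathbb{R}$; and the norm $\lvert-\rvert_S$ on $\tilde{\mathcal{B}}(F_n)$ is absolutely homogeneous with respect to the trivial absolute value $\lvert-\rvert_0$ by Corollary \ref{normontildeC} (equivalently the computation in Proposition \ref{normV} transported to $\tilde{\mathcal{B}}(F_n)$). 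Combining these two facts gives, for all $n$,
\[
\lvert X^n[af]\rvert_S=\lvert aX^n[f]\rvert_S=\lvert a\rvert_0\,\lvert X^n[f]\rvert_S.
\]

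Next I would divide by $g(n)$ and pass to the relevant asymptotic. For (i) and (ii) the point is simply that multiplication of a sequence of nonnegative reals by the fixed nonnegative constant $c=\lvert a\rvert_0$ commutes with $\limsup$ and $\liminf$: if $c=1$ this is vacuous, and if $c=0$ the scaled sequence is identically $0$, so its $\limsup$ and $\liminf$ are both $0=c\cdot(\text{anything})$. Hence
\[
\mathrm{usp}_S(X,[af],g)=\limsup_n\frac{\lvert a\rvert_0\,\lvert X^n[f]\rvert_S}{g(n)}=\lvert a\rvert_0\,\mathrm{usp}_S(X,[f],g),
\]
and the identical line with $\liminf$ gives (ii). For (iii), under the hypothesis that $\lim_n \lvert X^n[f]\rvert_S/g(n)$ exists in $\mathbb{R}$ (or that $\lim_n\lvert X^n[af]\rvert_S/g(n)$ does), the same scaling identity shows both limits exist simultaneously and $\mathrm{sp}_S(X,[af],g)=\lvert a\rvert_0\,\mathrm{sp}_S(X,[f],g)$; the case $\lvert a\rvert_0=0$ just says the zero class has $g$ speed $0$.

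I do not expect any genuine obstacle here — the only thing to be careful about is that we are scaling by $\lvert a\rvert_0\in\{0,1\}$ rather than by $\lvert a\rvert$, so one should not invoke the usual "$\limsup(c a_n)=c\limsup a_n$ for $c>0$" blindly when $a=0$; the degenerate case is handled separately as above. As the appendix already notes, the argument is word-for-word the linear-speed argument, so in the write-up I would simply refer back to the proof of Proposition \ref{speedseminorm} and to Corollary \ref{normontildeC} and omit the details.
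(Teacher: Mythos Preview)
Your proposal is correct and matches the paper's approach: the appendix explicitly omits all proofs, stating they are word-for-word the linear-speed arguments from Section~\ref{linspeed}, and your write-up is exactly the computation \eqref{speedhomogeneous} with $n$ replaced by $g(n)$ together with the absolute homogeneity of $\lvert-\rvert_S$ from Corollary~\ref{normontildeC}. Your remark about handling the degenerate case $\lvert a\rvert_0=0$ separately is a sensible bit of care not made explicit in the paper.
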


\begin{lem}
Let $f=\sum_{v\in I} \alpha(v)\phi v$ be a sum of counting quasimorphisms. Then 
\begin{enumerate}[(i)]
\item $\mathrm{usp}_S(X,[f_1+f_2],g)\leq \sup (\mathrm{usp}_S(X,[f_1],g),\mathrm{usp}_S(X,[f_2],g)$
\item $\mathrm{sp}_S(X,[f_1+f_2],g)\leq \sup (\mathrm{sp}_S(X,[f_1],g),\mathrm{sp}_S(X,[f_2],g)$ if both sides are defined.
\end{enumerate}
\end{lem}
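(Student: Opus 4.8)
The plan is to transcribe the proof of Proposition~\ref{speedseminorm} verbatim, replacing the normalising sequence $n$ by $g(n)$. The structural inputs are exactly three: the action of $X$ on $\tilde{\mathcal{B}}(F_n)$ is by linear maps, so $X^{n}[f_1+f_2]=X^{n}[f_1]+X^{n}[f_2]$; the length $\lvert-\rvert_S$ is an ultrametric norm on $\tilde{\mathcal{B}}(F_n)$ by Corollary~\ref{normontildeC} applied to the closed linear subspace $\tilde{\mathcal{B}}(F_n)\subset\tilde{\mathcal{C}}(F_n,S)$; and the two elementary facts about real sequences $a_k,b_k$ recorded just after Proposition~\ref{speedseminorm}, namely $\limsup_k\max(a_k,b_k)=\sup(\limsup_k a_k,\limsup_k b_k)$, and $\lim_k\max(a_k,b_k)=\max(\lim_k a_k,\lim_k b_k)$ whenever both limits on the right exist.

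First I would record the pointwise estimate, valid for every $n\in\mathbb{N}$:
\[
\lvert X^{n}[f_1+f_2]\rvert_S=\lvert X^{n}[f_1]+X^{n}[f_2]\rvert_S\le\max\bigl(\lvert X^{n}[f_1]\rvert_S,\lvert X^{n}[f_2]\rvert_S\bigr),
\]
using linearity of the action and the ultrametric triangle inequality on $\tilde{\mathcal{B}}(F_n)$. Since $g(n)>0$ by hypothesis, I may divide by $g(n)$ to obtain $\lvert X^{n}[f_1+f_2]\rvert_S/g(n)\le\max(a_n,b_n)$ with $a_n=\lvert X^{n}[f_1]\rvert_S/g(n)$ and $b_n=\lvert X^{n}[f_2]\rvert_S/g(n)$. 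Taking $\limsup_n$ of both sides and applying the first sequence identity to $a_n,b_n$ gives
\[
\mathrm{usp}_S(X,[f_1+f_2],g)\le\sup\bigl(\mathrm{usp}_S(X,[f_1],g),\mathrm{usp}_S(X,[f_2],g)\bigr),
\]
which is (i). This is literally the computation \ref{ultraspeed} with $n$ replaced by $g(n)$. For (ii) I assume all three $g$ speeds occurring in the statement exist. Then $a_n$ and $b_n$ converge, so by the second sequence identity $\max(a_n,b_n)\to\max(\mathrm{sp}_S(X,[f_1],g),\mathrm{sp}_S(X,[f_2],g))$; taking $\limsup_n$ in the divided pointwise estimate and using that $\mathrm{sp}_S(X,[f_1+f_2],g)$ exists (hence equals its own $\limsup$) yields
\[
\mathrm{sp}_S(X,[f_1+f_2],g)\le\sup\bigl(\mathrm{sp}_S(X,[f_1],g),\mathrm{sp}_S(X,[f_2],g)\bigr).
\]

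There is no genuine obstacle: the statement is a routine transcription of the linear case, and the only point worth care is the asymmetry between $\limsup$ and $\liminf$ under $\max$. The reverse inequality $\liminf_k\max(a_k,b_k)\ge\sup(\liminf_k a_k,\liminf_k b_k)$ points the wrong way, which is exactly why no analogous bound holds for the lower $g$ speed and why (ii) must be restricted to the case where the exact $g$ speeds are defined. For the same reason I would not attempt to strengthen (ii) into a claim that $\mathrm{sp}_S(X,[f_1+f_2],g)$ automatically exists; as in the body of the article, such a conclusion needs the extra input of a speed-reduced representative, which is unavailable at this level of generality.
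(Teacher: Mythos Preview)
Your proposal is correct and is exactly what the paper intends: the appendix explicitly states that the proofs are analogous to their counterparts in Section~\ref{linspeed} and declines to write them out, so transcribing the computation~\eqref{ultraspeed} from Proposition~\ref{speedseminorm} with $g(n)$ in place of $n$ is precisely the expected argument. One minor quibble: the second sequence fact you cite, $\lim_k\max(a_k,b_k)=\max(\lim_k a_k,\lim_k b_k)$, is not literally what is recorded after Proposition~\ref{speedseminorm} (the paper records the $\liminf$ inequality instead), but the fact is elementary and your use of it is sound.
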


\begin{cor}
Let $X\in \mathrm{Out}(F_n)$. Then we get:\begin{enumerate}
\item Assume that $\mathrm{usp}_S(X,[f],g)<\infty$ for all $[f]\in \tilde{\mathcal{B}}(F_n)$. Then the upper speed $\mathrm{usp}_S(X,-,g)$ is a seminorm on $\tilde{\mathcal{B}}(F_n)$.
\item Assume that $\mathrm{sp}_S(X,[f],g)$ exists for all $[f]\in \tilde{\mathcal{B}}(F_n)$. Then the speed $\mathrm{sp}_S(X,-,g)$ is a seminorm on $\tilde{\mathcal{B}}(F_n)$.
\end{enumerate}
\end{cor}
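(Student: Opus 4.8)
The plan is to transcribe the proof of Proposition \ref{speedseminorm} essentially word for word, with the identity \eqref{speedhomogeneous} replaced by the first lemma of this appendix and the chain \eqref{ultraspeed} replaced by the second. Recall that to verify that a function on $\tilde{\mathcal{B}}(F_n)$ is a seminorm — with $\mathbb{R}$ carrying the trivial absolute value $\lvert -\rvert_0$ — one must check that it is real-valued, absolutely homogeneous, and satisfies the triangle inequality; nonnegativity needs no argument, since both $\mathrm{usp}_S(X,[f],g)$ and $\mathrm{sp}_S(X,[f],g)$ are limits (respectively limit superiors) of quotients of nonnegative numbers by the positive reals $g(n)$.

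For (i), the hypothesis $\mathrm{usp}_S(X,[f],g)<\infty$ for every $[f]$ is exactly the assertion that $\mathrm{usp}_S(X,-,g)$ takes values in $\mathbb{R}$. Absolute homogeneity is the first lemma of the appendix, which is itself immediate from the absolute homogeneity of $\lvert -\rvert_S$ on $\tilde{\mathcal{B}}(F_n)$. For the triangle inequality I would invoke the second lemma, which upgrades the ultrametric inequality \eqref{ultra} together with the elementary identity $\limsup_k\max(a_k,b_k)=\sup(\limsup_k a_k,\limsup_k b_k)$ to
\[
\mathrm{usp}_S(X,[f_1+f_2],g)\le \sup\bigl(\mathrm{usp}_S(X,[f_1],g),\,\mathrm{usp}_S(X,[f_2],g)\bigr)\le \mathrm{usp}_S(X,[f_1],g)+\mathrm{usp}_S(X,[f_2],g).
\]
For (ii), the hypothesis that $\mathrm{sp}_S(X,[f],g)$ exists for all $[f]$ makes $\mathrm{sp}_S(X,-,g)$ real-valued, and since the value of the speed agrees with both the upper and the lower speed whenever it exists, the homogeneity and subadditivity bounds of (i) carry over verbatim.

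I do not expect any genuine obstacle: this is a formal consequence of the two lemmas that precede it, and there is essentially nothing to compute. The one point worth flagging — as was already noted immediately after Proposition \ref{speedseminorm} — is the asymmetry between $\limsup$ and $\liminf$ with respect to $\max$: because one only has $\liminf_k\max(a_k,b_k)\ge\sup(\liminf_k a_k,\liminf_k b_k)$, the same argument does \emph{not} show that the lower $g$ speed $\mathrm{lsp}_S(X,-,g)$ is a seminorm, which is why it is absent from the statement.
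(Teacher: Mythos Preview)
Your proposal is correct and matches the paper's approach exactly: the appendix explicitly states that its proofs are analogous to those in Subsection~\ref{linspeed} and omits them, so transcribing the proof of Proposition~\ref{speedseminorm} with the two appendix lemmas substituted for \eqref{speedhomogeneous} and \eqref{ultraspeed} is precisely what is intended. Your side remark on the $\limsup$/$\liminf$ asymmetry also mirrors the paper's own observation following Proposition~\ref{speedseminorm}.
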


By knowing the $g$ speed of $X$ on $[f]$, we get information on the length of $X^{n}[f]$ for big enough $n$.

\begin{cor}
Let $X\in \mathrm{Out}(F_n)$ and let $g:\mathbb{N}\to \mathbb{R}$ such that $\lim_{n\to \infty} g(n)=\infty$. Assume that the $g$ speed of $X$ exists on $[f_1], [f_2]\in \tilde{\mathcal{B}}(F_n)$. If $\mathrm{sp}_S(X,[f_1],g)>\mathrm{sp}_S(X,[f_2],g)$, then for every constant $C\in \mathbb{R}$ there is $N\in \mathbb{N}$ such that $\lvert X^{n}[f_1]\rvert_S>\lvert X^{n}[f_2]\rvert_S+C$ for $n>N$.
\end{cor}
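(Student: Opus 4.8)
The plan is to transcribe the proof of Corollary \ref{speedgiveslength} almost verbatim, replacing the divisor $n$ by $g(n)$, and to note that the only new ingredient needed is the hypothesis $\lim_{n\to\infty} g(n)=\infty$, which is precisely what absorbs the additive constant $C$ in the limit.

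First I would argue by contradiction. Suppose the conclusion fails for some $C\in\mathbb{R}$; then $\lvert X^{n}[f_1]\rvert_S\leq \lvert X^{n}[f_2]\rvert_S+C$ holds for infinitely many $n$, so we may extract a strictly increasing sequence $(a_k)_{k\in\mathbb{N}}$ of natural numbers along which this inequality holds for every $k$. Next I would divide through by $g(a_k)$ and pass to the limit $k\to\infty$: since the $g$ speed of $X$ on $[f_1]$ and on $[f_2]$ is assumed to exist, the limits along the subsequence $(a_k)$ agree with the full limits $\mathrm{sp}_S(X,[f_i],g)$, and since $g(a_k)\to\infty$ the term $\tfrac{C}{g(a_k)}$ tends to $0$. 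This gives
\[
\mathrm{sp}_S(X,[f_1],g)=\lim_k \frac{\lvert X^{a_k}[f_1]\rvert_S}{g(a_k)} \leq \lim_k \frac{\lvert X^{a_k}[f_2]\rvert_S+C}{g(a_k)} = \mathrm{sp}_S(X,[f_2],g),
\]
contradicting the hypothesis $\mathrm{sp}_S(X,[f_1],g)>\mathrm{sp}_S(X,[f_2],g)$.

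I do not expect any genuine obstacle. The single place where the new hypothesis $g(n)\to\infty$ is used — a hypothesis absent from the linear statement only because there $g=\mathrm{Id}$ already diverges — is the vanishing of $\tfrac{C}{g(a_k)}$; everything else is a line-by-line copy of the proof of Corollary \ref{speedgiveslength}. The one point deserving a moment's care is the replacement of the limit over $n$ by the limit over the subsequence $(a_k)$, which is legitimate precisely because the $g$ speed is assumed to be an honest limit; if only the upper or lower $g$ speed were available the argument would not go through, exactly as in the linear case discussed after Proposition \ref{speedseminorm}.
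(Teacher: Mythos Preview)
Your proposal is correct and matches the paper's approach exactly: the appendix explicitly states that the proofs are analogous to their linear counterparts in Section~\ref{linspeed} and omits them, so the intended argument is precisely the verbatim transcription of the proof of Corollary~\ref{speedgiveslength} with $n$ replaced by $g(n)$, using $g(n)\to\infty$ to kill the $C/g(a_k)$ term. Your commentary on where the hypothesis $g(n)\to\infty$ enters and why the existence of the limit (rather than merely $\limsup$ or $\liminf$) is needed for the subsequence step is accurate and appropriate.
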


We have the same concepts that we used to prove that $T^{-1}$ has linear speed.

\begin{defn}
Assume that the speed of $X$ on $[f]$ exists and $\mathrm{sp}_S(X,[f],g)>0$. Then we say that $X$ has \emph{strongly speed of type $g$ on $[f]$}.
\end{defn}

\begin{lem}
If $X$ has strongly speed of type $g$ on $[f]$, then $X$ has speed of type $g$ on $[f]$.
\end{lem}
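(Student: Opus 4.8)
The plan is to unwind the definitions exactly as in the linear case treated in Section \ref{linspeed}; the argument is a verbatim adaptation of the proof of the lemma there stating that strongly linear speed on $[f]$ implies linear speed on $[f]$. First I would set $s=\mathrm{sp}_S(X,[f],g)$. By the hypothesis that $X$ has strongly speed of type $g$ on $[f]$, this limit exists and satisfies $s>0$.

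Next, fix $\epsilon$ with $0<\epsilon<s$. By the definition of $\mathrm{sp}_S(X,[f],g)$ as a limit, there is $N\in\mathbb{N}$ such that for all $n>N$
\[
s-\epsilon\leq \frac{\lvert X^{n}[f]\rvert_S}{g(n)}\leq s+\epsilon.
\]
Since $g\colon\mathbb{N}\to\mathbb{R}_+$, multiplying through by $g(n)$ gives, for all $n>N$,
\[
(s-\epsilon)g(n)\leq \lvert X^{n}[f]\rvert_S\leq (s+\epsilon)g(n).
\]
Taking $C=s-\epsilon$, $C'=s+\epsilon$ and $n_0=N$ in the definition of $\Theta$, this shows $\lvert X^{n}[f]\rvert_S=\Theta(g)$, which is exactly the statement that $X$ has speed of type $g$ on $[f]$.

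There is no genuine obstacle here. The only place the hypotheses enter is in the choice $\epsilon<s$, which needs $s>0$ — this is precisely what the word \emph{strongly} buys us, since it guarantees that the lower constant $C=s-\epsilon$ is positive rather than merely nonnegative — together with $g$ taking values in $\mathbb{R}_+$, which is what lets us clear the denominator. As elsewhere in this appendix, I would not spell the proof out beyond this, since it is identical to its linear counterpart.
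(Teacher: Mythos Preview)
Your proof is correct and is exactly the argument the paper intends: the appendix explicitly states that the proofs are analogous to their linear counterparts in Section \ref{linspeed} and does not write them out, and your argument is the verbatim adaptation of that linear-case proof with $n$ replaced by $g(n)$.
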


\begin{defn}
Let $f=\sum_{v\in I} \alpha(v)\phi v$ be a sum of counting quasimorphisms. Then $f$ is \emph{$g$ speed reduced} for $X$ if $\mathrm{sp}_S(X,[f],g)=\sup_{v\in \mathrm{supp}(\alpha)} \mathrm{sp}_S(X,[\phi v],g)$ and both sites of the equation exist.
\end{defn}

\begin{lem}
Let $f=\sum_{v\in I} \alpha(v)\phi v$ be a sum of counting quasimorphisms. Then $f$ is $g$ speed reduced for $X$ if the $g$ speed of $X$ on $[\phi v]$ exists for all $v\in \mathrm{supp}(\alpha)$ and \begin{equation}
\mathrm{lsp}_S(X,[f],g)\geq\sup_{v\in \mathrm{supp}(\alpha)} \mathrm{usp}_S(X,[\phi v],g).
\end{equation}
\end{lem}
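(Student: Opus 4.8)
The plan is to run the argument of Lemma \ref{howprovespeedred} essentially verbatim, only replacing its two linear-speed inputs (the ultrametric estimate \ref{ultraspeed} and the absolute homogeneity \ref{speedhomogeneous}) by their general-$g$ analogues recorded earlier in this appendix, namely the sup-subadditivity $\mathrm{usp}_S(X,[f_1+f_2],g)\le\sup(\mathrm{usp}_S(X,[f_1],g),\mathrm{usp}_S(X,[f_2],g))$ and the homogeneity $\mathrm{usp}_S(X,[af],g)=\lvert a\rvert_0\,\mathrm{usp}_S(X,[f],g)$.

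First I would fix notation: write $f=\sum_{v\in I}\alpha(v)\phi v$ and assume without loss of generality that $I=\mathrm{supp}(\alpha)$, so $I$ is finite and $\lvert\alpha(v)\rvert_0=1$ for every $v\in I$. Iterating the sup-subadditivity of $\mathrm{usp}_S(X,-,g)$ over the finite sum and then applying homogeneity to each summand, one gets
\[
\mathrm{usp}_S(X,[f],g)\le\sup_{v\in I}\mathrm{usp}_S(X,[\alpha(v)\phi v],g)=\sup_{v\in I}\mathrm{usp}_S(X,[\phi v],g).
\]
Combining this with the trivial inequality $\mathrm{lsp}_S(X,[f],g)\le\mathrm{usp}_S(X,[f],g)$ and with the hypothesis $\sup_{v\in I}\mathrm{usp}_S(X,[\phi v],g)\le\mathrm{lsp}_S(X,[f],g)$, I obtain the chain
\[
\mathrm{lsp}_S(X,[f],g)\le\mathrm{usp}_S(X,[f],g)\le\sup_{v\in I}\mathrm{usp}_S(X,[\phi v],g)\le\mathrm{lsp}_S(X,[f],g),
\]
forcing all four quantities to be equal.

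To conclude I would observe that since the $g$ speed of $X$ on $[\phi v]$ exists for each $v\in I$, we have $\mathrm{usp}_S(X,[\phi v],g)=\mathrm{sp}_S(X,[\phi v],g)\in\mathbb{R}$, and as $I$ is finite the common value $\sup_{v\in I}\mathrm{sp}_S(X,[\phi v],g)$ is a finite real number. Hence $\mathrm{lsp}_S(X,[f],g)=\mathrm{usp}_S(X,[f],g)$ is finite, i.e. $\lim_n\lvert X^n[f]\rvert_S/g(n)$ exists in $\mathbb{R}$; that is, $\mathrm{sp}_S(X,[f],g)$ exists and equals $\sup_{v\in\mathrm{supp}(\alpha)}\mathrm{sp}_S(X,[\phi v],g)$, which is precisely the definition of $f$ being $g$ speed reduced for $X$.

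There is no real obstacle in this argument; the only points deserving a line of care are the passage from the two-term sup-subadditivity stated above to the finite-sum version (a one-line induction on $\lvert I\rvert$) and the remark that, since $\mathbb{R}$ carries the trivial absolute value, every nonzero coefficient $\alpha(v)$ contributes a factor $\lvert\alpha(v)\rvert_0=1$ and therefore drops out of the estimate, so that no assumption on the size of the $\alpha(v)$ is needed.
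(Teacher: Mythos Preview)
Your proposal is correct and follows exactly the approach the paper intends: the appendix explicitly omits the proof, stating that it is ``exactly analogous'' to the linear case in Lemma~\ref{howprovespeedred}, and your argument is precisely that proof with the general-$g$ versions of \eqref{ultraspeed} and \eqref{speedhomogeneous} substituted in. The extra care you take (the induction to pass from the two-term sup-subadditivity to the finite sum, and the remark that $\lvert\alpha(v)\rvert_0=1$ makes the coefficients drop out) is a welcome elaboration of steps the paper leaves implicit.
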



\begin{thebibliography}{10}

\bibitem{Abert}
Mikl\'{o}s Ab\'{e}rt.
\newblock Some questions.
\newblock \url{http://www.renyi.hu/~abert/questions.pdf}, 2010.
\newblock Accessed: 2016-07-29.

\bibitem{BestvinaBrombergFujiwara2015}
Mladen Bestvina, Ken Bromberg, and Koji Fujiwara.
\newblock Constructing group actions on quasi-trees and applications to mapping
  class groups.
\newblock {\em Publ. Math. Inst. Hautes \'Etudes Sci.}, 122:1--64, 2015.

\bibitem{BestvinaBrombergFujiwara2016}
Mladen Bestvina, Ken Bromberg, and Koji Fujiwara.
\newblock Bounded cohomology with coefficients in uniformly convex {B}anach
  spaces.
\newblock {\em Comment. Math. Helv.}, 91(2):203--218, 2016.

\bibitem{Laminations}
Mladen Bestvina, Mark Feighn, and Michael Handel.
\newblock Laminations, trees, and irreducible automorphisms of free groups.
\newblock {\em Geom. Funct. Anal.}, 7(2):215--244, 1997.

\bibitem{Titsalternative}
Mladen Bestvina, Mark Feighn, and Michael Handel.
\newblock The {T}its alternative for {${\mathrm{Out}}(F_n)$}. {I}. {D}ynamics
  of exponentially-growing automorphisms.
\newblock {\em Ann. of Math. (2)}, 151(2):517--623, 2000.

\bibitem{Traintracks}
Mladen Bestvina and Michael Handel.
\newblock Train tracks and automorphisms of free groups.
\newblock {\em Ann. of Math. (2)}, 135(1):1--51, 1992.

\bibitem{Brandenbursky}
Michael {Brandenbursky} and Micha\l\ {Marcinkowski}.
\newblock {Aut-invariant norms and Aut-invariant quasimorphisms on free and
  surface groups}.
\newblock Preprint, \texttt{arXiv:1702.01662}.

\bibitem{Brooks}
Robert Brooks.
\newblock Some remarks on bounded cohomology.
\newblock In {\em Riemann surfaces and related topics: {P}roceedings of the
  1978 {S}tony {B}rook {C}onference ({S}tate {U}niv. {N}ew {Y}ork, {S}tony
  {B}rook, {N}.{Y}., 1978)}, volume~97 of {\em Ann. of Math. Stud.}, pages
  53--63. Princeton Univ. Press, Princeton, N.J., 1981.

\bibitem{Brown}
Kenneth~S. Brown.
\newblock {\em Cohomology of groups}, volume~87 of {\em Graduate Texts in
  Mathematics}.
\newblock Springer-Verlag, New York-Berlin, 1982.

\bibitem{Buehler-Exact}
Theo B{\"u}hler.
\newblock Exact categories.
\newblock {\em Expo. Math.}, 28(1):1--69, 2010.

\bibitem{Buehler-Foundations}
Theo B{\"u}hler.
\newblock On the algebraic foundations of bounded cohomology.
\newblock {\em Mem. Amer. Math. Soc.}, 214(1006):xxii+97, 2011.

\bibitem{BurgerMonod}
Marc Burger and Nicolas Monod.
\newblock Continuous bounded cohomology and applications to rigidity theory.
\newblock {\em Geom. Funct. Anal.}, 12(2):219--280, 2002.

\bibitem{scl}
Danny Calegari.
\newblock {\em scl}, volume~20 of {\em MSJ Memoirs}.
\newblock Mathematical Society of Japan, Tokyo, 2009.

\bibitem{CalegariWalker}
Danny Calegari and Alden Walker.
\newblock Surface subgroups from linear programming.
\newblock {\em Duke Math. J.}, 164(5):933--972, 2015.

\bibitem{OuterSpace}
Marc Culler and Karen Vogtmann.
\newblock Moduli of graphs and automorphisms of free groups.
\newblock {\em Invent. Math.}, 84(1):91--119, 1986.

\bibitem{EML1}
Samuel Eilenberg and Saunders MacLane.
\newblock Relations between homology and homotopy groups of spaces.
\newblock {\em Ann. of Math. (2)}, 46:480--509, 1945.

\bibitem{EML2}
Samuel Eilenberg and Saunders MacLane.
\newblock Relations between homology and homotopy groups of spaces. {II}.
\newblock {\em Ann. of Math. (2)}, 51:514--533, 1950.

\bibitem{Grigorchuk}
Rotislav~I. Grigorchuk.
\newblock Some results on bounded cohomology.
\newblock In {\em Combinatorial and geometric group theory ({E}dinburgh,
  1993)}, volume 204 of {\em London Math. Soc. Lecture Note Ser.}, pages
  111--163. Cambridge Univ. Press, Cambridge, 1995.

\bibitem{Gromov}
Michael Gromov.
\newblock Volume and bounded cohomology.
\newblock {\em Inst. Hautes \'Etudes Sci. Publ. Math.}, (56):5--99 (1983),
  1982.

\bibitem{primer}
Tobias Hartnick.
\newblock A primer on cohomological methods in representation theory of surface
  groups.
\newblock In Ian Biringer, Michelle Lee, Anton Lukyanenko, and Sara Maloni,
  editors, {\em Notes from the 2013 GEAR summer workshop on Higher
  Teichm\"uller--Thurston--Theory}, pages 47--84.

\bibitem{HS}
Tobias Hartnick and Pascal Schweitzer.
\newblock On quasioutomorphism groups of free groups and their transitivity
  properties.
\newblock {\em J. Algebra}, 450:242--281, 2016.

\bibitem{HT}
Tobias {Hartnick} and Alexey {Talambutsa}.
\newblock {Relations between counting functions on free groups and free
  monoids}.
\newblock {\em Groups Geom. Dyn.}, to appear.

\bibitem{Hatcher}
Allen Hatcher.
\newblock {\em Algebraic topology}.
\newblock Cambridge University Press, Cambridge, 2002.

\bibitem{HullOsin}
Michael Hull and Denis Osin.
\newblock Induced quasicocycles on groups with hyperbolically embedded
  subgroups.
\newblock {\em Algebr. Geom. Topol.}, 13(5):2635--2665, 2013.

\bibitem{Ivanov}
Nikolai~V. Ivanov.
\newblock Foundations of the theory of bounded cohomology.
\newblock {\em Zap. Nauchn. Sem. Leningrad. Otdel. Mat. Inst. Steklov. (LOMI)},
  143:69--109, 177--178, 1985.
\newblock Studies in topology, V.

\bibitem{Knuth}
Donald~E. Knuth.
\newblock Big omicron and big omega and big theta.
\newblock {\em SIGACT News}, 8(2):18--24, April 1976.

\bibitem{LS}
Roger~C. Lyndon and Paul~E. Schupp.
\newblock {\em Combinatorial group theory}.
\newblock Classics in Mathematics. Springer-Verlag, Berlin, 2001.
\newblock Reprint of the 1977 edition.

\bibitem{MonodThesis}
Nicolas Monod.
\newblock {\em Continuous bounded cohomology of locally compact groups}, volume
  1758 of {\em Lecture Notes in Mathematics}.
\newblock Springer-Verlag, Berlin, 2001.

\bibitem{Invitation}
Nicolas Monod.
\newblock An invitation to bounded cohomology.
\newblock In {\em International {C}ongress of {M}athematicians. {V}ol. {II}},
  pages 1183--1211. Eur. Math. Soc., Z\"urich, 2006.

\bibitem{Nielsen}
Jakob Nielsen.
\newblock Die {I}somorphismengruppe der freien {G}ruppen.
\newblock {\em Math. Ann.}, 91(3-4):169--209, 1924.

\bibitem{thirddegree}
Teruhiko Soma.
\newblock Bounded cohomology and topologically tame {K}leinian groups.
\newblock {\em Duke Math. J.}, 88(2):357--370, 1997.

\bibitem{Survey}
Karen Vogtmann.
\newblock Automorphisms of free groups and outer space.
\newblock In {\em Proceedings of the {C}onference on {G}eometric and
  {C}ombinatorial {G}roup {T}heory, {P}art {I} ({H}aifa, 2000)}, volume~94,
  pages 1--31, 2002.

\bibitem{Whitehead}
John H.~C. Whitehead.
\newblock On certain sets of elements in a free group.
\newblock {\em Proceedings of the London Mathematical Society},
  s2-41(1):48--56, 1936.

\end{thebibliography}
\end{document}